\theoremstyle{definition}
\newtheorem{theorem}{Theorem}
\let\emph\textbf
\newaliascnt{remark}{theorem}
\newaliascnt{definition}{theorem}
\newaliascnt{proposition}{theorem}
\newaliascnt{lemma}{theorem}
\newaliascnt{corollary}{theorem}
\newaliascnt{example}{theorem}
\newaliascnt{convention}{theorem}
\newaliascnt{todo}{theorem}
\newtheorem{remark}[remark]{Remark}
\newtheorem{definition}[definition]{Definition}
\newtheorem{proposition}[proposition]{Proposition}
\newtheorem{lemma}[lemma]{Lemma}
\newtheorem{corollary}[corollary]{Corollary}
\newtheorem{example}[example]{Example}
\newtheorem{convention}[convention]{Convention}
\numberwithin{equation}{section}
\numberwithin{figure}{section}
\numberwithin{table}{section}
\let\c@table\c@figure\makeatother
\numberwithin{theorem}{section}
\numberwithin{remark}{section}
\numberwithin{definition}{section}
\numberwithin{proposition}{section}
\numberwithin{lemma}{section}
\numberwithin{corollary}{section}
\numberwithin{example}{section}
\numberwithin{convention}{section}
\numberwithin{todo}{section}
\setlist{topsep=0.5em, itemsep=0em}
\renewcommand{\arraystretch}{1.3}
\newcommand{\morearraystretch}{\renewcommand{\arraystretch}{1.8}}
\newcommand{\Id}{\operatorname{Id}}
\newcommand{\id}{\mathrm{id}}
\newcommand{\coid}{\mathrm{id}^*}
\newcommand{\Hom}{\operatorname{Hom}}
\newcommand{\Ker}{\operatorname{Ker}}
\newcommand{\End}{\operatorname{End}}
\newcommand{\Ob}{\operatorname{Ob}}
\newcommand{\cat}[1]{\mathcal{#1}}
\newcommand{\htensor}{\widehat{\otimes}}
\newcommand{\pre}{\operatorname{pre}}
\newcommand{\Tw}{\operatorname{Tw}}
\newcommand{\Mod}{\operatorname{Mod}}
\newcommand{\Add}{\operatorname{Add}}
\newcommand{\MC}{\operatorname{MC}}
\newcommand{\Gtl}{\operatorname{Gtl}}
\newcommand{\Jac}{\operatorname{Jac}}
\newcommand{\wFuk}{\operatorname{wFuk}}
\newcommand{\D}{\operatorname{D}}
\newcommand{\MF}{\operatorname{MF}}
\def\H{\operatorname{H}}
\newcommand{\HTw}{\operatorname{H}\operatorname{Tw}}
\newcommand{\mf}{\operatorname{mf}}
\newcommand{\HH}{\operatorname{HH}}
\newcommand{\HC}{\operatorname{HC}}
\newcommand{\cyc}{\operatorname{cyc}}
\newcommand{\pmat}[1]{\begin{pmatrix} #1 \end{pmatrix}}
\newcommand{\vspan}{\operatorname{span}}
\newcommand{\odd}{\operatorname{odd}}
\newcommand{\even}{\operatorname{even}}
\newcommand{\isoto}{\xrightarrow{\sim}}
\newcommand{\verylongisoto}{\xrightarrow{~~~\sim~~~}}
\newcommand{\verylongmapsto}{\xmapsto{~~~\phantom{\sim}~~~}}
\newcommand{\verylongto}{\xrightarrow{~~~\phantom{\sim}~~~}}
\newcommand{\embeds}{\hookrightarrow}
\def\Im{\operatorname{Im}}
\newcommand{\smooth}[1]{\tilde{#1}}
\newcommand{\restr}[1]{|_{#1}}
\newcommand{\IDd}{\mathsf{Disk}_{\operatorname{ID}}}
\newcommand{\CRd}{\mathsf{Disk}_{\operatorname{CR}}}
\newcommand{\DSd}{\mathsf{Disk}_{\operatorname{DS}}}
\newcommand{\DWd}{\mathsf{Disk}_{\operatorname{DW}}}
\newcommand{\disktarget}{\operatorname{t}}
\newcommand{\Abouzaid}{\operatorname{Abou}}
\newcommand{\punctures}{\operatorname{Punc}}
\newcommand{\disjoint}{\mathbin{\dot{\cup}}}
\newcommand{\ZigzagCat}{\mathbb{L}}
\newcommand{\DefZigzagCat}{\mathbb{L}_q}
\newcommand{\ncpow}[1]{\langle\!\langle #1 \rangle\!\rangle}
\newcommand{\running}{~|~}
\newcommand{\mirQ}{\check{Q}}
\newcommand{\compl}[1]{\widehat{#1}}
\newcommand{\contraction}{c}
\newcommand{\opp}{\mathsf{opp}}
\newcommand{\Perf}{\operatorname{Perf}}
\newcommand{\landau}{\mathcal{O}}
\newcommand{\matf}[4]{\mathord{\begin{tikzpicture}[baseline={(0, -0.07)}] \path (0, 0) node[anchor=east] {$ #3: #1 $} (0.6, 0) node[anchor=west] {$ #2 : #4 $}; \path[draw, ->] (0, 0.07) -- (0.6, 0.07); \path[draw, ->] (0.6, -0.07) -- (0, -0.07); \end{tikzpicture}}}
\newcommand{\dmatf}[4]{\mathord{\begin{tikzpicture}[baseline={(0, -0.07)}] \path (0, 0) node[anchor=east] {$ #1 $} (0.6, 0) node[anchor=west] {$ #2 $}; \path[draw, ->] (0, 0.07) -- (0.6, 0.07) node[midway, above] {$ #3 $}; \path[draw, ->] (0.6, -0.07) -- (0, -0.07) node[midway, below] {$ #4 $}; \end{tikzpicture}}}
\newcommand{\chlQ}{Q^{\ZigzagCat}}
\newcommand{\RefObjects}{\mathbb{L}}
\newcommand{\DefRefObjects}{\mathbb{L}_q}
\newcommand{\projects}{\twoheadrightarrow}
\newcommand{\closure}[1]{\overline{#1}}
\newcommand{\Modfd}{\operatorname{Mod}^{\mathrm{fd}}}
\newcommand{\rModfd}{\operatorname{Mod}^{\mathrm{fd}}_{\mathrm{right}}}
\newcommand{\Ext}{\operatorname{Ext}}
\newcommand{\koszul}[1]{#1^!}
\newcommand{\kdual}[1]{{#1}^\vee}
\newcommand{\Ch}{\mathrm{Ch}}
\newcommand{\arcsequence}{\operatorname{Arcs}}
\newcommand{\Proj}{\operatorname{Proj}}
\newcommand{\mfconst}{O}
\newcommand{\CYn}{CY$ n $}
\newcommand{\Tr}{\operatorname{Tr}}
\newcommand{\algopp}{\operatorname{op}}
\newcommand{\oppalg}{\operatorname{op}}
\newcommand{\env}{\operatorname{e}}
\newcommand{\RHom}{\operatorname{RHom}}
\newcommand{\Barcon}{\operatorname{B}}
\newcommand{\vdimension}{\operatorname{dim}}
\newcommand{\Mdual}[1]{#1^{\mathrm{D}}}
\newcommand{\diag}{\operatorname{diag}}
\NewDocumentCommand{\tensor}{t_}
 {%
  \IfBooleanTF{#1}
   {\tensorop}
   {\otimes}%
 }
\NewDocumentCommand{\tensorop}{m}
 {%
  \mathbin{\mathop{\otimes}\displaylimits_{#1}}%
 }
\newcommand{\paperone}{\cite{Paper-I}}
\newcommand{\papertwoA}{\cite{Paper-IIA}}
\newcommand{\papertwoB}{\cite{Paper-IIB}}
\newcommand{\cA}{\mathcal{A}}
\title{Deformed Mirror Symmetry for Punctured Surfaces}
\author{Raf Bocklandt and Jasper van de Kreeke}
\date{15 March 2023}
\begin{document}

\maketitle
\begin{abstract}
Mirror symmetry originally envisions a correspondence between deformations of the A-side and deformations of the B-side. In this paper, we achieve an explicit correspondence in the case of punctured surfaces.

The starting point is the noncommutative mirror equivalence $ \Gtl Q ≅ \mf(\Jac \mirQ, ℓ) $ for a punctured surface $ Q $. We pick a deformation $ \Gtl_q Q $ which captures a large part of the deformation theory and includes the relative Fukaya category. To find the corresponding deformation of $ \mf(\Jac \mirQ, ℓ) $, we deform work of Cho-Hong-Lau which interprets mirror symmetry as Koszul duality. As result we explicitly obtain the corresponding deformation $ \mf(\Jac_q \mirQ, ℓ_q) $ together with a deformed mirror functor $ \Gtl_q Q \isoto \mf(\Jac_q \mirQ, ℓ_q) $.

The bottleneck is to verify that the algebra $ \Jac_q \mirQ $ is indeed a (flat) deformation of $ \Jac \mirQ $. We achieve this by deploying a result of Berger-Ginzburg-Taillefer on deformations of CY3 algebras, which however requires the relations to be homogeneous. We show how to replace this homogeneity requirement by a simple boundedness condition and obtain flatness of $ \Jac_q \mirQ $ for almost all $ Q $.

We finish the paper with examples, including a full treatment of the 3-punctured sphere and 4-punctured torus. With the help of our computations in \papertwoB, we describe $ \Jac_q \mirQ $ explicitly. It turns out that the deformed potential $ ℓ_q $ is still central in $ \Jac_q \mirQ $, in contrast to the popular slogan that central elements do not survive under deformation.
\end{abstract}

\tableofcontents

\newcommand{\paperthreeacknowledgements}{%
\subsection*{Acknowledgements}
This project was supervised by the first author and is part of the second author's PhD thesis. The second author thanks the participants of his 2020 Summer Camp on Derived Categories, Stability Conditions and Deformations. The project was supported by the NWO grant “Algebraic methods and structures in the theory of Frobenius manifolds and their applications” (TOP1.17.012).}

\section{Introduction}
Mirror symmetry is the quest for equivalences between Fukaya categories and categories of coherent sheaves. Noncommutative mirror symmetry is the quest for equivalences between Fukaya categories and categories of sheaves on noncommutative spaces. In the present paper, our starting point is noncommutative mirror symmetry for punctured surfaces:

\begin{center}
\begin{tikzpicture}
\path (0, 0) node[align=center] (A) {\textbf{Gentle algebra} \\ $ \Gtl Q $} (10, 0) node[align=center] (B) {\textbf{Matrix factorizations} \\ $ \mf(\Jac \mirQ, ℓ) $};
\path[draw, ->] (4, 0) -- (6, 0) node[midway, below] {\cite{Bocklandt}} node[midway, above] {\scalebox{1.5}{$ \sim $}};
\end{tikzpicture}
\end{center}

In the present paper, we pick one specific deformation $ \Gtl_q Q $ of $ \Gtl Q $ and find the corresponding deformation of $ \mf(\Jac \mirQ, ℓ) $. The result is a deformed mirror equivalence:

\begin{center}
\begin{tikzpicture}
\path (0, 0) node[align=center] (A) {\textbf{Deformed gentle algebra} \\ $ \Gtl_q Q $} (10, 0) node[align=center] (B) {\textbf{Deformed matrix factorizations} \\ $ \mf(\Jac_q \mirQ, ℓ_q) $};
\path[draw, ->] (4, 0) -- (6, 0) node[midway, above] {\scalebox{1.5}{$ \sim $}};
\end{tikzpicture}
\end{center}

In what follows, we explain our quest from different perspectives. We explain the philosophy of the specific deformation $ \Gtl_q Q $, comment on the source of mirror functors from the construction of Cho, Hong and Lau \cite{CHL}, and explain a bottleneck concerning the question whether the deformed Jacobi algebra $ \Jac_q \mirQ $ is indeed a deformation of $ \Jac \mirQ $.

\paragraph*{Deformation theory}
In $ A_∞ $-deformation theory one studies possible modifications of a given $ A_∞ $-structure which keep the $ A_∞ $-relations intact. One possible line of study consists of formal (infinitesimally) curved $ A_∞ $-deformations. The base ring for such deformations is a local algebra $ B $ with a few additional properties.

An interesting question arises when one is given two equivalent $ A_∞ $-categories $ \cat C $, $ \cat D $ and tries to transfer a deformation from $ \cat C $ to $ \cat D $. As a starting point, one is given a quasi-equivalence $ F: \cat C → \cat D $ and a deformation $ \cat D_q $. Transferring the deformation $ \cat C_q $ via $ F $ then entails finding a deformation $ \cat D_q $ of $ \cat D $ together with a deformation $ F_q $ of $ F $ such that $ F_q: \cat C_q → \cat D_q $ is a functor of deformed $ A_∞ $-categories.

The difficulty in transferring $ A_∞ $-deformations lies in the character of $ A_∞ $-theory. Indeed, both the $ A_∞ $-products of $ \cat C $ and $ \cat D $ and the functor $ F $ have higher components, which make it impossible to quickly to write down the corresponding deformation $ \cat D_q $.

Nevertheless, it is known that a transfer of deformations along quasi-equivalences always exists. The clue is to interpret $ A_∞ $-deformations of $ \cat C $ as Maurer-Cartan elements of the Hochschild DGLA $ \HC(\cat C) $. The quasi-equivalence $ F: \cat C → \cat D $ then gives rise to a non-canonical $ L_∞ $-morphism $ F_*: \HC(\cat C) → \HC(\cat D) $. By applying $ F_* $ to a given deformation $ \cat C_q $, viewed as Maurer-Cartan element, one obtains the corresponding deformation $ \cat D_q $. While this abstract interpretation does make $ \cat D_q $ computable, it sets the stage for the systematic quest of deformed mirror symmetry.

\paragraph*{Gentle algebras}
A popular way of modeling wrapped Fukaya categories of punctured surfaces deploys gentle algebras\cite{Bocklandt}. In this framework, one starts from an oriented closed surface $ S $ with a finite set of punctures $ M ⊂ S $. One chooses a system $ \cA $ of arcs which connect the punctures and divide the surface into polygons. To an arc system, one can associate a so-called gentle algebra $ \Gtl \cA $, reminiscent of the classical associative gentle algebras of \cite{Assem}. The gentle algebra $ \Gtl \cA $ is actually an $ A_∞ $-category whose higher products detect the topology of the punctured surface $ S \setminus M $. It was shown in \cite{Bocklandt} that it accurately models the wrapped Fukaya category of $ (S, M) $.

Dimers are specific kinds of arc systems which suit the purposes of mirror symmetry. We shall consider the specific mirror symmetry of punctured surfaces, built in \cite{Bocklandt}. This statement of mirror symmetry entails a quasi-isomorphism $ F: \Gtl Q → \mf(\Jac \mirQ, ℓ) $. The dimer $ \mirQ $ is the so-called dual dimer of $ Q $ and can be built from $ Q $ in a combinatorical way. In contrast, the mirror functor $ F $ itself is only given non-constructively and built in an inductive way by solving cocycle equations. Whenever we are given a deformation $ \Gtl_q Q $, it would be very hard to explicitly find the corresponding deformation of $ \mf(\Jac \mirQ, ℓ) $.

\paragraph*{Deformed Fukaya categories}
Seidel \cite{Seidel-relative} has introduced the idea of deforming Fukaya categories relative to a divisor. The idea is to introduce a formal parameter $ q $ and weight every pseudoholomorphic disk by $ q^s $ where $ s $ counts the number of intersections of the disk with the divisor. In \paperone, we have transported this concept to the world of gentle algebras. The result is a deformation $ \Gtl_q Q $, in which every puncture comes with its own deformation parameter. Whenever a disk covers the punctures $ q_1, …, q_k $, the contribution of this disk is weighted by the product $ q_1 … q_k $.

We raised the hope that our candidate deformation $ \Gtl_q Q $ would be the correct way to implement Seidel's idea on the side of gentle algebras. In \papertwoB, we examined this expectation and computed a part of the $ A_∞ $-structure of the derived category $ \HTw\Gtl_q Q $. Very specifically, it concerns the subcategory $ \H\DefZigzagCat ⊂ \HTw\Gtl_q Q $ given by the zigzag paths in $ Q $. The result is an explicit description of the $ A_∞ $-structure of $ \H\DefZigzagCat $ in terms of certain types of immersed disks. While it is hard to determine values for products of non-transversal sequences in the relative Fukaya category, our description of $ \H\DefZigzagCat $ determines their values very accurately. Although our calculation is limited to the zigzag paths, we consider \papertwoB\ a crude verification that $ \Gtl_q Q $ is the correct transport of Seidel's vision to gentle algebras and can be considered a “relative wrapped Fukaya category”.

Relative Fukaya categories have already served as A-side of mirror symmetry before. For instance, Lekili and Perutz \cite{Lekili-Perutz} find a commutative mirror for the relative Fukaya category of the $ 1 $-punctured torus, apparently the first use of a relative Fukaya category in mirror symmetry. In \cite{Lekili-Polishchuk}, Lekili and Polishchuk generalize this result to the case of the $ n $-punctured torus. They depart from a finite collection of split-generators of the Fukaya category and compute part of their deformed products in the relative Fukaya category. Their mirror is then obtained by guessing the correct deformation on the B-side. Complete knowledge of the products in the relative Fukaya category or even a relative wrapped Fukaya category are not required in their approach.

\paragraph*{Mirror functors}
A rich source of mirror functors is the recent construction of Cho, Hong and Lau \cite{CHL}. Their construction associates to a given $ A_∞ $-category $ \cat C $ with a suitable subcategory $ \RefObjects ⊂ \cat C $ a Landau-Ginzburg model $ (\Jac Q^{\RefObjects}, ℓ) $ together with an $ A_∞ $-functor
\begin{equation*}
F: \cat C → \MF(\Jac \chlQ, ℓ).
\end{equation*}
The Cho-Hong-Lau construction can be applied to the category $ \cat C = \HTw\Gtl Q $ by choosing $ \RefObjects $ to be the subcategory given by so-called zigzag paths. This application yields back the original mirror symmetry for punctured surfaces $ \Gtl Q \cong \mf(\Jac \mirQ, ℓ) $ from \cite{Bocklandt}.

In the present paper, it is our aim to produce a deformation of $ \mf(\Jac Q, ℓ) $ which corresponds to $ \Gtl_q Q $. Thanks to the Cho-Hong-Lau construction, this task becomes straightforward: The first step is to deform the Cho-Hong-Lau construction. The result is a procedure which generates mirror functors of the kind $ \cat C_q → \MF(\Jac_q \chlQ, ℓ_q) $. The second step is to apply this deformed construction to the case of $ \cat C_q = \HTw\Gtl_q Q $. The result is a quasi-isomorphism of deformed $ A_∞ $-categories
\begin{equation*}
F_q: \Gtl_q Q \isoto \mf(\Jac_q \mirQ, ℓ_q).
\end{equation*}
In particular, the deformation $ \mf(\Jac_q \mirQ, ℓ_q) $ is the desired deformation of $ \mf(\Jac \mirQ, ℓ) $ corresponding to $ \Gtl_q Q $. It is possible to describe the algebra $ \Jac_q \mirQ $, the potential $ ℓ_q $ and the mirror objects $ F_q (a) $ explicitly. This requires heavy computations in the minimal model $ \HTw\Gtl_q Q $ which we have performed in \papertwoB. Thanks to these earlier computations, we offer in the present paper an explicit description of $ \Jac_q \mirQ $, $ ℓ_q $ and $ F_q (a) $ in terms of combinatorics in $ Q $.

\paragraph*{Flatness of superpotential deformations}
A bottleneck in this paper is the question whether $ \Jac_q \mirQ $ is a (flat) deformation of $ \Jac \mirQ $ as an algebra. Indeed, our deformed Cho-Hong-Lau construction leads to a definition of $ \Jac_q \mirQ $ as a mere quotient of $ ℂ\mirQ ⟦Q_0⟧ $ by certain deformed relations. A quotient by deformed relations however need not be an algebra deformation in general. The question is whether the specific case of $ \Jac_q \mirQ $ is a deformation of $ \Jac \mirQ $ nevertheless. To resolve this question, we prove a flatness result for superpotential deformations of CY3 algebras.

Our flatness result is a culmination of a long sequence of improvements in the literature. Our starting point is the work of Berger, Ginzburg and Taillefer \cite{Berger-Ginzburg, Berger-Taillefer} which concerns PBW deformations of CY3 algebras. Like all previous results, their work requires the superpotential $ W $ to be homogeneous. We translate their work to the setting of formal deformations and show that the homogeneity condition is superfluous and can be replaced by a mild boundedness condition. We obtain a flatness result for formal deformations of CY3 algebras with nonhomogeneous superpotential. In particular, it follows from this result that $ \Jac_q \mirQ $ is a flat deformation of $ \Jac \mirQ $ for almost all dimers $ \mirQ $.

Ultimately, our flatness result renders the category $ \mf(\Jac_q \mirQ, ℓ_q) $ a deformation of $ \mf(\Jac \mirQ, ℓ) $ and $ F_q $ an equivalence of deformations. This proves noncommutative mirror symmetry for punctured surfaces.

\paragraph*{Assembling deformed mirror symmetry}
The present paper is the final one in a series of three. We explain here the purpose of this series, which results have been obtained in the first papers and how we build on them in the present paper.

Our original motivation was to transport Seidel's idea of relative Fukaya categories to the world of gentle algebras and to use it as A-side in a deformed mirror symmetry for punctured surfaces. We realized that an effective way of constructing the deformation of $ \mf(\Jac \mirQ, ℓ) $ was to follow the Cho-Hong-Lau construction, inserting the deformation $ \Gtl_q Q $ as an input instead of $ \Gtl Q $. This approach requires an explicit and lengthy minimal model calculation, which is the reason we distributed the material into a series of three papers.

The first paper in the series is \paperone\ and concerns the deformation theory of the gentle algebras $ \Gtl \cA $ under certain assumptions on the arc system $ \cA $. One of the main results is a complete classification of the deformations of $ \Gtl \cA $.

The second paper in the series is \papertwoB\ and conducts all the necessary computations for applying the Cho-Hong-Lau construction. It focuses on the case of the specific deformation $ \Gtl_q Q $ and defines the category of deformed zigzag paths $ \DefZigzagCat $. By means of the deformed Kadeishvili construction from our auxiliary paper \papertwoA, it builds a minimal model $ \H\DefZigzagCat $ for $ \DefZigzagCat $. The main result is an explicit description of the deformed $ A_∞ $-structure of $ \H\DefZigzagCat $ in terms of four types of immersed disks. These four types are labeled CR, ID, DS and DW disks and they agree precisely with the immersed disks one expects form the relative Fukaya category.

In the present paper, we tie the previous calculations together. We start by deforming the Cho-Hong-Lau construction in general. Then, we apply this deformed construction to the special case of $ \Gtl_q Q $ and obtain a deformed Jacobi algebra $ \Jac_q \mirQ $ and a deformed central element $ ℓ_q $. Thanks to the second paper, we have an explicit description of the $ A_∞ $-structure on $ \H\DefZigzagCat $, giving an explicit description of $ \Jac_q \mirQ $ and $ ℓ_q $. This description is theoretically given in terms of the CR, ID, DS and DW disks from \papertwoB, but simplifies a bit in the present paper because mostly products of transversal sequences are regarded. Apart from proving that $ \Jac_q \mirQ $ is indeed a deformation of $ \Jac \mirQ $, simply plugging in the results of \papertwoB\ already finishes deformed mirror symmetry.

\paragraph*{Structure of the paper}
In \autoref{sec:3ainfty}, we review $ A_∞ $-categories and their deformations. We also introduce notation and terminology for treating algebra deformations, including the $ \mathfrak{m} $-adic topology and flatness conditions. In \autoref{sec:koszul}, we present Koszul duality and the relationship between cyclic $ A_∞ $-algebras and Calabi-Yau dg algebras. We show how to tweak Koszul duality in order to obtain $ A_∞ $-functors similar to the Cho-Hong-Lau construction. In \autoref{sec:3prelim-MS}, we review dimers, gentle algebras and mirror symmetry of punctured surfaces. In \autoref{sec:zigzag}, we review the definition of the category of deformed zigzag paths $ \DefZigzagCat $ and description of its minimal model $ \H\DefZigzagCat $ from \papertwoB. In \autoref{sec:flatness}, we investigate deformations of Jacobi algebras given by deformations of the superpotential. We also consider the specific case $ \Jac \mirQ $ of Jacobi algebras of dimers. In \autoref{sec:CHL}, we motivate and review the Cho-Hong-Lau construction. We provide an explicit deformed construction and resolve a few technicalities. In \autoref{sec:MS}, we apply the deformed Cho-Hong-Lau construction to the specific case of $ \Gtl_q Q $. We provide explicit descriptions of the deformed Jacobi algebra $ \Jac_q \mirQ $, the central element $ ℓ_q $ and the deformed matrix factorizations $ F_q (a) $. In \autoref{sec:3examples}, we work out deformed mirror symmetry for the examples of the 3-punctured sphere and a 4-punctured torus.

\paperthreeacknowledgements

\section{Preliminaries on $ A_∞ $-categories}
\label{sec:3ainfty}
In this section, we recollect background material on $ A_∞ $-categories and fix notation. In \autoref{sec:3ainfty-ainfty}, we recall $ A_∞ $-categories, their functors, twisted completion and minimal models. In \autoref{sec:3ainfty-defo}, we recall completed tensor products, deformations of $ A_∞ $-categories and their functors. We very briefly comment on the construction of twisted completion and minimal model for $ A_∞ $-deformations from \papertwoA. In \autoref{sec:prelim-submodules}, we introduce specific terminology and properties for submodules of $ B \htensor X $ as preparation for \autoref{sec:flatness}. In \autoref{sec:3ainfty-free}, we recall $ \mathfrak{m} $-adically free modules. In \autoref{sec:3ainfty-variants}, we examine variants of our flatness condition for ideals.

\subsection{$ A_∞ $-categories}
\label{sec:3ainfty-ainfty}
In this section we recall $ A_∞ $-categories, completed tensor products, $ A_∞ $-deformations and functors between $ A_∞ $-deformations. The material is standard and can for instance be found in \cite{Bocklandt-book}. Throughout we work over an algebraically closed field of characteristic zero and write $ ℂ $.

\begin{definition}
A ($ ℤ $- or $ ℤ/2ℤ $-graded, strictly unital) \emph{$ A_∞ $-category} $ \cat C $ consists of a collection of objects together with $ ℤ $- or $ ℤ/2ℤ $-graded hom spaces $ \Hom(X, Y) $, distinguished identity morphisms $ \id_X ∈ \Hom^0(X, X) $ for all $ X ∈ \cat C $, together with multilinear higher products
\begin{equation*}
μ^k: \Hom(X_k, X_{k+1}) ¤ … ¤ \Hom(X_1, X_2) → \Hom(X_1, X_{k+1}), \quad k ≥ 1
\end{equation*}
of degree $ 2-k $ such that the $ A_∞ $-relations and strict unitality axioms hold: For every compatible morphisms $ a_1, …, a_k $ we have
\begin{align*}
& \sum_{0 ≤ j < i ≤ k} (-1)^{‖a_n‖ + … + ‖a_1‖} μ(a_k, …, μ(a_i, …, a_{j+1}), a_j, …, a_1) = 0, \\
& μ^2 (a, \id_X) = a, ~ μ^2 (\id_Y, a) = (-1)^{|a|} a, ~ μ^{≥3} (…, \id_X, …) = 0.
\end{align*}
\end{definition}


Next we recall the additive completion $ \Add\cat C $ of an $ A_∞ $-category $ \cat C $. This category consists of formal sums of shifted objects. The hom space between two objects consists of matrices of morphisms between the summands.

\begin{definition}
\label{def:3ainfty-ainfty-Add}
Let $ \cat C $ be an $ A_∞ $ category with product $ μ_{\cat C} $. The additive completion $ \Add \cat C $ of $ \cat C $ is the category of formal sums of shifted objects of $ \cat C $:
\begin{equation*}
A_1 [k_1] ⊕ … ⊕ A_n [k_n].
\end{equation*}
The hom space between two such objects $ X = \bigoplus A_i [k_i] $ and $ Y = \bigoplus B_i [m_i] $ is
\begin{equation*}
\Hom_{\Add\cat C} (X, Y) = \bigoplus_{i, j} \Hom_{\cat C} (A_i, B_j) [m_j - k_i].
\end{equation*}
Here $ [-] $ denotes the right-shift. The products on $ \Add\cat C $ are given by multilinear extensions of
\begin{equation*}
μ_{\Add \cat C}^k (a_k, …, a_1) = (-1)^{\sum_{j < i} \Vert a_i \Vert l_j} μ_{\cat C}^k (a_k, …, a_1).
\end{equation*}
Here each $ a_i $ lies in some $ \Hom(X_i[k_i], X_{i+1} [k_{i+1}]) $. The integer $ l_i $ denotes the difference $ k_{i+1} - k_i $ between the shifts and the degree $ ‖a_i‖ $ is the degree of $ a_i $ as element of $ \Hom_{\cat C} (X_i, X_{i+1}) $.
\end{definition}

Next we recall the twisted completion $ \Tw\cat C $ of an $ A_∞ $-category $ \cat C $. The objects of this category are virtual chain complexes of objects of $ \cat C $:

\begin{definition}
\label{def:3ainfty-ainfty-Tw}
A \emph{twisted complex} in $ \cat C $ is an object $ X ∈ \Add\cat C $ together with a morphism $ δ ∈ \Hom^1_{\Add\cat C} (X, X) $ of degree $ 1 $ such that $ δ $ is strictly upper triangular and satisfies the Maurer-Cartan equation:
\begin{equation*}
\MC(δ) ≔ μ^1 (δ) + μ^2 (δ, δ) + … = 0.
\end{equation*}
We may refer to the morphism $ δ $ as the \emph{twisted differential}. Note that the upper triangularity ensures that this sum is well-defined. The \emph{twisted completion} of $ \cat C $ is the $ A_∞ $-category $ \Tw\cat C $ whose objects are twisted complexes. Its hom spaces are the same as for the additive completion:
\begin{equation*}
\Hom_{\Tw\cat C} (X, Y) = \Hom_{\Add\cat C} (X, Y).
\end{equation*}
The products on $ \Tw \cat C $ of $ \cat C $ are given by embracing with $ δ $'s:
\begin{equation*}
μ_{\Tw \cat C}^k (a_k, …, a_1) = \sum_{n_0, …, n_k ≥ 0} μ_{\Add \cat C} (\underbrace{δ, …, δ}_{n_k}, a_k, …, a_1, \underbrace{δ, …, δ}_{n_0}).
\end{equation*}
\end{definition}


A functor between two $ A_∞ $-categories is a mapping which matches the products of the two categories:

\begin{definition}
Let $ \cat C $ and $ \cat D $ be $ A_∞ $-categories. Then a \emph{functor} $ F: \cat C → \cat D $ of $ A_∞ $-categories consists of a map $ F: \Ob(\cat C) → \Ob(\cat D) $ together with for every $ k ≥ 1 $ a degree $ 1-k $ multilinear map
\begin{equation*}
F^k: \Hom_{\cat C} (X_k, X_{k+1}) ¤ … ¤ \Hom_{\cat C} (X_1, X_2) → \Hom_{\cat C} (FX_1, FX_{k+1})
\end{equation*}
such that the $ A_∞ $-functor relations hold:
\begin{multline*}
\sum_{0 ≤ j < i ≤ k} (-1)^{‖a_j‖ + … + ‖a_1‖} F(a_k, …, a_{i+1}, μ(a_i, …, a_{j+1}), a_j, …, a_1) \\
= \sum_{\substack{l ≥ 0 \\ 1 = j_1 < … < j_l ≤ k}} μ(F(a_k, …, a_{j_l}), …, F(…, a_{j_2}), F(…, a_{j_1})).
\end{multline*}
The functor $ F $ is an \emph{isomorphism} if $ F: \Ob(\cat C) → \Ob(\cat D) $ is a bijection and $ F^1: \Hom_{\cat C} (X, Y) → \Hom_{\cat D} (FX, FY) $ is an isomorphism for all $ X, Y ∈ \cat C $. The functor $ F $ is a \emph{quasi-isomorphism} if $ F: \Ob(\cat C) → \Ob(\cat D) $ is a bijection and $ F^1: \Hom_{\cat C} (X, Y) → \Hom_{\cat D} (FX, FY) $ is a quasi-isomorphism of complexes for every $ X, Y ∈ \cat C $.
\end{definition}

\begin{definition}
When $ F: \cat C → \cat D $ and $ G: \cat D → \cat E $ are $ A_∞ $-functors, then their composition is given by $ GF: \Ob(\cat C) → \Ob(\cat E) $ on objects and
\begin{equation*}
(GF)(a_k, …, a_1) = \sum G(F(a_k, …), …, F(…, a_1)).
\end{equation*}
\end{definition}


Let us recall minimal models and their notation as follows:

\begin{definition}
An $ A_∞ $-category $ \cat C $ is \emph{minimal} if $ μ^1_{\cat C} = 0 $. A \emph{minimal model} of $ \cat C $ is any minimal $ A_∞ $-category $ \cat D $ together with a quasi-isomorphism $ F: \cat D → \cat C $. A minimal model of $ \cat C $ is generically denoted $ \H\cat C $.
\end{definition}

By the famous Kadeishvili theorem, every $ A_∞ $-category has a minimal model. In fact, a minimal model can be constructed semi-explicitly by sums over trees.

\subsection{Deformations of $ A_∞ $-categories}
\label{sec:3ainfty-defo}
In this section, we recall deformations of $ A_∞ $-categories. We follow \papertwoA\ where also more detail can be found. We start by recalling completed tensor products. Then we recall $ A_∞ $-deformations and their functors. We comment very briefly on the construction of the twisted completion and minimal models for $ A_∞ $-deformations from \papertwoA.

We recall now completed tensor products $ B \htensor X $ with $ B $ a local ring and $ X $ a vector space. The letter $ B $ will always denote a local ring with extra properties. We have decided to give this a name:

\begin{definition}
A \emph{deformation base} is a complete local Noetherian unital $ ℂ $-algebra $ B $ with residue field $ B/\mathfrak{m} = ℂ $. The maximal ideal is always denoted $ \mathfrak{m} $.
\end{definition}

\begin{remark}
By the Cohen structure theorem, every deformation base is of the form $ ℂ⟦x_1, …, x_n⟧ / I $ with $ I $ denoting some ideal.
\end{remark}

If $ X $ is a vector spaces, then $ B \htensor X = \lim (B/\mathfrak{m}^k \tensor X) $ denotes the completed tensor product over $ ℂ $. For simplicity, we write $ \mathfrak{m}^k X $ to denote the infinitesimal part $ \mathfrak{m}^k X = \mathfrak{m}^k \htensor X ⊂ B \htensor X $. Recall that $ B \htensor X $ is a $ B $-module and comes with the $ \mathfrak{m} $-adic topology, which turns $ B \htensor X $ into a sequential Hausdorff space. For convenience, we may from time to time use expressions like $ x = \landau(\mathfrak{m}^k) $ to indicate $ x ∈ \mathfrak{m}^k X $.

\begin{definition}
A map $ φ: B \htensor X → B \htensor Y $ is \emph{continuous} if it is continuous with respect to the $ \mathfrak{m} $-adic topologies. A map $ φ: (B \htensor X_k) ¤ … ¤ (B \htensor X_1) → B \htensor Y $ is \emph{continuous} if for every $ 1 ≤ i ≤ k $ and every sequence of elements $ x_1, …, \hat x_i, …, x_k $ the map
\begin{equation*}
μ(x_k, …, -, …, x_1): B \htensor X_i → B \htensor Y
\end{equation*}
is continuous.
\end{definition}

\begin{remark}
Every element in $ B \htensor X $ can be written as a series $ \sum_{i = 0}^∞ m_i x_i $. Here $ m_i $ is a sequence of elements $ m_i ∈ \mathfrak{m}^{→∞} $ and $ x_i $ is a sequence of elements $ x_i ∈ X $. We have used the notation $ m_i ∈ \mathfrak{m}^{→∞} $ to indicate that $ m_i ∈ \mathfrak{m}^{k_i} $ for some sequence $ (k_i) ⊂ ℕ $ with $ k_i → ∞ $.
\end{remark}

\begin{remark}
\label{rem:prelim-ainfty-continuityautomatic}
Every $ B $-linear map $ B \htensor X → B \htensor Y $ is automatically continuous (see \papertwoA~for the argument), so is every every $ B $-multilinear map $ (B \htensor X_k) ¤ … ¤ (B \htensor X_1) → B \htensor Y $. Linear maps $ X → B \htensor Y $ can be uniquely extended to $ B $-linear maps $ B \htensor X → B \htensor Y $ and multilinear maps $ X_k ¤ … ¤ X_1 → B \htensor Y $ can be uniquely extended to $ B $-multilinear maps $ (B \htensor X_k) ¤ … ¤ (B \htensor X_1) → B \htensor Y $ (see \papertwoA).
\end{remark}

\begin{remark}
\label{rem:prelim-ainfty-leadingterm}
The \emph{leading term} of a $ B $-linear map $ φ: B \htensor X → B \htensor Y $ is the map $ φ_0: X → Y $ given by the composition $ φ_0 = π φ \restr{X} $, where $ π: B \htensor Y → Y $ denotes the standard projection. If the leading term $ φ_0 $ is injective or surjective, then $ φ $ is injective or surjective itself (see \papertwoA~for the argument).
\end{remark}

We recall now $ A_∞ $-deformations. When $ \cat C $ is an $ A_∞ $-category, the idea is to model its $ A_∞ $-deformations on the collection of enlarged hom spaces $ \{B \htensor \Hom_{\cat C} (X, Y)\}_{X, Y ∈ \cat C} $. Any $ B $-multilinear product on these hom spaces is automatically continuous. Similarly, functors of $ A_∞ $-deformations will be defined as maps between tensor products of the enlarged hom spaces and will be automatically continuous as well.

$ A_\infty $-deformations of $ \cat C $ will always be allowed to have infinitesimal curvature. The reason is that only this way we get a homologically sensible notion: Whenever $ μ_q $ is an (infinitesimally) curved deformation, then $ ν = μ - μ_q $ is a Maurer-Cartan element of the Hochschild DGLA $ \HC(\cat C) $. We comment on this in more detail in \papertwoA.

\begin{definition}
Let $ \cat C $ be an $ A_∞ $ category with products $ μ $ and $ B $ a deformation base. An \emph{$ A_\infty $-deformation} of $ \cat C_q $ of $ \cat C $ consists of
\begin{itemize}
\item The same objects as $ \cat C $,
\item Hom spaces $ \Hom_{\cat C_q} (X, Y) = B \htensor \Hom_{\cat C} (X, Y) $ for $ X, Y ∈ \cat C $,
\item $ B $-multilinear products of degree $ 2 - k $
\begin{equation*}
μ_q^k: \Hom_{\cat C_q} (X_k, X_{k+1}) \tensor … \tensor \Hom_{\cat C_q} (X_1, X_2) → \Hom_{\cat C_q} (X_1, X_{k+1}), ~ k ≥ 1
\end{equation*}
\item Curvature of degree $ 2 $ for every object $ X ∈ \cat C $
\begin{equation*}
μ_{q, X}^0 ∈ \mathfrak{m} \Hom_{\cat C_q}^2 (X, X),
\end{equation*}
\end{itemize}
such that $ μ_q $ reduces to $ μ $ once the maximal ideal $ \mathfrak{m} $ is divided out, and $ μ_q $ satisfies the curved $ A_∞ $ ($ cA_∞ $) relations
\begin{equation*}
\sum_{k ≥ l ≥ m ≥ 0} (-1)^{‖a_m‖ + … + ‖a_1‖} μ_q (a_k, …, μ_q (a_l, …), a_m, …, a_1) = 0.
\end{equation*}
The deformation is \emph{unital} if the deformed higher products still satisfy the unitality axioms
\begin{equation*}
μ^2_q (a, \id_X) = a, ~ μ^2_q (\id_Y, a) = (-1)^{|a|} a, ~ μ^{≥3}_q (…, \id_X, …) = 0.
\end{equation*}
\end{definition}

It sometimes comes handy to work with deformations that include more objects than $ \cat C $ does. We fix terminology as follows:

\begin{definition}
Let $ \cat C $ be an $ A_∞ $-category. Let $ O $ be an arbitrary set of objects and $ F: O → \Ob(\cat C) $ a map. Then the \emph{object-cloned version} $ F^* \cat C $ of $ \cat C $ is the $ A_∞ $-category given by object set $ O $, hom spaces
\begin{equation*}
\Hom_{F^* \cat C} (X, Y) = \Hom_{\cat C} (F(X), F(Y)), \quad X, Y ∈ O,
\end{equation*}
and products simply given by the same composition as in $ \cat C $.
\end{definition}

An $ A_∞ $-deformation of $ \cat C $ always gives an induced deformation of $ F^* \cat C $. This provides a map of Maurer-Cartan elements $ \MC(\HC(\cat C), B) → \MC(\HC(F^* \cat C), B) $. In case $ F $ is surjective, the categories $ \cat C $ and $ F^* \cat C $ are equivalent and the map of Maurer-Cartan elements becomes a bijection after dividing out gauge equivalence. However, the map on raw Maurer-Cartan elements is not a bijection itself. After these comments, we are ready for the following terminology:

\begin{definition}
\label{def:prelim-ainfty-objectcloning}
Let $ \cat C $ be an $ A_∞ $-category and $ B $ a deformation base. Let $ O $ be an arbitrary set and $ F: O → \Ob\cat C $ a map. An \emph{object-cloning deformation} is a deformation $ \cat D_q $ of $ \cat D = F^* \cat C $. The object-cloning deformation is \emph{essentially surjective} if $ F: \Ob\cat D → \Ob\cat C $ reaches all objects of $ \cat C $ up to isomorphism.
\end{definition}

We are now ready to explain the natural extension of $ A_∞ $-functors to the deformed case.

\begin{definition}
Let $ \cat C, \cat D $ be two $ A_∞ $-categories and $ \cat C_q, \cat D_q $ deformations. A \emph{functor of deformed $ A_∞ $-categories} consists of a map $ F_q: \Ob(\cat C) → \Ob(\cat D) $ together with for every $ k ≥ 1 $ a $ B $-multilinear degree $ 1-k $ map
\begin{equation*}
F^k_q: \Hom_{\cat C_q} (X_k, X_{k+1}) ¤ … ¤ \Hom_{\cat C_q} (X_1, X_2) → \Hom_{\cat D_q} (F_q X_1, F_q X_{k+1})
\end{equation*}
and infinitesimal curvature $ F^0_{q, X} ∈ \mathfrak{m} \Hom^1_{\cat D} (F_q X, F_q X) $ for every $ X ∈ \cat C $, such that the curved $ A_∞ $-functor relations hold:
\begin{multline*}
\sum_{0 ≤ j ≤ i ≤ k} (-1)^{‖a_j‖ + … + ‖a_1‖} F_q (a_k, …, a_{i+1}, μ_q (a_i, …, a_{j+1}), a_j, …, a_1) \\
= \sum_{\substack{l ≥ 0 \\ 1 = j_1 < … < j_l ≤ k}} μ_q (F_q (a_k, …, a_{j_l}), …, F_q (…, a_{j_2}), F_q (…, a_{j_1})).
\end{multline*}
If $ \cat C_q $ and $ \cat D_q $ are strictly unital, then we say $ F_q $ is strictly unital if $ F_q^1 (\id_X) = \id_{F_q X} $ for every $ X \in \cat C $ and $ F_q^{≥2} (…, \id_X, …) = 0 $.
\end{definition}

\begin{remark}
Note that the functor $ F_q $ itself is allowed to have a curvature component. The first two curved $ A_∞ $-functor relations read
\begin{align*}
F^0_q + F^1_q (μ^0_{\cat C_q, X}) &= μ^1_{\cat D_q} (F^0_{q, X}), \\
F^1_q (μ^1_{\cat C_q} (a)) + (-1)^{‖a‖} F^2_q (μ^0_{\cat C_q, Y}, a) + F^2_q (a, μ^0_{\cat C_q, X}) &= μ^1_{\cat D_q} (F^1_q (a)) + μ^2_{\cat D_q} (F^0_{q, Y}, F^1_q (a)) \\
& \quad + μ^2_{\cat D_q} (F^1_q (a), F^0_{q, X}), \quad ∀ ~ a: X → Y.
\end{align*}
If $ F_q: \cat C_q \to \cat D_q $ is a functor of $ A_\infty $-deformations, then its leading term $ F: \cat C \to \cat D $ is automatically a functor of $ A_\infty $-categories.
\end{remark}


Twisted completions of $ A_∞ $-deformations exist. When $ \cat C_q $ is a deformation of $ \cat C $, we can form a twisted completion $ \Tw\cat C_q $, as we have elaborated in \papertwoA. This category $ \Tw\cat C_q $ is a deformation of $ \Tw\cat C $. The objects of $ \Tw\cat C $ are defined in terms of twisted differentials as well, but the twisted differentials do not satisfy the Maurer-Cartan equation with respect to the deformed product $ μ_{\cat C_q} $. Instead, the failure to satisfy the Maurer-Cartan equation is captured in the object's curvature. For more details we refer to \papertwoA.

Minimal models of $ A_∞ $-deformations exist. When $ \cat C_q $ is a deformation of $ \cat C $, we can form a minimal model $ \H\cat C_q $. This category $ \H\cat C_q $ is a deformation of $ \H\cat C $. The differential and curvature of $ \H\cat C_q $ need not vanish. Instead, $ \H\cat C_q $ carries an infinitesimal residue differential and curvature. For more details we refer to \papertwoA.

\subsection{Submodules of completed tensor products}
\label{sec:prelim-submodules}
We introduce here the notions of pseudoclosed and quasi-flat submodules of $ B \htensor X $ which we use in \autoref{sec:flatness}. We also comment on intersections between submodules.

Let us start by introducing pseudoclosed submodules. The rationale is that not all $ B $-submodules of $ B \htensor X $ are created equal: Some are closed under taking power series with increasing powers of $ \mathfrak{m} $, some are not. As preparation, we define the following notation:

\begin{definition}
\label{def:prelim-submodules-shorthand}
Let $ X $ be a vector space and $ Y ⊂ B \htensor X $ a subspace. Then we put
\begin{align*}
BY &≔ \Im(B \htensor Y → B \htensor X), \\
\mathfrak{m}^k Y &≔ \Im(\mathfrak{m}^k \htensor Y → B \htensor X).
\end{align*}
Here, the maps $ B \htensor Y → B \htensor X $ and $ \mathfrak{m}^k \htensor Y → B \htensor X $ denote the multiplication maps which send for instance $ b ¤ y ↦ by $ and $ m ¤ y ↦ my $.
\end{definition}

\begin{remark}
Explicitly, the spaces $ BY $ and $ \mathfrak{m}^k Y $ are given by elements of $ B \htensor X $ that can be written respectively as
\begin{align*}
x &= \sum_{i = 0}^∞ m_i y_i ∈ B \htensor X, \quad m_i ∈ \mathfrak{m}^{→ ∞}, ~ y_i ∈ Y, \\
x &= \sum_{i = 0}^∞ m_i y_i ∈ B \htensor X, \quad m_i ∈ \mathfrak{m}^{≥k, →∞}, ~ y_i ∈ Y.
\end{align*}
Here we use the notation $ m_i ∈ \mathfrak{m}^{→ ∞} $ to indicate that there is a sequence $ (k_i) ⊂ ℕ $ converging to $ ∞ $ such that $ m_i ∈ \mathfrak{m}^{k_i} $. The notation $ m_i ∈ \mathfrak{m}^{≥k, →∞} $ indicates that we shall have $ k_i ≥ k $.
\end{remark}

\begin{remark}
We warn that the notation $ BY $ is suggestive but should not be misunderstood. The rationale behind the notation is that $ BY $ should contain any $ B $-linear multiples of $ Y $ and power series of such elements in growing $ \mathfrak{m} $-order. We emphasize that $ BY $ is not the same as the linear span of products $ b · y $ for $ b ∈ B $ and $ y ∈ Y $. Similarly, $ BY $ is not the same as the $ \mathfrak{m} $-adic closure of this span. The analogous warning holds for $ \mathfrak{m}^k Y $. The intention of the notation is to provide foundations for \autoref{sec:flatness}.
\end{remark}

\begin{definition}
An $ B $-submodule $ M ⊂ B \htensor X $ is \emph{pseudoclosed} if $ BM ⊂ M $.
\end{definition}

\begin{example}
If $ Y \subseteq X $ is a linear subspace, then $ B \htensor Y \subseteq B \htensor X $ is pseudoclosed. In contrast, the $ B $-submodule $ B ¤ X \subseteq B \htensor X $ is not pseudoclosed if $ X $ is infinite-dimensional, since $ B (B ¤ X) = B \htensor X $.
\end{example}

Denote by $ B \cdot Y \subseteq B \htensor X $ the space (finitely) spanned by elements of the form $ by $ with $ b \in B $ and $ y \in Y $. Denote by $ \mathfrak{m}^k \cdot Y $ the space (finitely) spanned by elements of the form $ my $ with $ m \in \mathfrak{m}^k $ and $ y \in Y $. In general, the spaces $ BY $ and $ \mathfrak{m}^k Y $ are not the same as $ B \cdot Y $ and $ \mathfrak{m}^k \cdot Y $. Pseudoclosed submodules are an exception:

\begin{lemma}
\label{th:prelim-submodules-algtop}
Let $ Y ⊂ B \htensor X $ be a pseudoclosed $ B $-submodule. Then $ BY = B · Y $ and $ \mathfrak{m}^k Y = \mathfrak{m}^k · Y $.
\end{lemma}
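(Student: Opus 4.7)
The plan is to prove the two equalities separately, with pseudoclosedness immediately handling the first and a Noetherian decomposition argument needed for the second. The inclusions $B \cdot Y \subseteq BY$ and $\mathfrak{m}^k \cdot Y \subseteq \mathfrak{m}^k Y$ are immediate from the definitions, since any finite sum $\sum_{j=1}^n b_j y_j$ is the image under the multiplication map of $\sum_j b_j \otimes y_j \in B \htensor Y$ (respectively $\mathfrak{m}^k \htensor Y$). So the real content lies in the reverse inclusions.

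For the first equality, I would chain $BY \subseteq Y \subseteq B \cdot Y$, where the first inclusion is pseudoclosedness and the second holds via $y = 1 \cdot y$ with $1 \in B$. This gives $BY = Y = B \cdot Y$ and incidentally reveals that pseudoclosed submodules also satisfy $Y = BY$.

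For the second equality, the key idea is to exploit that $B$ is Noetherian, so I pick finitely many generators $n_1, \ldots, n_r$ of the ideal $\mathfrak{m}^k$. Given $x \in \mathfrak{m}^k Y$, I write $x = \sum_{i=0}^\infty m_i y_i$ with $m_i \in \mathfrak{m}^{k_i}$, $k_i \geq k$ and $k_i \to \infty$. Using the ring-theoretic identity $\mathfrak{m}^{k_i} = \mathfrak{m}^k \cdot \mathfrak{m}^{k_i - k}$, I decompose each $m_i = \sum_{j=1}^r n_j a_{ij}$ with $a_{ij} \in \mathfrak{m}^{k_i - k}$. Swapping summation yields $x = \sum_{j=1}^r n_j Z_j$ where $Z_j := \sum_i a_{ij} y_i$ converges in $B \htensor X$ because $k_i - k \to \infty$. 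Each inner series $Z_j$ has precisely the form characterizing elements of $BY$, so by pseudoclosedness $Z_j \in BY \subseteq Y$. Therefore $x = \sum_j n_j Z_j$ exhibits $x$ as an element of $\mathfrak{m}^k \cdot Y$.

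The main obstacle is controlling the $\mathfrak{m}$-adic order of the coefficients $a_{ij}$ in the decomposition of $m_i$ through the generators $n_j$. Without the bound $a_{ij} \in \mathfrak{m}^{k_i - k}$, the inner series would not converge, and one could not collect the expression into finitely many outer terms $n_j Z_j$. The argument therefore hinges critically on the commutative-ring identity $\mathfrak{m}^{k_i} = \mathfrak{m}^k \cdot \mathfrak{m}^{k_i - k}$ for $k_i \geq k$, combined with Noetherianness to ensure that only finitely many generators $n_j$ appear.
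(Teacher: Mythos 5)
Your proof is correct and the underlying mechanism matches the paper's: for the second equality you decompose each coefficient $m_i \in \mathfrak{m}^{k_i}$ through a finite collection of elements of $\mathfrak{m}^k$, swap the (convergent) sums so as to collect finitely many outer factors, and then recognize each inner series as an element of $BY$, which pseudoclosedness brings back into $Y$. The paper realizes the same factorization by first invoking the Cohen structure theorem to write $B = \mathbb{C}\llbracket q_1, \ldots, q_n\rrbracket / I$ with $\mathfrak{m} = (q_1, \ldots, q_n)$, and then grouping terms by the finitely many degree-$k$ monomials in the $q_j$; you instead stay coordinate-free by picking a finite generating set $n_1, \ldots, n_r$ of $\mathfrak{m}^k$ (Noetherianness) and using the ideal identity $\mathfrak{m}^{k_i} = \mathfrak{m}^k \cdot \mathfrak{m}^{k_i-k}$ to produce the factors $a_{ij} \in \mathfrak{m}^{k_i-k}$. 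Your version avoids a choice of presentation of $B$, which is mildly cleaner, while the paper's is more hands-on; the two are otherwise equivalent. The treatment of the first equality via the chain $BY \subseteq Y \subseteq B \cdot Y$ is exactly the paper's argument.
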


\begin{proof}
The first statement is obvious since $ BY ⊂ Y ⊂ B · Y $. For the second statement, the idea is to exploit the Cohen structure theorem. Write $ B = ℂ⟦q_1, …, q_n⟧ / I $, and regard the maximal ideal $ \mathfrak{m} = (q_1, …, q_n) $. With this in mind, we can write any element $ x ∈ BY $ as a series
\begin{equation*}
x = \sum_{i = 0}^∞ m_i \tilde m_i y_i.
\end{equation*}
Here $ m_i $ is a monomial of degree $ k $ in the variables $ q_1, …, q_n $, the letter $ \tilde m_i $ denotes a sequence $ \tilde m_i ∈ \mathfrak{m}^{→∞} $, and $ y_i ∈ Y $. We conclude
\begin{equation*}
x = \sum_{\substack{\text{monomials } M \\ \text{ of degree } k}} M \sum_{\substack{i ≥ 0 \\ m_i = M}} \tilde m_i y_i.
\end{equation*}
The outer sum is finite. For every monomial $ M $ of degree $ k $, the inner sum is an element of $ Y $ since $ Y $ is pseudoclosed by assumption. In conclusion, every summand of the outer sum lies in $ \mathfrak{m}^k · Y $, and hence $ x ∈ \mathfrak{m}^k · Y $. We have shown that $ \mathfrak{m}^k Y ⊂ \mathfrak{m}^k · Y $. The inverse inclusion is obvious. We conclude $ \mathfrak{m}^k Y = \mathfrak{m}^k · Y $, finishing the proof.
\end{proof}

\begin{example}
A simple application is the case of the pseudoclosed submodule $ Y = BX = B \htensor X $. In this case, the lemma states $ \mathfrak{m}^k X = \mathfrak{m}^k (BX) = \mathfrak{m}^k · (BX) $ as subsets of $ B \htensor X $.
\end{example}

\begin{remark}
We interpret \autoref{th:prelim-submodules-algtop} as follows: The space $ \mathfrak{m}^k \cdot Y $ makes only reference to the $ B $-module structure, while the space $ \mathfrak{m}^k Y $ references a mixture of the $ B $-module structure with the topology of the ambient space $ B \htensor X $. For pseudoclosed modules, the topological part is already captured by the algebraic structure.
\end{remark}

We introduce now a flatness condition for $ B $-submodules of $ B \htensor X $. This flatness condition is particularly relevant in \autoref{sec:flatness}. To distinguish the notion from existing notions of flatness, we have chosen to name it quasi-flatness.

\begin{definition}
A $ B $-submodule $ M ⊂ B \htensor X $ is \emph{quasi-flat} if $ M ∩ \mathfrak{m} X ⊂ \mathfrak{m} M $.
\end{definition}

\begin{remark}
The inverse inclusion $ \mathfrak{m} M ⊂ M ∩ \mathfrak{m} X $ holds automatically if $ M $ is pseudoclosed.
\end{remark}

\begin{lemma}
\label{th:prelim-submodules-projectionsection}
Let $ M ⊂ B \htensor X $ be a $ B $-submodule. Assume $ M $ is quasi-flat and pseudoclosed. Let $ φ: π(M) → M $ be a linear section of the projection map $ π: M \to π(M) $. Then the $ B $-linear extension $ φ: B \htensor π(M) \to M $ is a $ B $-linear isomorphism.
\end{lemma}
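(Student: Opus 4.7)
The plan is to verify well-definedness of the $ B $-linear extension as a map to $ M $, its injectivity, and its surjectivity separately. Well-definedness is immediate from pseudoclosedness: since $ φ(π(M)) ⊂ M $, the $ B $-linear extension has image in $ Bφ(π(M)) ⊂ BM ⊂ M $, so the codomain can legitimately be restricted to $ M $.

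For injectivity, I would invoke the leading term criterion of \autoref{rem:prelim-ainfty-leadingterm}. The leading term of $ φ: B \htensor π(M) → B \htensor X $ is $ π φ|_{π(M)}: π(M) → X $, which equals the inclusion $ π(M) \embeds X $ because $ φ $ is a section of $ π: M → π(M) $. The inclusion is injective, so the full $ B $-linear extension $ φ $ is injective as well.

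Surjectivity is the substantive step. Given $ x ∈ M $, I plan to inductively construct $ c_k ∈ \mathfrak{m}^k \htensor π(M) $ such that the remainder $ r_k ≔ x - φ(c_0 + … + c_k) $ lies in $ \mathfrak{m}^{k+1} M $. The base case takes $ c_0 ≔ π(x) $: the remainder $ r_0 $ then lies in $ M ∩ \mathfrak{m} X $, which by quasi-flatness is contained in $ \mathfrak{m} M $. For the inductive step, \autoref{th:prelim-submodules-algtop} yields $ \mathfrak{m}^{k+1} M = \mathfrak{m}^{k+1} · M $, so $ r_k $ can be written as a genuinely finite sum $ r_k = \sum_j m_j y_j $ with $ m_j ∈ \mathfrak{m}^{k+1} $ and $ y_j ∈ M $. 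Setting $ c_{k+1} ≔ \sum_j m_j π(y_j) ∈ \mathfrak{m}^{k+1} \htensor π(M) $, $ B $-linearity produces $ r_{k+1} = \sum_j m_j (y_j - φ(π(y_j))) ∈ \mathfrak{m}^{k+1} · \mathfrak{m} M = \mathfrak{m}^{k+2} M $, where I apply the base case to each $ y_j $ and invoke \autoref{th:prelim-submodules-algtop} once more. Then $ c ≔ \sum_{k ≥ 0} c_k $ converges in $ B \htensor π(M) $, and the automatic continuity of the $ B $-linear map $ φ $ from \autoref{rem:prelim-ainfty-continuityautomatic}, together with $ r_k → 0 $, gives $ φ(c) = x $.

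The main obstacle is the inductive surjectivity step; the key insight is that pseudoclosedness via \autoref{th:prelim-submodules-algtop} allows one to replace the abstractly defined space $ \mathfrak{m}^{k+1} M $ by finite $ B $-linear combinations of elements of $ M $, so that stage $ k+1 $ corrections reduce to applying the quasi-flatness-based base case to each summand separately.
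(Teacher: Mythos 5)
Your proof is correct, and the overall architecture (leading-term criterion for injectivity, inductive remainder-reduction for surjectivity) matches the paper's. The interesting divergence is in the surjectivity induction. The paper maintains a remainder in $M \cap \mathfrak{m}^{k+1} X$ and then asserts "by quasi-flatness" that this remainder lies in $\mathfrak{m}^{k+1} M$ --- that step actually needs the \emph{strong} quasi-flatness inclusion $M \cap \mathfrak{m}^{k+1} X \subseteq \mathfrak{m}^{k+1} M$, which the paper only establishes later (in \autoref{th:prelim-flatvariants-flattostrong} via \autoref{th:prelim-flatvariants-equivalence}), so it is effectively a forward reference. You avoid that by strengthening the inductive invariant to $r_k \in \mathfrak{m}^{k+1} M$ directly, and then using \autoref{th:prelim-submodules-algtop} to replace the series description of $\mathfrak{m}^{k+1} M$ by a genuinely finite $B$-linear combination, so that the base-case weak inclusion $M \cap \mathfrak{m} X \subseteq \mathfrak{m} M$ can be applied termwise to each summand $y_j$. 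The payoff is that your proof is self-contained relative to the lemmas already available at this point in the paper; the cost is a slightly heavier bookkeeping load in the inductive step. Your explicit well-definedness check via pseudoclosedness is also a nice addition that the paper elides. One cosmetic note: where you write $\mathfrak{m}^{k+1} \cdot \mathfrak{m} M = \mathfrak{m}^{k+2} M$, the inclusion $\subseteq$ is all that is used and all that is obviously immediate; the reverse inclusion is true but not needed.
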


\begin{proof}
Injectivity follows from \autoref{rem:prelim-ainfty-leadingterm}, since the leading term is the identity. For surjectivity, let $ x ∈ M $. We construct sequences $ (x_k) $ and $ (y_k) $ such that $ x = x_1 + … + x_N + y_N $ for every $ N \in \mathbb{N} $ and $ x_k ∈ φ(\mathfrak{m}^k π(M)) $ and $ y_k ∈ \mathfrak{m}^{k+1} ∩ M $.

To start with, write $ x = φ(x_1) + y_1 $ for some $ x_1 \in π(M) $ and $ y_1 ∈ M \cap \mathfrak{m} X $. For induction hypothesis, assume the sequences are given until index $ k $. Then note $ y_k ∈ M \cap \mathfrak{m}^{k+1} X $. By quasi-flatness, we get $ y_k \in \mathfrak{m}^{k+1} M $. We can then write $ y_k = x_{k+1} + y_{k+1} $ with $ x_{k+1} \in φ(\mathfrak{m}^k π(M)) $ and $ y_{k+1} \in M \cap \mathfrak{m}^{k+1} X $. This finishes the inductive construction of the sequences.

Finally, we have $ x = \sum_{k = 1}^∞ x_k ∈ φ(B \htensor π(M)) $. We have shown that $ φ $ is surjective. This finishes the proof.
\end{proof}

The following is a useful criterion to find quasi-flat modules:

\begin{proposition}
\label{th:prelim-submodules-criterion}
Let $ B $ be a deformation base and $ X $ a vector space. Let $ M_1, …, M_k ⊂ B \htensor X $ be $ B $-submodules. Then the following are equivalent:
\begin{itemize}
\item We have $ B \htensor X = M_1 ⊕ … ⊕ M_k $.
\item The modules $ M_i $ are all pseudoclosed and quasi-flat, and $ X = π(M_1) ⊕ … ⊕ π(M_k) $.
\end{itemize}
Here $ π: B \htensor X → X $ denotes the canonical projection map.
\end{proposition}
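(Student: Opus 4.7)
My plan is to prove the two implications separately, leaning on \autoref{th:prelim-submodules-projectionsection} for the backward direction and on the $ B $-linear projections that come with a direct sum decomposition for the forward direction.

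For the backward implication, I assume each $ M_i $ is pseudoclosed and quasi-flat and $ X = \bigoplus_i \pi(M_i) $. Applying \autoref{th:prelim-submodules-projectionsection} to each $ M_i $ produces $ B $-linear isomorphisms $ \phi_i: B \htensor \pi(M_i) \isoto M_i $ whose leading terms are the inclusions $ \pi(M_i) \hookrightarrow X $. The hypothesis on $ X $ then gives $ B \htensor X = \bigoplus_i B \htensor \pi(M_i) $, so summing the $ \phi_i $ yields a $ B $-linear endomorphism $ \Phi $ of $ B \htensor X $ with leading term $ \id_X $. By \autoref{rem:prelim-ainfty-leadingterm}, $ \Phi $ is a $ B $-linear isomorphism, and tracking images gives $ B \htensor X = \bigoplus_i M_i $.

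For the forward implication, I start from $ B \htensor X = \bigoplus_i M_i $ and use the $ B $-linear projections $ p_i: B \htensor X \to M_i $, which are continuous by \autoref{rem:prelim-ainfty-continuityautomatic}. Pseudoclosedness is immediate: any element $ x = \sum_j m_j y_j \in BM_i $ with $ y_j \in M_i $ satisfies $ p_\ell(x) = \sum_j m_j p_\ell(y_j) = 0 $ for $ \ell \neq i $ by continuity, so $ x = p_i(x) \in M_i $. The central step is to establish $ \mathfrak{m} X = \bigoplus_i \mathfrak{m} M_i $: writing $ x \in \mathfrak{m} X $ as $ \sum_j n_j z_j $ with $ n_j \in \mathfrak{m}^{\geq 1, \to \infty} $ and $ z_j \in X $, I decompose each $ z_j \in X \subset \bigoplus_i M_i $ via the $ p_i $ and commute $ p_i $ past the series by continuity, obtaining $ p_i(x) = \sum_j n_j p_i(z_j) \in \mathfrak{m} M_i $.

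Once that identity is in place, quasi-flatness drops out by comparing the $ \bigoplus_i M_i $- and $ \bigoplus_i \mathfrak{m} M_i $-decompositions of any $ x \in M_i \cap \mathfrak{m} X $, and the decomposition $ X = \bigoplus_i \pi(M_i) $ follows similarly: spanning is trivial because $ \pi $ restricts to the identity on $ X $, and directness of a relation $ \sum_i \pi(m_i) = 0 $ follows by noting that $ y := \sum_i m_i $ lies in $ \ker \pi = \mathfrak{m} X = \bigoplus_i \mathfrak{m} M_i $, so uniqueness of the decomposition in $ \bigoplus_i M_i $ places each $ m_i $ in $ \mathfrak{m} M_i $ and each $ \pi(m_i) $ vanishes. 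The main obstacle I expect is the identification $ \mathfrak{m} X = \bigoplus_i \mathfrak{m} M_i $, since it is the one step that genuinely mixes the $ \mathfrak{m} $-adic topology with the algebraic direct sum and feeds every remaining conclusion in the forward direction.
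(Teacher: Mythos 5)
Your proposal is correct and follows essentially the same route as the paper: the backward implication assembles the isomorphisms $\phi_i$ from \autoref{th:prelim-submodules-projectionsection} into a map with leading term the identity, and the forward implication rests on the $B$-linear projections $p_i$ together with the key observation that $\mathfrak{m}X = \bigoplus_i \mathfrak{m}M_i$ (which the paper uses implicitly when writing $x = y_1 + \dots + y_k$ with $y_j \in \mathfrak{m}M_j$). The only cosmetic differences are that you isolate $\mathfrak{m}X = \bigoplus_i \mathfrak{m}M_i$ as an explicit intermediate claim and derive pseudoclosedness by computing $p_\ell(x) = 0$ directly rather than by noting $M_i = \ker(\id - p_i)$ is closed.
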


\begin{proof}
We first show that $ B \htensor X = M_1 ⊕ … ⊕ M_k $ implies that all $ M_i $ are quasi-flat and $ X = π(M_1) ⊕ … ⊕ π(M_k) $. After that, we show the converse.

For the first part, assume $ B \htensor X = M_1 ⊕ … ⊕ M_k $. Let $ 1 ≤ i ≤ n $. We first show that $ M_i $ is closed. Indeed, regard the projection $ p_i: B \htensor X → B \htensor X $ to the component $ M_i $. This map is clearly $ B $-linear, hence $ \id - p_i $ is $ B $-linear. By \autoref{rem:prelim-ainfty-continuityautomatic}, the map $ \id - p_i $ is continuous. The kernel of $ \id - p_i $ is $ M_i $ and we conclude that $ M_i $ is closed. In particular, $ M_i $ is pseudoclosed.

Next, we show that every $ M_i $ is quasi-flat. Pick any $ x ∈ \mathfrak{m} X ∩ M_i $. Since $ x ∈ \mathfrak{m} X $ and $ B \htensor X = M_1 + … + M_k $, we can write $ x = y_1 + … + y_k $ with $ y_j ∈ \mathfrak{m} M_j $. By pseudoclosedness we have $ y_j ∈ \mathfrak{m} M_j ⊂ M_j $. Since the sum $ M_1 ⊕ … ⊕ M_k $ is direct and $ x ∈ M_i $, we get $ y_j = 0 $ for $ j ≠ i $. We conclude $ x = y_i ∈ \mathfrak{m} M_i $. This proves every $ M_i $ quasi-flat.

Let us show that $ X = π(M_1) + … + π(M_k) $. Pick any $ x ∈ X $. Since $ x $ then also lies in $ B \htensor X $, write $ x = y_1 + … + y_k $ with $ y_i ∈ M_i $. We conclude $ x = π(x) = π(y_1) + … + π(y_k) $, therefore $ x ∈ π(M_1) + … + π(M_k) $. Since $ x $ was arbitrary, this shows $ X = π(M_1) + … + π(M_k) $.

Let us show that the sum $ π(M_1) + … + π(M_k) $ is direct. Assume by contradiction there is a sequence $ y_1, …, y_k $ with $ y_i ∈ M_i $ and $ π(y_1) + … + π(y_k) = 0 $. Then $ π(\sum y_i) = 0 $, in other words $ \sum y_i ∈ \mathfrak{m} X $. By assumption we have $ B \htensor X = M_1 + … + M_k $, in particuar $ \mathfrak{m} X ⊂ \mathfrak{m} (M_1 + … + M_k) $. Therefore we can write $ \sum y_i = \sum z_i $ with $ z_i ∈ \mathfrak{m} M_i $. Since the sum $ M_1 ⊕ … ⊕ M_k $ is direct, we have $ y_i = z_i $ for all $ i $. We conclude $ y_i = z_i ∈ \mathfrak{m} X $ and therefore $ π(y_i) = 0 $ for all $ i $. This shows that the sum $ π(M_1) + … + π(M_k) $ is direct. The first implication is proven, finishing the first part of the proof.

For the second part of the proof, assume every $ M_i $ is pseudoclosed and quasi-flat and $ X = π(M_1) ⊕ … ⊕ π(M_k) $. Choose a linear section $ π(M_i) \to M_i $ of the projection $ π: M_i \to π(M_i) $. According to \autoref{th:prelim-submodules-projectionsection}, the $ B $-linear extension $ φ_i: B \htensor π(M_i) \to M_i $ is an isomorphism. Add up all $ φ_i $ to arrive at the map
\begin{align*}
φ: B \htensor (π(M_1) ⊕ … ⊕ π(M_k)) &→ B \htensor X, \\
(x_1, …, x_k) &↦ φ_1 (x_1) + … + φ_k (x_k).
\end{align*}
Note we view $ π(M_1) ⊕ … ⊕ π(M_k) $ simply as vector space decomposition of $ X $. The map $ φ $ is $ B $-linear and continuous. Its leading term is by construction the identity on $ X $. Therefore $ φ $ is an isomorphism.

Using the auxiliary map $ φ $, we get that $ M_1 + … + M_k = B \htensor X $: By definition of $ φ $, its image is necessarily contained in $ M_1 + … + M_k $ and we conclude that $ M_1 + … + M_k = B \htensor X $.

Let us now show that the sum $ M_1 + … + M_k $ is direct. Pick any sequence $ x_1, …, x_k $ such that $ x_i ∈ M_i $ and $ x_1 + … + x_k = 0 $. Since $ φ_i $ surjects on $ M_i $, write $ x_i = φ_i (y_i) $ with $ y_i ∈ B \htensor π(M_i) $. We get that $ φ(y_1 + … + y_k) = 0 $. Since $ φ $ is injective, we get $ y_1 + … + y_k = 0 $, hence $ y_i = 0 $ for all $ i $. This shows $ x_i = φ_i (y_i) = 0 $ and we conclude that the sum $ M_1 ⊕ … ⊕ M_k $ is direct. This finishes the proof.
\end{proof}

We finish this section by explaining a property regarding intersections of $ B $-submodules. Whenever $ X, Y ⊂ B \htensor A $ are two subspaces, we may ask: Does it hold that
\begin{equation*}
\mathfrak{m} X ∩ \mathfrak{m} Y = \mathfrak{m} (X ∩ Y) ~?
\end{equation*}
This inclusion does not hold in general, but we shall give here the best possible variant in case one of the spaces $ X, Y $ is not deformed. Let us start with an example where the inclusion fails, as well as the example $ B = ℂ⟦q⟧ $.

\begin{example}
Regard $ B = ℂ⟦p, q⟧ $ and $ A = ℂ[X] $. Let $ X = \vspan(px) ⊂ B \htensor A $ and $ Y = \vspan(qx) ⊂ B \htensor A $. Then $ pqx $ lies in $ \mathfrak{m} X ∩ \mathfrak{m} Y $, but not in $ \mathfrak{m} (X ∩ Y) $.
\end{example}

\begin{example}
Regard $ B = ℂ⟦q⟧ $ and arbitrary $ A $. Let $ X, Y ⊂ B \htensor A $ be pseudoclosed. We claim that $ (q) X ∩ (q) Y ⊂ (q) (X ∩ Y) $. Indeed, pick $ z ∈ (q) X ∩ (q) Y $. By definition of $ (q) X $, we can write
\begin{equation*}
z = \sum q^{≥1, → ∞} x_i = q \sum q^{→∞} x_i, \quad z = \sum q^{≥1, →∞} y_i = q \sum q^{→∞} y_i.
\end{equation*}
Since $ X $ and $ Y $ are pseudoclosed, the two sums $ \sum q^{→∞} x_i $ and $ \sum q^{→∞} y_i $ lie in $ X $ and $ Y $, respectively. Write $ z = qz' $. Then both sums are equal to $ z' $, hence $ z' $ lies in the intersection of $ X $ and $ Y $. We conclude $ z = qz' ∈ q (X ∩ Y) $. This shows $ (q) X ∩ (q) Y ⊂ (q) (X ∩ Y) $.
\end{example}

Let us now make a more general statement. The idea is to keep one of the spaces $ X, Y $ non-deformed. In other words, one of them is simply of the form $ B \htensor V $ for some subspace $ V ⊂ A $. Let us make this precise as follows:

\begin{proposition}
\label{th:flatness-flatalgebraic-crude}
Let $ X $ be a vector space and $ B $ a deformation base. Let $ V ⊂ X $ be a subspace and $ M ⊂ B \htensor X $ a pseudoclosed $ B $-submodule. Then
\begin{equation*}
\mathfrak{m}^k M ∩ (\mathfrak{m}^k V + \mathfrak{m}^{k+1} X) ⊂ \mathfrak{m}^k (M ∩ (V + \mathfrak{m} X)) + \mathfrak{m}^{k+1} M.
\end{equation*}
\end{proposition}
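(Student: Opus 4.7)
The plan is to reduce the problem to a graded statement in the associated graded piece $\mathfrak{m}^k/\mathfrak{m}^{k+1}$, by first putting $z$ into a normal form modulo $\mathfrak{m}^{k+1} M$ and then reading off the hypothesis after a projection. Fix $z \in \mathfrak{m}^k M \cap (\mathfrak{m}^k V + \mathfrak{m}^{k+1} X)$, choose a vector space complement $U$ of $V$ in $X$ so that $X = V \oplus U$ and correspondingly $B \htensor X = (B \htensor V) \oplus (B \htensor U)$, and invoke the Cohen structure theorem to write $B = \mathbb{C}\llbracket q_1, \ldots, q_n \rrbracket / I$. Since $B$ is Noetherian, the quotient $\mathfrak{m}^k/\mathfrak{m}^{k+1}$ is a finite-dimensional $\mathbb{C}$-vector space, so I fix degree-$k$ monomials $q^{\alpha_1}, \ldots, q^{\alpha_s}$ whose residues form a basis.

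The first substantive step is to produce the normal form. By \autoref{th:prelim-submodules-algtop}, pseudoclosedness of $M$ gives $\mathfrak{m}^k M = \mathfrak{m}^k \cdot M$, so $z$ is a \emph{finite} combination $z = \sum_i b_i y_i$ with $b_i \in \mathfrak{m}^k$ and $y_i \in M$. Splitting each $b_i$ into its degree-exactly-$k$ homogeneous part plus a tail in $\mathfrak{m}^{k+1}$, and re-expanding the degree-$k$ parts in the fixed monomial basis of $\mathfrak{m}^k/\mathfrak{m}^{k+1}$, collecting terms yields
\[
z \equiv \sum_{j=1}^{s} q^{\alpha_j}\, \tilde Z_j \pmod{\mathfrak{m}^{k+1} M}, \qquad \tilde Z_j \in M.
\]

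The second step is to extract the content of the hypothesis. Apply the canonical projection $\sigma_k \colon B \htensor X \to (B/\mathfrak{m}^{k+1}) \otimes X$. The hypothesis $z \in \mathfrak{m}^k V + \mathfrak{m}^{k+1} X$ gives $\sigma_k(z) \in (\mathfrak{m}^k/\mathfrak{m}^{k+1}) \otimes V$, whereas the normal form produces $\sigma_k(z) = \sum_j \bar q^{\alpha_j} \otimes \pi(\tilde Z_j)$. Decomposing each leading term as $\pi(\tilde Z_j) = V_j + U_j$ along $X = V \oplus U$, the $U$-component of $\sigma_k(z)$ equals $\sum_j \bar q^{\alpha_j} \otimes U_j$ and must vanish in $(\mathfrak{m}^k/\mathfrak{m}^{k+1}) \otimes U$. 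Linear independence of the $\bar q^{\alpha_j}$ forces $U_j = 0$ for every $j$, so $\pi(\tilde Z_j) \in V$ and hence $\tilde Z_j \in M \cap (V + \mathfrak{m} X)$. Substituting back exhibits $z$ as an element of $\mathfrak{m}^k (M \cap (V + \mathfrak{m} X)) + \mathfrak{m}^{k+1} M$.

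The main obstacle to watch is that in general one cannot lift a leading term $\pi(\tilde Z_j) \in \pi(M)$ back into $M$; such a lifting would amount to quasi-flatness of $M$, which is not among the hypotheses. The trick is to avoid lifting altogether and instead argue on the associated graded $\mathfrak{m}^k/\mathfrak{m}^{k+1}$, where the hypothesis becomes a concrete tensor identity killed by linear independence of a monomial basis. Pseudoclosedness is used precisely to reduce to a finite sum before doing this graded bookkeeping, so that the decomposition into degree-$k$ homogeneous parts is a finite and well-defined process.
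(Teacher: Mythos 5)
Your proof is correct and takes a genuinely different route from the paper. The paper constructs a commutative diagram between $\mathfrak{m}^k \htensor M$, $\frac{\mathfrak{m}^k}{\mathfrak{m}^{k+1}} \otimes \frac{X}{V}$, and $\frac{\mathfrak{m}^k X}{\mathfrak{m}^{k+1}X + \mathfrak{m}^k V}$, proves the connecting map $\psi$ is an isomorphism, and then (asserting rather than proving the kernel identity for $\varphi$) concludes by factoring through $\Ker\varphi$ and contracting; it works with the full infinite series $\sum m_i \otimes x_i \in \mathfrak{m}^k \htensor M$ throughout. You instead invoke \autoref{th:prelim-submodules-algtop} at the outset to reduce any $z \in \mathfrak{m}^k M$ to a finite $\mathfrak{m}^k$-linear combination of elements of $M$, normalize against a monomial basis of $\mathfrak{m}^k/\mathfrak{m}^{k+1}$ to obtain $z \equiv \sum_j q^{\alpha_j}\tilde Z_j$ modulo $\mathfrak{m}^{k+1}M$ with $\tilde Z_j \in M$, and then read off $\pi(\tilde Z_j) \in V$ by linear independence of the residues $\bar q^{\alpha_j}$ after projecting to $(B/\mathfrak{m}^{k+1})\otimes X$. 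Your observation that the normal form already hands you $\tilde Z_j \in M$, so no lifting back from $\pi(M)$ (and hence no quasi-flatness) is required, identifies exactly the subtlety of the statement. What your approach buys is a self-contained elementary argument in which every nontrivial step is verified rather than asserted, at the mild cost of choosing a basis of $\mathfrak{m}^k/\mathfrak{m}^{k+1}$; the paper's diagram keeps the argument basis-free and works with infinite series directly, which is more economical but compresses the crucial kernel computation into an unproved remark.
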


\begin{proof}
In the first part of the proof, we illustrate the statement in case of $ B = ℂ⟦q⟧ $. In the second part, we build a commutative diagram which allows us to deduce the shape of elements of $ \mathfrak{m}^k M ∩ \mathfrak{m}^k V $ without choice of basis for $ B $. In the third part, we conclude the desired statement.

For the first step, let us illustrate the case of $ B = ℂ⟦q⟧ $. Let $ q^k x ∈ (q)^k M ∩ ((q)^k V + (q)^{k+1} X) $. In particular, we have $ q^k x ∈ (q)^k M $, hence $ x ∈ M $ since $ M $ is pseudoclosed. We also have $ q^k x ∈ (q)^k V + (q)^{k+1} X $, hence $ x ∈ ℂ⟦q⟧ V + (q) X $. Together this shows $ q^k x ∈ q^k (M ∩ (V + (q) X)) $. We conclude that in case $ B = ℂ⟦q⟧ $ the claimed statement holds.

For the second step, we build the following commutative diagram of linear maps:
\begin{equation*}
\begin{tikzcd}
\mathfrak{m}^k \htensor M \arrow[rr, "φ ~=~ π_i ¤ (π_V ∘ π_0)"] \arrow[rd, "π ∘ c"] && \frac{\mathfrak{m}^k}{\mathfrak{m}^{k+1}} ¤ \frac{X}{V} \arrow[ld, "ψ"', "\sim"] \\
& \frac{\mathfrak{m}^k X}{\mathfrak{m}^{k+1} X + \mathfrak{m}^k V}
\end{tikzcd}
\end{equation*}
Let us explain the maps. The horizontal map $ φ $ performs a projection to $ \mathfrak{m}^k / \mathfrak{m}^{k+1} $ on the first tensor factor and an inclusion of $ M $ into $ B \htensor X $ followed by projection to $ X $ and projection to $ X/V $ on the second tensor factor. The left vertical map $ π ∘ c $ consists of the inclusion $ M ⊂ B \htensor X $ on the second tensor factor, followed by multiplication with the first tensor factor and projection to the quotient by $ \mathfrak{m}^{k+1} X + \mathfrak{m}^k V $. The right vertical map $ ψ $ is induced from the multiplication map.

We claim that the diagram is commutative and $ ψ $ is an isomorphism. To see commutativity, pick an element $ m ¤ x $ with $ m ∈ \mathfrak{m}^k $ and $ x ∈ M $. Under $ φ $ it is sent to $ [m] ¤ [x] $, which under $ ψ $ is sent to $ [mx] $. Under $ π ∘ c $, the element $ m ¤ x $ is also sent to $ [mx] $. This demonstrates commutativity. To see that $ ψ $ is an isomorphism, recall that $ \mathfrak{m}^k X $ and $ \mathfrak{m}^k \htensor X $ are isomorphic by means of the splitting map $ \mathfrak{m}^k X → \mathfrak{m}^k \htensor X $. This splitting map induces a map
\begin{equation*}
\frac{\mathfrak{m}^k X}{\mathfrak{m}^{k+1} X + \mathfrak{m}^k V} → \frac{\mathfrak{m}^k}{\mathfrak{m}^{k+1}} ¤ \frac{X}{V}.
\end{equation*}
This map is an inverse of $ ψ $. This shows that $ ψ $ is an isomorphism.

For the third part of the proof, we conclude the desired inclusion. Let us start with the remark that the kernel of $ φ $ is equal to
\begin{equation}
\Ker(φ) = \mathfrak{m}^{k+1} \htensor M + \mathfrak{m}^k \htensor (M ∩ (V + \mathfrak{m} X)).
\end{equation}
Let now $ \sum_{i = 0}^∞ m_i x_i ∈ \mathfrak{m}^k M ∩ (\mathfrak{m}^k V + \mathfrak{m}^{k+1} X) $ with $ m_i ∈ \mathfrak{m}^{≥k, →∞} $ and $ x_i ∈ M $. Then
\begin{equation*}
(π ∘ c) \left(\sum m_i ¤ x_i\right) = \sum m_i π_0 (x_i) = 0.
\end{equation*}
Since $ π ∘ c = ψφ $ and $ ψ $ is injective, we get $ φ(\sum m_i ¤ x_i) = 0 $. Therefore $ \sum m_i ¤ x_i $ lies in the kernel of $ φ $, which explicitly reads
\begin{equation*}
\sum m_i ¤ x_i ∈ \mathfrak{m}^{k+1} \htensor M + \mathfrak{m}^k \htensor (M ∩ (V + \mathfrak{m} X)).
\end{equation*}
Contracting the tensors gives
\begin{equation*}
\sum m_i x_i ∈ \mathfrak{m}^{k+1} M + \mathfrak{m}^k (M ∩ (V + \mathfrak{m} X)).
\end{equation*}
Since $ \sum m_i x_i $ was arbitrarily chosen, this finishes the proof.
\end{proof}

\subsection{$ \mathfrak{m} $-adically free modules}
\label{sec:3ainfty-free}
In this section, we recall the notion of $ \mathfrak{m} $-adically free modules and their use in $ A_∞ $-deformations. The reason is that in \autoref{sec:CHL}, it comes very handy to use $ A_\infty $-deformations modeled on $ B $-modules which are only noncanonically isomorphic to $ B \htensor \Hom_{\cat C} (X, Y) $. In this section, we first recall the definition of $ \mathfrak{m} $-adically free modules, then tie it back to quasi-flat and pseudoclosed modules. After that, we provide the definition of $ A_∞ $-deformations that makes use of $ \mathfrak{m} $-adically free modules.

We start by recalling the notion of $ \mathfrak{m} $-adically free modules. We pick the following definition, as in \cite{Yekutieli-completion}:

\begin{definition}
A $ B $-module $ M $ is \emph{$ \mathfrak{m} $-adically free} if there is a vector space $ X $ such that $ M ≅ B \htensor X $ as $ B $-modules.
\end{definition}

An abstract $ B $-module $ M $ enjoys an $ \mathfrak{m} $-adic topology given by the neighborhood basis $ x + \mathfrak{m}^k \cdot M $ for every $ x \in M $ and $ k \in \mathbb{N} $. We claim that when $ M \cong B \htensor X $, then this topology is automatically compatible with the $ \mathfrak{m} $-adic topology on $ B \htensor X $:

\begin{lemma}
\label{th:prelim-freemodules-presentingcontinuity}
Let $ M $ be a $ B $-module and $ φ: M → B \htensor X $ be a $ B $-linear isomorphism. Then $ φ $ is a homeomorphism.
\end{lemma}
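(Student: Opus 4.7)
The plan is to show that $\phi$ carries the basic open neighborhoods of $0$ in $M$ bijectively onto the basic open neighborhoods of $0$ in $B \htensor X$. Since the $\mathfrak{m}$-adic topologies on both sides are translation-invariant and $\phi$ is additive, establishing this at $0$ already promotes $\phi$ to a homeomorphism at every point.

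First, because $\phi$ is $B$-linear and surjective, one has
\begin{equation*}
\phi(\mathfrak{m}^k \cdot M) \;=\; \mathfrak{m}^k \cdot \phi(M) \;=\; \mathfrak{m}^k \cdot (B \htensor X)
\end{equation*}
for every $k \in \mathbb{N}$. The module $B \htensor X$ is tautologically pseudoclosed, since $B(B \htensor X) \subset B \htensor X$ holds automatically. Applying \autoref{th:prelim-submodules-algtop} with $Y = B \htensor X$ (as spelled out in the example immediately after that lemma) yields the identification
\begin{equation*}
\mathfrak{m}^k \cdot (B \htensor X) \;=\; \mathfrak{m}^k X.
\end{equation*}
Combining the two displays gives $\phi(\mathfrak{m}^k \cdot M) = \mathfrak{m}^k X$, which is exactly the basic $\mathfrak{m}$-adic neighborhood of $0$ on the $B \htensor X$ side. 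Since $\phi^{-1}$ is again $B$-linear, the symmetric argument yields $\phi^{-1}(\mathfrak{m}^k X) = \mathfrak{m}^k \cdot M$, and translation invariance finishes the proof.

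The one point to watch is the identification $\mathfrak{m}^k \cdot (B \htensor X) = \mathfrak{m}^k X$. The left-hand side is defined purely from the abstract $B$-module structure, while the right-hand side records the topological information $\mathfrak{m}^k \htensor X$ coming from the completion; without pseudoclosedness these a priori differ, and the whole argument would collapse. Luckily, $B \htensor X$ is trivially pseudoclosed, so \autoref{th:prelim-submodules-algtop} bridges the algebraic and topological pictures for free.
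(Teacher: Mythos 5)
Your proof is correct and follows the same route as the paper: you use $B$-linearity and bijectivity to get $\phi(\mathfrak{m}^k \cdot M) = \mathfrak{m}^k \cdot (B \htensor X)$, then invoke \autoref{th:prelim-submodules-algtop} (applied to the tautologically pseudoclosed submodule $B \htensor X$) to identify $\mathfrak{m}^k \cdot (B \htensor X)$ with $\mathfrak{m}^k X$, matching the two neighborhood bases at $0$. The paper's proof is exactly this, just more terse; your added remarks on translation invariance and on the role of pseudoclosedness are correct and simply spell out what the paper leaves implicit.
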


\begin{proof}
By bijectivity and $ B $-linearity of $ φ $ we have $ φ(\mathfrak{m}^k · M) = \mathfrak{m}^k · (B \htensor X) $. By \autoref{th:prelim-submodules-algtop}, we have $ \mathfrak{m}^k · (B \htensor X) = \mathfrak{m}^k X $. This shows that $ φ $ is a homeomorphism.
\end{proof}

\begin{lemma}
Let $ M, N $ be $ \mathfrak{m} $-adically free $ B $-modules and $ φ: M → N $ a $ B $-linear map. Then $ φ $ is automatically continuous.
\end{lemma}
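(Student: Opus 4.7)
The plan is to reduce the statement to the already-established continuity result \autoref{rem:prelim-ainfty-continuityautomatic} for $B$-linear maps between modules of the form $B \htensor X$, by using \autoref{th:prelim-freemodules-presentingcontinuity} to transport the $\mathfrak{m}$-adic topologies.

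Concretely, since $M$ and $N$ are $\mathfrak{m}$-adically free, choose $B$-linear isomorphisms $\alpha: M \isoto B \htensor X$ and $\beta: N \isoto B \htensor Y$ for suitable vector spaces $X, Y$. By \autoref{th:prelim-freemodules-presentingcontinuity}, both $\alpha$ and $\beta$ are homeomorphisms with respect to the $\mathfrak{m}$-adic topologies. Form the composition
\begin{equation*}
\beta \circ \varphi \circ \alpha^{-1}: B \htensor X \to B \htensor Y.
\end{equation*}
This composition is $B$-linear, since it is built from three $B$-linear maps. By \autoref{rem:prelim-ainfty-continuityautomatic}, any $B$-linear map between completed tensor products is automatically continuous, so $\beta \circ \varphi \circ \alpha^{-1}$ is continuous.

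Finally, since $\alpha$ and $\beta^{-1}$ are continuous, we can write $\varphi = \beta^{-1} \circ (\beta \circ \varphi \circ \alpha^{-1}) \circ \alpha$ as a composition of continuous maps, whence $\varphi$ is continuous. There is no real obstacle: the content is entirely packaged in the previous two results, and this lemma is just the trivial diagram chase that combines them.
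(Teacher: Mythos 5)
Your proof is correct, but it takes a more roundabout route than the paper. The paper simply notes that because $\varphi$ is $B$-linear, one has $\varphi(\mathfrak{m}^k \cdot M) \subseteq \mathfrak{m}^k \cdot N$ for every $k$, which is already the definition of continuity for the $\mathfrak{m}$-adic topologies on abstract $B$-modules (neighborhood basis $x + \mathfrak{m}^k \cdot M$). That is the whole proof, with no need to choose presentations. Your argument instead picks isomorphisms $\alpha: M \isoto B \htensor X$ and $\beta: N \isoto B \htensor Y$, invokes \autoref{th:prelim-freemodules-presentingcontinuity} to make these homeomorphisms, and then cites \autoref{rem:prelim-ainfty-continuityautomatic} for the conjugated map. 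This is logically sound, and it has the virtue of explicitly threading through the two lemmas you already have in hand. However, it is doing extra work: you choose presentations and push the topology back and forth, when the underlying observation — $B$-linearity preserves $\mathfrak{m}^k$-multiples — is exactly what drives all three of the cited results and can be invoked directly on $M$ and $N$ without ever leaving the abstract module setting. Both approaches are valid; the paper's is shorter and avoids arbitrary choices.
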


\begin{proof}
Since $ φ $ is $ B $-linear, we have $ φ(\mathfrak{m}^k \cdot M) \subseteq \mathfrak{m}^k \cdot N $. This proves $ φ $ continuous.
\end{proof}

The ad-hoc quasi-flatness condition $ M \cap \mathfrak{m} X \subseteq \mathfrak{m} M $ is related to $ \mathfrak{m} $-adic freeness. While the former is a condition that makes explicit reference to the ambient space, the latter depends only on the abstract $ B $-module structure. Both are not equivalent, but we provide here the closest tie we can get.

\begin{proposition}
Let $ M ⊂ B \htensor X $ be a $ B $-submodule. Then the following are equivalent:
\begin{itemize}
\item $ M $ is quasi-flat and pseudoclosed.
\item There is a $ B $-linear isomorphism $ B \htensor Y → M $ with injective leading term $ Y → X $.
\end{itemize}
\end{proposition}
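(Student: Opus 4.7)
The two implications have rather different characters: the forward implication is essentially a repackaging of \autoref{th:prelim-submodules-projectionsection}, whereas the reverse implication requires working concretely with the $ \mathfrak{m} $-adic structure of $ B \htensor Y $.

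For the direction ``quasi-flat and pseudoclosed $ \Rightarrow $ existence of an isomorphism'', the plan is to set $ Y := π(M) $ and take any linear section $ φ: π(M) → M $ of the projection $ π|_M: M → π(M) $. \autoref{th:prelim-submodules-projectionsection} then states immediately that the $ B $-linear extension $ φ: B \htensor π(M) → M $ is a $ B $-linear isomorphism. Since $ φ $ was chosen as a section of $ π $, for $ y ∈ π(M) $ we have $ π(φ(y)) = y $, so the leading term $ φ_0: π(M) → X $ is the inclusion and hence injective.

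For the reverse direction, suppose $ φ: B \htensor Y → M $ is a $ B $-linear isomorphism with injective leading term $ φ_0: Y → X $. Composing with the inclusion $ M \hookrightarrow B \htensor X $ gives a $ B $-linear map $ B \htensor Y → B \htensor X $, which is continuous by \autoref{rem:prelim-ainfty-continuityautomatic}. For pseudoclosedness, take $ x = \sum m_i y_i ∈ BM $ with $ m_i ∈ \mathfrak{m}^{→∞} $, $ y_i ∈ M $, and lift each $ y_i = φ(z_i) $ with $ z_i ∈ B \htensor Y $. The series $ \sum m_i z_i $ converges in the complete space $ B \htensor Y $ to some $ z $, and continuity of $ φ $ gives $ φ(z) = \sum m_i y_i = x $, so $ x ∈ M $. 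For quasi-flatness, take $ x ∈ M ∩ \mathfrak{m} X $ and write $ x = φ(z) $. Using the canonical decomposition $ B \htensor Y = Y ⊕ (\mathfrak{m} \htensor Y) $, split $ z = z_0 + z_+ $ with $ z_0 ∈ Y $. The leading term of $ x $ vanishes, so $ φ_0 (z_0) = 0 $, hence $ z_0 = 0 $ by injectivity of $ φ_0 $. Thus $ z ∈ \mathfrak{m} \htensor Y $ and can be written as $ z = \sum m_i y_i $ with $ m_i ∈ \mathfrak{m}^{≥1, →∞} $ and $ y_i ∈ Y $; continuity of $ φ $ gives $ x = \sum m_i φ(y_i) ∈ \mathfrak{m} M $.

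The only delicate part is keeping track of the distinction between the abstract $ B $-module structure on $ M $ and its ambient position inside $ B \htensor X $: pseudoclosedness and quasi-flatness are precisely the two conditions needed to bridge this gap. No step requires a deeper input than \autoref{rem:prelim-ainfty-continuityautomatic} (automatic continuity of $ B $-linear maps), \autoref{rem:prelim-ainfty-leadingterm} (interpretation of the leading term), and \autoref{th:prelim-submodules-projectionsection}.
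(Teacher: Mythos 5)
Your proof is correct and follows essentially the same route as the paper's own argument: the forward implication is exactly the paper's appeal to \autoref{th:prelim-submodules-projectionsection} with $Y = \pi(M)$, and the reverse implication reproduces the paper's two-step check of pseudoclosedness (lift the series through $\varphi$ and invoke continuity) and quasi-flatness (decompose $z$ into its $Y$-part and $\mathfrak{m}\htensor Y$-part, kill the $Y$-part using injectivity of the leading term). No substantive difference.
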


\begin{proof}
Assume $ M $ is quasi-flat and pseudoclosed. Then according to lemma we get a $ B $-linear isomorphism $ B \htensor π(M) \isoto M $ with leading term the identity. This shows the claim, in particular $ M $ is $ \mathfrak{m} $-adically free.

Conversely, assume $ M $ is $ \mathfrak{m} $-adically free, presented by an injective leading term. Then we have an isomorphism $ φ: B \htensor Y → M ⊂ B \htensor X $. The map is automatically continuous. We show that $ M $ is pseudoclosed: Let $ \sum m_i x_i $ be a series with $ m_i ∈ \mathfrak{m}^{→∞} $ and $ x_i ∈ M $. Then write $ x_i = φ(y_i) $. We get $ \sum m_i y_i ∈ B \htensor Y $ and $ φ(\sum m_i y_i) = \sum m_i φ(y_i) = \sum m_i x_i $. This shows $ \sum m_i x_i ∈ \Im(φ) = M $. Hence $ M $ is pseudoclosed.

To show that $ M $ is quasi-flat, pick an element of $ M ∩ \mathfrak{m} X $, written in the form $ φ(x) ∈ M ∩ \mathfrak{m} X $ with $ x ∈ B \htensor Y $. We claim $ x ∈ \mathfrak{m} Y $. Write $ x = y + z $ with $ y ∈ Y $ and $ z ∈ \mathfrak{m} Y $. Then $ φ(x) = φ(y) + φ(z) $. Hence $ φ(y) ∈ \mathfrak{m} X $. In particular $ y $ vanishes under the leading term $ πφ: Y → X $. By assumption, the leading term is injective and we get $ y = 0 $. This shows $ φ(x) = φ(z) ∈ \mathfrak{m} M $. This shows $ M ∩ \mathfrak{m} X ⊂ \mathfrak{m} M $.
\end{proof}

\begin{remark}
It is not true that $ M ⊂ B \htensor X $ is quasi-flat and pseudoclosed if and only if it is $ \mathfrak{m} $-adically free. For instance, let $ X = \vspan(x_1, x_2) $ and $ B = ℂ⟦q⟧ $. Regard the space $ M = (q) x_1 + ℂ⟦q⟧ x_2 $. The module $ M $ is $ \mathfrak{m} $-adically free through the isomorphism $ B \htensor X → M $ given by $ x_1 ↦ q x_1 $ and $ x_2 ↦ x_2 $. However $ M $ is not quasi-flat since $ qx_1 ∈ ((q) X ∩ M) \setminus (q)M $.
\end{remark}

One can use $ \mathfrak{m} $-adically free modules to model $ A_\infty $-deformations. To be more precise, we have so far defined $ A_\infty $-deformations as (infinitesimally curved) $ A_\infty $-structures on the completed tensor product $ B \htensor \Hom_{\cat C} (X, Y) $. It is possible to also allow arbitrary $ \mathfrak{m} $-adically free $ B $-modules instead, under the condition that the quotient by $ \mathfrak{m} $ is $ \Hom_{\cat C} (X, Y) $. We greatly profit from this variant in \autoref{sec:CHL}.

\begin{definition}
Let $ \cat C $ be a $ ℤ $-graded (or $ ℤ/2ℤ $-graded) $ A_∞ $-category. A \emph{loose $ A_∞ $-deformation} of $ \cat C $ is a collection of $ \mathfrak{m} $-adically free $ ℤ $-graded (or $ ℤ/2ℤ $-graded) $ B $-modules $ \{\Hom_{\cat C_q} (X, Y)\}_{X, Y ∈ \cat C} $ together with $ B $-multilinear maps $ μ_q^{k≥0} $ of degree $ 2-k $ satisfying the curved $ A_∞ $-relations, together with linear isomorphisms $ ψ_{X, Y}: \Hom_{\cat C_q} (X, Y) / (\mathfrak{m} · \Hom_{\cat C_q} (X, Y)) \isoto \Hom_{\cat C} (X, Y) $ for every $ X, Y $, such that $ \cat C $ is obtained by dividing out $ \mathfrak{m} $ and identification via $ \{ψ_{X, Y}\} $.
\end{definition}

\begin{definition}
Let $ \cat C, \cat D $ be two $ A_∞ $-categories and $ \cat C_q, \cat D_q $ be loose $ A_∞ $-deformations. A \emph{functor of loose $ A_∞ $-deformations} $ F_q: \cat C_q → \cat D_q $ consists of maps $ F_q: \Ob\cat C → \Ob\cat D $ together with $ B $-multilinear maps $ F_q^{k≥0} $ such that $ F_q $ satisfies the curved $ A_∞ $-functor relations and $ F_{q, X}^0 ∈ \mathfrak{m} · \Hom_{\cat C_q} (F_q(X), F_q(X)) $ for every $ X ∈ \cat C $. The \emph{leading term} of $ F_q $ is the functor $ F: \cat C → \cat D $ obtained by dividing out $ \mathfrak{m} $ and identification via $ \{ψ_{X, Y}\} $. We may also say that $ F_q $ is a deformation of $ F $ in this case.
\end{definition}

\begin{remark}
In contrast to $ A_\infty $-deformations modeled on $ B \htensor \Hom_{\cat C} (X, Y) $, a loose $ A_\infty $-deformation does not directly give a Maurer-Cartan element of the Hochschild DGLA $ \HC(\cat C) $. Instead, one first needs to make a choice of $ B $-linear identification $ φ_{X, Y}: \Hom_{\cat C_q} (X, Y) \cong B \htensor \Hom_{\cat C} (X, Y) $ for every $ X, Y \in \cat C $. Of course, the identification needs to be compatible with $ ψ_{X, Y} $ in the sense that its leading term must be the identity when identifying $ \Hom_{\cat C_q} (X, Y) / (\mathfrak{m} · \Hom_{\cat C_q} (X, Y)) $ via $ ψ_{X, Y} $. In yet other words, the following diagram needs to commute:
\begin{equation*}
\begin{tikzcd}
\Hom_{\cat C_q} (X, Y) \arrow[r, "φ_{X, Y}"] \arrow[d, "π"] & B \htensor \Hom_{\cat C} (X, Y) \arrow[d, "π"] \\
\Hom_{\cat C_q} (X, Y) / (\mathfrak{m} · \Hom_{\cat C_q} (X, Y)) \arrow[r, "ψ_{X, Y}"] & \Hom_{\cat C} (X, Y).
\end{tikzcd}
\end{equation*}

Once choices have been made, one obtains a Maurer-Cartan element $ μ_{q, φ} \in \MC(\HC(\cat C), B) $. Let us explain why two different choices $ φ, φ' $ yield gauge-equivalent Maurer-Cartan elements: Both $ φ_{X, Y} $ and $ φ'_{X, Y} $ have leading term the identity when identified via $ ψ_{X, Y} $, hence the composition
\begin{equation*}
φ'_{X, Y} ∘ φ^{-1}_{X, Y}: B \htensor \Hom_{\cat C} (X, Y) → B \htensor \Hom_{\cat C} (X, Y)
\end{equation*}
has leading term the identity (without any identification). This shows that the two Maurer-Cartan elements $ μ_{q, φ} $ and $ μ_{q, φ'} $ are related by the strict gauge functor $ φ' ∘ φ^{-1}: (B \htensor \cat C, μ_{q, φ}) → (B \htensor \cat C, μ_{q, φ'}) $.
\end{remark}

In analogy to \autoref{def:prelim-ainfty-objectcloning}, we fix the following terminology:

\begin{definition}
\label{def:prelim-freemodules-objectcloning}
Let $ \cat C $ be an $ A_∞ $-category and $ B $ a deformation base. Let $ O $ be an arbitrary set and $ F: O → \Ob\cat C $ a map. A \emph{loose object-cloning deformation} is a loose deformation $ \cat D_q $ of $ \cat D = F^* \cat C $. The loose object-cloning deformation is \emph{essentially surjective} if $ F: \Ob\cat D → \Ob\cat C $ reaches all objects of $ \cat C $ up to isomorphism.
\end{definition}

\subsection{On the quasi-flatness condition}
\label{sec:3ainfty-variants}
In this section, we present two alternative ways to formulate the quasi-flatness condition that we studied in \autoref{sec:prelim-submodules}. This serves as a preparation for later use in \autoref{sec:flatness}. The two alternatives for the quasi-flatness inclusion $ M ∩ \mathfrak{m} A ⊂ \mathfrak{m} M $ read as follows:

\begin{definition}
Let $ M \subseteq B \htensor X $ be a $ B $-submodule. Then $ M $ satisfies the
\begin{itemize}
\item \emph{weak quasi-flatness inclusion} if for every $ k ≥ 1 $ we have
$ M ∩ \mathfrak{m}^k X ⊂ \mathfrak{m}^k M + \mathfrak{m}^{k+1} X $.
\item \emph{strong quasi-flatness inclusion} if for every $ k ≥ 1 $ we have
$ M ∩ \mathfrak{m}^k X ⊂ \mathfrak{m}^k M $.
\end{itemize}
\end{definition}

We claim these alternative inclusions are indeed equivalent to quasi-flatness and moreover that any quasi-flat $ B $-submodule $ M \subseteq B \htensor X $ is automatically a closed subspace of $ B \htensor X $:

\begin{proposition}
\label{th:prelim-flatvariants-equivalence}
Let $ X $ be a vector space and $ B $ a deformation base. Let $ M \subseteq B \htensor X $ be a $ B $-submodule. If $ M $ is pseudoclosed, then the following are equivalent:
\begin{itemize}
\item $ M $ is quasi-flat.
\item $ M $ is quasi-flat and closed.
\item $ M $ satisfies the weak quasi-flatness inclusion.
\item $ M $ satisfies the strong quasi-flatness inclusion.
\end{itemize}
\end{proposition}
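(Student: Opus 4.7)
The implications $(2) \Rightarrow (1)$, $(4) \Rightarrow (1)$ (by setting $k = 1$), and $(4) \Rightarrow (3)$ (by weakening the right-hand side) are immediate, so the substance of the proof lies in showing $(1) \Rightarrow (4)$, $(3) \Rightarrow (4)$, and $(4) \Rightarrow (2)$.

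For $(1) \Rightarrow (4)$, the plan is to invoke \autoref{th:prelim-submodules-projectionsection}, producing a $B$-linear isomorphism $\varphi: B \htensor \pi(M) \isoto M$ whose leading term is the inclusion $\pi(M) \hookrightarrow X$. Pseudoclosedness combined with \autoref{th:prelim-submodules-algtop} yields $\varphi(\mathfrak{m}^k \htensor \pi(M)) = \mathfrak{m}^k M$, so the goal reduces to showing that $\varphi(y) \in \mathfrak{m}^k X$ forces $y \in \mathfrak{m}^k \htensor \pi(M)$. I would proceed by induction on $k$. The base case $k = 1$ follows from injectivity of the leading term. For the inductive step, the Cohen structure theorem writes $B = \mathbb{C}\pows{q_1, \ldots, q_n}/I$, making $\mathfrak{m}^k/\mathfrak{m}^{k+1}$ finite-dimensional; choosing lifts $e_i^{(k)} \in \mathfrak{m}^k$ of a basis and writing $y = \sum_i e_i^{(k)} y_i^{(k)} + r$ with $y_i^{(k)} \in \pi(M)$ and $r \in \mathfrak{m}^{k+1} \htensor \pi(M)$, one computes $\varphi(y) \equiv \sum_i e_i^{(k)} y_i^{(k)} \pmod{\mathfrak{m}^{k+1} X}$. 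The hypothesis forces this class to vanish in $\mathfrak{m}^k X / \mathfrak{m}^{k+1} X \cong (\mathfrak{m}^k/\mathfrak{m}^{k+1}) \otimes X$, and linear independence of the $[e_i^{(k)}]$ together with injectivity of $\pi(M) \hookrightarrow X$ forces all $y_i^{(k)} = 0$.

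For $(3) \Rightarrow (4)$, I would take $x \in M \cap \mathfrak{m}^k X$ and iteratively apply $(3)$ to extract $x = y_k + y_{k+1} + \cdots$ with $y_n \in \mathfrak{m}^n M$, using at each step that the residual $x - y_k - \cdots - y_n$ lies in $M \cap \mathfrak{m}^{n+1} X$ because $M$ is a $B$-module. Writing each $y_n$ as a series $\sum_j m_{n,j} x_{n,j}$ and enumerating the pairs $(n, j)$ as a single sequence whose orders tend to infinity, pseudoclosedness packages the total sum as an element of $\mathfrak{m}^k M$ equal to $x$. The argument for $(4) \Rightarrow (2)$ is analogous: for $x$ in the closure of $M$, choose $(x_n) \subseteq M$ with $x - x_n \in \mathfrak{m}^n X$, so that $x_{n+1} - x_n \in M \cap \mathfrak{m}^n X \subseteq \mathfrak{m}^n M$ by $(4)$; assembling the decompositions of the differences into a single pseudoclosed series yields $x - x_1 \in M$, hence $x \in M$.

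The main obstacle, in my view, is the inductive graded-piece argument in $(1) \Rightarrow (4)$: it requires careful bookkeeping of a \emph{Taylor expansion} along lifts of a basis of each $\mathfrak{m}^k/\mathfrak{m}^{k+1}$, and it relies on the identification $\mathfrak{m}^k X/\mathfrak{m}^{k+1} X \cong (\mathfrak{m}^k/\mathfrak{m}^{k+1}) \otimes X$ which is only available because this graded piece is finite-dimensional, via Cohen. The double-series reorganization in $(3) \Rightarrow (4)$ and $(4) \Rightarrow (2)$ is conceptually transparent but needs attention to ensure that the combined coefficients genuinely lie in $\mathfrak{m}^{\to\infty}$ before invoking pseudoclosedness.
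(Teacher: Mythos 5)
Your proof is correct and logically complete. Two of the three substantive implications track the paper closely: your $(3) \Rightarrow (4)$ is the paper's iteration argument for weak $\Rightarrow$ quasi-flat (\autoref{th:prelim-flatvariants-weaktoflat}) carried out from an arbitrary starting index $k$ rather than $k = 1$, and your $(4) \Rightarrow (2)$ matches the paper's pseudoclosed $+$ quasi-flat $\Rightarrow$ closed argument (\autoref{th:prelim-flatvariants-flattoclosed}), differing only in whether one regroups via telescoping differences or applies strong quasi-flatness directly to the terms of the series.

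The step where you take a genuinely different route is $(1) \Rightarrow (4)$, i.e.\ quasi-flat $\Rightarrow$ strong. The paper (\autoref{th:prelim-flatvariants-flattostrong}) first proves the auxiliary inclusion $\mathfrak{m}^k M \cap \mathfrak{m}^{k+1} X \subset \mathfrak{m}^{k+1} M$ using a $B$-linear section $\varphi: B \htensor \pi(M) \to M$ with leading term the identity (not necessarily an isomorphism), and then iterates $M \cap \mathfrak{m}^k X \subset (\mathfrak{m} M \cap \mathfrak{m}^2 X) \cap \mathfrak{m}^k X \subset \cdots \subset \mathfrak{m}^k M$. You instead invoke the full bijectivity of $\varphi$ from \autoref{th:prelim-submodules-projectionsection}, reduce to showing $\varphi(y) \in \mathfrak{m}^k X \Rightarrow y \in \mathfrak{m}^k \htensor \pi(M)$, and prove that by an explicit Cohen-basis expansion in each graded piece $\mathfrak{m}^k / \mathfrak{m}^{k+1}$. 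Both proofs ultimately lean on the same structural fact about the trivially-deformed module $B \htensor \pi(M)$: the paper asserts it as ``$y \in \mathfrak{m}^{k+1} X \cap \mathfrak{m}^k \pi(M)$, hence $y \in \mathfrak{m}^{k+1} \pi(M)$'' without further justification, whereas you unpack it through the identification $\mathfrak{m}^k X / \mathfrak{m}^{k+1} X \cong (\mathfrak{m}^k/\mathfrak{m}^{k+1}) \otimes X$ (proved elsewhere in the paper in \autoref{th:flatness-flatalgebraic-crude}). Your version is longer and carries the Cohen bookkeeping burden you yourself flag, but it has the virtue of making explicit the finite-dimensionality of $\mathfrak{m}^k/\mathfrak{m}^{k+1}$ that the paper's shortcut silently uses. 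One small terminological point: in your $(3) \Rightarrow (4)$ and $(4) \Rightarrow (2)$ you justify $\mathfrak{m}^n M \subseteq M$ by saying ``$M$ is a $B$-module,'' but with the paper's definition $\mathfrak{m}^n M$ is the image of $\mathfrak{m}^n \htensor M \to B \htensor X$, so what you actually need is $\mathfrak{m}^n M \subseteq BM \subseteq M$, which is pseudoclosedness rather than just the $B$-module structure.
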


In the following lemmas, we provide a proof of \autoref{th:prelim-flatvariants-equivalence}.

\begin{remark}
The proofs are easier to understand if one has the example of $ B = ℂ⟦q⟧ $ in mind. This case has the practical property that any element $ x ∈ \mathfrak{m} X $ can automatically be written as $ qy $ for some $ y ∈ X $. If for example it now becomes known that $ x ∈ \mathfrak{m} M $, then it is immediate that $ y ∈ M $. To see this, write $ x ∈ (q) M $ as a power series in elements of $ M $ and divide by $ q $:
\begin{align*}
qy &= x = \sum_{n = 0}^∞ q^{n+1} x_n = q · \sum_{n = 0}^∞ q^{→ ∞} x_n, \quad \text{hence} \quad y = \sum_{n = 0}^∞ q^n x_n ∈ M.
\end{align*}
\end{remark}

\begin{lemma}
\label{th:prelim-flatvariants-weaktoflat}
Let $ M \subseteq B \htensor X $ be a pseudoclosed $ B $-submodule. If $ M $ satisfies the weak quasi-flatness inclusion, then $ M $ is quasi-flat. 
\end{lemma}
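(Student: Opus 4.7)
The plan is to iteratively apply the weak quasi-flatness inclusion to decompose a given $ x \in M \cap \mathfrak{m} X $ into summands of increasing $ \mathfrak{m} $-order, and then reassemble these summands into a single series of the form required by the definition of $ \mathfrak{m} M $.

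First, starting from $ x \in M \cap \mathfrak{m} X $, I would apply the weak inclusion with $ k = 1 $ to write $ x = y_1 + z_1 $ with $ y_1 \in \mathfrak{m} M $ and $ z_1 \in \mathfrak{m}^2 X $. Since $ M $ is pseudoclosed we have $ \mathfrak{m} M \subseteq BM \subseteq M $, so $ y_1 \in M $, whence $ z_1 = x - y_1 \in M \cap \mathfrak{m}^2 X $. Iterating this with the weak inclusion at $ k = 2, 3, \ldots $ produces sequences $ y_k \in \mathfrak{m}^k M $ and $ z_k \in M \cap \mathfrak{m}^{k+1} X $ such that $ x = y_1 + \cdots + y_k + z_k $ for every $ k \geq 1 $. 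Since $ z_k \to 0 $ in the $ \mathfrak{m} $-adic topology, we obtain $ x = \sum_{k \geq 1} y_k $ as a convergent sum.

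The second step is to realize this sum as a single series witnessing $ x \in \mathfrak{m} M $. Expand each $ y_k = \sum_i m^{(k)}_i x^{(k)}_i $ with $ m^{(k)}_i \in \mathfrak{m}^{\geq k, \to \infty} $ and $ x^{(k)}_i \in M $. I would then enumerate all pairs $ (k, i) $ as a single sequence in such a way that the orders of the coefficients tend to infinity. Such an enumeration exists because for every $ N \geq 1 $ only finitely many pairs yield a coefficient outside $ \mathfrak{m}^N $: for $ k \geq N $ none at all, and for each $ k < N $ only finitely many, since $ m^{(k)}_i \in \mathfrak{m}^{\to \infty} $ as $ i \to \infty $.

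The main subtlety I expect is verifying that the rearranged series actually converges to $ x $ rather than to some other element of $ B \htensor X $, since the double sum is only conditionally organized. This is handled by the observation above: beyond any initial segment containing every pair with coefficient outside $ \mathfrak{m}^N $, the remaining terms all land in $ \mathfrak{m}^N X $, so the partial sums of the reordered series differ from $ \sum_{k \geq 1} y_k = x $ by elements of $ \mathfrak{m}^N X $ for arbitrarily large $ N $. Consequently the rearranged series $ \sum_j \tilde{m}_j \tilde{x}_j $ converges to $ x $, and since every $ \tilde{m}_j $ lies in $ \mathfrak{m}^{\geq 1, \to \infty} $ and every $ \tilde{x}_j \in M $, we conclude $ x \in \mathfrak{m} M $, finishing the proof.
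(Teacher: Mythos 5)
Your proof is correct and follows essentially the same iterative approach as the paper: apply the weak quasi-flatness inclusion together with pseudoclosedness to peel off summands $ y_k \in \mathfrak{m}^k M $ with remainder in $ M \cap \mathfrak{m}^{k+1} X $, and then sum them up. Your explicit rearrangement argument in the final step fills in details the paper states only tersely; note that one can shortcut this step by invoking \autoref{th:prelim-submodules-algtop}, which for pseudoclosed $ M $ identifies $ \mathfrak{m}^k M $ with the \emph{finite} span $ \mathfrak{m}^k \cdot M $, so each $ y_k $ is a finite sum and concatenating the $ y_k $ into a single series of $ \mathfrak{m}M $ is immediate without any reindexing.
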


\begin{proof}
It is our task to show $ M ∩ \mathfrak{m} X ⊂ \mathfrak{m} M $. Pick $ x ∈ M ∩ \mathfrak{m} X $. Iterating the weak quasi-flatness inclusion in combination with pseudoclosedness gives
\begin{equation*}
x ~∈~ M ∩ \mathfrak{m} X ~⊂~ \mathfrak{m} M + M ∩ \mathfrak{m}^2 X ~⊂~ \mathfrak{m} M + \mathfrak{m}^2 M + M ∩ \mathfrak{m}^3 X ~⊂~ ….
\end{equation*}
More precisely, write $ x = x_1 + y_1 $ with $ x_1 ∈ \mathfrak{m} M $ and $ y_1 ∈ M ∩ \mathfrak{m}^2 X $. Then write $ y_1 = x_2 + y_2 $ with $ x_2 ∈ \mathfrak{m}^2 M $ and $ y_2 ∈ M ∩ \mathfrak{m}^3 X $. Continuing this way, we obtain sequences $ (x_k) $ and $ (y_k) $ with the property that
\begin{equation*}
x = (x_1 + … + x_N) + y_N, \quad x_k ∈ \mathfrak{m}^k M, ~ y_k ∈ M ∩ \mathfrak{m}^{k+1} X.
\end{equation*}
Letting $ N → ∞ $ we get within $ B \htensor X $ that
\begin{equation*}
x = \sum_{k = 1}^∞ x_k.
\end{equation*}
In principle, the right-hand side converges within the completion of $ M $. Since we assumed that $ M $ is pseudoclosed, we can however do better: Every summand $ x_k $ lies in $ \mathfrak{m}^k M $ and summation starts at $ k = 1 $. Therefore the infinite sum lies in $ \mathfrak{m} M $. We conclude that $ x ∈ \mathfrak{m} M $. This shows $ M ∩ \mathfrak{m} X ⊂ \mathfrak{m} M $.
\end{proof}

\begin{lemma}
\label{th:prelim-flatvariants-flattostrong}
Let $ M \subseteq B \htensor X $ be a quasi-flat $ B $-submodule. Then $ M $ satisfies the strong quasi-flatness inclusion.
\end{lemma}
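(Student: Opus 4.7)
I plan to prove the strong quasi-flatness inclusion $ M \cap \mathfrak{m}^k X \subseteq \mathfrak{m}^k M $ by induction on $ k \ge 1 $. The base case $ k = 1 $ is exactly quasi-flatness, so all work lies in the inductive step, where the ingredients are the decomposition \autoref{th:flatness-flatalgebraic-crude} (applied with $ V = 0 $) and the algebraic-topological identification $ \mathfrak{m}^k Y = \mathfrak{m}^k \cdot Y $ from \autoref{th:prelim-submodules-algtop} for pseudoclosed $ Y $.

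For the step $ k \to k+1 $, I would fix $ x \in M \cap \mathfrak{m}^{k+1} X \subseteq M \cap \mathfrak{m}^k X $, so the inductive hypothesis gives $ x \in \mathfrak{m}^k M \cap \mathfrak{m}^{k+1} X $. Now \autoref{th:flatness-flatalgebraic-crude} with $ V = 0 $ yields
\begin{equation*}
x \in \mathfrak{m}^k (M \cap \mathfrak{m} X) + \mathfrak{m}^{k+1} M,
\end{equation*}
and quasi-flatness upgrades $ M \cap \mathfrak{m} X \subseteq \mathfrak{m} M $ inside the first summand, so that $ x \in \mathfrak{m}^k (\mathfrak{m} M) + \mathfrak{m}^{k+1} M $. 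It remains to verify the subclaim $ \mathfrak{m}^k (\mathfrak{m} M) \subseteq \mathfrak{m}^{k+1} M $. For this I would apply \autoref{th:prelim-submodules-algtop} twice: first to $ \mathfrak{m} M $, which is itself pseudoclosed by a routine rearrangement of a double power series whose coefficients already lie in $ \mathfrak{m} $, to obtain $ \mathfrak{m}^k (\mathfrak{m} M) = \mathfrak{m}^k \cdot \mathfrak{m} M $; and then to $ M $ itself, pseudoclosed by the standing hypothesis inherited from \autoref{th:prelim-flatvariants-equivalence}, to obtain $ \mathfrak{m} M = \mathfrak{m} \cdot M $ and $ \mathfrak{m}^{k+1} M = \mathfrak{m}^{k+1} \cdot M $. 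Multiplying in the ring $ B $ then gives $ \mathfrak{m}^k \cdot \mathfrak{m} \cdot M = \mathfrak{m}^{k+1} \cdot M $, closing the subclaim and completing the inductive step.

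The main obstacle I foresee is precisely this subclaim. The spaces $ \mathfrak{m}^k (\mathfrak{m} M) $ and $ \mathfrak{m}^{k+1} M $ are defined through distinct infinite-series expansions and their coincidence is not at all formal; the proof only goes through because \autoref{th:prelim-submodules-algtop} lets us translate both sides into their finitely-generated algebraic shadows, where the identity reduces to ordinary associativity of multiplication in $ B $. Without this intermediate identification one would be forced to run a direct absolute-convergence argument for double series of the form $ \sum_{i,j} (m_i p_{ij}) w_{ij} $ with coefficients tending to infinity in $ \mathfrak{m} $-order, which is feasible but considerably more cumbersome to bookkeep.
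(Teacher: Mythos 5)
Your proof is correct and follows a genuinely different route to the auxiliary inclusion $\mathfrak{m}^k M \cap \mathfrak{m}^{k+1} X \subseteq \mathfrak{m}^{k+1} M$ that underlies the argument. The paper proves that inclusion directly: choose a $B$-linear map $\varphi: B \htensor \pi(M) \to M$ with leading term the identity (available since $M$ is pseudoclosed and quasi-flat), write $x = \varphi(y) + z$ with $y \in \mathfrak{m}^k \pi(M)$ and $z \in \mathfrak{m}^{k+1} M$, use vanishing of leading terms to promote $y$ into $\mathfrak{m}^{k+1} \pi(M)$ and hence $x$ into $\mathfrak{m}^{k+1} M$, and then telescope from quasi-flatness. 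Your inductive step re-derives the same inclusion, but relays it through \autoref{th:flatness-flatalgebraic-crude} (with $V = 0$) and \autoref{th:prelim-submodules-algtop} rather than constructing the splitting $\varphi$ by hand. Both proofs lean on the pseudoclosedness of $M$, the standing hypothesis of \autoref{th:prelim-flatvariants-equivalence} that you correctly flag — the paper for the existence of $\varphi$, you for the hypotheses of the two cited lemmas. What your modularity costs is the extra subclaim that $\mathfrak{m} M$ is pseudoclosed: that is indeed true by the diagonal rearrangement you sketch, but you can sidestep it entirely. Since $\mathfrak{m} X = \Ker(\pi)$ is closed (hence pseudoclosed) and $M$ is pseudoclosed, the intersection $M \cap \mathfrak{m} X$ is pseudoclosed, so \autoref{th:prelim-submodules-algtop} gives directly
\begin{equation*}
\mathfrak{m}^k(M \cap \mathfrak{m} X) \;=\; \mathfrak{m}^k \cdot (M \cap \mathfrak{m} X) \;\subseteq\; \mathfrak{m}^k \cdot \mathfrak{m} \cdot M \;=\; \mathfrak{m}^{k+1} \cdot M \;=\; \mathfrak{m}^{k+1} M,
\end{equation*}
the middle inclusion coming from quasi-flatness together with $\mathfrak{m} M = \mathfrak{m} \cdot M$, so that $\mathfrak{m} M$ never needs to be discussed on its own.
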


\begin{proof}
The proof consists of two parts: We first prove the auxiliary inclusion $ \mathfrak{m}^k M ∩ \mathfrak{m}^{k+1} X ⊂ \mathfrak{m}^{k+1} M $ for every $ k ≥ 1 $. Second, we derive the strong quasi-flatness inclusion by iterating the auxiliary inclusion.

First, let us prove the auxiliary inclusion. Denoting by $ π: B \htensor X \to X $ the standard projection, regard the subspace $ π(M) \subseteq X $. We can choose a $ B $-linear continuous map $ φ: B \htensor π(M) → M $ with leading term the identity. Then any $ x ∈ M $ can be written as $ x = φ(y) + z $ with $ z ∈ M ∩ \mathfrak{m} ⊂ \mathfrak{m} M $.

Let $ x ∈ \mathfrak{m}^k M ∩ \mathfrak{m}^{k+1} X $. Then we can write $ x = φ(y) + z $ with $ y ∈ \mathfrak{m}^k π(M) $ and $ z ∈ \mathfrak{m}^{k+1} M $. We get that $ y-φ(y) ∈ \mathfrak{m}^{k+1} X $ and $ φ(y) = x-z ∈ \mathfrak{m}^{k+1} X $. Summing up, we get $ y ∈ \mathfrak{m}^{k+1} X ∩ \mathfrak{m}^k π(M) $, hence $ y ∈ \mathfrak{m}^{k+1} π(M) $. In consequence, we have $ φ(y) ∈ \mathfrak{m}^{k+1} M $. Finally, we get $ x = φ(y) + z ∈ \mathfrak{m}^{k+1} M $. This proves the auxiliary inclusion.

Finally, we combine quasi-flatness with iterated applications of the auxiliary inclusion:
\begin{align*}
M ∩ \mathfrak{m}^k X &= (M ∩ \mathfrak{m} X) ∩ \mathfrak{m}^k X \\
&⊂ (\mathfrak{m} M ∩ \mathfrak{m}^2 X) ∩ \mathfrak{m}^k X \\
&⊂ (\mathfrak{m}^2 M ∩ \mathfrak{m}^3 X) ∩ \mathfrak{m}^k X \\
&⊂ … ⊂ \mathfrak{m}^k M.
\end{align*}
This finishes the proof.
\end{proof}

\begin{lemma}
\label{th:prelim-flatvariants-flattoclosed}
Let $ M \subseteq B \htensor X $ be a pseudoclosed and quasi-flat $ B $-submodule. Then $ M $ is closed.
\end{lemma}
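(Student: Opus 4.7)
The plan is to invoke the strong quasi-flatness inclusion (\autoref{th:prelim-flatvariants-flattostrong}) together with pseudoclosedness, expressing any limit point of $ M $ as a telescoping series whose tail is visibly an element of $ BM $.

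Since $ B \htensor X $ carries a sequential Hausdorff topology, it suffices to show that $ M $ is sequentially closed. So the plan is to pick a sequence $ (x_n) \subseteq M $ converging $ \mathfrak{m} $-adically to some $ x \in B \htensor X $, and deduce $ x \in M $. After passing to a subsequence we may assume $ x - x_n \in \mathfrak{m}^n X $ for all $ n \geq 1 $. Setting $ y_n \coloneqq x_{n+1} - x_n $, the differences lie in $ M \cap \mathfrak{m}^n X $. The strong quasi-flatness inclusion (\autoref{th:prelim-flatvariants-flattostrong}) then places each $ y_n $ in $ \mathfrak{m}^n M $, so it admits a presentation
\begin{equation*}
y_n = \sum_{i = 0}^\infty m_{n, i} z_{n, i}, \quad m_{n, i} \in \mathfrak{m}^{\geq n, \to \infty}, ~ z_{n, i} \in M.
\end{equation*}

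Telescoping gives $ x = x_1 + \sum_{n = 1}^\infty y_n $, and the next step is to read the right-hand side as a single series lying in $ BM $. The plan is to enumerate the pairs $ (n, i) $ antidiagonally as a sequence $ j \mapsto (n_j, i_j) $ with $ n_j + i_j \to \infty $, and argue that the resulting coefficients $ \tilde m_j \coloneqq m_{n_j, i_j} $ satisfy $ \tilde m_j \in \mathfrak{m}^{\to \infty} $. Indeed, for every $ K $, only pairs with $ n < K $ can satisfy $ m_{n, i} \notin \mathfrak{m}^K $, and among those only finitely many $ i $ can do so because $ m_{n, i} \in \mathfrak{m}^{\to \infty} $ in the $ i $-direction. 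The rearranged series $ \sum_j \tilde m_j z_{n_j, i_j} $ therefore converges $ \mathfrak{m} $-adically, with value $ \sum_n y_n $. Since the series manifestly represents an element of $ BM $, pseudoclosedness $ BM \subseteq M $ then yields $ x - x_1 \in M $, whence $ x \in M $.

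The main subtlety is justifying the rearrangement of the double sum, which reduces to the unconditional convergence of $ \mathfrak{m} $-adic series whose coefficient orders tend to infinity. Apart from this bookkeeping, the argument is a routine combination of the strong quasi-flatness from the preceding lemma with the pseudoclosedness hypothesis.
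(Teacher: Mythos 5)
Your proof is correct and takes the same route as the paper's: reduce to a series whose terms lie in $M \cap \mathfrak{m}^{\to\infty} X$, invoke the strong quasi-flatness inclusion from \autoref{th:prelim-flatvariants-flattostrong} to place those terms in $\mathfrak{m}^{\to\infty} M$, and conclude via pseudoclosedness $BM \subseteq M$. The only difference is cosmetic: you pass from a sequence to its telescoping series and spell out the double-series reindexing bookkeeping that the paper compresses into the single phrase ``the limit of the series hence lies in $BM$.''
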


\begin{proof}
Let $ \sum x_n $ be a series of elements $ x_n ∈ M $ that converges in $ B \htensor X $. Then $ x_n ∈ \mathfrak{m}^{→ ∞} X ∩ M $. By the strong quasi-flatness inclusion, we get $ x_n ∈ \mathfrak{m}^{→∞} M $. The limit of the series hence lies in $ BM $. Since $ M $ is pseudoclosed, the limit lies in $ M $. We conclude that $ M $ is closed.
\end{proof}

The combination of the above lemmas proves \autoref{th:prelim-flatvariants-equivalence}. For $ B $-submodules, pseudoclosed and quasi-flat implies closed, and closed implies pseudoclosed. However closed does not imply quasi-flat, for instance the submodule $ M = ℂ⟦q⟧x_1 + (q)x_2 ⊂ ℂ⟦q⟧ \htensor \vspan(x_1, x_2) $ is closed but not quasi-flat.

\section{Preliminaries on Koszul duality}
\label{sec:koszul}
In the present section, we present Koszul duality as a preparation for \autoref{sec:CHL}. Koszul duality is a phenomenon which provides a rich source of nontrivial $ A_∞ $-functors by matching $ A_∞ $-algebras and dg algebras. Classical Koszul duality involves only ordinary (associative, non-dg) algebras and makes best-effort statements on their homological properties. Modern Koszul duality concerns $ A_∞ $-algebras and dg algebras and largely recovers classical Koszul duality from a more elegant description.

\begin{center}
\begin{tabular}{@{}c@{\hspace{0.1\linewidth}}c@{}}
\textbf{A-side} & \textbf{B-side} \\\hline
{Augmented $ A_∞ $-algebra} & {DG algebra} \\
$ A = (ℂ\id ⊕ ℂX_1 ⊕ … ⊕ ℂX_n, μ_A) $ & $ \koszul A = (ℂ\ncpow{\kdual x_1, …, \kdual x_n}, \kdual{μ}_A) $ \\\hline
{Cyclic $ A_∞ $-structure} & {Calabi-Yau dg structure} \\
Degree $ n $ & Dimension $ n $ \\\hline
{$ A $-coderivation} & {Linear map} \\
$ m: T(\bar A[1]) ¤ M → T(\bar A[1]) ¤ N $ & $ \kdual m: M → N ¤ \koszul A $ \\\hline
{$ A $-module} & {Twisted complex} \\
$ (M, μ_M) $ & $ F(M) = (M \tensor \koszul A, \kdual{μ}_{M, 0}) $
\end{tabular}
\end{center}

Technically, modern Koszul duality consists of dualizing the $ A_∞ $-axioms on vector space level. The results can be described both abstractly in terms of dual vector spaces and the bar construction, as well as concretely by means of a choice of basis and a construction of dg structure on a power series algebra. In the present section, we follow the works of Ginzburg \cite{Ginzburg-CY}, Van den Bergh \cite{vdB-CY, vdB-duality}, Lu-Wu-Palmieri-Zhang \cite{LPWZ-Koszul} and \cite{Bocklandt-CY}.

In \autoref{sec:koszul-bimodules}, we recall $ A_∞ $-modules and their categories. In \autoref{sec:koszul-koszul}, we recall Koszul duality between augmented $ A_∞ $-algebras and dg algebras and the Koszul duality functor. In \autoref{sec:koszul-classical}, we recall several classical properties of Koszul duality and explain how they survive in modern Koszul duality. In \autoref{sec:koszul-cy}, we recall Calabi-Yau dg algebras. In \autoref{sec:koszul-cyord}, we focus on ordinary (non-dg) Calabi-Yau algebras. In \autoref{sec:koszul-cy3}, we focus on the special case of ordinary Calabi-Yau algebras of dimension $ n = 3 $. In \autoref{sec:koszul-correspondence}, we recall the notion of cyclic $ A_∞ $-algebras and explain the correspondence between cyclic $ A_∞ $-algebras and dg Calabi-Yau algebras via Koszul duality. In \autoref{sec:koszul-transfer}, we tweak Koszul duality statements in order to motivate the Cho-Hong-Lau construction.

\begin{remark}
In this section, we partially follow the dg sign convention rather than the $ A_∞ $-sign convention:
\begin{align*}
μ^1 (μ^2 (f, g)) &= μ^2 (μ^1 (f), g) + (-1)^{|f|} μ^2 (f, μ^1 (g)), \\
μ^2 (f, μ^2 (g, h)) &= μ^2 (μ^2 (f, g), h).
\end{align*}
The precise distribution is as follows: The $ A_∞ $-algebra $ A $ follows $ A_∞ $-signs. The dg algebra $ \koszul A $, the dg categories $ \rModfd A $ and $ \Tw\koszul A $ as well as the dg functor $ F: \rModfd A → \Tw\koszul A $ follow dg signs. We amend all signs to the $ A_∞ $-setting in \autoref{sec:CHL}.
\end{remark}

\subsection{Modules}
\label{sec:koszul-bimodules}
In this section, we recall modules over $ A_∞ $-algebras. We start by recalling the interpretation of $ A_∞ $-structures as coderivations on the bar construction. Then we recall $ A_∞ $-modules and categories of $ A_∞ $-modules.



We start by recalling the tensor coalgebra construction. We denote by $ [1] $ the left-shift.

\begin{definition}
Let $ A $ be an $ A_∞ $-algebra. Regard the \emph{tensor algebra}
\begin{equation*}
T(A[1]) = \bigoplus_{n ∈ ℕ} A[1]^{¤n}.
\end{equation*}
The canonical coproduct $ Δ: T(A[1]) → T(A[1]) ¤ T(A[1]) $ is given by
\begin{equation}
\label{eq:koszul-bimodules-coproduct}
Δ(a_k ¤ … ¤ a_1) = \sum_{0 ≤ i ≤ k} (a_i ¤ … ¤ a_1) ¤ (a_k ¤ … ¤ a_{i+1}).
\end{equation}
A \emph{coderivation} $ m: T(A[1]) → T(A[1]) $ is a linear map satisfying the co-Leibniz rule
\begin{equation*}
Δ ∘ m = (m ¤ \id + \id ¤ m) ∘ Δ.
\end{equation*}
Here the maps $ m ¤ \id $ and $ \id ¤ m $ bear the sign given by the Koszul sign rule:
\begin{align*}
(m ¤ \id)((a_k ¤ … ¤ a_1) ¤ (b_l ¤ … ¤ b_1)) &= m(a_k ¤ … ¤ a_1) ¤ (b_l ¤ … ¤ b_1) \\
(\id ¤ m)((a_k ¤ … ¤ a_1) ¤ (b_l ¤ … ¤ b_1)) &= (-1)^{‖a_1‖ + … + ‖a_k‖} (a_k ¤ … ¤ a_1) ¤ m(b_l ¤ … ¤ b_1).
\end{align*}
\end{definition}

\begin{remark}
The $ A_∞ $-product $ μ $ on $ A $ can be interpreted as a coderivation $ μ_A: T(A[1]) → T(A[1]) $ given by
\begin{equation*}
μ_A (a_k ¤ … ¤ a_1) = \sum_{0 ≤ j < i ≤ k} (-1)^{‖a_1‖ + … + ‖a_j‖} a_k ¤ … ¤ μ(a_i, …, a_{j+1}) ¤ … ¤ a_1.
\end{equation*}
The $ A_∞ $-relations for $ μ $ are equivalent to the condition $ μ_A^2 = 0 $. One easily checks that $ μ_A $ is indeed a coderivation with respect to the coproduct $ Δ $. This check explains the awkward flip used in \eqref{eq:koszul-bimodules-coproduct}. If one uses “Polish notation” $ μ(a_1, …, a_k) $ instead of $ μ(a_k, …, a_1) $, one can avoid this flip (see \cite{Seidel-bimodules, CHL}).
\end{remark}

\begin{definition}
Let $ A $ be an $ A_∞ $-algebra. Then the \emph{bar construction} $ \Barcon A $ is the dg coalgebra structure on $ T(A[1]) $ given by the canonical coproduct $ Δ $ together with the coderivation $ μ_A $.
\end{definition}


Modules over dg algebras comes with only an action map $ A ¤ M → A $ and a differential $ M → M $. When $ A $ is an $ A_∞ $-algebra, one allows the action maps to have higher components. For sake of \autoref{sec:koszul-koszul}, we restrict here to defining right $ A $-modules. Left $ A $-modules are defined analogously.

\begin{definition}
Let $ A $ be an $ A_∞ $-algebra. Then a right \emph{$ A $-module} is a graded vector space $ M $ together with a degree $ 1 $ map $ μ: M ¤ T(A[1]) → M $ of satisfying the $ A_∞ $-relations when combined with the product $ μ $ of $ A $ in a suitable way:
\begin{multline*}
\sum_{0 ≤ i ≤ k} (-1)^{‖a_1‖ + … + ‖a_i‖} μ(μ(m, a_k, …, a_{i+1}), a_i, …, a_1) \\
+ \sum_{0 ≤ j < i ≤ k} (-1)^{‖a_1‖ + … + ‖a_j‖} μ(m, a_k, …, a_{i+1}, μ(a_i, …, a_{j+1}), a_j, …, a_1) = 0.
\end{multline*}
We shall only regard unital $ A $-modules, in the sense that $ μ(m, \id) = m $ and $ μ^{≥3} (m, …, \id, …) = 0 $.
\end{definition}

An $ A $-module can be captured elegantly as a coderivation, comparable to the way that the $ A_∞ $-product on $ A $ can be captured via a coderivation:

\begin{definition}
\label{def:koszul-bimodules-Mcoderivation}
Let $ A $ be an $ A_∞ $-algebra and $ M $ a graded vector space. Then we regard the comodule map $ Δ_M: M ¤ T(A[1]) → T(A[1]) ¤ (M ¤ T(A[1]) $ given by
\begin{equation*}
Δ_M (m ¤ a_k ¤ … ¤ a_1) = \sum_{0 ≤ i ≤ k} (a_i ¤ … ¤ a_1) ¤ (m ¤ a_k ¤ … ¤ a_{i+1}).
\end{equation*}
A map $ f: M ¤ T(A[1]) → N ¤ T(A[1]) $ is a \emph{coderivation} if
\begin{equation*}
Δ_N ∘ f = (\id ¤ f) ∘ Δ_M.
\end{equation*}
In the context of coderivations, we denote by $ μ_A $ also the map $ M ¤ T(A[1]) → M ¤ T(A[1]) $ given by
\begin{equation*}
μ_A (a_k ¤ … ¤ a_1 ¤ m) = \sum_{0 ≤ j < i ≤ k} (-1)^{‖a_1‖ + … + ‖a_j‖} ~ m ¤ a_k ¤ … ¤ μ(a_i ¤ … ¤ a_{j+1}) ¤ … ¤ a_1.
\end{equation*}
\end{definition}

\begin{remark}
Whenever $ f: M ¤ T(A[1]) → N ¤ T(A[1]) $ is a coderivation, we can consider its projection to $ N $ which we denote by $ f_0: M ¤ T(A[1]) → N $. Conversely, if $ f_0: M ¤ T(A[1]) → N $ is a graded linear map, we can turn it into a coderivation $ f: M ¤ T(A[1]) → N ¤ T(A[1]) $. The precise correspondence between $ f $ and $ f_0 $ reads
\begin{equation*}
f(m ¤ a_k ¤ … ¤ a_1) = \sum_{0 ≤ i ≤ k} (-1)^{|f_0| (‖a_1‖ + … + ‖a_i‖)} f_0 (m ¤ a_k ¤ … ¤ a_{i+1}) ¤ a_i ¤ … ¤ a_1.
\end{equation*}
In terms of \autoref{def:koszul-bimodules-Mcoderivation}, a right $ A $-module is simply a coderivation $ μ_M: M ¤ T(A[1]) → M ¤ T(A[1]) $ of degree $ 1 $ such that $ (μ_M + μ_A)^2 = 0 $. The use of the letter $ μ_A $ is clearly an abuse of notation, but we expect no confusion to arise.
\end{remark}


Capturing an $ A $-module in terms of a coderivation $ μ_M $ makes it particularly straightforward to define a category of $ A $-modules:

\begin{definition}
Let $ A $ be an $ A_∞ $-algebra. Then $ \rModfd A $ is the dg category of finite-dimensional right $ A $-modules, with structure specified as follows:
\begin{itemize}
\item The hom space $ \Hom_{\rModfd A} (M, N) $ is the space of coderivations $ M ¤ T(A[1]) → N ¤ T(A[1]) $.
\item The product $ μ^2_{\rModfd A} $ is ordinary composition.
\item The differential measures failure to be a module morphism:
\begin{equation*}
μ^1_{\rModfd A} (f) = (μ_A + μ_N) ∘ f - (-1)^{|f|} f ∘ (μ_A + μ_M).
\end{equation*}
\end{itemize}
\end{definition}

\begin{remark}
It is readily checked by hand that $ μ^1_{\rModfd A} (f) $ is a coderivation if $ f $ is a coderivation.
\end{remark}

Bimodules are another important tool in homological algebra. They can be defined in a way analogous to left or right $ A $-modules, see for instance \cite[Section 2]{Seidel-bimodules}. We come back to bimodules in the dg case in \autoref{sec:koszul-cy}.




\subsection{Koszul duality}
\label{sec:koszul-koszul}
In this section, we recapitulate Koszul duality as a preparation to the Cho-Hong-Lau construction. Koszul duality is a construction connecting an $ A_∞ $-algebra $ A $ with a dg algebra $ \koszul A $, its Koszul dual. Surprisingly, this construction also induces a correspondence between $ A_∞ $-modules over $ A $ and twisted complexes over $ \koszul A $:
\begin{equation*}
F: \rModfd A \verylongto \Tw\koszul A.
\end{equation*}
Our aim is to recall as fast as possible that Koszul duality produces functors. For further details we refer to \cite{LPWZ-Koszul} and \cite[Section 12.5]{Bocklandt-book}.

\begin{definition}
An \emph{augmented} $ A_∞ $-algebra is an $ A_∞ $-algebra $ A $ with a decomposition $ A = \bar A ⊕ ℂ \id $ such that $ μ(a_k, …, a_1) ∈ \bar A $ whenever $ a_1, …, a_k ∈ \bar A $.
\end{definition}

\begin{remark}
If $ A $ is an augmented $ A_∞ $-algebra, many constructions for $ A $ can be carried out by working with the “augmented” tensor coalgebra $ T(\bar A[1]) $ instead of the full tensor coalgebra $ T(A[1]) $. For instance, to define an $ A $-module it suffices to provide the map $ T(\bar A[1]) ¤ M → M $ instead of $ T(A[1]) ¤ M → M $ since we only work with unital modules.

The canonical coproduct $ Δ: T(\bar A[1]) → T(\bar A[1]) ¤ T(\bar A[1]) $ and for a graded vector space $ M $ the comodule map $ Δ_M: T(\bar A[1]) ¤ M → T(\bar A[1]) ¤ T(\bar A[1]) ¤ M $ are defined as in the non-augmented case, this time restricting to $ T(\bar A[1]) $ instead of $ T(A[1]) $.
\end{remark}

\begin{remark}
If $ M $ is a graded vector space, we denote its graded dual vector space by $ \kdual M $. The dual of $ T(\bar A[1]) $ is equal to
\begin{equation*}
\kdual{T(\bar A[1])} = \prod_{n = 0}^∞ (\kdual{\bar A[1]})^{¤ n}.
\end{equation*}
Here we make the identification that reverses the order of tensor components:
\begin{equation}
\label{eq:koszul-koszul-reverse}
\begin{aligned}
\kdual V_1 ¤ … ¤ \kdual V_k &\verylongisoto \kdual{(V_k ¤ … ¤ V_1)}, \\
φ_1 ¤ … ¤ φ_k & \verylongmapsto (-1)^{\sum_{1 ≤ s < t ≤ k} |φ_s||φ_t|} [(x_k ¤ … ¤ x_1) ↦ φ_1 (x_1) … φ_k (x_k)].
\end{aligned}
\end{equation}
The dual of a map $ m: T(\bar A[1]) → T(\bar A[1]) $ has the shape $ \kdual m: \kdual{T(\bar A[1])} → \kdual{T(\bar A[1])} $.
\end{remark}

\begin{definition}
\label{def:koszul-koszul-def}
Let $ A $ be a finite-dimensional augmented $ A_∞ $-algebra. Then its \emph{Koszul dual} $ \koszul A $ is the dg algebra given by
\begin{equation*}
\koszul A = \kdual{T(\bar A[1])}.
\end{equation*}
Upon the identification of \eqref{eq:koszul-koszul-reverse}, the product on $ \koszul A $ is defined as the standard product on $ \compl{T(\kdual{\bar A[1]})} $. The differential on $ \koszul A $ is given by
\begin{equation*}
dv = (-1)^{|v| + 1} v ∘ μ_A, \quad ∀ v ∈ \kdual{T(\bar A[1])} = \Hom_{ℂ} (T(\bar A[1]), ℂ).
\end{equation*}
Here $ μ_A: T(\bar A[1]) → T(\bar A[1]) $ denotes the product of $ A $.
\end{definition}

\begin{example}
Pick a basis $ x_1, …, x_k $ for $ A $. Then we have the dual elements $ \kdual x_i $ of degree $ |\kdual x_i| = 1 - |x_i| $. Multiplication within $ \koszul A $ can be performed simply as $ \kdual x_i · \kdual x_j = \kdual x_i \kdual x_j $. To interpret this element as element of $ \kdual{T(\bar A[1])} $, we have to apply the sign flip from \eqref{eq:koszul-koszul-reverse}:
\begin{equation*}
\kdual x_i \kdual x_j = (-1)^{|\kdual x_i| |\kdual x_j|} \kdual{(x_j ¤ x_i)}.
\end{equation*}
Here we have written $ \kdual{(x_j ¤ x_i)} $ for the element of $ \koszul A $ which sends the element $ x_j ¤ x_i $ to $ 1 $ and all other basis tensors to zero. Now write the product of $ A $ as
\begin{equation*}
μ^l (x_{i_l}, …, x_{i_1}) = \sum_{1 ≤ j ≤ k} c_{i_l, …, i_1}^j x_j.
\end{equation*}
Then
\begin{align*}
d_{\koszul A} (\kdual x_j) &= (-1)^{|\kdual x_j| + 1} \sum_{\substack{l ≥ 1 \\ 1 ≤ i_l, …, i_1 ≤ k}} c_{i_l, …, i_1}^j \kdual{(x_{i_l} ¤ … ¤ x_{i_1})} \\
&= (-1)^{|\kdual x_j| + 1} \sum_{\substack{l ≥ 1 \\ 1 ≤ i_l, …, i_1 ≤ k}} c_{i_l, …, i_1}^j (-1)^{\sum_{1 ≤ s < t ≤ l} |\kdual x_{i_s}| |\kdual x_{i_t}|} ~ \kdual x_{i_1} … \kdual x_{i_l}.
\end{align*}
\end{example}

\begin{remark}
The Koszul dual $ \koszul A $ is indeed a dg algebra. Abstractly speaking, the dual of the operator $ Δ $ is the ordinary product $ \kdual{Δ}: \kdual{T(\bar A[1])} ¤ \kdual{T(\bar A[1])} → \kdual{T(\bar A[1])} $. Dualizing $ μ_A^2 = 0 $ gives $ (\kdual{μ}_A)^2 = 0 $ and dualizing the co-Leibniz rule for $ μ_A $ with respect to $ Δ $ gives the Leibniz rule for $ μ_A $ with respect to $ \kdual{Δ} $. The signs can be checked by hand.
\end{remark}

\begin{definition}
Let $ A $ be an augmented $ A_∞ $-algebra. Let $ M $ and $ N $ be graded vector spaces and $ f: M ¤ T(\bar A[1]) → N $ a linear map. Then the \emph{Koszul transform} of $ f $ is the partial dual map $ \kdual f: M → N ¤ \koszul A $. It is a graded linear map with characterizing property
\begin{equation*}
∀ m ∈ M, ~ a ∈ T(\bar A[1]): \quad ⟨\kdual f(m), a⟩ = f(m ¤ a).
\end{equation*}
Here $ ⟨-, -⟩ $ liberally denotes the standard pairing between $ \kdual{T(\bar A[1])} $ and $ T(\bar A[1]) $, in this case as map
\begin{equation*}
⟨-, -⟩: (N ¤ \kdual{T(\bar A[1])}) ¤ T(\bar A[1]) → N.
\end{equation*}
\end{definition}

We copy \autoref{def:3ainfty-ainfty-Add} and adapt it slightly to the dg case.

\begin{definition}
Let $ D $ be a dg algebra. Then the category $ \Add D $ is the category of formal shifted sums of copies of $ D $ with hom spaces given as spaces of matrices with entries $ D $. The differential $ μ^1_{\Add D} $ and product $ μ^2_{\Add D} $ are the linear and bilinear extension of differential and product of $ D $, with a sign change. On single matrix entries, the sign change is as follows:
\begin{align*}
μ^1_{\Add D} (a) &= (-1)^{k-l} d_D a, \quad ∀ a: D[k] → D[l], \\
μ^2_{\Add D} (a, b) &= (-1)^{|a|_D (k-l)} ab, \quad ∀ a: D[l] → D[m], ~ b: D[k] → D[l].
\end{align*}
\end{definition}

\begin{definition}
Let $ D $ be a dg algebra. Then a twisted complex over $ D $ is an element $ X ∈ \Add D $ together with an element $ δ ∈ \End_{\Add D}^1 (X) $ such that $ δ $ is upper triangular and satisfies the Maurer-Cartan equation:
\begin{equation*}
μ^1_{\Add D} (δ) + μ^2_{\Add D} (δ, δ) = 0.
\end{equation*}
The differential and product on $ \Tw D $ follow the sign rule
\begin{align*}
μ^1_{\Tw D} (f) &= μ^1_{\Add D} (f) + μ^2_{\Add D} (δ, f) + (-1)^{|f|+1} μ^2_{\Add D} (f, δ), \\
μ^2_{\Tw D} (f, g) &= μ^2_{\Add D} (f, g).
\end{align*}
\end{definition}


The product $ μ^2_{\Add D} $ is comparable to a matrix product and similarly $ μ^1_{\Add D} $ acts as entry-wise differential. As preparation for the Koszul duality functor, we prove a few properties regarding the Koszul transform. We show that the Koszul transform of $ f ∘ g $ agrees with the matrix product $ μ^2_{\Add D} $ of $ \kdual f $ and $ \kdual g $ and that taking the Koszul transform of $ f ∘ μ_A $ amounts to taking the entry-wise differential $ μ^1_{\Add D} $ of $ \kdual f $. The precise statement is as follows:

\begin{lemma}
\label{th:koszul-koszul-functoriality}
Let $ A $ be a finite-dimensional augmented $ A_∞ $-algebra. Let $ f: M ¤ T(\bar A[1]) → N ¤ T(\bar A[1]) $ and $ g: L ¤ T(\bar A[1]) → M ¤ T(\bar A[1]) $ be coderivations. Then
\begin{align}
\label{eq:koszul-koszul-functoriality}
μ^2_{\Add D} (\kdual f_0, \kdual g_0) &= \kdual{(f ∘ g)_0} \\
\label{eq:koszul-koszul-dfD}
μ^1_{\Add D} (\kdual f_0) &= (-1)^{|f| + 1} \kdual{(f ∘ μ_A)_0}.
\end{align}
\end{lemma}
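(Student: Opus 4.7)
The plan is to prove both identities by direct computation: expand each side using the coderivation formula recalled in the previous remark, then compare via the canonical pairing between $\koszul A$ and $T(\bar A[1])$.

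For \eqref{eq:koszul-koszul-functoriality}, I would first use that $g$ is a coderivation to write
\begin{equation*}
g(l \otimes a_k \otimes \cdots \otimes a_1) = \sum_{0 \leq i \leq k} (-1)^{|g_0|(\|a_1\| + \cdots + \|a_i\|)} \, g_0(l \otimes a_k \otimes \cdots \otimes a_{i+1}) \otimes a_i \otimes \cdots \otimes a_1,
\end{equation*}
then apply $f$ and project onto $N$. Since $f$ is itself a coderivation, only the summand in which the outer leg of $f$ absorbs every remaining tensor factor survives the projection, producing an explicit double sum for $(f \circ g)_0$ of the form $\sum_i \pm f_0\bigl(g_0(l \otimes a_k \otimes \cdots \otimes a_{i+1}) \otimes a_i \otimes \cdots \otimes a_1\bigr)$. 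On the other hand, $\mu^2_{\Add D}(\kdual f_0, \kdual g_0)$ is the usual matrix product with $\koszul A$-valued entries, and its pairing against $a_k \otimes \cdots \otimes a_1$ is controlled by the product on $\koszul A$, which by construction is dual to the coproduct $\Delta$ on $T(\bar A[1])$. The resulting splits $(a_i \otimes \cdots \otimes a_1) \otimes (a_k \otimes \cdots \otimes a_{i+1})$ match the two factors $\kdual g_0$ and $\kdual f_0$, so a term-by-term comparison finishes the identification of the underlying vectors.

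For \eqref{eq:koszul-koszul-dfD}, the route is more direct. By definition $\mu^1_{\Add D}$ acts entrywise as the dg algebra differential $d_D$ of $\koszul A$, twisted by the shift sign $(-1)^{k-l}$. By \autoref{def:koszul-koszul-def} we have $d_D v = (-1)^{|v|+1} \, v \circ \mu_A$, and precomposing with $\mu_A$ on the $T(\bar A[1])$-side of $\kdual f_0$ is exactly the Koszul transform of $f \circ \mu_A$ projected to its $N$-component. It remains to collect the shift sign together with the internal $(-1)^{|v|+1}$ into the claimed global sign $(-1)^{|f|+1}$.

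The hard part will be sign bookkeeping. Each equation accumulates several independent sign contributions: the Koszul sign $(-1)^{\|a_1\| + \cdots + \|a_i\|}$ from the coderivation expansion, the order-reversal sign from \eqref{eq:koszul-koszul-reverse} that appears when tensors of dual elements are identified with duals of tensors, and the shift signs built into the definitions of $\mu^1_{\Add D}$ and $\mu^2_{\Add D}$. The cleanest way to check agreement is to evaluate both sides on pure tensor basis elements and perform an induction on the tensor length $k$, so that each sign contribution can be isolated on the level of individual summands instead of being dragged through the whole formula.
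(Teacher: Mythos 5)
Your proposal is correct and follows essentially the same computational route as the paper's proof: expand coderivations on basis tensors, compare via the canonical pairing, and chase the Koszul, order-reversal, and shift signs. The only small difference is that the paper skips the induction on tensor length by reducing immediately, via bilinearity, to rank-one $f_0$ and $g_0$ (supported on a single basis tensor $\varepsilon_M \otimes Y$ resp.\ $\varepsilon_L \otimes X$ and valued on a single basis vector), so each of the three sign sources you identify appears in isolation rather than being carried through the full coderivation sum.
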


\begin{proof}
The simplest way to evaluate both statements is by choosing bases for $ A $, $ L $, $ M $ and $ N $. Let $ X, Y ∈ T(\bar A[1]) $ be basis elements homogeneous with respect to both tensor degree and $ ‖·‖_{\bar A[1]} $. Let $ ε_L ∈ L $ and $ ε_M ∈ M $ and $ ε_N ∈ N $ be further basis elements. Let $ f $ and $ g $ be given coderivations. They are determined solely by their first component $ f_0 $ and $ g_0 $. Since the statement is linear in $ f $ and $ g $, we may assume that $ g_0 (ε_L ¤ X) = ε_M $ and $ f_0 (ε_M ¤ Y) = ε_N $ and that $ f_0 $ and $ g_0 $ vanish on all other basis elements for $ L ¤ T(\bar A[1]) $ and $ M ¤ T(\bar A[1]) $. The Koszul transforms of $ f $, $ g $ read
\begin{align*}
\kdual f (ε_M) &= ε_N ¤ \kdual Y, \\
\kdual g (ε_L) &= ε_M ¤ \kdual X.
\end{align*}
Let us now show \eqref{eq:koszul-koszul-functoriality}. We note that $ (f ∘ g)_0 $ vanishes on all basis elements, except
\begin{equation*}
(f ∘ g)_0 (ε_L ¤ X ¤ Y) = (-1)^{|g|‖Y‖} f(g(ε_L ¤ X) ¤ Y) = (-1)^{|g||\kdual Y|} ε_N.
\end{equation*}
Therefore $ \kdual{(f ∘ g)} $ and $ \kdual f · \kdual g $ vanish on all basis elements of $ L $, except
\begin{align*}
\kdual{(f ∘ g)}(ε_L) &= (-1)^{|g||\kdual Y|} ε_N ¤ \kdual{(X ¤ Y)} \\
&= (-1)^{|g||\kdual Y| + |\kdual X||\kdual Y|} ε_N ¤ \kdual Y \kdual X \\
&= (-1)^{(|ε_L| - |ε_M|) |\kdual Y|} ε_N ¤ \kdual Y \kdual X \\
&= (-1)^{(|ε_L| - |ε_M|) |\kdual Y|} (\kdual f · \kdual g)(ε_L) \\
&= μ^2_{\Add D} (\kdual f, \kdual g) (ε_L).
\end{align*}
In the third row we have used that $ |g| + |ε_L| = |\kdual X| + |\kdual X| $. This proves \eqref{eq:koszul-koszul-functoriality}.

We now prove \eqref{eq:koszul-koszul-dfD}. It is our aim to compute the composition $ (f ∘ μ_A)_0 = f_0 ∘ μ_A $. Since $ f_0 $ vanishes on all basis elements except $ ε_M ¤ Y $ and \eqref{eq:koszul-koszul-dfD} is linear in $ μ_A $ itself, we may simply assume that $ μ_A (ε_M ¤ Z) = ε_M ¤ Y $ for some basis element $ Z ∈ T(\bar A[1]) $ and $ μ_A $ vanishes on all other basis elements. Then $ (f ∘ μ_A)_0 $ vanishes on all basis elements of $ L ¤ T(\bar A[1]) $, except
\begin{equation*}
(f ∘ μ_A)_0 (ε_M ¤ Z) = f_0 (ε_M ¤ Y) = ε_N.
\end{equation*}
We see that $ \kdual{(f ∘ μ_A)_0} $ vanishes on all basis elements of $ M $, except
\begin{align*}
\kdual{(f ∘ μ_A)_0} (ε_M) &= ε_N ¤ \kdual Z \\
&= (-1)^{|\kdual Y| + 1} ε_N ¤ d\kdual Y \\
&= (-1)^{|f| + 1} μ^1_{\Add D} (\kdual f) (ε_M).
\end{align*}
In the last row, we have used that $ |ε_M| + |f| = |ε_N| + |\kdual Y| $. This finishes the proof.
\end{proof}

\begin{remark}
In \autoref{th:koszul-koszul-functoriality}, it is essential that $ f $ and $ g $ be coderivations. For instance, $ μ_A: M ¤ T(\bar A[1]) → M ¤ T(\bar A[1]) $ is not a coderivation on its own and in fact its zeroth component $ μ_{M, 0} $ vanishes, while a composition $ f ∘ μ_A $ may again have nonvanishing zeroth component and therefore nonvanishing Koszul transform.
\end{remark}

In \autoref{th:koszul-koszul-functor}, we recall the Koszul duality functor between modules over $ A $ and twisted complexes over $ \koszul A $. The idea is to apply “Koszul transform” the action map of every module. The construction is functorial, therefore gives rise to a dg functor.

\begin{corollary}
\label{th:koszul-koszul-functor}
Let $ A $ be a finite-dimensional augmented $ A_∞ $-algebra. Then the following defines a dg functor:
\begin{align*}
F: \rModfd A &\verylongto \Tw\koszul A, \\
(M, μ_M) &\verylongmapsto (M ¤ \koszul A, \kdual{μ}_{M, 0}), \\
f &\verylongto \kdual f_0.
\end{align*}
We call $ F $ the \emph{Koszul duality functor} of $ A $.
\end{corollary}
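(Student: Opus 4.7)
The plan is to verify the four defining properties of a dg functor: (a) $F(M,\mu_M)$ is a well-formed twisted complex, (b) $F$ sends coderivations to morphisms of the correct degree in $\Tw \koszul A$, (c) $F$ preserves composition, and (d) $F$ intertwines $\mu^1_{\rModfd A}$ and $\mu^1_{\Tw \koszul A}$. All four reduce to \autoref{th:koszul-koszul-functoriality} combined with one elementary vanishing observation: for any coderivation $h: L \otimes T(\bar A[1]) \to M \otimes T(\bar A[1])$ one has $(\mu_A \circ h)_0 = 0$, because $\mu_A$ only reshuffles tensor factors to the right of the $M$-component and therefore never lands in the tensor-degree-zero piece $M$ that the projection $(\cdot)_0$ picks out.

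For (a), I would expand the module axiom $(\mu_M + \mu_A)^2 = 0$ into four cross-terms on $M \otimes T(\bar A[1])$. Projecting to $M$, the vanishing observation kills both $(\mu_A \circ \mu_M)_0$ and $(\mu_A^2)_0$, leaving $(\mu_M \circ \mu_M)_0 + (\mu_M \circ \mu_A)_0 = 0$. Applying \autoref{th:koszul-koszul-functoriality} (with $|\mu_M|=1$, so the sign in \eqref{eq:koszul-koszul-dfD} becomes $+1$) converts this into
\begin{equation*}
\mu^2_{\Add \koszul A}(\kdual{\mu}_{M,0}, \kdual{\mu}_{M,0}) + \mu^1_{\Add \koszul A}(\kdual{\mu}_{M,0}) = 0,
\end{equation*}
which is exactly the Maurer-Cartan equation for $\delta_{F(M)} = \kdual{\mu}_{M,0}$. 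Statement (b) is automatic from the definition of the partial dual, which preserves degree and lands in a space of matrices with entries in $\koszul A$.

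Statement (c) is an immediate specialization of \eqref{eq:koszul-koszul-functoriality}: for coderivations $f, g$,
\begin{equation*}
F(f \circ g) = \kdual{(f \circ g)_0} = \mu^2_{\Add \koszul A}(\kdual{f}_0, \kdual{g}_0) = \mu^2_{\Tw \koszul A}(F(f), F(g)).
\end{equation*}
For (d), I would decompose $\mu^1_{\rModfd A}(f) = (\mu_A + \mu_N)\circ f - (-1)^{|f|} f \circ (\mu_A + \mu_M)$ into four pieces and project to $N$. The vanishing observation eliminates $(\mu_A \circ f)_0$. The piece $(\mu_N \circ f)_0$ gives $\mu^2_{\Add \koszul A}(\delta_{F(N)}, F(f))$ by \eqref{eq:koszul-koszul-functoriality}; the piece $-(-1)^{|f|}(f \circ \mu_A)_0$ gives $\mu^1_{\Add \koszul A}(F(f))$ after cancelling the sign $(-1)^{|f|+1}$ from \eqref{eq:koszul-koszul-dfD} against the $-(-1)^{|f|}$ in front; the piece $-(-1)^{|f|}(f \circ \mu_M)_0$ gives $(-1)^{|f|+1}\mu^2_{\Add \koszul A}(F(f), \delta_{F(M)})$ by \eqref{eq:koszul-koszul-functoriality}. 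Summing reproduces $\mu^1_{\Tw \koszul A}(F(f))$ exactly.

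The hard part will be the sign bookkeeping in (d), since four independent sources of signs interact: the definition of $\mu^1_{\rModfd A}$, the sign from \eqref{eq:koszul-koszul-dfD}, the degree-dependent sign in $\mu^1_{\Tw \koszul A}$, and the Koszul sign rule implicit in composing coderivations. A secondary technical point is to confirm that $\delta_{F(M)}$ is admissible as twisted differential in $\Tw \koszul A$: here finite-dimensionality of $M$ makes the Maurer-Cartan sum a genuinely finite matrix equation over $\koszul A$, and the non-augmentation part of $\delta_{F(M)}$ is just the differential $\mu_M^1$ on $M$, so the usual bar-cobar convergence suffices.
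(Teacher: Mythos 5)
Your proposal follows exactly the paper's route: derive the Maurer–Cartan equation for $\delta_{F(M)} = \kdual{\mu}_{M,0}$ from the module axiom $(\mu_M + \mu_A)^2 = 0$ after killing the $\mu_A \circ (\cdot)$ cross-terms under $(\cdot)_0$, then plug \autoref{th:koszul-koszul-functoriality} into the four-term expansion of $\mu^1_{\rModfd A}(f)$ to obtain $\mu^1_{\Tw\koszul A}(F(f))$, with the composition statement being an immediate reading of \eqref{eq:koszul-koszul-functoriality}. Your one generalization — stating $(\mu_A \circ h)_0 = 0$ for arbitrary coderivations $h$ rather than only for $\mu_M$ — is exactly the observation the paper uses silently to drop $(\mu_A \circ f)_0$ in the differential computation, and your closing remark about upper-triangularity of $\delta_{F(M)}$ is the issue the paper also only addresses loosely (in the remark following the corollary, under an additional positivity hypothesis on $\bar A$), so no meaningful gap remains.
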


\begin{proof}
There are three items to check. First, we show that $ \kdual{μ}_{M, 0} $ satisfies the Maurer-Cartan equation. Then, we check that $ F $ preserves differential and product. We start with the Maurer-Cartan equation:
\begin{align*}
μ^1_{\Add \koszul A} (\kdual{μ}_{M, 0}) + μ^2_{\Add \koszul A} (\kdual{μ}_{M, 0}, \kdual{μ}_{M, 0}) &= \kdual{(μ_{M, 0} ∘ μ_A)} + \kdual{(μ_{M, 0} ∘ μ_M)} \\
&= \kdual{(μ_M ∘ μ_A + μ_M ∘ μ_M + μ_A ∘ μ_M)_0} = 0.
\end{align*}
In the second row, we have used that $ (μ_A ∘ μ_M)_0 = 0 $. Next, for every coderivation $ f: M ¤ T(\bar A[1]) → N ¤ T(\bar A[1]) $ we have
\begin{align*}
F(μ^1 (f)) &= \kdual{((μ_N + μ_A) ∘ f - (-1)^{|f|} f ∘ (μ_M + μ_A))_0} \\
&= \kdual{(μ_N ∘ f)_0} - (-1)^{|f|} \kdual{(f ∘ μ_M)_0} - (-1)^{|f|} \kdual{(f ∘ μ_A)_0} \\
&= μ^2_{\Add \koszul A} (\kdual{μ}_{N, 0}, \kdual f_0) - (-1)^{|f|} μ^2_{\Add \koszul A} (\kdual f_0, \kdual{μ}_{M, 0}) + μ^1_{\Add \koszul A} (\kdual f_0) \\
&= μ^1_{\Tw\koszul A} (F(f)).
\end{align*}
If additionally $ g: L ¤ T(\bar A[1]) → M ¤ T(\bar A[1]) $ is a coderivation, then
\begin{equation*}
F(μ^2 (f, g)) = F(f ∘ g) = \kdual{(f ∘ g)_0} = μ^2_{\Add \koszul A} (\kdual f, \kdual g) = μ^2_{\Tw\koszul A} (F(f), F(g)).
\end{equation*}
This shows that $ F $ is a dg functor and finishes the proof.
\end{proof}

\begin{remark}
Strictly speaking, the object $ F(M) = (M ¤ \koszul A, \kdual{μ}_{M, 0}) $ only becomes a twisted complex upon choice of a graded basis for $ M $. Furthermore, $ \kdual{μ}_{M, 0} $ need not be an upper triangular matrix. However, if $ \bar A $ is concentrated in positive degrees, then sorting the basis elements of $ M $ in order of descending degree makes $ \kdual{μ}_{M, 0} $ upper triangular.
\end{remark}

\subsection{Classical Koszual duals}
\label{sec:koszul-classical}
In this section, we comment on the relations of the modern with the classical Koszul dual construction. Classical Koszul duality is namely a phenomenon known for ordinary algebras and we recall here its typical properties: First, the double Koszul dual $ \koszul{(\koszul A)} $ is $ A $ again. Second, the Koszul dual algebra is formal. Third, the Koszul algebra is the Ext algebra of its simple module $ ℂ = A / \bar A $. In the present section, we recall these statements in the classical context and recall how they translate to modern Koszul duality. A valuable source is \cite{LPWZ-Koszul}.


Koszul duality has classically been a correspondence between Koszul algebras, a class of ordinary algebras with quadratic relations:

\begin{center}
\begin{tikzpicture}
\path (0, 0) node[align=center] (A) {\textbf{Ordinary algebra} \\ $ \frac{V ¤ V}{R} = \frac{ℂ⟨X, Y⟩}{(XY-YX)} $};
\path (8, 0) node[align=center] (B) {\textbf{Ordinary algebra} \\ $ \frac{\kdual V ¤ \kdual V}{R^{\perp}} = \frac{ℂ⟨X, Y⟩}{(X^2, Y^2, XY+YX)} $};
\path[draw, <->] ($ (A.east)!0.2!(B.west) $) -- ($ (A.east)!0.8!(B.west) $) node[midway, above] {Koszul};
\end{tikzpicture}
\end{center}

The relations on either side are the “orthogonal complement” of the relations on the other side along the pairing $ (V ¤ V) ¤ (\kdual V ¤ \kdual V) → ℂ $. In particular, classical Koszul duality is an involution from the beginning.

Koszul duality for $ A_∞ $-algebras is not a one-way street either. If $ A $ is an augmented finite-dimensional $ A_∞ $-algebra, we regard the double dual $ \koszul{(\koszul A)} $. It is possible that this dg algebra is quasi-isomorphic to $ A $ itself. However, the double Koszul dual construction applies vector space duals twice so that finiteness conditions are required to match $ A $ exactly with $ \koszul{(\koszul A)} $.

A sufficient finiteness criterion can be formulated if one assumes that the $ A_∞ $-algebra $ A $ has an additional grading, also referred to as \emph{Adams grading}. The direct sum decomposition $ A = ℂ\id ⊕ \bar A $ is supposed to be compatible and the products $ μ^k $ are assumed to be homogeneous with respect to the Adams grading. One then says that $ A $ is \emph{Adams connected} if the homogeneous part of $ \bar A $ with respect to any Adams degree $ j ∈ ℤ $ is finite-dimensional and vanishes either for all $ j ≤ 0 $ or all $ j ≥ 0 $ \cite{LPWZ-Koszul}. This connectedness assumption is a sufficient finiteness criterion and ensures that the double Koszul $ \koszul{(\koszul A)} $ is quasi-isomorphic to $ A $ again:

\begin{theorem}[{\cite{LPWZ-Koszul}}]
Let $ A $ be an augmented $ A_∞ $-algebra. If $ A $ is Adams connected, then $ \koszul{(\koszul A)} $ and $ A $ are quasi-isomorphic as $ A_∞ $-algebras.
\end{theorem}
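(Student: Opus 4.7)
The plan is to exhibit an explicit comparison morphism of $A_\infty$-algebras $ηA : A → \koszul{(\koszul A)} $ and to show, using the Adams connectedness hypothesis, that it is a quasi-isomorphism. The candidate map is the evaluation-at-$A$ morphism: an element $a \in \bar A$ tests a dual element $φ \in \koszul A = \kdual{T(\bar A[1])}$ by reading off its component on $a$, and this pairing extends naturally to a map $A \to \kdual{T(\overline{\koszul A}[1])} = \koszul{(\koszul A)}$. Concretely, one dualizes the bar construction twice: $B(A) = T(\bar A[1])$ is a dg coalgebra via $\mu_A$ and the shuffle coproduct $Δ$, the Koszul dual is its continuous vector-space dual, and iterating the construction produces a double-dual candidate of $A$. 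The first task is to verify that $η_A$ is indeed a strict morphism of $A_\infty$-algebras, which is a purely formal check on how $\mu_A$ translates under double dualization.

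The next task is to put enough finiteness in place so that $ηA$ can be a quasi-isomorphism. Here Adams connectedness enters decisively. Equip $A$ with its Adams grading and propagate it: the bar construction inherits an Adams grading via the total Adams weight of a tensor word, and the dual $\koszul A$ is graded by Adams degree as well. Because $\bar A$ is finite-dimensional in each Adams degree and vanishes in Adams degrees of one sign, the Adams weight of a tensor word $a_1 \otimes \dots \otimes a_k$ goes to $\pm\infty$ with $k$. Consequently, each Adams-graded piece of $T(\bar A[1])$, and hence of $\koszul A$, $B(\koszul A)$ and $\koszul{(\koszul A)}$, is finite-dimensional. The upshot is that the canonical evaluation $V \to \kdual{(\kdual V)}$ is an isomorphism in each Adams degree, so checking that $η_A$ is a quasi-isomorphism reduces to verifying that the bar-cobar unit is a quasi-isomorphism Adams-degree-by-Adams-degree.

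The key computational step is then the bar-cobar quasi-isomorphism $A \isoto \Omega B(A)$, where $Ω$ denotes the cobar construction of a dg coalgebra. One realizes $\koszul{(\koszul A)}$ as a model for $\Omega B(A)$ using the pairing described above; after that, one runs the standard acyclic-cobar-bar argument. I would set up a filtration by Adams degree on both sides, so that the map $ηA$ becomes filtered, and then invoke the classical fact that the associated graded of the bar-cobar unit is a quasi-isomorphism (this reduces to the contractibility of the two-sided bar complex of a free resolution). Adams connectedness ensures the filtration is bounded or exhaustive in each fixed Adams degree, so the associated spectral sequence converges strongly and the quasi-isomorphism on $E^1$ pages lifts to a quasi-isomorphism of total complexes.

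The main obstacle will be the careful bookkeeping of the completion and of signs in the double dualization: the underlying vector space of $\koszul A$ is a completed tensor algebra $ℂ\ncpow{\kdual x_1, \dots, \kdual x_n}$, so the bar construction $B(\koszul A)$ and its dual require a topological interpretation, and one must check that Adams connectedness makes every relevant sum finite after fixing an Adams degree. Once this is settled, the $A_\infty$-homotopy between $\id_A$ and the composition of $ηA$ with a quasi-inverse can be constructed by the standard contracting homotopy on the two-sided bar complex, and the quasi-isomorphism statement follows. The modern Koszul-duality framework of \cite{LPWZ-Koszul} carries out exactly this argument.
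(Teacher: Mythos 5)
The paper states this result as a direct citation from \cite{LPWZ-Koszul} and gives no proof of its own, so there is no internal argument to compare your proposal against; what follows is an assessment of the proposal on its own terms.

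Your overall strategy --- interpret $\koszul{(\koszul A)}$ as a model for the cobar-bar composite $\Omega B A$, reduce the claim to the bar-cobar unit $A \to \Omega B A$ being a quasi-isomorphism, and use the Adams grading to ensure that every space involved is finite-dimensional in each Adams degree so that the double-dual evaluation is an isomorphism degree-by-degree --- is the standard route and is in the spirit of what LPWZ actually do. The critical role you give Adams connectedness is correctly identified: it is what makes the linear duals well-behaved and lets the spectral-sequence argument converge.

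Two points need correcting or tightening. First, the bar construction $B(A) = T(\bar A[1])$ is a dg coalgebra under the \emph{deconcatenation} coproduct (this is exactly the $\Delta$ of equation (3.1) in the paper, $\Delta(a_k \otimes \cdots \otimes a_1) = \sum_i (a_i \otimes \cdots \otimes a_1) \otimes (a_k \otimes \cdots \otimes a_{i+1})$), not the shuffle coproduct as you write. The shuffle coproduct is irrelevant here and would not even make $\mu_A$ a coderivation. Second, the phrase ``contractibility of the two-sided bar complex of a free resolution'' is not quite the right mechanism for the bar-cobar unit being a quasi-isomorphism; the usual proof filters $\Omega B A$ by word length and identifies the associated graded of the unit with the inclusion of $A$ into a tensor algebra on an acyclic complex, using connectivity to guarantee convergence. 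You have the right shape of argument, but this step should be stated with more care, especially since the completion built into $\koszul A = \kdual{T(\bar A[1])}$ means that $B(\koszul A)$ and its dual must be handled on fixed Adams pieces before the bar/cobar identifications make sense. With those two repairs the proposal would match the content of the cited theorem.
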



The classical Koszul dual is automatically an ordinary graded algebra, without the need to pass to cohomology. From a modern perspective, Koszul duals are dg algebras and only passing to cohomology $ \H\koszul A $ and forgetting the $ A_∞ $-structure gives an ordinary graded algebra. Koszul algebras are certain types of algebras distinguished by the property that their Koszul dual $ \koszul A $ tends to be a formal dg algebra. This way, one can forget the higher structure on $ \H\koszul A $ and recover classical Koszul duality. For sake of completeness, we recall the definition here:

\begin{definition}[{\cite{BGS}}]
A graded associative algebra $ A $ is \emph{Koszul} if it is positively graded $ A = \bigoplus_{i ≥ 0} A^i $, we have $ A^0 = ℂ\id $ and $ ℂ = A / \bigoplus_{i > 0} A^i $ as an $ A $-module has a resolution of graded $ A $-modules
\begin{equation*}
… → P^2 → P^1 → P^0 → A → 0
\end{equation*}
in which every $ P^i $ is generated by its degree $ i $ component: $ P^i = A P_i^i $.
\end{definition}

In \cite{LPWZ-Koszul} a precise criterion was given for formality:

\begin{theorem}[{\cite[Corollary 2.7]{LPWZ-Koszul}}]
Let $ A $ be an $ (a, b) $-generated Koszul algebra in the sense of \cite[Definition 2.6]{LPWZ-Koszul}. Then we have an $ A_∞ $-quasi-isomorphism $ \koszul A ≅ \H^0\koszul A $.
\end{theorem}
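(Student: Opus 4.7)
The plan is to apply Kadeishvili's theorem to place a minimal $A_\infty$-structure on $\H\koszul A$ quasi-isomorphic to $\koszul A$, and then to kill all higher products $m_{\geq 3}$ by a bigrading count exploiting $(a,b)$-generation. Once $m_{\geq 3}=0$, the minimal model is just the ordinary graded algebra $\H^0\koszul A$ with its standard product, and the Kadeishvili quasi-isomorphism gives the desired $A_\infty$-quasi-isomorphism $\koszul A \simeq \H^0 \koszul A$.

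First I would set up the bigrading. A Koszul algebra $A = \bigoplus_{i\geq 0} A_i$ is Adams-graded, so the bar construction $T(\bar A[1])$ carries two gradings: the cohomological degree $s$ counting tensor length and the internal Adams weight $w$ recording total Adams degree. Both descend to $\koszul A$, and the dg differential on $\koszul A$ is bihomogeneous. One may choose a Kadeishvili homotopy retract of $\koszul A$ onto $\H\koszul A$ that respects the Adams weight; this produces transferred higher products $m_n$ on $\H\koszul A$ that are bihomogeneous of cohomological shift $2-n$ and Adams shift $0$. Next, using the $(a,b)$-generation hypothesis in the sense of \cite[Definition 2.6]{LPWZ-Koszul}, I would show that $\H^s\koszul A$ is concentrated in a single Adams weight $w(s)$ depending linearly on $s$. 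In the classical quadratic Koszul case this is the familiar diagonal statement $\Ext^s_A(\mathbb{C},\mathbb{C}) = \Ext^{s}_s$; the $(a,b)$ framework produces the same diagonal concentration with a different slope, extracted from the minimal graded resolution of $\mathbb{C}$ over $A$ as in \cite{LPWZ-Koszul}.

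With $\H\koszul A$ concentrated on a single ray of the $(s,w)$-lattice, the vanishing of higher products is immediate. Any $m_n$ with $n \geq 3$ would carry $n$ inputs on the ray to an output on the ray, but the combination of cohomological shift $2-n$ and Adams shift $0$ is inconsistent with remaining on a ray of nonzero slope unless $n=2$. Hence $m_{\geq 3} = 0$, the minimal model collapses to the associative algebra $\H^0\koszul A$, and $\koszul A$ is formal. The main obstacle, as I see it, is the middle step — establishing the single-ray concentration of $\H\koszul A$ for $(a,b)$-generated Koszul algebras, which is precisely where the LPWZ generation hypothesis does its work. Once that concentration is granted, both the Kadeishvili transfer and the bidegree vanishing argument are routine.
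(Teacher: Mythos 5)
The paper does not actually supply a proof of this theorem; it is a verbatim citation of \cite[Corollary 2.7]{LPWZ-Koszul}, so there is no in-house argument to compare yours against. That said, your reconstruction follows the expected template (bigraded Kadeishvili transfer plus an Adams-degree count), and your bidegree calculation in the final step is correct as stated: \emph{if} $\H^s\koszul A$ sits in a single Adams weight $w(s)=cs$ with $c\neq 0$, then $m_n$ preserving Adams degree and shifting cohomological degree by $2-n$ forces $c(2-n)=0$, hence $n=2$. What worries me is the step you yourself flag as the crux: the assertion that $(a,b)$-generation places $\H\koszul A$ on a single line of the $(s,w)$-lattice, i.e.\ $w(s)$ is honestly linear in $s$. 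For a classical Koszul algebra ($a=b$) this is the familiar diagonal $\Ext^s=\Ext^s_s$ and your argument closes. But when $a\neq b$, the natural expectation — and what actually happens for Berger's $N$-Koszul algebras, which are the paradigmatic examples with two generation slopes — is that $w(s)$ is only \emph{piecewise} linear, alternating slopes between even and odd homological degrees. Piecewise-linear concentration does not kill higher products: in the $N$-Koszul case the Adams count permits a nontrivial $m_N:(\Ext^1)^{\otimes N}\to\Ext^2$, and indeed the Ext-algebra of an $N$-Koszul algebra is famously \emph{not} formal. Your argument, as written, would erroneously conclude formality for such algebras.

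So the gap is concrete: you need to extract from \cite[Definition 2.6]{LPWZ-Koszul} exactly why their notion of $(a,b)$-generated Koszul is compatible with a genuinely linear concentration (or, if it is not, what replaces the simple slope argument). One cannot get away with the heuristic analogy to the $a=b$ case here — it is precisely the $a\neq b$ regime where the naive bidegree count is known to fail, so the content of the LPWZ theorem must live in constraints beyond what you have used. Your own caveat (``this is precisely where the LPWZ generation hypothesis does its work'') is exactly right, but it cannot be deferred: without that step you have not ruled out the $N$-Koszul counterexample, and the proof does not close. A secondary, smaller point: the statement asserts a quasi-isomorphism to $\H^0\koszul A$ rather than to $\H\koszul A$, which additionally requires the cohomology to be concentrated in a single cohomological degree of $\koszul A$; for $a=b=1$ this is automatic from the diagonal, but for general $(a,b)$ this needs its own argument, and your write-up does not address it.
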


\begin{remark}
Classical Koszul duality is full of correspondences between types of algebras. Folklore statements include that Koszul algebras correspond to Koszul algebras, Gorenstein corresponds to Gorenstein, Artin-Schelter regular corresponds to Frobenius \cite[Corollary D, E]{LPWZ-Koszul}.
\end{remark}

\begin{remark}
The classical analog of $ \Tw\H^0\koszul A $ is $ \Perf \H^0\koszul A $. This is the reason why Koszul duality is classically formulated as triangulated equivalence between categories of the form $ \D \Mod A $ and $ \Perf \H^0\koszul A $, see for instance \cite[Theorem B]{LPWZ-Koszul}. These classical Koszul duality functors are typically complicated for the reason that they do not take the possible higher structures on $ \H\koszul A $ into account. In the classical world, this is “solved” by restricting to Koszul algebras. Thanks to modern Koszul duality, it is possible to recover classical Koszul duality from the functor $ \Modfd A → \Tw \koszul A $. The idea is to replace $ \koszul A $ by its zeroth cohomology, which is an ordinary algebra. Abstractly, we aim for a functor $ \Tw\koszul A → \Tw\H^0 \koszul A $ in order to precompose it with the Koszul duality functor $ \Modfd A → \Tw\koszul A $. We follow this route in \autoref{sec:koszul-transfer}.
\end{remark}


Another classical statement of Koszul duality is that $ \koszul A $ can be interpreted as Ext algebra of the simple module $ ℂ = A /\bar A $ of $ A $. As we recall in \autoref{th:koszul-prop-Ext}, this is also the case if $ A $ is an $ A_∞ $-algebra. One can alternatively use this description of $ \koszul A $ as defining property. With regards to notation, let $ A $ be an augmented $ A_∞ $-algebra. Then we denote by $ ℂ $ the simple right $ A $-module $ ℂ = A /\bar A $. Its action map $ ℂ ¤ T(\bar A[1]) → ℂ $ is simply zero. In these terms, $ \koszul A $ is just the dg algebra $ \Hom_{rModfd A} (ℂ, ℂ) $:

\begin{lemma}
\label{th:koszul-prop-Ext}
Let $ A $ be an augmented $ A_∞ $-algebra. Then we have an isomorphism of dg algebras
\begin{equation*}
\koszul A ≅ \Hom_{\rModfd A} (ℂ, ℂ).
\end{equation*}
\end{lemma}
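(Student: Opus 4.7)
The plan is to exploit that for $ M = N = \C $, both the $ A $-action $ \mu_{\C} $ vanishes and a coderivation $ f \colon \C \otimes T(\bar A[1]) \to \C \otimes T(\bar A[1]) $ is determined by its first component $ f_0 \colon \C \otimes T(\bar A[1]) \to \C $. Under the natural identification $ \C \otimes T(\bar A[1]) \cong T(\bar A[1]) $, this first component is precisely an element of $ \kdual{T(\bar A[1])} = \koszul A $. I therefore define the candidate isomorphism $ \Phi \colon \koszul A \to \Hom_{\rModfd A}(\C, \C) $ by sending $ v $ to the unique coderivation $ f_v $ with $ (f_v)_0(1 \otimes a) = v(a) $. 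By the remark following \autoref{def:koszul-bimodules-Mcoderivation}, $ \Phi $ is a degree-preserving linear bijection, and the substance of the proof is to verify that it respects product and differential.

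For multiplicativity I would specialize equation \eqref{eq:koszul-koszul-functoriality} of \autoref{th:koszul-koszul-functoriality} to $ L = M = N = \C $. The matrix product in $ \Add \koszul A $ reduces to the ordinary product in $ \koszul A $ because all relevant shift degrees vanish, while $ \mu^2_{\rModfd A}(f_v, f_w) = f_v \circ f_w $ by definition. Hence $ (f_v \circ f_w)_0 $ corresponds to $ v \cdot w $ under $ \Phi $, giving $ \Phi(v) \cdot \Phi(w) = \Phi(v \cdot w) $.

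For the differential, the simplification $ \mu_{\C} = 0 $ reduces $ \mu^1_{\rModfd A}(f) $ to $ \mu_A \circ f - (-1)^{|f|} f \circ \mu_A $. I would then observe that $ (\mu_A \circ f)_0 = 0 $: any output of $ f $ has the form $ f_0(1 \otimes a_k \otimes \ldots \otimes a_{j+1}) \otimes a_j \otimes \ldots \otimes a_1 $ with $ f_0(\ldots) \in \C $, and $ \mu_A $ only rearranges and contracts the trailing tensor factors, so the result always retains strictly positive tensor degree and projects to zero in $ \C $. Consequently $ (\mu^1_{\rModfd A}(f))_0 = (-1)^{|f|+1}(f \circ \mu_A)_0 $, which under $ \Phi $ matches the formula $ dv = (-1)^{|v|+1}\, v \circ \mu_A $ from \autoref{def:koszul-koszul-def}. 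The main obstacle is the careful sign bookkeeping in these identifications together with the verification that $ (\mu_A \circ f)_0 $ vanishes; beyond these checks, the lemma is a direct unravelling of the definitions developed in this section.
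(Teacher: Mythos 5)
Your proof is correct and uses the same technical core as the paper: the multiplicativity and differential-compatibility statements of \autoref{th:koszul-koszul-functoriality}, specialized to $L=M=N=\C$. The paper packages the argument as a one-line consequence of \autoref{th:koszul-koszul-functor} (the Koszul duality functor $F$ is fully faithful and sends $\C$ to $(\koszul A, 0)$, whose endomorphism algebra is $\koszul A$); you instead unravel exactly what that specialization amounts to, in particular spelling out the key point that $(\mu_A \circ f)_0 = 0$ because the augmentedness of $A$ forces $\mu_A$ to preserve strictly positive tensor degree, which is the same observation that underlies the paper's "easy to see that $F$ is fully faithful."
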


\begin{proof}
This follows easily from \autoref{th:koszul-koszul-functor}. Indeed, the Koszul duality functor $ F $ sends the simple module $ ℂ $ to the twisted complex $ (\koszul A, 0) ∈ \Tw\koszul A $ and therefore establishes a map between the two dg endomorphism algebras. It is easy to see that $ F $ is fully faithful. Since the endomorphism algebra of $ (\koszul A, 0) $ is just $ \koszul A $, we are done.
\end{proof}

\begin{remark}
Similar to \autoref{th:koszul-prop-Ext}, let $ (M, μ_M) $ be a finite-dimensional right $ A $-module. Then we have an isomorphism of right $ \koszul A $-modules
\begin{equation*}
M ¤ \koszul A ≅ \Hom_{\rModfd} (ℂ, M).
\end{equation*}
The differential on $ M ¤ \koszul A $ is induced from induced from $ \rModfd A $ and the action of $ \koszul A $ is by (signed) right-multiplication. The action of $ \koszul A $ on $ \Hom_{\rModfd} (ℂ, M) $ is by composing with $ \koszul A ≅ \Hom_{\rModfd} (ℂ, ℂ) $ on the right.
\end{remark}

\subsection{Calabi-Yau algebras}
\label{sec:koszul-cy}
In this section, we recall Calabi-Yau algebras. This class of algebras is now widely recognized as a noncommutative analog of Calabi-Yau manifolds. The definition brings several technical difficulties and correspondingly a wide range of adaptations and variants have been introduced in the literature. In the present section, we follow the original definition of Ginzburg \cite{Ginzburg-CY}, in particular we focus on the dg case.


We start by recalling opposite and enveloping algebras.

\begin{definition}
Let $ A $ be a dg algebra. Then the \emph{opposite algebra} $ A^{\algopp} $ of $ A $ is obtained by setting $ A^{\algopp} = A $ as vector space and defining
\begin{equation*}
d_{A^{\algopp}} (a) = - d_A (a), \quad a ·_{A^{\algopp}} b = (-1)^{|a||b|} ba.
\end{equation*}
The \emph{enveloping algebra} $ A^{\env} = A ¤ A^{\algopp} $ is the tensor product of $ A $ and $ A^{\algopp} $. It is an dg algebra itself with product $ (a ¤ b)(c ¤ d) = (-1)^{|b||c|} ac ¤ bd $ and differential $ d(a ¤ b) = da ¤ b + (-1)^{|a|} a ¤ db $.
\end{definition}


\begin{definition}
Let $ A $ be a dg algebra. Then an \emph{$ A $-bimodule} is the same as an $ A^{\env} $-module. The space $ A $ becomes naturally a bimodule over $ A $ by putting $ (a ¤ b)x = (-1)^{|b||x|} axb $ for $ a ¤ b ∈ A^{\oppalg} $ and $ x ∈ A $. The space $ A^{\env} $ is also an $ A $-bimodule, but in two different ways. The default action is the outer action given by
\begin{equation*}
(a ¤ b).(x ¤ y) = (-1)^{|b||x|} ax ¤ yb, \quad a ¤ b ∈ A^{\env}, x ¤ y ∈ A^{\env}.
\end{equation*}
The alternative is the inner action given by
\begin{equation*}
(a ¤ b).(x ¤ y) = (-1)^{|b||x| + |a||b| + |a||x|} xb ¤ ay, \quad a ¤ b ∈ A^{\env}, x ¤ y ∈ A^{\env}.
\end{equation*}
\end{definition}

The dg category of dg modules over a dg algebra can be defined similar to the module category over an $ A_∞ $-algebra. If $ D $ is a dg algebra, we denote this dg category by $ \Mod D $. It is not the same as the ordinary category with hom spaces the morphisms of dg modules, but rather the differential measures failure to be a dg morphism. To define $ \Mod D $, one turns the dg algebra into an honest $ A_∞ $ by means of the sign flip $ μ^1 (a) = (-1)^{|a|} $ and $ μ^2 (a, b) = (-1)^{|b|} ab $ and then forms $ \Mod D $ according to the recipe presented in \autoref{sec:koszul-bimodules}. One may express the result more explicitly which we will not attempt here.

In particular, we can form the category $ \Mod A^{\env} $. When $ M ∈ \Mod_{A^{\env}} $, we can regard the hom space
\begin{equation*}
\Mdual M = \Hom_{\Mod A^{\env}} (M, A^{\env}).
\end{equation*}
For us, it is most important that $ \Mdual M $ is again a dg $ A^{\env} $-module, often called the dual bimodule of $ M $. Its differential is the differential $ μ^1_{\Mod A^{\env}} $ and the $ A^{\env} $-action on $ \Mdual M $ is the inner action on the codomain $ A^{\env} $. It is elementary to check that $ \Mdual M $ is indeed a dg module. According to Kontsevich and Soibelman, the bimodule $ \Mdual A $ is to be viewed as “inverse dualizing bibundle” $ F ↦ F ¤ K_X^{-1} [-\vdimension X] $ of the noncommutative manifold defined by $ A $ \cite[Definition 8.1.6]{Kontsevich-Soibelman-NC}.

When $ M $ is a dg module, the $ n $-th left shift $ M[n] $ also becomes an object of $ \Mod A^{\env} $. Reflecting the definition of Calabi-Yau manifolds in the commutative world, the dg algebra $ A $ is called Calabi-Yau if the dual bimodule $ \Mdual A $ is quasi-isomorphic to a shift of $ A $:

\begin{definition}
Let $ A $ be a dg algebra. Then $ A $ is \emph{Calabi-Yau} of dimension $ n ≥ 1 $ (\CYn) if $ \Mdual A[n] $ and $ A $ are quasi-isomorphic in the category $ \Mod A^{\env} $.
\end{definition}

\begin{remark}
The original definition \cite[Definition 3.2.3]{Ginzburg-CY} requires that the quasi-isomorphism is a self-dual morphism. Van den Bergh has shown in \cite[Proposition C.1]{vdB-CY} that this condition is typically automatic.
\end{remark}

The definition of the hom space $ \Hom_{A^{\env}} (A, A^{\env}) $ takes the degrees of the chosen resolution of $ A $ and the degrees of $ A^{\env} $ into account. In particular, requiring $ \Hom_{A^{\env}} (A, A^{\env}) $ and $ A[n] $ to be quasi-isomorphic cannot be easily translated into a property regarding resolutions on $ A $. If $ A $ is however an ordinary algebra (concentrated in degree zero), then the definition simplifies as follows:

\begin{lemma}[{\cite[(3.2.5)]{Ginzburg-CY}}]
Let $ A $ be an associative algebra which has a finite projective $ A $-bimodule resolution of finitely generated bimodules. Then $ A $ is Calabi-Yau of dimension $ n ≥ 1 $ if and only if
\begin{equation}
\label{eq:koszul-cy-ordcrit}
\HH^k (A, A^{\env}) ≅ \begin{cases} A & \text{if } k = n, \\ 0 & \text{else}. \end{cases}
\end{equation}
Here $ \HH^k (A, A^{\env}) $ is equipped again with the inner $ A $-bimodule action and the isomorphism is meant as $ A $-bimodules.
\end{lemma}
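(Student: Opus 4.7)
The plan is to identify $\Mdual A$ with a derived computation of $\RHom_{A^{\env}}(A, A^{\env})$ and then invoke the Calabi-Yau condition to locate its cohomology.

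First, I would fix a finite projective bimodule resolution $P_\bullet \verylongto A$ by finitely generated projective $A^{\env}$-modules, which exists by hypothesis. Since the terms of $P_\bullet$ are cofibrant, the canonical morphism $P_\bullet \verylongto A$ is a quasi-isomorphism in $\Mod A^{\env}$, and applying the dg functor $\Hom_{A^{\env}}(-, A^{\env})$ yields a quasi-isomorphism of $A^{\env}$-modules
\begin{equation*}
\Mdual A = \Hom_{A^{\env}}(A, A^{\env}) \simeq \Hom_{A^{\env}}(P_\bullet, A^{\env}),
\end{equation*}
where both sides are equipped with the inner $A$-bimodule action on the codomain. The cohomology of the right-hand side is, by definition, $\Ext^k_{A^{\env}}(A, A^{\env}) = \HH^k(A, A^{\env})$, with bimodule structure inherited from the inner action.

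Assume now that $A$ is Calabi-Yau of dimension $n$, so that $\Mdual A[n] \simeq A$ in $\Mod A^{\env}$. Equivalently, $\Mdual A$ is quasi-isomorphic to $A$ placed in degree $n$. Taking cohomology of the two sides gives the claimed formula: $\HH^k(A, A^{\env})$ vanishes for $k \neq n$ and equals $A$ with its natural bimodule action for $k = n$. Conversely, if the cohomology is concentrated as in \eqref{eq:koszul-cy-ordcrit}, then the bounded complex $\Hom_{A^{\env}}(P_\bullet, A^{\env})$ of projective bimodules has cohomology concentrated in a single degree, hence is quasi-isomorphic in $\Mod A^{\env}$ to that cohomology viewed as a complex with zero differential, i.e.\ to $A[-n]$. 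Shifting by $n$ produces the required quasi-isomorphism $\Mdual A[n] \simeq A$.

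The main technical point to check is that the bimodule action induced on $\HH^k(A, A^{\env})$ by the inner action on the codomain $A^{\env}$ indeed matches the inner action on $A$ under the isomorphism in degree $n$. This is a compatibility statement about two passes to cohomology and requires care with the sign conventions introduced in the definition of $\Mod A^{\env}$ and of the inner action, but is otherwise standard. The finite generation of the $P^i$ is used to ensure that $\Hom_{A^{\env}}(P_\bullet, A^{\env})$ is a bounded complex of $A^{\env}$-modules, and hence the cohomology isomorphism does not require passing through any completion.
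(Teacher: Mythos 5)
Your proof is correct and follows the same broad route as the paper's: the forward direction is immediate once one knows $\Mdual A = \Hom_{\Mod A^{\env}}(A, A^{\env})$ computes Hochschild cohomology, and the converse consists of turning cohomology concentration into a genuine quasi-isomorphism in $\Mod A^{\env}$.

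The one place you diverge is the converse. The paper's version picks a linear section $\H^n P \hookrightarrow P^n$ and builds a map $A \to \Mdual A$ landing in degree $n$; your version uses the finite projective resolution to replace $\Mdual A$ by the bounded complex $\Hom_{A^\env}(P_\bullet, A^\env)$ and then argues this is quasi-isomorphic to its cohomology placed in degree $n$. Both work, and yours has a small advantage: since the resolution has length $m$ and $\HH^{>n}(A,A^\env) = 0$ forces $n \le m$, you can project the truncation $\tau_{\ge n}$ onto its top cohomology $A$, which gives a genuine $A^\env$-linear chain map (the paper's linear section need not be $A^\env$-linear, and the paper silently relies on the slack in its coderivation hom spaces to absorb this). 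Do make the truncation explicit though — the cohomology need not sit at the very top of the complex if the resolution is longer than $n$, so "hence quasi-isomorphic to that cohomology" wants a word about $\tau_{\ge n}$.

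One imprecision worth correcting: your final remark attributes boundedness of $\Hom_{A^\env}(P_\bullet, A^\env)$ to the $P^i$ being finitely generated. Boundedness comes from the resolution being finite in length; what finite generation buys you is that each $\Hom_{A^\env}(P^i, A^\env)$ (with the inner action) is again a finitely generated projective $A^\env$-bimodule, which is what you actually need when you invoke "bounded complex of projective bimodules."
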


\begin{proof}
By definition $ \HH^k (A, A^{\env}) $ is the cohomology of $ \Hom_{A^{\env}} (A, A^{\env}) $, together with the additional $ A $-bimodule action. We now prove both directions.

If $ A $ is \CYn\, then $ \Hom_{A^{\env}} (A, A^{\env}) $ and $ A[n] $ are quasi-isomorphic as objects of $ \Mod A^{\env} $. There exist closed morphisms $ f $ and $ g $ between them and a morphism $ h $ of degree $ -1 $ such that $ fg = \id + d(h) $. In particular $ f $ and $ g $ define quasi-inverse morphisms of complexes and we conclude the cohomology of $ \Hom_{A^{\env}} (A, A^{\env}) $ is $ A $, concentrated in degree $ n $.

Conversely, assume \eqref{eq:koszul-cy-ordcrit} holds. The conclusion is a general statement regarding complexes. Let $ P^• $ be a complex with homology concentrated in a single degree, namely $ φ: A \isoto \H^k P $. Then there is a quasi-isomorphism $ ψ: A → P^• $ of complexes given by $ a ↦ φ(a) ∈ \H^k P ⊂ P^k $. This map is indeed a map of chain complexes since the differential on $ A $ and on the image $ ψ(A) $ vanishes. The map is a quasi-isomorphism since on cohomology this map is just $ φ $. This finishes the proof.
\end{proof}

\begin{remark}
\label{rem:koszul-bimodules-AAopp}
In contrast to dg algebras, two $ A_∞ $-algebras can not be tensored easily to form a tensor $ A_∞ $-algebra. Instead, the construction is difficult and has attracted various literature \cite{Loday, Markl-Shnider}. This includes an advanced construction of the tensor product $ A ¤ A^{\algopp} $ as $ A_∞ $-algebra.

This way, $ A ¤ A^{\algopp} $ itself becomes an $ A $-bimodule. This construction should not be confused with the naive bimodule action of $ A $ mentioned in \cite[Theorem 1.8]{Mescher}. In this naive action, $ A $ acts on the two tensor components of the graded vector space $ A ¤ A^{\algopp} $ separately. In the present section, we avoid all difficulties by restricting to the case of $ A $ being a dg algebra whenever we need the algebra $ A^{\env} $ or $ A ¤ A^{\algopp} $ as $ A $-bimodule.
\end{remark}


In the definition of a category of modules $ \Mod A $, one is not limited to choosing the specific resolution $ T(A[1]) ¤ M $ of $ M $. Instead one may choose an arbitrary “projective replacement”. In case $ A $ is an $ A_∞ $-algebra, this is a topic of research, but if $ A $ is a dg algebra, then the correct notion of “projective replacement” is to be K-projective:

\begin{definition}[{\cite[Section 4.2]{LPWZ-Koszul}}]
Let $ A $ be a dg algebra. Then a dg module $ P $ is \emph{K-projective} if the dg hom space $ \Hom_{\Mod A} (P, Q) $ is acyclic whenever $ Q $ is an acyclic dg module. A \emph{K-projective replacement} of $ M ∈ \Modfd A $ is a K-projective dg module $ P $ together with a quasi-isomorphism $ P → M $.
\end{definition}

\begin{remark}
Keller shows in \cite[Section 3.2]{Keller-onDG} that K-projective replacements can be found by resolving a module into a complex of so-called dg-projective modules. A module $ P $ is called dg-projective if it is a cofibrant object with respect to the projective model structure on the category of dg categories. Explicitly, $ P $ is dg-projective if for every surjective quasi-isomorphism $ L \projects M $, every morphism $ P → M $ factors through $ L $.
\end{remark}

\begin{remark}
The hom spaces in $ \Modfd A $ enjoy various names in the literature, thanks to the fact that their cohomology can also be built as derived hom functor. Generally, the cohomology hom spaces $ \H^•\Hom_{\Modfd A} (M, N) $ may be denoted $ \RHom_A^• (M, N) $ or $ \Ext_A^• (M, N) $. When $ M = N $, the dg hom space $ \Hom_{\Modfd A} (M, M) $ is actually a dg algebra itself and its minimal model becomes an $ A_∞ $-algebra. This $ A_∞ $-algebra is commonly denoted $ \RHom_A^• (M, M) $ as well, despite the fact that classical derived hom functors do not retain homotopy information.
\end{remark}

\begin{remark}
If $ A $ is a dg algebra, then the Hochschild cohomology $ \HH^• (A, M) $ with coefficients in an $ A $-bimodule $ M $ is by definition equal to $ \H^•\Hom_{\Mod A^{\env}} (A, M) $.
\end{remark}

\subsection{Van den Bergh and Serre duality}
\label{sec:koszul-cyord}
In this section, we recall Van den Bergh and Serre duality for ordinary (non-dg) Calabi-Yau algebras. As it turns out, the Calabi-Yau property for ordinary algebras is closely related to having a self-dual bimodule resolution. This fact has been observed and exploited in \cite{Bocklandt-CY}.

\begin{definition}[{\cite{Bocklandt-CY}}]
\label{def:prelim-cy-selfdual}
Let $ A $ be an algebra and $ 0 → P^n → … → P^0 → A $ be a projective $ A $-bimodule resolution of $ A $ by finitely generated bimodules. Then $ P^• $ is a \emph{self-dual} resolution if $ \Hom_{A^{\env}} (P^•, A^{\env}) \cong P^{n-•} $ as complexes of $ A $-bimodules:
\begin{equation*}
\begin{tikzcd}
0 \arrow[r] & \Hom_{A^{\env}} (P^0, A^{\env}) \arrow[d, "\rotatebox{90}{$ \sim $}"] \arrow[r] & … \arrow[r] & \Hom_{A^{\env}} (P^n, A^{\env}) \arrow[r] \arrow[d, "\rotatebox{90}{$ \sim $}"] & 0 \\
0 \arrow[r] & P^n \arrow[r] & … \arrow[r] & P^0 \arrow[r] & 0
\end{tikzcd}
\end{equation*}
\end{definition}

\begin{lemma}
If $ A $ has a selfdual projective bimodule resolution of length $ n ≥ 1 $ then $ A $ is \CYn.
\end{lemma}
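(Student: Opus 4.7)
The plan is to verify the Ginzburg criterion recalled just before the lemma, namely that $\HH^k(A, A^{\env})$ vanishes for $k \neq n$ and is isomorphic to $A$ with the inner bimodule action for $k = n$. Once this holds, the \CYn\ property follows immediately from the criterion.

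First, I would use the given projective bimodule resolution $P^\bullet$ to compute Hochschild cohomology as $\HH^k(A, A^{\env}) = \H^k \Hom_{A^{\env}}(P^\bullet, A^{\env})$, which is valid since all $P^i$ are projective and finitely generated and the complex is bounded (so that $P^\bullet$ is a K-projective replacement of $A$ in $\Mod A^{\env}$). Next, I would invoke the self-duality hypothesis of \autoref{def:prelim-cy-selfdual} to replace $\Hom_{A^{\env}}(P^\bullet, A^{\env})$ by the isomorphic complex $P^{n-\bullet}$. Since the original $P^\bullet$ is a resolution of $A$ and hence has cohomology concentrated in cohomological degree $0$, the reindexed complex $P^{n-\bullet}$ has cohomology concentrated in degree $n$, equal to $A$ there. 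Therefore $\HH^n(A, A^{\env}) \cong A$ and $\HH^k(A, A^{\env}) = 0$ for $k \neq n$.

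The main subtlety, and the step I would spend the most time on, is verifying that the bimodule structure on $\HH^n(A, A^{\env})$ matches what the Ginzburg criterion requires. Recall that $\HH^k(A, A^{\env})$ carries the inner $A$-bimodule action coming from $A^{\env}$ itself, whereas the natural action on $P^{n-\bullet}$ is the native bimodule action inherited termwise from the resolution. Since the self-duality isomorphism of \autoref{def:prelim-cy-selfdual} is imposed as one of complexes of $A$-bimodules, these two actions are intertwined, and the induced action on $\H^n P^{n-\bullet} \cong A$ is simply the tautological left-right action of $A$ on itself, matching the Ginzburg criterion exactly. Granting this compatibility, the criterion concludes that $A$ is \CYn; the self-duality of the resulting quasi-isomorphism $A \to \Mdual A[n]$, demanded by Ginzburg's original definition, is automatic by \cite[Proposition C.1]{vdB-CY} as recalled in the remark following the definition of \CYn.
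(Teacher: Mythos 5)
Your proposal is correct and follows essentially the same route as the paper: compute $\HH^k(A, A^{\env})$ from the self-dual resolution, apply self-duality to replace $\Hom_{A^{\env}}(P^\bullet, A^{\env})$ by $P^{n-\bullet}$, and read off concentration of cohomology in degree $n$ with value $A$. The paper's proof is terser and leaves implicit the bimodule-structure compatibility you spell out; that point is indeed already built into \autoref{def:prelim-cy-selfdual}, which demands the duality isomorphism be one of complexes of $A$-bimodules, so your added care is sound but not a divergence in method.
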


\begin{proof}
Let $ P^• $ be a selfdual bimodule resolution. By definition, the Hochschild cohomology $ \HH^k (A, A^{\env}) $ is the homology in degree $ k $ of the complex $ \Hom_{A^{\env}} (P^•, A^{\env}) $. By selfduality, this homology is just the homology in degree $ n-k $ of $ P^• $, which is $ A $ if $ k = n $ and zero otherwise. This shows that $ A $ is \CYn.
\end{proof}


A bimodule resolution of $ A $ gives rise to resolutions for all left and right $ A $-modules. The idea is to tensor the resolution on the right or left side with $ M $, respectively:

\begin{lemma}[{\cite[Lemma 2.4]{Berger-Taillefer}}]
\label{th:koszul-cyord-tensorresolution}
Let $ A $ be an algebra and $ P^• → A $ a projective bimodule resolution. Let $ M $ be a left $ A $-module. Then $ P^• ¤_A M → M $ is a resolution for $ M $.
\end{lemma}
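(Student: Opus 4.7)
The plan is to unpack the assumption, pass to flatness as a one-sided module, and then tensor.

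First I would restate the hypothesis concretely: a projective bimodule resolution $ P^\bullet \to A $ means we have an exact sequence of $ A $-bimodules
\begin{equation*}
\cdots \to P^2 \to P^1 \to P^0 \to A \to 0,
\end{equation*}
with each $ P^i $ projective as an $ A^{\env} $-module. The task is to show that tensoring this on the right with $ M $ over $ A $ stays exact and lands in left $ A $-modules; the latter is immediate since each $ P^i \otimes_A M $ inherits a left $ A $-action from the left factor of $ P^i $.

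The key step is to observe that each $ P^i $ is \emph{flat as a right $ A $-module}. Indeed, the free bimodule $ A \otimes A $ is free as a right $ A $-module via its right tensor factor, hence any direct sum $ \bigoplus (A \otimes A) $ is free (and hence flat) as a right $ A $-module; a direct summand of such a free bimodule is therefore still flat on the right. Since every projective bimodule is a summand of a free bimodule, each $ P^i $ is flat as a right $ A $-module.

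With flatness in hand, the conclusion follows by a standard argument: applying the right-exact functor $ - \otimes_A M $ to the exact complex and using that each $ P^i $ has vanishing higher $ \operatorname{Tor}_A (-, M) $ shows that exactness is preserved at every spot. Combining this with the canonical identification $ A \otimes_A M = M $ yields the exact sequence
\begin{equation*}
\cdots \to P^1 \otimes_A M \to P^0 \otimes_A M \to M \to 0,
\end{equation*}
which is precisely the claimed resolution of $ M $ by left $ A $-modules. There is no real obstacle beyond recording the one-sided flatness of projective bimodules; once that is noted, the argument is a routine application of the $ \operatorname{Tor} $-long-exact-sequence / flat base-change principle.
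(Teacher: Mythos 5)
Your argument is correct, and since the paper cites this result from Berger–Taillefer without reproducing a proof, there is nothing to compare it against; what you give is the standard argument. The key observation — that $A \tensor_\C A$ (the rank-one free $A$-bimodule) is free as a right $A$-module with its right action on the second tensor factor, so that every projective bimodule is flat (indeed projective) as a right $A$-module — is exactly what makes the dimension-shifting/Tor-vanishing step go through when you splice the long exact sequence into short exact sequences $0 \to Z^{i} \to P^{i} \to Z^{i-1} \to 0$.

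One small point worth recording, since the paper goes on to compute $\Ext_A^\bullet(M, N)$ as $\H^\bullet \Hom_A(P^\bullet \tensor_A M, N)$: for that application one needs $P^\bullet \tensor_A M$ to be not merely exact over $M$ but a \emph{projective} resolution. Your proof already contains the needed ingredient — each $P^i$ is a summand of $(A \tensor_\C A)^{(I)}$, hence $P^i \tensor_A M$ is a summand of $(A \tensor_\C M)^{(I)}$, which is free as a left $A$-module — so you may as well state it explicitly. With that sentence added, the proposal is complete.
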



Van den Bergh duality is a Poincaré-style theorem for Hochschild homology and cohomology of \CYn\ algebras. This duality was first observed in \cite{vdB-duality} and we recall it as follows:

\begin{theorem}[{\cite{vdB-duality}}]
Let $ A $ be a \CYn\ algebra. If $ A $ has a finite projective resolution of finitely generated bimodules, then
\begin{equation*}
\HH^k (A, M) ≅ \HH_{n-k} (A, M).
\end{equation*}
\end{theorem}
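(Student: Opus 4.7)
The plan is to express both Hochschild homology and cohomology via the finite projective bimodule resolution $ P^\bullet \to A $, and then exploit the Calabi-Yau duality $ \Mdual A \simeq A[-n] $ to convert one into the other. Concretely, since each $ P^i $ is finitely generated projective as an $ A^{\env} $-module, there is a natural isomorphism
\begin{equation*}
\Hom_{A^{\env}} (P^i, A^{\env}) \tensor_{A^{\env}} M \isoto \Hom_{A^{\env}} (P^i, M), \quad (f \tensor m) \mapsto (x \mapsto f(x) \cdot m),
\end{equation*}
where the tensor product uses the inner $ A^{\env} $-action on the codomain $ A^{\env} $ of $ f $. This is a standard consequence of the isomorphism holding for $ A^{\env} $ itself and being preserved under finite direct sums and direct summands.

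Assembling these isomorphisms into a morphism of complexes, I would obtain a quasi-isomorphism $ \Mdual{P^\bullet} \tensor_{A^{\env}} M \simeq \Hom_{A^{\env}} (P^\bullet, M) $. The right-hand side computes $ \HH^k (A, M) $ by definition. For the left-hand side, the complex $ \Mdual{P^\bullet} $ computes $ \RHom_{A^{\env}} (A, A^{\env}) \simeq A [-n] $ by the Calabi-Yau assumption, and hence $ \Mdual{P^\bullet} $ is a K-projective replacement of $ A[-n] $ as a right $ A^{\env} $-module. Therefore $ \Mdual{P^\bullet} \tensor_{A^{\env}} M $ computes $ \Tor_{A^{\env}}^\bullet (A[-n], M) $, whose $ k $-th cohomology is $ \HH_{n-k} (A, M) $.

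Chaining these identifications delivers $ \HH^k (A, M) \cong \HH_{n-k} (A, M) $ as desired. The main obstacle is technical rather than conceptual: one must verify that the pointwise isomorphism $ \Mdual{P^i} \tensor_{A^{\env}} M \isoto \Hom_{A^{\env}} (P^i, M) $ is natural in $ i $, so that it assembles into a chain map, and that the bimodule structures line up correctly on both sides (in particular, the inner $ A^{\env} $-action appearing in the definition of $ \Mdual{(-)} $ must be the one matching the right-module action used to form $ - \tensor_{A^{\env}} M $). A second technical point is to justify that $ \Mdual{P^\bullet} $ genuinely serves as a projective resolution of $ A[-n] $, which requires finite generation of each $ P^i $ and finiteness of the resolution; both are explicit in the hypotheses. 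Once these compatibilities are in place, the statement follows from the tensor--hom juggling outlined above.
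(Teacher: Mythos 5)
Your proof is correct, and it takes a genuinely different route from the one the paper gives. The paper's proof is explicitly given only ``in the easy case where $A$ has a selfdual resolution,'' i.e.\ it assumes the strict isomorphism of complexes $\Hom_{A^{\env}}(P^\bullet, A^{\env}) \cong P^{n-\bullet}$ (cf.\ \autoref{def:prelim-cy-selfdual}). Under that assumption, the argument is a three-line chain: $\HH^k(A, M) = \H^k\Hom_{A^{\env}}(P^\bullet, M) = \H^k(\Mdual{P^\bullet} \tensor_{A^{\env}} M) = \H^k(P^{n-\bullet} \tensor_{A^{\env}} M) = \HH_{n-k}(A, M)$, with no need for any derived-category bookkeeping because the self-duality is an honest isomorphism of complexes. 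You instead work from the bare \CYn\ hypothesis $\Mdual{P^\bullet} \simeq A[-n]$, which is only a quasi-isomorphism, and then need the extra step that $\Mdual{P^\bullet}$ --- being a bounded complex of finitely generated projective bimodules --- is K-flat, so that $\Mdual{P^\bullet} \tensor_{A^{\env}} M$ really does compute the derived tensor product with $A[-n]$. (You wrote ``K-projective''; the relevant property for the tensor step is K-flatness, though bounded complexes of projectives enjoy both.) Both routes share the same tensor--hom identification for finitely generated projectives, and both must track the inner versus outer $A^{\env}$-actions carefully, as you note. The payoff of your version is that it matches the theorem's actual hypothesis rather than a special case; the payoff of the paper's version is brevity, since the self-dual isomorphism short-circuits the replacement argument entirely.
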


\begin{proof}
We recall here the proof in the easy case where $ A $ has a selfdual resolution $ P^• $. We compute
\begin{align*}
\HH^k (A, M) &= \H^k\Hom_{A^{\env}} (P^•, M) \\
&= \H^k\big(\Hom_{A^{\env}} (P^•, A^{\env}) ¤_{A^{\env}} M\big) \\
&= \H^k (P^{n-•} ¤_{A^{\env}} M) = \HH_{n-k} (A, M).
\end{align*}
\end{proof}


As we recapitulate in \autoref{th:prelim-cy-ext}, a \CYn\ algebra has the property that the $ n $-th shift is a Serre functor for its derived category. In \cite{Bocklandt-CY}, \CYn\ algebras were precisely defined by this characteristic property. It is unclear to which extent the definitions are equivalent.

\begin{lemma}[{\cite[Proposition 2.3]{Berger-Taillefer}}]
\label{th:prelim-cy-ext}
Let $ A $ be an algebra with a finite projective resolution of finitely generated bimodules. If $ A $ is \CYn, then for all finite-dimensional $ A $-modules $ M, N $ there are natural isomorphisms of graded vector spaces
\begin{equation*}
\kdual{\Ext_A^• (M, N)} \cong \Ext_A^{n-•} (N, M).
\end{equation*}
\end{lemma}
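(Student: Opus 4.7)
The plan is to bridge $\Ext_A^\bullet(M,N)$ to Hochschild cohomology with coefficients, apply Van den Bergh duality to convert cohomology to homology, and then use the general finite-dimensional duality between Hochschild homology and cohomology. I expect the main subtleties to be bookkeeping for bimodule structures on dual spaces and verifying that all identifications are natural in $M$ and $N$.

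First I would set $V_{M,N} = \Hom_{\C}(M, N)$ viewed as an $A$-bimodule via $(a \cdot f \cdot b)(m) = a \, f(b m)$. Let $P^\bullet \to A$ be the given finite projective bimodule resolution by finitely generated bimodules. By \autoref{th:koszul-cyord-tensorresolution}, $P^\bullet \tensor_A M \to M$ is a projective resolution of $M$ as a left $A$-module. The tensor-hom adjunction then gives
\begin{equation*}
\Ext_A^\bullet(M,N) \;=\; \H^\bullet \Hom_A(P^\bullet \tensor_A M, N) \;\cong\; \H^\bullet \Hom_{A^\env}(P^\bullet, V_{M,N}) \;=\; \HH^\bullet(A, V_{M,N}),
\end{equation*}
where the middle isomorphism identifies $\Hom_A(P^\bullet \tensor_A M, N)$ with $\Hom_{A^\env}(P^\bullet, \Hom_\C(M,N))$ by $\phi \mapsto (p \mapsto (m \mapsto \phi(p \tensor m)))$.

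Next I would apply Van den Bergh duality to the coefficient bimodule $V_{M,N}$, which is finite-dimensional since $M,N$ are. This yields
\begin{equation*}
\HH^\bullet(A, V_{M,N}) \;\cong\; \HH_{n-\bullet}(A, V_{M,N}).
\end{equation*}
Taking $\C$-linear duals and using that Hochschild homology with finite-dimensional coefficients dualizes to Hochschild cohomology with dualized coefficients (a chain-level statement, since $P^\bullet \tensor_{A^\env} V_{M,N}$ and $\Hom_{A^\env}(P^\bullet, \kdual{V_{M,N}})$ are dual complexes when $V_{M,N}$ is finite-dimensional and $P^\bullet$ has finitely generated components), we obtain
\begin{equation*}
\kdual{\HH_{n-\bullet}(A, V_{M,N})} \;\cong\; \HH^{n-\bullet}(A, \kdual{V_{M,N}}).
\end{equation*}

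Finally I would identify the bimodule dual: for finite-dimensional $M,N$ one has a natural isomorphism of $A$-bimodules $\kdual{\Hom_\C(M,N)} \cong \Hom_\C(N,M)$, sending a functional $\lambda$ to the map $n \mapsto \sum_i \lambda(e_i^* \tensor n) \, e_i$ (and checked to be bimodule-equivariant independently of bases). Combining the steps yields
\begin{equation*}
\kdual{\Ext_A^\bullet(M,N)} \;\cong\; \HH^{n-\bullet}(A, \Hom_\C(N,M)) \;\cong\; \Ext_A^{n-\bullet}(N,M),
\end{equation*}
where the last isomorphism is the first step applied with $M$ and $N$ swapped. All the intermediate isomorphisms are natural in $M$ and $N$, which transports naturality to the composition. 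The main obstacle is the middle duality step: one must verify at the chain level that the finiteness hypotheses (finite-dimensionality of $M, N$ and finitely generated projective $P^\bullet$) suffice to identify $\kdual{(P^\bullet \tensor_{A^\env} V_{M,N})}$ with $\Hom_{A^\env}(P^\bullet, \kdual{V_{M,N}})$, which is where the hypothesis on the resolution is really used.
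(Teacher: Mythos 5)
Your proposal is correct and follows essentially the same route as the paper's proof: both convert $\Ext$ to Hochschild cohomology via a finitely generated bimodule resolution, both use the chain-level duality $\kdual{\HH_\bullet(A,V)}\cong\HH^\bullet(A,\kdual V)$ for finite-dimensional coefficients, both invoke Van den Bergh duality, and both finish with $\kdual{\Hom_\C(M,N)}\cong\Hom_\C(N,M)$. The only difference is cosmetic: you apply Van den Bergh before dualizing while the paper rewrites the coefficient as a dual first and applies Van den Bergh last, but the ingredients and the logical dependencies are identical.
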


\begin{proof}
We recall here the proof from \cite{Berger-Taillefer}. The first step of the proof is to realize that thanks to \autoref{th:koszul-cyord-tensorresolution}, $ \Ext^k_A (M, N) $ equals the Hochschild cohomology $ \HH^k (A, \Hom_{ℂ} (M, N)) $:
\begin{equation*}
\Ext^k_A (M, N) = \H^k\Hom_A (P^• ¤ M, N) = \H^k\Hom_{A^{\env}} (P^•, \Hom_{ℂ} (M, N)) = \HH^k (A, \Hom_{ℂ} (M, N)).
\end{equation*}
Here $ \Hom_{ℂ} (M, N) $ is the $ A $-bimodule with the left factor acting on $ N $ and the right factor acting on $ M $: $ (aφb)(m) = aφ(bm) $.

The second step is to show $ \HH^k (A, \kdual M) ≅ \kdual{\HH_k (A, M)} $ for finite-dimensional $ A $-bimodules $ M $. Here the vector space dual $ \kdual M $ is also an $ A $-bimodule by letting the left factor act from the right on $ M $ and the right factor act from the left on $ M $, explicitly $ (aφb)(m) = φ(bma) $.
We compute
\begin{equation*}
\HH^k (A, \kdual M) = \H^k\Hom_{A^{\env}} (P^•, \Hom_{ℂ} (M, ℂ)) = \Hom_{ℂ} (P^• ¤_{A^{\env}} M, ℂ) = \kdual{\HH_k (A, M)}.
\end{equation*}
Combining the first two steps with Van den Berg duality, we conclude
\begin{align*}
\Ext_A^k (M, N) &≅ \HH^k (A, \Hom_{ℂ} (M, N)) \\
&= \HH^k (A, \kdual{\Hom_{ℂ} (N, M)}) \\
&≅ \kdual{\HH_k (A, \Hom_{ℂ} (N, M))} \\
&≅ \kdual{\HH^{n-k} (A, \Hom_{ℂ} (N, M))} \\
&≅ \kdual{\Ext^{n-k}_A (N, M)}.
\end{align*}
This finishes the proof.
\end{proof}

\begin{remark}
The duality between the two Ext spaces can also be interpreted as a pairing
\begin{equation*}
⟨-, -⟩: \Ext_A^• (M, N) × \Ext_A^{n-•} (N, M) → ℂ.
\end{equation*}
This pairing can also be expressed by means of the traces $ \Tr_M $, $ \Tr_N $ on $ \Ext_A^• (M, M) $ and $ \Ext_A^• (N, N) $ as
\begin{equation*}
⟨x, y⟩ = \Tr_N (x ∘ y) = (-1)^{|x||y|} \Tr_M (y ∘ x), \quad x ∈ \Ext_A^• (M, N), \quad y ∈ \Ext_A^{n-•} (N, M).
\end{equation*}
Here $ ∘ $ denotes the composition of extensions, equivalently the product of $ \H\Mod A $. The trace $ \Tr_M: \Ext_A^• (M, M) → ℂ $ is induced from the trace on the complex $ \Hom_A (P^• ¤ M, P^• ¤ M) $ which computes $ \Ext^• (M, M) $. The correct signs were computed in \cite[Appendix]{Bocklandt-CY}.
\end{remark}

\subsection{Jacobi algebras}
\label{sec:koszul-cy3}
In this section, we recall characterizations of ordinary (non-dg) CY3 algebras. The core idea is that most CY3 algebras are the Jacobi algebra of a quiver with superpotential. Conversely, most quivers with superpotential gives rise to a CY3 algebra. In the present section, we recall the precise criteria from \cite{Bocklandt-CY}. In particular, we recall superpotentials and their associated Jacobi algebras as well as a candidate bimodule resolution for Jacobi algebras. This section serves as a direct preparation for \autoref{sec:flatness}.

We start by fixing terminology for cyclic elements of quiver algebras as follows:

\begin{definition}
Let $ Q $ be a quiver. A path in $ Q $ is a cycle if it starts and ends at the same vertex. If $ p $ is a cycle in $ Q $, we denote by $ p_{\cyc} ∈ ℂQ $ the sum of its cyclic permutations. We extend this assignment linearly to $ ℂQ $ and denote it by $ p ↦ p_{\cyc} $ as well. An element $ W ∈ ℂQ $ is \emph{cyclic} if it lies in the image of this map. Explicitly, $ W $ is cyclic if it is a linear combination of cycles whose coefficients are invariant under cyclic permutation:
\begin{equation*}
W = \sum_{\text{cycles } a_k … a_1} λ_{a_k … a_1} a_k … a_1, \quad \text{with} \quad ∀i = 1, …, k: ~~ λ_{a_k … a_1} = λ_{a_{i-1} … a_{i+1} a_i}.
\end{equation*}
\end{definition}

A superpotential on a quiver $ Q $ is defined as a cyclic element $ W ∈ ℂQ $ which is a linear combination of paths of length at least two:

\begin{definition}
A \emph{superpotential} is a cyclic element $ W ∈ ℂQ_{≥2} $. Its \emph{relations} are the elements
\begin{equation*}
∂_a W = \sum_{\substack{\text{paths } a_k … a_1 \\ \text{with } a_k = a}} λ_{a_k … a_1} a_{k-1} … a_1, \quad a ∈ Q_1.
\end{equation*}
Its \emph{Jacobi algebra} is given by
\begin{equation*}
\Jac(Q, W) = \frac{ℂQ}{(∂_a W)}.
\end{equation*}
Here $ (∂_a W) $ denotes the two-sided ideal generated by the partial derivatives $ ∂_a W $ for $ a ∈ Q_1 $.
\end{definition}

\begin{remark}
A typical assumption in the literature is that the paths are of length at least three. In fact, length two term gives rise to a single arrow being contained in one relation $ ∂_a W $. The effect is that this arrow is killed in the Jacobi algebra.
\end{remark}

The original paper of Ginzburg \cite{Bocklandt-CY} formulated the expectation that all CY3 algebras “appearing in nature” are Jacobi algebras of quivers with superpotential. This expectation was largely verified in \cite{Bocklandt-CY}, with the core result that a quiver algebra with graded relations which is CY3 is necessarily of the form $ \Jac(Q, W) $:

\begin{theorem}[{\cite[Theorem 3.1]{Bocklandt-CY}}]
\label{th:koszul-cy3-graded}
Let $ Q $ be a quiver and let $ A = ℂQ / I $ be the quotient by a finitely generated graded ideal $ I ⊂ ℂQ_{≥2} $. If $ A $ is CY3, then there exists a superpotential $ W $ such that $ ℂQ/I ≅ \Jac(Q, W) $.
\end{theorem}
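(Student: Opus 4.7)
The plan is to construct a minimal projective bimodule resolution of $ A $ of length three which is \emph{self-dual} in the sense of \autoref{def:prelim-cy-selfdual}, and then to read the superpotential $ W $ directly off its differentials. Self-duality is what the CY3 hypothesis buys us, and once it is in hand, cyclic invariance of $ W $ is essentially forced.

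First I would set up the low-degree part of the minimal bimodule resolution from the combinatorics of $ Q $: take $ P^0 = \bigoplus_{v \in Q_0} Ae_v \otimes e_v A $ with differential the multiplication map, $ P^1 = \bigoplus_{a \in Q_1} Ae_{s(a)} \otimes e_{t(a)} A $ with differential $ e_{s(a)} \otimes e_{t(a)} \mapsto a \otimes 1 - 1 \otimes a $, and, using that $ I $ is finitely generated, $ P^2 = \bigoplus_{\alpha \in R} Ae_{s(r_\alpha)} \otimes e_{t(r_\alpha)} A $ for a finite family of relations $ r_\alpha $. The terms $ P^{\geq 3} $ exist abstractly by standard homological algebra but their ranks are not yet constrained.

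Second I would use CY3 to cap the resolution at length three and to upgrade it to a self-dual resolution. By \autoref{th:prelim-cy-ext} applied with $ S = \bigoplus_v \mathbb{C}_v $ the sum of vertex simples, one obtains $ \dim \Ext^k_A(S, S) = \dim \Ext^{3-k}_A(S, S) $; minimality of $ P^\bullet $ translates this into $ \operatorname{rk} P^k = \operatorname{rk} P^{3-k} $, so $ P^{\geq 4} = 0 $, $ P^3 \cong P^0 $, and $ R $ is in canonical bijection with $ Q_1 $. Feeding the CY3 quasi-isomorphism $ \Hom_{A^{\env}}(P^\bullet, A^{\env})[3] \simeq A $ into $ P^\bullet $ and aligning degree by degree using minimality lifts it to an honest isomorphism of complexes of bimodules, exhibiting $ P^\bullet $ as a self-dual resolution.

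Third I would read off $ W $. Writing $ r_a = \sum_p \lambda_{a, p}\, p $ for the relation associated to an arrow $ a $, the differentials $ P^2 \to P^1 $ and $ P^3 \to P^2 $ are encoded by the coefficient matrices $ (\lambda_{a, p}) $ and their duals; self-duality identifies them, which translates to the cyclic identity $ \lambda_{a_0, a_k \cdots a_1} = \lambda_{a_k, a_{k-1} \cdots a_0} $ for every cycle $ a_k \cdots a_0 $. This is exactly the condition that the $ r_a $ be the arrow-derivatives of a cyclic element, so setting
\begin{equation*}
W = \sum_{\text{cycles } a_k \cdots a_0} \frac{1}{k+1}\, \lambda_{a_0, a_k \cdots a_1}\, a_k \cdots a_0
\end{equation*}
yields a superpotential with $ \partial_a W = r_a $ for every $ a \in Q_1 $ and hence $ \Jac(Q, W) \cong A $; the hypothesis $ I \subset \mathbb{C}Q_{\geq 2} $ guarantees $ W \in \mathbb{C}Q_{\geq 2} $. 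The main obstacle is the straightening in the second step: upgrading the abstract CY3 quasi-isomorphism to a strict isomorphism of minimal complexes. Without graded finite generation of $ I $ and minimality of $ P^\bullet $, higher-order correction terms could obstruct the matching of $ P^k $ with $ P^{3-k} $; these hypotheses are precisely what force the comparison to be literal and thereby deliver the cyclic symmetry of the relation coefficients.
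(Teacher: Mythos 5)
Your approach — build the minimal bimodule resolution, use CY3 to identify it with its own dual, and extract the superpotential from the cyclic symmetry this forces on the relation coefficients — is correct in outline and captures the real mechanism. It is a slightly different route from the one Bocklandt sketches for this theorem: the paper works with minimal resolutions of the \emph{simple} modules and reads the constraints off of $\dim\Ext^k(S_v,S_w)$ via Serre duality, whereas you work directly with the bimodule resolution and self-duality (\autoref{def:prelim-cy-selfdual}). The two are equivalent (the bimodule resolution tensors down to the one-sided resolutions, cf.\ \autoref{th:koszul-cyord-tensorresolution}), and your version makes the cyclic symmetry of the coefficient matrices more transparent, while the simple-module version makes the rank counting $\operatorname{rk} P^k = \operatorname{rk} P^{3-k}$ and the bijection between $R$ and $Q_1$ more immediate. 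Either buys you the same conclusion.

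Two things to tighten. First, there is a circularity hazard in step two: \autoref{th:prelim-cy-ext} as stated in the paper assumes a finite projective bimodule resolution, so you cannot literally invoke it to prove $P^{\geq 4}=0$. You should instead derive the Ext pairing directly from the derived CY3 isomorphism $\RHom_{A^\env}(A,A^\env)\simeq A[-3]$ (which already encodes that $A$ is perfect over $A^\env$ if you adopt Ginzburg's full definition including homological smoothness, as Bocklandt implicitly does) rather than from the finite-resolution corollary. Second, your formula for $W$ has a normalization mismatch with the paper's conventions: the paper defines $\partial_a W$ by stripping $a$ from the \emph{front only}, and writes $W=\sum_{\text{cycles}}\lambda_c c$ summed over all cyclic paths with $\lambda$ cyclic-invariant, so that $\partial_a W$ picks up exactly the cycles starting with $a$. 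With the extra $\frac{1}{k+1}$ you would get $\partial_a W = \frac{1}{k+1}\, r_a$. Drop the $\frac{1}{k+1}$ (or switch to the convention where $\partial_a$ sums over all cyclic positions) and the identity $\partial_a W = r_a$ comes out on the nose.
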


The idea of the proof is to explore the structure of $ I $ in terms of resolutions for the simple modules of $ A $. These first bits of these resolutions are standard and do not depend on the ideal $ I $. In contrast, the last bits depend on $ I $ but can be guessed by consideration on the dimension of the Ext spaces based on the assumption that $ A $ is CY3.

Conversely, not every Jacobi algebra is CY3. There is however a precise criterion due to \cite{Bocklandt-CY} as well. The criterion is formulated in terms of a “candidate” bimodule resolution for $ A $. We start with the following notation:

\begin{definition}
Let $ W ∈ ℂQ_{≥2} $ be a superpotential. Then the $ ℂQ_0 $-bimodule generated by $ W $ in $ ℂQ $ is denoted
\begin{equation*}
W = ℂ Q_0 W Q_0 = \bigoplus_{v ∈ Q_0} ℂvWv ⊂ ℂQ.
\end{equation*}
The relations space is denoted
\begin{equation*}
R = \vspan\{∂_a W \running a ∈ Q_1\}.
\end{equation*}
\end{definition}

To introduce the candidate bimodule resolution, let $ Q $ be a quiver, $ W ∈ ℂQ_{≥3} $ a superpotential. Let us temporarily write $ A = \Jac(Q, W) $ for the Jacobi algebra. The candidate bimodule resolution has the shape
\begin{equation}
\label{eq:koszul-cy3-resolution}
0 → A \tensor_{ℂQ_0} W \tensor_{ℂQ_0} A \overset{g_1}{→} A \tensor_{ℂQ_0} R \tensor_{ℂQ_0} A \overset{g_2}{→} A \tensor_{ℂQ_0} ℂQ_1 \tensor_{ℂQ_0} A \overset{g_3}{→} A \tensor_{ℂQ_0} A → A → 0.
\end{equation}

\begin{remark}
\label{rem:koszul-cy3-maps}
The maps in the sequence \eqref{eq:flatness-resolution} are described as follows:
\begin{itemize}
\item For the map $ g_1 $, let $ w = \sum_{i ∈ I} r_i a_i = \sum_{j ∈ J} b_i r_j ∈ W $, where all $ a_i $ an $ b_i $ are arrows and the $ r_i, r_j ∈ R $ are compatible elements. The map $ g_1 $ sends the element $ 1 ¤ w ¤ 1 $ to
\begin{equation*}
g_1 (1 ¤ w ¤ 1) = \sum_{i ∈ I} 1 ¤ r_i ¤ a_i - \sum_{j ∈ J} b_j ¤ r_j ¤ 1.
\end{equation*}
\item For the map $ g_2 $, let $ r ∈ R $ and for $ d ≥ 0 $ write $ r = \sum_{i ∈ I_d} p_i^{(d)} a_i^{(d)} q_i^{(d)} ∈ R $, where $ p_i^{(d)} $ are paths of length $ d $ and $ a_i^{(d)} $ are arrows. The map $ g_2 $ sends the element $ 1 ¤ r $ to
\begin{equation*}
g_2 (1 ¤ r ¤ 1) = \sum_{d ≥ 0} \sum_{i ∈ I_d} p_i^{(d)} ¤ a_i^{(d)} ¤ q_i^{(d)}.
\end{equation*}
\item The map $ g_3 $ is given by $ 1 ¤ a ¤ 1 ↦ a ¤ 1 - 1 ¤ a $.
\item The fourth map is simply the contraction map.
\end{itemize}
\end{remark}

The sequence \eqref{eq:flatness-resolution} is clearly a chain complex. Whether or not it is exact is an indicator on whether $ A $ is CY3 or not:

\begin{theorem}[{\cite[Theorem 4.3]{Bocklandt-CY}}]
Let $ Q $ be a quiver and $ W ∈ ℂQ_{≥3} $ a superpotential. Then the algebra $ \Jac(Q, W) $ is CY3 if and only if the sequence \eqref{eq:koszul-cy3-resolution} is a bimodule resolution for $ A $.
\end{theorem}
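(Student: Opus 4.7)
The plan is to establish a canonical self-duality of the candidate complex $P^{\bullet}$ from \eqref{eq:koszul-cy3-resolution}, namely an isomorphism of complexes of $A$-bimodules
\begin{equation*}
\Hom_{A^{\env}}(P^{\bullet}, A^{\env}) \cong P^{3-\bullet}.
\end{equation*}
Once this self-duality is in place, both directions of the theorem follow formally from the Hochschild cohomology criterion \eqref{eq:koszul-cy-ordcrit}.

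For the construction, the first step is a term-wise duality via tensor-hom adjunction: for any finitely generated $ℂQ_0$-bimodule $V$ one has $\Hom_{A^{\env}}(A \tensor_{ℂQ_0} V \tensor_{ℂQ_0} A, A^{\env}) \cong A \tensor_{ℂQ_0} \kdual V \tensor_{ℂQ_0} A$ as $A$-bimodules. The key individual identifications are $\kdual{(ℂQ_1)} \cong R$ and $\kdual R \cong ℂQ_1$, both via the pairing $a \leftrightarrow ∂_a W$ for $a \in Q_1$, and $\kdual W \cong ℂQ_0$, which yields $A \tensor_{ℂQ_0} \kdual W \tensor_{ℂQ_0} A \cong A \tensor_{ℂQ_0} A$. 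The second step is to check that these term-wise isomorphisms intertwine the differentials $g_1, g_2, g_3$ of \autoref{rem:koszul-cy3-maps} with their bimodule duals. This reduces to a cyclic Leibniz identity for $∂_a$ on cyclic paths: the explicit formula for $g_1(1 \tensor w \tensor 1)$, involving both decompositions $w = \sum r_i a_i = \sum b_j r_j$, is precisely dual to the formula for $g_2(1 \tensor r \tensor 1)$ under the pairing, and cyclicity of $W$ is exactly what guarantees the two decompositions exist and produce symmetric data.

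Given the self-duality, the forward direction is immediate: if $P^{\bullet}$ is exact, then $\Hom_{A^{\env}}(P^{\bullet}, A^{\env}) \cong P^{3-\bullet}$ is also exact apart from cohomological degree $3$ where the cohomology equals $A$; hence $\HH^k(A, A^{\env}) = A$ for $k = 3$ and $0$ otherwise, so $A$ is CY3 by \eqref{eq:koszul-cy-ordcrit}. For the reverse direction, suppose $A$ is CY3. The rightmost three terms of $P^{\bullet}$ form the standard beginning of any projective $A$-bimodule resolution (since $R = \vspan\{∂_a W\}$ generates the defining ideal), so $P^{\bullet}$ is already exact at its three rightmost positions. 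Applying $\Hom_{A^{\env}}(-, A^{\env})$ and using self-duality transports this to exactness at the three leftmost positions of $P^{\bullet}$ itself; combined with the CY3 hypothesis pinning the total cohomology of $\Hom_{A^{\env}}(P^{\bullet}, A^{\env})$ to $A$ at position $3$ and zero elsewhere, this forces $P^{\bullet}$ to be exact throughout.

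The main obstacle is the self-duality verification itself: one must check that the term-wise isomorphisms assemble into an actual chain map, which requires tracking bimodule actions, signs, and the precise form of $g_1, g_2, g_3$ from \autoref{rem:koszul-cy3-maps}. The pairing between $ℂQ_1$ and $R$ given by $a \leftrightarrow ∂_a W$ is nondegenerate precisely because $W$ is cyclic — cyclicity is the structural assumption without which the duality would break down, and the restriction $W \in ℂQ_{\geq 3}$ ensures that no arrow is killed by the relations and the three rightmost-position exactness really is standard.
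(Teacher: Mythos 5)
Your construction of the self-duality $\Hom_{A^{\env}}(P^{\bullet}, A^{\env}) \cong P^{3-\bullet}$ and the forward direction (exact $\Rightarrow$ CY3 via the self-dual-resolution lemma) are on the right track — this is indeed the backbone of Bocklandt's argument. The problem is the reverse direction.

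Two issues in the reverse argument are serious. First, the phrase ``applying $\Hom_{A^{\env}}(-, A^{\env})$ and using self-duality transports this to exactness at the three leftmost positions'' does not describe a valid step. Self-duality is an isomorphism of complexes $\Hom_{A^{\env}}(P^{\bullet}, A^{\env}) \cong P^{3-\bullet}$; this identifies the homology of the dual at degree $i$ with the homology of $P^{\bullet}$ at degree $3-i$, which is a change of indexing, not a transfer of exactness. A partially exact self-dual complex of projectives need not be exact. Second, ``the CY3 hypothesis pinning the total cohomology of $\Hom_{A^{\env}}(P^{\bullet}, A^{\env})$ to $A$ at position $3$'' is circular as written: you may compute $\HH^{\ast}(A, A^{\env})$ from $\Hom_{A^{\env}}(P^{\bullet}, A^{\env})$ only if $P^{\bullet}$ is already a projective resolution of $A$, which is exactly the conclusion you are trying to establish.

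The gap can be localized and it is not a small detail. What you actually need is to compare $P^{\bullet}$ with a minimal projective bimodule resolution $Q^{\bullet} \to A$. You have $Q_0 = P_0$ and $Q_1 = P_1$ automatically, but $Q_2 = P_2$ is not automatic: $Q_2 = A \otimes_{ℂQ_0} V_2 \otimes_{ℂQ_0} A$ with $V_2 = I/(JI+IJ)$ the space of minimal relations, and $R = \vspan\{\partial_a W\}$ merely surjects onto $V_2$. To get $Q_2 = P_2$ you need $R \to V_2$ to be an isomorphism, i.e.\ the $\partial_a W$ linearly independent and minimal. This is where CY3 must be used in an essential way: self-duality of the minimal resolution $Q^{\bullet}$ (which holds because $A$ is CY3) forces $\kdual{(ℂQ_1)} \cong V_2$, hence $\dim V_2 = |Q_1| \geq \dim R$, while the surjection gives $\dim R \geq \dim V_2$, so $R \cong V_2$. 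Only after this identification does $P^{\bullet}$ agree with a minimal resolution in degrees $\leq 2$, and only then does $\HH^0 = \HH^1 = 0$ legitimately give $H_3(P^{\bullet}) = \ker(g_1) = 0$ and $H_2(P^{\bullet}) = 0$ via the self-duality of $P^{\bullet}$. Without the dimension count the reverse implication is not proved. A smaller point: the identification $\kdual{W} \cong ℂQ_0$ is not always true as stated — $W$ as a $ℂQ_0$-bimodule can be a proper summand of $ℂQ_0$ if the superpotential misses some vertex; this is consistent with the theorem (the complex then fails to be a resolution and $A$ fails to be CY3), but the self-duality claim should be phrased so it does not presuppose $W$ touches every vertex.
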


\subsection{Cyclic $ A_∞ $-algebras}
\label{sec:koszul-correspondence}
In this section, we recall the correspondence of cyclic $ A_∞ $-algebras and Calabi-Yau dg algebras via Koszul duality. In \autoref{th:prelim-cy-ext} and \ref{th:koszul-prop-Ext}, we have already seen that $ \koszul A $ is the endomorphism algebra of the simple module $ ℂ $ and that modules over a \CYn\ algebra enjoy Serre duality of degree $ n $. This is a strong indication that $ \H\koszul A $ has a cyclic structure of degree $ n $. As we recall in this section, this is the norm and cyclic $ A_∞ $-algebras correspond to Calabi-Yau algebras under Koszul duality:

\begin{center}
\begin{tikzpicture}
\path (0, 0) node[align=center] (A) {$ A $ \\ \textbf{Cyclic}} (8, 0) node[align=center] (B) {$ \koszul A $ \\ \textbf{Calabi-Yau}};
\path[draw, <->] ($ (A.east)!0.2!(B.west) $) -- ($ (A.east)!0.8!(B.west) $) node[midway, above] {Koszul};
\end{tikzpicture}
\end{center}

In this section, we start by recalling the notion of cyclic $ A_∞ $-algebras. Then we explain that their Koszul duals are so-called deformed dg-preprojective algebras, which are Calabi-Yau. This explains the correspondence of cyclic $ A_∞ $-algebras and Calabi-Yau dg algebras via Koszul duality. Our primary reference is \cite{vdB-CY}. We start by recalling cyclic $ A_∞ $-algebras, a generalizations of Frobenius algebras:

\begin{definition}
Let $ A $ be an $ A_∞ $-algebra. Then $ A $ is \emph{cyclic} of degree $ n $ if it is equipped with a bilinear nondegenerate pairing $ ⟨-, -⟩: A × A → ℂ $ of degree $ -n $ such that $ ⟨x, y⟩ = (-1)^{|x||y|} ⟨y, x⟩ $ and
\begin{equation*}
⟨μ^k (a_{k+1}, …, a_2), a_1⟩ = (-1)^{‖a_{k+1}‖ (‖a_k‖ + … + ‖a_1‖)} ⟨μ^k (a_k, …, a_1), a_{k+1}⟩.
\end{equation*}
\end{definition}

\begin{remark}
The pairing is a noncommutative and categorical manifestation of a symplectic form. This is one of the reasons that cyclic $ A_∞ $-algebras are dominant in the theory of Fukaya categories. In our context, the notion of cyclic Fukaya categories is reserved for the A-side of mirror symmetry, while Calabi-Yau algebras are reserved for the B-side of mirror symmetry.
\end{remark}

When discussing Koszul duality for cyclic $ A_∞ $-algebras, we frequently need to choose a basis compatible with the pairing $ ⟨-, -⟩ $. We recall this piece of linear algebra as follows:

\begin{lemma}
\label{th:koszul-correspondence-matrix}
Let $ V $ be a finite-dimensional graded vector space over $ ℂ $ and $ ⟨-, -⟩: V × V → ℂ $ a nondegenerate bilinear form of degree $ -n $ with $ ⟨x, y⟩ = (-1)^{|x||y|} ⟨y, x⟩ $. Then there is a graded basis for $ V $ in which $ ⟨-, -⟩ $ takes the form
\begin{align}
\label{eq:koszul-correspondence-matrix}
⟨-, -⟩ &= \left(\begin{array}{c|c|c}
\begin{matrix} & I_k \\ ± I_k & \end{matrix} && \\\hline
& I_l & \\\hline
&& \begin{matrix} & -I_m \\ I_m & \end{matrix}
\end{array}\right), \\
\nonumber ± I_k &= \diag((-1)^{(n+1)|x_1|}, …, (-1)^{(n+1)|x_k|}).
\end{align}
Here $ k = \vdimension V_{<n/2} $ and the first $ k $ basis elements $ x_1, …, x_k $ can be freely chosen. The matrix $ I_l $ appears only if $ n $ is even and $ n/2 $ is even. The symplectic part consisting of $ I_m $ and $ -I_m $ only appears if $ n $ is even and $ n/2 $ is odd. In these cases we have $ l = \vdimension V_{n/2} $ and $ m = \vdimension V_{n/2} / 2 $, respectively.
\end{lemma}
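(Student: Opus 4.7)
The plan is to exploit the degree of the pairing to reduce the problem to standard linear algebra in each homogeneous component separately. Since $\langle-,-\rangle$ has degree $-n$, the pairing $\langle x,y\rangle$ vanishes unless $|x|+|y|=n$. Hence the form decomposes as a direct sum of pairings $V_i\times V_{n-i}\to\mathbb{C}$, and I would treat the three cases $i<n/2$, $i=n/2$, and $i>n/2$ separately.

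First I would handle the off-diagonal case $i\neq n/2$. Freely choose any graded basis $x_1,\dots,x_k$ of $V_{<n/2}$. Because the total pairing is nondegenerate and pairs $V_i$ with $V_{n-i}$, the restriction to $V_i\times V_{n-i}$ is itself a nondegenerate pairing. Hence there is a unique basis $y_1,\dots,y_k$ of $V_{>n/2}$ with $|y_j|=n-|x_j|$ and $\langle x_j,y_l\rangle=\delta_{jl}$. Applying the symmetry $\langle x,y\rangle=(-1)^{|x||y|}\langle y,x\rangle$ with $|x|+|y|=n$ yields $\langle y_j,x_j\rangle=(-1)^{|x_j|(n-|x_j|)}=(-1)^{|x_j|(n+1)}$, matching the prescribed $\pm I_k$ block. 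The ordering of basis elements (all $x_j$'s first, then all $y_j$'s) produces exactly the antidiagonal identity pattern.

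Next I would handle the middle part $V_{n/2}$, which only exists when $n$ is even. The restriction of $\langle-,-\rangle$ to $V_{n/2}\times V_{n/2}$ is a nondegenerate form satisfying $\langle x,y\rangle=(-1)^{(n/2)^2}\langle y,x\rangle$. When $n/2$ is even this sign is $+1$, so the form is symmetric and, working over $\mathbb{C}$, can be diagonalized to the identity matrix $I_l$ with $l=\dim V_{n/2}$. When $n/2$ is odd the sign is $-1$, so the form is alternating; nondegenerate alternating forms over any field are equivalent to the standard symplectic form, producing the block $\bigl(\begin{smallmatrix} 0 & -I_m \\ I_m & 0\end{smallmatrix}\bigr)$ with $2m=\dim V_{n/2}$. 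In either case, the transformation stays inside $V_{n/2}$ and therefore does not disturb the basis of $V_{<n/2}$ chosen in the first step.

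The only mild subtlety I anticipate is sign bookkeeping: verifying that the symmetry exponent $(-1)^{|x_j|(n-|x_j|)}$ indeed simplifies to $(-1)^{(n+1)|x_j|}$ (using $|x_j|^2\equiv|x_j|\pmod 2$), and confirming the parity dichotomy $(-1)^{(n/2)^2}=(-1)^{n/2}$ which governs the symmetric-versus-alternating split on $V_{n/2}$. Once these are clean, assembling the three blocks in the prescribed ordering—$V_{<n/2}$ first, then $V_{n/2}$, then the dual basis in $V_{>n/2}$—produces the matrix \eqref{eq:koszul-correspondence-matrix} verbatim, and the freedom in choosing $x_1,\dots,x_k$ is manifest from the construction.
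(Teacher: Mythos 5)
Your proposal is correct and follows essentially the same route as the paper: fix a basis of $V_{<n/2}$, use nondegeneracy of the degree-$(-n)$ pairing to produce a unique dual basis in $V_{>n/2}$, compute the sign $(-1)^{|x_j|(n-|x_j|)}=(-1)^{(n+1)|x_j|}$ from the graded-symmetry condition, and classify the restriction to $V_{n/2}$ as symmetric (when $n/2$ is even) or alternating (when $n/2$ is odd) via the parity of $(n/2)^2\equiv n/2\pmod 2$. The only cosmetic difference is that the paper phrases the dual basis construction via the induced isomorphism $V_{>n/2}\isoto \kdual{V}_{<n/2}$, which is the same content.
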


\begin{proof}
Let $ x_1, …, x_k $ be a given graded basis for $ V_{<n/2} $. The restricted pairing $ ⟨-, -⟩: V_{<n/2} × V_{>n/2} → ℂ $ gives an isomorphism $ φ: V_{>n/2} \isoto \kdual V_{<n/2} $ and the basis $ x_1, …, x_k $ gives rise to a dual basis $ \kdual x_1, …, \kdual x_i ∈ \kdual V_{<n/2} $. Pick $ x_j^* ∈ V_{>n/2} $ such that $ φ(x_j^*) = \kdual x_j $. Then
\begin{equation*}
⟨x_i, x_j^*⟩ = φ(x_j^*)(x_i) = \kdual x_j (x_i) = δ_{ij}, \quad ⟨x_j^*, x_i⟩ = (-1)^{(n-|x_j|)|x_i|} δ_{ij} = (-1)^{(n+1)|x_i|} δ_{ij}.
\end{equation*}
Picking $ x_1, …, x_k, x_1^*, …, x_k^* $ as first part of the basis sets up the first diagonal block of the matrix. If $ n $ is odd, then $ V = V_{<n/2} ⊕ V_{>n/2} $ and we are done. If $ n $ is even and $ n/2 $ is even, then $ ⟨-, -⟩ $ is a nondegenerate symmetric bilinear form on $ V_{n/2} $ and we can find a basis in which $ ⟨-, -⟩: V_{n/2} × V_{n/2} → ℂ $ is the identity matrix. If $ n $ is even and $ n/2 $ is odd, then $ ⟨-, -⟩: V_{n/2} × V_{n/2} → ℂ $ is a symplectic form on $ V_{n/2} $ and we can choose a symplectic basis. This finishes the proof.
\end{proof}

\begin{definition}
\label{def:koszul-correspondence-dualbasis}
Let $ V $ be a finite-dimensional graded vector space over $ ℂ $ and $ ⟨-, -⟩: V × V → ℂ $ a nondegenerate bilinear form of degree $ -n $. Choose a basis for $ V $ in which $ ⟨-, -⟩ $ takes the shape \eqref{eq:koszul-correspondence-matrix}:
\begin{equation*}
\begin{aligned}
& x_1, …, x_k, x_1^*, …, x_k^* && \quad \text{if } n \text{ odd}, \\
& x_1, …, x_k, x_1^*, …, x_k^*, x_{k+1}, …, x_{k+l} && \quad \text{if } n \text{ even}, ~ n/2 \text{ even}, \\
& x_1, …, x_k, x_1^*, …, x_k^*, x_{k+1}, …, x_{k+m}, x_{k+1}^*, …, x_{k+m}^* && \quad \text{if } n \text{ even}, ~ n/2 \text{ odd.}
\end{aligned}
\end{equation*}
Then we call the pair of sequences
\begin{equation*}
\begin{aligned}
& x_1, …, x_k && \text{and} \quad x_1^*, …, x_k^* && \text{if } n \text{ odd}, \\
& x_1, …, x_k, x_{k+1}, …, x_{k+l} && \text{and} \quad x_1^*, …, x_k^*, x_{k+1}^* ≔ x_{k+1}, …, x_{k+l}^* ≔ x_{k+l} && \text{if } n \text{ even}, ~ n/2 \text{ even}, \\
& x_1, …, x_k, x_{k+1}, …, x_{k+m} && \text{and} \quad x_1^*, …, x_{k+m}^* && \text{if } n \text{ even}, ~ n/2 \text{ odd}
\end{aligned}
\end{equation*}
a \emph{basis with dual basis} for $ V $ via $ ⟨-, -⟩ $.
\end{definition}

\begin{remark}
For any $ i $ we have $ |x_i^*| = n - |x_i| $, regardless of whether $ n $ is odd or even.
\end{remark}

Van den Bergh investigated the correspondence between cyclic $ A_∞ $-algebras and Calabi-Yau dg algebras via Koszul duality in \cite{vdB-CY}. The technical core result is a structure characterization of Calabi-Yau dg algebras. Roughly speaking, a dg algebra is \CYn\ if it is weakly equivalent to a so-called deformed dg-preprojective algebra $ Π(Q, n, W) $ for some quiver $ Q $ and superpotential $ W ∈ ℂQ_{≥3} $. Here $ W $ is required to satisfy $ \{W, W\} = 0 $ with respect to the so-called necklace bracket $ \{-, -\} $, on which we will not elaborate here. We recall the construction of the algebras $ Π(Q, n, W) $ as follows:

\begin{definition}
Let $ Q $ be a graded quiver, $ n ≥ 3 $ an integer and $ W ∈ ℂQ_{≥3} $ a superpotential homogeneous of degree $ -n+3 $ with respect to the grading on $ Q $. Assume that $ W $ is “graded cyclic”, instead of cyclic. Assume $ \{W, W\} = 0 $ with respect to the necklace bracket. Let $ \tilde Q $ be the double quiver, obtained from $ Q $ by inserting for every arrow $ a $ an arrow $ a^* $ in opposite direction. The arrow $ a^* $ is assigned degree $ -n+2-|a|_Q $. In the special case where an arrow $ a $ is a loop with odd degree $ |a|_Q = (2-n)/2 $, no arrow $ a^* $ is adjoined and one puts $ a^* = a $. Let $ \bar Q $ be the quiver obtained from $ \tilde Q $ by additionally inserting on every vertex a loop $ z_i $ of degree $ 1-n $. Then the \emph{deformed dg-preprojective algebra}
\begin{equation*}
Π(Q, n, W) = (\compl{ℂ\tilde Q}, d_Π)
\end{equation*}
is the dg algebra modeled on $ \compl{ℂ\tilde Q} $ with the following differential:
\begin{align*}
d_Π (a) &= (-1)^{(|a|+1)|a^*|} ∂_{a^*} W, \\
d_Π (a^*) &= (-1)^{|a|+1} ∂_a W, \\
d_Π (z_i) &= \sum_{h(a) = i} [a, a^*].
\end{align*}
\end{definition}

We recall Van den Bergh's structure result as follows:

\begin{theorem}[{\cite[Theorem 10.2.2]{vdB-CY}}]
\label{th:koszul-correspondence-classification}
Let $ Q $ be a graded quiver, $ n ≥ 3 $ an integer and $ W ∈ ℂ\tilde Q_{≥3} $ a superpotential with $ \{W, W\} = 0 $. Assume $ W $ is homogeneous of degree $ -n+3 $ with respect to the grading on $ Q $ and all arrows in $ Q $ lie in degree range $ [-n+2, 0] $. Then $ Π(Q, n, W) $ is \CYn.
\end{theorem}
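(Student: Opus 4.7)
The plan is to derive the Calabi-Yau property from the cyclic $A_\infty$-Koszul correspondence previewed at the start of \autoref{sec:koszul-correspondence}. Concretely, I would exhibit a cyclic $A_\infty$-algebra $A$ of degree $n$ whose Koszul dual recovers $\Pi(Q,n,W)$, and then deduce CY$n$-ness of $\koszul A$ from the cyclic pairing. This cleanly separates the combinatorics of the superpotential from the bimodule-resolution bookkeeping, and uses exactly the machinery recalled in \autoref{sec:koszul-koszul}--\ref{sec:koszul-cy3}.

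First I would build $A$. The underlying graded vector space $\bar A$ is a $\mathbb{C}Q_0$-bimodule whose basis pairs $(x_a, x_a^*)$ correspond to the arrow set of $\tilde Q$, with degrees chosen so that $|x_a^*| = n - |x_a|$; this is compatible with the duality of \autoref{def:koszul-correspondence-dualbasis} and with the prescription $|a^*| = -n+2-|a|$ after the $[1]$-shift entering Koszul duality. The nondegenerate pairing $\langle-,-\rangle$ of degree $-n$ is forced by \autoref{th:koszul-correspondence-matrix}, with care for the three cases of parity of $n$ and $n/2$ and the loop case $a^* = a$ of degree $(2-n)/2$. The higher products are defined via
\begin{equation*}
\langle \mu^{k-1}_A(x_{a_k}, \dots, x_{a_2}), x_{a_1}\rangle = \lambda_{a_k\dots a_1}
\end{equation*}
whenever $W = \sum \lambda_{a_k\dots a_1}\, a_k\dots a_1$; graded cyclicity of $W$ becomes cyclic symmetry of $\langle-,-\rangle$ with respect to $\mu_A$, and $\{W,W\}=0$ becomes the $A_\infty$-relations $\sum \mu_A \circ \mu_A = 0$ after unwinding along the pairing.

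Second I would compute $\koszul A$ using \autoref{def:koszul-koszul-def}. The generators $\kdual{x_a}, \kdual{x_a^*}$ have degrees matching the arrows of $\tilde Q$, and the Koszul differential $dv = (-1)^{|v|+1} v\circ \mu_A$ unfolds, by the example right after \autoref{def:koszul-koszul-def}, into
\begin{equation*}
d(a) = (-1)^{(|a|+1)|a^*|}\,\partial_{a^*}W, \qquad d(a^*) = (-1)^{|a|+1}\,\partial_a W,
\end{equation*}
while the loops $z_i$ with $d(z_i) = \sum_{h(a)=i}[a,a^*]$ arise from the identity-component correction that encodes the augmentation $A = \mathbb{C}\id \oplus \bar A$ at the level of $\koszul A$. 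After taking the $\mathfrak{m}$-adic completion implicit in \autoref{def:koszul-koszul-def}, this identifies $\koszul A \cong \Pi(Q,n,W)$ as dg algebras.

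Third I would transport the cyclic structure on $A$ to a self-dual bimodule resolution of $\koszul A$ in the sense of \autoref{def:prelim-cy-selfdual}. The bar resolution of $\koszul A$ as a bimodule carries a canonical length-$n$ filtration with pieces indexed by the tensor powers of $\bar A[1]$; the degree-$(-n)$ cyclic pairing on $A$ induces a bimodule isomorphism between this resolution and its bimodule dual shifted by $n$, specialising in the non-dg case to the Jacobi resolution \eqref{eq:koszul-cy3-resolution} with the self-duality $R \leftrightarrow \mathbb{C}Q_1$ and $W \leftrightarrow \mathbb{C}Q_0$. Invoking the self-dual-resolution implication recalled around \autoref{def:prelim-cy-selfdual} then yields that $\koszul A \cong \Pi(Q,n,W)$ is CY$n$. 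The main obstacle is precisely this third step: showing that the bar-style bimodule resolution genuinely terminates in degree $n$ and that the cyclic pairing descends to a strict self-duality rather than only a quasi-isomorphism. Termination uses the homogeneity of $W$ in degree $-n+3$ together with the degree-window assumption $|a|\in[-n+2,0]$ for arrows (so that the bar filtration is bounded), while strict self-duality requires a careful choice of basis via \autoref{th:koszul-correspondence-matrix} and a sign-tracking argument uniform across the odd/even-$n$ and loop cases.
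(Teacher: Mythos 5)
The paper does not prove this theorem; it is cited verbatim from Van den Bergh \cite[Theorem 10.2.2]{vdB-CY}, so there is no paper proof to compare against. Your proposal is therefore an attempt at an independent argument, and the overall heuristic — recognize $\Pi(Q,n,W)$ as the Koszul dual of a cyclic $A_\infty$-algebra and extract the CY property from the pairing — is a reasonable echo of Van den Bergh's philosophy. Your first two steps (building $A$ from $(Q,n,W)$, identifying $\koszul A \cong \Pi(Q,n,W)$, and matching $\{W,W\}=0$ with the $A_\infty$-relations) mirror what the paper itself does in the corollary immediately after this theorem, only in the reverse direction, and are essentially correct modulo sign-checking.

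The third step, however, has genuine gaps and is where the whole weight of the theorem lives. First, the bar resolution of $\koszul A$ does not ``carry a canonical length-$n$ filtration'': it is an unbounded resolution with tensor powers $(\bar A[1])^{\otimes k}$ in all degrees $k\ge 0$, and no degree-window hypothesis on the arrows shortens it. What you actually want is the finite Koszul-type complex
\begin{equation*}
\koszul A \otimes \kdual{(\bar A^n)} \otimes \koszul A \to \cdots \to \koszul A \otimes \kdual{(\bar A^1)} \otimes \koszul A \to \koszul A \otimes \koszul A \to \koszul A,
\end{equation*}
whose length $n$ is forced by the $A_\infty$-algebra being concentrated in degrees $0,\dots,n$, and whose self-duality is indeed a shadow of the degree-$(-n)$ pairing. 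But this complex being a resolution, i.e.\ being exact except at the rightmost spot, is precisely the content one has to prove — and you assert it rather than prove it. Self-duality of a chain complex does not imply exactness: the paper itself makes exactly this distinction in \autoref{sec:koszul-cy3}, where for $n=3$ the candidate complex \eqref{eq:koszul-cy3-resolution} exists and has the right shape for every superpotential, yet its exactness is equivalent to, not a consequence of, CY$3$-ness (cf.\ \cite[Theorem 4.3]{Bocklandt-CY}). Second, the self-dual-resolution criterion you cite, \autoref{def:prelim-cy-selfdual} and the lemma following it, is stated only for ordinary (non-dg) algebras and finite bimodule resolutions; $\Pi(Q,n,W)$ is a dg algebra, and establishing the analogue in the dg setting (with the correct notions of K-projectivity and bimodule dual from \autoref{sec:koszul-cy}) is itself a nontrivial adaptation that your proposal does not address. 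Both the exactness and the dg self-duality are essential, and both are missing.
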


Van den Bergh also shows the converse statement that every \CYn\ dg algebra under suitable finiteness conditions is equivalent to some deformed dg-preprojective algebra $ Π (Q, n, W) $. It is an easy consequence of \autoref{th:koszul-correspondence-classification} that the Koszul dual of a minimal cyclic $ A_∞ $-category is \CYn:

\begin{corollary}
Let $ A $ be a finite-dimensional augmented minimal $ A_∞ $-algebra concentrated in non-negative degrees with $ A^0 = ℂ\id $. Assume $ A $ is cyclic of degree $ n $. Then $ \koszul A $ is a deformed dg-preprojective algebra and is therefore \CYn.
\end{corollary}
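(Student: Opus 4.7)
The plan is to realize $\koszul A$ explicitly as a deformed dg-preprojective algebra $\Pi(Q,n,W)$ for a suitable single-vertex quiver $Q$ and superpotential $W$, and then to conclude by \autoref{th:koszul-correspondence-classification}.

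Since $A$ is cyclic of degree $n$, augmented with $A^0 = \C\id$, minimal, and concentrated in non-negative degrees, the nondegenerate pairing forces $A$ to live in degrees $[0,n]$, with $A^n = \C\,\mathrm{vol}$ for an element $\mathrm{vol}$ dual to $\id$. The subspace $V = A^1 \oplus \cdots \oplus A^{n-1}$ inherits a nondegenerate pairing of degree $-n$, so \autoref{th:koszul-correspondence-matrix} yields a basis $\{x_i\}$ of $V$ with dual basis $\{x_i^*\}$, split into off-diagonal, orthogonal and symplectic blocks depending on the parity of $n/2$. I take $Q$ to have a single vertex with arrows $\{a_i\}$ corresponding to one representative from each pair $\{x_i, x_i^*\}$ in the lower half of the degree range, so that the double-quiver pairs $\{a_i, a_i^*\}$ recover the full basis of $V$. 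Identifying $a_i$ with $\kdual{x_i^*}$, $a_i^*$ with $\kdual{x_i}$, and the loop $z$ of degree $1-n$ with $\kdual{\mathrm{vol}}$ gives an isomorphism of graded algebras between the underlying algebra of $\Pi(Q,n,W)$ and $\koszul A = \widehat{T(\kdual{\bar A[1]})}$. The degree constraints $|a_i|, |a_i^*| \in [-n+2, 0]$ and $|a_i|+|a_i^*| = 2-n$ follow directly from $|x_i| \in [1,n-1]$.

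For the differential, I would build the superpotential from the cyclic $A_\infty$-products by the standard recipe
\begin{equation*}
W = \sum_{k \geq 2} \frac{1}{k+1} \sum_{i_0, \ldots, i_k} \langle \mu^k(x_{i_k}, \ldots, x_{i_1}), x_{i_0}\rangle \, \kdual{x_{i_0}^*}\kdual{x_{i_1}^*} \cdots \kdual{x_{i_k}^*},
\end{equation*}
where the indices range over the chosen basis of $V$. Cyclicity of the pairing with respect to $\mu$ makes $W$ cyclic in $\C\tilde Q$, a degree count using $|\mu^k| = 2-k$ forces $W$ to be homogeneous of total degree $-n+3$, and the $A_\infty$-relations for $\mu$ translate through Koszul duality into $\{W,W\}=0$ for the necklace bracket. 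The concrete check is that $\partial_{a_j^*} W$ and $\partial_{a_j} W$ reproduce the Koszul differential of $\koszul A$ on the generators $\kdual{x_j^*}$ and $\kdual{x_j}$ respectively, while the prescribed $d_\Pi(z) = \sum[a_i,a_i^*]$ matches the component of the Koszul differential on $\kdual{\mathrm{vol}}$ coming from $\mu^2$ via the pairing with $\id$. Together these assemble to an isomorphism $\koszul A \cong \Pi(Q,n,W)$ of dg algebras, after which \autoref{th:koszul-correspondence-classification} yields the \CYn\ conclusion.

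The main obstacle is the sign bookkeeping: translating the $A_\infty$-sign convention on $A$ to the dg-sign convention governing $\Pi(Q,n,W)$, and verifying that one and the same $W$ produces all three pieces of $d_\Pi$ (on $a_i$, on $a_i^*$ and on $z$) consistently. A secondary subtlety is the middle degree $n/2$ when $n$ is even: in the symplectic subcase one has arrows with $a = a^*$, and both the basis choice and the formula for $W$ must degenerate uniformly with the generic case. Both issues are local and should succumb to direct computation once the identifications of the previous paragraphs are in place.
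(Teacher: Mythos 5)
Your overall strategy matches the paper's: a CHL-type basis via \autoref{th:koszul-correspondence-matrix}, a single-vertex quiver whose double quiver's arrows are the Koszul-dual generators of $\koszul A$, a superpotential read off from the cyclic products, an identification $\koszul A \cong \Pi(Q,n,W)$, and an appeal to \autoref{th:koszul-correspondence-classification}. The paper does exactly this (and, like you, defers the sign bookkeeping and the even-$n$ middle-degree case).

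The superpotential formula you propose is wrong, though, and the error is material rather than a sign issue. You attach the variable $\kdual{x_{i_j}^*}$ to the element $x_{i_j}$ appearing in the pairing; the correct variable is $\kdual{x_{i_j}}$ itself. With your choice, the degree of a monomial $\kdual{x_{i_0}^*}\cdots\kdual{x_{i_k}^*}$ equals $\sum_j(1 - |x_{i_j}^*|) = (k+1)(1-n) + \sum_j|x_{i_j}|$, and combining this with the nondegeneracy constraint $\sum_j|x_{i_j}| = n+k-2$ (forced by $|\mu^k|=2-k$) gives total degree $-k(n-2)-1$. This depends on $k$ and never equals the required $-n+3$ once $k\geq 2$ and $n\geq 3$, so $W$ as written is not homogeneous of the degree \autoref{th:koszul-correspondence-classification} demands — the ``degree count'' you defer to would in fact falsify the formula rather than confirm it. The derivatives also come out wrong for the same reason: $\partial_{\kdual{x_j}}W$ picks up terms with $x_{i_l}=x_j^*$ and hence records coefficients of $x_j$ in the products (since $\langle v, x_j^*\rangle$ extracts the $x_j$-coefficient), while the piece of $d_{\koszul A}$ it must reproduce — the one on $\kdual{x_j^*} = a_j$ under your identification — records coefficients of $x_j^*$. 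Replacing $\kdual{x_{i_j}^*}$ by $\kdual{x_{i_j}}$ (and dropping the $\tfrac{1}{k+1}$, which the paper's formula does not use and which your convention for $\partial$ would otherwise need to absorb) repairs both defects and recovers the paper's superpotential.
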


\begin{proof}
The idea is to represent $ \koszul A $ as a deformed dg-preprojective algebra $ Π(Q, n, W) $ by letting the arrows of $ Q $ stand for basis elements of $ A $ and letting the superpotential $ W $ record the product $ μ $. We ignore signs.

The first step is to choose a basis for $ A $ by means of \autoref{th:koszul-correspondence-matrix}. Since $ A^0 = ℂ\id $, we can include the element $ \id $ in the the basis and obtain a basis with dual basis of the following form:
\begin{equation*}
A = ℂ\id ⊕ \vspan\{x_1, …, x_k\} ⊕ \vspan\{x_1^*, …, x_k^*\} ⊕ ℂ\coid.
\end{equation*}
Of course, we have to make adaptations in case $ n $ is even: There may be self-dual basis elements of degree $ n/2 $. This is not a problem and is adequately reflected in the definition of $ Π(Q, n, W) $ by the intricate extra condition on loops. We shall for simplicity proceed with the assumption that $ n $ is odd.

We define the quiver $ Q $ to be a single vertex with $ k $ arrows, such that the double quiver $ \bar Q $ and the extended quiver $ \tilde Q $ satisfy
\begin{align*}
\compl{ℂ\tilde Q} &= ℂ \ncpow{\kdual x_1, …, \kdual x_k, \kdual{(x_1^*)}, …, \kdual{(x_k^*)}}, \\
\compl{ℂ\bar Q} &= ℂ \ncpow{\kdual x_1, …, \kdual x_k, \kdual{(x_1^*)}, …, \kdual{(x_k^*)}, \kdual{(\coid)}}.
\end{align*}
In these graded algebras, the variables $ \kdual x_i $ and $ \kdual{(x_i^*)} $ have degrees $ 1 - |x_i| $ and $ 1-|x_i^*| $, respectively. The variable $ \kdual{(\coid)} $ has degree $ 1-n $.

We define the superpotential $ W ∈ \compl{ℂ\tilde Q} $ by Koszul transforming the $ A_∞ $-structure on $ μ $:
\begin{equation*}
W = \sum_{\substack{1 ≤ i_0, i_1, …, i_l ≤ k \\ 2^{l+1} \text{ star options}}} \underbrace{⟨μ(x_{i_l}^{[*]}, …, x_{i_1}^{[*]}), x_{i_0}^{[*]}⟩}_{∈ ℂ} · \underbrace{\kdual{(x_{i_l}^{[*]})} … \kdual{(x_{i_1}^{[*]})} \kdual{(x_{i_0}^{[*]})}}_{∈ ℂ\tilde Q}.
\end{equation*}
Here we sum over choices of indices $ i_0, …, i_l $ as well as choices of starred or non-starred basis elements and variables. Minimality of $ A $ ensures that $ W $ lies in $ ℂQ_{≥3} $ and cyclicity of $ A $ ensures that $ W $ is graded cyclic. The $ A_∞ $-relations for $ A $ ensure that $ \{W, W\} = 0 $.

The quiver $ Q $, the cyclicity degree $ n $ and the superpotential $ W $ now yield a deformed dg-projective algebra $ Π(Q, n, W) $. Explicitly, its differential reads
\begin{align*}
d_Π \kdual x_i &= (-1)^{(|\kdual x_i| + 1)|\kdual{(x_i^*)}} ∂_{\kdual{(x_i^*)}} W, \\
d_Π \kdual{(x_i^*)} &= (-1)^{|\kdual x_i| + 1} ∂_{\kdual x_i} W, \\
d_Π \kdual{(\coid)} &= \sum_{i = 1}^k \kdual x_i \kdual{(x_i^*)} - \kdual{(x_i^*)} \kdual x_i.
\end{align*}
In the remainder of the proof, we check that $ Π(Q, n, W) $ equals $ \koszul A $. In fact, we have already named all the variables $ 2k+1 $ variables in $ \compl{ℂQ} $ precisely the way in which variables for $ \koszul A $ are named.

To see that also the differentials $ d_Π $ and $ d_{\koszul A} $ agree, note that the derivative $ ∂_{\kdual{x_i}} W $ precisely records the $ x_i^* $ coefficient of all possible products $ μ(…) $ of which the input is not the identity $ \id $ or the co-identity $ \coid $. In principle, the differential of $ \koszul A $ also records those products $ μ(…) $ which contain a co-identity, however those products vanish:
\begin{equation*}
|μ^l(x_{i_l}^{[*]}, …, \coid, …, x_{i_1}^{[*]})| ≥ (2-l) + (l-1)+ n = n+1.
\end{equation*}
Here we have used the assumption that $ A $ is concentrated in non-negative degrees and the co-idenity is in degree $ n $. We draw the conclusion that such products vanish because $ A $ is limited to degrees at most $ n $. Ultimately, $ d \kdual{(x_i^*)} = ∂_{\kdual x_i} $ agrees with the differential of $ \kdual{(x_i^*)} $ in $ \koszul A $.

Also on the variable $ \kdual{(\coid)} $, the differentials of $ Π(Q, n, W) $ and $ \koszul A $ agree. Indeed, the differential of $ \koszul A $ records appearances of $ \coid $ as result of products $ μ(…) $. Thanks to cyclicity, we have
\begin{align*}
⟨μ^{l ≥ 3} (a_l, …, a_1), \id⟩ &= 0, \\
⟨μ^2 (a, b), \id⟩ + ⟨μ^2 (b, \id), a⟩ + ⟨μ^2 (\id, a), b⟩ &= 0.
\end{align*}
When plugging in $ a = x_i $ and $ b = x_j^* $, we precisely obtain $ d_{\koszul A} (\kdual{(\coid)}) = d_Π (\kdual{(\coid)}) $. Ultimately, we conclude that $ \koszul A = Π(Q, n, W) $. Thanks to \autoref{th:koszul-correspondence-classification}, the algebra $ Π(Q, n, W) $ is \CYn\ and therefore $ \koszul A $ is \CYn\ itself. This finishes the proof.
\end{proof}

\subsection{Cho-Hong-Lau roadmap}
\label{sec:koszul-transfer}
In this section, we motivate the Cho-Hong-Lau construction from the perspective of Koszul duality. The idea is to pass the dg algebra $ \koszul A $ to cohomology. Regarded as ordinary algebra, the cohomology $ \H^0\koszul A $ need not be a Calabi-Yau algebra itself. Under grading assumptions on $ A $, it is however a Jacobi algebra and therefore a candidate to be CY3. We explain how to drop the grading requirements so that one can start from $ A $ being $ ℤ/2ℤ $-graded. The Koszul duality functor only survives this drop of requirements when we replace the actual cohomology $ \H^0 \koszul A $ by a surrogate. At the end of the section, we provide this tweaked Koszul duality functor which comes close to the Cho-Hong-Lau construction.

Our first starting point is an $ A_∞ $-category $ A $ which is cyclic of degree $ n $, concentrated in non-negative degree and has degree zero part $ A^0 = ℂ \id $:
\begin{equation*}
A = ℂ\id ⊕ A^1 ⊕ … ⊕ A^{n-1} ⊕ ℂ\coid.
\end{equation*}
Our interest lies in the Koszul duality functor $ \rModfd A → \Tw\koszul A $ which we recalled in \autoref{sec:koszul-koszul}. The essential step leading to the Cho-Hong-Lau construction consists of replacing the dg algebra $ \koszul A $ by its zeroth cohomology $ \H^0 \koszul A $, which is an ordinary associative algebra. In this section, we provide a simple explanation why this works particularly well if $ A $ is cyclic of degree 3.

We start with the observation that passing to the minimal model is unproblematic if $ A $ is concentrated in non-negative degree. We denote the minimal model of the dg algebra $ \koszul A $ by $ \H\koszul A $. Its degree zero part is $ \H^0\koszul A $.

\begin{lemma}
\label{th:koszul-transfer-pass}
Let $ A $ be a non-negatively graded augmented $ A_∞ $-algebra such that $ A^0 = ℂ\id $. Then there is an $ A_∞ $-morphism $ \koszul A → \H^0 \koszul A $.
\end{lemma}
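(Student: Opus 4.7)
The plan is straightforward once one observes that $\koszul A$ is concentrated in non-positive degrees; the desired morphism is then simply the canonical projection to $\H^0 \koszul A$ with all higher $A_\infty$-components set to zero. To establish the grading: since $A$ is non-negatively graded and $A^0 = \mathbb{C}\id$, the reduced part $\bar A$ lives in degrees $\geq 1$, so $\bar A[1]$ lives in degrees $\geq 0$, the tensor coalgebra $T(\bar A[1])$ lives in degrees $\geq 0$, and hence its graded dual $\koszul A = \kdual{T(\bar A[1])}$ lies in non-positive degrees. Since $d_{\koszul A}$ raises degree by one and $(\koszul A)^k = 0$ for $k \geq 1$, we may write $\H^0 \koszul A = (\koszul A)^0 / d_{\koszul A}((\koszul A)^{-1})$, an ordinary associative algebra which we regard as an $A_\infty$-algebra concentrated in degree $0$.

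Next I would define $F \colon \koszul A \to \H^0 \koszul A$ to be the strict morphism with $F^k = 0$ for $k \geq 2$ and $F^1$ the canonical projection: vanishing on strictly negative degrees and equal to the quotient map $(\koszul A)^0 \to \H^0 \koszul A$ in degree $0$. A degree count shows that nontrivial higher components are not even possible: an $A_\infty$-component $F^k$ has degree $1 - k$, so landing in the degree-$0$ target would require inputs of total degree $k - 1 \geq 1$, which cannot occur since all inputs of $\koszul A$ lie in non-positive degrees.

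Finally I would verify the $A_\infty$-functor relations. For $k = 1$ the relation reduces to $F^1 \circ d = 0$: on elements of degree $-1$ the image of $d$ lands in $(\koszul A)^0$ and is identified to zero in $\H^0 \koszul A$ by construction of the quotient, while in all lower degrees the image of $d$ is killed outright because $F^1$ vanishes outside degree $0$. For $k = 2$ the identity $F^1 \mu^2(a, b) = F^1(a) \cdot F^1(b)$ is immediate on degree-$0$ inputs and both sides vanish otherwise. For $k \geq 3$ both sides are zero since $\koszul A$ is an honest dg algebra (so $\mu^k = 0$) and $\H^0 \koszul A$ is an ordinary algebra (so $\mu^k_{\H^0} = 0$). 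The one genuine point of care is the sign translation between the dg and $A_\infty$ conventions highlighted at the start of the section, but for a strict morphism with vanishing higher components this collapses to the standard fact that a dg algebra morphism becomes a strict $A_\infty$-morphism.
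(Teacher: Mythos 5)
Your proof is correct but takes a genuinely different and more direct route than the paper. The paper's proof first invokes the Kadeishvili minimal model $\H\koszul A$ and constructs the composite of the (non-canonical, in general non-strict) quasi-isomorphism $\koszul A \to \H\koszul A$ with the strict degree-zero projection $\pi_0 \colon \H\koszul A \to \H^0\koszul A$; the degree-counting argument you use is carried out there for $\pi_0$ rather than directly on $\koszul A$. You instead bypass the minimal model entirely and exhibit a strict morphism $\koszul A \to \H^0\koszul A$ directly, exploiting the extra fact that $\koszul A$ is an honest dg algebra so that $\mu^{\geq 3} = 0$ for free — whereas the paper has to argue separately via degrees that $\pi_0 \mu^{\geq 3}_{\H\koszul A} = 0$, since the minimal model does carry nontrivial higher products. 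Your version is arguably cleaner: it is fully explicit, produces a strict $A_\infty$-morphism, and avoids appealing to the existence of a minimal model (and the attendant choice of homotopy data). The shared ingredient is the observation that $\koszul A$ lives in non-positive degrees, which both proofs derive from $\bar A$ being concentrated in positive degrees. Note for your own records that the two constructions need not give the same morphism: your $F$ is strict, while the paper's composite inherits higher components from the minimal model quasi-isomorphism; the lemma only asserts existence, so either serves.
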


\begin{proof}
By definition of the minimal model $ \H\koszul A $, there is an $ A_∞ $-quasi-isomorphism $ \koszul A → \H\koszul A $. In the remainder of the proof, we show that the projection $ π_0: \H\koszul A → \H^0\koszul A $ is an $ A_∞ $-morphism. Regard homogeneous elements $ a_1, …, a_k ∈ \H\koszul A $. Writing out the definition of $ μ_{\H^0\koszul A} (π_0 a_k, …, π_0 a_1) $, it is our task to show
\begin{equation}
\label{eq:koszul-transfer-auxfunctor}
π_0 μ_{\H\koszul A} (a_k, …, a_1) = \begin{cases} μ_{\H\koszul A} (π_0 a_2, π_0 a_1) & \text{if } k = 2 \\ 0 &\text{if } k ≠ 2. \end{cases}
\end{equation}
The basic observation is that $ \koszul A $ is concentrated in non-positive degrees because $ \bar A $ is concentrated in positive degrees. The cohomology $ \H\koszul A $ is then also concentrated in non-positive degrees. To check \eqref{eq:koszul-transfer-auxfunctor}, we distinguish four cases according to the value of $ k $ and the degrees of the input elements $ a_1, …, a_k $.

If $ k = 1 $, both sides vanish. If $ k = 2 $ and $ a_1, a_2 $ are both of degree zero and \eqref{eq:koszul-transfer-auxfunctor} holds. If $ k = 2 $ and one of $ a_1, a_2 $ is of negative degree, then $ μ_{\H\koszul A} (a_2, a_1) $ is of negative degree and both sides of \eqref{eq:koszul-transfer-auxfunctor} vanish. If $ k ≥ 3 $, then $ μ_{\H\koszul A} (a_k, …, a_1) $ has degree at most $ 2-k ≤ -1 $ and therefore both sides vanish as well. This finishes the proof.
\end{proof}

We borrow the following notation from \autoref{def:flatness-ideals-IP}.

\begin{definition}
Let $ V $ be a vector space and $ R ⊂ V $ be a subspace. Then $ I(R)_{\compl{T(V)}} $ is the ideal of $ \compl{T(V)} $ given by the image of the map $ \compl{T(V)} \htensor R \htensor \compl{T(V)} → \compl{T(V)} $. Explicitly, $ I(R)_{\compl{T(V)}} $ consists of all elements that can be written as a series
\begin{equation*}
\sum_{i = 0}^∞ p_i r_i q_i, \quad p_i ∈ V^{¤ → ∞}, \quad r_i ∈ R, \quad q_i ∈ V^{¤ → ∞}.
\end{equation*}
\end{definition}

\begin{lemma}
\label{th:koszul-transfer-cohI}
Let $ A $ be an augmented $ A_∞ $-algebra concentrated in non-negative degrees with $ A^0 = ℂ\id $. Then we have an algebra identification
\begin{equation*}
\H^0\koszul A = \frac{\compl{T(\kdual{(A^1)})}}{I(\{d_{\koszul A} x \running x ∈ \kdual{(A^2)}\})_{\compl{T(\kdual{(A^1)})}}}.
\end{equation*}
\end{lemma}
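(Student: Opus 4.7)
The plan is to compute $\H^0 \koszul A$ directly from the degree structure of $\koszul A$. First I would exploit the hypotheses on $A$ to pin down the low-degree pieces of $\koszul A$. Since $\bar A$ sits in strictly positive degrees, the shift $\bar A[1]$ sits in non-negative degrees with $\bar A[1]^0 = A^1$ and $\bar A[1]^1 = A^2$. Dualizing reverses signs, so $\kdual{(\bar A[1])}$ is concentrated in non-positive degrees with $\kdual{(\bar A[1])}^0 = \kdual{(A^1)}$ and $\kdual{(\bar A[1])}^{-1} = \kdual{(A^2)}$. Using the identification \eqref{eq:koszul-koszul-reverse}, the algebra $\koszul A = \kdual{T(\bar A[1])}$ becomes the completed tensor algebra on $\kdual{(\bar A[1])}$, and an overall degree count shows:
\begin{align*}
\koszul A^{>0} &= 0, \\
\koszul A^0 &= \compl{T(\kdual{(A^1)})}, \\
\koszul A^{-1} &= \text{series with exactly one $\kdual{(A^2)}$-letter and the rest $\kdual{(A^1)}$-letters.}
\end{align*}

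Next I would analyse the differential. Since $\koszul A^1 = 0$, the differential vanishes on $\koszul A^0$; in particular $d_{\koszul A} \kdual y = 0$ for every $\kdual y \in \kdual{(A^1)}$. Because $d_{\koszul A}$ is a graded derivation (this is the Leibniz rule dual to the co-Leibniz rule for the coproduct), for any monomial $p \, x \, q$ with $p,q$ products of elements of $\kdual{(A^1)}$ and $x \in \kdual{(A^2)}$ we get
\begin{equation*}
d_{\koszul A}(p x q) = p \cdot d_{\koszul A}(x) \cdot q,
\end{equation*}
since the boundary terms $dp, dq$ vanish and $p$ has degree zero. Extending continuously to series in $\koszul A^{-1}$, this identifies the image $\Im(d_{\koszul A}\colon \koszul A^{-1} \to \koszul A^0)$ with the set of all formal series $\sum_{i \geq 0} p_i (d_{\koszul A} x_i) q_i$ where the lengths of $p_i, q_i$ tend to infinity. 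That is exactly the ideal $I(\{d_{\koszul A} x : x \in \kdual{(A^2)}\})_{\compl{T(\kdual{(A^1)})}}$.

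Finally, since $\koszul A$ has no positive-degree part, the cohomology in degree zero is the cokernel of $d_{\koszul A}\colon \koszul A^{-1} \to \koszul A^0$, yielding the claimed presentation. The conversion from quotient of vector spaces to quotient of algebras is automatic because $\Im(d_{\koszul A})$ is a two-sided ideal, which in turn follows from the derivation property.

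The step requiring the most care is the passage to the completed ideal: one has to verify that an arbitrary series in $\koszul A^{-1}$, not just a finite combination of monomials $pxq$, has image lying in the ideal $I(\{d_{\koszul A} x\})$ as defined, and conversely that every such series in the ideal lifts to an element of $\koszul A^{-1}$. Both are straightforward once one observes that $\koszul A$ is naturally the completed tensor algebra and the lengths of words in the product expansion grow, but they are the only place where topology plays a non-trivial role in the proof.
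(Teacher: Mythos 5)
Your proof is correct and follows essentially the same route as the paper: identify the graded pieces of $\koszul A$ via the non-negativity of $\bar A[1]$, observe that $\koszul A^1 = 0$ forces every degree-zero element to be a cocycle, use the Leibniz rule together with $d|_{\kdual{(A^1)}} = 0$ to show the degree-zero coboundaries are exactly $I(\{d_{\koszul A}x : x \in \kdual{(A^2)}\})_{\compl{T(\kdual{(A^1)})}}$, and note the two-sided ideal property. Your closing caveat about series versus finite sums is the one place you are slightly more explicit than the paper, but it does not change the argument.
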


\begin{proof}
Since $ \bar A $ is concentrated in positive degrees, its left-shift $ \bar A[1] $ is concentrated in non-negative degrees. We can write
\begin{equation*}
\koszul A = \prod_{i = 0}^∞ (\underbrace{\kdual{(A^1)}}_{\text{deg } 0} ⊕ \underbrace{\kdual{(A^2)}}_{\text{deg } -1} ⊕ …)^{¤ i}
\end{equation*}
In the remainder of the proof, we determine the degree zero cocycles and degree zero coboundaries of $ \koszul A $.

For the degree zero cocycles, we note that the degree zero part of $ \koszul A $ is $ \kdual{(A^1)} $ and the degree one part of $ \koszul A $ is zero. Together, the degree zero cocyles of $ \koszul A $ are just $ \kdual{T(A^1)} $.

For the degree zero coboundaries, we determine first the degree $ -1 $ part of $ \koszul A $. In fact, the part of degree $ -1 $ consists of products of degree zero elements with precisely one degree $ -1 $ element, more precisely
\begin{equation*}
\prod_{i = 0}^∞ \sum_{j = 0}^i \kdual{(A^1)}{}^{¤ j} ¤ \kdual{(A^2)} ¤ \kdual{(A^1)}{}^{¤ i-j}.
\end{equation*}
Now the degree zero coboundaries of $ \koszul A $ are precisely the differentials of elements in this space. Since $ d_{\koszul A} x = 0 $ for $ x ∈ \kdual{(A^1)} $, we immediately see that for $ x_1, …, x_i ∈ \kdual{(A^1)} $ and $ y ∈ \kdual{(A^2)} $ we have
\begin{equation*}
d_{\koszul A} (x_i ¤ … ¤ x_{i-j+1} ¤ y ¤ x_{i-j} ¤ … ¤ x_1) = x_i ¤ … ¤ x_{i-j+1} ¤ d_{\koszul A} y ¤ x_{i-1} ¤ … ¤ x_1.
\end{equation*}
We conclude that the degree zero coboundaries of $ \koszul A $ are precisely elements of $ I(\{d_{\koszul A} y \running y ∈ \kdual{(A^2)}\}) $. This finishes the proof.
\end{proof}

We shall continue writing $ I $ for the ideal defined in \autoref{th:koszul-transfer-cohI}:

\begin{definition}
\label{def:koszul-transfer-dropZ}
Let $ A $ be an augmented $ ℤ $-graded finite-dimensional $ A_∞ $-algebra cyclic of degree $ n ≥ 3 $. Denote by $ A^1 $ and $ A^2 $ the homogeneous subspaces of degree $ 1 $ and $ 2 $, respectively. Then we denote by $ I $ the ideal
\begin{equation*}
I = I(\{d_{\koszul A} x \running x ∈ \kdual{(A^2)}\})_{\compl{T(\kdual{(A^1)}}} ⊂ \compl{T(\kdual{(A^1)})}.
\end{equation*}
\end{definition}

It is in fact possible to say more about the ideal in case $ A $ is cyclic of degree $ 3 $:

\begin{lemma}
\label{th:koszul-transfer-3destiny}
Let $ A $ be an augmented finite-dimensional $ A_∞ $-algebra. Assume $ A $ is cyclic of degree $ 3 $ and concentrated in non-negative degrees with $ A^0 = ℂ\id $:
\begin{equation*}
A = ℂ\id ⊕ A^1 ⊕ A^2 ⊕ ℂ\coid.
\end{equation*}
Then we have a Koszul duality functor $ \rModfd A → \Tw\H^0\koszul A $. Upon choosing a basis $ x_1, …, x_k $ for $ A^1 $, the algebra $ \H^0\koszul A $ can be written as a Jacobi-type algebra and is naturally a candidate to be CY3:
\begin{equation}
\label{eq:koszul-koszul-quotientalg}
\H^0\koszul A = \frac{ℂ\ncpow{\kdual x_1, …, \kdual x_k}}{I(∂_{\kdual x_i} W)_{ℂ\ncpow{\kdual x_1, …, \kdual x_k}}}.
\end{equation}
\end{lemma}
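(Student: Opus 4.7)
The statement has two parts: existence of the Koszul duality functor $ \rModfd A \to \Tw \H^0\koszul A $, and the Jacobi-type presentation of $ \H^0\koszul A $. I would tackle them in sequence.

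For the functor, the plan is to compose the original Koszul duality functor $ F: \rModfd A \to \Tw\koszul A $ from \autoref{th:koszul-koszul-functor} with an induced functor $ \Tw \koszul A \to \Tw \H^0\koszul A $. The input for the latter is the $ A_\infty $-morphism $ \koszul A \to \H^0\koszul A $ from \autoref{th:koszul-transfer-pass}, which extends functorially to the twisted completions by the standard recipe whereby an $ A_\infty $-functor sends a twisted complex $ (X_0, \delta) $ to $ (X_0, \sum_k F^k(\delta, \ldots, \delta)) $, absorbing the twisted differential into the higher components.

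For the algebraic presentation, the plan is to start from \autoref{th:koszul-transfer-cohI} and identify its relations as partial derivatives of a superpotential. Since $ A $ is cyclic of degree $ 3 $, \autoref{th:koszul-correspondence-matrix} applied to $ \bar A = A^1 \oplus A^2 $ produces a dual basis $ x_1^*, \ldots, x_k^* \in A^2 $ of the chosen basis $ x_1, \ldots, x_k $ of $ A^1 $, so $ \kdual{(A^2)} = \vspan\{\kdual{(x_i^*)}\} $. I would then define
\[
W \;=\; \sum_{l \geq 1} \sum_{i_0, \ldots, i_l = 1}^{k} \langle \mu^l(x_{i_l}, \ldots, x_{i_1}), x_{i_0}\rangle \; \kdual x_{i_0} \kdual x_{i_1} \cdots \kdual x_{i_l} \;\in\; \compl{T(\kdual{(A^1)})},
\]
which is cyclic by cyclicity of the pairing together with cyclicity of $ \mu $. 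Unwinding \autoref{def:koszul-koszul-def} together with the order-reversal of \eqref{eq:koszul-koszul-reverse} shows that $ d_{\koszul A} \kdual{(x_i^*)} $, evaluated on the tensor $ x_{j_l} \otimes \cdots \otimes x_{j_1} $, returns the $ x_i^* $-coefficient of $ \mu^l(x_{j_l}, \ldots, x_{j_1}) $, which by definition of the dual basis equals $ \langle \mu^l(x_{j_l}, \ldots, x_{j_1}), x_i\rangle $. Extracting the leftmost variable $ \kdual x_i $ from $ W $ produces exactly these same coefficients as the monomials in $ \partial_{\kdual x_i} W $, so $ d_{\koszul A} \kdual{(x_i^*)} = \pm \partial_{\kdual x_i} W $. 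Since $ \kdual{(A^2)} $ is spanned by the $ \kdual{(x_i^*)} $, the two ideals of \autoref{th:koszul-transfer-cohI} and \eqref{eq:koszul-koszul-quotientalg} have the same generating sets (up to signs), hence coincide.

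The main obstacle is the bookkeeping of signs and orderings: one must track the order-reversal in \eqref{eq:koszul-koszul-reverse}, the Koszul sign of $ d_{\koszul A} $, the sign from cyclic reindexing inside $ W $, and the sign convention chosen for $ \partial_{\kdual x_i} $. None of these obstruct the result however, since at the level of generating sets of a two-sided ideal only signs and cyclic relabelings are at stake, and neither affects the generated ideal.
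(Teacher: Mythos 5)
Your proposal follows essentially the same route as the paper: compose the Koszul duality functor of \autoref{th:koszul-koszul-functor} with the functor $\Tw\koszul A \to \Tw\H^0\koszul A$ induced by the $A_\infty$-morphism of \autoref{th:koszul-transfer-pass}, then identify the relations of \autoref{th:koszul-transfer-cohI} as derivatives of the cyclic superpotential $W = \sum ⟨μ(x_{i_l}, \ldots, x_{i_1}), x_{i_0}⟩ \, \kdual x_{i_0}\kdual x_{i_1}\cdots\kdual x_{i_l}$ by computing $d_{\koszul A}\kdual{(x_i^*)}$. Your extra remark about why possible sign discrepancies do not change the generated ideal is a sensible precaution, but the argument and its dependencies coincide with the paper's.
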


\begin{proof}
We divide the proof into three steps. In the first step, we explain the Koszul duality functor. In the second step, we examine the algebra $ \H^0\koszul A $. In the third step, we comment on CY3-ness.

For the first step, pick the Koszul duality functor $ \rModfd A → \Tw\koszul A $ from \autoref{th:koszul-koszul-functor}. Thanks to \autoref{th:koszul-transfer-pass}, we have an additional morphism of $ A_∞ $-algebras $ \koszul A → \H^0 \koszul A $, which induces an $ A_∞ $-functor $ \Tw\koszul A → \Tw\H^0\koszul A $. Composing these two, we obtain the desired Koszul duality functor
\begin{equation*}
\rModfd A → \Tw\koszul A → \Tw\H^0 \koszul A.
\end{equation*}
For the second step of the proof, the starting point is the description $ \H^0\koszul A = \compl{T(\kdual{(A^1)})}/I $ from \autoref{th:koszul-transfer-cohI}. It is our task to examine the ideal $ I $. Choose a basis $ x_1, …, x_n $ for $ A^1 $ and denote by $ x_1^*, …, x_n^* $ the corresponding dual basis for $ A^2 $ via $ ⟨-, -⟩ $. We construct the superpotential $ W ∈ \compl{T(\kdual{(A^1)})} $ as follows:
\begin{equation*}
W = \sum_{1 ≤ i_1, …, i_k, i_0 ≤ n} ⟨μ(x_{i_k}, …, x_{i_1}), x_{i_0}⟩ \kdual x_{i_0} \kdual x_{i_1} … \kdual x_{i_k}.
\end{equation*}
This specific pairing has the chance of not vanishing only because $ n = 3 $. In fact, the degree of $ μ(…) $ is $ 2 $ while the degree of $ x_{i_0} $ is $ 1 $. The superpotential $ W $ is cyclic since $ μ $ is assumed to be cyclic. We now claim that $ ∂_{\kdual x_{i_0}} W = d_{\koszul A} \kdual{(x_{i_0}^*)} $. Indeed,
\begin{equation*}
∂_{\kdual x_{i_0}} W = \sum_{1 ≤ i_1, …, i_k ≤ n} ⟨μ(x_{i_k}, …, x_{i_1}), x_{i_0}⟩ \kdual x_{i_1} … \kdual x_{i_k} = d_{\koszul A} \kdual{(x_{i_0}^*)}.
\end{equation*}
This proves the desired description of $ \H^0\koszul A $. For the third part of the proof, we comment on the non-technical statement regarding the claimed CY3 candidate status of $ \H^0\koszul A $. As we have shown in the second part of the proof, the algebra $ \H^0 \koszul A $ is the quotient of a noncommutative power series ring by derivatives of a superpotential. This does technically not imply that $ \H^0\koszul A $ is CY3. However, it is known that if the number of variables and the degree of the superpotential are high enough, then the typical superpotential does turn the quotient into a CY3 algebra \cite[Corollary 4.4]{Bocklandt-CY}. This finishes the proof.
\end{proof}

\begin{remark}
It is essential that the cyclicity degree of $ A $ is $ 3 $: Assume $ A $ is cyclic of degree $ n $ instead. In order to represent $ d \kdual x $ as derivative of a superpotential $ W ∈ \compl{T(\kdual{(A^1)})} $, the only natural way is by $ d \kdual x $ being either the derivative $ ∂_{\kdual x} W $ or $ ∂_{\kdual{(x^*)}} W $. However, the variable $ \kdual x $ has degree $ -1 $ and the variable $ \kdual{(x^*)} $ has degree $ 1-(n-2) = 3-n $. We conclude that only for $ n = 3 $ any of the variables, namely $ \kdual{(x^*)} $, has a chance of appearing in $ W $. This shows that cyclicity in degree $ n = 3 $ is favorable for passing $ \koszul A $ to cohomology.
\end{remark}

The reduction of the Koszul dual to zeroth cohomology in \autoref{th:koszul-transfer-3destiny} makes room for further weakening of the assumptions on the side of the $ A_∞ $-algebra $ A $. As we show in \autoref{th:koszul-transfer-dropZ}, we can drop the requirement that $ A $ be $ ℤ $-graded and concentrated in non-negative degrees. The idea is to circumvent $ \koszul A $ and work directly with the quotient algebra displayed on the right-hand side of \eqref{eq:koszul-koszul-quotientalg}. When we restrict $ μ_A $ or $ μ_M $ to $ T(\bar A^1) $, we shall use the letter $ m $ instead of $ μ $:

\begin{definition}
Let $ A = ℂ\id ⊕ \bar A^1 ⊕ \bar A^2 ⊕ ℂ\coid $ be an augmented $ ℤ/2ℤ $-graded finite-dimensional $ A_∞ $-algebra cyclic of odd degree. We denote the restriction to $ T(A^1) $ of the map $ μ_A: M ¤ T(\bar A) → M ¤ T(\bar A) $ and its partial dual $ \kdual{μ}_A $ by
\begin{equation*}
\begin{aligned}
m_A & : & M ¤ T(\bar A^1) & → M ¤ T(\bar A^1), \\
\kdual m_{A, 0} & : & M & → M ¤ \kdual{T(\bar A^1)}.
\end{aligned}
\end{equation*}
Let $ M ∈ \rModfd A $ be a finite-dimensional right $ A $-module with action map $ μ_M: M ¤ T(A[1]) → M ¤ T(A[1]) $. Then we denote the restriction to $ T(A^1) $ of $ μ_M $ and $ \kdual{μ}_M $ by
\begin{equation*}
\begin{aligned}
m_M & : & M ¤ T(\bar A^1) & → M ¤ T(\bar A^1), \\
\kdual m_{M, 0} & : & M & → M ¤ \kdual{T(\bar A^1)}.
\end{aligned}
\end{equation*}
\end{definition}

We now formulate the Koszul duality where $ A $ is not required to be $ ℤ $-graded. From the standpoint of Koszul duality, this statement is the closest to the Cho-Hong-Lau construction that lies within the framework of finite-dimensional modules and twisted complexes:

\begin{lemma}
\label{th:koszul-transfer-dropZ}
Let $ A = ℂ\id ⊕ \bar A^1 ⊕ \bar A^2 ⊕ ℂ\coid $ be an augmented $ ℤ/2ℤ $-graded finite-dimensional $ A_∞ $-algebra cyclic of odd degree. Write $ J = \compl{T(\kdual{(A^1)})} / I $. Then we have a Koszul duality functor
\begin{align*}
F: \rModfd A &\verylongto \Tw J, \\
(M, μ_M) &\verylongmapsto (M ¤ J, \kdual m_{M, 0}), \\
f &\verylongmapsto \kdual f.
\end{align*}
\end{lemma}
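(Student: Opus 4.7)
The plan is to mirror the proofs of \autoref{th:koszul-koszul-functor} and \autoref{th:koszul-transfer-3destiny} and adapt them to the $\mathbb{Z}/2\mathbb{Z}$-graded setting with $J$ in place of $\koszul A$ or $\H^0\koszul A$. Three items have to be checked: first, that for every right $A$-module $(M, \mu_M)$ the partial dual $\kdual m_{M, 0}$ satisfies the Maurer-Cartan equation in $\Tw J$, so that $F(M, \mu_M)$ is a well-defined object; second, that $F$ intertwines the differentials on hom spaces; third, that $F$ intertwines composition. Items two and three will be routine adaptations of the corresponding steps of \autoref{th:koszul-koszul-functor}: both amount to computing matrix products and entry-wise differentials of Koszul transforms via \autoref{th:koszul-koszul-functoriality} and then projecting from $\compl{T(\kdual{A^1})}$ down to $J$, using the already-checked first item to absorb spurious terms into the ideal $I$.

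The main work is the first item. Since $J$ carries no differential, the MC equation collapses to $\mu^2_{\Add J}(\kdual m_{M, 0}, \kdual m_{M, 0}) = 0$; by \autoref{th:koszul-koszul-functoriality} this matrix product equals $\kdual{(m_M \circ m_M)_0}$ viewed in $M \otimes \compl{T(\kdual{A^1})}$, and the task reduces to showing it lies in $M \otimes I$. The structural observation I would use is that for every $\vec a \in (\bar A^1)^l$ the element $\mu^l(\vec a)$ actually lies in $\bar A^2$. Augmentation excludes any $\id$-component; the off-diagonal form of the cyclic pairing (\autoref{th:koszul-correspondence-matrix}) combined with cyclicity of odd degree $n$ forces $\bar A^1$ and $\bar A^2$ to be of opposite parity with $\bar A^1$ odd; a parity count on $|\mu^l(\vec a)| = (2-l) + l \cdot 1 \equiv 0 \pmod 2$ then rules out $\bar A^1$ and $\coid$ (both odd), pinning $\mu^l(\vec a)$ into $\bar A^2$.

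Given this pinning, I would invoke the $A_\infty$-module relation $(\mu_M + \mu_A)^2 = 0$ and project to the empty-tensor component, restricted to inputs in $M \otimes T(\bar A^1)$, to rewrite $(m_M m_M)_0 = -(\mu_M \mu_A)_0 \restr{M \otimes T(\bar A^1)}$ (using that $\mu_M$ preserves $T(\bar A^1)$). Decomposing each inner contraction $\mu(a_i, \ldots, a_{j+1}) \in \bar A^2$ along a basis $\{x_s^*\}$ and taking the Koszul transform in the remaining input slots produces an expression of the form
\begin{equation*}
-\kdual{(m_M m_M)_0}(m) = \sum_s \sum_{\vec b, \vec c} \pm \mu(m, \vec b, x_s^*, \vec c) \otimes \kdual{\vec b} \cdot d_{\koszul A}(\kdual{x_s^*}) \restr{T(\bar A^1)} \cdot \kdual{\vec c},
\end{equation*}
which visibly lies in $M \otimes I$, since the generator $d_{\koszul A}(\kdual{x_s^*}) \restr{T(\bar A^1)}$ of $I$ is sandwiched between power-series factors in $\compl{T(\kdual{A^1})}$. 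The hard part is exactly this first item: the bookkeeping required to recognize, after the Koszul transform, the surviving $\bar A^2$-valued inner contractions as generators of $I$. The combination of augmentation with the parity constraint from cyclicity of odd degree is precisely what pins those contractions into $\bar A^2$ and makes the identification go through.
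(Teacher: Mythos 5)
Your proof follows the same route as the paper, whose own proof is deliberately only a ``glimpse'': both reduce the Maurer--Cartan identity to the module relation $(m_M \circ m_A)_0 + (m_M \circ m_M)_0 = 0$ and the claim that $\kdual{(m_M \circ m_A)_0}$ lies in $M \otimes I$, a claim the paper asserts without justification. Your parity-plus-augmentation observation pinning the inner contractions $\mu^l(\vec a) \in \bar A^2$ is precisely the detail that makes that assertion work, so the proposal is correct and usefully fills in the step the paper glosses over.
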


\begin{proof}
We only provide a glimpse of the proof here, since the more general version is treated in \autoref{sec:CHL}. For instance, it is instructive to explain why $ (M, \kdual m_{M, 0}) $ is a twisted complex over $ J $.

In order to recognize $ (M, \kdual m_{M, 0}) $ as twisted complex over $ J $, we have to check its Maurer-Cartan identity. Since $ M $ is a module over $ A $, we have $ (m_M ∘ m_A)_0 + (m_M ∘ m_M)_0 = 0 $. The image of $ \kdual{(m_M ∘ m_A)_0} $ lies in $ M ¤ I $. This means that modulo $ M ¤ I $, we have
\begin{equation*}
μ^2_{\Add J} (\kdual m_{M, 0}, \kdual m_{M, 0}) = \kdual{(m_M ∘ m_M)_0} = 0.
\end{equation*}
This shows that $ \kdual m_{M, 0} $ satisfies the Maurer-Cartan equation in $ \Add J $ and $ (M ¤ J, \kdual m_{M, 0}) $ is indeed a twisted complex.
\end{proof}

\begin{remark}
The functor $ F $ in \autoref{th:koszul-transfer-dropZ} can be made explicit upon choice of basis for $ A $. Choose a basis of a consisting of elements $ x_i $ in odd degree and the dual elements $ x_i^* $ in even degree:
\begin{equation*}
A = \underbrace{ℂ\id}_{\text{even}} ⊕ \underbrace{\vspan\{x_1, …, x_k\}}_{\text{odd}} ⊕ \underbrace{\vspan\{x_1^*, …, x_k^*\}}_{\text{even}} ⊕ \underbrace{ℂ\coid}_{\text{odd}}.
\end{equation*}
The algebra $ J $ takes on the form
\begin{equation*}
J = \frac{ℂ \ncpow{\kdual x_1, …, \kdual x_k}}{I(∂_{\kdual x_i} W)_{ℂ \ncpow{\kdual x_1, …, \kdual x_k}}}.
\end{equation*}
Formulated in terms of the basis, the fact that $ \kdual m_M $ squares to zero is based on the observation that
\begin{align*}
0 =& \sum_{\substack{l ≥ 0 \\ 1 ≤ i_1, …, i_l ≤ k}} (μ · μ) (m, x_{i_l}, …, x_{i_1}) \\
=& \sum_{\substack{l ≥ 0 \\ 1 ≤ i_1, …, i_l ≤ k \\ 0 ≤ s ≤ r ≤ l}} μ(m, x_{i_l}, …, μ(x_{i_r}, …, x_{i_{s+1}}), …) + \sum_{\substack{l ≥ 0 \\ 1 ≤ i_1, …, i_l ≤ k \\ 0 ≤ r ≤ l}} μ(μ(m, x_{i_l}, …, x_{i_{r+1}}), …, x_1).
\end{align*}
Note that there are no signs since all $ x_i $ are odd. When dualizing the above equality, the first summand on the second row lands in the ideal $ I(∂_{\kdual x_i} W)_{ℂ\ncpow{x_1, …, x_k}} $ and vanishes in $ J $. Meanwhile, the second summand on the second row dualizes to $ (\kdual m_{M, 0})^2 $. We conclude that $ (\kdual m_{M, 0})^2 = 0 $, in other words $ (M ¤ J, \kdual m_M) $ is a twisted complex.
\end{remark}

The construction of \autoref{th:koszul-transfer-dropZ} leads directly to the Cho-Hong-Lau construction. The idea is to extend the construction of the Koszul duality functor $ \rModfd A → \Tw J $ to the case of non-augmented algebras. On the side of the Koszul dual, the required adaption consists of passing from the twisted completion to matrix factorizations. In \autoref{sec:CHL}, we recall this Cho-Hong-Lau construction and in particular amend all signs to the rules for $ A_∞ $-categories.

\section{Preliminaries on mirror symmetry}
\label{sec:3prelim-MS}
In this section, we recollect preliminaries on mirror symmetry of punctured surfaces from \cite{Bocklandt}.

\subsection{Gentle algebras}
In this section, we recall the $ A_∞ $-gentle algebra from \cite{Bocklandt} and its deformed version from \paperone. In particular, we explain the philosophy of $ \Gtl_q Q $ as a discrete relative Fukaya category.

\begin{definition}
A \emph{punctured surface} is a closed oriented surface $ S $ with a finite set of punctures $ M ⊂ S $. We assume that $ |M| ≥ 1 $, or $ |M| ≥ 3 $ if $ S $ is a sphere.
\end{definition}

The assumptions on $ |M| $ are cosmetical and explained in \papertwoB.

\begin{definition}
Let $ (S, M) $ be a punctured surface. An \emph{arc} in $ S $ is a not necessarily closed curve $ γ: [0, 1] → S $ running from one puncture to another. An \emph{arc system} on a punctured surface is a finite collection of arcs such that the arcs meet only at the set $ M $ of punctures. Intersections and self-intersections are not allowed. The arc system satisfies the \emph{no monogons or digons} condition \emph{[NMD]} if
\begin{itemize}
\item No arc is a contractible loop in $ S \setminus M $.
\item No pair of distinct arcs is homotopic in $ S \setminus M $.
\end{itemize}
The arc system satisfies the \emph{no monogons or digons in the closed surface} condition \emph{[NMDC]} if
\begin{itemize}
\item No arc is a contractible loop in $ S $.
\item No pair of distinct arcs is homotopic in $ S $.
\end{itemize}
An arc system is \emph{full} if the arcs cut the surface into contractible pieces, which we call \emph{polygons}.
\end{definition}

\begin{definition}
A \emph{dimer} $ Q $ is a full arc system on a punctured surface such that every polygon is bounded by at least three arcs and the arcs along the boundary of a polygon are all oriented in the same direction. The letter $ Q_0 $ denotes the set of punctures of the dimer, the letter $ Q_1 $ denotes the set of arcs, and $ |Q| $ denotes the closed surface.
\end{definition}

\begin{remark}
All polygons in a dimer are either bounded entirely clockwise, or entirely anticlockwise. A polygon neighboring a clockwise polygon is anticlockwise, and the other way around. A dimer can be interpreted as a quiver embedded in a surface, therefore quiver terminology applies and we may for instance refer to paths of $ Q $.
\end{remark}

The \emph{gentle algebra} $ \Gtl Q $ is an $ A_∞ $-category defined as follows: Its objects are the arcs $ a ∈ Q_1 $. A basis for the hom space $ \Hom_{\Gtl Q} (a, b) $ is given by the set of all angles around punctures from $ a $ to $ b $. This includes \emph{empty angles}, which are the \emph{identities} on the arcs. The hom spaces of $ \Gtl Q $ are not finite-dimensional, in contrast to what is classically called a gentle algebra. The $ ℤ/2ℤ $-grading on $ \Gtl Q $ is given by declaring all interior angles of polygons to have odd degree. The differential $ μ^1 $ is plainly set to zero, and the product $ μ^2 (α, β) $ of two angles $ α, β $ is defined as the concatenation $ αβ $ of $ α $ and $ β $ if both angles wind around the same puncture and $ α $ starts where $ β $ ends:
\begin{equation*}
μ^1 ≔ 0, \quad μ^2 (α, β) ≔ (-1)^{|β|} αβ.
\end{equation*}
The higher products $ μ^{≥3} $ on $ \Gtl Q $ are defined in terms of what we will call discrete immersed disks. Roughly speaking, a discrete immersed disk may either be a polygon, or a sequence of polygons stitched together. We make this precise by regarding immersions of the standard polygon $ P_k $, depicted in \autoref{fig:prelim-gtl-P5}:

\begin{definition}
Let $ Q $ be dimer. A \emph{discrete immersed disk} in $ Q $ consists of an oriented immersion $ D: P_k → |Q| $ of a standard polygon $ P_k $ into the surface, such that
\begin{itemize}
\item The edges of the polygon are mapped to a sequence of arcs.
\item The immersion does not cover any punctures.
\end{itemize}
The immersion mapping itself is only taken up to reparametrization. The sequence of \emph{interior angles} of $ D $ is the sequence of angles in $ Q $ given as images of the interior angles of $ P_k $ under the map $ D $. An angle sequence $ α_1, …, α_k $ is a \emph{disk sequence} if it is the sequence of interior angles of some discrete immersed disk.
\end{definition}

\begin{figure}
\centering
\begin{subfigure}{0.3\linewidth}
\centering
\begin{tikzpicture}
\path[draw] (0, 0) node[below] {3} -- ++(right:1) node[midway, above] {\tiny 2} node[below] {2} -- ++(72:1) node[midway, left] {\tiny 1} node[right] {1} -- ++(144:1) node[midway, below] {\tiny 5} node[above] {5} -- ++(216:1) node[midway, below] {\tiny 4} node[left] {4} -- ++(-72:1) node[midway, right] {\tiny 3};
\end{tikzpicture}
\caption{Standard polygon $ P_5 $}
\label{fig:prelim-gtl-P5}
\end{subfigure}
\begin{subfigure}{0.3\linewidth}
\centering
\begin{tikzpicture}
\path[draw, ->] (0, 0) node[left] {$ q $} -- (2, 0) node[right] {$ p $} node[midway, above] {$ a $};
\path[fill] (0, 0) circle[radius=0.05] (2, 0) circle[radius=0.05];
\path[draw, ->] (2, 0) ++(-170:0.5) arc(-170:170:0.5) node[midway, right] {$ ℓ_p $};
\path[draw, ->] (0, 0) ++(10:0.5) arc(10:350:0.5) node[midway, left] {$ ℓ_q $};
\path (1, -1) node {$ μ^0_q = q ℓ_q + p ℓ_p $};
\end{tikzpicture}
\caption{Assigning curvature to an arc}
\end{subfigure}
\begin{subfigure}{0.3\linewidth}
\centering
\begin{tikzpicture}[scale=1]
\path[draw] (0, 0) -- ++(left:1) coordinate (A) -- ++(60:1) coordinate (B) coordinate[pos=0.3] (3-end) coordinate[pos=0.7] (4-start) -- ++(right:1) coordinate[pos=0.3] (4-end) coordinate[pos=0.7] (5-start) coordinate (C) -- ++(300:1) coordinate[pos=0.3] (5-end) coordinate[pos=0.7] (6-start) coordinate (D) -- ++(240:1) coordinate[pos=0.3] (6-end) coordinate[pos=0.7] (7-start) coordinate (E) -- ++(left:1) coordinate[pos=0.4] (1-end) coordinate[pos=0.3] (7-end) coordinate[pos=0.7] (2-start) coordinate[pos=0.6] (8-start) coordinate (F) -- ++(120:1) coordinate[pos=0.3] (2-end) coordinate[pos=0.7] (3-start);
\path[draw] (0, 0) -- (F) coordinate[pos=0.6] (8-end) coordinate[pos=0.3] (9-start);
\path[draw] (0, 0) -- (E) coordinate[pos=0.6] (1-start) coordinate[pos=0.3] (9-end);
\foreach \i in {2, 3, 4, 5, 6, 7} \path[draw, ->, bend right=60] (\i-start) to (\i-end);
\path[draw] (A) -- (D) (B) -- (E) (C) -- (F);
\path[fill] (0, 0) circle[radius=0.05];
\path (A) node[left] {$ α_1 $};
\path (B) node[left] {$ α_2 $};
\path (C) node[right] {$ α_3 $};
\path (D) node[right] {$ α_4 $};
\path (E) node[right] {$ α_5 $};
\path (F) node[left] {$ α_6 $};
\path (F) -- (A) node[midway, left] {$ a $};
\path (0, 0) node[above] {$ q $};
\path (0, -1.5) node {$ μ^6_q (α_6, …, α_1) = q \id_a $};
\end{tikzpicture}
\caption{Deformed product $ μ^6_q $}
\end{subfigure}
\caption{Illustration for $ \Gtl Q $ and $ \Gtl_q Q $}
\label{fig:prelim-gtl-gtlq}
\end{figure}
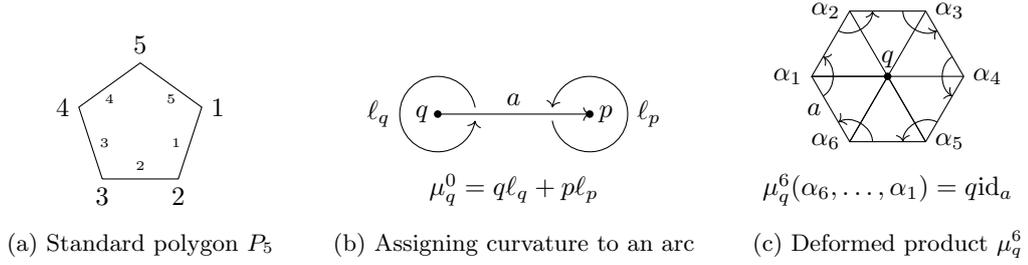

We can now describe the higher products $ μ^{≥3} $ on $ \Gtl Q $ as follows:

\begin{definition}
Let $ Q $ be a dimer. Then the \emph{gentle algebra} $ \Gtl Q $ of $ Q $ is the $ A_∞ $-category with objects the arcs $ a ∈ Q_1 $, hom spaces spanned by angles, and $ A_∞ $-product $ μ $ defined by $ μ^1 = 0 $ and $ μ^2 (α, β) = (-1)^{|β|} αβ $. To define $ μ^{k≥3} $, let $ α_1, …, α_k $ be any disk sequence, let $ β $ be an angle composable with $ α_1 $, i.e.~$ β α_1 ≠ 0 $, and let $ γ $ be an angle post-composable with $ α_k $, i.e.~$ α_k γ ≠ 0 $. Then
\begin{equation*}
μ^k (β α_k, …, α_1) ≔ β, \quad μ^k (α_k, …, α_1 γ) ≔ (-1)^{|γ|} γ.
\end{equation*}
The higher products vanish on all angle sequences other than these.
\end{definition}

In \paperone\ we defined a deformation $ \Gtl_q Q $ of $ \Gtl Q $, hoping it to provide a discrete relative Fukaya category. In \papertwoB, we confirmed this hope. The starting point for the definition of $ \Gtl_q Q $ is a dimer which satisfies the [NMDC] condition. The category $ \Gtl_q Q $ is a curved $ A_∞ $-deformation of $ \Gtl Q $ over the deformation base $ B = ℂ⟦Q_0⟧ $, which has one deformation parameter per puncture. We denote the maximal ideal by $ \mathfrak{m} = (Q_0) ⊂ ℂ⟦Q_0⟧ $.

The curvature $ μ^0_q $ of $ \Gtl_q Q $ is defined as follows: For every puncture $ q ∈ Q_0 $, denote by $ ℓ_q $ the sum of all full turns around $ q $, summed over all arc ends at $ q $. The total curvature $ μ^0_q $ of $ \Gtl_q Q $ is defined as the sum over all puncture contributions:
\begin{equation*}
μ^0_q ≔ \sum_{q ∈ Q_0} q ℓ_q.
\end{equation*}
The product $ μ^1_q $ still vanishes and the product $ μ^2_q ≔ μ^2 $ is not deformed. The higher products $ μ^{≥3}_q $ however count discrete immersed disks which are now also allowed to cover punctures, weighting the result of every disk with the product $ ∈ ℂ⟦Q_0⟧ $ of the punctures covered. The definition of $ μ^0_q $ and $ μ^{≥3}_q $ is depicted in \autoref{fig:prelim-gtl-gtlq}.

\subsection{Zigzag paths}
\label{sec:prelim-zigzag}
In this section, we recall the notions of zigzag paths and geometric consistency for dimers. An exhaustive reference is \cite{Bocklandt-consistency}. More information is also found in \papertwoB.

\begin{definition}
Let $ Q $ be a dimer. A \emph{zigzag path} $ L $ is an infinite path $ … a_2 a_1 a_0 a_{-1} a_{-2} … $ of arcs in $ Q $ together with an alternating choice of “left” or “right” for every $ i ∈ ℕ $ such that
\begin{itemize}
\item $ a_{i+1} a_i $ lies in a clockwise polygon if $ i $ is assigned “right”,
\item $ a_{i+1} a_i $ lies in a counterclockwise polygon if $ i $ is assigned “left”.
\end{itemize}
We also say that $ L $ \emph{turns left} at $ a_i $ if $ i $ is assigned “left” and \emph{turns right} if $ a_i $ is assigned “right”. Two zigzag paths are identified if their paths including left/right indications differ only by integer shift.
\end{definition}

\begin{figure}
\centering
\begin{tikzpicture}
\path[draw, -{To[scale=2]}] (0, 0) -- ++(315:1.5) node[near end, left] {$ a_1 $} coordinate[midway] (alpha1-end) -- ++(45:1.5) node[near start, right] {$ a_2 $} coordinate[midway] (alpha1-start) -- ++(315:1.5) node[near end, left] {$ a_3 $} coordinate[midway] (alpha3-end) -- ++(45:1.5) node[near start, right] {$ a_4 $} coordinate[midway] (alpha3-start) -- ++(315:1.5) node[near end, left] {$ a_5 $} coordinate[midway] (alpha5-end) -- ++(45:1.5) node[near start, right] {$ a_6 $} coordinate[midway] (alpha5-start) -- ++(315:1.5) node[near end, left] {$ a_1 $} coordinate[midway] (additional);
\end{tikzpicture}
\caption{A zigzag path $ L $}
\label{fig:prelim-zigzag-sketch}
\end{figure}
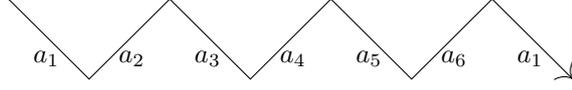

Slightly simplified, a zigzag path $ L $ is a path in $ Q $ that turns alternatingly maximally right and maximally left in $ Q $. The typical shape of a zigzag path is drawn in \autoref{fig:prelim-zigzag-sketch}. If every puncture of $ Q $ has valence at least 4, then the left/right indication is a superfluous datum. In this case, the left/right indication for zigzag paths is a superfluous part of the datum of a zigzag path. For other dimers $ Q $, the left/right indication is very important. An example is the $ M $-punctured sphere $ Q_M $ which we will recall in \autoref{sec:zigzag}. Our definition deviates slightly from the definition of \cite{Bocklandt}.

Geometric consistency is a specific instance of various consistency conditions which can be imposed on dimers. To define it, we denote by $ \tilde Q $ the lift of the arc system $ Q $ to the universal cover of the closed surface $ |Q| $. We define the auxiliary notion of zigzag rays as follows, depicted in \autoref{fig:prelim-zigzag-rays}:

\begin{definition}
Let $ a ∈ \tilde Q_1 $ be an arc. Then the four zigzag rays starting at $ a $ are the sequences of arcs $ (a_i^1)_{i≥0} $, $ (a_i^2)_{i≥0} $, $ (a_i^3)_{i≥0} $ and $ (a_i^4)_{i≥0} $ in $ \tilde Q $ determined by $ a_0^1 = a_0^2 = a_0^3 = a_0^4 = a $ and the following properties:
\begin{itemize}
\item The sequences $ (a_i^1) $ and $ (a_i^2) $ satisfy $ h(a_i^{1/2}) = t(a_{i+1}^{1/2}) $.
\item The sequences $ (a_i^3) $ and $ (a_i^4) $ satisfy $ t(a_i^{3/4}) = h(a_{i+1}^{3/4}) $.
\item The path $ a_{i+1}^{1/2} a_i^{1/2} $ lies in the boundary of a counterclockwise polygon when $ i $ is odd/even, and clockwise when $ i $ is even/odd.
\item The path $ a_i^{3/4} a_{i+1}^{3/4} $ lies in the boundary of a counterclockwise polygon when $ i $ is odd/even, and clockwise when $ i $ is even/odd.
\end{itemize}
\end{definition}

\begin{figure}
\centering
\begin{subfigure}{0.35\linewidth}
\centering
\begin{tikzpicture}[scale=2]
\path[draw] (0, 0) -- ++(45:1) node[midway, left] {$ a_2^3 $} -- ++(315:0.7) node[midway, left] {$ a_1^3 $} coordinate (B);
\path[draw, -{To[width=0.2cm, length=0.3cm]}] (B) -- ++(up:1.2) node[midway, left] {$ a $} coordinate (A);
\path[draw] (A) -- ++(315:0.7) node[midway, right] {$ a_1^2 $} -- ++(45:1) node[midway, left] {$ a_2^2 $};
\path[draw] (A) -- ++(225:0.7) node[midway, left] {$ a_1^1 $} -- ++(135:1) node[midway, right] {$ a_2^1 $};
\path[draw] (A) -- ++(down:1.2) node[pos=0.6, right] {} -- ++(45:0.7) node[midway, right] {$ a_1^4 $} -- ++(315:1) node[midway, right] {$ a_2^4 $};
\end{tikzpicture}
\caption{Zigzag rays starting at $ a ∈ \tilde Q_1 $}
\label{fig:prelim-zigzag-rays}
\end{subfigure}
\begin{subfigure}{0.3\linewidth}
\centering
\begin{tikzpicture}
\begin{scope}[dashed, gray]
\path[draw, <-] (0.1, 0) -- (1.9, 0);
\path[draw, ->] (2, 0.1) -- (2, 1.9);
\path[draw, <-] (1.9, 2) -- (0.1, 2);
\path[draw, <-] (0, 0.1) -- (0, 1.9);
\path[draw, ->] (0.1, 0.1) -- (0.9, 0.9);
\path[draw, <-] (1.9, 0.1) -- (1.1, 0.9);
\path[draw, ->] (1.9, 1.9) -- (1.1, 1.1);
\path[draw, <-] (0.1, 1.9) -- (0.9, 1.1);
\end{scope}
\path[draw, thick, rounded corners] (-0.1, 0) -- (-0.1, 2.2) -- (1, 1.1) -- (2.1, 2.2) -- (2.1, 0);
\path[draw, thick, rounded corners] (-0.2, 2) -- (-0.2, -0.3) -- (1, 0.9) -- (2.2, -0.3) -- (2.2, 2);
\end{tikzpicture}
\caption{Not geometrically consistent}
\label{fig:prelim-zigag-nonconsistent}
\end{subfigure}
\caption{On consistency}
\end{figure}
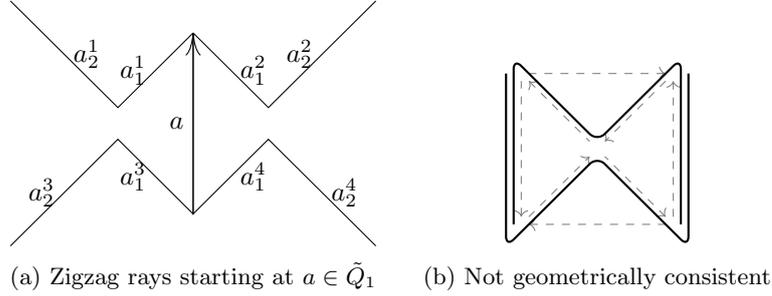

A dimer is geometrically consistent if the zigzag rays starting with an arc $ a $ in the universal cover intersect nowhere, except at $ a $ itself. The precise definition reads as follows:

\begin{definition}
Let $ Q $ be a dimer. Then $ Q $ is \emph{geometrically consistent} if for every $ a ∈ \tilde Q_1 $ the four zigzag rays $ (a_i^1) $, $ (a_i^2) $, $ (a_i^3) $ and $ (a_i^4) $ satisfy the following property: Whenever $ a_i^k = a_j^l $, then $ i = j $ and $ k = l $, or $ i = j = 0 $.
\end{definition}

\begin{remark}
Many dimers are geometrically consistent. In contrast, a dimer on a sphere is never geometrically consistent. A dimer which contains the pattern sketched in \autoref{fig:prelim-zigag-nonconsistent} is also not geometrically consistent. A geometrically consistent dimer satisfies the [NMDC] condition.
\end{remark}

\subsection{Matrix factorizations}
\label{sec:prelim-matrixfact}
In this section, we recall the notion of matrix factorizations. After recalling the definition, we focus on matrix factorization categories. Matrix factorizations serve as B-side in mirror symmetry. As such, our standard reference is \cite{Bocklandt}.

Matrix factorizations go back to 20th century work of Buchweitz and others. In a nutshell, the observation is as follows: Let $ A $ be an algebra and $ ℓ ∈ Z(A) $ a central element. Such a pair $ (A, ℓ) $ is also called a \emph{Landau-Ginzburg model}. If $ ℓ $ is prime in $ A $, there is no nontrivial factorization $ ℓ = ab $ in $ A $. However, there may be modules $ M, N ∈ \Mod A $ with maps $ f: M → N $ and $ g: N → M $ such that both $ g ∘ f: M → M $ and $ f ∘ g: N → N $ are multiplication by $ ℓ $. In other words, for typical $ ℓ $ there are more factorizations on the module level than in the algebra itself. This gives rise to the following definition:

\begin{definition}
\label{def:prelim-matrixfact-def}
Let $ A $ be an associative algebra and $ ℓ ∈ A $ a central element. A \emph{matrix factorization} of $ (A, ℓ) $ is a pair of finitely generated projective $ A $-modules $ (P, Q) $ together with $ A $-module morphisms $ f: P → Q $ and $ g: Q → P $ such that $ f ∘ g = ℓ \id_Q $ and $ g ∘ f = ℓ \id_P $.
\end{definition}

\begin{remark}
\label{th:prelim-matrixfact-alternative}
There is an alternative definition: A matrix factorization is a $ ℤ/2ℤ $-graded projective $ A $-module $ M $ together with an odd $ A $-module map $ δ: M → M $ such that $ δ^2 = ℓ \id_M $. This definition is equivalent to \autoref{def:prelim-matrixfact-def}. Given $ (M, δ) $, simply put $ P ≔ M^{\even} $ and $ Q ≔ M^{\odd} $. The odd map $ δ $ then automatically splits into two maps $ f: P → Q $ and $ g: Q → P $. Conversely given $ \matf PQfg $, put
\begin{equation*}
M = P ⊕ Q[1], \quad δ = \pmat{0 & g \\ f & 0}.
\end{equation*}
\end{remark}

The set of matrix factorizations of $ (A, ℓ) $ can be turned into a $ ℤ/2ℤ $-graded dg category $ \MF(A, ℓ) $. The intuition behind the dg structure is to interpret every matrix factorization $ (M, δ) $ as an almost chain complex, more precisely a twisted complex over the curved $ A_∞ $-category $ (A, ℓ) $. We use the notation $ \tilde{δ} $ to denote the tweaked version $ \tilde{δ}(m) = (-1)^{|m|} δ(m) $ of $ δ $.

\begin{definition}
Let $ (A, ℓ) $ be a Landau-Ginzburg model. The \emph{category of matrix factorizations} $ \MF(A, ℓ) $ is defined as follows:
\begin{itemize}
\item Objects are the matrix factorizations $ (M, δ) $ of $ (A, ℓ) $.
\item Hom spaces are given by $ \Hom((M, δ_M), (N, δ_N)) = \Hom_A (M, N) $, naturally $ ℤ/2ℤ $-graded.
\item The differential is given by $ μ^1 (f) = \tilde{δ}_N ∘ f - (-1)^{|f|} f ∘ \tilde{δ}_M $ for $ f ∈ \Hom((M, δ_M), (N, δ_N)) $.
\item The product is given by $ μ^2 (f, g) = (-1)^{‖f‖ |g|} f ∘ g $.
\end{itemize}
\end{definition}

More explicitly, regard two matrix factorizations $ \matf PQfg $ and $ \matf{P'}{Q'}{f'}{g'} $. Then their hom space is
\begin{equation*}
\underbrace{\Hom(P, P') ⊕ \Hom(Q, Q')}_{\text{even}} ⊕ \underbrace{\Hom(P, Q') ⊕ \Hom(Q, P')}_{\text{odd}}.
\end{equation*}
A morphism can be presented as a 2-by-2 matrix $ \pmat{A & B \\ C & D} $, where $ A: P → P' $ and $ B: Q → P' $ and so on. In these terms, we can write
\begin{equation*}
μ^1_{\MF(A, ℓ)} \pmat{A & B \\ C & D} = \pmat{-g'C+Bf & -g'D+Ag \\ f'A-Df & f'B-Cg}.
\end{equation*}
The product $ μ^2_{\MF(A, ℓ)} $ is simply given by signed matrix multiplication
\begin{equation*}
μ^2_{\MF(A, ℓ)} \left(\pmat{A & B \\ C & D}, \pmat{A' & B' \\ C' & D'}\right) = \pmat{AA'+BC' & -AB'+BD' \\ CA'-DC' & CB'+DD'}.
\end{equation*}

\subsection{Jacobi algebras of dimers}
\label{sec:prelim-mf}
In this section, we recollect the Jacobi algebras of dimers, together with their associated categories of matrix factorizations. We also fix some notation in this section. For example, the Jacobi algebra of a dimer will be denoted $ \Jac Q $, its special central element will be denoted $ ℓ ∈ \Jac Q $. We would like to remind the reader that the full category of matrix factorizations is denoted $ \MF(\Jac Q, ℓ) $. Meanwhile, we recall in this section a specific small subcategory, denoted by lowercase letters $ \mf(\Jac Q, ℓ) ⊂ \MF(\Jac Q, ℓ) $.

Let us start by fixing terminology and notation for cyclicity:

\begin{definition}
Let $ Q $ be a quiver. If $ p $ is a path in $ Q $, we denote by $ p_{\cyc} ∈ ℂQ $ the sum of its cyclic permutations. We extend this assignment linearly to $ ℂQ $ and denote it by $ p ↦ p_{\cyc} $ as well. An element $ W ∈ ℂQ $ is \emph{cyclic} if it is a linear combination of cyclic paths in $ Q $ whose coefficients are invariant under cyclic permutation:
\begin{equation*}
W = \sum_{\text{cycles } a_k … a_1} λ_{a_k … a_1} a_k … a_1, \quad \text{with} \quad ∀i = 1, …, k: ~~ λ_{a_k … a_1} = λ_{a_{i-1} … a_{i+1} a_i}.
\end{equation*}
\end{definition}

We recall that superpotentials are defined as cyclic elements of length at least two:

\begin{definition}
A \emph{superpotential} is a cyclic element $ W ∈ ℂQ_{≥2} $. Its \emph{relations} are the elements
\begin{equation*}
∂_a W = \sum_{\substack{\text{paths } a_k … a_1 \\ \text{with } a_k = a}} λ_{a_k … a_1} a_{k-1} … a_1, \quad a ∈ Q_1.
\end{equation*}
Its \emph{Jacobi algebra} is given by
\begin{equation*}
\Jac(Q, W) = \frac{ℂQ}{(∂_a W)}.
\end{equation*}
Here $ (∂_a W) $ denotes the two-sided ideal generated by the partial derivatives $ ∂_a W $ for $ a ∈ Q_1 $.
\end{definition}

A dimer $ Q $ is nothing else than a specific type of quiver embedded in a surface. In particular it comes with an associated path algebra $ ℂQ $. The dimer structure of $ Q $ provides us with an additional central element $ W ∈ ℂQ $, given by the difference of the clockwise polygons of $ Q $ and the counterclockwise polygons, cyclically permuted:
\begin{equation*}
W = \sum_{\substack{a_1, …, a_k \\ \text{clockwise}}} (a_1 … a_k)_{\cyc} ~- \sum_{\substack{a_1, …, a_k \\ \text{counterclockwise}}} (a_1 … a_k)_{\cyc}.
\end{equation*}

\begin{definition}
Let $ Q $ be a dimer. Then its \emph{Jacobi algebra} is the associative algebra $ \Jac Q = ℂQ / (∂_a W) $.
\end{definition}

The relations $ ∂_a W $ equate two neighboring polygons: Flipping a path over an arc $ a $ is possible if the path follows all arcs of a neighboring polygon apart from $ a $. These flip moves are known as \emph{F-term} moves and the equivalence relation on the set of paths in $ Q $ is known as F-term equivalence. The terminology is depicted in \autoref{fig:prelim-mf-Fterm}. A good reference is \cite{Davison}.

\begin{figure}
\centering
\begin{subfigure}{0.25\linewidth}
\centering
\begin{tikzpicture}
\path[draw, ->, semithick, rounded corners] (0, 0) -- ++(left:1) coordinate (right) -- ++(135:1) -- ++(225:1) coordinate (left) -- ++(left:1);
\path[draw, semithick, rounded corners] (0, 0) -- ++(left:1) -- ++(down:0.7) -- ($ (left) + (0, -0.7) $) -- ++(up:0.7) -- ++(left:1);
\path[draw, ->, gray] (left) ++(0.1, 0) -- ($ (right) + (-0.1, 0) $);
\path[draw, ->] ($ (left)!0.5!(right) $) arc(0:20:1);
\path[draw, ->] ($ (left)!0.5!(right) $) arc(0:-20:1);
\end{tikzpicture}
\caption{An F-term flip}
\end{subfigure}
\begin{subfigure}{0.35\linewidth}
\centering
\begin{tikzpicture}
\foreach \y in {0, 1, 2}
{\foreach \x in {0, 1, 2, 3, 4}
\path[draw, ->, gray] (\x, \y+0.1) to (\x, \y+0.9);
\foreach \x in {0, 1, 2, 3}
\path[draw, ->, gray] (\x+0.9, \y+0.9) to (\x+0.1, \y+0.1);
\foreach \x in {0, 1, 2, 3}
\path[draw, ->, gray] (\x+0.1, \y) to (\x+0.9, \y);}
\foreach \x in {0, 1, 2, 3}
\path[draw, ->, gray] (\x+0.1, 3) to (\x+0.9, 3);
\path[draw, ->, semithick, rounded corners] (0, 3.3) -- (3.4, 3.3) -- (0.3, 0);
\path[draw, ->, semithick, rounded corners] (0, 3.2) -- (2.3, 3.2) -- (0.3, 1.2) -- (1.2, 1.2) -- (0.2, 0);
\path[draw, ->, semithick, rounded corners] (0, 3.1) -- (1.1, 3.1) -- (0.1, 2.1) -- (1.1, 2.1) -- (0.1, 1.1) -- (1, 1.1) -- (0.1, 0);
\end{tikzpicture}
\caption{These 3 paths are equivalent.}
\end{subfigure}
\caption{F-term equivalence}
\label{fig:prelim-mf-Fterm}
\end{figure}
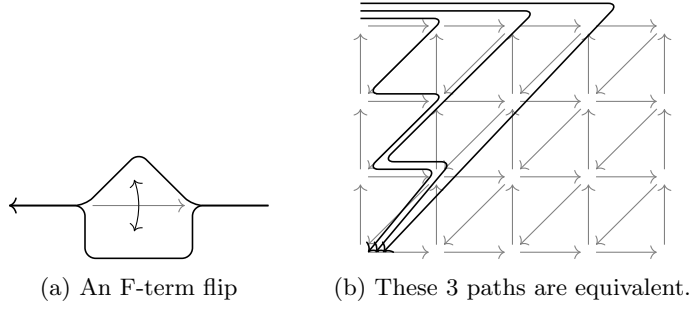

Regard the set of paths in $ Q $ modulo F-term equivalence. The set contains a special element $ ℓ_v $ for each vertex $ v ∈ Q_0 $, given by the boundary of a chosen polygon incident at $ v $. All boundaries of polygons incident at $ v $ are F-term equivalent, hence $ ℓ_v $ does not depend on the choice. In other words, it can be rotated around $ v $. We may drop the subscript from $ ℓ_v $ if it is clear from the context. The element $ ℓ $ commutes with all paths, that is, $ uℓ \sim ℓu $. Davison \cite{Davison} introduced the following consistency condition for dimers:

\begin{definition}[{\cite{Davison}}]
A dimer $ Q $ is \emph{cancellation consistent} if it has the following cancellation property:
\begin{equation*}
p ℓ \sim q ℓ ~\Longrightarrow~ p \sim q.
\end{equation*}
\end{definition}

The Jacobi algebra contains a special element which we denote by $ ℓ $ as well and which is called the \emph{potential}. It is given by the sum of the elements $ ℓ_v $ over $ v ∈ Q_0 $:
\begin{equation*}
ℓ = \sum_{v ∈ Q_0} ℓ_v ∈ \Jac Q.
\end{equation*}
The relations of $ \Jac Q $ ensure that $ ℓ $ is central and as an element of $ \Jac Q $ is independent of the choices of incident polygons.

We are ready to discuss matrix factorizations of the Landau-Ginzburg model $ (\Jac Q, ℓ) $. For every vertex $ v ∈ Q_0 $, the module $ (\Jac Q)v $ is projective. There are many further projectives, for instance given by taking direct sums of these elementary projectives. The hom space $ \Hom_{\Jac Q} ((\Jac Q)v, (\Jac Q)w) $ between two standard projectives is naturally identified with $ v (\Jac Q) w $, the subspace of paths in $ \Jac Q $ starting at $ w $ and ending at $ v $. A matrix factorization between two such projectives can be visualized as a bipartite graph consisting of vertices in $ Q $, connected by paths in $ Q $, such that all products sum up to $ ℓ $.

There is a special subcategory $ \mf(\Jac Q, ℓ) ⊂ \MF(\Jac Q, ℓ) $. The idea is that every polygon boundary can be factorized as the product of a single boundary arrow and all the other boundary arrows. More precisely, let $ a ∈ Q_1 $ be an arrow. There are precisely two polygons neighboring $ a $. The complements of their boundary are paths $ r_a^+ $ and $ r_a^- $. Within $ \Jac Q $, these two paths are identified and we simply denote them by $ \bar{a} = r_a^+ = r_a^- ∈ \Jac Q $. Since $ a \bar{a} = ℓ_{h(a)} $ and $ \bar{a} a = ℓ_{t(a)} $, we can build matrix factorizations from $ a $ and $ \bar{a} $:

\begin{definition}
Let $ Q $ be a dimer and $ a ∈ Q_1 $. Then $ \mf(\Jac Q, ℓ) ⊂ \MF(\Jac Q, ℓ) $ is the subcategory given by the matrix factorizations
\begin{equation*}
M_a = \dmatf{(\Jac Q) h(a)}{(\Jac Q) t(a)}{a}{\bar{a}}, \quad a ∈ Q_1.
\end{equation*}
\end{definition}

\begin{remark}
These matrix factorizations $ M_a $ are no factorizations of $ ℓ $ as element of $ \Jac Q $. Instead their behavior is “local”. We expect that the collection $ \{M_a\}_{a ∈ Q_1} $ already generates $ \HTw\MF(\Jac Q, ℓ) $ under shifts and cones. Such a result might be obtained by a local analysis or re-interpretation in terms of the commutative model of \cite{Ishii-Ueda} or \cite{Pascaleff-Sibilla}.
\end{remark}

\subsection{Mirror symmetry for punctured surfaces}
\label{sec:prelim-ms}
In this section, we recall mirror symmetry for punctured surfaces from \cite{Bocklandt}. We start with an overview of the ingredients and the original proof. In particular, we exhibit A- and B-side of this mirror equivalence, explain the equivalence on object level and on hom spaces. Regarding terminology, we recall here the construction of the dual dimer $ \mirQ $ attached to $ Q $. To prepare the reader for the rest of the paper, we explain why the original setup makes it so hard to deform mirror symmetry and how the more recent Cho-Hong-Lau construction solves this issue.

The basic ingredient for mirror symmetry for punctured surfaces is a dimer $ Q $ and its dual dimer $ \mirQ $, defined as follows:

\begin{definition}
Let $ Q $ be a dimer. Then its \emph{dual dimer} $ \mirQ $ is the dimer obtained by cutting $ Q $ into its polygons, flipping over the counterclockwise polygons and inverting their arrows, and gluing everything together again along the arrows.
\end{definition}

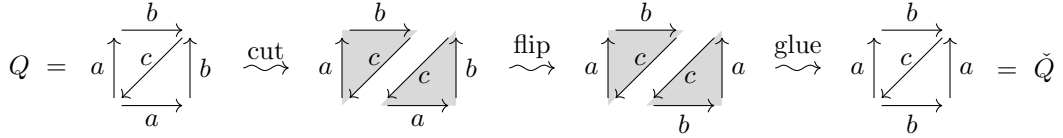
\begin{figure}
\centering
\begin{tikzpicture}
\path[draw, ->] (0, 0.1) -- (0, 0.9) node[midway, left] {$ a $};
\path[draw, ->] (0.1, 0) -- (0.9, 0) node[midway, below] {$ a $};
\path[draw, ->] (0.1, 1) -- (0.9, 1) node[midway, above] {$ b $};
\path[draw, ->] (1, 0.1) -- (1, 0.9) node[midway, right] {$ b $};
\path[draw, ->] (0.9, 0.9) -- (0.1, 0.1) node[pos=0.6, above] {$ c $};
\path[->, draw, decorate, decoration={snake, amplitude=0.1em}] (1.7, 0.5) to node[midway, above] {cut} (2.3, 0.5);
\path[->, draw, decorate, decoration={snake, amplitude=0.1em}] (5.2, 0.5) to node[midway, above] {flip} (5.8, 0.5);
\path[->, draw, decorate, decoration={snake, amplitude=0.1em}] (8.7, 0.5) to node[midway, above] {glue} (9.3, 0.5);
\path (-1, 0.5) node {$ Q ~= $} (12, 0.5) node {$ =~ \mirQ $};
\begin{scope}[shift={(3, 0)}]
\path[fill, color=gray!30] (0, 0) -- (0, 1) -- (1, 1) -- cycle;
\path[draw, ->] (0, 0.1) -- (0, 0.9) node[midway, left] {$ a $};
\path[draw, ->] (0.1, 1) -- (0.9, 1) node[midway, above] {$ b $};
\path[draw, ->] (0.9, 0.9) -- (0.1, 0.1) node[pos=0.6, above] {$ c $};
\path[fill, color=gray!30] (0.5, 0) -- (1.5, 0) -- (1.5, 1) -- cycle;
\path[draw, ->] (0.6, 0) -- (1.4, 0) node[midway, below] {$ a $};
\path[draw, ->] (1.5, 0.1) -- (1.5, 0.9) node[midway, right] {$ b $};
\path[draw, ->] (1.4, 0.9) -- (0.6, 0.1) node[pos=0.4, below] {$ c $};
\end{scope}
\begin{scope}[shift={(6.5, 0)}]
\path[fill, color=gray!30] (0, 0) -- (0, 1) -- (1, 1) -- cycle;
\path[draw, ->] (0, 0.1) -- (0, 0.9) node[midway, left] {$ a $};
\path[draw, ->] (0.1, 1) -- (0.9, 1) node[midway, above] {$ b $};
\path[draw, ->] (0.9, 0.9) -- (0.1, 0.1) node[pos=0.6, above] {$ c $};
\path[fill, color=gray!30] (0.5, 0) -- (1.5, 0) -- (1.5, 1) -- cycle;
\path[draw, ->] (0.6, 0) -- (1.4, 0) node[midway, below] {$ b $};
\path[draw, ->] (1.5, 0.1) -- (1.5, 0.9) node[midway, right] {$ a $};
\path[draw, ->] (1.4, 0.9) -- (0.6, 0.1) node[pos=0.4, below] {$ c $};
\end{scope}
\begin{scope}[shift={(10, 0)}]
\path[draw, ->] (0, 0.1) -- (0, 0.9) node[midway, left] {$ a $};
\path[draw, ->] (0.1, 0) -- (0.9, 0) node[midway, below] {$ b $};
\path[draw, ->] (0.1, 1) -- (0.9, 1) node[midway, above] {$ b $};
\path[draw, ->] (1, 0.1) -- (1, 0.9) node[midway, right] {$ a $};
\path[draw, ->] (0.9, 0.9) -- (0.1, 0.1) node[pos=0.6, above] {$ c $};
\end{scope}
\end{tikzpicture}
\caption{Three-punctured sphere and its mirror dimer}
\label{fig:prelim-ms-mirQ}
\end{figure}

\begin{example}
In \autoref{fig:prelim-ms-mirQ}, we have depicted the example of $ Q $ being the three-punctured sphere. Its mirror dimer $ \mirQ $ is a one-punctured torus. In this example, we have $ \Jac \mirQ = ℂ[a, b, c] $ and $ ℓ = abc $.
\end{example}

\begin{remark}
\label{rem:prelim-ms-Z}
A basic observation is that punctures in $ Q $ correspond to zigzag paths in $ \mirQ $. Moreover, let $ α: a → b $ be an angle in $ Q $. Denote by $ k ≥ 0 $ its “length”, or the number of indecomposable angles contained in $ α $. Then $ α $ can be reinterpreted in the mirror as a zigzag segment $ Z_α $ given by $ a = z_0, z_1, …, z_k = b $ between the arcs $ a $ and $ b $ in $ \mirQ $, with the property that $ h(z_i) = t(z_{i+1}) $.
\end{remark}

In the remainder of this section, we explain the following mirror symmetry of punctured surfaces:

\begin{theorem}[{\cite{Bocklandt}}]
Let $ Q $ be a dimer such that the dual dimer $ \mirQ $ is cancellation consistent. Then there exists an isomorphism of $ ℤ/2ℤ $-graded $ A_∞ $-categories
\begin{equation*}
F: \Gtl Q → \H\mf(\Jac \mirQ, ℓ).
\end{equation*}
\end{theorem}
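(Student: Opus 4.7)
The plan is to construct the functor $F$ by first specifying it on objects and on the leading component $F^1$ via the zigzag correspondence, then checking it is a quasi-isomorphism on hom spaces, and finally extending to the higher components by obstruction theory. On objects, I would send each arc $a \in Q_1$ to the elementary matrix factorization $M_a = \dmatf{(\Jac\mirQ) h(a)}{(\Jac\mirQ) t(a)}{a}{\bar a}$ from \autoref{sec:prelim-mf}. The definition of $F$ on generators of $\Hom_{\Gtl Q}(a,b)$ would use \autoref{rem:prelim-ms-Z}: an angle $\alpha: a \to b$ of length $k$ corresponds to a zigzag segment $Z_\alpha = (z_0 = a, z_1, \dots, z_k = b)$ in $\mirQ$, whose arrows assemble into a morphism of matrix factorizations from $M_a$ to $M_b$ by placing the products $z_i \cdots z_1$ and $z_k \cdots z_{i+1}$ in the two off-diagonal entries. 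Unitality fixes the empty angle to go to $\id_{M_a}$.

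The first serious step is to verify that $F^1$ lands in the space of closed morphisms modulo coboundaries and is a bijection on cohomology. Concretely, one computes $\H\Hom_{\mf(\Jac\mirQ,\ell)}(M_a, M_b)$ directly: a closed morphism is a pair of paths $(u,v)$ in $\Jac\mirQ$ satisfying $au = vb$ and $\bar a v = u \bar b$ (up to signs), and two such pairs are homotopic when they differ by $(a x + y \bar b, \bar a y + x b)$. Using cancellation consistency of $\mirQ$, one shows that every closed pair is F-term equivalent to one coming from a zigzag segment from $a$ to $b$, and distinct zigzag segments give linearly independent cohomology classes. This matches the angle basis of $\Hom_{\Gtl Q}(a,b)$ under $F^1$, so $F^1$ is a quasi-isomorphism and respects $\mathbb{Z}/2\mathbb{Z}$-grading because interior angles and odd powers of arrows both carry odd degree.

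Next I would extend $F$ to higher components $F^k$ for $k \geq 2$ so as to make it an $A_\infty$-functor into the minimal model $\H\mf(\Jac\mirQ,\ell)$. Since $\Gtl Q$ already has nontrivial higher products $\mu^{\geq 3}$ coming from discrete immersed disks, the curved $A_\infty$-functor relations at each order $k$ read schematically
\begin{equation*}
\mu^1_{\H\mf} \circ F^k + (\text{quadratic in lower } F^j) = F^{k-1}\circ(\text{terms involving } \mu_{\Gtl Q}),
\end{equation*}
and the right-hand side is automatically a $\mu^1_{\H\mf}$-cocycle whenever the relations hold through order $k-1$. Because the target is minimal, the differential vanishes, so the obstruction to solving for $F^k$ lies in $\H\Hom_{\H\mf}(M_{X_1}, M_{X_{k+1}})$; using the quasi-isomorphism $F^1$ one transports this obstruction class back to $\Gtl Q$, where it vanishes by the $A_\infty$-relations there. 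This gives existence of $F^k$ order by order, and the choice can be normalized to satisfy strict unitality. The same argument equivalently can be read off from the Cho-Hong-Lau construction applied to $\cat C = \HTw\Gtl Q$ with $\RefObjects$ the subcategory of zigzag paths, as alluded to in \autoref{sec:koszul-transfer}.

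The main obstacle is controlling the higher-component construction: one must ensure convergence of the inductive recipe and that the resulting $F$ is an honest isomorphism of $A_\infty$-categories (not only a quasi-isomorphism). Convergence is handled by the fact that for fixed angle inputs only finitely many disk sequences in $Q$ and finitely many zigzag segments in $\mirQ$ contribute, so each $F^k$ on given inputs is a finite sum. That $F$ is an isomorphism rather than merely a quasi-equivalence follows from $F^1$ being a bijection of hom-space bases (angles $\leftrightarrow$ zigzag segments), together with the identification on objects being a bijection $Q_1 \to Q_1 = \mirQ_1$. Cancellation consistency of $\mirQ$ is the crucial hypothesis throughout: it guarantees that the F-term relations in $\Jac\mirQ$ do not collapse distinct zigzag segments and gives the expected dimension count of the Ext spaces matching the hom spaces of $\Gtl Q$.
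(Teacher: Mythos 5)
Your definition of $F$ on objects and of $F^1$ via zigzag segments matches the paper's recollection of Bocklandt's construction, and the hom-space comparison via closed morphisms modulo homotopy, using cancellation consistency, is the right computation (it is \cite[Lemma 8.3]{Bocklandt}). The genuine gap is in the extension to $F^{\geq 2}$. Both $\Gtl Q$ and the target $\H\mf(\Jac\mirQ,\ell)$ are minimal, so $\mu^1 = 0$ on both sides and the equation you write, $\mu^1_{\H\mf}\circ F^k + (\text{quadratic}) = F^{k-1}(\cdots)$, has no $F^k$ in it at all; you cannot "solve for $F^k$" by lifting against $\mu^1$. The actual differential on candidate $F^n$'s is the Hochschild differential induced by the $F^1$-bimodule structure, built from $\mu^2$-brackets $\mu^2(F^1,-)\pm\mu^2(-,F^1)\pm(-)(\mu^2\otimes 1\cdots)\pm\cdots$, which is emphatically nonzero despite minimality. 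Consequently "because the target is minimal, the differential vanishes" is wrong, the obstruction does not live in $\H\Hom_{\H\mf}(M_{X_1},M_{X_{k+1}})$ but in a Hochschild cohomology group, and "transporting it back to $\Gtl Q$ where it vanishes by the $A_\infty$-relations" does not establish vanishing.

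This is precisely what makes Bocklandt's Appendix A (as recalled in \autoref{sec:prelim-ms}) nontrivial: each $F^{k+1}$ requires solving a Hochschild cocycle equation, existence of a solution requires knowing the higher products $\mu^k_{\H\mf}(F^1(\alpha_k),\dots,F^1(\alpha_1))$ for consecutive-angle sequences, and even then the original argument temporarily restricts to $Q$ "large enough" to force certain unknown terms to vanish. Your finiteness remark handles well-definedness of each $F^k$ but not this existence issue. The alternative you mention — applying Cho--Hong--Lau to $\cat C=\HTw\Gtl Q$ with $\RefObjects$ the zigzag paths — is how the present paper obtains an explicit functor (\autoref{sec:MS-CHL}), but it is not "the same argument": it bypasses the Hochschild obstruction by giving a closed formula $F(m_k,\dots,m_1)(m)=\pm\sum\mu(m_k,\dots,m_1,m,b,\dots,b)$, at the cost of needing the minimal model $\H\ZigzagCat$ computed explicitly, which is the content of \papertwoB. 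You should commit to one route; as written the obstruction-theory step is incorrect and the Cho--Hong--Lau route is only gestured at.
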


Let us recollect this functor $ F $ on object level. The objects of $ \Gtl Q $ are the arcs of $ Q $. The arcs of $ Q $ are in bijection with those of $ \mirQ $. The category $ \H\mf(\Jac \mirQ, ℓ) $ is the minimal model of $ \mf(\Jac \mirQ, ℓ) $ and as such has the same objects as $ \mf(\Jac \mirQ, ℓ) $. Combining these three observations, it is easy to grasp the functor $ F $ on object level: It simply maps $ a ∈ Q_1 $ to the matrix factorization $ M_a ∈ \H\mf(\Jac \mirQ, ℓ) $.

We can also grasp $ F $ on the level of morphisms. Recall that the hom space $ \Hom_{\Gtl Q} (a, b) $ is spanned by angles starting on $ a $ and ending on $ b $, rotating around a common puncture. The corresponding basis for $ \H\Hom_{\mf(\Jac \mirQ, ℓ)} (M_a, M_b) $ was determined in \cite[Lemma 8.3]{Bocklandt}. Let $ α: a → b $ be an angle in $ Q $. By \autoref{rem:prelim-ms-Z}, the angle $ α $ comes with an associated zigzag segment $ Z_α $ in $ \mirQ $. Define $ \opp_1 $ to be the path from $ t(b) $ to $ t(a) $ and $ \opp_2 $ the path from $ h(b) $ to $ h(a) $ along $ Z_α $. This definition is depicted in \autoref{fig:prelim-ms-opposite}. Denote by $ ζ(Z_α) $ the morphism of matrix factorizations $ ζ(Z_α): M_a → M_b $ given by
\begin{equation*}
ζ(Z_α) = \pmat{\opp_2 & 0 \\ 0 & \opp_1}.
\end{equation*}
In case $ k $ is odd, let $ \opp_1 $ be the path from $ h(b) $ to $ t(a) $ and $ \opp_2 $ the path from $ t(b) $ to $ h(a) $. Denote by $ ζ(Z_α) $ the morphism of matrix factorizations $ ζ(Z): M_a → M_b $ given by
\begin{equation*}
ζ(Z_α) = \pmat{0 & \opp_1 \\ \opp_2 & 0}.
\end{equation*}
It is an easy check that for any parity of $ k $ the morphism $ ζ(Z_α) $ is a closed morphism in $ \mf(\Jac Q, ℓ) $ in the sense that $ μ^1_{\mf(\Jac \mirQ, ℓ)} (ζ(Z)) = 0 $. In terms of $ ζ(Z_α) $, the functor $ F $ on level of objects and on the level $ F^1 $ on hom spaces is given by
\begin{align*}
F(a) &= M_a, \quad ∀a ∈ Q_1, \\
F^1 (α) &= ζ(Z_α), \quad ∀α: a → b.
\end{align*}

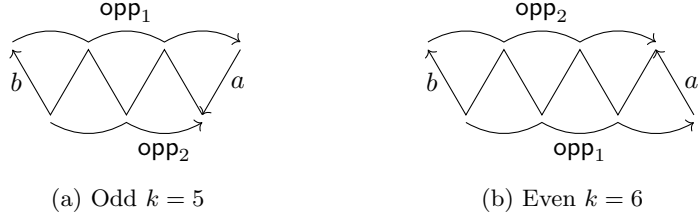
\begin{figure}
\centering
\begin{subfigure}{0.35\linewidth}
\centering
\begin{tikzpicture}
\path[draw, <-] (0, 0) -- ++(300:1) coordinate (1) -- ++(60:1) coordinate (2) -- ++(300:1) coordinate (3) -- ++(60:1) coordinate (4) -- ++(300:1) coordinate (5);
\path[draw, <-] (5) -- ++(60:1) coordinate (6);
\path[draw, ->] (0, 0.1) to[bend left] ($ (2) + (up:0.1) $) to[bend left] node[midway, above] {$ \opp_1 $} ($ (4) + (up:0.1) $) to[bend left] ($ (6) + (up:0.1) $);
\path[draw, ->] ($ (1) + (down:0.1) $) to[bend right] ($ (3) + (down:0.1) $) to[bend right] node[midway, below] {$ \opp_2 $} ($ (5) + (down:0.1) $);
\path (0, 0) -- (1) node[midway, left] {$ b $};
\path (5) -- (6) node[midway, right] {$ a $};
\end{tikzpicture}
\caption{Odd $ k = 5 $}
\end{subfigure}
\begin{subfigure}{0.35\linewidth}
\centering
\begin{tikzpicture}
\path[draw, <-] (0, 0) -- ++(300:1) coordinate (1) -- ++(60:1) coordinate (2) -- ++(300:1) coordinate (3) -- ++(60:1) coordinate (4) -- ++(300:1) coordinate (5) -- ++(60:1) coordinate (6);
\path[draw, <-] (6) -- ++(300:1) coordinate (7);
\path[draw, ->] (0, 0.1) to[bend left] ($ (2) + (up:0.1) $) to[bend left] node[midway, above] {$ \opp_2 $} ($ (4) + (up:0.1) $) to[bend left] ($ (6) + (up:0.1) $);
\path[draw, ->] ($ (1) + (down:0.1) $) to[bend right] ($ (3) + (down:0.1) $) to[bend right] node[midway, below] {$ \opp_1 $} ($ (5) + (down:0.1) $) to[bend right] ($ (7) + (down:0.1) $);
\path (0, 0) -- (1) node[midway, left] {$ b $};
\path (6) -- (7) node[midway, right] {$ a $};
\end{tikzpicture}
\caption{Even $ k = 6 $}
\end{subfigure}
\caption{Opposite paths for even and odd $ k $}
\label{fig:prelim-ms-opposite}
\end{figure}

\begin{remark}
This minimal model $ \H\mf(\Jac Q, ℓ) $ is by no means canonical. It can be calculated by the Kadeishvili construction, which however depends on the a choice of a so-called homological splitting for all hom spaces in $ \mf(\Jac \mirQ, ℓ) $. In \cite{Bocklandt}, it was observed that whatever splitting is chosen, the map $ F^1 $ alone is never an $ A_∞ $-functor. Instead, it is necessary to include higher components $ F^{≥2} $.
\end{remark}

The higher products $ F^{≥2} $ have been constructed in \cite[Appendix A]{Bocklandt}. The idea is to construct $ F^{k+1} $ inductively from $ F^1, …, F^k $. A first important ingredient is knowledge of the products
\begin{equation*}
μ^1_{\H\mf(\Jac \mirQ, ℓ)} (F^1 (α_k), …, F^1 (α_1)),
\end{equation*}
at least for sequences $ α_1, …, α_k $ which are consecutive interior angles of some polygon. The second ingredient is a temporary restriction to the case that $ Q $ is large enough to force certain unknown terms to vanish. With these two premises the component $ F^{k+1} $ can be constructed inductively.

\begin{remark}
The construction of $ F^{k+1} $ consists of solving a Hochschild cocycle equation, which of course has no unique solution. Correspondingly, the functor $ F $ cannot be computed explicitly from \cite{Bocklandt}.
\end{remark}

The lack of explicit functor $ F $ seemed to make it very difficult to deform mirror symmetry: Once we deform $ \Gtl Q $ to $ \Gtl_q Q $, there must be a deformation $ \mf_q (\Jac \mirQ, ℓ) $ of $ \mf(\Jac \mirQ, ℓ) $ such that $ \Gtl_q Q $ and $ \H\mf_q (\Jac \mirQ, ℓ) $ are still isomorphic. Actually finding this mirror deformation is a nontrivial task. The most basic approach is to write down the $ A_∞ $-functor equations and find manually a collection of deformed $ A_∞ $-products on $ \H\mf (\Jac \mirQ, ℓ) $ which still keep $ F $ a functor. This is impossible if $ F $ itself is unknown.

Fortunately, a modern explicit construction of a mirror functor $ F: \Gtl Q → \mf(\Jac \mirQ, ℓ) $ is available due to Cho, Hong and Lau \cite{CHL}. We shall explain here what makes this Cho-Hong-Lau construction so suited for deformations. The idea can be formulated in more generality: Let $ \cat C $ be an $ A_∞ $-category and $ \ZigzagCat ⊂ \cat C $ an subcategory. Then the Cho-Hong-Lau construction produces from the structure of the endomorphism algebra of $ \ZigzagCat $ a mirror Landau-Ginzburg model $ (J, ℓ) $ and a functor $ F: \cat C → \MF(J, ℓ) $:

\begin{center}
\begin{tikzpicture}
\path (0, 0) node[align=center] (A) {$ \ZigzagCat ⊂ \cat C $ \\ $ A_∞ $-category with subcategory} (7, 0) node[align=center] (B) {$ \cat C → \mf(J, ℓ) $ \\ mirror functor};
\path[draw, ->, decorate, decoration={snake, amplitude=0.2em, post length=0.5em}] ($ (A.east) + (right:1) $) to ($ (B.west) + (left:1) $);
\end{tikzpicture}
\end{center}

\begin{remark}
Let $ \cat C = \H\Tw\Gtl Q $ and $ \ZigzagCat ⊂ \H\Tw\Gtl Q $ be the category of zigzag paths in $ Q $, which we recall in \autoref{sec:zigzag-category}. Then the mirror $ (J, ℓ) $ is precisely $ (\Jac \mirQ, ℓ) $ and the construction gives an explicit functor $ F: \Gtl Q → \MF(\Jac \mirQ, ℓ) $. We recall this fact in more detail in \autoref{sec:MS-CHL}.
\end{remark}

In the context of deformations, the Cho-Hong-Lau construction is extraordinarily useful. Simply speaking, applying the construction to the deformed category $ \Gtl_q Q $ yields a deformed Landau-Ginzburg model $ (\Jac_q \mirQ, ℓ_q) $ and a deformed functor $ F_q: \Gtl_q Q → \MF(\Jac_q \mirQ, ℓ_q) $. In the rest of this paper, we make this rigorous. One bottleneck consists of proving that $ \Jac_q \mirQ $ is actually a deformation of $ \Jac \mirQ $.

\section{Preliminaries on the category of zigzag paths}
\label{sec:zigzag}
In this section, we recollect the description of the deformed category of zigzag paths from \papertwoB. The aim is to translate the material in such a way that it becomes directly usable in \autoref{sec:MS}.

In \autoref{sec:zigzag-category}, we recall the three ways of thinking about zigzag paths: as zigzag paths in the dimer $ Q $, as zigzag curves in the surface $ |Q| $, and as twisted complexes lying in $ \Tw\Gtl Q $. In particular, we recall the interpretation of intersection points between two zigzag curves as basis of cohomology hom space of the corresponding twisted complexes. We recall the definition of the $ A_∞ $-category $ \ZigzagCat $ of zigzag paths. In \autoref{sec:zigzag-deformed}, we recall the deformed counterpart $ \DefZigzagCat $ of $ \ZigzagCat $. In \autoref{sec:zigzag-products}, we review the deformed $ A_∞ $-structure on $ \H\DefZigzagCat $ in terms of CR, ID, DS and DW disks. Overall, we try to introduce the reader to the translation presented in \autoref{tab:zigzag-intro-translation}. In \autoref{sec:zigzag-mirobjects}, we review another class of $ A_∞ $-products on the minimal model $ \HTw\Gtl_q Q $.

\begin{table}
\centering
\morearraystretch
\begin{tabular}{@{}ccc@{}}
\textbf{Gadget} & \textbf{Discrete} & \textbf{Smooth} \\\hline
{Input datum} & $ Q $ & $ |Q| \setminus Q_0 $ \\
(deformed) & $ Q $ & $ (|Q|, Q_0) $ \\\hline
{Wrapped category} & $ \Gtl Q $ & $ \wFuk (|Q|\setminus Q_0) $ \\
(deformed) & $ \Gtl_q Q $ & nonexistent \\\hline
{Zigzag object} & $ L $ & $ \smooth L ⊂ |Q|\setminus Q_0 $ \\
(deformed) & $ L $ & $ \smooth L ⊂ |Q| $ \\\hline
{Zigzag category} & $ \H\ZigzagCat $ & “$ \widetilde{\ZigzagCat} $” \\
(deformed) & $ \H\DefZigzagCat $ & “$ \widetilde{\DefZigzagCat} $” \\\hline
{Basis morphisms} & $ h ∈ \Hom_{\H\ZigzagCat} (L_1, L_2) $ & $ p ∈ \smooth L_1 ∩ \smooth L_2 $ \\
{$ A_∞ $-products} & CR/ID/DS/DW disks & Morse-Bott disks
\end{tabular}
\caption{Translation between discrete and smooth}
\label{tab:zigzag-intro-translation}
\end{table}

Throughout this section, $ Q $ denotes a fixed geometrically consistent dimer or a standard sphere dimer $ Q_M $ with $ M ≥ 3 $, depicted in \autoref{fig:zigzag-intro-Q5} and \ref{fig:zigzag-intro-Q6}. For \autoref{sec:zigzag-products} and \ref{sec:zigzag-mirobjects}, we assume additionally \autoref{conv:zigzag-category-convention}.

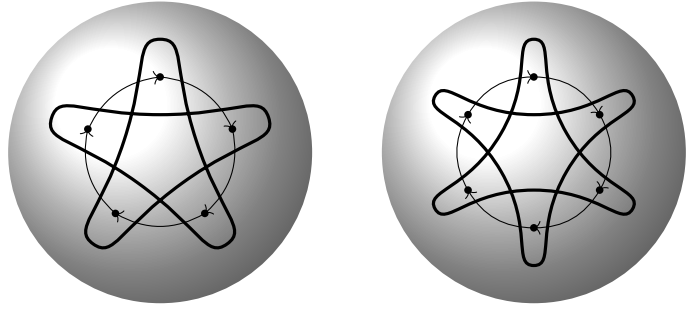
\begin{figure}
\centering
\begin{subfigure}{0.3\linewidth}
\centering
\begin{tikzpicture}
\path[shade, ball color=white] (0, 0) circle[radius=2];
\path[draw, very thick] plot[smooth cycle, tension=1.3] coordinates {(90:1.5) (18:0.5) (-54:1.5) (234:0.5) (162:1.5) (90:0.5) (18:1.5) (-54:0.5) (234:1.5) (162:0.5)};
\path[fill] (90:1) circle[radius=0.05];
\path[fill] (18:1) circle[radius=0.05];
\path[fill] (-54:1) circle[radius=0.05];
\path[fill] (234:1) circle[radius=0.05];
\path[fill] (162:1) circle[radius=0.05];
\path[draw, ->, bend left] (90:1) to (18:1);
\path[draw, ->, bend left] (18:1) to (-54:1);
\path[draw, ->, bend left] (-54:1) to (234:1);
\path[draw, ->, bend left] (234:1) to (162:1);
\path[draw, ->, bend left] (165:1) to (90:1);
\end{tikzpicture}
\caption{$ Q_5 $ with its single zigzag curve}
\label{fig:zigzag-intro-Q5}
\end{subfigure}
\begin{subfigure}{0.3\linewidth}
\centering
\begin{tikzpicture}
\path[shade, ball color=white] (0, 0) circle[radius=2];
\path[draw, very thick] plot[smooth cycle, tension=1.2] coordinates {(90:1.5) (30:0.5) (-30:1.5) (270:0.5) (210:1.5) (150:0.5)};
\path[draw, very thick] plot[smooth cycle, tension=1.2] coordinates {(30:1.5) (-30:0.5) (270:1.5) (210:0.5) (150:1.5) (90:0.5)};
\path[fill] (90:1) circle[radius=0.05];
\path[fill] (30:1) circle[radius=0.05];
\path[fill] (-30:1) circle[radius=0.05];
\path[fill] (270:1) circle[radius=0.05];
\path[fill] (210:1) circle[radius=0.05];
\path[fill] (150:1) circle[radius=0.05];
\path[draw, ->, bend left] (90:1) to (30:1);
\path[draw, ->, bend left] (30:1) to (-30:1);
\path[draw, ->, bend left] (-30:1) to (270:1);
\path[draw, ->, bend left] (270:1) to (210:1);
\path[draw, ->, bend left] (210:1) to (150:1);
\path[draw, ->, bend left] (150:1) to (90:1);
\end{tikzpicture}
\caption{$ Q_6 $ with its two zigzag curves}
\label{fig:zigzag-intro-Q6}
\end{subfigure}
\caption{The sphere dimers and their zigzag curves}
\label{fig:zigzag-intro-Q5Q6}
\end{figure}

\subsection{Category of zigzag paths}
\label{sec:zigzag-category}
In this section, we review the category $ \ZigzagCat $ of zigzag paths from \papertwoB. Throughout, $ Q $ is a geometrically consistent dimer or one of the standard sphere dimers $ Q_M $ with $ M ≥ 3 $.

There is a triad correspondence between zigzag paths, zigzag curves and corresponding twisted complexes. We have depicted this in \autoref{fig:zigzag-category-picture}. Let us explain their relation as follows:
\begin{itemize}
\item Zigzag paths are combinatorial gadgets in $ Q $.
\item Zigzag curves are defined as their smoothed analogs in the surface $ |Q| $.
\item A zigzag path $ L $ comes with a canonical twisted complex presentation also denoted $ L ∈ \Tw\Gtl Q $. The datum of $ L ∈ \Tw\Gtl Q $ includes a $ δ $-matrix consisting of angles between the arcs lying on the zigzag path.
\end{itemize}
In our setting, we allow additional signs in the entries of the $ δ $-matrix of the twisted complex. Once a specific choice of signs has been selected for every zigzag path, the sets of zigzag paths, zigzag curves and their twisted complexes are in one-to-one correspondence. We therefore allow ourselves to switch liberally between the three corresponding objects. For additional clarity, we may denote zigzag paths or their twisted complexes by letters $ L, L_1, … $ and associated zigzag curves by $ \smooth L, \smooth L_1, … $.

\begin{figure}
\centering
\begin{tikzpicture}
\begin{scope}[scale=0.5]
\path[draw, -{To[scale=2]}] (0, 0) -- ++(315:1.5) 
coordinate[midway] (alpha1-end) -- ++(45:1.5) 
coordinate[midway] (alpha1-start) -- ++(315:1.5) 
coordinate[midway] (alpha3-end) -- ++(45:1.5) 
coordinate[midway] (alpha3-start) -- ++(315:1.5) 
coordinate[midway] (alpha5-end) -- ++(45:1.5) 
coordinate[midway] (alpha5-start) -- ++(315:1.5) 
coordinate[midway] (additional);
\path (4, -2) node {zigzag path};
\end{scope}
\path (6, -0.5) node {\Large $ \longleftrightarrow $};
\begin{scope}[shift={(8, 0)}, scale=0.5]
\path[draw, gray!50, -{To[scale=2]}] (0, 0) -- ++(315:1.5) 
coordinate[midway] (alpha1-end) -- ++(45:1.5) 
coordinate[midway] (alpha1-start) -- ++(315:1.5) 
coordinate[midway] (alpha3-end) -- ++(45:1.5) 
coordinate[midway] (alpha3-start) -- ++(315:1.5) 
coordinate[midway] (alpha5-end) -- ++(45:1.5) 
coordinate[midway] (alpha5-start) -- ++(315:1.5) 
coordinate[midway] (additional);
\path[draw, very thick, rounded corners] ($ (alpha1-end)!-0.5!(alpha1-start) + (0, 0.2) $) -- (alpha1-end) -- ($ (alpha1-start)!0.5!(alpha1-end) + (0, -0.1) $) -- (alpha1-start) -- ($ (alpha1-start)!0.5!(alpha3-end) + (0, 0.1) $) -- (alpha3-end) -- ($ (alpha3-start)!0.5!(alpha3-end) + (0, -0.1) $) -- (alpha3-start) -- ($ (alpha3-start)!0.5!(alpha5-end) + (0, 0.1) $) -- (alpha5-end) -- ($ (alpha5-start)!0.5!(alpha5-end) + (0, -0.1) $) -- (alpha5-start) -- ($ (alpha5-start)!0.5!(additional) + (0, 0.1) $) -- (additional) -- ($ (additional)!-0.5!(alpha5-start) + (0, -0.2) $);
\path (4, -2) node {zigzag curve};
\end{scope}
\path (-1, -4) node {\Large $ \longleftrightarrow $};
\begin{scope}[shift={(0, -2.5)}]
\path (0, 0) node[anchor=west] (L) {$ L = (a_1 \oplus a_3 \oplus … \oplus a_k \oplus a_2 \oplus … \oplus a_{2k}, δ) $ with};
\path (0, -2) node[anchor=west] (delta) {$ δ = \left[
\begin{array}{c|c}
0 & \begin{array}{ccccc}
(-1)^{\#α_1} α_1 & 0 & … & 0 & (-1)^{\#α_{2k}} α_{2k} \\
(-1)^{\#α_2} α_2 & (-1)^{\#α_3} α_3 & … & 0 & 0 \\
0 & (-1)^{\#α_4} α_4 & … & 0 & 0 \\
… & … & … & … & … \\
0 & 0 & … & (-1)^{\#α_{2k-3}} α_{2k-3} & 0 \\
0 & 0 & … & (-1)^{\#α_{2k-2}} α_{2k-2} & (-1)^{\#α_{2k-1}} α_{2k-1}
\end{array} \\\hline
0 & 0
\end{array}\right] $};
\path[draw, decorate, decoration={brace, amplitude={8pt}, mirror}] ($ (L.north west) + (-0.1, 0) $) -- ($ (delta.south west) + (-0.1, 0) $);
\path (-1, -3) node[align=center] {twisted \\ complex};
\end{scope}
\end{tikzpicture}
\caption{Three descriptions of a zigzag path}
\label{fig:zigzag-category-picture}
\end{figure}
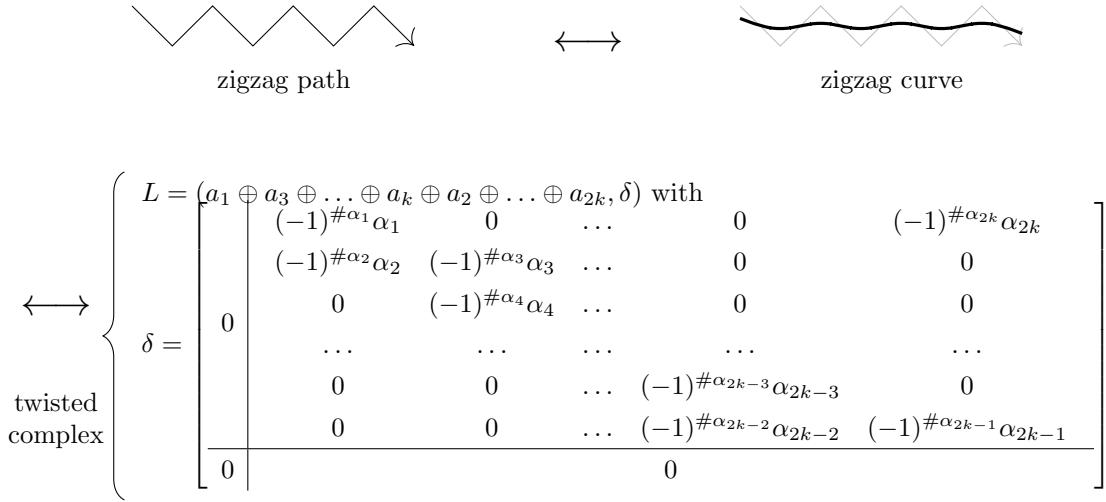


Interpreting a zigzag curve as object in the wrapped Fukaya category or as a twisted complex in $ \Tw\Gtl Q $ requires two more pieces of data. The first datum is the choice of spin structure. Choosing simultaneous spin structures for all zigzag paths in $ Q $ is equivalent to associating a sign $ (-1)^{\#α} $ to every internal angle of every polygon in $ Q $. Once the spin structure is chosen, we can form the twisted complex $ L ∈ \Tw\Gtl Q $ for every zigzag path $ L $, depicted with signs in \autoref{fig:zigzag-category-picture}. For the purpose of the present paper, we choose the $ \# $ signs in a very specific way, described in \autoref{conv:zigzag-category-convention}.

The second piece of data required is the choice of location for the identity and co-identity on every zigzag path. More precisely, this choice entails a choice of one of one indexed arc $ a_0 $ on $ L $ and the selection of one single angle $ α_0 $ out of all angles present in the $ δ $-matrix of $ L $. In the twisted complex presentation, the choice of identity and co-identity location are not visible. They are however needed as a choice to compute the products in the minimal model of $ \Tw\Gtl_q Q $, just like the products in the wrapped Fukaya category also depend on these choices.

We codify the convention on these choices as follows:

\begin{convention}
\label{conv:zigzag-category-convention}
The dimer $ Q $ is a geometrically consistent dimer or standard sphere dimer $ Q_M $ with $ M ≥ 3 $. Each zigzag path is supposed to come with a choice of an identity location $ a_0 $ and a co-identity location $ α_0 $. The co-identity $ α_0 $ shall be chosen to lie in a counterclockwise polygon. The spin structures is given by assigning to every interior angle $ α $ of a clockwise polygon the $ \# $ sign $ \#α = 0 $ and to every interior angle $ α $ of a counterclockwise polygon the $ \# $ sign $ \#α = 1 $.
\end{convention}

\begin{definition}
The \emph{category of zigzag paths} is the category $ \ZigzagCat ⊂ \Tw\Gtl Q $ given by the twisted complexes associated with all zigzag paths of $ Q $, each with its single associated choice of spin structure.
\end{definition}


In \papertwoB, we have investigated the minimal model $ \HTw\Gtl Q $. The objects of this minimal model are the same as those of $ \Tw\Gtl Q $. In particular, this category contains all twisted complexes associated with zigzag paths. However, the hom spaces are compressed in comparison to $ \Tw\Gtl Q $. We have provided explicit basis elements for these hom spaces $ \Hom_{\HTw\Gtl Q} (L_1, L_2) $ in \papertwoB. The basis elements can be identified with intersection points of $ \smooth L_1 $ and $ \smooth L_2 $:

\begin{lemma}
Let $ L_1 $ and $ L_2 $ be two zigzag paths. Then the intersection points between $ \smooth L_1 $ and $ \smooth L_2 $ naturally provide a basis for the hom space $ \Hom_{\H\ZigzagCat} (L_1, L_2) $. In case $ L_1 = L_2 $, this concerns only the transversal self-intersection points, which count double, plus the identity and co-identity points.
\end{lemma}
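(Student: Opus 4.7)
The plan is to unwind the statement into concrete combinatorics using the explicit twisted complex presentation of each $L_i$, then match the resulting cohomology basis with geometric intersection points of the smooth curves $\smooth L_1, \smooth L_2$. Most of the heavy lifting was done in \papertwoB, so the proof here is mainly a translation; I would cite the relevant statements rather than recompute them.

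First I would describe the Hom complex explicitly. Since each $L_i$ is the twisted complex $\bigoplus a_j[k_j]$ with a $\delta$-matrix whose entries are the interior angles of $Q$ read along the zigzag path, the Hom space $\Hom_{\Tw\Gtl Q}(L_1, L_2)$ consists of matrices whose entries are elements of $\Hom_{\Gtl Q}(a, b)$ between arcs $a$ on $L_1$ and $b$ on $L_2$. A basis of each matrix entry is given by the angles in $Q$ from $a$ to $b$ around a common puncture, and the twisted differential $\mu^1_{\Tw\Gtl Q}$ composes such angles with the $\delta$-angles of $L_1$ on the right and of $L_2$ on the left. My next step would be to observe that in $\Gtl Q$ the higher products of angle sequences sitting on zigzag paths can only occur in very restricted configurations (since zigzag paths turn maximally at every vertex), so the differential can be reduced to a sum of $\mu^2$ terms and a handful of $\mu^{\geq 3}$ terms coming from complete polygons.

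The main combinatorial step would then be to cluster basis angles according to where they live with respect to the two zigzag curves. The four cases to run through are: (i) an angle $\alpha$ from $a\subset L_1$ to $b\subset L_2$ whose vertex is a puncture where $\smooth L_1$ and $\smooth L_2$ both pass but the segments are not forced to cross, (ii) an angle which precisely realises a transversal intersection of $\smooth L_1$ and $\smooth L_2$ at a puncture, (iii) an angle lying inside a polygon along which both zigzag curves run parallel, and (iv) in the case $L_1 = L_2$, the two distinguished angles $\id$ and $\id^*$. In cases (i) and (iii) one checks that the angles come in pairs which cancel under the twisted differential (either one is a coboundary of the other after composition with a $\delta$-entry, or one is killed by composition with a $\delta$-entry from the other side); in case (ii) both angles are cocycles and inequivalent modulo coboundaries and therefore descend to independent classes; in case (iv) the identity and co-identity survive by construction and each transversal self-intersection contributes two classes. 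This matches exactly the count of intersection points of $\smooth L_1$ and $\smooth L_2$ claimed in the lemma.

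The hard part, and the reason \papertwoB\ is needed, is the bookkeeping in case (iii): showing that the angle sequences running parallel to the two zigzag curves through a shared polygon cancel correctly, using the higher products $\mu^{\geq 3}$ coming from that polygon together with the $\delta$-entries on both sides. Once this parallel-cancellation is in hand, an inductive filtration of the Hom complex by length of angle sequences collapses it to precisely the span of the classes indexed by intersection points. Geometric consistency (or the standing assumption that $Q = Q_M$) is what guarantees that $\smooth L_1$ and $\smooth L_2$ actually meet transversally away from angle intersections, so no further exceptional classes appear. I would finish by noting that the pairing between intersection points and basis classes is independent of the choices of identity/co-identity location, since those only affect the representatives, not the cohomology classes themselves.
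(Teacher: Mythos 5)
This lemma is imported from \papertwoB\ and carries no proof in this paper. The only argument offered is the remark immediately following, which takes a conceptually different route from yours: it invokes the derived equivalence $\Gtl Q \simeq \wFuk(|Q|\setminus Q_0)$ from \cite{Bocklandt}, under which zigzag paths correspond to zigzag curves, so the hom space on the Fukaya side has basis the geometric intersections; the key step is then that zigzag curves bound no digons in the punctured surface, so the chain-level Floer differential vanishes outright and intersections survive unchanged to cohomology. No case analysis or filtration is needed because the vanishing of the differential is established geometrically in one stroke.

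Your proposal instead attacks the twisted-complex hom space directly, expanding it as matrices of angles, filtering by angle-sequence length, and tracking cancellations of pairs of angles that do not realize transversal intersections. This is closer in spirit to the actual minimal-model computation the paper reports from \papertwoB\ (the Kadeishvili construction with ``tails of morphisms'' as homological splitting and matching of result components with disks) than to the remark's black-box Fukaya argument, so it is a legitimate and more self-contained route. The trade-off is that you inherit the bookkeeping you already flag under your case (iii): showing that the angle pairs running parallel along a shared polygon actually cancel requires the interaction between the $\delta$-matrices of both twisted complexes and the $\mu^{\geq 3}$ of $\Gtl Q$, which is precisely the delicate part that \papertwoB\ spends its effort on. Two further points to tighten: your claim that geometric consistency guarantees the two curves ``meet transversally away from angle intersections'' is imprecise — all intersections of zigzag curves are located at arc midpoints, and consistency (or the $Q_M$ hypothesis) is really what rules out monogons and digons, which would otherwise create extra differential terms — and in the case $L_1 = L_2$ you should make explicit how the bidegree data gives two independent classes per transversal self-intersection, which is a nontrivial piece of the statement and not automatic from the filtration. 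With those repairs the strategy is sound.
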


\begin{remark}
The description of the hom spaces by means of intersection points between $ \smooth L_1 $ and $ \smooth L_2 $ is expected from the derived equivalence of $ \Gtl Q $ and the wrapped Fukaya category \cite{Bocklandt}. Under this equivalence, the zigzag path $ L $ corresponds to the zigzag curve $ \smooth L $. As such, the hom space $ \Hom_{\wFuk (|Q|, Q_0)} (L_1, L_2) $ has basis given by intersections between $ \smooth L_1 $ and $ \smooth L_2 $. Since zigzag curves bound no digons in the punctured surface $ |Q| \setminus Q_0 $, the differential on $ \Hom_{\wFuk (|Q|, Q_0)} (\smooth L_1, \smooth L_2) $ vanishes and consequently intersection points also provide a basis for the cohomology $ \H\Hom_{\wFuk (|Q|, Q_0)} (\smooth L_1, \smooth L_2) $. In summary, this identifies intersections between $ \smooth L_1 $ and $ \smooth L_2 $ as hom space $ \Hom_{\HTw\Gtl Q} (L_1, L_2) $.
\end{remark}

Every basis morphism $ p: L_1 → L_2 $ comes with a degree $ |p| ∈ ℤ/2ℤ $ assigned. The degree depends on the orientation of the intersection between $ \smooth L_1 $ and $ \smooth L_2 $. The precise convention is depicted in \autoref{fig:zigzag-category-intersectiondeg}.

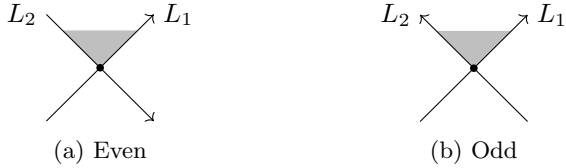
\begin{figure}
\centering
\begin{subfigure}{0.3\linewidth}
\centering
\begin{tikzpicture}
\path[fill, lightgray] (0, 0) -- (45:0.7) -- (135:0.7) -- cycle;
\path[draw] (0, 0) -- ++(135:1) node[left] {$ L_2 $} (0, 0) -- ++(225:1);
\path[draw, ->] (0, 0) -- ++(315:1);
\path[draw, ->] (0, 0) -- ++(45:1) node[right] {$ L_1 $};
\path[fill] (0, 0) circle[radius=0.05];
\end{tikzpicture}
\caption{Even}
\end{subfigure}
\begin{subfigure}{0.3\linewidth}
\centering
\begin{tikzpicture}
\path[fill, lightgray] (0, 0) -- (45:0.7) -- (135:0.7) -- cycle;
\path[draw, ->] (0, 0) -- ++(135:1) node[left] {$ L_2 $};
\path[draw] (0, 0) -- ++(225:1) (0, 0) -- ++(315:1);
\path[draw, ->] (0, 0) -- ++(45:1) node[right] {$ L_1 $};
\path[fill] (0, 0) circle[radius=0.05];
\end{tikzpicture}
\caption{Odd}
\end{subfigure}
\caption{Intersection degree}
\label{fig:zigzag-category-intersectiondeg}
\end{figure}

\begin{remark}
The basis of the hom space $ \Hom_{\HTw\Gtl Q} (L_1, L_2) $ is special in case $ L_1 = L_2 $. Its basis is then given by transversal intersections plus two special morphisms, namely the identity and co-identity. The transversal intersections are self-intersections, and they give in fact two morphisms in $ \HTw\Gtl Q $, of which one is odd and the other is even. Whenever we refer to self-intersections of a zigzag curve, it is understood that the datum of a self-intersection shall include the choice of whether we mean the odd or the even morphism. For more details we refer to \papertwoB\ or \cite[Chapter 9]{Bocklandt-book}.
\end{remark}

\subsection{Deformed category of zigzag paths}
\label{sec:zigzag-deformed}
In this section, we recall the deformation $ \DefZigzagCat $ of $ \ZigzagCat $ from \papertwoB. First, we recall the use of the deformed gentle algebra $ \Gtl_q Q $. We then explain that the zigzag curves survive upon deforming $ \Gtl Q $ to $ \Gtl_q Q $. Finally, we recall how the process of “uncurving” produces the deformed, yet curvature-free category $ \DefZigzagCat $.

We are interested in the minimal model of $ \Tw\Gtl_q Q $. For our purposes, it suffices in fact to look at the subcategory of $ \Tw\Gtl_q Q $ given by zigzag paths:

\begin{definition}
The category $ \DefZigzagCat^{\pre} ⊂ \Tw\Gtl_q Q $ is the subcategory consisting of the twisted complexes of all zigzag paths of $ Q $, each with their chosen spin structure.
\end{definition}

In \papertwoA, we showed how to compute a minimal model of a deformed $ A_∞ $-category $ \cat C_q $. The first step in the procedure consists of optimizing curvature according to a well-defined prescription. The result of the curvature optimization procedure is an $ A_∞ $-deformation that is gauge equivalent to $ \cat C_q $.

In the specific case of $ \DefZigzagCat^{\pre} $, we can explicitly describe the result $ \DefZigzagCat $ of the curvature optimization procedure:

\begin{definition}
Let $ L $ be a zigzag path of $ Q $, with associated twisted complex
\begin{equation*}
L = \left(a_1 ⊕ a_3 ⊕ … ⊕ a_k ⊕ a_2 ⊕ … ⊕ a_{2k}, δ\right)
\end{equation*}
as in \autoref{fig:zigzag-category-picture}. Then the corresponding \emph{deformed zigzag path} is the following object of $ \Tw'\Gtl_q $, still denoted $ L $:
\begin{align*}
L &= \left(a_1 ⊕ a_3 ⊕ … ⊕ a_k ⊕ a_2 ⊕ … ⊕ a_{2k}, δ\right), \\
δ &= \left[
\begin{array}{c|c}
0 & \text{ditto} \\\hline
\begin{array}{ccccc}
(-1)^{\# α_1} q_1 α_1' & (-1)^{\#α_2} q_2 α_2' & 0 & … & 0 \\
0 & (-1)^{\#α_3} q_3 α_3' & (-1)^{\#α_4} q_4 α_4' & … & 0 \\
… & … & … & … & … \\
0 & 0 & 0 & … & (-1)^{\#α_{2k-2}} q_{2k-2} α_{2k-2}' \\
(-1)^{\#α_{2k}} q_{2k} α_{2k}' & 0 & 0 & … & (-1)^{\#α_{2k-1}} q_{2k-1} α_{2k-1}'
\end{array} & 0
\end{array}\right].
\end{align*}
Here “ditto” denotes the same matrix entries as in \autoref{fig:zigzag-category-picture}. The letter $ q_i $ denotes the puncture around which $ α_i $ winds. The angle $ α_i' $ is defined as the complementary angle to $ α_i $. In other words, the angle $ α_i' $ is such that $ α_i' α_i $ comprises a single full turn around a puncture.

The \emph{category of deformed zigzag paths} is the subcategory $ \DefZigzagCat ⊂ \Tw'\Gtl_q Q $ consisting of the deformed zigzag paths.
\end{definition}

\begin{remark}
The definition of the deformed zigzag path is a specific application of the “complementary angle trick” laid out in \papertwoB. On the level of twisted complexes, the complementary angle trick infinitesimally changes the $ δ $-matrices of the zigzag paths. The resulting $ δ $-matrices are not upper triangular anymore, whence the notation $ \Tw'\Gtl_q Q $. On the level of the category $ \DefZigzagCat^{\pre} $ itself, the complementary angle trick consists of a specific gauge transformation.
\end{remark}

\begin{remark}
The category $ \DefZigzagCat $ of deformed zigzag paths is a deformation of the category $ \ZigzagCat $ of zigzag paths. In \papertwoB, we proved that $ \DefZigzagCat $ has “optimal curvature” in the sense of the deformed Kadeishvili theorem. In fact, if $ Q $ is geometrically consistent, then $ \DefZigzagCat $ is curvature-free. If $ Q = Q_M $ for $ M $ odd, then $ \DefZigzagCat $ is curvature-free as well (due to choice of spin structure). If $ Q = Q_M $ for $ M $ even, then the curvature of the two zigzag paths in $ \DefZigzagCat $ is an infinitesimal multiple of their identity morphisms.
\end{remark}

\subsection{Minimal model structure}
\label{sec:zigzag-products}
In this section, we recall the deformed $ A_∞ $-products on $ \H\DefZigzagCat $ from \papertwoB. As it turns out, these products have striking similarity with the products of the relative Fukaya category. Although we do not need Fukaya categories here, it is helpful to recall that their products are enumerated in terms of what we may call smooth immersed disks. In the present section, we recall that also the products of $ \H\DefZigzagCat $ can be enumerated in terms of certain types of smooth immersed disks. We recollect their precise rules.

The category $ \DefZigzagCat $ is a deformed $ A_∞ $-category and as such has no classical minimal model. In \papertwoA, we define a notion of minimal models for deformed $ A_∞ $-categories and show that every deformed $ A_∞ $ has a minimal model. Moreover, we show how to compute minimal models by means of a deformed Kadeishvili construction. In \papertwoB, we compute the minimal model $ \H\DefZigzagCat $ of $ \DefZigzagCat $ in its entirety.

\begin{remark}
The aim of our minimal model computation in \papertwoB\ was to identify the higher products of $ \H\DefZigzagCat $ as higher products of the relative Fukaya category. Here is a sketch of this computation: We provided an explicit homological splitting as well as corresponding deformed codifferential $ h_q $ and deformed projection $ π_q $ in terms of “tails of morphisms”. It remained to evaluate all Kadeishvili trees. The essential step was to analyze Kadeishvili trees by what we called “result components”. It turned out that every result component can be matched with a disk between the zigzag curves. Finally, we classified the disks that appeared this way into the types CR, ID, DS and DW. In other words, the minimal model $ \H\DefZigzagCat $ is given by intersection points of the zigzag curves, together with a deformed $ A_∞ $-structure which counts disks.
\end{remark}

The minimal model $ \H\DefZigzagCat $ can be expressed by means of counting CR, ID, DS and DW disks. The first step in this section is to recall these four types of disks. We give a verbal and visual characterization of these disks. As auxiliary disk type we recall the SL disks (shapeless disks), with a slightly abridged definition.

For the definition of these disk types, let $ L_1, …, L_{N+1} $ be a fixed sequence of $ N+1 ≥ 1 $ zigzag paths and $ h_i: L_i → L_{i+1} $ for $ i = 1, …, N $ some basis morphisms. Since identities among the inputs $ h_i $ yield well-known products $ μ_{\H\DefZigzagCat} (h_N, …, h_1) $ by the unitality property of $ \H\DefZigzagCat $, we assume that none of the inputs $ h_i $ is an identity. Note that co-identities are however allowed. Let $ t: L_1 → L_{N+1} $ be another intersection point, the letter standing for “target candidate”.

\begin{definition}
An \emph{SL disk} (shapeless disk) with \emph{inputs} $ h_1, …, h_N $ and \emph{output} $ t $ consists of an oriented immersion $ D: P_{N+1} → |Q| $ of the standard $ (N+1) $-gon $ P_{N+1} $ such that
\begin{itemize}
\item the $ i $-th edge of $ D $ is mapped to a segment of $ \smooth L_i $, for $ i = 1, …, N+1 $,
\item the $ i $-th corner of $ D $ is mapped to the intersection point corresponding with $ h_i $, for $ i = 1, …, N $,
\item the $ N+1 $-th corner of $ D $ is mapped to the intersection point corresponding with $ t $,
\item all corners of $ D $ are convex.
\end{itemize}
The immersion $ D $ itself is taken up to reparametrization.
\end{definition}

\begin{remark}
SL disks are allowed to be monogons ($ N = 0 $) or digons ($ N = 1 $). All the $ \smooth L_i $-segments are allowed to be empty. We can also imagine these empty segments as being infinitesimally short.
\end{remark}

The standard polygon $ P_{N+1} $ together with its numbering of corners and edges is depicted in \autoref{fig:prelim-gtl-P5}. Note that disk inputs are numbered in opposite direction as they would be in the standard definition \cite{Abouzaid}. The difference is necessary in order to match with the convention for gentle algebras \cite{Bocklandt}. With the definition of SL disks in mind, we are ready to recall the CR, ID, DS and DW disk types. Among these four disk types, only CR and ID disks are relevant for this paper and we have depicted their schematic in \autoref{fig:zigzag-products-CRdisk} and \ref{fig:zigzag-products-IDdisk}. For DS and DW disks we given an abridged definition and explain what makes them irrelevant.

\begin{definition}
A \emph{CR disk} (co-identity rule disk) is an SL disk all of whose segments are of non-empty, with the exception that multiple stacked co-identity inputs with empty segments in between are allowed, as long as their zigzag curve is oriented clockwise with the disk. We denote by $ \CRd $ the set of all CR disks, taking the union over arbitrary input sequence $ h_1, …, h_N $ and output $ t $. 
\end{definition}

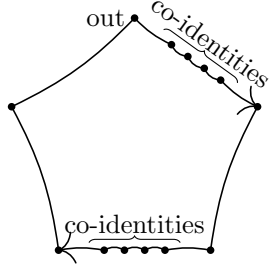
\begin{figure}
\centering
\begin{tikzpicture}
\path[draw, semithick, {To[scale=2]}-] (0, 0) coordinate(h3) to[bend left=10] ++(0.6, 0) coordinate(m1) to[bend left=30] ++(0.26666, 0) coordinate(m2) to[bend left=30] ++(0.26666, 0) coordinate (m3) to[bend left=30] ++(0.266666, 0) coordinate (m4) to[bend left=10] ++(0.6, 0) coordinate(h2) to[bend left=10] ++(72:2) coordinate(h1);
\path[draw, semithick, {To[scale=2]}-] (h1) to[bend left=10] ++(144:0.6) coordinate(m5) to[bend left=30] ++(144:0.26666) coordinate (m6) to[bend left=30] ++(144:0.26666) coordinate(m7) to[bend left=30] ++(144:0.266666) coordinate (m8) to[bend left=10] ++(144:0.6) coordinate(t) to[bend left=10] ++(216:2) coordinate(h4) to[bend left=10] (0, 0);
\path[fill] \foreach \i in {h1, h2, h3, h4, m1, m2, m3, m4, m5, m6, m7, m8, t} {(\i) circle[radius=0.05]};
\path (t) node[left] {out};
\path[draw, decorate, decoration={brace}] ($ (m1) + (-0.2, 0.1) $) to node[midway, above] {co-identities} ($ (m4) + (0.2, 0.1) $);
\path[draw, decorate, decoration={brace}] ($ (m8) + (144:0.2) + (54:0.1) $) to node[midway, sloped, above]  {co-identities} ($ (m5) + (144:-0.2) + (54:0.1) $);
\end{tikzpicture}
\caption{This pictures depicts a typical CR disk. The disk has twelve inputs of which eight are co-identities. There are two groups of co-identities, each consisting of four co-identities stacked together. By definition, the zigzag curves on which the two stacks of co-identities lie are required to run clockwise, as indicated by the arrows.}
\label{fig:zigzag-products-CRdisk}
\end{figure}

\begin{definition}
A \emph{ID disk} (identity degenerate disk) is an SL disk satisfying the following conditions:
\begin{itemize}
\item The output is an identity,
\item Precisely one input, the \emph{degenerate input}, is infinitesimally close to the output,
\item The degenerate input is an odd or even transversal intersection,
\item The disk becomes CR upon excision of the output and substitution of the output mark by the degenerate input,
\item In case the degenerate input is odd, it precedes respectively succeeds the output mark if $ \smooth L_1 $ is oriented clockwise respectively counterclockwise with the disk,
\item In case the degenerate input is even, then the source zigzag curve of the degenerate input is counterclockwise and the target zigzag curve of the degenerate input is clockwise.
\end{itemize}
We denote by $ \IDd $ the set of ID disks with arbitrary inputs and output.
\end{definition}

\begin{figure}
\centering
\begin{subfigure}{0.3\linewidth}
\centering
\begin{tikzpicture}
\path (2, 0) -- ++(72:2) coordinate (A) -- ++(144:2) coordinate (B);
\path[draw, semithick, {To[scale=2]}-] (0, 0) coordinate (h3) to[bend left=5] ++(0.6, 0) coordinate (m1) to[bend left=30] ++(0.26666, 0) coordinate (m2) to[bend left=30] ++(0.2666666, 0) coordinate (m3) to[bend left=30] ++(0.26666, 0) coordinate (m4) to[bend left=5] ++(0.6, 0) coordinate (h2) to[bend left=5] ++(72:1.8) coordinate (h1);
\path[draw, semithick, {To[scale=2]}-] (h1) to[bend left=10] ($ (A)!0.9!(B) $) coordinate (t) to (B) coordinate (h5);
\path[draw, semithick, {To[scale=2]}-] (h5) to ++(216:2) coordinate (h4) to (0, 0);
\path[fill] \foreach \i in {h1, h2, h3, h4, h5, t, m1, m2, m3, m4} {(\i) circle[radius=0.05]};
\path (t) node[right] {out};
\path[draw, decorate, decoration={brace}] ($ (m1) + (-0.2, 0.1) $) to node[midway, above] {\small co-identities} ($ (m4) + (0.2, 0.1) $);
\end{tikzpicture}
\caption{Clockwise odd input}
\label{fig:zigzag-products-IDdiskCB}
\end{subfigure}
\begin{subfigure}{0.3\linewidth}
\centering
\begin{tikzpicture}
\path (2, 0) -- ++(72:2) -- ++(144:2) coordinate (A) -- ++(216:2) coordinate (B);
\path[draw, semithick, {To[scale=2]}-] (0, 0) coordinate (h3) to[bend left=5] ++(0.6, 0) coordinate (m1) to[bend left=30] ++(0.26666, 0) coordinate (m2) to[bend left=30] ++(0.2666666, 0) coordinate (m3) to[bend left=30] ++(0.26666, 0) coordinate (m4) to[bend left=5] ++(0.6, 0) coordinate (h2) to[bend left=5] ++(72:1.8) coordinate (h1);
\path[draw, semithick, -{To[scale=2]}] (h1) to[bend left=10] (A) coordinate (h0);
\path[draw, semithick, -{To[scale=2]}] (h0) to ($ (A)!0.1!(B) $) coordinate (t) to[bend left=5] (B) coordinate (h5);
\path[draw, semithick] (h5) to (0, 0);
\path[draw, semithick] (h4) to (0, 0);
\path[fill] \foreach \i in {h0, h1, h2, h3, h5, t, m1, m2, m3, m4} {(\i) circle[radius=0.05]};
\path (t) node[left] {out};
\path[draw, decorate, decoration={brace}] ($ (m1) + (-0.2, 0.1) $) to node[midway, above] {\small co-identities} ($ (m4) + (0.2, 0.1) $);
\end{tikzpicture}
\caption{Counterclockwise odd input}
\label{fig:zigzag-products-IDdiskCCB}
\end{subfigure}
\begin{subfigure}{0.3\linewidth}
\centering
\begin{tikzpicture}
\path (2, 0) -- ++(72:2) coordinate (A) -- ++(144:2) coordinate (B);
\path[draw, semithick, {To[scale=2]}-] (0, 0) coordinate (h3) to[bend left=5] ++(0.6, 0) coordinate (m1) to[bend left=30] ++(0.26666, 0) coordinate (m2) to[bend left=30] ++(0.2666666, 0) coordinate (m3) to[bend left=30] ++(0.26666, 0) coordinate (m4) to[bend left=5] ++(0.6, 0) coordinate (h2) to[bend left=5] ++(72:1.8) coordinate (h1);
\path[draw, semithick, {To[scale=2]}-] (h1) to[bend left=10] ($ (A)!0.9!(B) $) coordinate (t) to (B) coordinate (h5);
\path[draw, semithick, -{To[scale=2]}] (h5) to ++(216:2) coordinate (h4);
\path[draw, semithick] (h4) to (0, 0);
\path[fill] \foreach \i in {h1, h2, h3, h4, h5, t, m1, m2, m3, m4} {(\i) circle[radius=0.05]};
\path (t) node[right] {out};
\path[draw, decorate, decoration={brace}] ($ (m1) + (-0.2, 0.1) $) to node[midway, above] {\small co-identities} ($ (m4) + (0.2, 0.1) $);
\end{tikzpicture}
\caption{Even input}
\label{fig:zigzag-products-IDdiskC}
\end{subfigure}
\caption{These pictures depict typical ID disks, categorized according to the type of their degenerate input. All depicted ID disks have nine inputs, of which four consist of a stack of co-identities and one is the degenerate input. The degenerate input is the input located directly next to the output mark. The orientations of the zigzag curves near the output mark are enforced by the specific rules of ID disks. The orientation of the zigzag curve carrying the co-identities is enforced by the requirement that the disk becomes CR upon excision of the output mark.}
\label{fig:zigzag-products-IDdisk}
\end{figure}
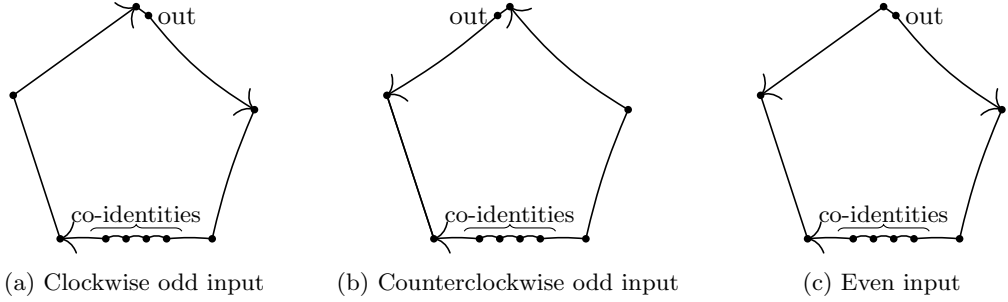

\emph{DS and DW disks} (degenerate strip disks, degenerate wedge disks) are immersed strips fitting into one of the two digons bounded by a zigzag curve $ \smooth L $ and its Hamiltonian deformation. It is possible to make this more precise. However, every DS and DW disk necessarily includes at least one even input. This already renders DS and DW disks irrelevant for the present paper. Even without recalling the precise definition, we denote by $ \DSd $ and $ \DWd $ the set of DS and DW disks, respectively.

In order to give the description of the products $ μ_{\H\DefZigzagCat} $ in terms of disks, we have to introduce two pieces of notation here: the Abouzaid sign and the deformation parameter attached to a disk. We have chosen to name this sign rule after Abouzaid for the reason that it is the same as in \cite{Abouzaid}.

\begin{definition}
Let $ D $ be an SL disk. Then its \emph{Abouzaid sign} $ \Abouzaid(D) ∈ ℤ/2ℤ $ is the sum of all $ \# $ signs on the boundary of $ D $, plus the number of odd inputs $ h_i: L_i → L_{i+1} $ where $ \smooth L_{i+1} $ is oriented counterclockwise relative to $ D $, plus one if the output $ t: L_1 → L_{N+1} $ is odd and $ \smooth L_{N+1} $ is oriented counterclockwise. The deformation parameter $ \punctures(D) ∈ ℂ⟦Q_0⟧ $ is the total product of all punctures covered by $ D $, counted with multiplicity.
\end{definition}

We will now make the description of the product structure on $ \H\DefZigzagCat $ precise. The procedure is familiar: Let $ h_1, …, h_N $ be a sequence of inputs. The product $ μ_{\H\DefZigzagCat} (h_N, …, h_1) $ is given by enumerating all disks with inputs $ h_1, …, h_N $ and arbitrary output $ t: L_1 → L_{N+1} $. In our specific case, the types of disks involved are the CR, ID, DS and DW disks. The contribution from a disk $ D $ carries the Abouzaid sign $ (-1)^{\Abouzaid(D)} $ and is weighted by the deformation parameter $ \punctures(D) ∈ ℂ⟦Q_0⟧ $. For a disk $ D $, denote its output $ t: L_1 → L_{N+1} $ by $ \disktarget(D) $.

\begin{theorem}[{\papertwoB}]
\label{th:zigzag-products-th}
Let $ Q $ be a geometrically consistent dimer or standard sphere dimer $ Q_M $ with $ M ≥ 3 $. The $ A_∞ $-product $ μ_{\H\DefZigzagCat} $ is strictly unital. Let $ h_1, …, h_N $ be a sequence of $ N ≥ 0 $ non-identity basis morphisms with $ h_i: L_i → L_{i+1} $. Then their product is given by
\begin{equation*}
μ_{\H\DefZigzagCat}^N (h_N, …, h_1) = \sum_{\substack{D ∈ \CRd \disjoint \IDd \disjoint \DSd \disjoint \DWd \\ D \text{ has inputs } h_1, …, h_N}} (-1)^{\Abouzaid(D)} \punctures(D) \disktarget(D).
\end{equation*}
\end{theorem}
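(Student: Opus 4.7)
The plan is to compute the minimal model $\H\DefZigzagCat$ by running the deformed Kadeishvili construction of \papertwoA\ on $\DefZigzagCat$, and then identify each term of the resulting tree sum with a CR, ID, DS or DW disk. First I would fix an explicit homological splitting on every hom space $\Hom_{\DefZigzagCat}(L_1, L_2) = B \htensor \Hom_{\ZigzagCat}(L_1, L_2)$. The natural choice is a "tail" splitting: the harmonic representative of an angle sequence should be its tail terminating at an identity or co-identity marker on $L_2$, while the codifferential $h_q$ is the (deformed) operator prolongating a morphism by appending further matrix entries of the twisted differential $\delta$ of the target zigzag path. This extends $B$-linearly from the undeformed splitting and is automatically continuous.

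Second, the Kadeishvili theorem then expresses $\mu^N_{\H\DefZigzagCat}(h_N, \ldots, h_1)$ as a sum over rooted planar binary trees with $N$ leaves labeled by $h_1, \ldots, h_N$, where each internal node applies a product $\mu_{\DefZigzagCat}$ (itself a sum-over-$\delta$ expansion inherited from $\Tw\Gtl_q Q$) and each internal edge applies $h_q$. I would unfold each tree into a sum of "result components", each specifying which twisted-complex summands of the intermediate zigzag paths are visited; the products in $\DefZigzagCat$ are computed locally in $\Gtl_q Q$, so a result component is essentially a concatenation of angle fragments lying in small discrete immersed disks of $Q$ glued along arcs of zigzag paths. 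The main combinatorial step is to exhibit a bijection between non-vanishing result components and oriented smooth immersed disks whose boundary alternates between the curves $\smooth L_i$ and the intersection points $h_i$, and to classify these disks according to whether they have non-empty segments everywhere (CR), a degenerate input infinitesimally close to an identity output (ID), or fit inside one of the digons formed by a zigzag curve with its pushoff (DS and DW).

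Third, I would track signs. The contributions come from three sources: the $A_\infty$-signs of $\DefZigzagCat$ at internal nodes, the $\#$-signs prescribed by \autoref{conv:zigzag-category-convention} for the spin structure encoded in the $\delta$-matrices, and the signs picked up when the codifferential traverses $\delta$-entries of alternating parity. A careful bookkeeping should collapse the total sign into exactly $(-1)^{\Abouzaid(D)}$, since the Abouzaid rule was designed to encode precisely the mismatch between clockwise/counterclockwise orientation of boundary components and the parity of odd corners. For the deformation parameters, every $\delta$-entry of a deformed zigzag path of the form $(-1)^{\# \alpha_i} q_i \alpha_i'$ contributes a factor $q_i$ each time the disk boundary crosses the puncture around which $\alpha_i$ winds, so summing over all such crossings yields $\punctures(D)$.

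The main obstacle is the bijection between tree result components and the four disk types, especially the degenerate cases. The CR type requires handling stacked co-identities where the separating segments are empty, which forces careful control of when the codifferential $h_q$ can produce a zero-length prolongation. The ID type requires identifying the "degenerate input" mechanism by which a single transversal self-intersection close to the identity marker produces a nontrivial contribution while all other neighbouring configurations cancel. Strict unitality, in contrast, is automatic: once the splitting is chosen so that identity morphisms are harmonic and the codifferential vanishes on them, any tree containing a non-root identity leaf evaluates to zero by the standard Kadeishvili argument. The geometric consistency hypothesis (or the sphere-dimer hypothesis) enters precisely to guarantee that zigzag curves intersect transversally and bound no extraneous digons in $|Q|$, so that the disk enumeration on the right-hand side is finite in each puncture-degree.
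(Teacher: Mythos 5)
Your proposal follows precisely the route that the paper sketches for the proof carried out in \papertwoB: a tail-type homological splitting with deformed codifferential $h_q$ and projection $\pi_q$, evaluation of the deformed Kadeishvili tree sum, analysis of trees via result components, a bijection matching result components with smooth disks classified as CR/ID/DS/DW, and sign/parameter bookkeeping landing on the Abouzaid convention. Note, however, that the present paper does not prove this theorem — it cites it from \papertwoB\ and only offers a short sketch — so your plan reproduces that sketch rather than competing with a full in-paper argument; the genuine work (notably the degenerate ID/DS/DW cases and the exact form of $h_q$, which must be a contracting homotopy and not merely a prolongation operator) lives in \papertwoB.
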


\begin{remark}
The description of \autoref{th:zigzag-products-th} also describes curvature $ μ^0_{\H\DefZigzagCat} $ and differential $ μ^1_{\H\DefZigzagCat} $ of the minimal model accurately. Let us explain this as follows:

In case $ Q $ is geometrically consistent, the curvature $ μ^0_{\H\DefZigzagCat} $ and the differential $ μ^1_{\H\DefZigzagCat} $ vanish. This is witnessed by the fact that there are no monogons or digons in $ Q $ bounded by zigzag curves.

In case $ Q = Q_M $ for odd $ M $, the curvature $ μ^0_{\H\DefZigzagCat} $ vanishes, but the differential $ μ^1_{\H\DefZigzagCat} $ is nonzero. This vanishing of curvature is witnessed by the fact that there are monogons in $ Q_M $ bounded by zigzag curves, but they cancel each other due to the spin structure. The differential is witnessed by the fact that there are digons in $ Q_M $ bounded by zigzag curves.

In case $ Q = Q_M $ for even $ M $, both curvatue $ μ^0_{\H\DefZigzagCat} $ and $ μ^1_{\H\DefZigzagCat} $ are nonzero. This is witnessed by the fact that there are both monogons and digons in $ Q_M $ bounded by zigzag paths.
\end{remark}

\subsection{Preparation for mirror objects}
\label{sec:zigzag-mirobjects}
In this section we review further selected products in the category $ \HTw\Gtl_q Q $ from \papertwoB. Namely, we regard products of the form $ μ(m, h_N, …, h_1) $, where $ h_1, …, h_N $ is a sequence of odd morphisms $ h_i: L_i → L_{i+1} $ not containing co-identities and $ m ∈ \Hom_{\HTw\Gtl_q Q} (L_{N+1}, a) $ is a morphism to an arc $ a ∈ \Gtl_q Q $. Ultimately, knowledge of these products serves the calculation of mirror objects in \autoref{sec:MS-mirobjects}.

We start with reviewing the hom space $ \Hom_{\HTw\Gtl Q} (L, a) $. Here $ L ∈ \HTw\Gtl Q $ is a zigzag path in $ Q $ and $ a ∈ \Gtl Q $ is an arc. The zigzag curve $ \smooth L $ and the arc $ a $ are both curves in the surface $ |Q| $. While $ \smooth L $ is a closed curve, the arc $ a $ is an interval. The curves $ \smooth L $ and $ a $ may be disjoint or intersect. If they intersect, they intersect in a single point, namely the midpoint of $ a $:

\begin{center}
\begin{tikzpicture}[yscale=0.7]
\path[use as bounding box] (0, 0) -- (3, 2);
\path[draw, gray, ->] (0, 0) -- ++(60:1) coordinate[midway] (1) -- ++(right:1) coordinate[midway] (2) coordinate (stop1);
\path[draw, gray] (stop1) -- ++(60:0.85) coordinate[midway] (3);
\path[draw, semithick, ->] (60:1)++(up:0.15) -- ++(right:1) node[midway, above] {$ a $} node[midway, below, shift={(down:0.3)}] {odd} coordinate[midway] (5);
\path[draw, semithick, rounded corners] ($ (1) + (-0.5, -0.1) $) to (1) to coordinate[pos=0.4] (7) (5) to coordinate[pos=0.6] (8) (3) to ($ (3) + (0.5, 0.1) $) node[below] {$ \smooth L $};
\path[fill] (5) circle[radius=0.05];
\end{tikzpicture}
\begin{tikzpicture}[yscale=0.7]
\path[use as bounding box] (0, 0) -- (3, 2);
\path[draw, gray, ->] (0, 1.6) -- ++(300:1) coordinate[midway] (4) -- ++(right:1) coordinate[midway] (5) coordinate (stop2);
\path[draw, gray] (stop2) -- ++(300:0.85) coordinate[midway] (6);
\path[draw, semithick, ->] (60:1) -- ++(right:1) node[midway, above] {$ a $} node[midway, below, shift={(down:0.3)}] {even} coordinate[midway] (2);
\path[draw, semithick, rounded corners] ($ (4) + (-0.5, 0.1) $) to (4) to coordinate[pos=0.4] (9) (2) to coordinate[pos=0.6] (10) (6) to ($ (6) + (0.5, -0.1) $) node[above] {$ \smooth L $};
\path[fill] (2) circle[radius=0.05];
\end{tikzpicture}
\begin{tikzpicture}[yscale=0.7]
\path[use as bounding box] (0, 0.1) -- (3, 2);
\path[draw, gray, ->] (0, 0) -- ++(60:1) coordinate[midway] (1) -- ++(right:1) coordinate[midway] (2) coordinate (stop1);
\path[draw, gray] (stop1) -- ++(60:0.85) coordinate[midway] (3);
\path[draw, gray, ->] (0, 2) -- ++(300:1) coordinate[midway] (4) -- ++(right:1) coordinate[midway] (5) coordinate (stop2);
\path[draw, gray] (stop2) -- ++(300:0.85) coordinate[midway] (6);
\path ($ (2)!0.5!(5) $) coordinate (m);
\path[draw, semithick, ->] (m) node[above, shift={(up:0.1)}] {$ a $} -- ++(left:0.5) -- ++(right:1);
\path[draw, semithick, rounded corners] ($ (1) + (-0.5, -0.1) $) to (1) to coordinate[pos=0.4] (7) (m) to coordinate[pos=0.6] (8) (3) to ($ (3) + (0.5, 0.1) $) node[above] {$ \smooth L $};
\path[draw, semithick, rounded corners] ($ (4) + (-0.5, 0.1) $) to (4) to coordinate[pos=0.4] (9) (m) to coordinate[pos=0.6] (10) (6) to ($ (6) + (0.5, -0.1) $) node[below] {$ \smooth L $};
\path[fill] (m) circle[radius=0.05];
\end{tikzpicture}
\end{center}

The picture on the left depicts a single odd intersection $ L → a $. The picture in the middle depicts an even intersection $ L → a $. It is also possible that $ L $ intersects the arc $ a $ twice, depicted on the right. Given that $ \HTw\Gtl Q $ and the wrapped Fukaya category are equivalent, we expect that intersections $ L → a $ provide a natural basis for the hom space $ \Hom_{\HTw\Gtl Q} (L, a) $. We confirmed this in \papertwoB:

\begin{lemma}
Let $ L $ be a zigzag path and $ a $ an arc in $ Q $. Then a natural basis for the hom space $ \Hom_{\HTw\Gtl Q} (L, a) $ is given by the intersections between $ a $ and $ \smooth L $.
\end{lemma}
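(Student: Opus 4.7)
The plan is to compute the cohomology $\H\Hom_{\Tw\Gtl Q}(L, a)$ explicitly and match a basis of it with the transversal intersections between $a$ and $\smooth L$. Recall that $L$ is the twisted complex $(a_1 \oplus a_3 \oplus \dots \oplus a_{2k}, \delta)$ where the nonzero entries of $\delta$ are the zigzag angles $\alpha_j$ between consecutive arcs on the zigzag path. Accordingly, as a graded vector space
\begin{equation*}
\Hom_{\Tw\Gtl Q}(L, a) = \bigoplus_{j=1}^{2k} \Hom_{\Gtl Q}(a_j, a),
\end{equation*}
each summand being spanned by angles at punctures from $a_j$ to $a$ (including the identity if $a_j = a$). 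Since $a$ carries no twisted differential, the twisted differential on $\Hom_{\Tw\Gtl Q}(L,a)$ sends $f \in \Hom_{\Gtl Q}(a_j, a)$ to $\pm\mu^2_{\Gtl Q}(f, \alpha_{j-1}) \pm \mu^2_{\Gtl Q}(f, \alpha_j)$, landing in the neighbouring summands. Concretely, the differential prepends the zigzag angle $\alpha_{j-1}$ or $\alpha_j$ to $f$ when the relevant concatenation is nonzero.

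The next step is to split this chain complex into subcomplexes indexed by the punctures of $Q$: every basis angle $f: a_j \to a$ winds around a unique puncture $p$, and precomposition with the zigzag angles $\alpha_{j-1}, \alpha_j$ keeps us inside the same $p$. For a fixed puncture $p$, the resulting subcomplex is controlled purely by the local combinatorics of the zigzag path near $p$: it consists of angles around $p$ from the various $a_j$ that sit on $L$ at $p$, wired together by the zigzag angles at $p$. I then want to analyse this local complex: every non-identity generator turns out to be paired, under the differential, with another non-identity generator by extending the angle past $\alpha_{j-1}$ or $\alpha_j$ along the zigzag, so these cancel in cohomology. The only generators that can survive are those which cannot be extended either way, and a direct inspection shows these are exactly the identity elements $\mathrm{id}_{a_j}: a_j \to a$ in summands where $a_j = a$ (with parity and sign dictated by which side of $\smooth L$ the arc $a$ exits).

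The main obstacle is the bookkeeping of this cancellation argument around a puncture. One must verify that iterating “prepend $\alpha_{j-1}$” or “prepend $\alpha_j$” eventually produces the identity on some new $a_i$ (thereby exiting the subcomplex cleanly) and does not loop back in a way that would create spurious cohomology, and this uses the geometric consistency of $Q$ (or the special structure of $Q_M$) to ensure that zigzag rays do not self-intersect around a puncture. Once the local cohomology is pinned down, a surviving class at summand $a_j = a$ corresponds geometrically to the point where the segment of $\smooth L$ parallel to $a_j$ crosses $a$ at its midpoint, and this transversal crossing is exactly one intersection point of $\smooth L$ with $a$.

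Finally, I would assemble the local calculations: summing over punctures, the cohomology classes produced are in bijection with the transversal intersections $\smooth L \cap a$, with parity (odd/even) corresponding to the orientation of the crossing as depicted in the three pictures preceding the lemma. This gives the desired basis of $\Hom_{\HTw\Gtl Q}(L, a)$. Invariance of the basis under the minimal model construction is automatic since minimal models preserve cohomology hom spaces. The one extra check is that the basis is canonical up to scalar, which follows because each cohomology class is represented by a unique identity morphism $\mathrm{id}_{a_j}$ on a summand.
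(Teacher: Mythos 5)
This lemma is not actually proved in the present paper: the text immediately before it says \emph{``We confirmed this in [Paper-IIB]''}, so the paper cites the second paper in the series rather than giving an argument. Your proposal therefore cannot be compared against an in-paper proof; it has to stand on its own. The high-level strategy (compute the cohomology of the hom complex $\Hom_{\Tw\Gtl Q}(L,a)$ directly, decompose nontrivial angles by the puncture they wind around, and argue by a pairing) is a sensible route and I believe is close in spirit to what one would do, but as written there is a genuine gap in the cancellation step.

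The problem is the claim that the surviving cohomology classes are ``exactly the identity elements $\id_{a_j}\colon a_j\to a$.'' In the twisted complex presentation of $L$ the arcs are ordered $a_1,a_3,\dots,a_{2k-1},a_2,\dots,a_{2k}$ and the $\delta$-matrix is strictly upper triangular with all of its nonzero entries in the top-right block, i.e.\ every zigzag angle in $\delta$ runs from the first half of the arcs to the second half. Hence the twisted differential $\partial(f)=\pm\mu^2_{\Add}(f,\delta)$ picks up contributions only at those components $\Hom(a_j,a)$ for which there is a $\delta$-entry $\alpha\colon a_i\to a_j$, i.e.\ only at arcs of one of the two index parities. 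If $a_j=a$ lies at that parity, then $\partial(\id_a)=\pm\id_a\circ\alpha_{j-1}\pm\id_a\circ\alpha_j=\pm\alpha_{j-1}\pm\alpha_j$ is a sum of two nonzero zigzag angles sitting in different puncture summands, so it cannot vanish: $\id_a$ at that position is \emph{not} a cocycle. The class corresponding to that intersection (which must exist, and in fact has the opposite $\mathbb{Z}/2\mathbb{Z}$-degree) is represented by something else, and ``direct inspection'' does not pin it down. You also implicitly use the decomposition ``(puncture subcomplexes) $\oplus$ (identities)'' as if both sides were subcomplexes, but the identities are not closed under $\partial$ — they map into the puncture pieces — so the decomposition argument in the form you state it does not give the asserted conclusion.

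Two smaller points. First, the formula $\partial(f)=\pm\mu^2(f,\alpha_{j-1})\pm\mu^2(f,\alpha_j)$ as written only makes sense at arcs of one parity (those at which the $\delta$-entries arrive); at arcs of the other parity the differential vanishes, and you should say so. Second, you should justify that no higher terms $\mu^{k\ge 3}(f,\delta,\dots,\delta)$ contribute to the twisted differential; this is true, and the cleanest reason is that every nonzero entry of $\delta$ runs from one half of the arcs to the other, so no two entries are composable and $\mu^k(f,\delta^{\otimes(k-1)})=0$ already for $k\ge 3$ (equivalently, consecutive zigzag angles lie in polygons of opposite orientation and hence cannot start a disk sequence).
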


Every intersection point $ m: L_1 → a $ comes with a partner $ m^*: L_2 → a $. To see this, let $ L_2 $ be the zigzag path departing from $ a $ on the opposite side of $ L_1 $. Then also $ \smooth L_2 $ intersects $ a $ at its midpoint. When $ m $ is even, its partner $ m^* $ is odd, and vice versa. It is possible that $ L_1 = L_2 $, namely in case the two zigzag paths departing from $ a $ are equal. Apart from the zigzag paths $ L_1 $ and $ L_2 $, there is not a single other zigzag path in $ Q $ that intersects $ a $. Let us review an additional type of disk, depicted in \autoref{fig:zigzag-mirobjects-MDMT}:

\begin{definition}
An \emph{MD disk} (mirror disk) is a CR disk whose
\begin{itemize}
\item inputs $ h_1, …, h_N $ are all odd and do not contain co-identities,
\item output is even and not an identity,
\item zigzag segments all run clockwise,
\end{itemize}
which has undergone the following surgery: The output mark, located at a certain arc $ a $, has been cut off. The odd morphism at $ a $ is added as final input, and the even morphism at $ a $ is indicated as new output.
\end{definition}

The Abouzaid sign $ \Abouzaid(D) $ and the deformation parameter $ \punctures(D) $ of an MD disk are defined in analogy to Abouzaid signs and deformation parameters for SL disks. Given an arc $ a ∈ Q_1 $, there are two zigzag paths departing from $ a $. In particular, there is one single odd basis morphism between these two zigzag paths located at $ a $. With this in mind, we are ready to recall the description of some products of the form $ μ_{\HTw\Gtl_q Q} (m, h_N, …, h_1) $ from \papertwoB:

\begin{lemma}
\label{th:zigzag-mirobjects-products}
Let $ h_1, …, h_N $ be a sequence of $ N ≥ 0 $ odd cohomology basis elements $ h_i: L_i → L_{i+1} $ such that none of them is the co-identity. Let $ a $ be an arc. Let $ m ∈ \Hom_{\HTw\Gtl_q Q} (L_{N+1}, a) $ be an odd intersection. Then we have
\begin{equation*}
μ_{\HTw\Gtl_q Q} (m, h_N, …, h_1) = \sum_{\substack{\text{MD disk } D \\ \text{with inputs } h_1, …, h_N, m}} (-1)^{\Abouzaid(D)} \punctures(D) m^*.
\end{equation*}
Let $ m ∈ \Hom_{\HTw\Gtl_q Q} (L_{N+1}, a) $ be an even intersection. Then the product $ μ(m, h_N, …, h_1) $ vanishes, except if $ N = 1 $ and $ h_1 $ is the odd intersection at $ a $ between the two zigzag paths departing from $ a $. In this case, we have
\begin{equation*}
μ_{\HTw\Gtl_q Q} (m, h_1) = - m^*.
\end{equation*}
\end{lemma}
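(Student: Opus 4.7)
The plan is to compute these products by extending the disk enumeration of \papertwoB\ to allow one additional input of type $L \to a$, where $a$ is an arc rather than a zigzag path. Since the arc $a$ is itself a specific twisted complex in $\Tw\Gtl_q Q$, the product $\mu(m, h_N, \ldots, h_1)$ can be computed by the deformed Kadeishvili construction on the same minimal model $\HTw\Gtl_q Q$. What changes is only the last tensor factor: instead of contracting with a zigzag-to-zigzag morphism, we contract with an intersection between a zigzag curve and the arc $a$. I would therefore revisit the classification of Kadeishvili result components from \papertwoB\ and insert the arc $a$ in the position of the output boundary segment of an SL disk. The contributing disks will then be exactly those where the result component ends not on a zigzag curve but on $a$, with the cohomology basis element $m$ sitting at the terminal corner.

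For the odd case, I would argue that a Kadeishvili tree contributes nonzero only if its associated immersed disk sweeps an SL-type region whose boundary alternates through $\smooth L_1, \ldots, \smooth L_{N+1}$ and ends on $a$. The technical core is to show that only CR-like configurations survive the deformed codifferential $h_q$ and projection $\pi_q$: since the inputs $h_i$ are odd and not co-identities, all degeneracies (ID, DS, DW) involving an arc endpoint at $a$ must be excluded on the same grounds as in \papertwoB. The remaining disks are precisely the CR disks whose output mark is excised at $a$, with the odd intersection $m$ reattached as a boundary input and the partner $m^*$ indicated as new output; this is exactly the MD disk definition. Counting these disks with the Abouzaid sign $(-1)^{\Abouzaid(D)}$ and deformation weight $\punctures(D)$ reproduces the claimed formula.

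For the even case, the restricted structure of the intersection forces a very narrow set of trees to contribute. The key observation is that an even intersection $m: L_{N+1} \to a$ already saturates the degree budget, so no product with $N = 0$ or $N \geq 2$ can yield a nonzero odd morphism at $a$ unless it factors through identities (excluded by unitality) or through degenerate configurations that we have ruled out. The remaining case is $N = 1$, where the only disk that can appear is an infinitesimally thin digon between $\smooth L_1$, $\smooth L_2$, and the arc $a$, pinned by the single odd intersection $h_1$ at $a$. This digon contributes a single term $-m^*$, and the sign $-1$ is obtained by tracking the $\#$-signs of the two half-turns plus the parity correction for the counterclockwise target curve in the Abouzaid convention.

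The main obstacle is the sign bookkeeping for MD disks. Because an MD disk is obtained from an underlying CR disk by a surgery that reinterprets the output as an input at the arc $a$, the Abouzaid sign must be cross-checked carefully: the $\#$-signs along the boundary, the parity of the reattached $m$, and the orientation of $\smooth L_1$ versus $\smooth L_{N+1}$ all contribute, and one needs to verify that the surgery preserves the sign formula rather than introducing an extra flip. Once the CR-to-MD sign correspondence is pinned down — this is where the convention $\#\alpha_0$ on the co-identity in \autoref{conv:zigzag-category-convention} enters decisively — the remainder of the argument is a direct translation of the disk enumeration from \papertwoB.
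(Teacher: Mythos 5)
This lemma is stated in the paper as a recollection from the companion paper \papertwoB; the present paper does not reproduce a proof but only a remark about corner cases for $Q_M$ with $M$ odd. There is therefore no in-text proof to compare your sketch against, and what you have written must be judged on its own as a plausible account of what a proof in \papertwoB\ would have to do.

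Your odd-intersection case is broadly consistent with the methodology the paper attributes to \papertwoB: extend the Kadeishvili-tree analysis to allow the final slot to be an intersection $L\to a$ rather than $L\to L'$, classify the surviving result components, and observe that the survivors are exactly CR disks whose output mark at $a$ has been excised and reattached as the final input, i.e.~MD disks. The sign cross-check you flag — that the surgery from CR output to MD input at $a$ preserves the Abouzaid sign under \autoref{conv:zigzag-category-convention} — is indeed the delicate point. But you assert, without argument, that \emph{all} degeneracies of ID, DS and DW type are excluded "on the same grounds as in \papertwoB." Those disk types are defined there for zigzag-to-zigzag outputs; you cannot simply reuse the exclusion, because the final hom space here is $\Hom(L_{N+1}, a)$ and the codifferential $h_q$ acts differently at the arc end. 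The remark in the paper that for $Q = Q_M$ with $M$ odd the differential $\mu^1(m)$ is computed by digons on the clockwise face, and that "by definition of MD disks, this corner case is included," shows the paper is aware of degenerate disks that do survive in the arc-boundary setting. Your sketch does not account for them; at minimum it needs to explain why MD-degenerate digons are absorbed into the stated formula rather than requiring a separate ID/DS/DW analogue.

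The even-intersection case has a genuine gap. You argue that "an even intersection $m$ already saturates the degree budget, so no product with $N = 0$ or $N \ge 2$ can yield a nonzero odd morphism at $a$." This degree argument does not work here: $\Gtl_q Q$ is only $\mathbb{Z}/2\mathbb{Z}$-graded, so $\mu^{N+1}(m, h_N, \ldots, h_1)$ for even $m$ and odd $h_i$ lands in odd degree for \emph{every} $N$, since $|m| + \sum|h_i| + (N+1) \equiv 0 + N + (N+1) \equiv 1 \pmod 2$. Degree counting alone places no restriction on $N$. The vanishing for $N \ne 1$ is a geometric fact — there is simply no admissible immersed disk with an even corner at $a$ and more than one zigzag-to-zigzag corner — and must be established by direct analysis of the Kadeishvili trees or of the moduli of such disks, not by a degree argument. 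You would need to replace this paragraph entirely.

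Secondary issues: the thin-digon contribution you describe for $N = 1$, even $m$, with sign $-1$, is the right shape of answer, but you should verify it against the actual definition of $\mu^2$ on the twisted complexes (it is essentially a $\mu^2$ with the $\delta$-entries of the arc object $a$, not a Kadeishvili-tree count), and check that the $(-1)^{|\gamma|}$-type sign in $\mu^2(\alpha,\beta) = (-1)^{|\beta|}\alpha\beta$ is what produces the $-1$, rather than an Abouzaid computation.
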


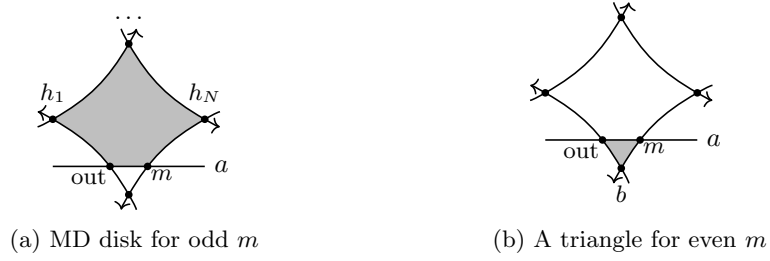
\begin{figure}
\centering
\begin{subfigure}{0.4\linewidth}
\centering
\begin{tikzpicture}
\path (0, 0) coordinate (A) (-1, 1) coordinate (B) (0, 2) coordinate (C) (1, 1) coordinate (D);
\path[draw, semithick] (A) to[out=300, in=100, looseness=0.5] ++(0.1, -0.2);
\path[draw, semithick, ->] (A) to[out=240, in=80, looseness=0.5] ++(-0.1, -0.2);
\path[draw, semithick, ->] (B) to[out=150, in=350, looseness=0.5] ++(-0.2, 0.1);
\path[draw, semithick] (B) to[out=210, in=10, looseness=0.5] ++(-0.2, -0.1);
\path[draw, semithick, ->] (C) to[out=60, in=260, looseness=0.5] ++(0.1, 0.2);
\path[draw, semithick] (C) to[out=120, in=280, looseness=0.5] ++(-0.1, 0.2);
\path[draw, semithick, ->] (D) to[out=330, in=170, looseness=0.5] ++(0.2, -0.1);
\path[draw, semithick] (D) to[out=30, in=190, looseness=0.5] ++(0.2, 0.1);
\path (A) to[out=120, in=330] coordinate[pos=0.3] (out) (B) to[out=30, in=240] (C) to[out=300, in=150] (D) to[out=210, in=60] coordinate[pos=0.7] (m) cycle;
\begin{scope}
\clip (out) -- (B) -- (C) -- (D) -- (m) -- cycle;
\path[fill=gray!50] (A) to[out=120, in=330] (B) to[out=30, in=240] (C) to[out=300, in=150] (D) to[out=210, in=60] cycle;
\end{scope}
\path[draw, semithick] (A) to[out=120, in=330] node[midway, above] {$ \phantom{L'} $} (B) to[out=30, in=240] (C) to[out=300, in=150] (D) to[out=210, in=60] node[midway, above] {$ \phantom{L} $} cycle;
\path[draw, semithick] ($ (out)!0.5!(m) $) ++(left:1) -- ++(right:2) node[pos=1, right] {$ a $};
\path[fill] \foreach \i in {A, B, C, D, out, m} {(\i) circle[radius=0.05]};
\path (out) node[below left, inner sep=0.1em] {\small out};
\path (m) node[below right, inner sep=0.1em] {\small $ m $};
\path (B) node[above, shift={(up:0.1)}] {\small $ h_1 $};
\path (C) node[above, shift={(up:0.2)}] {\small $ … $};
\path (D) node[above, shift={(up:0.1)}] {\small $ h_N $};
\end{tikzpicture}
\caption{MD disk for odd $ m $}
\end{subfigure}
\begin{subfigure}{0.4\linewidth}
\centering
\begin{tikzpicture}
\path (0, 0) coordinate (A) (-1, 1) coordinate (B) (0, 2) coordinate (C) (1, 1) coordinate (D);
\path[draw, semithick] (A) to[out=300, in=100, looseness=0.5] ++(0.1, -0.2);
\path[draw, semithick, ->] (A) to[out=240, in=80, looseness=0.5] ++(-0.1, -0.2);
\path[draw, semithick, ->] (B) to[out=150, in=350, looseness=0.5] ++(-0.2, 0.1);
\path[draw, semithick] (B) to[out=210, in=10, looseness=0.5] ++(-0.2, -0.1);
\path[draw, semithick, ->] (C) to[out=60, in=260, looseness=0.5] ++(0.1, 0.2);
\path[draw, semithick] (C) to[out=120, in=280, looseness=0.5] ++(-0.1, 0.2);
\path[draw, semithick, ->] (D) to[out=330, in=170, looseness=0.5] ++(0.2, -0.1);
\path[draw, semithick] (D) to[out=30, in=190, looseness=0.5] ++(0.2, 0.1);
\path (A) to[out=120, in=330] coordinate[pos=0.3] (out) (B) to[out=30, in=240] (C) to[out=300, in=150] (D) to[out=210, in=60] coordinate[pos=0.7] (m) cycle;
\begin{scope}
\clip (out) -- (m) -- (A) -- cycle;
\path[fill=gray!50] (A) to[out=120, in=330] (B) to[out=30, in=240] (C) to[out=300, in=150] (D) to[out=210, in=60] cycle;
\end{scope}
\path[draw, semithick] (A) to[out=120, in=330] node[midway, above] {$ \phantom{L'} $} (B) to[out=30, in=240] (C) to[out=300, in=150] (D) to[out=210, in=60] node[midway, above] {$ \phantom{L} $} cycle;
\path[draw, semithick] ($ (out)!0.5!(m) $) ++(left:1) -- ++(right:2) node[pos=1, right] {$ a $};
\path[fill] \foreach \i in {A, B, C, D, out, m} {(\i) circle[radius=0.05]};
\path (out) node[below left, inner sep=0.1em] {\small out};
\path (m) node[below right, inner sep=0.1em] {\small $ m $};
\path (A) node[below, shift={(down:0.1)}] {\small $ b $};
\end{tikzpicture}
\caption{A triangle for even $ m $}
\end{subfigure}
\caption{Disks contributing to products $ μ(m, b, …, b) $}
\label{fig:zigzag-mirobjects-MDMT}
\end{figure}

\begin{remark}
In case $ Q $ is geometrically consistent, the differential $ μ^1_{\HTw\Gtl_q Q} (m) $ vanishes. This is also the case if $ Q = Q_M $ for even $ M $. In case of $ Q = Q_M $ with $ M $ odd, the differential is instead given by counting digons which lie on the clockwise face of $ Q_M $. By definition of MD disks, this, corner case is included in \autoref{th:zigzag-mirobjects-products}.
\end{remark}

\section{Flatness of superpotential deformations}
\label{sec:flatness}
In this section, we show that a superpotential deformation of a CY3 Jacobi algebra is flat under assumption of a certain boundedness condition:

\begin{center}
\begin{tikzpicture}
\path (0, 0) node[align=center] (A) {\textbf{Jacobi algebra} \\ $ \Jac(Q, W) = \frac{ℂQ}{(∂_a W)} $} (10, 0) node[align=center] (B) {\textbf{Deformed Jacobi algebra} \\ $ \Jac(Q, W_q) = \frac{B \htensor ℂQ}{(∂_a W_q)} $};
\path[draw, ->] ($ (A.east)!0.1!(B.west) $) -- ($ (A.east)!0.9!(B.west) $) node[midway, above] {boundedness condition};
\end{tikzpicture}
\end{center}

Deformation theory for algebras has historically followed the question when a deformation of an ideal gives rise to a deformation of the algebra. The core indicators of this development are the type of algebra studied, the degree $ |R| $ of the relations in the algebra, and the degrees $ |R'| $ admitted for the deformation of the relations. Past development shows a continuous improvement on these two degrees:

\begin{center}
\begin{tabular}{cccccc}
Year & Authors & Type of algebra & Deformation & $ |R| $ & $ |R'| $ \\\hline
1994 & Braverman and Gaitsgory \cite{Braverman-Gaitsgory} & Koszul & PBW & 2 & 2 \\
2001 & Berger \cite{Berger} & $ N $-Koszul & — & $ N $ & — \\
2006 & Berger and Ginzburg \cite{Berger-Ginzburg} & $ N $-Koszul & PBW & $ N $ & $ < N $ \\
2006 & Berger and Taillefer \cite{Berger-Taillefer} & CY3 & PBW & $ N $ & $ < N $ \\
2023 & \emph{this paper} & CY3 & formal & bounded & any
\end{tabular}
\end{center}

In the present section, we examine this “flatness question” for the case of formal deformations of CY3 Jacobi algebras. We build on the work of Ginzburg-Berger, which shows that CY3-ness and homogeneity of the superpotential $ W $ with respect to a positive integer grading on $ Q $ give rise to an inductive argument that ensures flatness. The proof can however be continued without homogeneity requirement and we land in the completed path algebra $ \compl{ℂQ} $. In this section, we construct a boundedness condition which allows us to land back in $ ℂQ $ by means of a posteriori estimates. The result is a flatness theorem that goes beyond the work of Berger and Ginzburg. Finally, we show that Jacobi algebras of most dimers satisfy our boundedness condition.

The general context of our work is the theory of deformations of associative algebras. The main question can be sketched as follows: Let $ A $ be an algebra and $ I ⊂ A $ an ideal, then we have the quotient algebra $ A / I $. Let now $ B $ be a deformation base, for instance $ B = ℂ⟦q⟧ $. Let now $ I_q ⊂ B \htensor A $ be an ideal, for instance $ I_q ⊂ A⟦q⟧ $. We can then form the quotient algebra $ (B \htensor A) / I_q $, for instance $ A⟦q⟧ / I_q $. The main question is:

\begin{center}
How should $ I $ and $ I_q $ be related so that $ (B \htensor A) / I_q $ is a deformation of $ A / I $?
\end{center}

Our results are presented in \autoref{sec:flatness-flatness} and can be summarized as follows: Let $ A = \Jac(Q, W) $ be a CY3 algebra given by a quiver $ Q $ with superpotential $ W $. The relations of this algebra are the derivatives $ ∂_a W $ of the superpotential. Let $ W' ∈ \mathfrak{m} ℂQ $ be a deformation of the superpotential, that is, any additional cyclic terms lie in order $ \mathfrak{m} $. Then we can consider the deformed relations $ ∂_a (W + W') ∈ B \htensor ℂQ $. We show that if the relations $ ∂_a W $ satisfy a certain boundedness condition, then $ \Jac (Q, W + W') $ is automatically a deformation of $ \Jac(Q, W) $.

The boundedness condition for the relations $ ∂_a W $ is best described as follows: Each relation $ ∂_a W $ is a linear combination of paths in $ Q $. We define an equivalence relation on the set of paths in $ Q $ as follows:
\begin{itemize}
\item If $ p $ and $ q $ are paths appearing in the same relation $ ∂_a W $, then $ p \sim q $.
\item If $ p \sim q $, then $ apb \sim aqb $ for arbitrary paths $ a, b $.
\item Take the transitive hull.
\end{itemize}
This equivalence relation partitions the set of paths in $ Q $ into equivalence classes. Our boundedness condition demands that the path length in every equivalence class is bounded. In other words, we demand that path length cannot increase to infinity if we apply (parts of) relations.

We have structured this section as follows: In \autoref{sec:flatness-whatis}, we provide some intuition and terminology regarding flatness. In \autoref{sec:flatness-BGstandard}, we recollect the tools of Berger and Ginzburg \cite{Berger-Ginzburg}. In \autoref{sec:flatness-setup}, we set up notation and formulate a stronger versions of the tools of Berger and Ginzburg. In \autoref{sec:flatness-bounded}, we introduce our boundedness argument and demonstrate its strength, most of which can be read independent of the deformations context. In \autoref{sec:flatness-ideals}, we introduce more notation for ideal-like sets which allows us to apply the boundedness argument to the flatness question. In \autoref{sec:flatness-BGbounded}, we prove a bounded version of the tools of Berger and Ginzburg. In \autoref{sec:flatness-flatcompleted} we derive our first flatness result which concerns quasi-flatness of a certain auxiliary ideal in $ B \htensor \compl{ℂQ} $. In \autoref{sec:flatness-flatalgebraic}, we prove our second flatness result by tracing our way back to the non-completed path algebra. In \autoref{sec:flatness-closedness}, we provide an a posteriori interpretation of the auxiliary ideal, which promotes our first flatness result to a flatness result for completed ideals in $ \compl{ℂQ} $. In \autoref{sec:flatness-dimers}, we provide criteria for the Jacobi algebra of a dimer to satisfy our boundedness condition. In \autoref{sec:flatness-flatness}, we streamline our flatness theorems.

\subsection{Flatness and quasi-flatness}
\label{sec:flatness-whatis}
In this section, we recapitulate flatness of algebra deformations and quasi-flatness of ideal deformations. The core connection is that a quasi-flat ideal deformation typically makes the quotient algebra a flat algebra deformation:

\begin{center}
\begin{tikzpicture}
\path (0, 0) node[align=center] (A) {\textbf{Quasi-flat ideal deformation} \\ $ I_q ⊂ B \htensor A $ of $ I ⊂ A $} (8, 0) node[align=center] (B) {\textbf{Flat algebra deformation} \\ $ (B \htensor A) / I_q $ of $ A/I $};
\path[draw, ->] ($ (A.east)!0.2!(B.west) $) -- ($ (A.east)!0.8!(B.west) $);
\end{tikzpicture}
\end{center}

It is our aim to study deformations of ideals. We deploy notation and terminology from \autoref{sec:prelim-submodules}, in particular the distinct notation $ \mathfrak{m}^k Y \neq \mathfrak{m}^k · Y $. Let $ A $ be an algebra and $ I ⊂ A $ an ideal. Regard a deformation base $ B $ and let $ I_q ⊂ B \htensor A $ be an ideal. The question is in which cases $ (B \htensor A) / I_q $ can be identified with $ B \htensor (A/I) $. A first observation is that $ I_q $ needs to be close enough to $ I $. More precisely, we should have $ π(I_q) = I $, or in other words $ I_q + \mathfrak{m} A = I + \mathfrak{m} A $. This condition is not strong enough however. The basic problem is that $ I_q $ may be too large in higher orders of $ \mathfrak{m} $. Elements of $ A $ may get unexpectedly annihilated in the quotient $ (B \htensor A) / I_q $ when multiplied by elements of $ \mathfrak{m} $. The further condition that $ I_q $ therefore needs to satisfy is the quasi-flatness property $ I_q ∩ \mathfrak{m} A ⊂ \mathfrak{m} I_q $.

We start by fixing terminology for deformations of algebras and ideals as follows:

\begin{definition}
\label{def:flatness-whatis-Adefo}
Let $ A $ be an algebra and $ B $ a deformation base. Then a $ B $-algebra $ A_q $ is a \emph{deformation} of $ A $ if there is a $ B $-linear algebra isomorphism $ φ: A_q \isoto (B \htensor A, μ_q) $ where $ μ_q $ is a deformation of the product $ μ: A ¤ A → A $.
\end{definition}

\begin{definition}
\label{def:flatness-whatis-Idefo}
Let $ A $ be an algebra and $ I ⊂ A $ an ideal. Let $ B $ be a deformation base and $ I_q ⊂ B \htensor A $ an ideal. Then $ I_q $ is a \emph{deformation} of $ I $ if $ I + \mathfrak{m} A = I_q + \mathfrak{m} A $.
\end{definition}

\begin{remark}
Our terminology is slightly confusing: We have decided to call $ I_q $ a deformation of $ I $ already if it is loosely related to $ I $. In contrast, our notion for deformations of algebras is very strict.
\end{remark}

Recall from \autoref{sec:prelim-submodules} that $ I_q $ is \emph{quasi-flat} if $ I_q ∩ \mathfrak{m} A ⊂ \mathfrak{m} I_q $. The ideal $ I_q $ is \emph{pseudoclosed} if $ B I_q ⊂ I_q $. In the following, we present two sample deformations of the ideal $ I = (X) ⊂ A = ℂ[X] $ over $ B = ℂ\llbracket q \rrbracket $. One of the deformations is quasi-flat and the other is not:

\begin{center}
\begin{tabular}{ccc}
Object & Quasi-flat & Not quasi-flat \\\hline
Algebra $ A $ & $ ℂ[X] $ & $ ℂ[X] $ \\
Ideal $ I $ & $ (X) $ & $ (X) $ \\
Quotient $ A/I $ & $ ℂ $ & $ ℂ $ \\
Deformed ideal $ I_q $ & $ (X+q) $ & $ (X) + (q) $ \\
$ I_q ∩ qA $ & $ q I_q $ & $ q A $ \\
Quotient $ A⟦q⟧/I_q $ & $ \frac{ℂ[X]⟦q⟧}{(X+q)} = ℂ⟦q⟧ $ & $ \frac{ℂ[X]⟦q⟧}{(X) + (q)} = ℂ $ \\
\end{tabular}
\end{center}

Not every deformation of an algebra $ A/I $ can be described through a deformation of its ideal $ I $. Conversely, not every deformation $ I $ gives rise to a deformation of $ A/I $. We recall here a classification of those deformations $ I_q $ that give rise to deformations of $ A/I $:

\begin{proposition}
\label{th:flatness-whatis-equivalence}
Let $ A $ be an algebra and $ I ⊂ A $ an ideal. Let $ B $ be a deformation basis and $ I_q ⊂ B \htensor A $ a deformation of $ I $. Put $ A_q = (B \htensor A) / I_q $. Then the following are equivalent:
\begin{itemize}
\item $ I_q $ is quasi-flat and pseudoclosed.
\item $ A_q $ is a deformation of $ A/I $.
\end{itemize}
In this case, the $ \mathfrak{m} $-adic topology on $ A_q $ agrees with the quotient topology and any $ B $-linear isomorphism $ A_q \isoto B \htensor (A/I) $ is automatically continuous.
\end{proposition}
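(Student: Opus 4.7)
The plan is to funnel both implications through the direct sum criterion \autoref{th:prelim-submodules-criterion}: realizing $A_q$ as a deformation of $A/I$ is equivalent to choosing a $B$-linear section of the quotient $p: B \htensor A \to A_q$, and such a section exactly produces a $B$-module decomposition $B \htensor A = I_q \oplus M$. An identification I will use throughout is $\pi(I_q) = I$, which follows from the deformation-of-ideals condition $I + \mathfrak{m}A = I_q + \mathfrak{m}A$ of \autoref{def:flatness-whatis-Idefo}: both inclusions hold, the nontrivial direction by lifting any $y \in I \subset I_q + \mathfrak{m}A$ to an element of $I_q$.

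For the forward direction $(1) \Rightarrow (2)$, I pick a linear section $s: A/I \to A$ and let $V = s(A/I)$, so that $A = V \oplus I$. The two submodules $B \htensor V, I_q \subset B \htensor A$ are each pseudoclosed and quasi-flat ($B \htensor V$ trivially and $I_q$ by hypothesis), and their projections satisfy $\pi(B \htensor V) \oplus \pi(I_q) = V \oplus I = A$. The criterion \autoref{th:prelim-submodules-criterion} then gives the $B$-module decomposition $B \htensor A = (B \htensor V) \oplus I_q$, which descends to a $B$-linear isomorphism $A_q \cong B \htensor V \cong B \htensor (A/I)$. Transporting the product on $A_q$ through this isomorphism produces a $B$-bilinear deformed product on $B \htensor (A/I)$ whose leading term is the original product, witnessing that $A_q$ is a deformation of $A/I$.

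For the backward direction $(2) \Rightarrow (1)$, I again choose a linear section $s: A/I \to A$ and extend $B$-linearly to $\bar{s}: B \htensor (A/I) \to B \htensor A$. The composition $p \circ \bar{s}: B \htensor (A/I) \to A_q$ is $B$-linear and its leading term is the identity on $A/I$ (under the canonical identification $A_q/\mathfrak{m}A_q = A/\pi(I_q) = A/I$). Since $A_q$ is $\mathfrak{m}$-adically free by hypothesis (2), a minor extension of \autoref{rem:prelim-ainfty-leadingterm} to $B$-linear maps between $\mathfrak{m}$-adically free modules shows that $p \circ \bar{s}$ is a $B$-linear isomorphism. Then $\sigma = \bar{s} \circ (p \circ \bar{s})^{-1}: A_q \to B \htensor A$ is a $B$-linear section of $p$, and the splitting $y = (y - \sigma(p(y))) + \sigma(p(y))$ exhibits $B \htensor A = I_q \oplus \sigma(A_q)$ as a $B$-module direct sum. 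Applying \autoref{th:prelim-submodules-criterion} in its reverse direction forces both summands, in particular $I_q$, to be pseudoclosed and quasi-flat.

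The topology statements fall out from the general theory of $\mathfrak{m}$-adically free modules. Any $B$-linear isomorphism $A_q \isoto B \htensor (A/I)$ is automatically a homeomorphism by \autoref{th:prelim-freemodules-presentingcontinuity}, and $I_q$ is closed as the kernel of the continuous $B$-linear map $p$, forcing the quotient topology on $A_q$ to coincide with its $\mathfrak{m}$-adic topology. The main delicate point is the backward direction: although the isomorphism $\varphi: A_q \cong B \htensor (A/I)$ supplied by the definition need not satisfy $p \circ \bar{s} \circ \varphi = \id_{A_q}$, one obtains a canonical section $\sigma$ of $p$ using only the freeness of $A_q$ and a vector space section of $A \to A/I$, with no need to invoke $\varphi$ directly.
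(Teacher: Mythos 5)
Your proof is correct and follows essentially the same strategy as the paper: both directions funnel through the direct-sum criterion \autoref{th:prelim-submodules-criterion} by producing a $B$-linear section of the quotient map. The only real variation is in the backward direction, where you construct the section from $\mathfrak m$-adic freeness of $A_q$ and a vector-space section $s\colon A/I\to A$ plus a leading-term argument, whereas the paper takes a section of the composite $\varphi\pi$ directly and verifies it via continuity; both are valid and amount to the same thing.
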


\begin{proof}
We divide the proof into three parts: We first show that $ I_q $ being quasi-flat and pseudoclosed implies that $ A_q $ is a deformation. Second we show the converse, and third we draw the topological conclusions.

For the first part, assume $ I_q $ is quasi-flat and pseudoclosed. It is our task to find an isomorphism of $ B $-algebras $ φ: (B \htensor A) / I_q \isoto (B \htensor (A/I), μ_q) $, where $ μ_q $ is a deformation of the algebra structure of $ A/I $. Pick a complement $ V ⊂ A $ of $ I $ such that $ A = I ⊕ V $. Both $ I_q ⊂ B \htensor A $ and $ BV ⊂ B \htensor A $ are quasi-flat and pseudoclosed. By \autoref{th:prelim-submodules-criterion} we conclude that $ B \htensor A = I_q ⊕ BV $. We obtain a $ B $-linear isomorphism
\begin{equation*}
φ: \frac{B \htensor A}{I_q} = \frac{I_q ⊕ BV}{I_q} \isoto BV \isoto \frac{BI ⊕ BV}{BI} \isoto B \htensor \frac{A}{I}.
\end{equation*}
This already provides $ φ $ as $ B $-linear map. It remains to check that the algebra structure $ μ_q $ induced on $ B \htensor (A/I) $ from $ (B \htensor A)/I_q $ via $ φ $ is a deformation of the natural algebra structure $ μ $ of $ A/I $. Pick $ a, b ∈ V ⊂ A $. Write $ ab = x + m + v $ with $ x+m ∈ I_q $, $ x ∈ I $, $ m ∈ \mathfrak{m} A $, $ v ∈ BV $. Projecting $ ab $ to $ BV $ along $ I_q ⊕ BV $ gives $ v $. Projecting $ ab $ to $ BV $ along $ BI ⊕ BV $ instead gives $ v + \landau(\mathfrak{m}) $. This shows that $ μ_q $ is a deformation of $ μ $.

For the second part, assume there is an isomorphism of $ B $-modules $ φ: (B \htensor A) / I_q → B \htensor (A/I) $. Regard the $ B $-linear projection map $ π: B \htensor A → (B \htensor A) / I_q $. The composition $ φπ: B \htensor A → B \htensor (A/I) $ is $ B $-linear and surjective. Pick a linear section $ A/I → B \htensor A $ of $ φπ $ and extend to a $ B $-linear and continuous map $ ψ: B \htensor (A/I) → B \htensor A $ following \autoref{rem:prelim-ainfty-continuityautomatic}. Thanks to continuity, $ ψ $ is a section of $ φπ $ in the sense that $ φπψ = \id $. In other words, we have the $ B $-linear map $ ψφ: (B \htensor A) / I_q → B \htensor A $ with $ π(ψφ) = \id $. This shows that the projection $ π $ has a $ B $-linear section, hence $ I_q $ is a direct summand of $ B \htensor A $. By \autoref{th:prelim-submodules-criterion}, we conclude that $ I_q $ is quasi-flat and pseudoclosed.

For the third part, let $ φ: A_q → B \htensor (A/I) $ be any $ B $-linear isomorphism. By \autoref{th:prelim-freemodules-presentingcontinuity}, this map is automatically continuous when $ A_q $ is equipped with the $ \mathfrak{m} $-adic topology. By \autoref{rem:prelim-ainfty-continuityautomatic}, the composition $ B \htensor A → A_q \isoto B \htensor (A/I) $ is automatically continuous as well, and the universal property of the quotient topology renders $ φ $ continuous when $ A_q $ is equipped with the quotient topology. This settles all claims.
\end{proof}

\begin{remark}
When $ A $ is unital, then the $ B $-algebra isomorphism $ φ: (B \htensor A) / I_q → (B \htensor A, μ_q) $ can be chosen to preserve the unit. Indeed, one simply chooses the complement $ V $ to contain the unit.
\end{remark}

\begin{remark}
\label{rem:flatness-whatis-semisimple}
The statement of \autoref{th:flatness-whatis-equivalence} still holds true when $ A $ is an algebra over a semisimple algebra $ Λ $, for instance $ Λ = ℂQ_0 $ with $ Q $ a quiver. The ideals $ I $ and $ I_q $ are automatically a $ Λ $-submodule as well, and the complement $ V ⊂ A $ can be chosen as a $ Λ $-submodule due to semisimplicity. The $ B $-algebra isomorphism $ φ: (B \htensor A) / I_q → (B \htensor (A/I), μ_q) $ then becomes $ Λ $-linear.
\end{remark}

Let us illustrate flatness in the case of deformations of a superpotential $ W $ on a quiver $ Q $. Our typical starting point is a cyclic deformation $ W_q ∈ B \htensor ℂQ $ of $ W $. This deformation gives rise to deformed relations $ ∂_a W_q $ for $ a ∈ Q_1 $. The question we try to settle is when the ideal $ (∂_a W_q) $ or its closure in $ B \htensor ℂQ $ is quasi-flat. On an intuitive level this means the following: We start from any element $ x ∈ ℂQ $ and continue adding up deformed relations $ ∂_a W_q $:
\begin{equation*}
x + p_1 ∂_{a_1} W_q q_1 + p_2 ∂_{a_2} W_q q_2 + …
\end{equation*}
Here $ p_i, q_i $ are arbitrary paths in $ Q $. If we arrive at some point at an element of the form $ x + \landau(\mathfrak{m}) $, does the infinitesimal part necessarily vanish or have we incurred additional terms in higher order? If the infinitesimal part always vanishes, then the deformed ideal $ (∂_a W_q) $ contains no unexpected new relations. If the infinitesimal part consists of higher-order multiples of relations, then the deformed ideal $ (∂_a W_q) $ still contains no new relations. If the infinitesimal part consists of strictly more than $ \mathfrak{m} (∂_a W_q) $, then the deformed ideal $ (∂_a W_q) $ contains new relations in higher order which cannot be made by combining existing relations in lower order.

Relations in higher order which cannot be made from combining existing relations in lower order indicate an ideal that is not quasi-flat. More precisely, these relations lie in $ ((∂_a W_q) ∩ \mathfrak{m} ℂQ) \setminus \mathfrak{m} (∂_a W_q) $. They kill more than expected in the higher-order part of the quotient $ (B \htensor ℂQ) / (∂_a W_q) $ and prevent the quotient from being isomorphic to $ B \htensor ℂQ / (∂_a W) $.

\subsection{Berger-Ginzburg inclusion}
\label{sec:flatness-BGstandard}
In this section we recall superpotentials, the CY3 property and the Berger-Ginzburg inclusion. More precisely, we start by recalling superpotentials and their associated Jacobi algebras. In terms of a bimodule resolution, we recall what it means for an algebra to have the CY3 property. Jacobi algebras are sometimes CY3. If so, their superpotentials satisfies a certain condition which appeared in \cite{Berger-Ginzburg}. The reformulation of the CY3 property in terms of this condition is essential for us, and we shall refer to this condition as the \emph{Berger-Ginzburg inclusion}.

\begin{remark}
This section is meant as a motivational section. The reader who wishes to take the origin of the Berger-Ginzburg inclusion for granted is advised to skip to \autoref{sec:flatness-setup}. For instance, the original Berger-Ginzburg inclusion presented here is in fact not sufficient for our purposes, so we will build a stronger Berger-Ginzburg inclusion from scratch in \autoref{sec:flatness-setup}.
\end{remark}

To start with, let $ Q $ be a quiver. Recall from \autoref{sec:koszul-cy3} that a superpotential on $ Q $ is a cyclic element of $ ℂQ_{≥2} $. Recall also that we may denote by $ W $ the $ ℂQ_0 $-bimodule generated by $ W $ in $ ℂQ $:
\begin{equation*}
W = ℂ Q_0 W Q_0 = \bigoplus_{v ∈ Q_0} ℂvWv ⊂ ℂQ.
\end{equation*}
Recall that we denote the relations space by
\begin{equation*}
R = \vspan\{∂_a W \running a ∈ Q_1\}.
\end{equation*}

\begin{remark}
By abuse of notation, we have used the same notation for $ W ∈ ℂQ_{≥2} $ as for its associated bimodule in the above definition. We shall stick to this notation. The distinction should always be clear. In \autoref{sec:flatness-setup}, the terminology will be changed and improved.
\end{remark}

Recall that the Jacobi algebra associated with $ (Q, W) $ is the algebra $ A = \Jac(Q, W) = ℂQ / (∂_a W) $. Let us temporarily keep the shorthand $ A $ for the Jacobi algebra. Recall from \autoref{sec:koszul-cy3} the candidate resolution of $ A $ as $ A $-bimodule:
\begin{equation}
\label{eq:flatness-resolution}
0 → A \tensor_{ℂQ_0} W \tensor_{ℂQ_0} A \overset{g_1}{→} A \tensor_{ℂQ_0} R \tensor_{ℂQ_0} A \overset{g_2}{→} A \tensor_{ℂQ_0} ℂQ_1 \tensor_{ℂQ_0} A \overset{g_3}{→} A \tensor_{ℂQ_0} A → A → 0.
\end{equation}
We have described this sequence in \autoref{rem:koszul-cy3-maps}. As we recall in \autoref{sec:koszul-cy3}, the Jacobi algebra $ A = \Jac(Q, W) $ is CY3 if and only if the sequence is exact:

\begin{theorem}[{\cite[Theorem 4.3]{Bocklandt-CY}}]
Let $ Q $ be a quiver and $ W ∈ ℂQ_{≥3} $ a superpotential. Then the algebra $ \Jac(Q, W) $ is CY3 if and only if the sequence \eqref{eq:flatness-resolution} is exact.
\end{theorem}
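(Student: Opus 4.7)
The plan is to establish the equivalence by exploiting the self-duality of the candidate sequence $P^\bullet$ under the functor $\Hom_{A^{\env}}(-, A^{\env})$, after which the CY3 condition in the form $\HH^k(A, A^{\env}) \cong A[3]$ becomes equivalent to exactness of $P^\bullet$.

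First, I would check that $P^\bullet$ is always a chain complex, regardless of whether $A$ is CY3. The identity $g_3 \circ g_2 = 0$ follows directly from the fact that $\partial_a W$ becomes zero in $A$ for each arrow $a$. The identity $g_2 \circ g_1 = 0$ is the key calculation and relies on the cyclicity of the superpotential: writing $w \in W$ as $\sum_i r_i a_i = \sum_j b_j r_j$, the contributions from the two decompositions cancel telescopically thanks to the symmetric structure of cyclic derivatives. Finally, the contraction $A \tensor_{ℂQ_0} A \to A$ composed with $g_3$ is manifestly zero.

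Second, I would establish self-duality: the claim is that $\Hom_{A^{\env}}(P^\bullet, A^{\env}) \cong P^{3-\bullet}$ as a complex of $A$-bimodules. For each finitely generated free bimodule $A \tensor_{ℂQ_0} V \tensor_{ℂQ_0} A$, one has the canonical identification
\begin{equation*}
\Hom_{A^{\env}}(A \tensor_{ℂQ_0} V \tensor_{ℂQ_0} A, \, A^{\env}) \cong A \tensor_{ℂQ_0} \kdual{V} \tensor_{ℂQ_0} A,
\end{equation*}
where $\kdual{V}$ denotes the $ℂQ_0$-bimodule dual. The superpotential then induces natural identifications $\kdual{(ℂQ_0)} \cong ℂQ_0$, $\kdual{ℂQ_1} \cong ℂQ_1$ (reversing arrows, but combined with the bimodule action this gives the correct identification), $\kdual{R} \cong W$ via the pairing $W \tensor R \to ℂQ_0$ coming from the derivation, and $\kdual{W} \cong ℂQ_0$. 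Matching the differentials under these identifications reduces to the statement that the partial derivative map and its transpose agree under cyclic symmetry of $W$. This identification is exactly the content of the self-dual resolution terminology from \autoref{def:prelim-cy-selfdual}.

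Third, the "if" direction becomes immediate: if $P^\bullet$ is exact, it is a finite self-dual projective bimodule resolution of $A$ by finitely generated bimodules, hence by the lemma preceding \autoref{th:koszul-cyord-tensorresolution} (the selfduality argument) the algebra $A$ is CY3. For the "only if" direction, assume $A$ is CY3. Since $P^\bullet$ is a chain complex and its last three arrows $A \tensor_{ℂQ_0} ℂQ_1 \tensor_{ℂQ_0} A \to A \tensor_{ℂQ_0} A \to A \to 0$ are the beginning of a standard resolution (exact because $ℂQ_1$ generates the augmentation ideal modulo relations, and $R$ generates the relations by definition of $\Jac(Q,W)$), the remaining task is exactness at $A \tensor_{ℂQ_0} R \tensor_{ℂQ_0} A$ and at $A \tensor_{ℂQ_0} W \tensor_{ℂQ_0} A$. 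This is handled by comparing Ext dimensions of the simple modules $S_v$ at each vertex: the candidate complex computes $\Ext^\bullet_A(S_v, S_w)$, and Van den Bergh duality (\autoref{th:prelim-cy-ext}) forces $\dim \Ext^k_A(S_v, S_w) = \dim \Ext^{3-k}_A(S_w, S_v)$, which together with self-duality of $P^\bullet$ forces the homology in the middle degrees to vanish.

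The main obstacle is the "only if" direction, specifically the exactness at $A \tensor_{ℂQ_0} R \tensor_{ℂQ_0} A$. A priori there could be unexpected syzygies among the relations $\partial_a W$ that obstruct exactness there; one must rule these out using CY3-ness. The self-duality packages the problem so that exactness at this middle position is equivalent to exactness at $A \tensor_{ℂQ_0} ℂQ_1 \tensor_{ℂQ_0} A$ (which is automatic), but translating CY3-ness into the precise dimension count requires the technology of \autoref{th:prelim-cy-ext} applied simultaneously to all pairs $(S_v, S_w)$. Once those dimensions match, the projectivity of the terms and the fact that $P^\bullet$ is a complex forces it to be acyclic in the intermediate degrees.
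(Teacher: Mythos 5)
This statement is cited from \cite[Theorem 4.3]{Bocklandt-CY} and the paper does not supply a proof, so there is no internal argument to compare against; I will assess your proposal on its own merits.

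Your overall strategy is the right shape, and the \emph{if} direction (exactness plus self-duality implies CY3) is fine modulo actually verifying self-duality. But be careful: self-duality is not free. You need $\kdual{(ℂQ_0)} \cong W$ and $\kdual{(ℂQ_1)} \cong R$, which require $vWv \neq 0$ for every vertex $v$ and the $\partial_a W$ to be linearly independent. (Your list of identifications is also mislabelled: one wants $\kdual R \cong ℂQ_1$ and $\kdual W \cong ℂQ_0$, not $\kdual R \cong W$.) These non-degeneracy conditions have to be either assumed or derived.

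The more serious gap is in the \emph{only if} direction. You write that \emph{the candidate complex computes $\Ext^\bullet_A(S_v, S_w)$} --- but this presupposes that the complex is a resolution, which is precisely what you are trying to prove. Likewise, \emph{self-duality packages the problem so that exactness at the middle position is equivalent to exactness at $A \tensor ℂQ_1 \tensor A$} is not a valid inference: $\Hom_{A^{\env}}(-, A^{\env})$ is only left exact, so exactness of a complex of projectives at degree $i$ does not transfer to exactness at the self-dual position $3-i$. What \emph{does} hold is the following. Having established exactness at $A$, $P^0$ and $P^1$, the truncated complex $P^2 \to P^1 \to P^0 \to A \to 0$ agrees with the start of an honest projective bimodule resolution, and since $\Ext^i_{A^{\env}}(A, A^{\env})$ only depends on the resolution in degrees $\leq i+1$, left-exactness of $\Hom_{A^{\env}}(-, A^{\env})$ gives
\begin{equation*}
H^0\bigl(\Hom_{A^{\env}}(P^\bullet, A^{\env})\bigr) = \HH^0(A, A^{\env}), \qquad H^1\bigl(\Hom_{A^{\env}}(P^\bullet, A^{\env})\bigr) = \HH^1(A, A^{\env}),
\end{equation*}
and this does \emph{not} require exactness of $P^\bullet$ in higher degrees. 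Self-duality then identifies $H^i(\Hom_{A^{\env}}(P^\bullet, A^{\env}))$ with $H_{3-i}(P^\bullet)$, and CY3-ness says $\HH^0 = \HH^1 = 0$. Hence $H_3(P^\bullet) = H_2(P^\bullet) = 0$, which is exactly the missing exactness. This makes your dimension-count intuition precise and breaks the circularity.
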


Recall also from \autoref{th:koszul-cyord-tensorresolution} that the exact sequence can be used to produce projective resolutions for all left and right modules of $ A $. When tensoring the sequence on the left with $ ℂQ_0 $ over $ A $, then we obtain a resolution of $ ℂQ_0 $ as right $ A $-modules:
\begin{equation}
\label{eq:flatness-sequence}
0 → W \tensor_{ℂQ_0} A \overset{f_1}{→} R \tensor_{ℂQ_0} A \overset{f_2}{→} ℂQ_1 \tensor_{ℂQ_0} A \overset{f_3}{→} A → ℂQ_0 → 0.
\end{equation}

\begin{remark}
The maps $ f_1, f_2, f_3 $ are described as follows: For $ f_1 $, write an element $ w = \sum_{i ∈ I} r_i a_i $ in terms of relations $ r_i ∈ R $ and arrows $ a_i ∈ Q_1 $, then map $ f_1 (w) = \sum_{i ∈ I} r_i ¤ a_i $. For $ f_2 $, write an element $ r = \sum_{i ∈ I} a_i p_i $ with arrows $ a_i $, then map $ f_2 (r) = \sum_{i ∈ I} a_i ¤ p_i $. The map $ f_3 $ is the multiplication map and the fourth map is the projection onto the quotient of $ A $ by all arrows.
\end{remark}

Requiring an algebra to be CY3 is an algebro-geometric condition. Berger and Ginzburg \cite{Berger-Ginzburg} translate this condition for $ A = \Jac(Q, W) $ to a more tractable condition in terms of the superpotential and relations. More precisely, their approach is to evaluate exactness of the sequence \eqref{eq:flatness-sequence} at the module second from the left. The result is an inclusion in terms of $ R $ and $ W $:

\begin{lemma}[{\cite[Theorem 2.6]{Berger-Ginzburg}}]
\label{th:flatness-BGstandard-BGstandard}
If $ \Jac(Q, W) $ is CY3, then the \emph{Berger-Ginzburg inclusion} holds:
\begin{equation}
\label{eq:flatness-BGstandard}
R ℂQ ∩ ℂQ_1 R ⊂ W ℂQ + R I(R).
\end{equation}
\end{lemma}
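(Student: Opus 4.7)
The plan is to extract a single consequence of the CY3 hypothesis, exactly as indicated in the paragraph above the statement: use exactness of the sequence \eqref{eq:flatness-sequence} at the module $R \tensor_{ℂQ_0} A$ (second from the left). By \cite[Theorem 4.3]{Bocklandt-CY}, CY3-ness of $A = \Jac(Q,W)$ is equivalent to exactness of the candidate bimodule resolution \eqref{eq:flatness-resolution}, and each term $A \tensor_{ℂQ_0} V \tensor_{ℂQ_0} A$ is free (hence flat) as a right $A$-module, so tensoring on the left with the right $A$-module $ℂQ_0$ preserves exactness and yields exactness of \eqref{eq:flatness-sequence}. The ensuing identity $\Ker f_2 = \Im f_1$ is the only input I will use.

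Given $z \in R\,ℂQ \cap ℂQ_1\,R$, write $z = \sum_i r_i p_i$ with $r_i \in R$, $p_i \in ℂQ$, and consider $\tilde z = \sum_i r_i \tensor \bar p_i \in R \tensor_{ℂQ_0} A$, where the bar denotes the image in $A$. The first key step is to show $\tilde z \in \Ker f_2$. By the formula for $f_2$, its image in $ℂQ_1 \tensor_{ℂQ_0} A$ has $a$-component equal to the class in $A$ of the coefficient $z_a$ in the unique leftmost-arrow decomposition $z = \sum_{a \in Q_1} a\,z_a$ inside $ℂQ$ (this uses $z \in ℂQ_{\geq 2}$, automatic from $R \subseteq ℂQ_{\geq 2}$). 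The hypothesis $z \in ℂQ_1\,R$ furnishes an alternative decomposition $z = \sum_j b_j s_j$ with $s_j \in R$; by uniqueness, $z_a = \sum_{b_j = a} s_j \in R \subseteq I(R)$, whence $\bar z_a = 0$ in $A$ and $\tilde z \in \Ker f_2$.

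Exactness then gives $\tilde z = \sum_k f_1(w_k \tensor \bar q_k)$ for some $w_k \in W$ and $q_k \in ℂQ$. Writing $w_k = \sum_\ell r_{k\ell} a_{k\ell}$ in the right-most-arrow decomposition, the identity reads $\sum_i r_i \tensor \bar p_i = \sum_{k,\ell} r_{k\ell} \tensor \overline{a_{k\ell} q_k}$ in $R \tensor_{ℂQ_0} A$. The final step is to lift this equality to $R \tensor_{ℂQ_0} ℂQ$ and apply the multiplication map $m : R \tensor_{ℂQ_0} ℂQ \to ℂQ$. Because $ℂQ_0$ is semisimple, every $ℂQ_0$-module is flat; in particular tensoring the short exact sequence $0 \to I(R) \to ℂQ \to A \to 0$ from the left with $R$ over $ℂQ_0$ remains exact, so the kernel of the projection $R \tensor_{ℂQ_0} ℂQ \to R \tensor_{ℂQ_0} A$ is precisely $R \tensor_{ℂQ_0} I(R)$, whose image under $m$ lies in $R\,I(R)$. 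The two lifts therefore differ by an element of $R \tensor_{ℂQ_0} I(R)$, and applying $m$ yields $z = \sum_k w_k q_k + (\text{element of } R\,I(R)) \in W\,ℂQ + R\,I(R)$.

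The delicate point I anticipate is the final lift-and-difference step: even though $\tilde z = \sum_k f_1(w_k \tensor \bar q_k)$ holds in $R \tensor_{ℂQ_0} A$, one must pick lifts $q_k \in ℂQ$ and ensure that the resulting discrepancy between the two representatives of the same element in $R \tensor_{ℂQ_0} A$ is precisely absorbed by the $R\,I(R)$ term, which is where the flatness of $R$ over $ℂQ_0$ is needed to identify the kernel with $R \tensor_{ℂQ_0} I(R)$. The earlier steps are essentially formal: Step 1 (setting up the exactness input) is citation, and Step 2 (landing in $\Ker f_2$) is a bookkeeping exercise on uniqueness of arrow decompositions.
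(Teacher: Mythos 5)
Your proof is correct; a few notes on how it relates to what the paper actually does.

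The paper deliberately does \emph{not} prove this lemma directly: it cites it from Berger--Ginzburg and writes ``We shall not prove \autoref{th:flatness-BGstandard-BGstandard} \ldots\ here.'' Instead it proves the \emph{strong} Berger--Ginzburg inclusion (part 3 of the proof of \autoref{th:flatness-BGstrong-Wcase}) and remarks, just after \autoref{conv:flatness-relations}, that the weak inclusion follows by lifting an element of $R\,ℂQ \cap ℂQ_1 R$ to $R \tensor_{ℂQ_0} ℂQ$, applying the strong inclusion, and contracting. The underlying engine in both routes is the same: CY3-ness $\Leftrightarrow$ exactness of \eqref{eq:flatness-resolution} via \cite[Theorem 4.3]{Bocklandt-CY}, tensoring with $ℂQ_0$ to obtain \eqref{eq:flatness-sequence}, exactness at $R \tensor_{ℂQ_0} A$, and semisimplicity of $ℂQ_0$ to control the kernel of $R \tensor_{ℂQ_0} ℂQ \to R \tensor_{ℂQ_0} A$. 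What you do differently is to bypass the strong inclusion and prove the weak one head-on, tracing an explicit element $z$ through $\Ker f_2 = \Im f_1$ and managing the final lift with the multiplication map. The paper's packaging (rewrite the exact sequence purely in terms of $V \tensor ℂQ / V \tensor I(R)$ and read off the inclusion) is cleaner and is what supports the ``bounded'' refinement the paper needs later; your packaging is more elementary and self-contained but yields only the statement at hand.

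Two small points of precision. (i) Your remark that $z \in ℂQ_{\geq 2}$ ``automatic from $R \subseteq ℂQ_{\geq 2}$'' is a bit off under the paper's general \autoref{conv:flatness-relations}, where only $R \subset ℂQ_{\geq 1}$ is assumed; in the superpotential setting of this particular lemma ($W \in ℂQ_{\geq 3}$ so that $∂_aW \in ℂQ_{\geq 2}$) it is correct, but in any case all that the uniqueness of the leftmost-arrow decomposition requires is $z \in ℂQ_{\geq 1}$. (ii) The well-definedness of the splitting map $f_1$ (i.e.\ of $f$ from \autoref{conv:flatness-relations}) deserves one sentence: the multiplication $R \tensor_{ℂQ_0} ℂQ_1 \to ℂQ$ is injective because pure tensors $r \tensor a$ with distinct $a$ map to linear combinations of paths with distinct rightmost arrows, so the splitting of $W \subset R\,ℂQ_1$ into $R \tensor_{ℂQ_0} ℂQ_1$ is indeed a well-defined map, not just a choice of presentation.
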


\begin{remark}
The inclusion \eqref{eq:flatness-BGstandard} can be roughly interpreted as follows: Let $ p ∈ ℂQ $ be a path. Add any amount of ideal elements $ ry ∈ R ℂQ $ and $ ar ∈ ℂQ_1 R $ to $ p $. If we land at $ p $ again after the addition, the additions of the two types have been equal and therefore lie in the intersection $ R ℂQ ∩ ℂQ_1 R $. By the inclusion \eqref{eq:flatness-BGstandard}, the relations we have added are equivalent to applying the entire set of relations $ ∂_a W $ around some vertices and terms that “vanish quadratically” in $ A / I(R) $.
\end{remark}

Berger and Taillefer \cite{Berger-Taillefer} use the intermediate results of Berger and Ginzburg \cite{Berger-Ginzburg} to deduce flatness of PBW deformations. Apart from the CY3 condition in the form of the Berger-Ginzburg inclusion, their core assumption is that the superpotential $ W ∈ ℂQ_{≥2} $ is homogeneous in path length. Translated to the setting of formal deformations, their statement would read as follows:

\begin{proposition}
\label{th:flatness-BGstandard-BGflatness}
Let $ Q $ be a quiver and $ W ∈ ℂQ_{≥2} $ a homogeneous superpotential. If $ (B, \mathfrak{m}) $ is a deformation base and $ W' ∈ \mathfrak{m} ℂQ_{≥2} $ is a cyclic element, then the deformed ideal $ B (∂_a W + ∂_a W') $ is quasi-flat. In other words, the “deformed Jacobi algebra”
\begin{equation*}
\Jac(Q, W + W') ≔ \frac{B \htensor ℂQ}{B (∂_a W + ∂_a W')}
\end{equation*}
is a deformation of $ \Jac(Q, W) $.
\end{proposition}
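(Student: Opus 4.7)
The plan is to verify quasi-flatness of $I_q = B(\partial_a W + \partial_a W')$ inside $B \htensor ℂQ$ via the weak quasi-flatness inclusion of \autoref{th:prelim-flatvariants-equivalence}. Pseudoclosedness of $I_q$ is immediate from the definition of $BY$ in \autoref{def:prelim-submodules-shorthand}. Once weak quasi-flatness is checked, \autoref{th:prelim-flatvariants-equivalence} upgrades it to quasi-flatness, and \autoref{th:flatness-whatis-equivalence} then gives that $\Jac(Q, W + W')$ is a deformation of $\Jac(Q, W)$.

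The core of the argument is an inductive step on the $\mathfrak{m}$-adic order. Fix $k \geq 1$ and take $x \in I_q \cap \mathfrak{m}^k ℂQ$, written as $x = \sum_i f_i r_{i,q} g_i$ with $f_i, g_i \in B \htensor ℂQ$ and $r_{i,q} = \partial_{a_i}(W + W')$. Split $r_{i,q} = r_i + r'_i$ with $r_i = \partial_{a_i} W$ classical and $r'_i = \partial_{a_i} W' \in \mathfrak{m} ℂQ$. The assumption $x \in \mathfrak{m}^k ℂQ$ forces the leading-order contribution $\sum_i \bar f_i r_i \bar g_i$ to vanish in $ℂQ$, producing a classical syzygy among the $r_i$. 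The Berger-Ginzburg inclusion \eqref{eq:flatness-BGstandard} classifies such syzygies: after absorbing an arrow from each $f_i$ on the left into the relation, the expression lies in $R\,ℂQ \cap ℂQ_1\,R$, and therefore in $W\,ℂQ + R\,I(R)$. The $W\,ℂQ$ piece corresponds, via the map $g_1$ of \eqref{eq:flatness-resolution}, to the cyclic redundancies coming from full rotations of $W$, while the $R\,I(R)$ piece represents quadratic cancellations between relations.

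Each classical rewriting coming from Berger-Ginzburg lifts to the deformed setting by replacing $r_i$ by $r_{i,q}$ throughout. The lift produces correction terms of the form $f_i r'_j g_j$, which sit one $\mathfrak{m}$-order deeper than the pieces they correct. Iterating the rewriting order-by-order and summing the corrections produces a series that converges in the $\mathfrak{m}$-adic topology, yielding the desired weak quasi-flatness inclusion $x \in \mathfrak{m}^k I_q + \mathfrak{m}^{k+1} ℂQ$. Homogeneity of $W$ is decisive at this point: all classical relations $r_i$ sit in a single path-length degree, so the Berger-Ginzburg rewriting preserves the classical part's path-length, keeps the iteration within a finite-dimensional subspace at each fixed degree, and forces the sums of corrections to converge inside $ℂQ$ rather than only in its completion.

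The main obstacle is the compatibility between the Berger-Ginzburg rewriting and the $\mathfrak{m}$-adic filtration: one must ensure the lifted Koszul-type redundancies produce exactly the correction terms needed to absorb $x$ modulo $\mathfrak{m}^{k+1}$ without generating unexpected residues in higher $\mathfrak{m}$-order. Homogeneity makes the bookkeeping clean by bounding path length; later in this paper, it is replaced by a weaker boundedness hypothesis, and most of the subsequent technical work consists of transporting the inductive argument above to the non-homogeneous setting by means of a strengthened Berger-Ginzburg inclusion together with the a posteriori path-length estimate of \autoref{th:flatness-flatalgebraic-crude}.
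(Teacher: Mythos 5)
The paper does not actually prove this proposition. Immediately after stating it, the authors write that they ``shall not prove \autoref{th:flatness-BGstandard-BGstandard} and \autoref{th:flatness-BGstandard-BGflatness} here,'' because the homogeneity assumption is too restrictive for their purposes; instead they prove stronger non-homogeneous versions in \autoref{sec:flatness-flatcompleted}--\autoref{sec:flatness-flatalgebraic}. The only material the paper supplies for the homogeneous case is an informal sketch after Lemma \autoref{th:flatness-berger-ginzburg}: establish the Berger--Ginzburg inclusion $I(P)_{\mathbb{C}Q}\cap\mathfrak m\mathbb{C}Q\subset\mathbb{C}Q_1(I(P)_{\mathbb{C}Q}\cap\mathfrak m\mathbb{C}Q)+\mathfrak m I(P)_{\mathbb{C}Q}$, split off arrows on the left of the $\mathbb{C}Q_1(\cdots)$ piece, and conclude by downward induction on path degree, which terminates because $W$ is homogeneous. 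Your outline assembles the same ingredients and is consistent with this as well as with the later technical arguments such as \autoref{th:flatness-flatcompleted-bounds}, but two steps need correcting.

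First, the reduction to the Berger--Ginzburg inclusion is not described correctly. You claim that ``after absorbing an arrow from each $f_i$ on the left into the relation, the expression lies in $R\,\mathbb{C}Q\cap\mathbb{C}Q_1 R$.'' What actually happens (cf.\ the proof of \autoref{th:flatness-flatcompleted-bounds}) is a separation of terms by whether $f_i$ is a vertex: the vertex terms supply an element of $R\,\mathbb{C}Q$, and since the total leading-order contribution vanishes, that element equals minus the remaining terms, which live in $\mathbb{C}Q_1 I(R)$, \emph{not} $\mathbb{C}Q_1 R$. One therefore needs the strong form of the inclusion from \autoref{conv:flatness-relations}, or the observation in the remark following \autoref{th:flatness-BGstrong-Wcase} that the standard and strong versions coincide when $W$ is homogeneous. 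As phrased, your intermediate claim about $R\,\mathbb{C}Q\cap\mathbb{C}Q_1 R$ does not follow from the setup.

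Second, the logic around weak quasi-flatness is circular as stated. The weak inclusion $I_q\cap\mathfrak m^k X\subset\mathfrak m^k I_q+\mathfrak m^{k+1}X$ is a single rewriting step; the iteration and $\mathfrak m$-adic summation you invoke are precisely the content of \autoref{th:prelim-flatvariants-weaktoflat}, which already converts weak into full quasi-flatness. You should either prove the one-step weak inclusion and then cite \autoref{th:prelim-flatvariants-equivalence}, or follow the paper's degree induction directly, which needs no $\mathfrak m$-adic limit because homogeneity bounds the path degree and forces the recursion to terminate. Finally, \autoref{th:flatness-flatalgebraic-crude} is a purely $\mathfrak m$-adic statement about submodules, not a path-length estimate; the path-length control that replaces homogeneity in the non-homogeneous setting is the bounded-type machinery of \autoref{sec:flatness-bounded} and \autoref{th:flatness-flatcompleted-bounds}.
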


We shall not prove \autoref{th:flatness-BGstandard-BGstandard} and \autoref{th:flatness-BGstandard-BGflatness} here. Namely, the core assumption that $ W $ is homogeneous is too restrictive for our purposes. We will prove stronger versions of both.

\subsection{Notation and conventions}
\label{sec:flatness-setup}
In this section, we define the setup in which we prove flatness properties. As a first step, we start from relation spaces instead of superpotentials. Second, we introduce a stronger version of the Berger-Ginzburg inclusion. We fix both items in a notational framework, which does not make explicit reference to superpotentials. Finally, we show that relations coming from CY3 superpotentials fall within the framework.

It may happen that one is interested in checking flatness of Calabi-Yau like algebras where the relation space $ R $ is given, without the relations coming from a single superpotential $ W $. In this case, one may form the space $ W = ℂQ_1 R ∩ R ℂQ_1 $ which serves as a replacement for the superpotential:

\begin{center}
\begin{tikzpicture}
\path (-1, 0) node {\textbf{CY3 viewpoint}} (3, 0) node[align=center] {$ W ∈ ℂQ_{≥2} $ \\ superpotential} (5, 0) node {\Large $ \rightsquigarrow $} (7.5, 0) node[align=center] {$ R = \vspan(∂_a W) $ \\ relation space};
\path (-1, -1.5) node {\textbf{Deformation viewpoint}} (3, -1.5) node[align=center] {$ R ⊂ ℂQ $ \\ relation space} (5, -1.5) node {\Large $ \rightsquigarrow $} (7.5, -1.5) node[align=center] {$ W = ℂQ_1 R ∩ R ℂQ_1 $ \\ superpotential space};
\end{tikzpicture}
\end{center}

The approach of starting with a relation space $ R $ is also the context used in \cite{Berger-Ginzburg}. We will codify the setup in \autoref{conv:flatness-relations}. During the remainder of \autoref{sec:flatness}, we stick to this convention.

\begin{convention}
\label{conv:flatness-relations}
The letter $ Q $ denotes a quiver. The space $ R ⊂ ℂQ_{≥1} $ is a finite-dimensional $ ℂQ_0 $-bimodule of \emph{relations}. We write $ W ≔ ℂQ_1 R ∩ R ℂQ_1 $. When regarding deformations, $ (B, \mathfrak{m}) $ is a deformation base and $ ψ: R → \mathfrak{m} \compl{ℂQ} $ is a $ ℂQ_0 $-bimodule map. The ideal generated by $ R $ is denoted $ I(R) = ℂQ R ℂQ ⊂ ℂQ $. The space of \emph{deformed relations} is denoted $ P ≔ (\Id + ψ) (R) ⊂ B \htensor \compl{ℂQ} $. Two additional properties may be assumed:
\begin{itemize}
\item[{[BG]}] The \emph{strong Berger-Ginzburg inclusion}
\begin{equation}
\label{eq:flatness-BGstrong}
R ¤ ℂQ ∩ \contraction^{-1} (ℂQ_1 I(R)) ⊂ f(W ¤ ℂQ) + R ¤ I(R).
\end{equation}
Here, $ \contraction: R ¤ ℂQ → R ℂQ $ denotes the contraction map and $ f: W ¤ ℂQ → R ¤ ℂQ_1 ℂQ $ denotes the map that splits $ W $ into relations and arrows.
\item[{[CP]}] The property $ (ℂQ_1 P + P ℂQ_1) ∩ \mathfrak{m} \compl{ℂQ} = 0 $.
\end{itemize}
\end{convention}

\begin{remark}
We drop subscripts from the tensor product $ ¤ $. When the two tensorands come from quiver algebras, the tensor product always refers to the tensor product over $ ℂQ_0 $:
\begin{align*}
R ¤ ℂQ = R ¤_{ℂQ_0} ℂQ, \quad R ¤ I(R) = R ¤_{ℂQ_0} I(R), \quad W ¤ ℂQ = W ¤_{ℂQ_0} ℂQ, \quad …
\end{align*}
\end{remark}

In the remainder of the section, we illustrate which algebras fall under the framework of \autoref{conv:flatness-relations}. In particular, we show that Jacobi algebras with the CY3 property satisfy the conditions [BG] and [CP].

\begin{example}
The algebra $ ℂ⟨A, B, C⟩ / (BC, CA, AB) $ falls into the framework of \autoref{conv:flatness-relations}. Its relations are given by $ R = \vspan(BC, CA, AB) $ and by definition we have
\begin{equation*}
W = ℂQ_1 R ∩ R ℂQ_1 = \vspan(ABC, BCA, CAB).
\end{equation*}
This space is three-dimensional and not spanned by a single superpotential on $ ℂQ $. The algebra is not CY3. However the strong Berger-Ginzburg inclusion is satisfied:
\begin{equation*}
R ¤ ℂQ ∩ \contraction^{-1} (ℂQ_1 I(R)) = BC ¤ AℂQ + CA ¤ BℂQ + AB ¤ CℂQ = f(W ¤ ℂQ).
\end{equation*}
The algebra therefore falls within our framework \autoref{conv:flatness-relations} and can therefore be treated with our approach.

For this specific algebra another approach to deformations is possible: In \cite{Barmeier-Wang}, Barmeier and Wang study quiver algebras whose relations are defined by so-called reduction systems. The deformations of these algebras are in combinatorial correspondence with deformations of their reduction system. The specific relations $ BC = CA = AB = 0 $ constitute a reduction system, therefore this specific algebra falls under the framework of \cite{Barmeier-Wang}.
\end{example}

Next, we show that the case of superpotentials falls within the framework of \autoref{conv:flatness-relations}. As a preparation, we need the following lemma, modeled after \cite{Berger-Ginzburg}.

\begin{lemma}
\label{th:flatness-setup-psi}
Assume [CP]. Then the two replacement maps
\begin{align*}
ψ^0: W &→ \mathfrak{m} ℂQ_1 \compl{ℂQ}, \quad \sum_{i ∈ I} a_i r_i ↦ \sum_{i ∈ I} a_i ψ(r_i), \\
ψ^1: W &→ \mathfrak{m} \compl{ℂQ} ℂQ_1, \quad \sum_{i ∈ I} r_i a_i ↦ \sum_{i ∈ I} ψ(r_i) a_i.
\end{align*}
are well-defined and equal.
\end{lemma}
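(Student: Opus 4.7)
The plan is to use property [CP] to simultaneously establish well-definedness of both $\psi^0$ and $\psi^1$ and their equality as a single compound fact. Let $w \in W = ℂQ_1 R \cap R ℂQ_1$. By definition of $W$, we may write
\[ w = \sum_{i \in I} a_i r_i = \sum_{j \in J} r'_j b_j \]
for some arrows $a_i, b_j \in ℂQ_1$ and relations $r_i, r'_j \in R$. The key auxiliary element to consider is
\[ x \;=\; \sum_{i \in I} a_i \psi(r_i) \;-\; \sum_{j \in J} \psi(r'_j) b_j \;\in\; \mathfrak{m} \compl{ℂQ}. \]

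I would then rewrite $x$ by inserting $w - w = 0$ on the right, obtaining
\[ x \;=\; \sum_{i \in I} a_i \bigl(r_i + \psi(r_i)\bigr) \;-\; \sum_{j \in J} \bigl(r'_j + \psi(r'_j)\bigr) b_j \;\in\; ℂQ_1 P + P ℂQ_1. \]
Since $x$ also lies in $\mathfrak{m} \compl{ℂQ}$, property [CP] forces $x = 0$, so that
\[ \sum_{i \in I} a_i \psi(r_i) \;=\; \sum_{j \in J} \psi(r'_j) b_j. \]

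From this single identity all the claims follow. To see well-definedness of $\psi^0$, apply the identity to two different left-decompositions $w = \sum a_i r_i = \sum \tilde a_i \tilde r_i$ together with any fixed right-decomposition $w = \sum r'_j b_j$; both left-decompositions produce the same value $\sum \psi(r'_j) b_j$, hence they agree. Well-definedness of $\psi^1$ is analogous. Finally, the equality $\psi^0(w) = \psi^1(w)$ is just the identity above with both decompositions chosen freely. The only subtle point in the write-up is to note that the intermediate sums $\sum a_i \psi(r_i)$ and $\sum \psi(r'_j) b_j$ genuinely sit inside $\mathfrak{m} \compl{ℂQ}$ because $\psi$ takes values there; once that is recorded, [CP] does all the work and no further structural input (e.g., [BG] or CY3-ness) is needed.
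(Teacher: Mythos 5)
Your proof is correct, and for the equality $\psi^0 = \psi^1$ it is essentially the same calculation as the paper's: the paper writes $\psi^0(w) - \psi^1(w) = (w + \psi^0(w)) - (w + \psi^1(w)) \in P\,\mathbb{C}Q_1 + \mathbb{C}Q_1 P$, observes this lies in $\mathfrak{m}\widehat{\mathbb{C}Q}$, and invokes [CP] — exactly your key identity after unwinding. Where you diverge is on \emph{well-definedness}: the paper does not route it through [CP] at all, but observes that $\psi^0$ and $\psi^1$ are the compositions of $\psi$ with the canonical splittings $W \subset \mathbb{C}Q_1 R \to \mathbb{C}Q_1 \otimes_{\mathbb{C}Q_0} R$ and $W \subset R\,\mathbb{C}Q_1 \to R \otimes_{\mathbb{C}Q_0} \mathbb{C}Q_1$, which exist and are unique because the contraction maps from these tensor products into $\mathbb{C}Q$ are injective (a purely combinatorial fact about paths in a quiver, with no deformation-theoretic input). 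You instead deduce well-definedness as a corollary of the [CP]-derived identity: given two left-decompositions, compare each against a common right-decomposition. This works and is valid since [CP] is assumed, and it has the aesthetic advantage of making [CP] the single load-bearing hypothesis. The trade-off is that the paper's route establishes well-definedness unconditionally, which clarifies that $\psi^0$, $\psi^1$ exist as maps even when [CP] fails, and only their \emph{agreement} is a constraint. Your closing remark that no further structural input beyond [CP] is needed is correct.
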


\begin{proof}
For well-definedness, it suffices to note that $ ψ^0 $ and $ ψ^1 $ are nothing else than the splitting maps $ W ⊂ R ℂQ_1 → R ¤ ℂQ_1 $ and $ W ⊂ ℂQ_1 R → ℂQ_1 ¤ R $, composed with $ ψ $ acting either on the left or right factor. Now let us explain why $ ψ^0 = ψ^1 $. Let $ w ∈ W $, then
\begin{equation*}
ψ^0 (w) - ψ^1 (w) = (w + ψ^0 (w)) - (w + ψ^1 (w)) ∈ P ℂQ_1 + ℂQ_1 P.
\end{equation*}
The left-hand side simultaneously lies in $ \mathfrak{m} \compl{ℂQ} $. By [CP], the difference $ ψ^0 (w) - ψ^1 (w) $ vanishes. This proves $ ψ^0 = ψ^1 $.
\end{proof}

We are now ready to see how the case of relations coming from a superpotential fits into the framework of \autoref{conv:flatness-relations}. Let $ W $ be a superpotential and $ W' ∈ \mathfrak{m} ℂQ $ a deformation. We simply put $ R = \vspan\{∂_a W\} $ and $ ψ(∂_a W) ≔ ∂_a W' $. To make these definitions work, one needs to check that the relations $ ∂_a W $ are linearly independent and that $ ℂQ_1 R ∩ R ℂQ_1 $ is indeed the $ ℂQ_0 $-bimodule generated by the superpotential $ W ∈ ℂQ $. We verify this in the following lemma. Note that $ W $ is required to consist of paths of length $ ≥ 3 $ this time:

\begin{lemma}
\label{th:flatness-BGstrong-Wcase}
Let $ Q $ be a quiver with superpotential $ W ∈ ℂQ_{≥3} $. Assume $ \Jac(Q, W) $ is CY3. Then the relations $ ∂_a W $ for $ a ∈ Q_1 $ are linearly independent. Put $ R = \vspan\{∂_a W\} $. Then $ ℂQ_1 R ∩ R ℂQ_1 $ is equal to the $ ℂQ_0 $-bimodule generated by $ W $. The property [BG] holds.

Moreover, let $ W' ∈ \mathfrak{m} \compl{ℂQ} $ be a (cyclic) deformation. Define $ ψ: R → \mathfrak{m} \compl{ℂQ} $ by $ ψ(∂_a W) = ∂_a W' $. Then [CP] holds.
\end{lemma}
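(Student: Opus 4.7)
The plan is to treat the four assertions in sequence, drawing throughout on exactness of the candidate bimodule resolution \eqref{eq:flatness-resolution}. Linear independence of $\{∂_a W\}_{a ∈ Q_1}$ follows from CY3 as follows: exactness together with the self-dual shape of \eqref{eq:flatness-resolution} pairs the position of the resolution carrying $R$ with the position carrying $ℂQ_1$, forcing $R$ to be a $ℂQ_0$-bimodule of the same total rank $|Q_1|$; any linear dependence among the $∂_a W$ would shrink $R$ and contradict this. For the identification $ℂQ_1 R ∩ R ℂQ_1 = ℂQ_0 W ℂQ_0$, the inclusion $\supseteq$ is immediate from the cyclic identities $vWv = \sum_a (va)(∂_a W v) = \sum_a (v ∂_a W)(av)$ at each vertex $v$. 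For the reverse inclusion I would expand $x ∈ ℂQ_1 R ∩ R ℂQ_1$ in the basis $\{∂_c W\}$ of $R$ as $x = \sum_c A_c ∂_c W = \sum_c ∂_c W · B_c$ with $A_c, B_c ∈ ℂQ_1$, and deduce by coefficient comparison that the cyclic symmetry linking the $A_c$'s and $B_c$'s forces $x$ to lie in $ℂQ_0 W ℂQ_0$.

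The strong Berger-Ginzburg inclusion [BG] should follow by adapting the argument of \cite[Theorem 2.6]{Berger-Ginzburg} from the contracted to the uncontracted setting $R ⊗ ℂQ$. Given $\sum r_i ⊗ p_i ∈ R ⊗ ℂQ$ whose contraction $\sum r_i p_i$ lies in $ℂQ_1 I(R)$, the classical Berger-Ginzburg inclusion rewrites the contracted element, and one then corrects the tensor expression by an appropriate term of $f(W ⊗ ℂQ) + R ⊗ I(R)$. For [CP], pick $x ∈ (ℂQ_1 P + P ℂQ_1) ∩ \mathfrak{m} \compl{ℂQ}$ and decompose $x = \sum a_i (r_i + ψ(r_i)) + \sum (t_j + ψ(t_j)) b_j$. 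The non-$\mathfrak{m}$ part $\sum a_i r_i + \sum t_j b_j = 0$ places $w := \sum a_i r_i$ into $ℂQ_1 R ∩ R ℂQ_1 = ℂQ_0 W ℂQ_0$ by Part 2, and the $\mathfrak{m}$-order part of $x$ matches $ψ^0(w) - ψ^1(w)$ evaluated on the two canonical decompositions of $w$. Cyclicity of $W'$, expressed by $W' = \sum_a a ∂_a W' = \sum_a ∂_a W' · a$, ensures $ψ^0(W) = ψ^1(W) = W'$; by $ℂQ_0$-linearity the maps $ψ^0, ψ^1$ then agree on all of $ℂQ_0 W ℂQ_0$, so the $\mathfrak{m}$-order part of $x$ vanishes.

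The hard part will be establishing well-definedness of $ψ^0$ on $ℂQ_1 R$ and of $ψ^1$ on $R ℂQ_1$, which underlies both Part 2 and Part 4. This reduces to injectivity of the multiplication maps $ℂQ_1 ⊗_{ℂQ_0} R → ℂQ$ and $R ⊗_{ℂQ_0} ℂQ_1 → ℂQ$, which I expect to extract from exactness of \eqref{eq:flatness-resolution} combined with the linear independence from Part 1. Without such a direct injectivity argument the reasoning risks circularity, since \autoref{th:flatness-setup-psi} only establishes $ψ^0 = ψ^1$ under the hypothesis [CP] that we are presently trying to prove.
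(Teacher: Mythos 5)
Parts 1 and 4 track the paper's route closely: linear independence follows from the self-dual shape of the resolution, and your reduction of [CP] to $ψ^0(vWv) = ψ^1(vWv)$ via cyclicity of $W'$ is exactly the paper's argument. The gaps are in Parts 2 and 3. For Part 2's reverse inclusion, ``coefficient comparison'' on $x = \sum_c A_c \, \partial_c W = \sum_d \partial_d W \, B_d$ cannot work: linear independence of the $\partial_c W$ imposes no relation between the two collections of $ℂQ_1$-coefficients, and the inclusion $ℂQ_1 R \cap R\,ℂQ_1 \subseteq ℂQ_0 W ℂQ_0$ genuinely fails without CY3. (One vertex, two loops $a, b$, $W = a^3 + b^3$: here $R = \vspan(a^2, b^2)$ and $ℂQ_1 R \cap R\,ℂQ_1 = \vspan(a^3, b^3)$, strictly larger than $\vspan(a^3 + b^3)$, and indeed $ℂ\langle a, b\rangle / (a^2, b^2)$ is not CY3.) The CY3 input must enter through exactness of \eqref{eq:flatness-sequence} at $R \otimes \Jac(Q, W)$: the candidate $\sum r_i \otimes a_i$ lies in $\Ker f_2$ because $\sum r_i a_i \in ℂQ_1 R \subseteq ℂQ_1 I(R)$, hence lies in $\Im f_1$, and then a right-tensor-length analysis strips away the $R \otimes I(R)$ ambiguity. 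For Part 3, trying to lift the contracted Berger--Ginzburg inclusion and ``correct the tensor expression'' breaks on the kernel of $\contraction: R \otimes ℂQ \to ℂQ$: for nonhomogeneous $W$ there is no reason for that kernel to sit inside $f(W \otimes ℂQ) + R \otimes I(R)$. The paper's own comparison remark between \eqref{eq:flatness-BGstandard} and \eqref{eq:flatness-BGstrong} stresses that the two are equivalent only in the homogeneous case — which is precisely the case this lemma does not assume. The paper instead inserts the isomorphisms $R \otimes (ℂQ/I(R)) \cong (R \otimes ℂQ)/(R \otimes I(R))$ and its analogues for $W$, $ℂQ_1$ directly into \eqref{eq:flatness-sequence} and reads [BG] off as $\Ker \subseteq \Im$ there, never passing through the contracted version.

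Your final circularity worry is unfounded. Well-definedness of $ψ^0$ and $ψ^1$ needs only injectivity of the multiplication maps $ℂQ_1 \otimes_{ℂQ_0} R \to ℂQ$ and $R \otimes_{ℂQ_0} ℂQ_1 \to ℂQ$, and that is elementary: the image of $\sum_a a \otimes r_a$ splits as a direct sum over arrows, with each $r_a$ recovered by stripping that arrow. Neither exactness of \eqref{eq:flatness-resolution} nor [CP] is involved; only the \emph{identity} $ψ^0 = ψ^1$ in \autoref{th:flatness-setup-psi} takes [CP] as hypothesis, and the paper's proof of Part 4 does not invoke that lemma at all — it verifies $ψ^0 = ψ^1$ directly on each generator $vWv$, as you propose.
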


\begin{proof}
We prove all four statements one after another. For sake of simplicity, we use the notation $ W $ to mean both the superpotential and the $ ℂQ_0 $-bimodule $ ℂQ_0 W ℂQ_0 $ generated by the superpotential. In an expression like $ W ⊂ ℂQ_1 R $ or $ w ∈ W $, the letter $ W $ is to be interpreted as the $ ℂQ_0 $-bimodule $ ℂQ_0 W ℂQ_0 $. This way, no confusion should arise.

The first statement on linear independence of the relations $ ∂_a W $ is now widely known, due to Ginzburg, Bocklandt, Berger, Taillefer and others. For example, the CY3 property implies by \cite[Section 4.2]{Bocklandt-CY} that the sequence \eqref{eq:flatness-resolution} is a self-dual resolution of $ \Jac(Q, W) $. Hence $ R $ and $ ℂQ_1 $ are equal in dimension and the relations are linearly independent.

For the second part of the proof, we show that $ ℂQ_1 R ∩ R ℂQ_1 $ is the $ ℂQ_0 $-bimodule generated by $ W $. The inclusion $ W ⊂ ℂQ_1 R ∩ R ℂQ_1 $ is easy and follows from cyclicity: Indeed for any vertex $ v ∈ Q_0 $ we have
\begin{equation*}
vWv = \sum_{h(a) = v} a ∂_a W = \sum_{t(a) = v} ∂_a W a.
\end{equation*}
The first sum lies in $ ℂQ_1 R $ and the second sum lies in $ R ℂQ_1 $, hence $ vWv $ lies in their intersection.

The converse inclusion $ ℂQ_1 R ∩ R ℂQ_1 ⊂ W $ follows from inspection of the exact sequence \eqref{eq:flatness-sequence}. To see this, pick an element $ \sum r_i a_i $ with $ r_i ∈ R $ and $ a_i ∈ Q_1 $ that simultaneously lies in $ ℂQ_1 R $. We claim that $ \sum r_i ¤ a_i $ goes to zero under $ f_2: R ¤ \Jac(Q, W) → ℂQ_1 ¤ \Jac(Q, W) $. Indeed, the assumption that $ \sum r_i a_i ∈ ℂQ_1 R $ implies
\begin{equation*}
f_2 \left(\sum r_i ¤ a_i\right) = \sum r_i a_i = 0 ∈ \frac{ℂQ_1 ℂQ}{ℂQ_1 I(R)} \cong ℂQ_1 ¤ \Jac(Q, W).
\end{equation*}
By exactness of \eqref{eq:flatness-sequence}, we deduce that $ \sum r_i ¤ a_i $ lies in the image of $ f_1 $. Therefore within $ R ¤ \Jac(Q, W) $ we can write $ \sum r_i ¤ a_i = f_1 (\sum w_i ¤ p_i) $, where $ w_i ∈ W $ and $ p_i $ are paths. We shall now try to lift this equality to $ R ¤ ℂQ $. As a first step, recall that
\begin{equation*}
R ¤ \Jac(Q, W) = \frac{R ¤ ℂQ}{R ¤ I(R)}.
\end{equation*}
In consequence there exists a $ z ∈ R ¤ I(R) $ such that within $ R ¤ ℂQ $ we have
\begin{equation*}
\sum r_i ¤ a_i = f\left(\sum w_i ¤ p_i\right) + z.
\end{equation*}
Here $ f: W ¤ ℂQ → R ¤ ℂQ_1 ℂQ $ denotes the map that splits $ W $ into relations and arrows. The terms $ w_i ¤ p_i $ and $ z ∈ R ¤ I(R) $ look wild, but in reality we can vastly reduce the complexity: Collect in an index set $ S $ all indices $ i $ where the path $ p_i $ is not a vertex. For such indices $ i ∈ S $, we have that all terms in $ f(w_i ¤ p_i) $ have right tensor component of length at least 2. Note that all terms in $ R ¤ I(R) $ also have right tensor length at least 2. Nevertheless, the left-hand side $ \sum r_i ¤ a_i $ has terms only with right tensor length 1, since $ a_i $ are arrows. We deduce that $ f\left(\sum_{i \notin S} w_i ¤ p_i\right) + z = 0 $. Finally, we conclude
\begin{equation*}
\sum r_i ¤ a_i = f\left(\sum_{i ∈ S} w_i ¤ 1\right).
\end{equation*}
Contracting the tensors on both sides gives $ \sum r_i a_i ∈ W $ as desired. This proves the second statement.

For the third statement, it is our task to prove that the strong Berger-Ginzburg inclusion holds. We note that
\begin{equation*}
W ¤ \frac{ℂQ}{I(R)} \cong \frac{W ¤ ℂQ}{W ¤ I(R)}, \quad R ¤ \frac{ℂQ}{I(R)} \cong \frac{R ¤ ℂQ}{R ¤ I(R)} \quad ℂQ_1 ¤ \frac{ℂQ}{I(R)} \cong \frac{ℂQ_1 ℂQ}{ℂQ_1 I(R)}.
\end{equation*}
Inserting this into the exact sequence \eqref{eq:flatness-sequence}, we get the exact sequence
\begin{equation*}
\frac{W ¤ ℂQ}{W ¤ I(R)} \overset{[f]}{→} \frac{R ¤ ℂQ}{R ¤ I(R)} \overset{[\contraction]}{→} \frac{ℂQ_1 ℂQ}{ℂQ_1 I(R)}
\end{equation*}
Evaluating $ \Ker ⊂ \Im $ on this sequence yields the strong Berger-Ginzburg inclusion.

The fourth statement follows again from cyclicity. Any element in $ ℂQ_1 P + P ℂQ_1 $ can be written as $ x + ψ^0 (x) + y + ψ^1 (y) $, where $ x ∈ R ℂQ_1 $ and $ y ∈ ℂQ_1 R $. If such an element additionally lies in $ \mathfrak{m} ℂQ $, we get $ x + y = 0 $, since $ ψ^0 $ and $ ψ^1 $ only give terms in $ \mathfrak{m} ℂQ $. Hence $ x = -y ∈ ℂQ_1 R ∩ R ℂQ_1 = W $. Without loss of generality, we can assume $ x $ and $ y $ are in the form
\begin{equation*}
x = \sum_{t(a) = v} ∂_a W a = \sum_{h(a) = v} a ∂_a W = -y.
\end{equation*}
We deduce
\begin{equation*}
ψ^0 (x) = \sum_{t(a) = v} ∂_a W' a = \sum_{h(a) = v} a ∂_a W' = - ψ^1 (y).
\end{equation*}
In total, $ x + ψ^0 (x) + y + ψ^1 (y) $ vanishes. This demonstrates the fourth statement and finishes the proof.
\end{proof}

We have seen that the case of a CY3 superpotential is covered by \autoref{conv:flatness-relations}. From here on, we proceed in the context of \autoref{conv:flatness-relations}. We finish the present section with remarks on the difference between our strong Berger-Ginzburg inclusion and the original Berger-Ginzburg inclusion, as well as an explanation on how Berger and Ginzburg proceed in case of a homogeneous superpotential.

\begin{remark}
Let us compare the standard Berger-Ginzburg inclusion \eqref{eq:flatness-BGstandard} and strong Berger-Ginzburg inclusion \eqref{eq:flatness-BGstrong}. We claim that the strong inclusion implies the standard inclusion and the converse holds if $ W ∈ ℂQ $ is homogeneous. To show the strong-to-weak implication, lift any element $ x ∈ R ℂQ ∩ ℂQ_1 R $ to the tensor product $ R ¤ ℂQ $, apply the strong inclusion and contract again. To prove the weak-to-strong implication, assume $ W $ is homogeneous. Let $ x ∈ R ¤ ℂQ ∩ \contraction^{-1} (ℂQ_1 I(R)) $, then
\begin{equation*}
\contraction(x) ∈ R ℂQ ∩ ℂQ_1 I(R) ⊂ W ℂQ + R I(R) = \contraction(f(W ¤ ℂQ) + R ¤ I(R)).
\end{equation*}
Homogeneity makes the contraction map $ \contraction $ injective and hence $ x ∈ f(W ¤ ℂQ) + R ¤ I(R) $. This proves the weak-to-strong implication.
\end{remark}

The work of Berger and Ginzburg \cite{Berger-Ginzburg} takes place in the context of PBW deformations. We have translated their result to the case of formal deformations in \autoref{th:flatness-BGstandard-BGflatness}. Now that we have introduced better terminology, we are ready to explain how Berger and Ginzburg prove their result. Translated again to formal deformations, their core lemma is the following:

\begin{lemma}[Berger-Ginzburg]
\label{th:flatness-berger-ginzburg}
Assume the standard Berger-Ginzburg inclusion and [CP]. Then
\begin{equation}
\label{eq:flatness-BGstrong-toyflat}
I(P)_{ℂQ} ∩ \mathfrak{m} ℂQ ⊂ ℂQ_1 (I(P)_{ℂQ} ∩ \mathfrak{m} ℂQ) + \mathfrak{m} I(P)_{ℂQ}.
\end{equation}
\end{lemma}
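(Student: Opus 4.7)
The plan is to extract the $\mathfrak{m}$-leading part of $x$, apply the standard Berger-Ginzburg inclusion there, and then lift the resulting decomposition back to the deformed setting by means of the identity $\psi^0 = \psi^1$ furnished by [CP] and \autoref{th:flatness-setup-psi}.

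Concretely, I would write $x \in I(P)_{ℂQ} \cap \mathfrak{m} ℂQ$ as a finite sum $x = \sum_i u_i P_i v_i$ with $u_i, v_i \in B \htensor ℂQ$ and $P_i = r_i + \psi(r_i) \in P$. Replacing each $u_i, v_i$ by its leading term $\bar u_i := \pi(u_i), \bar v_i := \pi(v_i) \in ℂQ$ alters $x$ only by an element of $\mathfrak{m} I(P)_{ℂQ}$, so modulo this term I may assume $u_i, v_i \in ℂQ$. Splitting $u_i = u_i^0 + u_i^+$ with $u_i^0 \in ℂQ_0$ and $u_i^+ \in ℂQ_1 ℂQ$ then yields $x \equiv y + z \pmod{\mathfrak{m} I(P)_{ℂQ}}$, where $y := \sum u_i^0 P_i v_i$ concentrates the vertex prefactors and $z := \sum u_i^+ P_i v_i$ clearly belongs to $ℂQ_1 I(P)_{ℂQ}$.

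The heart of the argument treats $y$. Reducing modulo $\mathfrak{m}$ and using $x \in \mathfrak{m} ℂQ$ gives $\pi(y) = -\pi(z)$, so that $\pi(y)$ simultaneously lies in $R \cdot ℂQ$ and in $ℂQ_1 \cdot I(R)$. After reorganizing so that the standard Berger-Ginzburg inclusion \eqref{eq:flatness-BGstandard} applies (if necessary by iterating the inclusion on inner factors), I obtain a decomposition $\pi(y) = \sum_j w_j q_j + \sum_j r_j' s_j$ with $w_j \in W$, $r_j' \in R$, $s_j \in I(R)$. I then lift back to the deformed world: writing $w_j = \sum_k a_{jk} r_{jk} = \sum_k r_{jk}' a_{jk}'$, the undeformed term $w_j q_j$ lifts to $\sum_k a_{jk} P_{jk} q_j \in ℂQ_1 I(P)_{ℂQ}$, and the resulting $\psi$-error $\sum_k a_{jk} \psi(r_{jk}) q_j = \psi^0(w_j) q_j$ is reconciled with its alternative presentation $\psi^1(w_j) q_j = \sum_k \psi(r_{jk}') a_{jk}' q_j$ via [CP], producing the cancellation that absorbs the error into $\mathfrak{m} I(P)_{ℂQ}$. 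An analogous replacement handles each $r_j' s_j$ after expanding $s_j = \sum_l \sigma_{jl} r_{jl}'' \tau_{jl}$ and substituting $r_{jl}'' \leadsto P_{jl}''$. The upshot is $y \equiv y' \pmod{\mathfrak{m} I(P)_{ℂQ}}$ for some $y' \in ℂQ_1 I(P)_{ℂQ}$.

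To conclude, $x \equiv y' + z \pmod{\mathfrak{m} I(P)_{ℂQ}}$ with $y' + z \in ℂQ_1 I(P)_{ℂQ}$; since both $x$ and the $\mathfrak{m} I(P)_{ℂQ}$ correction lie in $\mathfrak{m} ℂQ$, so does $y' + z$. Using the direct-sum decomposition $ℂQ_1 \cdot ℂQ = \bigoplus_{a \in Q_1} a \cdot ℂQ$, I write $y' + z = \sum_a a \xi_a$ with $\xi_a \in I(P)_{ℂQ}$; the vanishing of $\pi(y' + z)$ then forces $\pi(\xi_a) = 0$ for every $a \in Q_1$, so each $\xi_a \in I(P)_{ℂQ} \cap \mathfrak{m} ℂQ$, yielding the desired inclusion. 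The main obstacle I anticipate is the bookkeeping in the lifting step: one must verify that after invoking the [CP] identification $\psi^0 = \psi^1$, the aggregated $\psi$-errors arising from both the $W \cdot ℂQ$ and $R \cdot I(R)$ pieces genuinely aggregate into $\mathfrak{m} I(P)_{ℂQ}$ rather than leaving behind stray first-order contributions, which is where the finiteness of $R$ and the semisimplicity of $ℂQ_0$ (together with \autoref{th:flatness-setup-psi}) do the essential work.
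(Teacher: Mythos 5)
The paper never proves this lemma — it states it and attributes it to Berger–Ginzburg, then moves on to prove bounded/strong analogues (\autoref{th:flatness-flatcompleted-bounds}, \autoref{th:flatness-flatcompleted-crude}). Still, a comparison with those proofs is the right way to evaluate your attempt.

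Your overall shape is right (peel off $\mathfrak{m}$-corrections and arrow prefactors, apply BG to the leading term, lift back, then use the arrow-prefix direct sum in the final step — which is fine), but there is a genuine gap at the ``lift back'' step. You apply the BG inclusion to the contracted element $\pi(y) \in ℂQ$, obtaining a decomposition $\pi(y) = \sum_j w_j q_j + \sum_j r_j' s_j$ \emph{in $ℂQ$}. You then form a deformed lift $\tilde y$ by replacing each relation symbol by $r + \psi(r)$. But $y$ itself has its own deformed presentation $\sum_i (r_i + \psi(r_i)) v_i$, and the two lifts differ by
\[
y - \tilde y = \sum_i \psi(r_i) v_i - \sum_j \psi^0(w_j) q_j - \sum_j \psi(r_j') s_j,
\]
which is a sum of bare $\psi(r)\cdot(\text{path})$ terms. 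Such terms lie in $\mathfrak{m} ℂQ$ but \emph{not} in $\mathfrak{m} I(P)_{ℂQ}$: the condition [CP] identifies $\psi^0 = \psi^1$ on $W$ via \autoref{th:flatness-setup-psi}, which you correctly use to make the lift of $w_j q_j$ land in $ℂQ_1 I(P)_{ℂQ}$, but it does nothing to identify $y$ with $\tilde y$. A priori $y - \tilde y$ only lies in $I(P)_{ℂQ} \cap \mathfrak{m} ℂQ$ — precisely the set you are trying to bound — so the argument is circular at this point.

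The paper's proof of the analogue (\autoref{th:flatness-flatcompleted-bounds}) avoids this by working \emph{at the tensor level}. There one forms $x = \sum p_i r_i \otimes q_i \in R \otimes ℂQ$ and applies the \emph{strong} BG inclusion $R \otimes ℂQ \cap \contraction^{-1}(ℂQ_1 I(R)) \subset f(W \otimes ℂQ) + R \otimes I(R)$, yielding $x = y + z$ as tensors. Applying $\psi$ to the left tensor factor is then a well-defined operation commuting with this decomposition, so $c(x + \psi(x)) = c(y + \psi(y)) + c(z + \psi(z))$ holds \emph{exactly}, and the deformed version you care about is $c(x+\psi(x))$ itself — there is no discrepancy to control. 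Your version, working only with the contracted standard inclusion, loses exactly this canonicality.

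A secondary but real issue: the standard BG inclusion as stated in \eqref{eq:flatness-BGstandard} has $ℂQ_1 R$ on the left, whereas you need $\pi(y) \in R ℂQ \cap ℂQ_1 I(R)$. Your remark about ``iterating the inclusion on inner factors'' does not explain how to reduce $ℂQ_1 I(R)$ to $ℂQ_1 R$. (The paper's surrounding remarks suggest the intended standard inclusion may in fact have $ℂQ_1 I(R)$ on the left, but as written the lemma's hypothesis does not cover your element.)

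To repair the proof, you would need to work at the tensor level from the start: lift $y$ to $\sum u_i^0 r_i \otimes v_i \in R \otimes ℂQ$, observe it lands in $\contraction^{-1}(ℂQ_1 I(R))$, invoke the strong BG inclusion, and then push the $\psi$-correction through the resulting \emph{tensor} decomposition — which is exactly what the paper's \autoref{th:flatness-flatcompleted-bounds} does.
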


After proving this lemma, Berger and Ginzburg assume that $ Q $ has a grading in which every arrow is positive and $ W $ is homogeneous. They continue as follows: Pick an element $ x $ on the left-hand side of homogeneous degree in zeroth order. By the lemma we can write $ x = y + z $, where $ y ∈ ℂQ_1 (I(P)_{ℂQ} ∩ \mathfrak{m} ℂQ) $ and $ z $ already lies in $ \mathfrak{m} I(P)_{ℂQ} $. In $ y $, we can split off arrows on the left. This results in reducing the degree and we can assume by induction that the part with an arrow less already lies in $ \mathfrak{m} I(P)_{ℂQ} $, hence $ y ∈ ℂQ_1 \mathfrak{m} I(P)_{ℂQ} ⊂ \mathfrak{m} I(P)_{ℂQ} $. It follows that $ I(P)_{ℂQ} ∩ \mathfrak{m} ℂQ ⊂ \mathfrak{m} I(P)_{ℂQ} $. This proves flatness in the setting of Berger and Ginzburg where $ W $ is homogeneous.

\subsection{Relations of bounded type}
\label{sec:flatness-bounded}
In this section, we show how to circumvent the homogeneity requirement of Berger and Ginzburg. Our substitution for the homogeneity requirement is a simple yet powerful boundedness condition. In the present section, we first introduce the boundedness argument in high generality. Then we demonstrate its strength in a sequence of applications. In \autoref{sec:flatness-BGbounded}, we tailor it specifically to the case of deformed relations and derive a bounded version of the strong Berger-Ginzburg inclusion. The boundedness condition allows us in \autoref{sec:flatness-flatcompleted} to continue the proof of flatness without homogeneity assumption.

In the most general way, our boundedness argument is stated as follows:

\begin{lemma}
\label{th:flatness-basis-related}
Let $ V $ be a vector space with basis $ V_0 $ and an equivalence relation $ \sim $ on $ V_0 $. Let $ X ⊂ V $ be a subspace with spanning set $ X_0 $ such that the $ V_0 $-constituents of every $ x_0 ∈ X_0 $ are $ \sim $-related. Then for every set $ C ⊂ V_0 $ closed under $ \sim $ we have
\begin{equation*}
X ∩ \vspan(C) ⊂ \vspan(X_0 ∩ \vspan(C)).
\end{equation*}
\end{lemma}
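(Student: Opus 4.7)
The plan is to exploit the direct sum decomposition of $V$ induced by the equivalence relation. Since $V_0$ is a basis and $\sim$ partitions $V_0$ into equivalence classes $\{E_\alpha\}_{\alpha \in A}$, we have a canonical direct sum decomposition
\begin{equation*}
V ~=~ \bigoplus_{\alpha \in A} \vspan(E_\alpha).
\end{equation*}
Any subset $D \subset V_0$ which is a union of equivalence classes then satisfies $\vspan(D) = \bigoplus_{E_\alpha \subset D} \vspan(E_\alpha)$. The set $C$ is precisely of this form by hypothesis.

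First I would pick any $x \in X \cap \vspan(C)$ and write $x = \sum_{i \in I} \lambda_i x_{0,i}$ with $\lambda_i \in \mathbb{C}$ and $x_{0,i} \in X_0$. For each $i$, the constituents of $x_{0,i}$ lie entirely in some single equivalence class, say $E_{\alpha(i)}$, so $x_{0,i} \in \vspan(E_{\alpha(i)})$. Next I would group the sum by equivalence class: for each $\alpha \in A$ put
\begin{equation*}
x_\alpha ~=~ \sum_{\substack{i \in I \\ \alpha(i) = \alpha}} \lambda_i x_{0,i} ~\in~ \vspan(E_\alpha),
\end{equation*}
so that $x = \sum_\alpha x_\alpha$.

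Now I would invoke the direct sum. Since $x \in \vspan(C) = \bigoplus_{E_\alpha \subset C} \vspan(E_\alpha)$ and simultaneously $x = \sum_\alpha x_\alpha$ in the decomposition $V = \bigoplus_\alpha \vspan(E_\alpha)$, uniqueness of the components forces $x_\alpha = 0$ whenever $E_\alpha \not\subset C$. Hence
\begin{equation*}
x ~=~ \sum_{\alpha \,:\, E_\alpha \subset C} \, \sum_{\substack{i \in I \\ \alpha(i) = \alpha}} \lambda_i x_{0,i}.
\end{equation*}
Each summand $x_{0,i}$ appearing here has all its constituents in $E_{\alpha(i)} \subset C$, so $x_{0,i} \in \vspan(C)$ and therefore $x_{0,i} \in X_0 \cap \vspan(C)$. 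This exhibits $x$ as a linear combination of elements of $X_0 \cap \vspan(C)$, proving the inclusion.

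I do not expect a real obstacle here: the statement is essentially the observation that a spanning set whose members each live in a single block of a direct sum decomposition automatically respects that decomposition. The only point worth checking carefully is that $C$ being \emph{closed under} $\sim$ is exactly what makes $\vspan(C)$ a sum of whole summands $\vspan(E_\alpha)$, which is what lets uniqueness of components kill the terms with $E_\alpha \not\subset C$.
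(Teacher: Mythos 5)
Your proof is correct and follows essentially the same idea as the paper's, just with a finer bookkeeping device. The paper makes the coarse two-way split $V = \vspan(C) \oplus \vspan(V_0 \setminus C)$: it separates the indices $i$ according to whether the constituents of $x_{0,i}$ lie in $C$ or (using the hypothesis on $X_0$ together with $\sim$-closedness of $C$) entirely outside $C$, and then notes that since $x \in \vspan(C)$ the second bundle of terms must vanish. You instead invoke the full decomposition $V = \bigoplus_\alpha \vspan(E_\alpha)$ by equivalence classes and kill all components with $E_\alpha \not\subset C$ at once; collapsing your classes into the two buckets ``inside $C$'' and ``outside $C$'' recovers the paper's argument exactly. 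Neither version buys anything over the other -- same hypothesis used in the same place, same conclusion -- so I'd regard this as the same proof.
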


\begin{proof}
The strategy is to write an element on the left-hand side in terms of the spanning set $ X_0 $ and then realize that the terms not related to $ C $ vanish collectively. Indeed, let
\begin{equation*}
x = \sum_{i ∈ I} λ_i x_i ∈ X ∩ \vspan(C), \quad x_i ∈ X_0, ~ |I| < ∞.
\end{equation*}
Now let $ S ⊂ I $ be the set of indices $ i ∈ I $ where the constituents of $ x_i $ lie in $ C $. Regard the decomposition
\begin{equation*}
x = \sum_{i ∈ S} λ_i x_i + \sum_{i \notin S} λ_i x_i.
\end{equation*}
The $ V_0 $-constituents of the first summand all lie in $ C $, while the constituents of the second summand all lie outside of $ C $. Since the left-hand side $ x $ lies in $ \vspan(C) $, the second summand necessarily vanishes. Since $ x_i ∈ X_0 ∩ \vspan(C) $ for $ i ∈ S $, this finishes the proof.
\end{proof}

We have a specific application to quiver algebras with relations $ R $ in mind: Declare paths $ pcq $ and $ pdq $ related if $ c $ and $ d $ appear in a relation of $ R $ together. Let us make this precise.

\begin{definition}
Let $ Q $ be a quiver and $ R ⊂ ℂQ $ a finite-dimensional $ ℂQ_0 $-bimodule. Assume a basis $ F $ for $ R $ is given. For each $ c ∈ F $, decompose $ c = \sum λ_i c_i $ as linear combination of paths ($ λ_i ≠ 0 $), then set $ p c_i q \sim p c_j q $ for any paths $ p, q $ and indices $ i, j $. Denote by $ \sim $ the transitive hull of this relation. Two paths $ p $ and $ q $ are \emph{$ F $-related} if $ p \sim q $. For $ N ∈ ℕ $ denote the supremum of all lengths of paths related to paths of lengths $ ≤ N $ by
\begin{equation*}
h(N) = \sup\{|q| \running ∃p: |p| ≤ N, ~ p \sim q\} ∈ ℕ ∪ \{∞\}.
\end{equation*}
The basis $ F $ is of \emph{bounded type} if $ h(N) < ∞ $ for all $ N ∈ ℕ $.
\end{definition}

\begin{remark}
The basis $ F $ is always assumed to be a basis for $ R $ as $ ℂQ_0 $-bimodule. Simply speaking, for every $ c ∈ F $ there shall be vertices $ v, w ∈ Q_0 $ such that every path in $ c $ runs from $ v $ to $ w $.
\end{remark}

During the present section, we provide applications of the bounded type condition. Typically, we will work with the basis $ F $ explicitly. In later sections it is only relevant that there exists a basis of bounded type. We therefore set up the following terminology:

\begin{definition}
\label{def:flatness-bounded-RWbounded}
Let $ Q $ be a quiver and $ R ⊂ ℂQ $ a finite-dimensional $ ℂQ_0 $-bimodule of relations. Then $ R $ is of \emph{bounded type} if it has a basis $ F $ of bounded type. A superpotential $ W $ is of \emph{bounded type} if its relation space $ R = \vspan\{∂_a W\} $ is of bounded type.
\end{definition}

\begin{example}
If $ R ⊂ ℂQ $ is graded space with respect to path length or any other grading that is positive on the arrows, then any homogeneous basis for $ R $ is of bounded type: Degree is then an invariant under $ F $-relatedness and length becomes bounded by a multiple of the degree. Here are three easy instances for the algebra $ ℂQ = ℂ⟨A, B, C⟩ $:

\begin{itemize}
\item Regard $ R = \vspan(AB-BA, BC-CB, CA-AC) $ with basis $ F = \{AB-BA, BC-CB, CA-AC\} $. We claim that $ F $ is of bounded type. Indeed, two paths $ p, q $ in $ Q $ are $ F $-related if and only if they differ by reordering of $ A $, $ B $ and $ C $. For instance, $ ABAC \sim AACB $. In particular, any two $ F $-related paths are of equal length and $ h(N) = N $. This shows that $ F $ is of bounded type.
\item Regard $ R = \vspan(AB-C^4, BA-C^2 B) $ and $ F = \{AB-C^4, BA-C^2 B\} $. This gives for instance $ B C^4 \sim BAB \sim C^2 B^2 $. To see that $ F $ is of bounded type, give $ A, B $ degree $ 2 $ and $ C $ degree $ 1 $. Then $ F $-related paths are equal in degree. We have $ h(0) = 0 $ and $ h(1) = 1 $ and $ h(N) = 2N $ for $ N ≥ 2 $.
\item Regard $ R = \vspan(AB, ABC) $. The basis $ F = \{AB, ABC\} $ is of bounded type and $ h(N) = N $. The basis $ F = \{AB + ABC, AB - ABC\} $ is however not of bounded type, because $ AB \sim ABC \sim ABCC $ etc.
\end{itemize}
\end{example}

\begin{remark}
Given a basis $ F ⊂ R $ and an integer $ N ∈ ℕ $, denote by $ l(N) $ the minimal length of paths $ F $-related to paths of length $ ≥ N $:
\begin{equation*}
l(N) = \min \{|q| \running ∃p: |p| ≥ N, ~ p \sim q\}.
\end{equation*}

Then $ h(l(N)) ≥ N $. Indeed, pick a path of length $ l(N) $. Then it is $ F $-related to a path of length $ ≥ N $ and hence the supremum $ h(l(N)) $ is at least $ N $.

Assume $ F $ is of bounded type. Then $ h $ is finite on every integer. The inequality $ h(l(N)) ≥ N $ ensures that $ l(N) $ converges to infinity as $ N → ∞ $. Simply speaking, if $ l(N) $ stays small, this means there are short paths equivalent to longer and longer paths, precisely the opposite of the assumption. In total, we conclude that the interval $ [l(M), h(N)] $ goes to infinity as $ [M, N] $ goes to infinity. Moreover, we can say
\begin{equation*}
p \sim q, ~ |q| ∈ [M, N] \quad \Longrightarrow \quad |p| ∈ [l(M), h(N)].
\end{equation*}
\end{remark}

To demonstrate the strength of the boundedness assumption, we apply \autoref{th:flatness-basis-related} to relations of bounded type. For instance, the next lemma brings elements $ x ∈ X $ known to satisfy length bounds back to a bounded part of the spanning set. We write $ \fourIdx{}{M≤}{}{≤N}{ℂQ} $ for the subspace of $ ℂQ $ spanned by paths of length between $ M $ and $ N $.

\begin{lemma}
\label{th:flatness-space-boundedness}
Let $ Q $ be a quiver and $ R ⊂ ℂQ $ a $ ℂQ_0 $-bimodule with basis $ F $. Let $ X ⊂ ℂQ $ be a subspace with spanning set $ X_0 $ such that for every $ x_0 ∈ X_0 $ all paths appearing in $ x_0 $ are $ F $-related. Then
\begin{equation}
\label{eq:flatness-space-boundedness}
∀M, N ∈ ℕ: \quad X ∩ \fourIdx{}{M≤}{}{≤N}{ℂQ} ⊂ \vspan(X_0 ∩ \fourIdx{}{l(M)≤}{}{≤h(N)}{ℂQ}).
\end{equation}
\end{lemma}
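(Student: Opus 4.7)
The plan is to reduce the statement to a direct application of \autoref{th:flatness-basis-related} by exhibiting the correct $ \sim $-closed set $ C $ of paths. The naive candidate $ \{p \running |p| ∈ [M, N]\} $ is not closed under $ F $-relatedness, so instead I would take its $ \sim $-saturation and let the bounds $ l(M), h(N) $ emerge automatically.

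First, set $ V = ℂQ $ with $ V_0 $ the set of all paths in $ Q $ and $ \sim $ the $ F $-relation. The hypothesis on $ X_0 $ is precisely that the $ V_0 $-constituents of every $ x_0 ∈ X_0 $ are $ \sim $-related, which is exactly what \autoref{th:flatness-basis-related} demands. I would then define
\[
C = \{q ∈ V_0 \running ∃ p ∈ V_0 : ~ p \sim q \text{ and } |p| ∈ [M, N]\},
\]
which is $ \sim $-closed by construction, since $ \sim $ is transitive.

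The key step is a two-sided sandwiching of $ \vspan(C) $ in terms of path length. On one side, every path $ p $ with $ |p| ∈ [M, N] $ is $ \sim $-related to itself, so $ \{p : |p| ∈ [M, N]\} ⊂ C $ and therefore $ \fourIdx{}{M≤}{}{≤N}{ℂQ} ⊂ \vspan(C) $. On the other side, if $ q ∈ C $ with witness $ p \sim q $ and $ |p| ∈ [M, N] $, then the very definitions of $ h $ and $ l $ give $ |q| ≤ h(N) $ and $ |q| ≥ l(M) $; consequently $ \vspan(C) ⊂ \fourIdx{}{l(M)≤}{}{≤h(N)}{ℂQ} $.

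Applying \autoref{th:flatness-basis-related} to this $ C $ then assembles the chain
\[
X ∩ \fourIdx{}{M≤}{}{≤N}{ℂQ} ~⊂~ X ∩ \vspan(C) ~⊂~ \vspan(X_0 ∩ \vspan(C)) ~⊂~ \vspan(X_0 ∩ \fourIdx{}{l(M)≤}{}{≤h(N)}{ℂQ}),
\]
which is the desired inclusion \eqref{eq:flatness-space-boundedness}. I expect no genuine obstacle: \autoref{th:flatness-basis-related} does all the abstract work, and the only thing to notice is that the length intervals $ [M, N] $ and $ [l(M), h(N)] $ are precisely calibrated so that the $ \sim $-saturation stays trapped between them. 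The bounded-type assumption on $ F $ is not needed for the inclusion itself; it only enters later to guarantee $ h(N) < ∞ $ and thereby make the right-hand side useful.
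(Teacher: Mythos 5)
Your proof is correct and follows essentially the same route as the paper: same choice of ambient space, spanning set, relation, and saturated set $C$, followed by the same two-sided sandwiching via the definitions of $l$ and $h$. The paper simply states the sandwiching more tersely, so there is nothing substantive to add.
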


\begin{proof}
We apply \autoref{th:flatness-basis-related}. As ambient space use $ V = ℂQ $, as spanning set $ V_0 $ use the set of paths in $ Q $, as relation $ \sim $ use $ F $-relatedness, and as restriction $ C $ use the set of paths in $ Q $ that are $ F $-related to paths of length between $ M $ and $ N $. \autoref{th:flatness-basis-related} now yields $ X ∩ \vspan(C) ⊂ \vspan(X_0 ∩ \vspan(C)) $.

The desired inclusion \eqref{eq:flatness-space-boundedness} is slightly weaker than this. Let us check. The left side of \eqref{eq:flatness-space-boundedness} is contained in $ X ∩ \vspan(C) $, since $ C $ includes in particular all paths of length between $ M $ and $ N $. Finally $ \vspan(X_0 ∩ \vspan(C)) $ is contained in the right side of \eqref{eq:flatness-space-boundedness}, since paths in $ C $ always have length at least $ l(M) $ and at most $ h(N) $.
\end{proof}

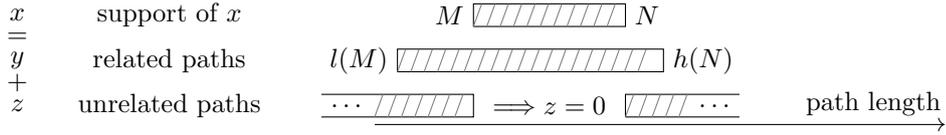
\begin{figure}
\centering
\begin{tikzpicture}
\path[draw] (0, 0) -- ++(up:0.3) node[midway, left] {$ M $} -- ++(right:2) -- ++(down:0.3) node[midway, right] {$ N $} coordinate (top-right) -- ++(left:2) coordinate (top-left);
\path[draw] (-1, -0.6) -- ++(up:0.3) node[midway, left] {$ l(M) $} -- ++(right:3.5) -- ++(down:0.3) node[midway, right] {$ h(N) $} coordinate (mid-right) -- ++(left:3.5) coordinate (mid-left);
\path[draw] (-2, -1.2) ++(up:0.3) coordinate (bot1-dots-top) -- ++(right:2) -- ++(down:0.3) coordinate[midway] (bot1-mid) coordinate (bot1-right) -- ++(left:2) coordinate[pos=0.65] (bot1-left);
\path[draw] (2, -1.2) -- ++(up:0.3) coordinate[midway] (bot2-mid) -- ++(right:1.5) coordinate (bot2-dots-top) ++(down:0.3) coordinate (bot2-dots-bot) [draw] -- ++(left:1.5) coordinate[pos=0.3] (bot2-right) coordinate (bot2-left);
\path ($ (-2, -1.2)!0.5!(bot1-dots-top) $) node[right] {…};
\path ($ (bot2-dots-top)!0.5!(bot2-dots-bot) $) node[left] {…};
\foreach \piece in {top, mid, bot1, bot2} \path[draw, decoration={border, amplitude=9pt, segment length=5pt, angle=70}, decorate, gray] (\piece-left) to (\piece-right);
\path[draw, ->] (-1.3, -1.3) to node[very near end, above] {path length} ++(right:7.5);
\path (-4, 0.15) node {support of $ x $};
\path (-4, -0.45) node {related paths};
\path (-4, -1.05) node {unrelated paths};
\path (-6, 0.15) node {$ x $};
\path (-6, -0.15) node {$ = $};
\path (-6, -0.45) node {$ y $};
\path (-6, -0.75) node {$ + $};
\path (-6, -1.05) node {$ z $};
\path ($ (bot1-mid)!0.5!(bot2-mid) $) node {$ \Longrightarrow z = 0 $};
\end{tikzpicture}
\caption{Decomposition into related and unrelated paths}
\label{fig:flatness-decomposition-length}
\end{figure}


We provide a further example of path length analysis, a toy version of our later flatness results.

\begin{lemma}
\label{th:flatness-toy-first-order}
Let $ Q $ be a quiver and $ R ⊂ ℂQ $ a $ ℂQ_0 $-bimodule with basis $ F $. Let $ (B, \mathfrak{m}) $ be a deformation base. Let $ X ⊂ B \htensor ℂQ $ be a subspace with spanning set $ X_0 $ such that for every $ x_0 ∈ X_0 $ all zeroth-order paths in $ x_0 $ are $ F $-related. Then
\begin{equation*}
X ∩ (ℂQ_{≤N} + \mathfrak{m}ℂQ) ⊂ \vspan(X_0 ∩ (ℂQ_{≤h(N)} + \mathfrak{m}ℂQ)) + X ∩ \mathfrak{m}ℂQ.
\end{equation*}
\end{lemma}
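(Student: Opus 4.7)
The plan is to mimic the proof of \autoref{th:flatness-space-boundedness}, but work modulo $\mathfrak{m}\compl{ℂQ}$ so that the path-length analysis only needs to see the zeroth-order projection. The key technical move is to partition the set of paths in $Q$ into $F$-equivalence classes and regard both the element $x$ and the spanning elements $x_i$ through their zeroth-order shadows in $ℂQ$.

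More concretely, take $x ∈ X ∩ (ℂQ_{≤N} + \mathfrak{m}ℂQ)$ and write $x = \sum_i λ_i x_i$ in terms of the spanning set $X_0$. Applying the projection $π: B \htensor ℂQ → ℂQ$ gives $π(x) ∈ ℂQ_{≤N}$. Now let $C$ be the union of all $F$-equivalence classes of paths that meet $ℂQ_{≤N}$. By the bounded type assumption, every path in $C$ has length at most $h(N)$, so $\vspan(C) ⊆ ℂQ_{≤h(N)}$. Split the index set into $S = \{i : π(x_i) ∈ \vspan(C)\}$ and its complement. Since all paths in $π(x_i)$ are $F$-related by hypothesis on $X_0$, each $π(x_i)$ lies entirely in $\vspan(C)$ or entirely in the complementary span $\vspan(\text{paths} \setminus C)$; so the partition is well-defined and exhaustive.

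Next, I would decompose $x = y + z$ with $y = \sum_{i ∈ S} λ_i x_i$ and $z = \sum_{i \notin S} λ_i x_i$. For $i \in S$, we have $π(x_i) \in ℂQ_{≤h(N)}$ and hence $x_i = π(x_i) + (x_i - π(x_i)) \in ℂQ_{≤h(N)} + \mathfrak{m} ℂQ$, so $x_i \in X_0 \cap (ℂQ_{≤h(N)} + \mathfrak{m} ℂQ)$ and $y \in \vspan(X_0 \cap (ℂQ_{≤h(N)} + \mathfrak{m} ℂQ))$. For the remainder $z$, note that $π(z) = \sum_{i \notin S} π(λ_i) π(x_i)$ lies in $\vspan(\text{paths} \setminus C)$, while simultaneously $π(z) = π(x) - π(y) \in ℂQ_{≤N} + \vspan(C) \subseteq \vspan(C)$. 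The direct sum decomposition $ℂQ = \vspan(C) \oplus \vspan(\text{paths} \setminus C)$ forces $π(z) = 0$, so $z \in \mathfrak{m} ℂQ$. Finally, $z = x - y \in X$ since both $x$ and $y$ belong to $X$, so $z \in X \cap \mathfrak{m} ℂQ$, which completes the decomposition.

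The one point requiring care, and the main potential obstacle, is the interpretation of $\vspan$ when coefficients $λ_i$ lie in $B$ rather than $ℂ$: one must make sure that ``$y \in \vspan(X_0 \cap \ldots)$'' is read in the same $B$-linear sense used elsewhere in the section, so that the decomposition $x = y + z$ is legitimate within the $B$-linear framework. As long as ``spanning set'' here means $B$-linear combinations of elements of $X_0$ (consistently with the setup of $X \subseteq B \htensor ℂQ$ in later uses), the argument above works verbatim and no further algebraic input is needed beyond the boundedness of $F$.
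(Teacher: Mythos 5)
Your proof is correct and takes essentially the same route as the paper: project to zeroth order and split the spanning-set expansion according to $F$-relatedness to short paths, with your explicit index-set partition simply unfolding the citation of \autoref{th:flatness-basis-related} that the paper's proof invokes. The $B$-versus-$ℂ$ concern you raise at the end does not in fact arise, since $\vspan$ here denotes the $ℂ$-linear span (as in \autoref{th:flatness-basis-related}) and your argument goes through verbatim with $ℂ$-coefficients.
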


\begin{proof}
It is possible to build on \autoref{th:flatness-space-boundedness}, but we deploy \autoref{th:flatness-basis-related} instead. Denote by $ π: X → ℂQ $ the projection to zeroth order. Define $ Y_0 ≔ π(X_0) $. It is a spanning set for $ π(X) $. Now let $ x ∈ X ∩ (ℂQ_{≤N} + \mathfrak{m}ℂQ) $. Then
\begin{equation*}
π(x) ∈ π(X) ∩ ℂQ_{≤N} ⊂ \vspan(Y_0 ∩ ℂQ_{≤h(N)}) ⊂ \vspan(π(X_0 ∩ (ℂQ_{≤h(N)} + \mathfrak{m}ℂQ))).
\end{equation*}
Hence we can choose $ x' ∈ \vspan(X_0 ∩ (ℂQ_{≤h(N)} + \mathfrak{m}ℂQ)) $ such that $ π(x) = π(x') $. In particular, we have $ x - x' ∈ X ∩ \mathfrak{m} ℂQ $. We finish the proof with the observation that $ x = x' + (x - x') $.
\end{proof}

\autoref{th:flatness-toy-first-order} is best summarized as follows: When forming a large sum $ x $ over $ X_0 $ elements, once their zeroth-order length surpasses $ h(N) $ they cannot contribute anymore to $ x $, up to higher order terms. In other words, summands of zeroth-order length $ > h(N) $ contribute only to higher order terms.

\begin{remark}
The original idea behind our boundedness condition is best explained as follows: We are given an element $ x ∈ ℂQ_1 (I(P)_{ℂQ} ∩ \mathfrak{m} ℂQ) $ and are supposed to improve on this. Berger and Ginzburg proceed by splitting off arrows on the left and thereby reducing degree. Our idea is to iterate the inclusion instead which increases degree. The resulting elements $ x_i $ lie in higher an higher path length. If we can show that the terms in very high path length are not related at all with the low length terms we started with, then the $ x_i $ must vanish. This insight led to the boundedness condition presented in this section.
\end{remark}

\subsection{Ideals tailored to boundedness}
\label{sec:flatness-ideals}
In this section, we collect and introduce notation for many ideal-like sets. For instance, we have already used the notation $ I(R) $ earlier to denote ideals generated by the relation space $ R $. In the present section, we define new ideal-like sets that are tailored to our boundedness argument of \autoref{sec:flatness-bounded}.

The setting of this section is \autoref{conv:flatness-relations}. The ideal-like sets we define here are depicted in \autoref{tab:flatness-ideals-overview}. In \autoref{def:flatness-bounded-ideals}, we start with simple subspaces of $ ℂQ $ where the lengths are bounded. In \autoref{def:flatness-ideals-embracing}, we define ideal-like spaces in which length bounds are imposed on the paths embracing a relation:

\begin{definition}
\label{def:flatness-bounded-ideals}
We define the following four subspaces of $ ℂQ $ and $ ℂQ ¤_{ℂQ_0} ℂQ $:
\begin{itemize}
\item The spaces
\begin{equation*}
\fourIdx{}{M≤}{}{≤N}{ℂQ}, \quad ℂQ_{≥N}, \quad ℂQ_{≤N}
\end{equation*}
are the subspaces of $ ℂQ $ spanned by paths of length in $ [M, N] $, $ [N, ∞) $ or $ [0, N] $, respectively.
\item The spaces 
\begin{equation*}
\fourIdx{}{M≤}{}{≤N}{(ℂQ ¤ ℂQ)}, \quad (ℂQ ¤ ℂQ)_{≥N}, \quad (ℂQ ¤ ℂQ)_{≤N}
\end{equation*}
are the subspaces of $ ℂQ ¤_{ℂQ_0} ℂQ $ spanned by pure tensors $ p ¤ p' $ where $ p $ and $ p' $ are paths with length bound $ |pp'| ∈ [M, N] $, $ |pp'| ≥ N $ or $ |pp'| ≤ N $, respectively.
\end{itemize}
\end{definition}

\begin{definition}
\label{def:flatness-ideals-embracing}
We define the following six subspaces of $ ℂQ $ and $ B \htensor \compl{ℂQ} $:
\begin{itemize}
\item The spaces
\begin{equation*}
\fourIdx{}{M≤}{}{≤N}{(ℂQ R ℂQ)}, \quad (ℂQ R ℂQ)_{≥M}, \quad (ℂQ R ℂQ)_{≤N}
\end{equation*}
are the subspaces of $ ℂQ $ consisting of elements that can be written in the form
\begin{equation*}
x = \sum_{\text{finite}} p_i r_i q_i,
\end{equation*}
where $ p_i, q_i $ are paths in $ Q $ and $ r_i ∈ R $ are relations such that for all $ i $ every path contained in $ p_i r_i q_i ∈ ℂQ $ has length in $ [M, N] $ or $ [M, ∞) $ or $ [0, N] $, respectively.
\item The spaces
\begin{equation*}
\fourIdx{}{M≤}{}{≤N}{(ℂQ P ℂQ)}, \quad (ℂQ P ℂQ)_{≥M}, \quad (ℂQ P ℂQ)_{≤N}
\end{equation*}
are the subspaces of $ B \htensor \compl{ℂQ} $ consisting of elements that can be written in the form
\begin{equation*}
x = \sum_{\text{finite}} p_i (r_i + ψ(r_i)) q_i,
\end{equation*}
where $ p_i, q_i $ are paths in $ Q $ and $ r_i ∈ R $ are relations such that for all $ i $ every path contained in $ p_i r_i q_i ∈ ℂQ $ has length in $ [M, N] $ or $ [M, ∞) $ or $ [0, N] $, respectively.
\end{itemize}
\end{definition}

\begin{table}
\centering
\morearraystretch
\begin{tabular}{@{}ccccc@{}}
\textbf{Notation} & & \textbf{Definition} & \textbf{Typical element} & \textbf{Requirement} \\\hline
$ I(R) $ & $ = $ & $ ℂQ R ℂQ $ & $ \sum\limits_{i = 0}^K p_i r_i q_i $ & finite sum \\
$ I(R)_{\compl{ℂQ}} $ & $ = $ & $ \substack{\Im(\compl{ℂQ} \htensor R \htensor \compl{ℂQ} \\ → \compl{ℂQ})} $ & $ \sum\limits_{i = 0}^∞ p_i r_i q_i $ & $ |p_i| + |q_i| → ∞ $ \\
$ \compl{ℂQ} R \compl{ℂQ} $ & $ = $ & $ \substack{\Im(\compl{ℂQ} ¤ R ¤ \compl{ℂQ} \\ → \compl{ℂQ})} $ & $ \sum\limits_{i = 0}^K \big(\sum\limits_{j = 0}^∞ λ_j p_j\big) r_i \big(\sum\limits_{j = 0}^∞ η_j q_j\big) $ & $ |p_i|, |q_i| → ∞ $ \\
$ (P) $ & $ = $ & {$ \substack{\Im(\compl{ℂQ} \htensor P \htensor \compl{ℂQ} \\ → B \htensor \compl{ℂQ})} $} & $ \sum\limits_{i = 0}^∞ p_i (r_i + ψ(r_i)) q_i $ & $ |p_i| + |q_i| → ∞ $ \\
$ I(P) $ & $ = $ & $ B ~ (P) $ & $ \sum\limits_{j = 0}^∞ m_j \sum\limits_{i = 0}^∞ p_i (r_i + ψ(r_i)) q_i $ & $ \substack{m_j ∈ \mathfrak{m}^{→∞} \\ |p_i| + |q_i| → ∞} $  \\
$ I(P)_{ℂQ} $ & $ = $ & $ B ℂQ P ℂQ $ & $ \sum\limits_{i = 0}^∞ m_i p_i (r_i + ψ(r_i)) q_i $ & $ m_i ∈ \mathfrak{m}^{→∞} $ \\
$ \fourIdx{}{M≤}{}{≤N}{ℂQ} $ & $ = $ & $ \underset{M ≤ |p| ≤ N}{\vspan} p $ & $ \sum\limits_{i = 0}^K p_i $ & $ M ≤ |p_i| ≤ N $ \\
$ \fourIdx{}{M≤}{}{≤N}{(ℂQ R ℂQ)} $ & $ = $ & $ \underset{M ≤ |prq| ≤ N}{\vspan} prq $ & $ \sum\limits_{i = 0}^K p_i r_i q_i $ & $ M ≤ |p_i r_i q_i| ≤ N $ \\
$ \fourIdx{}{M≤}{}{≤N}{(ℂQ P ℂQ)} $ & $ = $ & $ \underset{M ≤ |prq| ≤ N}{\vspan} p (r+ψ(r)) q $ & $ \sum\limits_{i = 0}^K p_i (r_i + ψ(r_i)) q_i $ & $ M ≤ |p_i r_i q_i| ≤ N $
\end{tabular}
\caption{Ideal-like sets with their definitions, typical elements and requirements. For sake of legibility, we have omitted double indices in the description of the typical elements. For instance $ p_j $ in double sums should read $ p_{i, j} $.}
\label{tab:flatness-ideals-overview}
\end{table}

We finally define several ideal-like spaces which are finely tuned to the purpose of bounding path lengths:

\begin{definition}
\label{def:flatness-ideals-IP}
We define the spaces $ I(R) $, $ I(R)_{\compl{ℂQ}} $, $ \compl{ℂQ} R \compl{ℂQ} $, $ (P) $, $ I(P) $ and $ I(P)_{ℂQ} $ as follows:
\begin{itemize}
\item The space $ I(R) = ℂQ R ℂQ ⊂ ℂQ $ is the ideal generated by $ R $. In other words, it contains elements of the form
\begin{equation*}
x = \sum\limits_{i = 0}^K p_i r_i q_i.
\end{equation*}
Here $ r_i ∈ R $ and $ p_i, q_i $ are paths in $ Q $.
\item The space $ I(R)_{\compl{ℂQ}} ⊂ \compl{ℂQ} $ is defined as the image of the multiplication map $ \compl{ℂQ} \htensor R \htensor \compl{ℂQ} \allowbreak → B \htensor \compl{ℂQ} $. In other words, its elements are of the form
\begin{equation*}
x = \sum_{i = 0}^∞ p_i r_i q_i.
\end{equation*}
Here $ r_i ∈ R $ and $ p_i, q_i $ are paths with $ |p_i| + |q_i| → ∞ $.
\item The space $ \compl{ℂQ} R \compl{ℂQ} $ is the ideal generated by $ R $ in $ \compl{ℂQ} $. In other words, its elements are of the form
\begin{equation*}
x = \sum_{j = 0}^K \big(\sum_{i = 0}^∞ λ_{i, j} p_{i, j}\big) r_{i, j} \big(\sum_{i = 0}^∞ η_{i, j} q_{i, j}\big).
\end{equation*}
Here $ r_i ∈ R $ and $ p_{i, j}, q_{i, j} $ are paths in $ Q $ such that for every $ j ∈ ℕ $ the lengths $ |p_{i, j}| $ and $ |q_{i, j}| $ converge to $ ∞ $ as $ i → ∞ $.
\item The space $ (P) ⊂ B \htensor \compl{ℂQ} $ is image of the multiplication map $ \compl{ℂQ} \htensor P \htensor \compl{ℂQ} → B \htensor \compl{ℂQ} $. In other words, it contains elements of the form
\begin{equation*}
x = \sum_{i = 0}^∞ p_i (r_i + ψ(r_i)) q_i.
\end{equation*}
Here $ r_i ∈ R $, and $ p_i, q_i $ are paths in $ Q $ with combined length $ |p_i| + |q_i| → ∞ $.
\item The space $ I(P) ⊂ B \htensor \compl{ℂQ} $ is defined as $ I(P) = B ~(P) $. In other words, it contains elements of the form
\begin{equation*}
x = \sum\limits_{j = 0}^∞ m_j \sum\limits_{i = 0}^∞ p_{i, j} (r_{i, j} + ψ(r_{i, j})) q_{i, j}.
\end{equation*}
Here $ m_j ∈ \mathfrak{m}^{→∞} $ and for every $ j ∈ ℕ $ the combined length $ |p_{i, j}| + |q_{i, j}| $ of $ p_{i, j} $ and $ q_{i, j} $ is supposed to converge to $ ∞ $ as $ i → ∞ $.
\item If $ \Im(ψ) ⊂ \mathfrak{m} ℂQ $, then the space $ I(P)_{ℂQ} ⊂ B \htensor ℂQ $ is defined as $ B (ℂQ P ℂQ) ⊂ B \htensor ℂQ $. In other words, it contains elements of the form
\begin{equation*}
x = \sum\limits_{i = 0}^∞ m_i p_i (r_i + ψ(r_i)) q_i.
\end{equation*}
Here $ m_i ∈ \mathfrak{m}^{→∞} $ and $ r_i ∈ R $. The elements $ p_i, q_i $ are paths in $ Q $.
\end{itemize}
\end{definition}

\begin{remark}
In \autoref{def:flatness-ideals-IP}, we have done our best to give both an abstract and a practical definition of all ideal-like sets. The abstract definitions come with the following two peculiarities: First, the notation $ B(P) $ in the definition of $ I(P) $ makes use of the shorthand notation of \autoref{def:prelim-submodules-shorthand}. Second, the given abstract definition of $ (P) $ makes use of the completed tensor product $ \compl{ℂQ} \htensor P \htensor \compl{ℂQ} $. The completion here is taken with respect to the Krull topology, which we actually only explain in \autoref{sec:flatness-closedness}. Just like $ B \htensor X $ consists of formal power series in elements of $ X $, the space $ \compl{ℂQ} \htensor P \htensor \compl{ℂQ} $ consists of formal two-sided power series of paths embracing elements of $ P $. For the present context, it suffices to accept the explicit description of the elements of $ (P) $ in terms of series.
\end{remark}

\begin{remark}
The individual definitions are tedious to memorize, but the hidden structure becomes apparent once we compare the definitions:

\begin{center}
\begin{tabular}{lll}
& $ R $ & $ P $ \\\hline
\textbf{formed in} $ ℂQ $ & $ I(R) $ & $ I(P)_{ℂQ} $ \\
\textbf{formed in} $ \compl{ℂQ} $ & $ I(R)_{\compl{ℂQ}} $ & $ I(P) $
\end{tabular}
\end{center}

The default objects are $ I(R) $ and $ I(P) $. The ideals $ I(R)_{\compl{ℂQ}} $ and $ I(P)_{ℂQ} $ only appear in \autoref{sec:flatness-flatalgebraic} and \ref{sec:flatness-closedness}.
\end{remark}

\begin{remark}
We list here a few warnings:
\begin{itemize}
\item $ I(R)_{\compl{ℂQ}} $ is an ideal in $ \compl{ℂQ} $, but not the ideal generated by $ R $ and not its closure either.
\item $ I(P) $ is an ideal in $ B \htensor \compl{ℂQ} $, but not the ideal generated by $ P $ and not its closure either.
\item $ \fourIdx{}{M≤}{}{≤N}{(ℂQ P ℂQ)} $ is not the same as $ ℂQ P ℂQ ∩ \fourIdx{}{M≤}{}{≤N}{ℂQ} $.
\end{itemize}
For instance, closedness of $ I(P) $ in the $ \mathfrak{m} $-adic topology would entail roughly the following: Whenever a sequence $ (x_n) ⊂ I(P) $ becomes concentrated in high powers of $ \mathfrak{m}^k $, the differences $ x_n - x_{n+1} $ are not only concentrated in $ \mathfrak{m}^k $ as a whole, but can be written as a sum over $ m_i p_i (r_i + ψ(r_i)) q_i $ where every single coefficient $ m_i $ lies in $ \mathfrak{m}^k $. This observation visualizes that $ I(P) $ is not necessarily closed. Once we prove $ I(P) $ quasi-flat however, it is also closed in the $ \mathfrak{m} $-adic topology according to \autoref{th:prelim-flatvariants-equivalence}.
\end{remark}

We have depicted abbreviated descriptions of the various sets in \autoref{tab:flatness-ideals-overview}.

\subsection{Bounded strong Berger-Ginzburg inclusion}
\label{sec:flatness-BGbounded}
In this section, we prove a bounded version of the strong Berger-Ginzburg inclusion. The necessity of this bounded version is illustrated by the large amount of path length estimates we need to deploy in \autoref{sec:flatness-flatcompleted}. Indeed, the lack of homogeneity for $ W $ makes it necessary to estimate lengths of virtually every vector involved in the flatness argument. One of the sources of vectors in the flatness argument is the strong Berger-Ginzburg inclusion. As such, we need a version of the strong Berger-Ginzburg inclusion that is suited for the bounded world. The present section is meant to provide this bounded version. It is surprising that the bounded version follows directly from the ordinary strong Berger-Ginzburg inclusion. It is a kind of a posteriori estimate and works without additional assumptions on $ Q $ or $ W $:

\begin{center}
\begin{tikzpicture}
\path (0, 0) node[align=center] (A) {\textbf{Strong Berger-Ginzburg} \\ \scriptsize $ R ¤ ℂQ ∩ \contraction^{-1} (ℂQ_1 I(R)) $ \\ \scriptsize $ ⊂ f(W ¤ ℂQ) + R ¤ I(R) $} (9, 0) node[align=center] (B) {\textbf{Bounded strong Berger-Ginzburg} \\ %
\scriptsize $ R ¤ ℂQ ~∩~ \contraction^{-1} (ℂQ_1 I(R)) ~∩~ \fourIdx{}{M≤}{}{≤N}{(ℂQ ¤ ℂQ)} $ \\
\scriptsize $ ⊂ f(W ¤~ \fourIdx{}{l(M)-|W|≤}{}{≤h(N)}{ℂQ}) ~+~ R ¤~ \fourIdx{}{l(M) - |R|≤}{}{≤h(N)}{(ℂQ R ℂQ)} $};
\path[draw, -implies, double equal sign distance] ($ (A.east)!0.1!(B.west) $) -- ($ (A.east)!1.1!(B.west) $) node[midway, above] {a posteriori};
\end{tikzpicture}
\end{center}

Since the strong Berger-Ginzburg inclusion works with tensors instead of paths, our first step in this section is to introduce a notion of $ F $-relatedness for tensors:

\begin{definition}
Let $ F $ be a basis for $ R $. Then two pure tensors of paths $ p ¤ p' $ and $ q ¤ q' $ are $ F $-\emph{related} if $ pp' $ and $ qq' $ are $ F $-related.
\end{definition}

We shall now construct a spanning set $ W_0 $ for $ W $ with the property that for every $ w ∈ W_0 $ all pure tensors of paths appearing in $ f(w) $ are $ F $-related. Recall from \autoref{conv:flatness-relations} that $ f: W ¤ ℂQ → R ¤ ℂQ_1 ℂQ $ is the map which splits $ W $ into relations and arrows. When $ w ∈ W $, then we have $ f(w) ∈ R ¤ ℂQ_1 $.

\begin{lemma}
\label{th:flatness-W-relatedness}
Let $ F $ be a basis for $ R $. Then $ W = ℂQ_1 R ∩ R ℂQ_1 $ has a spanning set $ W_0 $ such that for every $ w ∈ W_0 $ the constituents of $ f(w) ∈ R ¤ ℂQ_1 $ are $ F $-related.
\end{lemma}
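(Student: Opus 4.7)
The plan is to partition the set of paths of $ Q $ into $ F $-relatedness classes $ P = \bigsqcup_\alpha P_\alpha $, show that $ W $ splits as a corresponding direct sum $ W = \bigoplus_\alpha W_\alpha $, and then take $ W_0 $ as the union of vector-space bases of the summands $ W_\alpha $.

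First I would observe that $ ℂQ = \bigoplus_\alpha \vspan(P_\alpha) $ is a vector space decomposition, and that it induces $ R = \bigoplus_\alpha R_\alpha $ with $ R_\alpha = R \cap \vspan(P_\alpha) $. Indeed, by the very definition of $ \sim $, the constituent paths of every basis element $ c \in F $ are mutually $ F $-related, so each $ c $ lies in a single $ \vspan(P_\alpha) $. Partitioning $ F = \bigsqcup_\alpha F_\alpha $ accordingly yields $ R_\alpha = \vspan(F_\alpha) $.

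Next I would extend the decomposition to $ R ℂQ_1 $ and $ ℂQ_1 R $. Since $ \sim $ is compatible with left and right multiplication by paths, for $ c \in F_\alpha $ and $ a \in Q_1 $ all path constituents of the product $ ca $ are mutually $ F $-related, so $ ca $ belongs to a single $ \vspan(P_\beta) $. Because $ \{ca : c \in F, a \in Q_1\} $ spans $ R ℂQ_1 $, this yields
\begin{equation*}
R ℂQ_1 = \bigoplus_\beta \bigl(R ℂQ_1 \cap \vspan(P_\beta)\bigr),
\end{equation*}
with the $ \beta $-summand spanned by those $ ca $ lying in $ \vspan(P_\beta) $. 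The same argument applied to products $ ac $ gives the analogous decomposition for $ ℂQ_1 R $. Intersecting these two decompositions term by term produces
\begin{equation*}
W = R ℂQ_1 \cap ℂQ_1 R = \bigoplus_\beta W_\beta, \quad W_\beta \coloneqq W \cap \vspan(P_\beta).
\end{equation*}

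Finally, I would define $ W_0 $ as the union of bases of the summands $ W_\beta $. For $ w \in W_0 $ lying in $ W_\beta $, the description of $ R ℂQ_1 \cap \vspan(P_\beta) $ above lets us write $ w = \sum_i \lambda_i c_i a_i $ with $ c_i \in F $, $ a_i \in Q_1 $ and each $ c_i a_i \in \vspan(P_\beta) $. Then $ f(w) = \sum_i \lambda_i c_i ¤ a_i $, and after expanding $ c_i = \sum_j \nu_{ij} p_{ij} $ into its path constituents, the pure-tensor constituents of $ f(w) $ are exactly the $ p_{ij} ¤ a_i $. Since each $ p_{ij} a_i $ is a constituent of $ c_i a_i \in \vspan(P_\beta) $, it lies in $ P_\beta $, so all constituents of $ f(w) $ are pairwise $ F $-related. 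The main subtlety I anticipate is verifying that both $ R ℂQ_1 $ and $ ℂQ_1 R $ respect the $ F $-class decomposition of $ ℂQ $; this reduces to checking that each product $ ca $ or $ ac $ for $ c \in F $ a basis element and $ a \in Q_1 $ sits in a single class, which is immediate from the congruence property of $ \sim $.
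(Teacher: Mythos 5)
Your proof is correct and takes essentially the same approach as the paper's: both exploit that $ℂQ$ decomposes along $F$-relatedness classes of paths and that $RℂQ_1$ and $ℂQ_1R$ respect this decomposition. The paper carries this out element-by-element (splitting an arbitrary $w$ into pieces $w_p$), whereas you set up the global direct-sum decomposition $W = \bigoplus_\beta W_\beta$ first and then take bases of the summands; this is a cleaner packaging of the identical argument.
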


\begin{proof}
The strategy is to decompose an arbitrary $ w ∈ W $ into a sum $ \sum w_p $ such that the constituents of $ f(w_p) $ are $ F $-related. Pick $ w ∈ W $. Since $ W = ℂQ_1 R ∩ R ℂQ_1 $, we can write $ w $ in two ways as finite sums
\begin{equation*}
w = \sum_{i ∈ I} a_i r_i = \sum_{j ∈ J} r_j' b_j.
\end{equation*}
Here $ a_i $ and $ b_j $ are scalar multiples of arrows and $ r_i $ and $ r_j' $ lie in $ F $. Note that in both sums the constituents of every individual summand are $ F $-related. Denote by $ P $ the finite set of paths appearing anywhere in these sums, modulo $ F $-relatedness. Now $ P $ splits both $ I $ and $ J $ into classes. Namely for $ p ∈ P $, let $ I_p ⊂ I $ be the set of indices $ i $ where the constituents of $ a_i r_i $ are $ F $-related to $ p $. Similarly, let $ J_p ⊂ J $ be the set of indices where the constituents of $ r_j' b_j $ are $ F $-related to $ p $. For every $ p ∈ P $, both sums
\begin{equation*}
w_p = \sum_{i ∈ I_p} a_i r_i, \quad w_p' = \sum_{j ∈ J_p} r_j' b_j
\end{equation*}
have path support lying in the equivalence class of $ p $. Since the index sets $ (I_p)_{p ∈ P} $ and $ (J_p)_{p ∈ P} $ both exhaust disjointly $ I $ and $ J $, we conclude $ w_p = w_p' $ for every $ p ∈ P $. Furthermore $ w_p ∈ ℂQ_1 R $ and $ w_p' ∈ R ℂQ_1 $, hence $ w_p = w_p' ∈ W $. By construction, all constituents in
\begin{equation*}
f(w_p) = \sum_{j ∈ J_p} r_j' ¤ b_j.
\end{equation*}
contract to paths $ F $-related to $ p $. In particular, all constituents of $ f(w_p) $ are related to each other. We have decomposed $ w ∈ W $ into summands $ w_p ∈ W $ such that all constituents in $ f(w_p) $ are $ F $-related to each other. Running this algorithm for every $ w ∈ W $ provides a spanning set $ W_0 $ with the desired property. Naturally one can extract a finite one from it.
\end{proof}

\begin{remark}
The statement of \autoref{th:flatness-W-relatedness} is obvious in case $ R $ is the space of relations $ R = \vspan(∂_a W) $ coming from a CY3 superpotential. Indeed, simply use the spanning set
\begin{equation}
\label{eq:flatness-W-relatedness-superpotential}
W_0 = \left\{\sum_{t(a) = v} ∂_a W a ~\bigg|~ v ∈ Q_0\right\}.
\end{equation}
Let us check that the spanning set $ W_0 $ satisfies the claimed condition. For $ w = \sum_{t(a) = v} ∂_a W a $, we simply have $ f(w) = \sum_{t(a) = v} ∂_a W ¤ a $. By cyclicity of $ W $, all constituents of this sum are related. This makes that $ W_0 $ satisfies the requirements of \autoref{th:flatness-W-relatedness}.
\end{remark}

Before we devote ourselves to the bounded version of the strong Berger-Ginzburg inclusion, we need length bounds for the paths in $ R $ and $ W $. While $ R $ and $ W $ are not homogeneous, any element $ r ∈ R $ or $ w ∈ W $ can still be decomposed as a linear combination of paths in $ Q $:
\begin{equation*}
r = λ_1 p_1 + … λ_k p_k \quad \text{or} \quad w = ε_1 q_1 + … + ε_l q_l.
\end{equation*}
The paths $ p_1, …, p_k $ or $ q_1, …, q_1 $ are typically not of the same length. However, $ R $ and $ W $ are finite-dimensional by \autoref{conv:flatness-relations}. This implies that the path lengths encountered in $ R $ and $ W $ are bounded. We therefore fix the following notation:

\begin{definition}
\label{def:flatness-bounded-RWlength}
The maximum path length encountered in $ R $ and $ W $ is denoted by $ |R| ∈ ℕ $ and $ |W| ∈ ℕ $.
\end{definition}

We are ready to prove our bounded version of the strong Berger-Ginzburg inclusion:

\begin{lemma}
\label{th:flatness-berger-ginzburg-bounded}
Assume $ R $ is of bounded type. If the strong Berger-Ginzburg inclusion [BG] holds, then it also holds in the bounded form
\begin{multline}
\label{eq:flatness-berger-ginzburg-bounded}
R ¤ ℂQ ~∩~ \contraction^{-1} (ℂQ_1 I(R)) ~∩~ \fourIdx{}{M≤}{}{≤N}{(ℂQ ¤ ℂQ)} \\
⊂ f(W ¤~ \fourIdx{}{l(M)-|W|≤}{}{≤h(N)}{ℂQ}) ~+~ R ¤~ \fourIdx{}{l(M) - |R|≤}{}{≤h(N)}{(ℂQ R ℂQ)}
\end{multline}
Here $ N, M ∈ ℕ $ are arbitrary integers. The inclusion also holds when two-sided bounds are replaced by one-sided bounds from above or below.
\end{lemma}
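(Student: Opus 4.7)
The plan is to combine the unbounded strong Berger-Ginzburg inclusion [BG] with the abstract boundedness principle \autoref{th:flatness-basis-related}. Roughly, once [BG] hands us a decomposition $x = f(w) + y$, any summand of $w$ or $y$ whose contracted constituents are $F$-unrelated to the short paths making up $x$ must cancel collectively and can be discarded, leaving behind a truncation that automatically respects the prescribed length bounds.

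First I would pick $x$ on the left-hand side of \eqref{eq:flatness-berger-ginzburg-bounded} and apply [BG] directly, obtaining a decomposition $x = f(w) + y$ with $w \in W \otimes \mathbb{C}Q$ and $y \in R \otimes I(R)$. To prepare the boundedness argument, I fix spanning sets whose members have $F$-related contracted constituents. For $W \otimes \mathbb{C}Q$ I take $W_0 \otimes \{\text{paths}\}$ with $W_0$ as in \autoref{th:flatness-W-relatedness}, so that each $f(w_0 \otimes q) \in R \otimes \mathbb{C}Q_1 \mathbb{C}Q$ has pure-tensor constituents contracting to $F$-related paths. For $R \otimes I(R)$ I take pure tensors of the shape $c \otimes p c' q$ with $c, c' \in F$ and $p, q$ paths; the contracted constituents $c_i \, p \, c'_j \, q$ are all $F$-related, because the paths comprising $c$ (respectively $c'$) are pairwise $F$-related by definition of $F$, and $F$-relatedness is preserved under multiplication by arbitrary paths on either side.

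Next I would invoke \autoref{th:flatness-basis-related} in the ambient space $R \otimes \mathbb{C}Q$, with basis the pure tensors of paths, with equivalence the $F$-relatedness lifted to pure tensors, and with $C$ the set of pure tensors contracting to paths $F$-related to some path of length in $[M, N]$. The set $C$ is closed under $\sim$ by transitivity, and $x$ itself lies in $\vspan(C)$ by the length hypothesis. The lemma then produces the bounded decomposition by retaining only those spanning elements whose images sit inside $\vspan(C)$: a retained $w_0 \otimes q$ produces contracted constituents of length in $[l(M), h(N)]$, and since every path contained in $w_0$ has length at most $|W|$, this forces $|q| \in [l(M) - |W|, h(N)]$. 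Analogously a surviving $c \otimes p c' q$ in $R \otimes I(R)$ satisfies $|pc'q| \in [l(M) - |R|, h(N)]$. The one-sided variants follow by the identical argument with $[M, N]$ replaced by $[M, \infty)$ or $[0, N]$.

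The main obstacle I anticipate is the bookkeeping around the spanning sets: verifying that the chosen elements genuinely have $F$-related contracted constituents hinges on \autoref{th:flatness-W-relatedness}, whose proof in turn uses the two different ways of writing $W = \mathbb{C}Q_1 R \cap R \mathbb{C}Q_1$ to partition $W$ into blocks of $F$-related summands. Once that input is accepted, everything else is a direct application of the abstract boundedness lemma followed by elementary length arithmetic.
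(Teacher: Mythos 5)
Your argument follows the paper's own proof almost step for step: apply [BG], build spanning sets for $f(W \otimes \mathbb{C}Q)$ and $R \otimes I(R)$ whose constituents are $F$-related (via \autoref{th:flatness-W-relatedness} for the first and the obvious products $r \otimes p r' q$ for the second), define $C$ via $F$-relatedness to $[M,N]$-length paths, apply \autoref{th:flatness-basis-related}, and read off the length bounds at the end. One small slip: you invoke \autoref{th:flatness-basis-related} ``in the ambient space $R \otimes \mathbb{C}Q$, with basis the pure tensors of paths,'' but pure tensors of paths are not a basis for $R \otimes \mathbb{C}Q$ when $R$ is a proper subspace of $\mathbb{C}Q$ not spanned by paths. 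The lemma requires $V_0$ to be a genuine basis of $V$, so you should take the ambient space to be $\mathbb{C}Q \otimes_{\mathbb{C}Q_0} \mathbb{C}Q$, as the paper does; your $X = f(W \otimes \mathbb{C}Q) + R \otimes I(R)$ and $X_0$ then sit inside this larger space and the argument goes through unchanged.
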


\begin{proof}
The proof boils down to applying \autoref{th:flatness-space-boundedness} to the right side of the Berger-Ginzburg inclusion. Pick a spanning set $ W_0 ⊂ W $ as in \autoref{th:flatness-W-relatedness}. Put $ X = f(W ¤ ℂQ) + R ¤ I(R) $ and use splits $ f(w ¤ p) $ and products $ r ¤ p r' q $ as spanning set $ X_0 ⊂ X $, where $ w ∈ W_0 $, $ r, r' ∈ F $, and $ p, q $ are paths. Regard the ambient space $ V = ℂQ ¤ ℂQ $ and its basis $ V_0 $ consisting of pure tensors of paths.

By construction, all constituents in a split $ f(w ¤ p) $ are $ F $-related. Namely they consist of $ F $-related tensors, with additional $ p $ on the right side. Also all constituents in a product $ r ¤ p r' q $ are $ F $-related. Indeed, decomposing $ r = \sum λ_i c_i $ and $ r' = \sum λ_i' c_i' $ into scalar multiples of paths, we have
\begin{equation*}
∀i_0, j_0, i_1, j_1: \quad c_{i_0} ¤ p c_{j_0}' q ~\sim~ c_{i_1} ¤ p c_{j_0}' q ~\sim~ c_{i_1} ¤ p c_{j_1}' q.
\end{equation*}
Further choose $ C ⊂ V_0 $ as the set of path tensors $ p ¤ p' $ whose contraction $ pp' $ is related to a path of length at least $ M $ and at most $ N $. This set is closed under $ \sim $. Finally, all assumptions of \autoref{th:flatness-basis-related} are satisfied and we obtain
\begin{equation}
\label{eq:flatness-bg-bounded-application}
\big(f(W ¤ ℂQ) + R ¤ I(R)\big) ∩ \vspan(C) ⊂ \vspan(X_0 ∩ \vspan(C)).
\end{equation}
It remains to interpret both sides of this inclusion. Let us start with the right side. Tensors in $ \vspan(C) $ have length between $ l(M) $ and $ h(N) $. If an element $ f(w ¤ p) $ lies in $ \vspan(C) $, then $ p $ has length $ l(M) - |W| ≤ |p| ≤ h(N) $. If an element $ r ¤ p r' q $ lies in $ \vspan(C) $, then all paths $ c $ in $ p r' q $ have length $ l(M) - |R| ≤ |c| ≤ h(N) $. We conclude that the right side of \eqref{eq:flatness-bg-bounded-application} is contained in the right side of \eqref{eq:flatness-berger-ginzburg-bounded}.

We finish the proof with the remark that the left side of \eqref{eq:flatness-berger-ginzburg-bounded} is contained in the left side of \eqref{eq:flatness-bg-bounded-application}, since the strong Berger-Ginzburg inclusion holds by assumption and tensors of length between $ M $ and $ N $ lie in $ \vspan(C) $.
\end{proof}

\subsection{Quasi-flatness in the completed path algebra}
\label{sec:flatness-flatcompleted}
In this section, we prove our first quasi-flatness result. The idea is to use our boundedness condition from \autoref{sec:flatness-bounded} to continue the line of Berger and Ginzburg without homogeneity assumption. The flatness result in this section deals with the ideals $ I(P) ⊂ B \htensor \compl{ℂQ} $ in the completed quiver algebra. In \autoref{sec:flatness-flatalgebraic} we will reduce the result of the present section to the case of the non-completed quiver algebra.

For convenience of the reader, we have sketched in \autoref{th:flatness-berger-ginzburg} the core idea of Berger and Ginzburg. It is however not necessary to be aware of the statement. Rather, we prove from scratch a bounded version of their statement. Before we proceed, recall from \autoref{def:flatness-bounded-RWlength} that $ |R| $ and $ |W| $ denote the maximum path length encountered in $ R $ and $ W $.

\begin{lemma}
\label{th:flatness-flatcompleted-bounds}
Assume $ R $ is of bounded type and [BG] and [CP] hold. Then
\begin{equation}
\label{eq:flatness-flatcompleted-boundsinclusion}
(ℂQ P ℂQ)_{≥N} ∩ \mathfrak{m} \compl{ℂQ} ⊂ ℂQ_1 (ℂQ P ℂQ)_{≥ l(N)-|R|-1} + \mathfrak{m} I(P)_{≥ l(N)-2|R|}
\end{equation}
\end{lemma}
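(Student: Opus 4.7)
The plan is to adapt the inductive step used by Berger and Ginzburg in the proof of \autoref{th:flatness-berger-ginzburg}, feeding the bounded strong Berger-Ginzburg inclusion of \autoref{th:flatness-berger-ginzburg-bounded} into it so that every length bound is carried through. Pick $x$ on the left-hand side and write it as a finite sum $x = \sum_i p_i (r_i + \psi(r_i)) q_i$ with every path in $p_i r_i q_i$ of length at least $N$. Since $\psi$ takes values in $\mathfrak{m} \compl{ℂQ}$, the zeroth-order part of $x$ equals $\sum_i p_i r_i q_i$, and $x \in \mathfrak{m} \compl{ℂQ}$ forces this to vanish in $ℂQ$.

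The next step is to split the index set according to whether $p_i$ is a vertex or a path of length at least one. The non-vertex contribution is immediate: factoring the leading arrow of each $p_i$ places $\sum_{|p_i| \geq 1} p_i (r_i + \psi(r_i)) q_i$ in $ℂQ_1 (ℂQ P ℂQ)_{\geq N-1}$, and the elementary inequalities $l(N) \leq N$ and $|R| \geq 1$ yield $N-1 \geq l(N) - |R| - 1$. For the vertex contribution, the vanishing of zeroth-order gives $\sum_{p_i = v_i} v_i r_i q_i = -\sum_{|p_i| \geq 1} p_i r_i q_i \in ℂQ_1 I(R)$. Lift to the tensor $\eta = \sum_{p_i = v_i} r_i \otimes q_i \in R \otimes ℂQ$, an element of $(ℂQ \otimes ℂQ)_{\geq N}$ whose contraction lies in $ℂQ_1 I(R)$. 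The bounded strong Berger-Ginzburg inclusion produces $\eta = \sum_j f(w_j \otimes u_j) + \sum_k r_k' \otimes (s_k r_k'' t_k)$ with $|u_j| \geq l(N) - |W|$ and $|s_k r_k'' t_k| \geq l(N) - |R|$. Decompose each $w_j = \sum_\alpha a_{j,\alpha} r_{j,\alpha}$ with $a_{j,\alpha} \in ℂQ_1$, contract, and pass to the deformed world by substituting $r \mapsto r + \psi(r)$ throughout. The first piece $\sum_{j,\alpha} a_{j,\alpha} (r_{j,\alpha} + \psi(r_{j,\alpha})) u_j$ sits in $ℂQ_1 (ℂQ P ℂQ)_{\geq l(N) - |W|} \subseteq ℂQ_1 (ℂQ P ℂQ)_{\geq l(N) - |R| - 1}$ since $|W| \leq |R| + 1$; the second piece is peeled a second time by writing $r_k' = \sum_a a r_{k,a}'$ so that its purely-$R$ part $\sum_{k,a} a \cdot r_{k,a}' s_k (r_k'' + \psi(r_k'')) t_k$ also lands in $ℂQ_1 (ℂQ P ℂQ)_{\geq l(N) - |R| - 1}$ and is absorbed into the first target summand.

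The main obstacle, requiring the most care, is placing all remaining $\psi$-corrections — the discrepancy between the vertex part of $x$ and its deformed BG-candidate, together with the cross-terms in the expanded quadratic piece — into $\mathfrak{m} I(P)_{\geq l(N) - 2|R|}$. Every such correction contains at least one explicit factor $\psi(r) \in \mathfrak{m} \compl{ℂQ}$; expanding this as $\sum_\ell m_\ell \gamma_\ell$ with $m_\ell \in \mathfrak{m}^{\to \infty}$ pulls the coefficient outside and leaves a scalar remainder to be shown in $I(P)$. This is done by the systematic substitution $r = (r + \psi(r)) - \psi(r)$ for any bare relation factor that still appears, iterating if necessary and invoking property [CP] to rule out ambiguities when a single relation can be approached from either side. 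The worst-case bookkeeping adds exactly one further length shift of $|R|$ on top of the $l(N) - |R|$ coming from the bounded BG output, accounting for the final $l(N) - 2|R|$ in the statement.
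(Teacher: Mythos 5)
Your overall strategy coincides with the paper's: split $x$ into the "non-vertex" part (where $|p_i|\ge 1$), the vertex part, lift the latter's zeroth-order shadow to a tensor in $R\otimes\mathbb{C}Q$, feed that into the bounded strong Berger-Ginzburg inclusion, and redistribute. The bookkeeping of the non-vertex part and the final length bound $N-1\ge l(N)-|R|-1$ are fine.

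The gap is in how you "pass to the deformed world by substituting $r\mapsto r+\psi(r)$ throughout." The bounded BG inclusion is applied to the \emph{undeformed} tensor $\eta=\sum_{p_i=v_i} r_i\otimes q_i$, giving $\eta = \sum_j f(w_j\otimes u_j)+\sum_k r_k'\otimes s_k r_k'' t_k$. What you actually have in hand is $\contraction(\eta+\psi(\eta))$, where $\psi$ acts on the \emph{left} tensor factor. Since $f$ produces terms of the form $r_j\otimes a_j u$ with $w=\sum r_j a_j$ (relations on the left, arrows on the right), applying $\psi$ to the left factor and contracting yields $\sum (w_j+\psi^1(w_j))u_j$. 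To bring this into the target form $\mathbb{C}Q_1(\mathbb{C}QP\mathbb{C}Q)$, you need to re-express $w_j$ with the arrow on the left, i.e. $\sum a_{j,\alpha}r_{j,\alpha}$, and then replace $r\mapsto r+\psi(r)$. But the element produced this way is $\sum(w_j+\psi^0(w_j))u_j$. Equality of the two requires $\psi^0=\psi^1$ on $W$, which is exactly \autoref{th:flatness-setup-psi} and is the point at which [CP] actually enters. Your only reference to [CP] ("invoking property [CP] to rule out ambiguities when a single relation can be approached from either side") is in the wrong paragraph and does not make this invocation visible. Without it, your "deformed BG-candidate" is not the same element as the vertex part of $x$, and the argument does not close.

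The treatment of the remaining $\psi$-corrections is also vaguer than it needs to be. No iteration is required, and there is no residual "discrepancy": the exact algebraic identity
\begin{equation*}
(r'+\psi(r'))sr''t = r's(r''+\psi(r''))t + \psi(r')s(r''+\psi(r''))t - (r'+\psi(r'))s\psi(r'')t
\end{equation*}
puts the first summand into $\mathbb{C}Q_1(\mathbb{C}QP\mathbb{C}Q)_{\ge l(N)-|R|}$ (factor an arrow off $r'$) and the last two into $\mathfrak{m}I(P)_{\ge l(N)-2|R|}$ (expand $\psi(r')$ or $\psi(r'')$ as a series in $\mathfrak{m}^{\to\infty}$ and use $|s_ir_i''t_i|\ge l(N)-|R|$ together with $|r_i''|\le|R|$, which is where the second $|R|$ shift appears). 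Spelling this out, and pinning the $\psi^0=\psi^1$ step to \autoref{th:flatness-setup-psi}, would turn your sketch into a complete proof.
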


\begin{proof}
By assumption, $ R $ has a basis $ F $ of bounded type. Now pick an element
\begin{equation*}
\tilde x = \sum_{i ∈ I} p_i (r_i + ψ(r_i)) q_i ∈ (ℂQ P ℂQ)_{≥N} ∩ \mathfrak{m} \compl{ℂQ}.
\end{equation*}
By definition of $ (ℂQ P ℂQ)_{≥N} $, we can assume $ p_i $ and $ q_i $ are scalar multiples of paths, $ r_i $ lies in $ F $ and all paths in $ p_i r_i q_i $ have length at least $ N $. Let us inspect the sum. The terms where $ p_i ∈ ℂQ_{≥1} $ already lie on the right-hand side of \eqref{eq:flatness-flatcompleted-boundsinclusion} and do not need further treatment. The terms where $ p_i $ is a vertex are nasty. Denote by $ I_0 ⊂ I $ the set of these nasty indices. Put
\begin{equation*}
x ≔ \sum_{i ∈ I_0} p_i r_i ¤ q_i ∈ R ¤ ℂQ.
\end{equation*}
Since $ \tilde x $ is supposed to vanish on zeroth order, we have
\begin{equation*}
\contraction(x) = \sum_{i ∈ I_0} p_i r_i q_i = - \sum_{i ∈ I \setminus I_0} p_i r_i q_i ∈ ℂQ_1 I(R).
\end{equation*}
At the same time, $ x $ lies in $ (ℂQ ¤ ℂQ)_{≥N} $ and hence on the left-hand side of the bounded strong Berger-Ginzburg inclusion \eqref{eq:flatness-berger-ginzburg-bounded}, using $ N $ for the lower bound and dropping the upper bound. We conclude
\begin{equation*}
x ∈ f(W ¤ ℂQ_{≥ l(N)-|W|}) + R ¤ (ℂQ R ℂQ)_{≥ l(N)-|R|}.
\end{equation*}
Split $ x = y + z $ according to this decomposition. We want to determine $ ψ (x) = ψ (y) + ψ (z) $, where $ ψ $ here acts on the left tensor factor of $ R ¤ ℂQ $. Let us regard $ y $ first. We can write $ y $ as a finite sum
\begin{equation*}
y = \sum_{i ∈ K} f(w_i ¤ p_i')
\end{equation*}
with $ w_i $ lying in $ W $ and $ p_i' $ scalar multiples of paths with $ |p_i'| ≥ l(N) - |W| $. By \autoref{th:flatness-setup-psi}, we have $ ψ^0 = ψ^1 $ on $ W $. Therefore we can swap $ ψ^0 (w_i) $ over:
\begin{equation*}
\contraction(y + ψ^0(y)) = \sum_{i ∈ K} (w_i + ψ^0 (w_i)) p_i' = \sum_{i ∈ K} (w_i + ψ^1 (w_i)) p_i'.
\end{equation*}
Note that $ w_i + ψ^1 (w_i) ∈ ℂQ_1 P $ by nature. We obtain the first intermediate result
\begin{equation*}
\contraction(y + ψ^0 (y)) ∈ ℂQ_1 P ℂQ_{≥ l(N)-|W|}.
\end{equation*}
Now regard $ z $. We can write
\begin{equation*}
z = \sum_{i ∈ L} r_i' ¤ s_i r_i'' t_i,
\end{equation*}
where $ r_i', r_i'' ∈ F $ and $ s_i, t_i $ are scalar multiples of paths, and all paths in $ s_i r_i'' t_i $ have length at least $ l(N) - |R| $.
We get
\begin{align*}
\contraction(z + ψ(z)) &= \sum_{i ∈ L} (r_i' + ψ(r_i')) s_i r_i'' t_i \\
&= \sum_{i ∈ L} r_i' s_i (r_i'' + ψ(r_i'')) t_i + \sum_{i ∈ L} ψ(r_i') s_i (r_i'' + ψ(r_i'')) t_i - \sum_{i ∈ L} (r_i' + ψ(r_i')) s_i ψ(r_i'') t_i \\
&∈ ℂQ_1 (ℂQ P ℂQ)_{≥ l(N)-|R|} + \mathfrak{m} I(P)_{≥ l(N)-2|R|}.
\end{align*}
In the last row, we have used that $ r_i' ∈ R ⊂ ℂQ_{≥1} $. In total, we get
\begin{align*}
\tilde x = \sum_{i ∈ I} p_i (r_i + ψ(r_i)) q_i &= \contraction(y + ψ(y)) + \contraction(z + ψ(z)) + \sum_{i ∈ I \setminus I_0} p_i (r_i + ψ(r_i)) q_i \\
&∈ ℂQ_1 (ℂQ P ℂQ)_{≥ l(N)-|R|-1} + \mathfrak{m} I(P)_{≥ l(N)-2|R|}.
\end{align*}
We have used that $ |W| ≤ |R|+1 $.
\end{proof}

The following is a continuation of the line of Berger and Ginzburg.

\begin{lemma}
\label{th:flatness-flatcompleted-crude}
Assume $ R $ is of bounded type and [BG] and [CP] holds. Then
\begin{equation*}
I(P) ∩ \mathfrak{m} \compl{ℂQ} ⊂ ℂQ_1 (I(P) ∩ \mathfrak{m} \compl{ℂQ}) + \mathfrak{m} I(P).
\end{equation*}
\end{lemma}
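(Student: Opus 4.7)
The plan is to adapt the proof of \autoref{th:flatness-flatcompleted-bounds} to an arbitrary element of $I(P) \cap \mathfrak{m}\compl{ℂQ}$ (rather than a finite sum in $(ℂQ P ℂQ)_{\geq N}$). The key new ingredient is that left multiplication by arrows of $Q$ is injective on $\compl{ℂQ}$, which lets us conclude that the intermediate factors built during the decomposition lie in $\mathfrak{m}\compl{ℂQ}$.

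First I would reduce to the case $y \in (P) \cap \mathfrak{m}\compl{ℂQ}$: since $I(P) = B(P)$, write $\tilde x = y + m$ with $y \in (P)$ collecting the summands whose $B$-coefficient is not in $\mathfrak{m}$, and $m \in \mathfrak{m} I(P)$ collecting the rest. Then $y = \tilde x - m \in (P) \cap \mathfrak{m}\compl{ℂQ}$, and the $m$-part is already of the required form. Expand $y = \sum_i p_i(r_i + ψ(r_i)) q_i$ with $r_i$ in a basis $F$ of bounded type, $p_i, q_i$ scalar multiples of paths, and $|p_i|+|q_i| \to \infty$. Partition $I = I_0 \sqcup I_1$ based on whether $p_i$ is a vertex; for $i \in I_1$ factor $p_i = a_i p_i'$ with $a_i \in Q_1$, so that $\sum_{i \in I_1} p_i(r_i + ψ(r_i)) q_i = \sum_{a \in Q_1} a \cdot B_a$ with $B_a \in (P)$. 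For the $I_0$ part, form the tensor $x = \sum_{i \in I_0} v_i r_i \otimes q_i \in R \htensor \compl{ℂQ}$; since $π_0(y) = 0$, its contraction $\contraction(x)$ equals $-π_0(\sum_{I_1} p_i r_i q_i) \in ℂQ_1 I(R)_{\compl{ℂQ}}$.

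Next I would apply a completed version of the strong Berger-Ginzburg inclusion to $x$. While \eqref{eq:flatness-BGstrong} is stated for non-completed tensors, its bounded form \autoref{th:flatness-berger-ginzburg-bounded} applied to each finite truncation of $x$ (by the path-length of $q_i$) yields decompositions with controlled lengths; using that $R$ is of bounded type and $l(N) \to \infty$, these assemble into a decomposition $x = x_f + x_z$ in $R \htensor \compl{ℂQ}$ with $x_f \in f(W \htensor \compl{ℂQ})$ and $x_z \in R \htensor I(R)_{\compl{ℂQ}}$. Exactly as in the proof of \autoref{th:flatness-flatcompleted-bounds}, the swap $ψ^0 = ψ^1$ of \autoref{th:flatness-setup-psi} applied to $x_f$, together with moving $ψ$-terms across relations in $x_z$, produces $\sum_{i \in I_0} v_i(r_i + ψ(r_i)) q_i = \sum_{a \in Q_1} a \cdot B_a' + c$ with $B_a' \in I(P)$ and $c \in \mathfrak{m} I(P)$.

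Combining, $y = \sum_{a \in Q_1} a \cdot (B_a + B_a') + c$ with $c \in \mathfrak{m} I(P)$. Since $y, c \in \mathfrak{m}\compl{ℂQ}$, the finite sum $\sum_a a \cdot (B_a + B_a')$ itself lies in $\mathfrak{m}\compl{ℂQ}$, hence $\sum_a a \cdot π_0(B_a + B_a') = 0$ in $\compl{ℂQ}$. Reading off the coefficient of each path in the completed path algebra according to its first arrow forces $π_0(B_a + B_a') = 0$ for every $a \in Q_1$, so $B_a + B_a' \in I(P) \cap \mathfrak{m}\compl{ℂQ}$. The desired decomposition $\tilde x \in ℂQ_1(I(P) \cap \mathfrak{m}\compl{ℂQ}) + \mathfrak{m} I(P)$ follows. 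The main obstacle is the lift of the strong Berger-Ginzburg inclusion to the completed tensor product: one must check that the decompositions produced by the bounded form at different truncation levels are compatible, which relies in an essential way on the bounded-type condition on $R$.
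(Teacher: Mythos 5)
Your overall architecture is sound, and several of the ingredients are exactly right: the reduction to $(P) \cap \mathfrak{m}\compl{ℂQ}$, the vertex/non-vertex split $I = I_0 \sqcup I_1$, the use of $\psi^0 = \psi^1$, and the final step of reading off coefficients by first arrow. But the step you yourself flag as "the main obstacle" --- lifting the strong Berger-Ginzburg inclusion to a completed tensor $x \in R \htensor \compl{ℂQ}$ --- is not a fillable routine gap, and your proposed route (truncation by the path-length of $q_i$) fails for a concrete reason. A truncation of $x$ at path-length $N$ does not in general have vanishing zeroth-order part: the zeroth order of $y$ cancels only through interference between paths that may sit at very different lengths. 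Consequently, a truncated tensor need not lie in $\contraction^{-1}(\mathbb{C}Q_1 I(R))$, so the bounded Berger-Ginzburg inclusion \eqref{eq:flatness-berger-ginzburg-bounded} cannot be applied to it. Moreover, even if it could, the decompositions you would get at different truncation levels are produced by a non-canonical inclusion, so there is no reason for them to be nested, and the bounded-type condition alone does not force compatibility.

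The paper's proof sidesteps exactly this difficulty by choosing a different decomposition of $y$ \emph{before} any Berger-Ginzburg argument. It partitions the index set by $F$-relatedness class of the paths appearing in $p_i r_i q_i$: put $S_N$ = indices whose paths are related to some path of length $\leq N$, and $S_N' = S_N \setminus S_{N-1}$. Because $R$ is of bounded type, each $S_N'$ is \emph{finite}, so each chunk $x_N = \sum_{i \in S_N'} p_i(r_i+\psi(r_i))q_i$ is a finite sum. Because every term $p_i r_i q_i$ has its full path-support inside a single $F$-class, the zeroth-order supports of different chunks are \emph{pairwise disjoint}; since $y$ vanishes at zeroth order, each chunk individually satisfies $x_N \in (\mathbb{C}Q P \mathbb{C}Q)_{\geq N} \cap \mathfrak{m}\compl{ℂQ}$. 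One can then apply the already-proved finite version \autoref{th:flatness-flatcompleted-bounds} to each $x_N$ separately (rather than re-running the Berger-Ginzburg argument), and the bounded-type condition via $l(N) \to \infty$ guarantees that summing over $N$ lands in $I(P)$, not merely in some completion. Concretely: your plan would work if you replaced "truncation by path-length of $q_i$" by "chop into the $F$-relatedness chunks $x_N$"; the properties of $F$-relatedness (finiteness of each class, invariance of the vanishing-at-zeroth-order condition under the chop) are precisely what makes the finite Berger-Ginzburg inclusion legally applicable chunk-by-chunk.
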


\begin{proof}
The strategy is to divide an element $ x $ on the left-hand side into chunks $ x_N $ that individually lie in $ (ℂQ P ℂQ)_{≥N} ∩ \mathfrak{m} $. We estimate that the lengths of the ideal paths used in $ x_N $ converge to $ ∞ $, so that summing up the chunks again lands on the right-hand side and not in completion of $ I(P) $.

Since $ \mathfrak{m} I(P) $ already lies on the right side, it suffices to regard elements of $ (P) $ on the left-hand side. Regard such an element
\begin{equation*}
x = \sum_{i = 0}^∞ p_i (r_i + ψ(r_i)) q_i ∈ (P) ∩ \mathfrak{m} \compl{ℂQ}.
\end{equation*}
For $ N ≥ 0 $, let $ S_N ⊂ ℕ $ be the set of indices $ i $ where all paths in $ p_i r_i q_i $ are related to a path of length $ ≤ N $. All sets $ S_N $ are finite and together exhaust $ ℕ $. In order to rewrite $ x $, let us make these sets disjoint by setting $ S_N' ≔ S_N \setminus S_{N-1} $ for $ N ≥ 1 $ and $ S_0' ≔ S_0 $. Put
\begin{equation*}
x_N = \sum_{i ∈ S_N'} p_i (r_i + ψ(r_i)) q_i.
\end{equation*}
Then we get $ x = \sum_{N = 0}^∞ x_N $. We show that each chunk $ x_N $ lies in $ \mathfrak{m} \compl{ℂQ} $. Let $ M < N $ and $ i ∈ S_M', j ∈ S_N' $. Then paths in $ p_i r_i q_i $ are related to paths of length $ ≤ M $, while paths in $ p_j r_j q_j $ are not related to paths of length $ ≤ M $, since $ j \notin S_M $. This implies that the zeroth order path supports of all chunks $ x_N $ are pairwise disjoint. However we know that $ x ∈ \mathfrak{m} \compl{ℂQ} $, hence $ x $ vanishes on zeroth order. We conclude $ x_N ∈ ℂQ P ℂQ ∩ \mathfrak{m} \compl{ℂQ} $.

Now note that for $ i ∈ S_N' $ the term $ p_i r_i q_i $ has length at least $ N $, for otherwise $ i $ would be contained in $ S_{N-1} $. We conclude that
\begin{equation*}
x_N ∈ (ℂQ P ℂQ)_{≥N} ∩ \mathfrak{m} \compl{ℂQ}.
\end{equation*}
Using \autoref{th:flatness-flatcompleted-bounds}, we deduce
\begin{equation*}
x_N ∈ ℂQ_1 (ℂQ P ℂQ)_{≥ l(N)-|R|-1} + \mathfrak{m} I(P)_{≥ l(N)-2|R|}.
\end{equation*}
Since $ R $ is of bounded type, we have $ l(N) → ∞ $ as $ N → ∞ $. When summing up $ x_N $ over $ N ∈ ℕ $, this estimate on path lengths embracing $ P $ ensures that we get two well-defined sums in $ I(P) $ and not e.g.~in the completion of $ I(P) $:
\begin{equation*}
x = \sum_{N = 0}^∞ x_N ∈ ℂQ_1 I(P) + \mathfrak{m} I(P).
\end{equation*}
Since $ x $ lies in $ \mathfrak{m} \compl{ℂQ} $ by assumption, we obtain
\begin{equation*}
x ∈ (ℂQ_1 I(P) ∩ \mathfrak{m} \compl{ℂQ}) + \mathfrak{m} I(P) = ℂQ_1 (I(P) ∩ \mathfrak{m} \compl{ℂQ}) + \mathfrak{m} I(P).
\end{equation*}
This finishes the proof.
\end{proof}

We now arrive at our first flatness result. Recall that quasi-flatness for $ I(P) $ refers to the inclusion $ I(P) ∩ \mathfrak{m} \compl{ℂQ} ⊂ \mathfrak{m} I(P) $.

\begin{proposition}[Quasi-flatness I]
\label{th:flatness-flatcompleted-th}
Assume $ R $ is of bounded type and [BG] and [CP] hold. Then $ I(P) $ is quasi-flat.
\end{proposition}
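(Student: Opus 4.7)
The plan is to iterate the inclusion of \autoref{th:flatness-flatcompleted-crude} in order to push the non-$\mathfrak{m} I(P)$ part of $x$ into paths of increasingly high length, and then to reassemble the accumulated $\mathfrak{m} I(P)$-contributions into a single element of $\mathfrak{m} I(P)$.

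Given $x \in I(P) \cap \mathfrak{m}\compl{ℂQ}$, \autoref{th:flatness-flatcompleted-crude} yields a decomposition $x = z_1 + y_1$ with $z_1 \in \mathfrak{m} I(P)$ and $y_1 \in ℂQ_1 \cdot (I(P) \cap \mathfrak{m}\compl{ℂQ})$. Since $Q_1$ is finite, I write $y_1 = \sum_{a \in Q_1} a \cdot u_a$ with each $u_a \in I(P) \cap \mathfrak{m}\compl{ℂQ}$, apply the lemma to every $u_a$, and absorb the newly produced $\mathfrak{m} I(P)$-pieces via $ℂQ_1 \cdot \mathfrak{m} I(P) \subset \mathfrak{m} I(P)$ (which holds since $I(P)$ is a left ideal in $B \htensor \compl{ℂQ}$). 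After $k$ iterations, $x = z_k + y_k$ with $z_k \in \mathfrak{m} I(P)$ and $y_k \in ℂQ_1^k \cdot (I(P) \cap \mathfrak{m}\compl{ℂQ})$.

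The remainder $y_k$ has the property that every path appearing in it (viewed in the $\compl{ℂQ}$-factor of $B \htensor \compl{ℂQ}$) has length at least $k$, since multiplication by a length-$k$ word in arrows shifts all path lengths up by $k$. Consequently $y_k \to 0$ in the Krull topology on $B \htensor \compl{ℂQ}$, and $z_k \to x$ in the same topology. The differences $z_{k+1} - z_k = y_k - y_{k+1}$ lie in $\mathfrak{m} I(P)$ and are concentrated on paths of length at least $k$. I then assemble the formal series $z_1 + \sum_{k \geq 1} (z_{k+1} - z_k)$ into a genuine element of $\mathfrak{m} I(P) = \mathfrak{m} \cdot (P)$, using that $(P)$ is the image of the completed tensor product $\compl{ℂQ} \htensor P \htensor \compl{ℂQ}$ and therefore already absorbs formal series of $P$-embracings whose outer paths grow in length.

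The main obstacle is the final reassembly step: while each $z_k$ and each difference is in $\mathfrak{m} I(P)$ and $y_k$ vanishes in the path-length topology, closedness of $\mathfrak{m} I(P)$ in that topology is not immediate and must be argued carefully. To handle this I replace \autoref{th:flatness-flatcompleted-crude} by its bounded strengthening \autoref{th:flatness-flatcompleted-bounds}, which under the boundedness hypothesis on $R$ (so that $l(N) \to \infty$) yields explicit path-length lower bounds on the $\mathfrak{m} I(P)$-contributions produced at each iterate. These bounds match the path-length filtration of $(P)$ against the $\mathfrak{m}$-adic filtration on the coefficient side, making the rearrangement of the iteratively produced decompositions into a single $\mathfrak{m} \cdot (P)$-presentation legitimate and completing the proof.
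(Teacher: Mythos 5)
Your proposal follows the same iterative scheme as the paper's proof: repeatedly apply \autoref{th:flatness-flatcompleted-crude} to write $x = z_k + y_k$ with $z_k \in \mathfrak{m} I(P)$ and $y_k \in ℂQ_1^k(I(P) \cap \mathfrak{m}\compl{ℂQ})$, observe that $y_k$ dies in the path-length (Krull) filtration, and then reassemble the accumulated $\mathfrak{m} I(P)$-pieces. So the core of the argument is the same.

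Where you differ from the paper is in how you treat the reassembly. You explicitly flag the genuine subtlety — that $\mathfrak{m} I(P)$ is not closed under limits in the Krull topology, so having $z_k \to x$ with $z_k \in \mathfrak{m} I(P)$ is not by itself enough — whereas the paper's proof compresses this to the remark that the summands $y_N$ live in higher and higher path length and hence the series sums to an element of $\mathfrak{m} I(P)$. You are right that this step needs care, and your diagnosis (the $\mathfrak{m}$-orders of the new pieces produced at stage $k$ need not grow with $k$; what saves you is that the paths embracing $P$ in those pieces grow, and $(P)$ is by definition a set of series whose embracing paths go to infinity) is exactly what makes the paper's terse claim true. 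One imprecision worth noting: \autoref{th:flatness-flatcompleted-bounds} is not a ``bounded strengthening'' of the crude lemma that you can swap in directly, because its hypothesis is $(ℂQ P ℂQ)_{\geq N} \cap \mathfrak{m}\compl{ℂQ}$ rather than $I(P) \cap \mathfrak{m}\compl{ℂQ}$; to use it in the iteration you would first need the chunk decomposition $x = \sum_N x_N$ with $x_N \in (ℂQ P ℂQ)_{\geq N} \cap \mathfrak{m}\compl{ℂQ}$ that appears in the \emph{proof} of the crude lemma, and only then can \autoref{th:flatness-flatcompleted-bounds} supply the explicit length estimates. A second minor point: writing $\mathfrak{m} I(P) = \mathfrak{m} \cdot (P)$ conflicts with the paper's $\cdot$-notation for finite spans; the correct identification is $\mathfrak{m} I(P) = \mathfrak{m}(P)$ in the sense of Definition~\ref{def:prelim-submodules-shorthand}, which is what your absorption argument actually uses.
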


\begin{proof}
Iterating the statement of \autoref{th:flatness-flatcompleted-crude}, we get
\begin{align*}
I(P) ∩ \mathfrak{m} \compl{ℂQ} &⊂ ℂQ_1 (I(P) ∩ \mathfrak{m} \compl{ℂQ}) + \mathfrak{m} I(P) \\
&⊂ ℂQ_1 (ℂQ_1 (I(P) ∩ \mathfrak{m} \compl{ℂQ}) + \mathfrak{m} I(P)) + \mathfrak{m} I(P) ⊂ ….
\end{align*}
In other words, pick $ x ∈ I(P) ∩ \mathfrak{m} \compl{ℂQ} $. Then we get a sequence of elements $ x_N ∈ ℂQ_N (I(P) ∩ \mathfrak{m} \compl{ℂQ}) $ and $ y_N ∈ \mathfrak{m} ℂQ_{N-1} I(P) $ with
\begin{equation*}
∀N ∈ ℕ: \quad x = x_N + y_1 + … + y_N.
\end{equation*}
Now the series $ \sum_{N = 1}^∞ y_N $ defines an element in $ \mathfrak{m} I(P) $, since its summands $ y_N $ contain only paths of higher and higher length. We claim that this element is precisely $ x $. Indeed, disregard all paths longer than an arbitrary number. Then the series stabilizes to $ x $ after finitely many summands and does not change anymore thereafter. Finally, we conclude $ x $ is the sum of the series, which we already know lies in $ \mathfrak{m} I(P) $.
\end{proof}

\subsection{Quasi-flatness in the path algebra}
\label{sec:flatness-flatalgebraic}
In this section, we build our second quasi-flatness result. Namely, we show that $ I(P)_{ℂQ} $ is quasi-flat if $ R $ is of bounded type and the conditions [BG] and [CP] hold. In \autoref{sec:flatness-flatcompleted} we have already seen that $ I(P) $ is quasi-flat. The idea of the present section is to use the boundedness argument to bring quasi-flatness from $ I(P) $ to $ I(P)_{ℂQ} $.

The first step is to refine the Berger-Ginzburg lemma \autoref{th:flatness-berger-ginzburg}. We deploy the bounded version of the Berger-Ginzburg inclusion, sticking to the lower bounds and forgetting the upper bounds.

\begin{lemma}
\label{th:flatness-deformed-estimate-crude}
Let $ R $ be of bounded type. Then for any natural numbers $ M ≤ N $ we have
\begin{equation*}
I(P) ∩ (\fourIdx{}{M≤}{}{≤N}{\compl{ℂQ}} + \mathfrak{m} \compl{ℂQ}) ⊂ \fourIdx{}{l(M)≤}{}{≤h(N)}{(ℂQ P ℂQ)} + \mathfrak{m} \compl{ℂQ} ∩ I(P).
\end{equation*}
The inclusion also holds once the upper bound or lower bound is dropped.
\end{lemma}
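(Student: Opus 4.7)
The plan is to mimic the first-order ``toy'' result~\autoref{th:flatness-toy-first-order}, now inside the deformed ideal $I(P)$ and with both bounds $M \leq N$. Fix a basis $F$ for $R$ of bounded type. For $x \in I(P) \cap (\fourIdx{}{M≤}{}{≤N}{\compl{ℂQ}} + \mathfrak{m}\compl{ℂQ})$, expand $x$ in its generators
\[
x = \sum_{j} m_j \sum_{i} p_{i,j}(r_{i,j} + ψ(r_{i,j})) q_{i,j},
\]
with $m_j \in \mathfrak{m}^{→∞}$, $r_{i,j} \in F$, and $p_{i,j}, q_{i,j}$ paths satisfying $|p_{i,j}| + |q_{i,j}| → ∞$. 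I would split each $m_j = m_j^{(0)} + m_j^{(+)}$ into its constant and $\mathfrak{m}$-parts; the terms carrying $m_j^{(+)}$ give an element $x'' \in I(P) \cap \mathfrak{m}\compl{ℂQ}$ absorbed into the second piece of the right-hand side, while only finitely many $m_j^{(0)}$ are nonzero because $m_j \in \mathfrak{m}^{→∞}$. Absorbing those constants into $p_{i,j}$ yields $w \in (P)$ with $π(w) = π(x) \in \fourIdx{}{M≤}{}{≤N}{ℂQ}$ and $x = w + x''$.

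Next, call an index pair $(i, j)$ \emph{relevant} if some path in $p_{i,j} r_{i,j} q_{i,j}$ is $F$-related to a path of length in $[M, N]$, and \emph{irrelevant} otherwise. Bounded-typeness ensures that, for each of the finitely many active $j$, the length $|p_{i,j} r_{i,j} q_{i,j}|$ eventually exceeds $h(N)$, so only finitely many pairs are relevant. For a relevant pair, all paths in $p_{i,j} r_{i,j} q_{i,j}$ are mutually $F$-related (since paths inside the basis element $r_{i,j}$ are related by construction) and therefore all have length in $[l(M), h(N)]$. Thus the finite sum $w_{\mathrm{rel}}$ of relevant terms lies in $\fourIdx{}{l(M)≤}{}{≤h(N)}{(ℂQ P ℂQ)}$, as required by the right-hand side of the claim.

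The final step is to show $w_{\mathrm{irr}} := w - w_{\mathrm{rel}} \in I(P) \cap \mathfrak{m}\compl{ℂQ}$. Because $ψ$ takes values in $\mathfrak{m}\compl{ℂQ}$, the zeroth-order supports of $w_{\mathrm{rel}}$ and $w_{\mathrm{irr}}$ consist of paths $F$-related, respectively not $F$-related, to $[M, N]$, and are therefore disjoint in $ℂQ$. Since $π(w) = π(x)$ is supported in $[M, N]$---whose paths are trivially $F$-related to $[M, N]$---the decomposition $π(w) = π(w_{\mathrm{rel}}) + π(w_{\mathrm{irr}})$ forces $π(w_{\mathrm{irr}}) = 0$, giving $x = w_{\mathrm{rel}} + (w_{\mathrm{irr}} + x'')$ in the desired form. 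The one-sided variants follow from the identical argument with one-sided analogs of $F$-relatedness. The main obstacle is the bookkeeping around $B$-coefficients: the space $\fourIdx{}{l(M)≤}{}{≤h(N)}{(ℂQ P ℂQ)}$ is only a $ℂ$-subspace rather than a $B$-submodule, so the $\mathfrak{m}$-parts of the $m_j$'s must be peeled off before running the boundedness argument, which itself hinges on $ψ$ living in strictly positive $\mathfrak{m}$-order.
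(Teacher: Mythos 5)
Your argument is correct and mirrors the paper's proof: split off the $\mathfrak{m}$-part of the $B$-coefficients to reduce to $(P)$ (the paper compresses this into the remark that $\mathfrak{m} I(P)$ already lies on the right-hand side), then partition indices by $F$-relatedness to $[M,N]$ and use disjointness of zeroth-order supports to kill the irrelevant part. One small phrasing slip: the fact that $|p_{i,j} r_{i,j} q_{i,j}|$ eventually exceeds $h(N)$ comes from the built-in condition $|p_{i,j}| + |q_{i,j}| \to \infty$ in the definition of $(P)$, not from bounded-typeness, which instead guarantees $h(N) < \infty$ so that only finitely many pairs can be relevant.
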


\begin{proof}
Since $ \mathfrak{m} I(P) $ already lies on the right-hand side, it suffices to prove the inclusion of
\begin{equation*}
(P) ∩ (\fourIdx{}{M≤}{}{≤N}{\compl{ℂQ}} + \mathfrak{m} \compl{ℂQ}).
\end{equation*}
Pick an element
\begin{equation*}
x = \sum_{i = 0}^∞ p_i (r_i + ψ(r_i)) q_i ∈ (P) ∩ (\fourIdx{}{M≤}{}{≤N}{\compl{ℂQ}} + \mathfrak{m} \compl{ℂQ}).
\end{equation*}
We can assume $ |p_i| + |q_i| → ∞ $. Our strategy is to decompose $ x $ into two parts. Let $ S ⊂ ℕ $ be the set of indices $ i $ where the constituents of $ p_i r_i q_i $ are related to paths of length in the interval $ [M, N] ⊂ ℕ $. Then for $ i ∈ S $ all paths in $ p_i r_i q_i $ are related to paths of length in $ [M, N] $ and are hence of length in $ [l(M), h(N)] $ themselves. In particular $ S $ is finite. Meanwhile for $ i \notin S $ no paths in $ p_i r_i q_i $ are related to paths of length in $ [M, N] $ and we conclude that
\begin{equation*}
\sum_{i ∈ ℕ \setminus S} p_i r_i q_i = 0.
\end{equation*}
In other words, we have
\begin{equation*}
x_{ℕ \setminus S} ≔ \sum_{i ∈ ℕ \setminus S} p_i (r_i + ψ(r_i)) q_i ∈ (P) ∩ \mathfrak{m}ℂQ,
\end{equation*}
while
\begin{equation*}
x_S ≔ \sum_{i ∈ S} p_i (r_i + ψ(r_i)) q_i ∈ \fourIdx{}{l(M)≤}{}{≤h(N)}{(ℂQ P ℂQ)}.
\end{equation*}
Recalling $ x = x_S + x_{ℕ \setminus S} $ gives that $ x $ lies in the right-hand side of the desired inclusion.
\end{proof}

We seek to apply \autoref{th:flatness-deformed-estimate-crude} iteratively. As we pull out more and more powers of the maximal ideal, we need to ensure that the remainder is still bounded in length at zeroth order. We can achieve this if we require from the very beginning that an element $ x ∈ I(P) $ has bounded length at order of $ \mathfrak{m} $. For $ B = ℂ⟦q⟧ $ we would require that the path lengths at every $ q $ power $ q^k $ are bounded. For general deformation basis $ B $, we shall require that the path lengths in $ x $ shall be bounded if we project to $ B / \mathfrak{m}^k $. This gives rise to a subset of $ B \htensor ℂQ $, namely the space of elements with length at most $ N_k $ at $ \mathfrak{m} $-order $ ≤ k $. Let us record this in the following definition:

\begin{definition}
Let $ N_0, N_1, … $ be an increasing sequence of integers. Then we set
\begin{align*}
ℂQ_{≤ (N_0, N_1, …)} &= \{x ∈ B \htensor ℂQ \running π_k (x) ∈ (B / \mathfrak{m}^k) ¤ ℂQ_{≤ N_k} ~∀k ∈ ℕ\} \\
& = ℂQ_{≤N_0} + \mathfrak{m}^1 ℂQ_{≤N_1} + \mathfrak{m}^2 ℂQ_{≤N_2} + ….
\end{align*}
Here $ π_k: B \htensor ℂQ → (B/\mathfrak{m}^k) ¤ ℂQ $ denotes the standard projection. 
\end{definition}

\begin{example}
For $ B = ℂ⟦q⟧ $ the space $ ℂQ_{≤ (N_0, N_1, …)} $ is simply the space of $ ℂQ $-valued power series in $
 q $ where the paths at order $ q^k $ are of length $ ≤ N_k $.
\end{example}

We are now getting closer to proving quasi-flatness of $ I(P)_{ℂQ} $. As we shall see, elements of $ I(P)_{ℂQ} $ are namely distinguished elements of $ I(P) $ in the sense that they simultaneously lie in $ ℂQ_{≤ (N_0, …)} $ for some sequence $ N_0 ≤ N_1 ≤ … $. To exploit this property, let us prove the following lemma.

\begin{lemma}
\label{th:flatness-flatalgebraic-termsout}
Assume $ R $ is of bounded type and $ I(P) $ is quasi-flat. Let $ N_1 ≤ N_2 ≤ … $ be a sequence. Then there exists a sequence $ N_1' ≤ N_2' ≤ … $ such that
\begin{align*}
I(P) ∩ ℂQ_{≤ (0, N_1, …)} &⊂ \sum_{k = 1}^∞ \mathfrak{m}^k (ℂQ P ℂQ)_{≤ N_k'} \\
& = \mathfrak{m}^1 (ℂQ P ℂQ)_{≤ N_1'} + \mathfrak{m}^2 (ℂQ P ℂQ)_{≤ N_2'} + …
\end{align*}
\end{lemma}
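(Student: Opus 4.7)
The plan is an inductive argument that peels off one $\mathfrak{m}$-order at a time, combining three ingredients: strong quasi-flatness of $I(P)$ (which follows from quasi-flatness by \autoref{th:prelim-flatvariants-flattostrong}), the length-bounded extraction statement \autoref{th:flatness-deformed-estimate-crude}, and the $\mathfrak{m}^k$-shifted intersection inclusion \autoref{th:flatness-flatalgebraic-crude} applied to $M = I(P)$ and $X = \compl{ℂQ}$. I first note that since the leading terms of elements of $I(P)$ lie in $I(R) \subset ℂQ_{\ge 1}$, the condition $N_0 = 0$ forces the zeroth-order part of $x$ to vanish, so $x \in \mathfrak{m}\compl{ℂQ} \cap I(P)$ to begin with.

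Define the target sequence recursively by $N_0' = 0$ and
\begin{equation*}
N_k' \;=\; h\!\left(\max(N_k, N_1', \ldots, N_{k-1}')\right).
\end{equation*}
Boundedness of $R$ guarantees that each $N_k'$ is finite, and monotonicity of $h$ combined with $h(N) \ge N$ makes the sequence non-decreasing. The induction will construct $x_k \in \mathfrak{m}^k (ℂQ P ℂQ)_{\le N_k'}$ and remainders $r_k \in \mathfrak{m}^{k+1}\compl{ℂQ} \cap I(P)$ with $x = x_1 + \cdots + x_k + r_k$, starting from $r_0 = x$.

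At stage $k$, strong quasi-flatness upgrades $r_{k-1} \in \mathfrak{m}^k \compl{ℂQ} \cap I(P)$ to $r_{k-1} \in \mathfrak{m}^k I(P)$. Writing $r_{k-1} = x - x_1 - \cdots - x_{k-1}$ and inspecting the coefficient at $\mathfrak{m}^k/\mathfrak{m}^{k+1}$, the length of the $\mathfrak{m}^k$-part is at most $M_k := \max(N_k, N_1', \ldots, N_{k-1}')$, since each subtracted $x_j$ contributes bounded length $N_j'$ and the original $x$ contributes length at most $N_k$. Hence $r_{k-1} \in \mathfrak{m}^k ℂQ_{\le M_k} + \mathfrak{m}^{k+1}\compl{ℂQ}$. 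Applying \autoref{th:flatness-flatalgebraic-crude} with $V = ℂQ_{\le M_k}$ gives
\begin{equation*}
r_{k-1} \in \mathfrak{m}^k \!\left(I(P) \cap (ℂQ_{\le M_k} + \mathfrak{m}\compl{ℂQ})\right) + \mathfrak{m}^{k+1} I(P),
\end{equation*}
and then \autoref{th:flatness-deformed-estimate-crude} controls the inner intersection by $(ℂQ P ℂQ)_{\le h(M_k)} + \mathfrak{m}\compl{ℂQ} \cap I(P) = (ℂQ P ℂQ)_{\le N_k'} + \mathfrak{m}\compl{ℂQ} \cap I(P)$. Extracting the first summand as $x_k \in \mathfrak{m}^k (ℂQ P ℂQ)_{\le N_k'}$ leaves $r_k \in \mathfrak{m}^{k+1} I(P) \subset \mathfrak{m}^{k+1}\compl{ℂQ} \cap I(P)$, closing the induction.

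Since $r_k \to 0$ in the $\mathfrak{m}$-adic topology, the series $\sum_{k \ge 1} x_k$ converges to $x$ in $B \htensor \compl{ℂQ}$, yielding the required decomposition $x \in \sum_{k \ge 1} \mathfrak{m}^k (ℂQ P ℂQ)_{\le N_k'}$. The main obstacle is the bookkeeping: the length bound $M_k$ at stage $k$ depends on \emph{all} previously chosen $N_j'$, so $N_k'$ grows by iterated application of $h$. Finiteness of each $N_k'$ (rather than any uniform bound) is exactly the content of the boundedness hypothesis on $R$, and is what makes the inductive step work without a homogeneity assumption on $W$.
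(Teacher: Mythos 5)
Your overall strategy is the same as the paper's: peel off one $\mathfrak{m}$-order at a time, upgrading via quasi-flatness to $\mathfrak{m}^k I(P)$, then applying \autoref{th:flatness-flatalgebraic-crude} and \autoref{th:flatness-deformed-estimate-crude} to the $\mathfrak{m}^k$-slice. However, there is a genuine gap in the length bookkeeping.

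The step that fails is the claim that, looking at the $\mathfrak{m}^k/\mathfrak{m}^{k+1}$-coefficient of $r_{k-1} = x - x_1 - \cdots - x_{k-1}$, ``each subtracted $x_j$ contributes bounded length $N_j'$.'' Unwinding the definitions: an element $x_j \in \mathfrak{m}^j (\mathbb{C}Q P \mathbb{C}Q)_{\le N_j'}$ is a series $\sum m_i p_i (r_i + \psi(r_i)) q_i$ where the bound $N_j'$ controls the lengths of the \emph{zeroth-order} paths $p_i r_i q_i$, not the lengths appearing in $p_i \psi(r_i) q_i$. When you extract the $\mathfrak{m}^k$-order part for $k > j$, you pick up the $\mathfrak{m}^{k-j}$-order part of $\psi(r_i)$, and the path lengths there are not controlled by $N_j'$ at all. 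Concretely, if $\psi$ replaces a short relation by a very long path at order $\mathfrak{m}^2$, your bound at level $k=j+2$ is simply false. So the recursion $N_k' = h(\max(N_k, N_1', \ldots, N_{k-1}'))$ is too small: at each stage you also have to absorb the extra length that $\psi$ can introduce at the relevant $\mathfrak{m}$-order.

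The fix (which is the content of the auxiliary construction in the paper) is to introduce the quantities $M_s$ defined as the maximum path length encountered in the image of $\pi_s \circ \psi: R \to (B/\mathfrak{m}^s) \otimes \mathbb{C}Q$, and to fold these into the recursion. The paper defines $N_1'' = N_1$ and $N_{k+1}'' = \max(M_k + h(N_k''), N_{k+1})$, then sets $N_k' = h(N_k'')$; the estimate at level $\mathfrak{m}^{k+1}$ of $r_k$ then becomes $\max(N_{k+1}, M_k + h(N_1''), \ldots, M_1 + h(N_k'')) \le N_{k+1}''$, with the $M_s$-offsets doing precisely the work your $N_j'$-only bound omits. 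Once the recursion is corrected, the rest of your argument (strong quasi-flatness, the two auxiliary lemmas, convergence of $\sum x_k$) goes through as you describe.
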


\begin{proof}
The idea is to iterate \autoref{th:flatness-flatalgebraic-crude} in combination with \autoref{th:flatness-deformed-estimate-crude} and quasi-flatness of $ I(P) $. In the first part of the proof, we construct the desired sequence $ (N_k') $. In the second part, we construct the desired decomposition for individual elements $ x ∈ I(P) ∩ ℂQ_{≤ (0, N_1, …)} $. In the third part, we wrap up and prove the desired inclusion.

For the first part of the proof, let us construct the sequence $ (N_k') $. Given arbitrary $ k ∈ ℕ $, let $ M_k $ be the maximum path length encountered in the image of $ π ∘ ψ: R → B / \mathfrak{m}^k ¤ ℂQ $. We construct an auxiliary sequence as
\begin{align*}
N_1'' &= N_1, \\
N_2'' &= \max(M_1 + h(N_1''), N_2), \\
N_3'' &= \max(M_2 + h(N_2''), N_3), \\
… &
\end{align*}
The sequence $ (N_k'') $ is increasing, since $ h(N_i'') ≥ N_i'' $. We construct the final desired sequence as
\begin{equation*}
N_k' = h(N_k'').
\end{equation*}

For the second part of the proof, we prove a finite version of the desired inclusion by iterating \autoref{th:flatness-deformed-estimate-crude}. Pick an arbitrary element $ x ∈ I(P) ∩ ℂQ_{≤ (0, N_1, …)} $. We shall inductively construct sequences $ (y_k)_{k ≥ 1} $, $ (x_k)_{k ≥ 0} $ such that
\begin{equation}
\label{eq:flatness-flatalgebraic-crudesequence}
\begin{aligned}
& ∀k ≥ 0: \quad x = y_1 + … + y_k + x_k, \\
& ∀k ≥ 0: \quad x_k ∈ \mathfrak{m}^{k+1} I(P), \\
& ∀k ≥ 1: \quad y_k ∈ \mathfrak{m}^k (ℂQ P ℂQ)_{≤ h(N_k'')}.
\end{aligned}
\end{equation}
Our induction starts at $ k = 0 $. Note that we are not required to construct an element $ y_0 $. For the induction base $ k = 0 $, note that quasi-flatness gives
\begin{equation*}
x ∈ I(P) ∩ ℂQ_{≤ (0, N_1, …)} ⊂ \mathfrak{m} I(P).
\end{equation*}
We simply put $ x_0 = x $. This satisfies the requirement \eqref{eq:flatness-flatalgebraic-crudesequence} for the induction base $ k = 0 $.

Towards the induction step, let $ k ≥ 0 $ and assume that $ x_0, …, x_k $ and $ y_1, …, y_k $ have been constructed with property \eqref{eq:flatness-flatalgebraic-crudesequence}. Regard the element $ x_k = x - y_1 - … - y_k $. Since $ x ∈ ℂQ_{≤ (0, N_1, …)} $ and $ y_i ∈ \mathfrak{m}^i (ℂQ P ℂQ)_{≤ h(N_k'')} $, the total length in $ x_k $ at level $ \mathfrak{m}^{k+1} $ is less or equal to
\begin{equation*}
\max(N_{k+1}, M_k + h(N_1''), M_{k-1} + h(N_2''), …, M_1 + h(N_k'')) ≤ \max(N_{k+1}, M_k + h(N_k'')) = N_{k+1}''.
\end{equation*}
In the above inequality, the term $ M_k + h(N_k'') $ in the second maximum only appears when $ k ≥ 1 $. In either case, the bound by $ N_{k+1} $ is valid. In combination with \autoref{th:flatness-flatalgebraic-crude}, \autoref{th:flatness-deformed-estimate-crude} and quasi-flatness we conclude
\begin{align*}
x_k &∈ \mathfrak{m}^{k+1} I(P) ∩ \mathfrak{m}^{k+1} (ℂQ_{≤ (N_{k+1}'')} + \mathfrak{m} \compl{ℂQ}) \\
& ⊂ \mathfrak{m}^{k+1} (I(P) ∩ (ℂQ_{≤ N_{k+1}''} + \mathfrak{m} \compl{ℂQ})) + \mathfrak{m}^{k+2} I(P) \\
& ⊂ \mathfrak{m}^{k+1} ((ℂQ P ℂQ)_{≤ h(N_{k+1}'')} + \mathfrak{m} I(P)) + \mathfrak{m}^{k+2} I(P) \\
& ⊂ \mathfrak{m}^{k+1} (ℂQ P ℂQ)_{≤ h(N_{k+1}'')} + \mathfrak{m}^{k+2} I(P).
\end{align*}
According to this sum decomposition, write $ x_k = y_{k+1} + x_{k+1} $. This finishes the induction step. Ultimately, we have constructed the sequences $ (x_k)_{k ≥ 0} $ and $ (y_k)_{k ≥ 1} $ with property \eqref{eq:flatness-flatalgebraic-crudesequence}.

For the third part of the proof, we wrap up and prove the desired inclusion. Regard an element $ x ∈ I(P) ∩ ℂQ_{≤ (0, N_1, …)} $ together with its sequences $ (x_k) $ and $ (y_k) $. With respect to the $ \mathfrak{m} $-adic topology on $ B \htensor A $, we have the converging series
\begin{equation*}
x = \sum_{k = 1}^∞ y_k.
\end{equation*}
Indeed, the summands lie in increasingly high powers of $ \mathfrak{m} $. The limit is $ x $, since the difference between $ y_1 + … + y_k $ and $ x $ is $ x_k $ which lies in increasingly high powers of $ \mathfrak{m} $. Even better, we conclude
\begin{equation*}
x ∈ \sum_{k = 1}^∞ \mathfrak{m}^k (ℂQ P ℂQ)_{≤ h(N_k'')}.
\end{equation*}
Since $ x $ was arbitrary and $ N_k' $ is defined as $ h(N_k'') $, this proves the claim.
\end{proof}

We are ready to prove quasi-flatness of $ I(P)_{ℂQ} $. The requirement is that $ I(P) $ is already quasi-flat. By \autoref{th:flatness-flatcompleted-th}, this happens for example $ R $ is of finite type and [BG] and [CP] hold.

\begin{proposition}[Quasi-flatness II]
\label{th:flatness-flatalgebraic-th}
Assume $ R $ is of bounded type and $ ψ $ maps to $ \mathfrak{m} ℂQ $. If $ I(P) $ is quasi-flat, then $ I(P)_{ℂQ} $ is quasi-flat.
\end{proposition}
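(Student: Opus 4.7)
The plan is to reduce quasi-flatness of $I(P)_{ℂQ}$ to the already-established quasi-flatness of $I(P)$ by exploiting a length boundedness property that is built into the very definition of $I(P)_{ℂQ}$. Indeed, because $\psi$ lands in $\mathfrak{m} ℂQ$ rather than $\mathfrak{m}\compl{ℂQ}$, elements of $I(P)_{ℂQ}$ automatically have bounded path length at every $\mathfrak{m}$-adic truncation, which is precisely the hypothesis required by the key preparatory result \autoref{th:flatness-flatalgebraic-termsout}.

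First I would show that every $x \in I(P)_{ℂQ}$ lies in $ℂQ_{\leq(0,N_1,N_2,\ldots)}$ for a suitable increasing sequence $(N_k)_{k\geq 1}$. Writing $x = \sum_{i=0}^\infty m_i z_i$ with $m_i \in \mathfrak{m}^{\to\infty}$ and $z_i \in ℂQ P ℂQ$, the generators of $ℂQ P ℂQ$ are of the shape $p(r+\psi(r))q$ where $p,q$ are paths and $\psi(r) \in \mathfrak{m} ℂQ$. Since $\psi(r)$ is itself an element of $\mathfrak{m}\htensor ℂQ$, for each fixed $k$ the truncation $\pi_k(\psi(r))$ is a finite sum of honest paths, hence of bounded length; the same therefore holds for $\pi_k(z_i)$. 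The truncation $\pi_k(x)$ involves only those finitely many $i$ with $m_i \in \mathfrak{m}^{<k}$, so $\pi_k(x)$ is a finite sum of bounded-length paths. Taking $N_k$ to be the maximum path length appearing in $\pi_k(x)$ (and replacing by the running maximum to ensure monotonicity) gives the claim.

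Next, let $x \in I(P)_{ℂQ} \cap \mathfrak{m} ℂQ$. Since $I(P)_{ℂQ} \subseteq I(P)$, we have $x \in I(P)$. Because $x \in \mathfrak{m} ℂQ$, its zeroth order part vanishes, so we may take $N_1 = 0$ in the sequence just constructed. Applying \autoref{th:flatness-flatalgebraic-termsout} — whose hypotheses hold by our assumptions that $R$ is of bounded type and $I(P)$ is quasi-flat — yields an increasing sequence $(N_k')_{k\geq 1}$ together with a decomposition
\begin{equation*}
x \in \sum_{k=1}^\infty \mathfrak{m}^k (ℂQ P ℂQ)_{\leq N_k'}.
\end{equation*}

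Finally, each summand $\mathfrak{m}^k (ℂQ P ℂQ)_{\leq N_k'}$ is contained in $\mathfrak{m}^k \cdot (ℂQ P ℂQ) \subseteq \mathfrak{m}\, I(P)_{ℂQ}$, and the infinite sum converges $\mathfrak{m}$-adically because the $\mathfrak{m}$-orders of the contributing coefficients tend to infinity both in $k$ and, for fixed $k$, within each summand. Assembling the double series along diagonals gives an expression $\sum_j m_j y_j$ with $m_j \in \mathfrak{m}^{\geq 1,\to\infty}$ and $y_j \in I(P)_{ℂQ}$, witnessing $x \in \mathfrak{m}\, I(P)_{ℂQ}$ as desired. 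The main obstacle is the first step: if $\psi$ were only assumed to map into $\mathfrak{m}\compl{ℂQ}$, the path lengths at an individual $\mathfrak{m}$-level could be unbounded and the boundedness machinery of \autoref{sec:flatness-bounded} would not apply — this is precisely why the hypothesis $\Im(\psi) \subseteq \mathfrak{m} ℂQ$ appears in the statement.
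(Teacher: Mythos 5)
Your proposal is correct and takes essentially the same route as the paper: you observe that because $\psi$ lands in $\mathfrak{m}ℂQ$ and $R$ is finite-dimensional, any $x\in I(P)_{ℂQ}$ has bounded path length at each $\mathfrak{m}$-adic truncation, you invoke \autoref{th:flatness-flatalgebraic-termsout}, and you reassemble the double sum into an element of $\mathfrak{m}\,I(P)_{ℂQ}$. The only cosmetic slip is writing ``we may take $N_1 = 0$'' where you mean the zeroth-order bound $N_0$ (the $0$ in $ℂQ_{\leq(0,N_1,\ldots)}$, matching the hardcoded $0$ in the lemma); otherwise your reconstruction is faithful and merely more detailed than the paper's.
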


\begin{proof}
It is our task to show $ I(P)_{ℂQ} ∩ \mathfrak{m} ℂQ ⊂ \mathfrak{m} I(P)_{ℂQ} $. Pick an element $ x ∈ I(P)_{ℂQ} ∩ \mathfrak{m} ℂQ $. Start with the observation that the relation space $ R $ is finite-dimensional, the image of $ ψ $ lies in $ \mathfrak{m} ℂQ $ instead of $ \mathfrak{m} \compl{ℂQ} $ and in $ I(P)_{ℂQ} $ only finitely many paths are multiplied to $ P $ at order $ ≤ k $. We see that $ x $ has bounded path length at order $ ≤ k $ for every $ k ∈ ℕ $. In other words, there is a sequence $ N = (N_0, N_1, …) $ such that $ x ∈ ℂQ_{≤ (0, N_1, …)} $. In total, we have $ x ∈ I(P) ∩ ℂQ_{≤ (0, N_1, …)} $. According to \autoref{th:flatness-flatalgebraic-termsout}
, we get
\begin{equation*}
x ∈ \sum_{k = 1}^∞ \mathfrak{m}^k (ℂQ P ℂQ)_{≤ N_k'}.
\end{equation*}
This shows $ x ∈ I(P)_{ℂQ} $ and finishes the proof.
\end{proof}

\subsection{Closedness results}
\label{sec:flatness-closedness}
In this section, we prove additional closedness results. The idea is that a quotient of an algebra by an ideal can always be taken. However, if the algebra enjoys topological properties, they can be lost in the quotient if the ideal is not good enough. In the present section we devote ourselves to the study of this problem in the case of the deformed ideals $ I(P) ⊂ \compl{ℂQ} $. Essentially, we show that $ I(P) $ is good enough for the “Krull topology” on $ \compl{ℂQ} $ if $ R $ is of bounded type.

Let us start by recalling the classical topology on $ \compl{ℂQ} $.

\begin{definition}
The \emph{Krull topology} on $ ℂQ $ (or $ \compl{ℂQ} $) is the topology generated by the neighborhood basis
\begin{equation*}
x + ℂQ_{≥N} \quad \text{or} \quad x + \compl{ℂQ}_{≥N}, \quad \text{for } x ∈ ℂQ, ~ N ∈ ℕ.
\end{equation*}
\end{definition}

\begin{remark}
With the Krull topology, the space $ ℂQ $ is first countable and sequential. A sequence $ x_n ∈ ℂQ $ converges to some $ x ∈ ℂQ $ if $ x_n - x_{n+1} $ is concentrated in higher and higher path length. The space $ ℂQ $ is not complete: The space $ \compl{ℂQ} $ is in fact its completion.
\end{remark}

\begin{remark}
\label{rem:flatness-closedness-Iclosed}
A standard setup in algebra is as follows: Given is a path algebra $ ℂQ $ of a quiver and one is interested in dividing out an ideal $ I ⊂ ℂQ $. One accepts the quotient algebra $ ℂQ / I $ without questioning its topological properties. If $ I_q $ is a quasi-flat deformation of $ I $, then $ (B \htensor ℂQ) / I_q $ is a deformation of $ ℂQ / I $. For path algebras of quivers, this is all one typically desires.

In contrast, consider an ideal $ I ⊂ \compl{ℂQ} $ in the completed path algebra. Then one can still form the quotient $ \compl{ℂQ} / I $, but one is interested in the Krull topology on the quotient. Therefore one typically requires that $ I ⊂ \compl{ℂQ} $ is closed with respect to the Krull topology.
\end{remark}

\begin{remark}
Let us use the standard notation $ \compl{ℂQ} R \compl{ℂQ} $ for the ideal generated by $ R $ in $ \compl{ℂQ} $. We claim that
\begin{equation*}
\compl{ℂQ} R \compl{ℂQ} ⊂ I(R)_{\compl{ℂQ}} ⊂ \closure{I(R)}.
\end{equation*}
Indeed, the left-hand side is the finite span of elements of the form $ prq $ where $ r ∈ R $ and $ p, q ∈ \compl{ℂQ} $. A reordering, or counting argument, for the constituents of $ p $ and $ q $ shows that $ prq ∈ I(R)_{\compl{ℂQ}} $. On the other hand, pick an element $ x ∈ I(R)_{\compl{ℂQ}} $, presented as a series \autoref{def:flatness-ideals-IP}. Truncating the series at high indices immediately shows that $ x $ lies in the closure of $ I(R) $.
\end{remark}

We arrive at the following inclusion:

\begin{equation*}
R ~⊂~ I(R) ~⊂~ \compl{ℂQ} R \compl{ℂQ} ~⊂~ I(R)_{\compl{ℂQ}} ~⊂~ \closure{I(R)}.
\end{equation*}

In case $ R $ is of bounded type, we can improve on these inclusions: We shall see that $ I(R)_{\compl{ℂQ}} $ is the closure of $ I(R) $ and thereby also the closure of $ \compl{ℂQ} R \compl{ℂQ} $.

\begin{remark}
\label{rem:flatness-closedness-nonclosedCQRCQ}
At this point, we shall comment on the stark difference with the commutative case. In fact, $ \compl{ℂQ} R \compl{ℂQ} $ is not necessarily closed with respect to the Krull topology, while this would hold if $ A = \compl{ℂQ} $ were commutative. Indeed, let $ A $ be a commutative local ring with maximal ideal $ \mathfrak{m}_A $. Then $ (A, \mathfrak{m}_A) $ is a Zariski ring and hence any ideal $ I ⊂ A $ is automatically closed with respect to the $ \mathfrak{m}_A $-adic topology. If $ \compl{ℂQ} $ were commutative, this would imply that $ \compl{ℂQ} R \compl{ℂQ} $ is closed with respect to the Krull topology.

We shall here give an example of a quiver $ Q $ and a space $ R $ where $ \compl{ℂQ} R \compl{ℂQ} $ is clearly not closed:

\begin{center}
\begin{tikzpicture}
\path[fill] (0, 0) circle[radius=0.05] (2, 0) circle[radius=0.05];
\path[draw, ->] (0, 0.1) to[out=90, in=90] (-1, 0) node[left] {$ C $} to[out=270, in=270] (0, -0.1);
\path[draw, <-] (2, 0.1) to[out=90, in=90] (3, 0) node[right] {$ A $} to[out=270, in=270] (2, -0.1);
\path[draw, ->] (0.1, 0) -- (1.9, 0) node[midway, above] {$ B $};
\path (5, 0) node {$ R = \vspan(B) $.};
\end{tikzpicture}
\end{center}

We note that the series $ \sum_{i = 0}^∞ A^i B C^i $ lies in the closure of $ ℂQ R ℂQ $, and claim that it does not lie in $ \compl{ℂQ} B \compl{ℂQ} $. The clue is to analyze the structure of infinite sums of paths. For any element $ \sum_{i, j = 0}^∞ λ_{i, j} A^i B C^j $ let us call $ (λ_{i, j}) $ its coefficient matrix. An element of the form
\begin{equation*}
\left(\sum_{i = 0}^∞ λ_i A^i\right) B \left(\sum_{j = 0}^∞ η_j C^j\right)
\end{equation*}
has a coefficient matrix of column rank at most one, since all columns are multiples of each other. Any element in the ideal $ \compl{ℂQ} B \compl{ℂQ} $ is a finite sum of elements of this form and therefore has coefficient matrix with finite column rank. The coefficient matrix of the element $ x = \sum_{i = 0}^∞ A^i B C^i $ is however an infinite diagonal matrix which has infinite rank. We conclude $ x \notin \compl{ℂQ} B \compl{ℂQ} $. This illustrates a difference with the commutative case and shows how intricate closedness can be.
\end{remark}

\begin{lemma}
\label{th:flatness-closedness-IRclosed}
Let $ R $ be of bounded type. Then $ I(R)_{\compl{ℂQ}} $ is the closure of $ I(R) $ in the Krull topology.
\end{lemma}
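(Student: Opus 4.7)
The plan is to prove both inclusions, with the easy direction coming from truncating series and the hard direction $\closure{I(R)} \subseteq I(R)_{\compl{ℂQ}}$ being where the boundedness hypothesis is essential.

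For the easy direction, any element of $I(R)_{\compl{ℂQ}}$ is by definition a series $\sum_{i=0}^\infty p_i r_i q_i$ with $|p_i| + |q_i| \to \infty$, and its partial sums lie in $I(R)$ and converge to it in the Krull topology. So $I(R)_{\compl{ℂQ}} \subseteq \closure{I(R)}$.

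For the hard direction, I would proceed as follows. Let $x \in \closure{I(R)}$ and choose $x_n \in I(R)$ with $x - x_n \in \compl{ℂQ}_{\geq N_n}$ for a sequence $N_n \to \infty$. Set $y_1 = x_1$, $y_n = x_n - x_{n-1}$ for $n \geq 2$, and $N_0 = 0$, so that $y_n \in I(R) \cap ℂQ_{\geq N_{n-1}}$ and $x = \sum_{n \geq 1} y_n$ converges in the Krull topology. Now fix a basis $F$ of $R$ of bounded type and apply \autoref{th:flatness-space-boundedness} with $X = I(R)$ and spanning set $X_0 = \{prq \running r \in F,\ p, q \text{ paths in } Q\}$ in the one-sided form (the proof given for \autoref{th:flatness-space-boundedness} goes through verbatim with $C$ chosen as the set of paths $\sim$-related to some path of length $\geq N_{n-1}$, which is visibly closed under $\sim$). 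This yields a finite decomposition
\begin{equation*}
y_n = \sum_{j} \lambda_{n, j}\, p_{n, j}\, r_{n, j}\, q_{n, j}, \qquad |p_{n, j} r_{n, j} q_{n, j}| \geq l(N_{n-1}).
\end{equation*}
Since $R$ is finite-dimensional with $|r_{n, j}| \leq |R|$, each building block satisfies $|p_{n, j}| + |q_{n, j}| \geq l(N_{n-1}) - |R|$. Bounded type ensures $l(N_{n-1}) \to \infty$, so enumerating the pairs $(n, j)$ in a single sequence produces a presentation $x = \sum_{n, j} \lambda_{n, j}\, p_{n, j}\, r_{n, j}\, q_{n, j}$ with $|p_{n, j}| + |q_{n, j}| \to \infty$, which is exactly the form required for $x \in I(R)_{\compl{ℂQ}}$.

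The main obstacle is the one I already flagged: going from Krull-convergence of $x_n$, which only tells us that the differences $y_n$ have large ambient path length, to the finer statement that $y_n$ admits an explicit $\sum p_{n, j} r_{n, j} q_{n, j}$ presentation whose individual paths $p_{n, j}, q_{n, j}$ are themselves long. Without a hypothesis on $R$ this step fails, as illustrated by \autoref{rem:flatness-closedness-nonclosedCQRCQ}; the bounded type condition is precisely tailored so that applying (the one-sided variant of) \autoref{th:flatness-space-boundedness} performs this upgrade, and the lower bound $l(N_{n-1}) - |R|$ is exactly the quantitative length control one needs for the resulting double series to sit inside $I(R)_{\compl{ℂQ}}$ rather than in some strictly larger completion.
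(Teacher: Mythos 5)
Your proof is correct and follows essentially the same strategy as the paper's: decompose the limit into telescoping differences $y_n \in I(R) \cap ℂQ_{\geq N_{n-1}}$, use the boundedness lemma (Lemma~\ref{th:flatness-space-boundedness}) to force each $y_n$ into $(ℂQ R ℂQ)_{\geq l(N_{n-1})}$, and re-sum into a series of the required form. The only difference is cosmetic: you work directly with an approximating sequence from $I(R)$, while the paper phrases the argument as closedness of $I(R)_{\compl{ℂQ}}$ under limits of sequences $(x_n) \subset I(R)_{\compl{ℂQ}}$, which is a marginally stronger formulation but relies on the same estimate.
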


\begin{proof}
It is clear that $ I(R)_{\compl{ℂQ}} $ is contained in the closure of $ I(R) $. Conversely, $ I(R) $ is naturally a subset of $ I(R)_{\compl{ℂQ}} $. It remains to be shown that $ I(R)_{\compl{ℂQ}} $ is closed.

Regard a sequence $ (x_n) ⊂ I(R)_{\compl{ℂQ}} $ with $ x_n → x ∈ \compl{ℂQ} $. It is our task to show that $ x $ can be written as a series
\begin{equation*}
x = \sum_{i = 0}^∞ p_i r_i q_i \quad \text{where} \quad r_i ∈ R, ~ |p_i| + |q_i| → ∞.
\end{equation*}
We can assume that $ x_{n+1} - x_n ∈ ℂQ_{≥ k_n} $ for a sequence $ (k_n) ⊂ ℕ $ with $ k_n → ∞ $. The standard boundedness argument shows that
\begin{equation*}
x_n - x_{n+1} ∈ I(R) ∩ \compl{ℂQ}_{≥ k_n} ⊂ (ℂQ R ℂQ)_{≥ l(k_n)}.
\end{equation*}
Summing up over $ n ≥ 0 $, we get that $ x $ is indeed an element of $ I(R)_{\compl{ℂQ}} $.
\end{proof}

Next, we shall prove that $ I(R)_{\compl{ℂQ}} $ has a closed complement in $ \compl{ℂQ} $.

\begin{example}
The existence of closed complements is intricate. It is not true that a complement of an ideal $ I ⊂ ℂQ $ gives rise to a closed complement of its closure $ \closure{I} ⊂ \compl{ℂQ} $. More precisely, if $ ℂQ = I + V $, then it does not necessarily hold that $ \compl{ℂQ} = \closure{I} + \closure{V} $. For instance, pick $ ℂQ = ℂ⟨A⟩ $ and regard
\begin{align*}
I &= (1+A) = \vspan(1+A, A+A^2, …), \\
V &= \vspan(1).
\end{align*}
We have $ ℂ⟨A⟩ = I ⊕ V $. We have $ 1 - A^n ∈ I $, hence $ 1 ∈ \closure{I} ∩ \closure{V} $.
\end{example}

\begin{lemma}
\label{th:flatness-closedness-IRcomplement}
Let $ R $ be of bounded type. Then $ I(R)_{\compl{ℂQ}} $ has a closed complement.
\end{lemma}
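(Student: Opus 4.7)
The plan is to use the bounded-type condition to decompose $ \compl{ℂQ} $ along $ F $-equivalence classes of paths and then choose the complement class by class. The bounded-type hypothesis is precisely the assumption needed to make each class finite, which turns the construction into linear algebra inside finite-dimensional pieces.

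Concretely, I would pick a basis $ F $ of $ R $ of bounded type and let $ \sim $ denote the associated equivalence relation on the set of paths in $ Q $. Bounded type means $ h(N) < \infty $ for each $ N $, hence every equivalence class $ C $ is a \emph{finite} set of paths (all related to one fixed path of length $ N $ must themselves have length $ \leq h(N) $). Consequently, grouping paths by class,
\begin{equation*}
\compl{ℂQ} \;=\; \prod_{C} \vspan(C),
\end{equation*}
where the topology on the right is the product topology arising from the discrete (finite-dimensional!) topology on each $ \vspan(C) $, and this matches the Krull topology on $ \compl{ℂQ} $.

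Next I would observe that $ I(R) $ is graded by the equivalence classes. Since $ F $ spans $ R $, the ideal $ I(R) = ℂQ R ℂQ $ is spanned as a vector space by products $ p r q $ with $ p, q $ paths and $ r \in F $. By definition of $ F $-relatedness, all paths appearing in such a product lie in a single class, so $ prq \in \vspan(C) $ for some $ C $. Hence
\begin{equation*}
I(R) \;=\; \bigoplus_{C} \bigl(I(R) \cap \vspan(C)\bigr).
\end{equation*}
Combining this with \autoref{th:flatness-closedness-IRclosed} and the product description of $ \compl{ℂQ} $, a routine sequential-closure argument (each $ \vspan(C) $ being finite-dimensional, so class components stabilize under Krull-convergent sequences) yields
\begin{equation*}
I(R)_{\compl{ℂQ}} \;=\; \closure{I(R)} \;=\; \prod_{C} \bigl(I(R) \cap \vspan(C)\bigr).
\end{equation*}

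With this structural description in hand, the proof finishes by pure linear algebra. For every class $ C $ choose a linear complement $ V_C \subseteq \vspan(C) $ of the finite-dimensional subspace $ I(R) \cap \vspan(C) $, and set
\begin{equation*}
V \;:=\; \prod_{C} V_C \;\subseteq\; \compl{ℂQ}.
\end{equation*}
The class-by-class direct sum decomposition $ \vspan(C) = (I(R) \cap \vspan(C)) \oplus V_C $ then assembles into $ \compl{ℂQ} = I(R)_{\compl{ℂQ}} \oplus V $, and $ V $ is closed since it is a product of closed subspaces in the product topology. The step I anticipate needing the most care is the identification $ \closure{I(R)} = \prod_C (I(R) \cap \vspan(C)) $: one direction is immediate by truncating at finitely many classes, while the other direction uses that approximating sequences in $ I(R) $ have class-$ C $ components that must eventually stabilize, which in turn is exactly the feature guaranteed by finiteness of each class, i.e.\ by bounded type.
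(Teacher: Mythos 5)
Your proof is correct and takes a genuinely different (and arguably cleaner) route than the paper's. The paper builds $ V $ incrementally by path length: it greedily chooses sets $ V_N $ of paths of length $ N $ that are independent of $ I(R) $ and fill out $ ℂQ_{≤N} $, takes $ V = \prod_N \vspan V_N $, and then proves disjointness and spanning by hand, invoking $ F $-relatedness and the bound $ h(N) $ inside those arguments. You instead put the $ F $-equivalence classes front and center: you observe that bounded type makes every class $ C $ \emph{finite}, which yields a topological product decomposition $ \compl{ℂQ} = \prod_C \vspan(C) $ compatible with the Krull topology, that $ I(R) $ decomposes as $ \bigoplus_C (I(R) ∩ \vspan(C)) $ because each generator $ pfq $ with $ f ∈ F $ is supported on a single class, and hence (via \autoref{th:flatness-closedness-IRclosed}) that $ I(R)_{\compl{ℂQ}} = \closure{I(R)} = \prod_C (I(R) ∩ \vspan(C)) $, since each factor $ \vspan(C) $ is finite-dimensional and therefore closed. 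From there the complement is assembled class by class from finite-dimensional linear algebra, and $ V = \prod_C V_C $ is closed as a product of closed subspaces. What your approach buys is conceptual transparency: the existence of a closed complement is exposed as a direct consequence of the fact that bounded type makes $ \compl{ℂQ} $ a countable product of finite-dimensional spaces graded by $ F $-classes, and $ I(R) $ is compatible with that grading. The paper's length-filtered construction produces a $ V $ with a path basis but obscures this grading and requires the two somewhat technical direct-sum and spanning verifications. One detail worth making explicit if you write this up: the identification of the product topology with the Krull topology uses both directions of the finiteness, namely that only finitely many classes meet $ ℂQ_{<N} $, and that any finite set of classes is contained in some $ ℂQ_{<N} $; you implicitly use both, and both follow from bounded type plus finiteness of the arrow set.
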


\begin{proof}
It is our task to find a closed subspace $ V ⊂ \compl{ℂQ} $ such that $ \compl{ℂQ} = I(R)_{\compl{ℂQ}} ⊕ V $. The naive idea is to fill $ V $ by picking paths of increasing length which complement $ I(R) $. However, the closure of the space spanned by these paths need not have vanishing intersection with $ I(R)_{\compl{ℂQ}} $ in general. The remedy is provided by the boundedness condition, which allows us to reduce the vanishing intersection property to checking it on finite lengths of paths. We divide the proof into three steps: In the first part of the proof, we construct the space $ V $. In the second part, we show $ I(R)_{\compl{ℂQ}} ∩ V = 0 $. In the third part, we show $ \compl{ℂQ} = I(R)_{\compl{ℂQ}} + V $.

For the first part of the proof, we construct the space $ V $. Pick for every $ N ∈ ℕ $ a set $ V_N $ of paths of length $ N $ such that all paths in $ V_n $ for $ n ≤ N $ are linearly independent of $ I(R) $, but span $ ℂQ_{≤N} $ when combined with $ I(R) $:
\begin{equation*}
ℂQ_{≤N} ⊂ \vspan(V_0 ∪ … ∪ V_N) ⊕ I(R).
\end{equation*}
Our candidate space for the closed complement is
\begin{equation*}
V = \prod_{N = 0}^∞ \vspan V_N ⊂ \compl{ℂQ}.
\end{equation*}
This is the subspace consisting of elements of $ \compl{ℂQ} $ which are supported on paths lying in the union $ ∪_{N ∈ ℕ} V_N $. The space $ V $ is closed with respect to the Krull topology. Indeed, sticking to our sequential viewpoint, whenever $ (x_n) ⊂ V $ converges in $ \compl{ℂQ} $, then coefficients of $ x_n $ on paths in low path length stabilize. From a purely topological viewpoint, if $ x ∈ \compl{ℂQ} \setminus V $, then $ x $ has support on some path $ p \notin ∪_{N ∈ ℕ} V_N $. Any element $ y ∈ x + ℂQ_{>|p|} $ then also has support on this path $ p $ and therefore $ y \notin V $. This shows $ V $ is closed.

For the second part of the proof, we show $ I(R)_{\compl{ℂQ}} ∩ V = 0 $. Pick any element $ x ∈ I(R)_{\compl{ℂQ}} ∩ V $. We shall prove that $ x ∈ ℂQ_{>N} $ for every $ N ∈ ℕ $. Indeed, let $ N ∈ ℕ $. Then decompose $ x = x_1 + x_2 $ into path support related to length $ ≤ N $ and path support not related to length $ ≤ N $. Since $ x $ lies in $ V $ and $ V $ is built from pure paths, we have that $ x_1, x_2 ∈ V $. More specifically, since all paths related to length $ ≤ N $ are of length $ ≤ h(N) $, we have $ x_1 ∈ \vspan(V_0 ∪ … ∪ V_{h(N)}) $.

We claim that $ x_1 ∈ I(R) $. Indeed, write
\begin{equation*}
x = \sum_{i = 0}^∞ p_i r_i q_i, \quad r_i ∈ R, ~ |p_i| + |q_i| → ∞.
\end{equation*}
Let $ S ⊂ ℕ $ be the set of indices where all paths in $ p_i r_i q_i $ are related to paths of length $ ≤ N $. Then
\begin{equation*}
x = \sum_{i ∈ S} p_i r_i q_i + \sum_{i ∈ ℕ \setminus S} p_i r_i q_i.
\end{equation*}
In the first summand, all paths are related to paths of length $ ≤ N $. In the second summand, no paths are related to length $ ≤ N $. In conclusion, we have
\begin{equation*}
x_1 = \sum_{i ∈ S} p_i r_i q_i, \quad x_2 = \sum_{i ∈ ℕ \setminus S} p_i r_i q_i.
\end{equation*}
We obtain $ x_1 ∈ I(R) $. In conclusion $ x_1 ∈ I(R) ∩ \vspan(V_0, …, V_{h(N)}) $. However by construction of the sets $ V_n $, the sum $ I(R) + \vspan(V_0, …, V_{h(N)}) $ is actually direct and hence $ x_1 = 0 $. We conclude $ x = x_2 $. Since all paths in the support of $ x_2 $ are not related to length $ ≤ N $, they are of length $ > N $. We conclude $ x ∈ \compl{ℂQ}_{>N} $. Since $ N $ was arbitrary, we conclude $ x = 0 $. This shows $ I(R)_{\compl{ℂQ}} ∩ V = 0 $.

For the third part of the proof, we show that $ \compl{ℂQ} = I(R)_{\compl{ℂQ}} + V $. The idea is to break down an element of $ \compl{ℂQ} $ into pieces which individually lie in the two spaces and make sure the pieces sum up appropriately. Let now $ x ∈ \compl{ℂQ} $. Split $ x $ into pieces according to path length:
\begin{equation*}
x = \sum_{n = 0}^∞ x_n, \quad |x_n| = n.
\end{equation*}
We claim that for every $ n ∈ ℕ $ we have $ x_n ∈ I(R)_{≥l(n)} + \vspan(V_{l(n)} ∪ … V_n) $. To prove this, fix $ n ∈ ℕ $. By construction, we can write $ x = y_n + z_n $ with $ y_n ∈ I(R) $ and $ z_n ∈ \vspan(V_0 ∪ … ∪ V_n) $. Write
\begin{equation*}
y_n = \sum_{i ∈ I}^∞ p_i r_i q_i, \quad r_i ∈ R, |I| < ∞.
\end{equation*}
Let $ S ⊂ I $ be the set of indices $ i ∈ I $ where all paths in $ p_i r_i q_i $ are related to a path of length $ n $. We have $ y_n = y_n^{(1)} + y_n^{(2)} $ with
\begin{equation*}
y_n^{(1)} = \sum_{i ∈ S} p_i r_i q_i, \quad y_n^{(2)} = \sum_{i ∈ I \setminus S} p_i r_i q_i.
\end{equation*}
For $ i ∈ S $, all paths in $ p_i r_i q_i $ are of length $ ≥ l(n) $. This implies $ y_n^{(1)} ∈ I(R)_{≥l(n)} $.

Next, split $ z_n = z_n^{(1)} + z_n^{(2)} $ with $ z_n^{(1)}, z_n^{(2)} ∈ \vspan(V_0 ∪ … ∪ V_n) $ such that $ z_n^{(1)} $ only contains paths related to paths of length $ n $ and $ z_n^{(2)} $ only contains paths not related to paths of length $ n $. Since all paths related to paths of length $ n $ have length at least $ l(n) $, we have $ z_n^{(1)} ∈ \vspan(V_{l(n)} ∪ … ∪ V_n) $.

We argue that $ y_n^{(2)} + z_n^{(2)} = 0 $. Indeed, all paths in $ y_n^{(1)} + z_n^{(1)} $ are related to paths of length $ n $, while all paths in $ y_n^{(2)} + z_n^{(2)} $ are not related to length $ n $. Since $ x_n $ itself has length $ n $, we conclude $ y_n^{(2)} + z_n^{(2)} = 0 $.

Finally, we get $ x_n = y_n^{(1)} + z_n^{(1)} $. Since $ y_n^{(1)} ∈ I(R)_{≥l(N)} $ and $ z_n^{(1)} ∈ \vspan(V_{l(n)} ∪ … ∪ V_n) $ and $ l(n) → ∞ $, we get
\begin{equation*}
x = \sum_{n = 0}^∞ y_n^{(1)} + z_n^{(1)} = \sum_{n = 0}^∞ y_n^{(1)} + \sum_{n = 0}^∞ z_n^{(1)} ∈ I(R)_{\compl{ℂQ}} + V.
\end{equation*}
This proves that $ \compl{ℂQ} = I(R)_{\compl{ℂq}} + V $. Together with the fact $ I(R)_{\compl{ℂQ}} ∩ V = 0 $ proven before, this proves the direct sum decomposition $ \compl{ℂQ} = I(R)_{\compl{ℂQ}} ⊕ V $ and finishes the proof.
\end{proof}


In the remainder of this section, we devote ourselves to the study of $ P $ and its associated ideal-like spaces. Let us give an indication of the additional topology we want to regard. As we recalled in \autoref{rem:flatness-closedness-Iclosed}, for a quotient $ \compl{ℂQ} / I $ one typically demands that $ I ⊂ \compl{ℂQ} $ be closed with respect to the Krull topology. The quotient $ \compl{ℂQ} / I $ then inherits a quotient topology. The tensor product $ B \htensor (\compl{ℂQ} / I) $ then obtains a combination of the Krull and $ \mathfrak{m} $-adic topology. We define this topology as follows:

\begin{definition}
\label{def:flatness-closedness-tensortop}
Let $ B $ be a deformation base. Then the \emph{tensor topology} on $ B \htensor \compl{ℂQ} $ is the limit topology induced from
\begin{equation*}
B \htensor \compl{ℂQ} = \lim (B/\mathfrak{m}^k ¤ \compl{ℂQ})
\end{equation*}
Here the individual spaces $ B/\mathfrak{m}^k ¤ \compl{ℂQ} $ are equipped with the Krull topology. An explicit neighborhood basis for the topology is given by the subspaces $ x + \mathfrak{m}^k \compl{ℂQ} + B \compl{ℂQ}_{≥N} $ for $ x ∈ B \htensor \compl{ℂQ} $.
\end{definition}

\begin{remark}
With respect to the tensor topology, a sequence $ x_n ⊂ \compl{ℂQ} $ converges if for every $ k ∈ ℕ $, the path lengths of the differences $ x_n - x_{n+1} $ go to infinity once $ \mathfrak{m}^k $ is divided out.
\end{remark}

We have the following chain of inclusions for $ P $ and its associated ideal-like spaces:

\begin{center}
\begin{tikzpicture}
\newcommand{\inclusionconnect}[2]{\path (#1.east) -- (#2.west) node[midway, sloped] {$ ⊂ $};}
\path (1, 0) node (A) {$ P $} (3, 0) node (B) {$ ℂQ P ℂQ $} (5, 0) node (C) {$ \compl{ℂQ} P \compl{ℂQ} $} (8, 0.5) node (D) {$ (P) $} (8, -0.5) node (E) {$ (B \htensor \compl{ℂQ}) P (B \htensor \compl{ℂQ}) $} (11, 0) node (F) {$ I(P) $} (12.5, 0) node (G) {$ \closure{I(P)} $} (14, 0) node (H) {$ \closure{I(P)}^{\tensor} $};
\inclusionconnect AB
\inclusionconnect BC
\inclusionconnect CD
\inclusionconnect CE
\inclusionconnect DF
\inclusionconnect EF
\inclusionconnect FG
\inclusionconnect GH
\end{tikzpicture}
\end{center}

Here $ (B \htensor \compl{ℂQ}) P (B \htensor \compl{ℂQ}) $ denotes the ideal generated by $ P $ in $ B \htensor \compl{ℂQ} $. If $ R $ is of bounded type and $ I(P) $ is quasi-flat, the inclusions simplify: We shall see that $ I(P) $ is the closure of $ (B \htensor \compl{ℂQ}) P (B \htensor \compl{ℂQ}) $.

\begin{lemma}
\label{th:flatness-completed-powerswitch}
Let $ R $ be of bounded type and $ I(P) $ quasi-flat. Write $ \tilde l (N) = l(N) - |R| $ for $ N ∈ ℕ $. Then for $ k ≥ 1 $ and $ N ≥ 0 $ we have
\begin{align*}
& I(P) ∩ (B \compl{ℂQ}_{≥N} + \mathfrak{m}^k \compl{ℂQ}) \\
& ⊂ \sum_{i = 0}^{k-1} \mathfrak{m}^i (ℂQ P ℂQ)_{≥l(\tilde l^i (N))} + \mathfrak{m}^k I(P) \\
& = (ℂQ P ℂQ)_{≥ l(N)} + \mathfrak{m} (ℂQ P ℂQ)_{≥ l(\tilde l(N))} + \mathfrak{m}^2 (ℂQ P ℂQ)_{≥ l(\tilde l(\tilde l(N)))} + … + \mathfrak{m}^k I(P).
\end{align*}
\end{lemma}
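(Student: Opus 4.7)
The plan is to proceed by induction on $k$. The base case $k = 1$ falls out quickly from earlier machinery: given $x \in I(P) \cap (B \compl{\mathbb{C}Q}_{\geq N} + \mathfrak{m} \compl{\mathbb{C}Q})$, the elementary inclusion $B \compl{\mathbb{C}Q}_{\geq N} \subset \compl{\mathbb{C}Q}_{\geq N} + \mathfrak{m} \compl{\mathbb{C}Q}$ places $x$ in $I(P) \cap (\compl{\mathbb{C}Q}_{\geq N} + \mathfrak{m} \compl{\mathbb{C}Q})$, so \autoref{th:flatness-deformed-estimate-crude} with the upper bound dropped delivers $x \in (\mathbb{C}Q P \mathbb{C}Q)_{\geq l(N)} + \mathfrak{m} \compl{\mathbb{C}Q} \cap I(P)$, and the quasi-flatness hypothesis absorbs the residual into $\mathfrak{m} I(P)$.

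For the inductive step, assuming the statement at index $k$, take $x \in I(P) \cap (B \compl{\mathbb{C}Q}_{\geq N} + \mathfrak{m}^{k+1} \compl{\mathbb{C}Q})$. Since $\mathfrak{m}^{k+1} \subset \mathfrak{m}^k$, the hypothesis yields a decomposition $x = \sum_{i=0}^{k-1} y_i + z$ with $y_i \in \mathfrak{m}^i (\mathbb{C}Q P \mathbb{C}Q)_{\geq l(\tilde{l}^i(N))}$ and $z \in \mathfrak{m}^k I(P)$; the remaining task is to show $z \in \mathfrak{m}^k (\mathbb{C}Q P \mathbb{C}Q)_{\geq l(\tilde{l}^k(N))} + \mathfrak{m}^{k+1} I(P)$. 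The key geometric observation is that $(\mathbb{C}Q P \mathbb{C}Q)_{\geq M} \subset B \compl{\mathbb{C}Q}_{\geq M - |R|}$: each summand $p(r + ψ(r))q$ with $|prq| \geq M$ satisfies $|p| + |q| \geq M - |R|$, and this bounds path lengths in $p ψ(r) q$ at every $\mathfrak{m}$-order. Hence $\sum y_i \in B \compl{\mathbb{C}Q}_{\geq \tilde{l}^k(N)}$ (the smallest of the bounds $\tilde{l}^{i+1}(N)$, attained at $i = k-1$), and $z = x - \sum y_i$ lies in $\mathfrak{m}^k I(P) \cap (B \compl{\mathbb{C}Q}_{\geq \tilde{l}^k(N)} + \mathfrak{m}^{k+1} \compl{\mathbb{C}Q})$.

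It remains to prove the ``$\mathfrak{m}^k$-scaled base case'': for every $N'$, $\mathfrak{m}^k I(P) \cap (B \compl{\mathbb{C}Q}_{\geq N'} + \mathfrak{m}^{k+1} \compl{\mathbb{C}Q}) \subset \mathfrak{m}^k (\mathbb{C}Q P \mathbb{C}Q)_{\geq l(N')} + \mathfrak{m}^{k+1} I(P)$, which I expect to be the main obstacle. My strategy is to exploit the $\mathfrak{m}$-adic freeness of $I(P)$, which follows from pseudoclosedness (immediate from the explicit series presentation of $I(P)$) together with quasi-flatness via \autoref{th:prelim-submodules-projectionsection}: any linear section of $\pi \colon I(P) \to I(R)_{\compl{\mathbb{C}Q}}$ extends to a $B$-linear isomorphism $B \htensor I(R)_{\compl{\mathbb{C}Q}} \cong I(P)$. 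Writing an element $\zeta \in \mathfrak{m}^k I(P)$ as $\sum_j m_j w_j$ with $m_j \in \mathfrak{m}^{\geq k, \to \infty}$ and $w_j \in I(P)$, one may replace each $w_j$ by its zeroth-order part $\pi(w_j) \in I(R)_{\compl{\mathbb{C}Q}}$ modulo $\mathfrak{m}^{k+1} I(P)$. The constraint on $\zeta$ then forces the leading term $\sum_j \bar{m}_j \pi(w_j)$ into $\mathfrak{m}^k/\mathfrak{m}^{k+1} \otimes (I(R)_{\compl{\mathbb{C}Q}} \cap \compl{\mathbb{C}Q}_{\geq N'})$, and adapting the boundedness argument of \autoref{th:flatness-basis-related} to series in $\compl{\mathbb{C}Q}$ contains this intersection in $(\compl{\mathbb{C}Q} R \compl{\mathbb{C}Q})_{\geq l(N')}$; lifting back via the section yields the decomposition. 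The delicate checks here are the series-level boundedness inclusion and that the lift lands in $\mathfrak{m}^k (\mathbb{C}Q P \mathbb{C}Q)_{\geq l(N')}$ under the summand-indexing conventions of \autoref{def:flatness-ideals-IP}.
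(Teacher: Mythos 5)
Your induction-on-$k$ skeleton mirrors the paper's iteration: the base case $k=1$ (via \autoref{th:flatness-deformed-estimate-crude} and quasi-flatness) is exactly right, and the reduction of the inductive step to the $\mathfrak{m}^k$-scaled base case is sound, including the observation $(\mathbb{C}Q P\mathbb{C}Q)_{\geq M}\subset B\compl{\mathbb{C}Q}_{\geq M-|R|}$ and the resulting placement of $z$ in $\mathfrak{m}^k I(P)\cap(B\compl{\mathbb{C}Q}_{\geq\tilde{l}^k(N)}+\mathfrak{m}^{k+1}\compl{\mathbb{C}Q})$.

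The gap is in the lift step of the scaled base case. Your leading-term computation is fine: the class of $\zeta$ does land in $(\mathfrak{m}^k/\mathfrak{m}^{k+1})\otimes(I(R)_{\compl{\mathbb{C}Q}}\cap\compl{\mathbb{C}Q}_{\geq N'})$, and the boundedness argument confines a representative $v$ to terms of path length $\geq l(N')$. But lifting through an arbitrary linear section $\varphi$ of $\pi\colon I(P)\to I(R)_{\compl{\mathbb{C}Q}}$ does not go through. The section carries no memory of the $p(r+\psi(r))q$-presentation underlying $P$, whereas $(\mathbb{C}Q P\mathbb{C}Q)_{\geq l(N')}$ is, per \autoref{def:flatness-ideals-embracing}, a space of \emph{finite} sums with that explicit shape; and $v$ can be an infinite series in $I(R)_{\compl{\mathbb{C}Q}}$, in which case $n\varphi(v)$ with $n\in\mathfrak{m}^k$ \emph{fixed} cannot be rewritten as $\sum_i m_i y_i$ with $m_i\in\mathfrak{m}^{\geq k,\to\infty}$ and $y_i\in(\mathbb{C}Q P\mathbb{C}Q)_{\geq l(N')}$ as membership in $\mathfrak{m}^k(\mathbb{C}Q P\mathbb{C}Q)_{\geq l(N')}$ requires, since the convergence in $\varphi(v)$ is driven by growing path length rather than growing $\mathfrak{m}$-order. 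So the ``delicate check'' you flag at the end is precisely where the argument fails, not a loose end. The paper separates the two concerns: \autoref{th:flatness-flatalgebraic-crude} first pulls out $\mathfrak{m}^k$ to reach the lift-agnostic target $\mathfrak{m}^k(I(P)\cap(\compl{\mathbb{C}Q}_{\geq N'}+\mathfrak{m}\compl{\mathbb{C}Q}))+\mathfrak{m}^{k+1}I(P)$, phrased purely in terms of the submodule $I(P)$; only then is \autoref{th:flatness-deformed-estimate-crude} together with quasi-flatness applied to replace $I(P)\cap(\compl{\mathbb{C}Q}_{\geq N'}+\mathfrak{m}\compl{\mathbb{C}Q})$ by $(\mathbb{C}Q P\mathbb{C}Q)_{\geq l(N')}+\mathfrak{m} I(P)$ before taking $\mathfrak{m}^k(-)$. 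Substituting this two-step composition for the section-based lift repairs the scaled base case, after which your induction closes.
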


\begin{proof}
The proof is very similar to the proof of \autoref{th:flatness-flatalgebraic-termsout}. The idea is to iterate \autoref{th:flatness-flatalgebraic-crude} in combination with \autoref{th:flatness-deformed-estimate-crude} and quasi-flatness of $ I(P) $.

Let $ x ∈ I(P) ∩ (B \compl{ℂQ}_{≥N} + \mathfrak{m}^k \compl{ℂQ}) $. Then in particular
\begin{equation*}
x ∈ I(P) ∩ (\compl{ℂQ}_{≥N} + \mathfrak{m} \compl{ℂQ}) ⊂ (ℂQ P ℂQ)_{≥ l(N)} + \mathfrak{m} I(P).
\end{equation*}
According to this sum decomposition, write $ x = y_1 + x_1 $. We clearly have $ y_1 ∈ B \compl{ℂQ}_{≥ l(N) - |R|} $. Since $ x ∈ B \compl{ℂQ}_{≥N} + \mathfrak{m}^k \compl{ℂQ} $, we conclude $ x_1 ∈ \mathfrak{m} I(P) ∩ (B \compl{ℂQ}_{≥ \tilde l(N)} + \mathfrak{m}^k \compl{ℂQ}) $. We now continue this way, using \autoref{th:flatness-flatalgebraic-crude}:
\begin{align*}
x_1 &∈ \mathfrak{m} I(P) ∩ (\mathfrak{m} \compl{ℂQ}_{≥ \tilde l (N)} + \mathfrak{m}^2 \compl{ℂQ}) \\
& ⊂ \mathfrak{m} (I(P) ∩ (\compl{ℂQ}_{≥ \tilde l (N)} + \mathfrak{m} \compl{ℂQ})) + \mathfrak{m}^2 I(P) \\
& ⊂ \mathfrak{m} ((ℂQ P ℂQ)_{≥ l (\tilde l(N))} + \mathfrak{m} I(P)) + \mathfrak{m} I(P) \\
& ⊂ \mathfrak{m} (ℂQ P ℂQ)_{≥ l(\tilde l(N))} + \mathfrak{m}^2 I(P).
\end{align*}
Split $ x_1 = y_2 + x_2 $ according to this sum decomposition and continue. The result is immediate.
\end{proof}

\begin{proposition}
\label{th:flatness-completed-IPclosed}
Assume $ R $ is of bounded type and $ I(P) $ is quasi-flat. Then $ I(P) $ is the closure of the ideal $ (B \htensor \compl{ℂQ}) P (B \htensor \compl{ℂQ}) $ with respect to the tensor topology.
\end{proposition}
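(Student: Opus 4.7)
The plan is to split the equality $\overline{(B \htensor \compl{ℂQ}) P (B \htensor \compl{ℂQ})} = I(P)$ (with closure taken in the tensor topology of \autoref{def:flatness-closedness-tensortop}) into three ingredients. Writing $J \coloneqq (B \htensor \compl{ℂQ}) P (B \htensor \compl{ℂQ})$ for the raw ideal, I aim to establish (a) $J \subseteq I(P)$, (b) $I(P) \subseteq \closure{J}$, and (c) $I(P)$ is closed in the tensor topology. Then $\closure{J} \subseteq \closure{I(P)} = I(P)$ follows from (a) and (c), while (b) supplies the reverse inclusion.

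Ingredient (a) is a reindexing argument: a generator $arb$ with $a, b \in B \htensor \compl{ℂQ}$ and $r \in P$ is expanded by writing $a = \sum_i m_i a_i$ and $b = \sum_j n_j b_j$ with coefficients in $\mathfrak{m}^{\to\infty}$ and $a_i, b_j \in \compl{ℂQ}$, and then expanding each $a_i, b_j$ as a series of paths whose lengths go to infinity. After merging indices the result has precisely the shape $\sum_k \mu_k \sum_\ell p_{\ell,k} (r + \psi(r)) q_{\ell,k}$ required by \autoref{def:flatness-ideals-IP}. Ingredient (b) is the tautological approximation: truncating both series in the defining representation of an element $x \in I(P)$ produces elements of $J$, and the truncations converge to $x$ in the tensor topology because both the omitted $\mathfrak{m}$-power and the omitted path-length tail can be made arbitrarily deep.

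The heart of the argument is ingredient (c), which is where the hypotheses of the proposition actually enter. Given a tensor-topology convergent sequence $(x_n) \subset I(P)$ with limit $x$, pass to a subsequence so that $x_{n+1} - x_n \in \mathfrak{m}^{k_n} \compl{ℂQ} + B \compl{ℂQ}_{\geq N_n}$ with $k_n, N_n$ strictly increasing to infinity. Apply \autoref{th:flatness-completed-powerswitch} to each difference to obtain a decomposition
\[
x_{n+1} - x_n = \sum_{i=0}^{k_n-1} y_{n,i} + z_n, \qquad y_{n,i} \in \mathfrak{m}^i (ℂQ P ℂQ)_{\geq l(\tilde l^i(N_n))}, \quad z_n \in \mathfrak{m}^{k_n} I(P),
\]
where bounded type of $R$ guarantees that $l(\tilde l^i(N_n)) \to \infty$ with $N_n$ for each fixed $i$. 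Rewriting $x = x_0 + \sum_n (x_{n+1} - x_n)$ and regrouping by $\mathfrak{m}$-order, for each fixed $i$ the series $\sum_n y_{n,i}$ converges to an element of $\mathfrak{m}^i \cdot (P)$ because its path lengths grow unboundedly, while the remainders $z_n$ contribute a series whose $\mathfrak{m}$-orders grow.

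The main obstacle is making the rearrangement in (c) rigorous: one must verify that the double series indexed by $(n,i)$ actually assembles into a single series of the format required by \autoref{def:flatness-ideals-IP}, rather than merely into a formal sum lying in an ambient completion. This is precisely what \autoref{th:flatness-completed-powerswitch} provides, since its simultaneous control of $\mathfrak{m}$-order and path length, resting on bounded type of $R$ and on the assumed quasi-flatness of $I(P)$, is what allows the pieces to be collected inside $I(P)$ itself.
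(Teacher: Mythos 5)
Your proposal matches the paper's proof in structure and substance: the easy inclusions, the reduction to closedness of $I(P)$, the subsequence extraction, the application of \autoref{th:flatness-completed-powerswitch} to each increment, and the rearrangement of the resulting double series (with bounded type of $R$ and quasi-flatness of $I(P)$ entering precisely through that lemma) are all present in the paper's argument. The only difference is cosmetic bookkeeping — the paper selects an increasing cutoff sequence $(n_k)$ and collapses the low-$\mathfrak{m}$-order parts at each scale $k$ into a single term of $B(ℂQ\,P\,ℂQ)_{\geq k}$ before splitting into two convergent series, whereas you keep the individual $\mathfrak{m}$-orders $i$ separate — but both resolve into the same triangulation of the double index and land inside $I(P)$.
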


\begin{proof}
It is clear that $ (B \htensor \compl{ℂQ}) P (B \htensor \compl{ℂQ}) $ is contained in $ I(P) $ and that $ I(P) $ is contained in the closure of $ (B \htensor \compl{ℂQ}) P (B \htensor \compl{ℂQ}) $. It remains to show that $ I(P) $ is closed.

To prove $ I(P) $ closed, regard a series of $ (x_n) ⊂ I(P) $ converging in $ B \htensor \compl{ℂQ} $:
\begin{equation*}
x = \sum_{n = 0}^∞ x_n.
\end{equation*}
We now explain how to prove $ x ∈ I(P) $. It entails inspecting every $ x_n $ and dividing it into chunks in such a way it becomes evident that a reordering of the chunks sums up to an element of $ I(P) $. The reordering is unproblematic, keeping $ x $ intact as element of $ B \htensor \compl{ℂQ} $.

In order to define the chunks, we shall build a sequence $ (n_k)_{k ≥ 1} ⊂ ℕ $. Let $ k ≥ 1 $. Since we assume the series $ \sum x_n $ converges in the tensor topology, there exists for every $ n ∈ ℕ $ a (maximally chosen) $ N_n^{(k)} ∈ ℕ $ such that $ x_n ∈ B ℂQ_{≥N_n^{(k)}} + \mathfrak{m}^k \compl{ℂQ} $. We have that $ N_n^{(k)} → ∞ $ as $ n → ∞ $. In particular, we have $ l(\tilde l^{k-1} (N_n^{(k)})) → ∞ $ as $ n → ∞ $. Choose $ n_k ∈ ℕ $ so high that $ l(\tilde l^{k-1} (N_n^{(k)})) ≥ k $ for all $ n ≥ n_k $. Of course, we can enforce that $ n_k $ is an increasing sequence: $ n_k ≥ n_{k-1} $. We have now built the sequence $ (n_k)_{k ≥ 1} $ which is essential in the construction of the chunks.

Let now $ n ∈ ℕ $ and assume $ n ≥ n_1 $. Then there is a unique $ k ∈ ℕ $ such that $ n_k ≤ n < n_{k+1} $. We get
\begin{align*}
x_n &∈ I(P) ∩ [B ℂQ_{≥N_n^{(k)}} + \mathfrak{m}^k \compl{ℂQ}] \\
&⊂ \sum_{i = 0}^{k-1} \mathfrak{m}^i (ℂQ P ℂQ)_{≥l(\tilde l^i (N_n^{(k)}))} + \mathfrak{m}^k I(P) \\
&⊂ B (ℂQ P ℂQ)_{≥ k} + \mathfrak{m}^k I(P).
\end{align*}
In the second row, we have applied \autoref{th:flatness-completed-powerswitch}. In the third row, we have used that $ l(\tilde l^i (N_n^{(k)})) ≥ l(\tilde l^{k-1} (N_n^{(k)})) ≥ k $ minding $ n ≥ n_k $. With respect to the two summands, write now
\begin{equation*}
x_n = x_n^{(1)} + x_n^{(2)}, \quad x_n^{(1)} ∈ B (ℂQ P ℂQ)_{≥ k}, \quad x_n^{(2)} ∈ \mathfrak{m}^k I(P).
\end{equation*}
We can now write
\begin{align*}
x = \sum_{n = 0}^{n_1 - 1} x_n + \underbrace{\sum_{n = n_1}^{n_2 - 1} x_n^{(1)}}_{∈ B (ℂQ P ℂQ)_{≥ 1}} + \underbrace{\sum_{n = n_2}^{n_3 - 1} x_n^{(1)}}_{∈ B (ℂQ P ℂQ)_{≥2}} + …
 + \underbrace{\sum_{n = n_1}^{n_2 - 1} x_n^{(1)}}_{∈ \mathfrak{m}^1 I(P)} + \underbrace{\sum_{n = n_2}^{n_3 - 1} x_n^{(1)}}_{∈ \mathfrak{m}^2 I(P)} + … ~∈ I(P).
\end{align*}
The top row is a finite sum. The second row adds up to an element of $ I(P) $ since the path lengths increase. The third summand adds up to an element of $ I(P) $ since the powers of $ \mathfrak{m} $ increase. This finally shows $ x ∈ I(P) $ and we conclude $ I(P) $ is closed with respect to the tensor topology.
\end{proof}

We finish this section by commenting on deformations of algebras where the Krull topology is taken into account. Recall from \autoref{th:flatness-whatis-equivalence} that $ B \htensor A / I_q $ is a deformation of $ A / I $ if $ I_q $ is a quasi-flat deformation of $ I $. It would be delightful to have a version of this statement which takes into account the Krull topology in case $ A = \compl{ℂQ} $. This requires two steps: First we shall define what it means for a deformation to be tensor continuous. Second we shall prove that $ B \htensor \compl{ℂQ} / I_q $ is a tensor continuous deformation in case $ I_q $ is quasi-flat and closed with respect to the tensor topology. Let us start as follows:

\begin{definition}
Let $ I ⊂ \compl{ℂQ} $ be a closed ideal and $ I_q $ a deformation closed with respect to the tensor topology. Then $ (B \htensor \compl{ℂQ}) / I_q $ is a (tensor continuous) \emph{deformation} of $ \compl{ℂQ} / I $ if there exists a deformation $ μ_q $ of the product $ μ: (\compl{ℂQ} / I) ¤ (\compl{ℂQ} / I) → (\compl{ℂQ} / I) $ together with a $ B $-linear algebra isomorphism
\begin{equation*}
φ: \frac{B \htensor \compl{ℂQ}}{I_q} \isoto (B \htensor (\compl{ℂQ} / I), μ_q)
\end{equation*}
which is a homeomorphism with respect to the tensor topology.
\end{definition}

\begin{lemma}
\label{th:flatness-closedness-AlgVsIdeal}
Let $ Q $ be a quiver, $ I ⊂ \compl{ℂQ} $ a closed ideal which has a closed complement in $ \compl{ℂQ} $. Let $ I_q $ be a deformation closed with respect to the tensor topology. If $ I_q $ is quasi-flat, then $ (B \htensor \compl{ℂQ})/ I_q $ is a (tensor continuous) deformation of $ \compl{ℂQ} / I $.
\end{lemma}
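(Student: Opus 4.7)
The plan is to enhance the purely algebraic equivalence of \autoref{th:flatness-whatis-equivalence} to a topological one by exploiting the closed complement of $ I $. First I would check that $ I_q $ is pseudoclosed: the tensor topology on $ B \htensor \compl{ℂQ} $ is coarser than the $ \mathfrak{m} $-adic topology, since its basic neighborhoods $ \mathfrak{m}^k \compl{ℂQ} + B\compl{ℂQ}_{\geq N} $ contain the basic $ \mathfrak{m} $-adic neighborhoods $ \mathfrak{m}^k \compl{ℂQ} $. Hence tensor-closedness of $ I_q $ implies $ \mathfrak{m} $-adic closedness, and closed $ B $-submodules are automatically pseudoclosed. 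Together with the quasi-flatness hypothesis, \autoref{th:flatness-whatis-equivalence} then provides a deformation $ \mu_q $ of the product on $ \compl{ℂQ}/I $ and a $ B $-linear algebra isomorphism $ \phi: (B \htensor \compl{ℂQ})/I_q \isoto (B \htensor (\compl{ℂQ}/I), \mu_q) $. Following the proof of that theorem, $ \phi $ can be constructed from the closed complement $ V \subset \compl{ℂQ} $ of $ I $ given by hypothesis: \autoref{th:prelim-submodules-criterion} yields the decomposition $ B \htensor \compl{ℂQ} = I_q \oplus BV $, and $ \phi $ is then projection onto the second summand, followed by the identification $ BV = B \htensor V \cong B \htensor (\compl{ℂQ}/I) $.

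Next I would verify continuity in both directions. For continuity of $ \phi $, the composition $ B \htensor \compl{ℂQ} \twoheadrightarrow (B \htensor \compl{ℂQ})/I_q \xrightarrow{\phi} B \htensor (\compl{ℂQ}/I) $ coincides with the map $ \id_B \htensor \pi $ induced from the Krull-continuous quotient $ \pi: \compl{ℂQ} \to \compl{ℂQ}/I $; this map is tensor continuous because at each level $ k $ the map $ B/\mathfrak{m}^k \tensor \compl{ℂQ} \to B/\mathfrak{m}^k \tensor (\compl{ℂQ}/I) $ is Krull continuous, and the tensor topology is the inverse limit of these. The universal property of the quotient topology then makes $ \phi $ continuous. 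For continuity of $ \phi^{-1} $, the closed-complement hypothesis means that the inclusion $ V \hookrightarrow \compl{ℂQ} $ is a continuous section of the quotient $ \compl{ℂQ} \to \compl{ℂQ}/I $ and the resulting bijection $ V \cong \compl{ℂQ}/I $ is a Krull-homeomorphism. Its $ B $-linear extension $ B \htensor V \hookrightarrow B \htensor \compl{ℂQ} $ is again tensor continuous by the inverse-limit description, and composing with the quotient $ B \htensor \compl{ℂQ} \twoheadrightarrow (B \htensor \compl{ℂQ})/I_q $ yields $ \phi^{-1} $ after the identifications above.

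The main obstacle I anticipate is to justify precisely that the closed-complement hypothesis makes the splitting $ \compl{ℂQ} = I \oplus V $ a topological direct sum in the Krull topology, so that the projection onto $ V $ along $ I $ is continuous rather than merely a set-theoretic inverse of $ V \hookrightarrow \compl{ℂQ}/I $. The natural route is to verify this by hand using the length filtration $ \compl{ℂQ}_{\geq N} $: since $ V $ is closed and complements $ I $, any element of $ \compl{ℂQ}_{\geq N} $ decomposes into components of $ I \cap \compl{ℂQ}_{\geq N'} $ and $ V \cap \compl{ℂQ}_{\geq N'} $ for $ N' $ tending to infinity with $ N $, and an a posteriori boundedness-style argument shows that the projection to $ V $ preserves the filtration up to a controlled shift. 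Once this topological splitting is established, the correspondence of basic tensor neighborhoods under $ \phi $ reduces to bookkeeping, and both continuity and the algebra-isomorphism statement are delivered simultaneously.
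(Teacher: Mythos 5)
The architecture is right and matches the paper: both proofs choose a closed complement $V$ of $I$, upgrade to the decomposition $B \htensor \compl{ℂQ} = I_q \oplus BV$, and push the $B$-linear isomorphism $\phi$ from \autoref{th:flatness-whatis-equivalence} to a tensor-homeomorphism. Your opening observation that tensor-closed implies $\mathfrak{m}$-adically closed (the tensor topology being coarser) and hence pseudoclosed is correct and makes explicit a hypothesis the paper leaves implicit when it invokes \autoref{th:flatness-whatis-equivalence}.

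However, your forward continuity argument contains a real error. The composite $B \htensor \compl{ℂQ} \twoheadrightarrow (B \htensor \compl{ℂQ})/I_q \xrightarrow{\phi} B \htensor (\compl{ℂQ}/I)$ does \emph{not} coincide with $\id_B \htensor \pi$. The former has kernel $I_q$; the latter has kernel $BI = B \htensor I$. These are different submodules of $B \htensor \compl{ℂQ}$ in general — take $\compl{ℂQ} = \C[X]$, $I = (X)$, $B = \C\llbracket q\rrbracket$, $I_q = (X+q)$: then $X+q \in I_q \setminus BI$. Concretely, tracing through the definition of $\phi$ in \autoref{th:flatness-whatis-equivalence}, the composite is $x \mapsto (\id_B \htensor \pi)\bigl(p_{BV}(x)\bigr)$ where $p_{BV}$ is the projection onto $BV$ along $I_q$, not along $BI$. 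So this route does not bypass the topological splitting question; it runs straight into it.

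The uniform fix, which also resolves the obstacle you flag for $\phi^{-1}$ but only gesture at, is to observe that all the spaces in play are complete, Hausdorff, first-countable (hence metrizable) topological abelian groups: $\compl{ℂQ}$ with the Krull topology, $B \htensor \compl{ℂQ}$ with the tensor topology, and their closed subspaces $I$, $V$, $I_q$, $BV$. An algebraic direct sum of two closed subgroups of such a group is automatically a topological direct sum by the open mapping theorem (the addition map $I \times V \to \compl{ℂQ}$, respectively $I_q \times BV \to B \htensor \compl{ℂQ}$, is a continuous bijection between complete metrizable groups, hence open). The projections along $I$, respectively along $I_q$, are then continuous, and from there $\phi$ is a tensor-homeomorphism by direct inspection of its three defining identifications. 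Your proposed hands-on boundedness argument is not needed and would be awkward to carry out for an abstract closed complement, since there is no a priori control on how the filtration $\compl{ℂQ}_{\geq N}$ interacts with $V$.
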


\begin{proof}
The proof proceeds as in \autoref{th:flatness-whatis-equivalence}, with minor adaptions. The first step is to choose a complement $ V ⊂ \compl{ℂQ} $ such that $ \compl{ℂQ} = I ⊕ V $. In comparison with \autoref{th:flatness-whatis-equivalence}, we can choose $ V $ to be closed. As in \autoref{th:flatness-whatis-equivalence}, we obtain the direct sum decompositions
\begin{equation*}
\compl{ℂQ} = I ⊕ V, \quad B \htensor \compl{ℂQ} = I_q ⊕ BV.
\end{equation*}
The difference in our case is that the first is not only a direct sum of vector spaces and the second not only a direct sum of $ \mathfrak{m} $-adically closed subspaces. Instead, the direct summands of the first sum are closed with respect to the Krull topology and the summands of the second sum are closed with respect to the tensor topology. As in \autoref{th:flatness-whatis-equivalence}, we define the map
\begin{equation*}
φ: B \htensor \frac{\compl{ℂQ}}{I} = B \htensor \frac{V ⊕ I}{I} \isoto B \htensor V \isoto \frac{I_q ⊕ BV}{I_q} = \frac{B \htensor \compl{ℂQ}}{I_q}.
\end{equation*}
This map is immediately a homeomorphism with respect to the tensor topology. One then transfers the product of $ (B \htensor \compl{ℂQ}) / I_q $ onto a deformed product $ μ_q $ on $ B \htensor (\compl{ℂQ} / I) $. This finishes the proof.
\end{proof}

\subsection{Dimers of bounded type}
\label{sec:flatness-dimers}
In this section, we introduce a boundedness condition of dimers. Our motivation comes from Jacobi algebras of dimers, which are a special case of Jacobi algebras of quivers with superpotential. It is our interest to show that a deformation of the superpotential leads to a flat deformation of the Jacobi algebra. Fortunately, Jacobi algebras of most dimers are CY3. In other words, a Jacobi algebra of a dimer falls under the framework developed in the past sections. The only question remaining is whether the superpotential is of bounded type.

In the present section, we show that the superpotential of a Jacobi algebra of a dimer is of bounded type in the following cases:
\begin{itemize}
\item All polygons in $ Q $ have equal number of edges (obvious),
\item $ Q $ is cancellation consistent and sits in a torus (\autoref{rem:dimer-bounded-torus}),
\item $ Q $ is cancellation consistent and has no triangles (\autoref{th:dimer-bounded-theorem}).
\end{itemize}
To start our investigation, let us recall the construction of this Jacobi algebra and the meaning of consistency. Let $ Q $ be a dimer. Then its superpotential $ W ∈ ℂQ $ is given by the difference of the clockwise polygons of $ Q $ and the counterclockwise polygons, cyclically permuted:
\begin{equation*}
W = \sum_{\substack{a_1, …, a_k \\ \text{clockwise}}} (a_1 … a_k)_{\cyc} ~- \sum_{\substack{a_1, …, a_k \\ \text{counterclockwise}}} (a_1 … a_k)_{\cyc}.
\end{equation*}
The relations $ ∂_a W $ equate two neighboring polygons: Flipping a path over an arc $ a $ is possible if the path follows all arcs of a neighboring polygon apart from $ a $. These flip moves are known as \emph{F-term} moves and the equivalence relation on the set of paths in $ Q $ is known as F-term equivalence. The terminology is depicted in \autoref{fig:flatness-dimers-Fterm}. A good reference is \cite{Davison}.

\begin{figure}
\centering
\begin{subfigure}{0.25\linewidth}
\centering
\begin{tikzpicture}
\path[draw, ->, semithick, rounded corners] (0, 0) -- ++(left:1) coordinate (right) -- ++(135:1) -- ++(225:1) coordinate (left) -- ++(left:1);
\path[draw, semithick, rounded corners] (0, 0) -- ++(left:1) -- ++(down:0.7) -- ($ (left) + (0, -0.7) $) -- ++(up:0.7) -- ++(left:1);
\path[draw, ->, gray] (left) ++(0.1, 0) -- ($ (right) + (-0.1, 0) $);
\path[draw, ->] ($ (left)!0.5!(right) $) arc(0:20:1);
\path[draw, ->] ($ (left)!0.5!(right) $) arc(0:-20:1);
\end{tikzpicture}
\caption{An F-term flip}
\end{subfigure}
\begin{subfigure}{0.35\linewidth}
\centering
\begin{tikzpicture}
\foreach \y in {0, 1, 2}
{\foreach \x in {0, 1, 2, 3, 4}
\path[draw, ->, gray] (\x, \y+0.1) to (\x, \y+0.9);
\foreach \x in {0, 1, 2, 3}
\path[draw, ->, gray] (\x+0.9, \y+0.9) to (\x+0.1, \y+0.1);
\foreach \x in {0, 1, 2, 3}
\path[draw, ->, gray] (\x+0.1, \y) to (\x+0.9, \y);}
\foreach \x in {0, 1, 2, 3}
\path[draw, ->, gray] (\x+0.1, 3) to (\x+0.9, 3);
\path[draw, ->, semithick, rounded corners] (0, 3.3) -- (3.4, 3.3) -- (0.3, 0);
\path[draw, ->, semithick, rounded corners] (0, 3.2) -- (2.3, 3.2) -- (0.3, 1.2) -- (1.2, 1.2) -- (0.2, 0);
\path[draw, ->, semithick, rounded corners] (0, 3.1) -- (1.1, 3.1) -- (0.1, 2.1) -- (1.1, 2.1) -- (0.1, 1.1) -- (1, 1.1) -- (0.1, 0);
\end{tikzpicture}
\caption{These 3 paths are equivalent.}
\label{fig:dimer-bounded-equivalent-paths}
\end{subfigure}
\begin{subfigure}{0.25\linewidth}
\centering
\begin{tikzpicture}
\path[draw, ->, bend right] (0, 0) to (0, 1);
\path[draw, ->, looseness=1.5, bend right=40] (0.1, -0.1) to (0.1, 1.1);
\path[draw, ->, looseness=2, bend right=60] (0.1, -0.2) to (0.1, 1.2);
\path[draw, ->, looseness=2.5, bend right=80] (0.2, -0.3) to coordinate[midway] (outer) (0.2, 1.3);
\path[fill] (0, -0.1) circle[radius=0.05];
\path[fill] (0, 1.1) circle[radius=0.05];
\path (outer) node[right] {… ?};
\end{tikzpicture}
\caption{Longer and longer?}
\end{subfigure}
\caption{Intuition on F-term equivalence}
\label{fig:flatness-dimers-Fterm}
\end{figure}
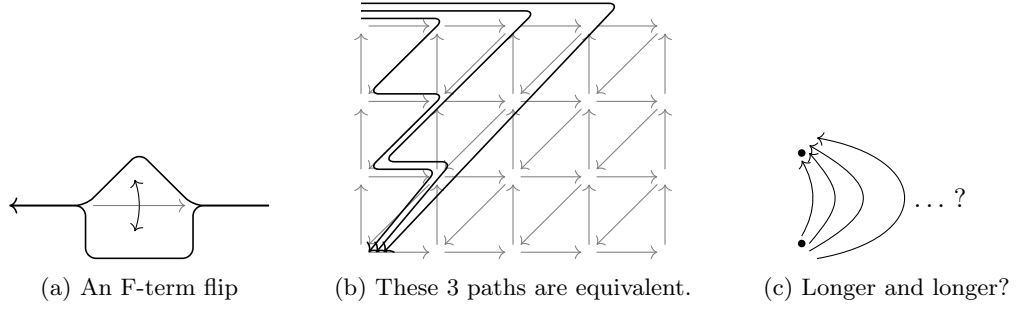

Regard the set of paths in $ Q $ modulo F-term equivalence. The set contains a special element $ ℓ_v $ for each vertex $ v ∈ Q_0 $, given by the boundary of a chosen polygon at $ v $. All boundaries of polygons incident at $ v $ are F-term equivalent, hence $ ℓ_v $ does not depend on the choice. In other words, it can be rotated around $ v $. We may drop the subscript from $ ℓ_v $ if it is clear from the context. The element $ ℓ $ commutes with all paths, that is, $ uℓ \sim ℓu $. Davison \cite{Davison} introduced the following consistency condition for dimers:

\begin{definition}[{\cite{Davison}}]
A dimer $ Q $ is \emph{cancellation consistent} if it has the following cancellation property:
\begin{equation*}
p ℓ \sim q ℓ ~\Longrightarrow~ p \sim q.
\end{equation*}
\end{definition}

\begin{remark}
An equivalent requirement is $ pr \sim qr ⇒ p \sim q $ for all compatible paths $ p, q, r $. Indeed assume $ pr \sim qr $. Then pick an “inverse” path $ r' $ in $ Q $ such that $ rr' \sim ℓ^k $ for some $ k ∈ ℕ $, and observe $ p ℓ^k \sim prr' \sim qrr' \sim q ℓ^k $, hence $ p \sim q $.
\end{remark}

The Jacobi algebra of $ Q $ is given by $ \Jac(Q) = ℂQ / (∂_a W) $. It has a special central element $ ℓ ∈ \Jac(Q) $, given by the sum of the $ ℓ_v $ at all vertices $ v ∈ Q_0 $:
\begin{equation*}
ℓ = \sum_{v ∈ Q_0} ℓ_v ∈ \Jac(Q),
\end{equation*}
The set of paths in $ Q $ modulo F-term equivalence is a basis for $ \Jac(Q) $. Davison shows in \cite[Theorem 8.1]{Davison} that $ \Jac(Q) $ is CY3 if $ Q $ is cancellation consistent. A complete survey of consistency conditions can be found in \cite{Bocklandt-consistency}. For example, zigzag consistency implies cancellation consistency.

The previous sections of this paper establish that CY3 algebras with relations of bounded type have favorable deformation theory: Whenever we deform their superpotential in a cyclic way, the deformation is flat. We want to apply this to Jacobi algebras of dimers. Since $ \Jac(Q) $ is already known to be CY3 if $ Q $ is cancellation consistent, it remains to ask if the relations $ ∂_a W $ are of bounded type. This is the case for some dimers, but not for all. In the present section, we provide criteria for this to happen. Let us give this notion a name.

\begin{definition}
\label{def:flatness-dimers-bounded}
A dimer $ Q $ is of \emph{bounded type} if all F-term equivalence classes are bounded in path length. Equivalently, all F-term equivalence classes are finite sets.
\end{definition}

In this new terminology, the Jacobi algebra of a dimer $ Q $ has favorable deformation theory if $ Q $ if cancellation consistent and of bounded type. Which dimers are of bounded type? An easy example are the dimers whose polygons are all of equal length. Indeed, in such case an F-term move preserves length. A more intricate case is the following:

\begin{lemma}
\label{rem:dimer-bounded-torus}
Let $ Q $ be a cancellation consistent torus dimer. Then $ Q $ is of bounded type.
\end{lemma}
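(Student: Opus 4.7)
The strategy is to construct an F-term invariant positive real weight on paths which directly bounds their length. Concretely, I would exhibit a function $w: Q_1 \to \mathbb{R}_{>0}$ such that $\sum_{a \in \partial P} w(a) = 1$ for every polygon $P$, and then show that the additive extension $w(p) := \sum_{a \in p} w(a)$ descends to a well-defined invariant on F-term equivalence classes.

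To construct such a weighting, I would invoke the classical fact that a cancellation consistent torus dimer admits a positive \emph{R-charge}, namely a positive real-valued weighting on arrows whose values sum to $1$ around the boundary of every polygon. This is a consequence of the equivalence, on the torus, between cancellation consistency and a cluster of related conditions, including \emph{nondegeneracy}: every arrow of $Q$ lies in some perfect matching of the bipartite graph whose vertices are the clockwise and counterclockwise polygons of $Q$ and whose edges are the arrows. Given nondegeneracy, let $\mu_1, \ldots, \mu_N$ enumerate the (finite) set of perfect matchings of $Q$ and set $w(a) := \tfrac{1}{N} \sum_{i=1}^N \mathbbm{1}[a \in \mu_i]$. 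Nondegeneracy ensures $w(a) > 0$ for every arrow, while the matching condition gives $\sum_{a \in \partial P} w(a) = 1$ for every polygon $P$. A survey of these equivalences is given in \cite{Bocklandt-consistency}.

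Next I would verify F-term invariance. An F-term move replaces a subpath $r$, namely the complement of an arrow $a$ in a polygon $P_1$, by the subpath $r'$ which is the complement of $a$ in the neighboring polygon $P_2$. Both complements satisfy $w(r) = 1 - w(a) = w(r')$, so the total weight $w(p) = \sum_{a \in p} w(a)$ is preserved under every F-term move. Setting $m := \min_{a \in Q_1} w(a) > 0$, the trivial estimate $m \cdot |p| \leq w(p)$ shows that every representative of a fixed F-term equivalence class has length at most $w(p)/m$. Since $w(p)$ is constant within each F-term class, every such class has uniformly bounded path length, which is exactly the bounded type condition.

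The main obstacle is justifying the existence of the positive R-charge $w$ (equivalently, nondegeneracy of $Q$) from cancellation consistency alone. This requires invoking the nontrivial interplay among the various consistency conditions available for torus dimers (cancellation, algebraic, zigzag, and geometric consistency, as well as nondegeneracy), which under the torus hypothesis are known to be tightly related. Once the R-charge is produced, the remaining steps are entirely elementary bookkeeping.
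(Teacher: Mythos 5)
Your proposal is correct and follows essentially the same route as the paper: invoke the fact that a cancellation consistent torus dimer is nondegenerate (every arc lies in a perfect matching, via the consistent R-grading of Bocklandt-consistency and Broomhead), combine the perfect matchings into a positive arc-weighting under which the superpotential is homogeneous, observe that the weight is therefore F-term invariant, and conclude that path length is bounded within each F-term class. The only cosmetic difference is that you normalize by $1/N$ to make polygon degrees equal $1$ while the paper uses the integer sum; both give the same bound.
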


\begin{proof}
In \cite[Theorem 7.6]{Bocklandt-consistency} it is shown that a cancellation consistent torus dimer has a so-called “consistent R-grading”, or “anomaly-free R-symmetry”. Broomhead shows in \cite[Section 2.3]{Broomhead} that the existence of an anomaly-free R-symmetry implies that for every arc there exists a perfect matching $ P: Q_1 → \{0, 1\} $ that is positive on that arc. Summing up all the perfect matching gradings, we obtain a positive integer grading $ d $ on $ ℂQ $ in which $ W $ is homogeneous. The degree $ d $ of a path is then preserved under F-term equivalence. Since length of a path $ p $ is bounded by its total degree $ d(p) $, we conclude that any cancellation consistent dimer on a torus is of bounded type.
\end{proof}

In this section, we give another wide class of cancellation consistent dimers of bounded type: those where all polygons are of length at least 4. The core observation is that whenever a path flips over a zigzag path for the first time, it includes a cycle $ ℓ $ right there, see \autoref{fig:dimer-bounded-zigzag-flip}. In short, this gives rise to the following line of proof: It suffices to regard paths equivalent to $ ℓ^k $ and proceed by induction over $ k ∈ ℕ $. Once F-term moves bring a path $ p \sim ℓ^k $ to cross a zigzag path for the first time, it includes a cycle $ ℓ $ right there. Stripping away $ ℓ $ from the path makes it equivalent to $ ℓ^{k-1} $ and by induction hypothesis such a path is bounded in length. In other words, zigzag paths at sufficient distance provide a “cage” for F-term equivalence classes, see \autoref{fig:dimer-bounded-zigzag-cage}.

The problem with this “cage proof” is that construction of an effective cage requires topological arguments and geometric consistency. We derive a more refined proof, focusing on the crossings between $ p $ and individual zigzag paths.

\begin{figure}
\centering
\begin{subfigure}{0.3\linewidth}
\centering
\begin{tikzpicture}
\path[draw, ->, gray] (0, 0) -- ++(225:1) -- ++(135:1) -- ++(225:1) coordinate (right) -- ++(135:1) -- ++(225:1) coordinate[midway] (flip) coordinate (left) -- ++(135:1);
\path[draw, ->, semithick, rounded corners] (left) ++(down:0.1) -- ++(300:0.5) -- ++(right:0.8) node[midway, above, shift={(0, 0.1)}] {$ ℓ $} -- ++(60:0.5) -- ++(135:0.9) coordinate (end);
\path[draw, semithick, dashed] (end) -- ++(225:0.8);
\path[draw, ->] (flip) arc(45:60:1);
\path[draw] (flip) arc(45:30:1);
\end{tikzpicture}
\caption{Initial flip always has an $ ℓ $}
\label{fig:dimer-bounded-zigzag-flip}
\end{subfigure}
\begin{subfigure}{0.3\linewidth}
\centering
\begin{tikzpicture}
\path[draw] (0, 0) -- ++(315:0.3) -- ++(45:0.3) -- ++(315:0.3) -- ++(45:0.3) -- ++(315:0.3) -- ++(45:0.3) -- ++(315:0.3) -- ++(45:0.3) -- ++(315:0.3) -- ++(45:0.3) -- ++(315:0.3) -- ++(45:0.3) -- ++(315:0.3) -- ++(45:0.3) coordinate (end1);
\path[draw] (end1) ++(0.1, -0.1) -- ++(225:0.3) -- ++(315:0.3) -- ++(225:0.3) -- ++(315:0.3) -- ++(225:0.3) -- ++(315:0.3) -- ++(225:0.3) -- ++(315:0.3) coordinate (end2);
\path[draw] (end2) ++(-0.1, -0.1) -- ++(135:0.3) -- ++(225:0.3) -- ++(135:0.3) -- ++(225:0.3) -- ++(135:0.3) -- ++(225:0.3) -- ++(135:0.3) -- ++(225:0.3) -- ++(135:0.3) -- ++(225:0.3) -- ++(135:0.3) -- ++(225:0.3) -- ++(135:0.3) -- ++(225:0.3) coordinate (end3);
\path[draw] (end3) ++(-0.1, 0.1) -- ++(45:0.3) -- ++(135:0.3) -- ++(45:0.3) -- ++(135:0.3) -- ++(45:0.3) -- ++(135:0.3) -- ++(45:0.3) -- ++(135:0.3);
\path[draw, ->, semithick, rounded corners] (1, -1) to[bend right=45, looseness=1] (1.5, -1.5) to[bend left=60, looseness=1.5] (2, -1.5) to (2.5, -1.25);
\path[draw, ->, semithick, rounded corners] (1, -0.9) to[bend left=60, looseness=1.5] (2.5, -1);
\path[fill] (1, -1) circle[radius=0.05];
\path[fill] (2.5, -1.15) circle[radius=0.05];
\end{tikzpicture}
\caption{Zigzag paths act as cage}
\label{fig:dimer-bounded-zigzag-cage}
\end{subfigure}
\begin{subfigure}{0.25\linewidth}
\centering
\begin{tikzpicture}[scale=0.7]
\path[draw, gray] (0, 0) -- ++(40:1.5) coordinate (left-top) -- ++(320:1.5) -- ++(40:1.5) coordinate (top);
\path[draw, semithick, gray] (top) -- ++(320:1.5) coordinate (bot);
\path[draw, ->, gray] (bot) -- ++(40:1.5);
\path (bot) -- ++(270:1) coordinate (down);
\path[draw, rounded corners, thick, ->] (left-top) ++(0, 0.1) ++(90:1) -- ++(270:1) node[midway, right] {$ a $} -- ++(320:1.4) node[near end, above] {$ b_1 $} -- ($ (top) + (0, 0.1) $) node[above] {…} -- ($ (bot) + (0.1, 0.1) $) node[near end, above] {$ b_k $} -- ($ (down) + (0.1, 0) $) node[midway, left] {$ c $};
\end{tikzpicture}
\caption{Crossing a zigzag path}
\label{fig:dimer-bounded-zigzag-crossing}
\end{subfigure}
\caption{Zigzag intuition}
\end{figure}
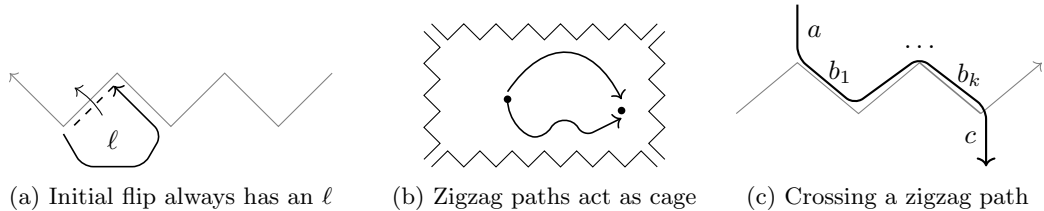

\begin{definition}
A \emph{crossing} of $ p $ over a zigzag path $ Z $ is a sequence of $ k ≥ 1 $ consecutive arcs in $ p $ that follow $ Z $, such that $ p $ leaves $ Z $ to the left or right before the sequence, and the right resp.~left after the sequence (\autoref{fig:dimer-bounded-zigzag-crossing}).
\end{definition}

Paths containing a full cycle $ ℓ $ around a polygon are easy to bound in length by induction. We therefore regard mainly paths that are $ ℓ $-free, that is, do not contain a full cycle $ ℓ $ around some polygon. In other words, a path is \emph{$ ℓ $-free} if it is not of the form $ pℓq $. Note this is not the same as being a minimal path, since minimality refers to F-term equivalence: A path is minimal if it is not F-term equivalent to a path of the form $ ℓq $. We are now ready to prove that crossings with zigzag paths are a partial invariant.

\begin{lemma}
\label{th:dimer-bounded-crossings-preserved}
Let $ Q $ be a dimer without triangles and let $ Z $ be a zigzag path. Let $ p $ and $ q $ be two closed $ ℓ $-free paths differing only by an F-term move. Then $ p $ and $ q $ have the same number of crossings with $ Z $.
\end{lemma}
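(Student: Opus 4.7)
The plan is to prove the lemma by a careful case analysis on how the zigzag path $ Z $ interacts with the two polygons $ F_1, F_2 $ sharing the arc $ a $ across which the F-term move takes place. First I would write $ p = x · r_1 · y $ and $ q = x · r_2 · y $, where $ r_1 = b_1 \cdots b_m $ is the complementary path to $ a $ along the clockwise polygon $ F_1 $, and $ r_2 = c_1 \cdots c_n $ is the complementary path along the counterclockwise polygon $ F_2 $. The no-triangle hypothesis forces $ m, n ≥ 3 $. The crucial structural input is that a zigzag path uses exactly $ 0 $ or $ 2 $ arcs of any polygon, and in the latter case the two arcs are consecutive on the polygon boundary. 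Hence $ Z ∩ F_1 $ and $ Z ∩ F_2 $ each take one of the forms: $ ∅ $, a consecutive pair not involving $ a $, or a consecutive pair containing $ a $ (the latter only when $ Z $ uses $ a $).

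I would then split into two main cases. First, when $ Z $ does not use $ a $: the shared subsequence of $ p $ with $ Z $ inside $ r_1 $ equals a consecutive pair $ \{b_i, b_{i+1}\} $ (or is empty, trivially). Because both $ r_1 $ and $ Z $ perform the same clockwise turn on $ F_1 $ at the vertex between $ b_i $ and $ b_{i+1} $, the arcs $ b_{i-1} $ and $ b_{i+2} $ on either side lie on the $ F_1 $-side of $ Z $; consequently the shared subsequence is a bounce, not a crossing. Symmetric analysis handles $ r_2 $ in $ q $. When the pair sits at the boundary of $ r_1 $ (namely $ i = 1 $ or $ i+1 = m $), the shared subsequence may continue into $ x $ or $ y $, in which case the corresponding subsequence in $ q $ also continues into the same $ x $ or $ y $ by the same arcs; I would then argue that the preceding arc in $ p $ (for instance $ b_{m-2} $) and the preceding arc in $ q $ (namely $ c_n $) lie on the same side of $ Z $, because they can be joined by a path inside the bigon $ D = F_1 ∪ F_2 $ that avoids every arc of $ Z $ (using that $ Z $ does not use $ a $). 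Hence the two shared subsequences contribute equally. Second, when $ Z $ uses $ a $: without loss of generality $ Z $ traverses $ b_m, a, c_1 $ consecutively, giving $ r_1 ∩ Z = \{b_m\} $ and $ r_2 ∩ Z = \{c_1\} $. The $ ℓ $-free hypothesis rules out $ y_1 = a $ and $ x_{\mathrm{end}} = a $, which are precisely the conditions under which these shared subsequences would extend. Hence each shared subsequence is a single arc. A local analysis at the vertex $ \mathrm{tail}(a) $ shows that $ b_{m-1} $ (lying on $ F_1 $) and $ y_1 $ (not on $ F_1 $, since $ y_1 ≠ a $) sit on opposite sides of $ Z $, so $ \{b_m\} $ is a genuine crossing in $ p $; the symmetric analysis at $ \mathrm{head}(a) $ shows that $ \{c_1\} $ is a genuine crossing in $ q $. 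Both contribute exactly one crossing, and the counts match.

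The main obstacle I expect to confront is the careful tracking of sides of $ Z $, particularly when a shared subsequence extends across the boundary between $ r_1 $ (or $ r_2 $) and $ x $ or $ y $. These are the cases where $ r_1, r_2 $ contribute to differently-anchored shared subsequences of $ p $ and $ q $, and the topological step of identifying sides across the bigon $ D $ (using that $ Z $ avoids part of $ D $) becomes essential for matching the side data. The $ ℓ $-free hypothesis is used repeatedly to exclude the degenerate configurations where $ p $ would wrap entirely around $ F_1 $ or $ F_2 $, and the no-triangle hypothesis guarantees $ m, n ≥ 3 $, ensuring that interior arcs such as $ b_{m-1} $ and $ c_2 $ exist and can serve as reference for the side analysis.
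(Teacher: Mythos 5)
Your proposal is correct, but it takes a genuinely different route from the paper's proof. The paper argues crossing-by-crossing: it fixes a crossing $a, b_1, \ldots, b_k, c$ of $p$ over $Z$, performs a case analysis (Cases 1--4) on where the flipped polygon can sit relative to that crossing, shows the crossing is preserved or relocated in each case, and then verifies afterwards that the resulting correspondence $\chi \mapsto \varphi(\chi)$ is a bijection. You instead localize to the flip region: writing $p = x \cdot r_1 \cdot y$, $q = x \cdot r_2 \cdot y$, you compare the shared subsequences of $p$ and $q$ with $Z$ that touch $r_1$ or $r_2$, using the structural fact that $Z$ meets any polygon in consecutive pairs, and show that $r_1$ and $r_2$ contribute the same crossing count. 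This buys you something: the bijection bookkeeping at the end of the paper's proof is unnecessary, since everything outside the flip region is literally identical in $p$ and $q$. What it costs you is the ``same-side'' step, which the paper handles visually in \autoref{fig:dimer-bounded-crossing-cases} and which you handle via a connecting path inside the bigon $D = F_1 \cup F_2$.

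Two points need tightening in a full write-up. First, the dichotomy ``$Z \cap F_1$ is $\emptyset$, a pair not containing $a$, or a pair containing $a$'' conflates a property of the whole periodic path $Z$ with a property of a single visit: $Z$ can meet $F_1$ (or $F_2$) in several disjoint consecutive pairs, possibly using $a$ in one visit and $b_i, b_{i+1}$ in another, so the argument should be organized per visit rather than per zigzag path. Second, the justification ``$b_{m-2}$ and $c_n$ lie on the same side of $Z$ because they can be joined by a path inside $D$ avoiding every arc of $Z$'' should be rephrased: what matters is that the connecting path avoids the particular shared segment (whose intersection with $D$ is the boundary arcs $b_{m-1}, b_m$, so any path in the interior of the disk $D$ misses it), and that near $t(a)$ the arc $c_n$ sits in the angular sector between $b_m$ and $x_1$ that also contains $a$ and the interiors of $F_1, F_2$. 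This is what makes ``same side'' a well-posed local statement about the tubular neighborhood of the shared segment, since $Z$ need not separate the surface globally. With those two amendments the argument goes through and matches the paper's conclusion, using the no-triangle hypothesis (to guarantee $m, n \geq 3$, so the interior reference arcs exist) and $\ell$-freeness (to rule out $y_1 = a$ and $x_{\mathrm{end}} = a$) in essentially the same way the paper does.
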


\begin{proof}
The strategy is to inspect the crossings of $ p $ and $ q $ over $ Z $, and match them up. It is essential that $ Q $ has no triangles, because triangles bordering $ Z $ make it possible to create new crossings, see \autoref{fig:dimer-bounded-triangles}.

Let us inspect a crossing of $ p $ over $ Z $. Without loss of generality we can assume that $ p $ turns right at the end of the sequence and left at the beginning. We show that the crossing is preserved when $ p $ flips to $ q $. We scrutinize this by a case distinction on where the flip happens. Recall that a flip always involves precisely one polygon minus an arc.

Regard the sequence $ a, b_1, …, b_k, c $ of arcs on $ p $ crossing over $ Z $. Regard the case that the arcs involved in the flip are all before $ a $. Then $ a, b_1, …, b_k, c $ stay entirely part of the path and the crossing is preserved.

Regard the case that $ k ≥ 2 $. Then no three consecutive $ b_i $ arcs can be involved in the flip, because they follow a zigzag and do not circle around a polygon. Two consecutive $ b_i $ arcs are not enough for a flip, since all polygons are assumed to consist of at least 4 arcs. Hence arcs before $ a $, $ a $ itself, $ b_1 $ and $ b_2 $ remain as possible arcs involved in the flip. In all cases we check that a crossing at the same point still exists in $ q $. The case where arcs $ b_{k-1} $, $ b_k $, $ c $, $ … $ on the other side of the crossing are involved follows similarly.

We distinguish 4 cases, depicted in \autoref{fig:dimer-bounded-crossing-cases}. In cases 1–3, only arcs before $ a $ and $ a $ itself are involved in the flipping. Case 1 depicts the situation where $ a $ lies maximally left, case 2 depicts an average situation, case 3 depicts the situation where $ a $ lies maximally right. Due to arrow directions, case 1 and 3 differ in appearance.

It turns out in case 1 that there is a new arc, indicated by a checkmark in the figure, that leaves $ Z $ and everything between that arc and $ b_1 $ follows $ Z $. In other words $ q $ still has a crossing at the same location. In case 2, the arc leaving $ Z $ also changes through the flip, but the crossing as a whole remains. Case 3 is actually impossible: By assumption $ a $ leaves $ Z $, and in order to conclude a polygon $ P $ minus an arc, $ p $ needs to continue turning around $ P $. It ends precisely at the head of $ b_1 $, concluding an $ ℓ $-cycle $ …, a, b_1 $.

Case 4 is the situation where arcs before $ a $, the arc $ a $ itself and $ b_1 $ are involved in the flipping. Since $ a $ and $ b_1 $ are supposed to be part of the flipping, the polygon to be flipped is necessarily the one lying in the corner of the zigzag path at $ b_1 $, $ b_2 $. The first arc of $ p $ involved in the flip starts at the head of $ b_2 $ (or the corresponding vertex of $ Z $ in case $ k = 1 $). What comes before that arc in $ p $? The arc $ b_2 $ following $ b_1 $ on $ Z $, which we also label this way by abuse if $ k = 1 $, cannot come before it, because $ p $ is $ ℓ $-free. By arrow directions, it can also not concern the arc $ b_3 $, similarly labeled by abuse if $ k ≤ 2 $. Hence it must concern an arc that turns left of the zigzag path. In the figure this arc is depicted again by a checkmark. This demonstrates that also in case 4 the crossing is preserved.

Finally it is also easy to see that the crossing is preserved in case where paths before $ a $, $ a $ itself and $ b_1 $ and $ b_2 $ are involved in the flip. Moreover, no flip is possible that includes $ a $, $ b_1 $ and $ c $ if $ k = 1 $.

Let us scrutinize the conclusion. We have associated to each crossing of $ p $ over $ Z $, let us call it $ χ $, a crossing $ φ(χ) $ of $ q $ over $ Z $. Is this map $ χ ↦ φ(χ) $ a one-to-one correspondence? Swapping the roles of $ p $ and $ q $, we also have a map $ ψ $ from crossings of $ q $ over $ Z $ to crossings of $ p $ over $ Z $. Inspecting the case distinction \autoref{fig:dimer-bounded-crossing-cases} again, it becomes apparent that $ ψ $ has no other choice than associating to $ φ(χ) $ back $ χ $ again. For example, a crossing $ χ $ and its image $ φ(χ) $ always have an arc on $ Z $ in common, and similarly $ χ $ and $ ψ(χ) $ do. In other words $ φ ∘ ψ = \Id $ and similarly $ ψ ∘ φ = \Id $. We conclude that $ p $ and $ q $ have the same number of crossings over $ Z $.
\end{proof}

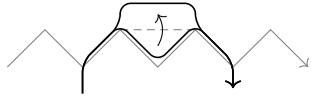
\begin{figure}
\centering
\begin{tikzpicture}[scale=0.7]
\path[draw, gray, ->] (0, 0) -- ++(45:1) -- ++(315:1) coordinate (1) -- ++(45:1) coordinate (2) -- ++(315:1) coordinate (3) -- ++(45:1) coordinate (4) -- ++(315:1) coordinate (5) -- ++(45:1) -- ++(315:1);
\path[draw, densely dashed, gray] (2) -- (4);
\path[draw, ->, semithick, rounded corners] (1) ++(down:0.5) -- ($ (1) + (0, 0.1) $) -- ($ (2) + (0, 0.1) $) -- ($ (2) + (0, 0.5) $) -- ($ (4) + (0, 0.5) $) -- ($ (4) + (0, 0.1) $) -- ($ (5) + (0, 0.1) $) -- ++(down:0.5);
\path[draw, ->, semithick, rounded corners] (1) ++(down:0.5) -- ($ (1) + (0, 0.1) $) -- ($ (2) + (0, 0.1) $) -- ($ (3) + (0, 0.1) $) -- ($ (4) + (0, 0.1) $) -- ($ (5) + (0, 0.1) $) -- ++(down:0.5);
\path[draw, ->, bend right] ($ (3) + (0, 0.4) $) to ($ (3) + (0, 1) $);
\end{tikzpicture}
\caption{Triangles ruin our invariant.}
\label{fig:dimer-bounded-triangles}
\end{figure}

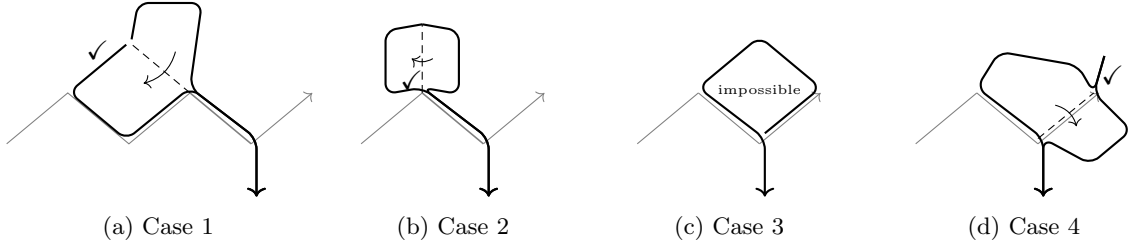
\begin{figure}
\centering
\begin{subfigure}{0.25\linewidth}
\centering
\begin{tikzpicture}[scale=0.7]
\path[draw, gray] (0, 0) -- ++(40:1.5) -- ++(320:1.5) -- ++(40:1.5) coordinate (top);
\path[draw, semithick, gray] (top) -- ++(320:1.5) coordinate (bot);
\path[draw, ->, gray] (bot) -- ++(40:1.5);
\path (bot) -- ++(270:1) coordinate (down);
\path[draw, densely dashed] (top) -- ++(140:1.5) coordinate (sup) coordinate[midway] (flip);
\path[draw, rounded corners, thick, ->] ($ (sup) + (-0.05, -0.05) $) -- ++(220:1.4) node[midway, above] {\checkmark} -- ++(320:1.4) -- ($ (top) + (0, 0.1) $) -- ($ (bot) + (0.1, 0.1) $) -- ($ (down) + (0.1, 0) $);
\path[draw, rounded corners, thick, ->] ($ (sup) + (0.05, 0.05) $) -- ++(80:0.7) -- ++(right:1.2) -- ($ (top) + (0, 0.1) $)  -- ($ (bot) + (0.1, 0.1) $) -- ($ (down) + (0.1, 0) $);
\path[draw, ->, bend left] ($ (flip) + (0.3, 0.3) $) to ($ (flip) + (-0.3, -0.3) $);
\end{tikzpicture}
\caption{Case 1}
\end{subfigure}
\begin{subfigure}{0.22\linewidth}
\centering
\begin{tikzpicture}[scale=0.7]
\path[draw, gray] (0, 0) -- ++(40:1.5) coordinate (top);
\path[draw, semithick, gray] (top) -- ++(320:1.5) coordinate (bot);
\path[draw, ->, gray] (bot) -- ++(40:1.5);
\path (bot) -- ++(270:1) coordinate (down);
\path[draw, densely dashed] (top) -- ++(90:1.3) coordinate (sup) coordinate[midway] (flip);
\path[draw, rounded corners, thick, ->] (sup) -- ++(190:0.7) -- ++(down:1.2) -- ($ (top) + (0, 0.1) $) node[midway, above, shift={(0.1, -0.1)}] {\checkmark} -- ($ (bot) + (0.1, 0.1) $) -- ($ (down) + (0.1, 0) $);
\path[draw, rounded corners, thick, ->] (sup) -- ++(350:0.7) -- ++(down:1.2) -- ($ (top) + (0, 0.1) $)  -- ($ (bot) + (0.1, 0.1) $) -- ($ (down) + (0.1, 0) $);
\path[draw, ->, bend left] ($ (flip) + (0.2, 0) $) to ($ (flip) + (-0.2, 0) $);
\end{tikzpicture}
\caption{Case 2}
\end{subfigure}
\begin{subfigure}{0.22\linewidth}
\centering
\begin{tikzpicture}[scale=0.7]
\path[draw, gray] (0, 0) -- ++(40:1.5) coordinate (top);
\path[draw, gray] (top) -- ++(320:1.5) coordinate (bot);
\path[draw, ->, gray] (bot) -- ++(40:1.5) coordinate (end);
\path (bot) -- ++(270:1) coordinate (down);
\path[draw, rounded corners, thick, ->] ($ (bot) + (0.1, 0.2) $) -- ($ (end) + (0, 0.1) $) -- ++(140:1.5) coordinate (high) -- ($ (top) + (0, 0.1) $) -- ($ (bot) + (0.1, 0.1) $) -- ($ (down) + (0.1, 0) $);
\path ($ (bot)!0.5!(high) $) node {\tiny impossible};
\end{tikzpicture}
\caption{Case 3}
\end{subfigure}
\begin{subfigure}{0.25\linewidth}
\centering
\begin{tikzpicture}[scale=0.7]
\path[draw, gray] (0, 0) -- ++(40:1.5) coordinate (top);
\path[draw, gray] (top) -- ++(320:1.5) coordinate (bot);
\path[draw, ->, gray] (bot) -- ++(40:1.5) coordinate (end);
\path (bot) -- ++(270:1) coordinate (down);
\path[draw, densely dashed] ($ (bot) + (0, 0.1) $) -- ($ (end) + (-0.1, 0) $) coordinate[midway] (flip);
\path[draw, ->, rounded corners, thick] ($ (end) + (0.1, 0.7) $) -- ($ (end) + (-0.1, 0) $) node[midway, right, shift={(-0.1, 0)}] {\checkmark} -- ++(120:0.6) -- ++(170:1.5) -- ($ (top) + (0, 0.1) $) -- ($ (bot) + (0.1, 0.1) $) -- ($ (down) + (0.1, 0) $);
\path[draw, ->, rounded corners, thick] ($ (end) + (0.1, 0.7) $) -- ($ (end) + (-0.1, 0) $) -- ++(315:1) -- ++(220:1) -- ($ (bot) + (0.1, 0.1) $) -- ($ (down) + (0.1, 0) $);
\path[draw, ->, bend left] ($ (flip) + (-0.2, 0.2) $) to ($ (flip) + (0.2, -0.2) $);
\end{tikzpicture}
\caption{Case 4}
\end{subfigure}
\caption{F-term moves of $ ℓ $-free paths preserve zigzag crossings.}
\label{fig:dimer-bounded-crossing-cases}
\end{figure}

Given a path $ p $, recall our plan is to utilize the crossings of $ p $ over arbitrary zigzag paths as a partial invariant to bound the length of $ p $. As announced, we do not construct an explicit cage, but rather argue as follows: The only way to avoid crossing zigzag paths is to follow the boundary of a polygon. If we assume $ p $ is $ ℓ $-free, then following the boundary of a polygon is possible for at most $ K $ consecutive arcs, where $ K $ is the maximum length of polygons in $ Q $. We conclude that $ p $ necessarily crosses a zigzag path at least once every $ K $ arcs. Let us make this precise.

\begin{lemma}
\label{th:dimer-bounded-observation}
Let $ Q $ be a cancellation consistent dimer. If $ p $ is an $ ℓ $-free path having $ C $ crossings with zigzag paths, then its length is bounded by $ K(C+1) $.
\end{lemma}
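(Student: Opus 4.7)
The approach follows the intuition sketched immediately before the statement: an $\ell$-free path can only avoid crossing zigzag paths by tracing the boundary of a single polygon, and by $\ell$-freeness this tracing can span at most $K$ arcs.

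First I would fix notation: for each pair of consecutive arcs $(a_i, a_{i+1})$ of $p = a_1 \cdots a_n$, let $P_i$ denote the unique polygon containing the angle between them at the vertex $h(a_i) = t(a_{i+1})$. The key observation, used implicitly in \autoref{th:dimer-bounded-crossings-preserved}, is that because every arc of $Q$ lies on the boundary of exactly one clockwise and one counterclockwise polygon, a transition $P_i \neq P_{i+1}$ forces the two polygons to have opposite orientations, and consequently the three arcs $a_i, a_{i+1}, a_{i+2}$ form a zigzag-type turn contained in a uniquely determined zigzag path.

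Next I would show: in any maximal consecutive sub-sequence of $p$ along which the polygon sequence is constant, say $P_j = P_{j+1} = \cdots = P_{j+m-1} = P$, all of the involved arcs $a_j, \ldots, a_{j+m}$ lie on $\partial P$. By $\ell$-freeness, such a sub-sequence has at most $|\partial P| - 1 \leq K - 1 < K$ arcs, since including the full boundary would produce $\ell_P$ as a factor of $p$. It then remains to argue that each polygon transition $P_i \neq P_{i+1}$ is absorbed into a crossing of $p$ with a zigzag path in the sense of the definition: the zigzag turn at the transition lies on a unique zigzag path $Z$, and extending the turn in both directions along $Z$ as far as $p$ continues to follow $Z$ produces a maximal $Z$-following sub-segment of $p$ which constitutes a crossing, the opposite-sides condition following from the alternating-polygon structure of $Z$.

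Combining these facts, the arcs of $p$ decompose into at most $C+1$ maximal constant-polygon runs, separated from one another by the $C$ crossings, and each such run has length at most $K$, yielding $|p| \leq K(C+1)$. The main obstacle will be the step relating polygon transitions to bona fide crossings, in particular verifying the opposite-sides condition in the definition of a crossing. I expect this to be handled by carefully tracking how $p$ enters and exits each maximal $Z$-following sub-segment, using the combinatorics of the dimer (each arc belongs to exactly two polygons of opposite orientation) together with the $\ell$-free hypothesis to force $p$ to deviate from $Z$ on the opposite side after the segment compared to before.
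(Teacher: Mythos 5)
Your final counting step conflates polygon transitions with crossings, and the two are not in bijection. Suppose $p$ follows a zigzag path $Z$ for $m \geq 3$ consecutive arcs, which is a \emph{single} crossing in the sense of the definition. Because zigzag paths alternate maximal left and maximal right turns, the interior angles of this segment alternate between clockwise and counterclockwise polygons, so already within this one crossing you have $m-2$ transitions $P_i \neq P_{i+1}$. A single long crossing therefore forces arbitrarily many constant-polygon runs, and the claim that there are at most $C+1$ runs is false. In addition, even if that decomposition held, the arcs lying \emph{inside} crossings are never counted by the bound $K \cdot \#\text{runs}$, so the inequality $|p| \leq K(C+1)$ would still not follow from your argument.

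There is also a smaller definitional problem: \textquotedblleft the unique polygon $P_i$ containing the angle between $a_i$ and $a_{i+1}$\textquotedblright\ is only meaningful when $a_i$ and $a_{i+1}$ are consecutive boundary arcs of some polygon, i.e.\ the turn is a single elementary angle. An arbitrary $\ell$-free path has no reason to turn elementarily at every vertex, so the polygon sequence $(P_i)$ on which the whole decomposition rests is not well-defined without an extra convention, and whatever convention you pick then has to be tracked through the crossing analysis.

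The paper's proof takes a more local route that avoids both problems. It shows that any three consecutive arcs $cba$ of $p$ either lie on the boundary of a single polygon or produce a crossing of a zigzag path at $b$ (a case analysis on whether $a$ and $c$ are left-most, right-most, or interior at the two ends of $b$), and then notes that two overlapping polygon triplets must lie in the \emph{same} polygon. Hence any window of $K$ consecutive arcs of an $\ell$-free path must contain a non-polygon triplet, i.e.\ a crossing. The count is then done per window of $K$ arcs rather than per polygon transition, which is what makes the estimate $|p| \leq K(C+1)$ actually come out.
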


\begin{proof}
The strategy is to show that $ p $ necessarily crosses a zigzag path at least once every $ K $ arcs, and then apply \autoref{th:dimer-bounded-crossings-preserved}.

We claim that a path $ cba $ of length 3 either crosses a zigzag path at $ b $ or is part of a polygon. To check this, we take on the perspective of $ b $, allowing us to find words for where $ a $ and $ c $ turn at head and tail of $ b $. The generic case is when $ a $ and $ c $ are neither left-most nor right-most. Then, the path $ cba $ crosses both zigzag paths starting at $ b $.

Let us treat the special cases. If $ c $ is the left-most (resp.~right-most) at the head of $ b $ and $ a $ is the right-most (resp.~left-most) at the tail of $ a $, then $ cba $ is part of a counterclockwise (resp.~clockwise) polygon. If $ c $ is the left-most (resp.~right-most), but $ a $ is not the right-most (resp.~left-most), then the zigzag path starting at $ b $ and turning right (resp.~left) crosses $ p $ at $ b $. Similarly if $ a $ is the right-most (resp.~left-most), but $ c $ is not the left-most (resp.~right-most), then the zigzag path starting at $ b $ and turning left (resp.~right) crosses $ p $ at $ b $.

Either way, we conclude a path $ cba $ of length 3 either crosses a zigzag path at $ b $ or is part of a polygon. Now regard a path longer than 3 arrows. How many consecutive arcs are possible without crossing a zigzag path? By consistency, there are at least four polygons incident at every vertex. Hence if $ dcb $ lies in a polygon and $ cba $ lies in a polygon, then both lie in the same polygon. We conclude that after $ K $ arcs, a path has either completed a cycle around a polygon or crossed a zigzag path. If a path $ p $ contains no cycle at all, then it has crossed at least $ \lfloor |p| / K \rfloor $ many zigzag paths. Reading this inequality the other way around gives the desired bound.
\end{proof}

We recall some notions, before diving into the proof. Let $ p $ be a path in $ Q $. By consistency, $ p $ is equivalent to a composition $ ℓ^k q $ of a cycle power $ ℓ^k $ and a minimal path $ q $. That is, $ q $ cannot be written as a multiple of $ ℓ $. This decomposition $ p = ℓ^k q $ is unique up to equivalence of $ q $. Let us call $ k $ the “looseness” of $ p $.

\begin{remark}
\label{th:dimer-bounded-cycle-splitoff}
If $ p = p_2 ℓ p_1 $ is a path containing a cycle, then $ p_2 ℓ p_1 \sim p_2 p_1 ℓ $. If $ Q $ is cancellation consistent, it satisfies the cancellation condition: If $ p $ is equivalent to $ ℓ^k $, then $ p_2 p_1 $ is equivalent to $ ℓ^{k-1} $. Once established that paths equivalent to $ ℓ^{k-1} $ have bounded length, then $ p_2 p_1 $ and hence $ p_2 ℓ p_1 $ are also bounded.
\end{remark}

\begin{lemma}
\label{th:bounded-dimer-cycle}
Let $ Q $ be a cancellation consistent dimer without triangles. For any vertex $ v $ and integer $ k $, the paths equivalent to $ ℓ^k $ at $ v $ are of bounded length.
\end{lemma}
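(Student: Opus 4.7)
The plan is to proceed by induction on $k$. For the base case $k=0$, cancellation consistency forces any closed path F-term equivalent to the trivial path $e_v$ to itself be $e_v$, so the bound is $0$ (using that $p \sim e_v$ gives $p \cdot \ell \sim \ell$, and then cancellation on any $\ell$-subword of $p$).

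For the inductive step, assume all paths F-term equivalent to $\ell^{k-1}$ at any vertex have length at most $B_{k-1}$. Let $p \sim \ell^k$ at $v$. I would split into two cases based on whether $p$ contains an $\ell$-cycle as a subword. If $p = p_2 \ell p_1$ contains an $\ell$-cycle, then \autoref{th:dimer-bounded-cycle-splitoff} gives $p \sim p_2 p_1 \ell$, and since $p \sim \ell^k$, cancellation consistency yields $p_2 p_1 \sim \ell^{k-1}$. By the inductive hypothesis, $|p_2p_1| \leq B_{k-1}$, hence $|p| \leq B_{k-1} + K$, where $K$ is the maximum polygon length.

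If $p$ is $\ell$-free, I would apply \autoref{th:dimer-bounded-observation}, which bounds $|p| \leq K(C(p)+1)$ in terms of the total number $C(p)$ of crossings of $p$ with zigzag paths. The task reduces to bounding $C(p)$ by a function of $k$. By \autoref{th:dimer-bounded-crossings-preserved}, the crossings count with each zigzag path $Z$ is preserved under F-term moves between $\ell$-free closed paths, and the crossings of $\ell^k$ with $Z$ equal $k$ times those of $\ell$. This would give $C(p) \leq k \cdot C(\ell)$ provided we can connect $p$ to $\ell^k$ through $\ell$-free representatives.

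The main obstacle lies here: a chain of F-term moves from $p$ to $\ell^k$ generally passes through paths that do contain $\ell$-subwords, so \autoref{th:dimer-bounded-crossings-preserved} does not apply directly to the whole chain. I plan to bridge this gap by analyzing the effect of a single F-term move that creates or destroys an $\ell$-subword: such a move changes the crossings with each zigzag path $Z$ by exactly the crossings of $\ell$ with $Z$ (coming from the newly formed or removed polygonal cycle). Tracking this bookkeeping along a chain of moves from $p$ to $\ell^k$ yields $C(p, Z) \leq k \cdot C(\ell, Z)$ for every $Z$. Summing over zigzag paths and feeding the result into \autoref{th:dimer-bounded-observation} produces the bound $|p| \leq K(kC(\ell)+1)$, which, together with Case~1, completes the induction.
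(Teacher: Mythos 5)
Your base case and your first inductive subcase (where $p$ contains an $\ell$-cycle) agree with the paper. The genuine gap is in the $\ell$-free subcase: the "bridging" claim that an F-term move creating or destroying an $\ell$-subword changes the crossing count with each zigzag path $Z$ by exactly $C(\ell,Z)$ is not proved, and it is not obviously true. The proof of \autoref{th:dimer-bounded-crossings-preserved} explicitly depends on $\ell$-freeness to rule out the degenerate cases (cases 3 and 4 in that case analysis), and once an $\ell$-cycle is present those cases do occur; how the crossing count then behaves under a single F-term move is precisely what that lemma does \emph{not} tell you. Moreover, even granting your bridging claim, your bookkeeping is incomplete: the chain from $\ell^k$ to $p$ may contain consecutive F-term moves between two paths that \emph{both} contain an $\ell$-cycle, and for those steps you have no control on crossings at all, so you cannot conclude $C(p,Z)\le k\,C(\ell,Z)$.

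The paper sidesteps all of this with a structural trick that you are missing. Fix a chain $\ell^k = p_1, \dots, p_n = p$ of F-term moves, and take the \emph{maximal} index $m<n$ for which $p_m$ still contains an $\ell$-cycle (such $m$ exists because $p_1=\ell^k$ does). Since $p_m$ contains an $\ell$-cycle, you can split it off via \autoref{th:dimer-bounded-cycle-splitoff} and apply the inductive hypothesis to get $|p_m|\le B_{k-1}+K$. One F-term move changes length by at most $K$, so $|p_{m+1}|\le B_{k-1}+2K$, giving a crude bound on the crossings of $p_{m+1}$. But $p_{m+1},\dots,p_n$ are all $\ell$-free by maximality of $m$, so \autoref{th:dimer-bounded-crossings-preserved} applies verbatim along that tail, the crossing count of $p=p_n$ equals that of $p_{m+1}$, and \autoref{th:dimer-bounded-observation} closes the argument. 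Note that this does not produce anything close to the sharp bound $C(p)\le kC(\ell)$ you were aiming for; it gives a much cruder (roughly doubly exponential in $k$) bound, which is all the statement requires. You do not need to control crossings across $\ell$-creating moves, only to locate one $\ell$-free suffix of the chain whose starting point you can bound.
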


\begin{proof}
We have a partial invariant at hand: the number of times a given pass crosses a zigzag path. This number is not preserved under F-term equivalence in general, but becomes an invariant once we restrict to closed $ ℓ $-free paths.

We proceed by induction. Assume all closed paths equivalent to $ ℓ^{k-1} $ starting at vertex $ v $ have length $ ≤ N $. Let $ p $ be a path equivalent to $ ℓ^k $. If $ p $ contains a cycle $ ℓ $, then we can bound $ |p| ≤ N + K $ by \autoref{th:dimer-bounded-cycle-splitoff} and we are done. Therefore we can assume $ p $ is $ ℓ $-free.

Pick a sequence $ ℓ^k = p_1, …, p_n = p $ of paths, each related to its successor by an F-term move. Let $ m < n $ be the maximal number where $ p_m $ still contains a cycle $ ℓ $. Then $ |p_m| ≤ N + K $ by the induction hypothesis. An F-move changes length by at most $ K $, hence $ |p_{m+1}| ≤ N + 2K $. By \autoref{th:dimer-bounded-crossings-preserved}, the path $ p_{m+1} $ has the same total number of crossings with zigzag paths as $ p_{m+2}, …, p_n $ do.

Since $ |p_{m+1}| ≤ N + 2K $, the path $ p_{m+1} $ has at most $ 2N + 4K $ crossings with zigzag paths. We have seen this number stays constant and hence also $ p_n $ has at most $ 2N + 4K $ crossings with zigzag paths. By \autoref{th:dimer-bounded-observation} its length is bounded by $ K (2N + 4K + 1) $. This finishes the induction.
\end{proof}

\begin{theorem}
\label{th:dimer-bounded-theorem}
Any cancellation consistent dimer without triangles is of bounded type.
\end{theorem}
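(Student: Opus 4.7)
The plan is to reduce the general case of bounded type to the special case of cycles that is already settled by \autoref{th:bounded-dimer-cycle}. The key idea is that in a cancellation consistent dimer, every path has an ``inverse up to $\ell$-powers'', which allows one to convert any F-term equivalence class into a class of cycles.

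Concretely, let $C$ be an arbitrary F-term equivalence class and pick a representative $p \colon u \to v$. I would first argue that there exists a path $p' \colon v \to u$ together with an integer $k \ge 0$ such that $p p' \sim \ell^k$. This is the same ``inverse path'' construction that is used implicitly in the remark following the definition of cancellation consistency: one can construct $p'$ by following zigzag segments from $v$ back to $u$ and using the fact that in a cancellation consistent dimer any long enough cycle at $u$ is F-term equivalent to a power of $\ell_u$. Once such a $p'$ is in hand, the rest of the proof is immediate: for any $q \in C$ we have
\begin{equation*}
q p' \sim p p' \sim \ell^k,
\end{equation*}
so \autoref{th:bounded-dimer-cycle} provides an integer $N$ (depending only on $k$ and $u$) with $|q p'| \le N$. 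Since $|p'|$ is a fixed constant, this gives the uniform bound $|q| \le N - |p'|$. Thus $C$ is bounded, and because $C$ was arbitrary the dimer $Q$ is of bounded type.

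The main obstacle is the cleanness of the inverse-path step. One has to check carefully that in a cancellation consistent dimer without triangles one can really pick such a $p'$ for every $p$, and that the resulting $k$ is a genuine integer $\ge 0$ rather than something artificial. Once this is verified (by using zigzag paths to return from $v$ to $u$ and invoking \autoref{th:bounded-dimer-cycle} to recognize the resulting closed path as some $\ell^k$), the theorem follows with no further calculation. All the genuinely hard combinatorics — preserving crossings under F-term moves and bounding $\ell$-free paths by their number of crossings — has already been carried out in \autoref{th:dimer-bounded-crossings-preserved} and \autoref{th:dimer-bounded-observation}.
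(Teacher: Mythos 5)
Your proposal is correct and takes essentially the same route as the paper's proof: both reduce to \autoref{th:bounded-dimer-cycle} by composing an arbitrary $q \sim p$ with a fixed ``inverse'' path $p'$ (so that the composite is F-term equivalent to a fixed $\ell$-power) and then subtracting the constant $|p'|$. The one small imprecision in your sketch is that it is \emph{contractible} cycles, not ``long enough'' ones, that consistency guarantees are $\sim \ell^k$; the paper's formulation resolves your worry about the inverse-path step precisely by choosing $p'$ so that $p'p$ is contractible, after which $p'p \sim \ell^{k_0}$ is automatic.
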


\begin{proof}
\autoref{th:bounded-dimer-cycle} already establishes the claim for the paths $ p = ℓ^k $. We deduce from this the general case where $ p $ is an arbitrary path. Fix some path $ p' $ from the end of $ p $ to the start of $ p $, such that $ p'p $ is contractible. Then $ p'p \sim ℓ^{k_0} $ for some $ k_0 $. Now let $ q $ be an arbitrary path equivalent to $ p $. We get
\begin{equation*}
q \sim p \quad \Longrightarrow \quad p' q \sim p' p \sim ℓ^{k_0}.
\end{equation*}
By \autoref{th:bounded-dimer-cycle}, the length of $ p' q $ is bounded. In particular, the length of $ q $ is bounded.
\end{proof}

\begin{remark}
The bound of \autoref{th:bounded-dimer-cycle} is exponential in $ k $:
\begin{equation*}
q \sim ℓ^k ~\Longrightarrow~ |q| ≤ \mathcal{O} ((2K)^k).
\end{equation*}
The bound is also exponential if we fix $ p $ and regard paths $ q \sim p ℓ^k $. Indeed, let $ p $ and $ p' $ be fixed paths with $ p' p \sim ℓ^{k_0} $, then
\begin{equation*}
q \sim ℓ^k p ~\Longrightarrow~ |q| ≤ |p'q| ≤ \mathcal{O} ((2K)^k),
\end{equation*}
since $ p'q \sim ℓ^{k+k_0} $. These bounds are far from sharp. Regard for example a relatively straight dimer like the one in \autoref{fig:dimer-bounded-equivalent-paths}. This figure convinces us that the expected bound is actually linear in $ k $.
\end{remark}

\subsection{Main theorems on flatness}
\label{sec:flatness-flatness}
In this section, we collect our main theorems on flatness. In particular, we return to the case where the relations come from a superpotential. For the statement of our theorem, the deformed relations are supposed to come from a deformation of the superpotential and the algebra is supposed to be CY3.

Let us state our flatness result first in the most general way, taking the setup from \autoref{conv:flatness-relations}.

\begin{remark}
Recall that $ I(R) $ is the ideal generated by $ R $ in $ ℂQ $. The spaces $ I(R)_{\compl{ℂQ}} $, $ I(P)_{ℂQ} $ and $ I(P) $ are a bit more complicated. We defined them in an intricate way in \autoref{sec:flatness-ideals}. Under the assumptions of \autoref{th:flatness-flatness-general}, the definitions however simplify: The space $ I(P) ⊂ B \htensor \compl{ℂQ} $ is quasi-flat by \autoref{th:flatness-flatcompleted-th} and $ I(P)_{ℂQ} ⊂ B \htensor ℂQ $ is quasi-flat if $ ψ $ only maps to $ \mathfrak{m} ℂQ $ by \autoref{th:flatness-flatalgebraic-th}.

In simplified terms, the space $ I(R)_{\compl{ℂQ}} $ is the closure of the ideal generated by $ R $ in $ \compl{ℂQ} $ by \autoref{th:flatness-closedness-IRclosed}. The space $ I(P)_{ℂQ} $ is the closure of the ideal generated by $ P $ in $ B \htensor ℂQ $ if $ ψ $ only maps to $ \mathfrak{m} ℂQ $, since $ I(P)_{ℂQ} $ is quasi-flat and hence closed. The space $ I(P) $ is the closure of the ideal generated by $ P $ in $ B \htensor \compl{ℂQ} $ with respect to the tensor topology by \autoref{th:flatness-completed-IPclosed}. Written out, we have
\begin{align*}
I(R) &= ℂQ R ℂQ ⊂ ℂQ, \\
I(R)_{\compl{ℂQ}} &= \closure{ℂQ R ℂQ} ⊂ \compl{ℂQ}, \\
I(P)_{ℂQ} &= \closure{(B \htensor ℂQ) P (B \htensor ℂQ)} ⊂ B \htensor ℂQ, \quad \text{if } ψ(R) ⊂ \mathfrak{m} ℂQ, \\
I(P) &= \closure{(B \htensor \compl{ℂQ})P(B \htensor \compl{ℂQ})}^{\tensor} ⊂ B \htensor \compl{ℂQ}.
\end{align*}
\end{remark}

With these preparations, we are ready to state our flatness result in the most general way.

\begin{proposition}
\label{th:flatness-flatness-general}
Under \autoref{conv:flatness-relations}, assume $ R $ is of bounded type and [BG] and [CP] hold. Then we have:
\begin{itemize}
\item $ \frac{B \htensor \compl{ℂQ}}{I(P)} $ is a (tensor continuous) deformation of $ \frac{\compl{ℂQ}}{I(R)_{\compl{ℂQ}}} $.
\item $ \frac{B \htensor ℂQ}{I(P)_{ℂQ}} $ is a deformation of $ \frac{ℂQ}{I(R)} $ if $ ψ $ only maps to $ \mathfrak{m} ℂQ $.
\end{itemize}
\end{proposition}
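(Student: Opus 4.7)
The statement is essentially a corollary that assembles the quasi-flatness, closedness, and complement results of the previous subsections. My plan is to treat the two bullets in parallel, reducing each to a single application of the appropriate algebra-versus-ideal equivalence (\autoref{th:flatness-whatis-equivalence} for the second bullet, \autoref{th:flatness-closedness-AlgVsIdeal} for the first), and to collect the missing hypotheses from the ambient results.

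For the second bullet, first observe that $P = (\Id + \psi)(R)$ reduces modulo $\mathfrak{m}$ to $R$ because $\psi$ lands in $\mathfrak{m}\,\mathbb{C}Q$ by assumption. Thus any generator of $I(P)_{\mathbb{C}Q}$ is an $\mathfrak{m}$-perturbation of a generator of $I(R)$, which gives the deformation condition
\[
I(P)_{\mathbb{C}Q} + \mathfrak{m}\,\mathbb{C}Q \;=\; I(R) + \mathfrak{m}\,\mathbb{C}Q
\]
in the sense of \autoref{def:flatness-whatis-Idefo}. Next, \autoref{th:flatness-flatcompleted-th} yields quasi-flatness of $I(P)$, whence \autoref{th:flatness-flatalgebraic-th} promotes this to quasi-flatness of $I(P)_{\mathbb{C}Q}$; pseudoclosedness of $I(P)_{\mathbb{C}Q}$ is immediate from its definition as $B(\mathbb{C}Q\,P\,\mathbb{C}Q)$. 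Finally, since $I(R)$ is a $\mathbb{C}Q_0$-bimodule inside a $\mathbb{C}Q_0$-algebra, I invoke the semisimple version of \autoref{th:flatness-whatis-equivalence} recorded in \autoref{rem:flatness-whatis-semisimple} to conclude that $(B \htensor \mathbb{C}Q)/I(P)_{\mathbb{C}Q}$ is a deformation of $\mathbb{C}Q/I(R)$.

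For the first bullet, the argument is parallel but must respect the Krull/tensor topologies. The deformation condition
\[
I(P) + \mathfrak{m}\compl{\mathbb{C}Q} \;=\; I(R)_{\compl{\mathbb{C}Q}} + \mathfrak{m}\compl{\mathbb{C}Q}
\]
again follows from the fact that $\psi$ is infinitesimal: every generator of $I(P)$ differs from a generator of $I(R)_{\compl{\mathbb{C}Q}}$ by an element of $\mathfrak{m}\compl{\mathbb{C}Q}$, and conversely. By \autoref{th:flatness-flatcompleted-th} the ideal $I(P)$ is quasi-flat, and by \autoref{th:flatness-completed-IPclosed} it is closed in the tensor topology of \autoref{def:flatness-closedness-tensortop}. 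On the undeformed side, \autoref{th:flatness-closedness-IRclosed} identifies $I(R)_{\compl{\mathbb{C}Q}}$ as the Krull-closure of $I(R)$, and \autoref{th:flatness-closedness-IRcomplement} produces a closed complement. All hypotheses of \autoref{th:flatness-closedness-AlgVsIdeal} are then satisfied, so that quotient is a tensor-continuous deformation of $\compl{\mathbb{C}Q}/I(R)_{\compl{\mathbb{C}Q}}$.

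The only non-routine point is verifying the two deformation equalities above; everything else is bookkeeping. I expect the minor obstacle to be making sure the two inclusions $I(P) \subseteq I(R)_{\compl{\mathbb{C}Q}} + \mathfrak{m}\compl{\mathbb{C}Q}$ and its converse are written with the right flavor of series convergence (tensor topology in the completed case, purely algebraic in the $\mathbb{C}Q$ case), since $I(P)$ is described as a double series in $\mathfrak{m}$ and in path multiplication. Once those are spelled out, the proof is essentially a citation chain through the section.
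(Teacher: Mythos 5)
Your proposal is correct and follows essentially the same citation chain as the paper's own proof. In fact your version is slightly more careful than the published one: you explicitly verify the deformation condition $I(P)_{\mathbb{C}Q}+\mathfrak{m}\,\mathbb{C}Q = I(R)+\mathfrak{m}\,\mathbb{C}Q$ (and its completed analogue), you note pseudoclosedness of $I(P)_{\mathbb{C}Q}$, and for the first bullet you explicitly invoke \autoref{th:flatness-completed-IPclosed} to supply the hypothesis that $I(P)$ is closed in the tensor topology — all of which the paper's proof uses silently without spelling out.
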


\begin{proof}
This is a culmination of what we have proved in the preceding sections. Regard the first statement. Since $ R $ is of bounded type and [BG] and [CP] hold, \autoref{th:flatness-flatcompleted-th} implies that $ I(P) $ is quasi-flat. By \autoref{th:flatness-closedness-IRclosed}, we have that $ I(R)_{\compl{ℂQ}} $ is closed and by \autoref{th:flatness-closedness-IRcomplement} it has a closed complement. Invoking \autoref{th:flatness-closedness-AlgVsIdeal} gives that $ (B \htensor \compl{ℂQ}) / I(P) $ is a deformation of $ \compl{ℂQ} / I(R)_{\compl{ℂQ}} $.

Regard the second statement. By \autoref{th:flatness-flatalgebraic-th}, also $ I(P)_{ℂQ} $ is quasi-flat. Invoking \autoref{th:flatness-whatis-equivalence} proves the second statement. This finishes the proof.
\end{proof}

Let us restate this proposition in case the relations come from a superpotential. Recall that a superpotential $ W $ gives a relation space $ R = \vspan\{∂_a W\} $ and a deformation $ W' $ of the superpotential gives a deformed relation space $ P = \vspan\{∂_a (W + W')\} $. All details are taken care of by \autoref{th:flatness-BGstrong-Wcase}. Let us use the following notation:
\begin{align*}
\Jac(Q, W) &= \frac{ℂQ}{(∂_a W)}, \\
\Jac(\compl{Q}, W) &= \frac{\compl{ℂQ}}{~~\closure{(∂_a W)}~~}, \\
\Jac(Q, W + W') &= \frac{B \htensor ℂQ}{~~\closure{(B \htensor ℂQ) (∂_a (W + W')) (B \htensor ℂQ)}~~}, \quad \text{if } W' ∈ \mathfrak{m} ℂQ, \\
\Jac(\compl{Q}, W + W') &= \frac{B \htensor \compl{ℂQ}}{~~\closure{(B \compl{ℂQ}) (∂_a (W + W')) (B \compl{ℂQ})}^{\tensor}~~}.
\end{align*}
With these considerations, the following theorem is an immediate consequence of \autoref{th:flatness-flatness-general}.

\begin{theorem}
\label{th:flatness-flatness-applied}
Let $ Q $ be a quiver, $ W ∈ ℂQ_{≥3} $ a superpotential and $ W' ∈ \mathfrak{m} \compl{ℂQ} $ be cyclic. If $ \Jac(Q, W) $ is CY3 and $ W $ is of bounded type, then
\begin{itemize}
\item $ \Jac(\compl{Q}, W + W') $ is a deformation of $ \Jac(\compl{Q}, W) $.
\item $ \Jac(Q, W + W') $ is a deformation of $ \Jac(Q, W) $ if $ W' ∈ \mathfrak{m} ℂQ $.
\end{itemize}
\end{theorem}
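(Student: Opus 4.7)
The plan is to reduce the theorem to \autoref{th:flatness-flatness-general} by casting the superpotential setup into the relation-space framework of \autoref{conv:flatness-relations}, and then to identify the resulting quotients with the Jacobi algebras named in the statement.

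First, I would set up the data of \autoref{conv:flatness-relations} as follows. Put $R = \vspan\{\partial_a W \running a \in Q_1\} \subseteq \mathbb{C}Q_{\geq 2}$; by definition, $W$ of bounded type means precisely $R$ of bounded type. Define the deformation map $\psi: R \to \mathfrak{m}\widehat{\mathbb{C}Q}$ by $\psi(\partial_a W) = \partial_a W'$, so that $P = (\Id + \psi)(R) = \vspan\{\partial_a(W+W')\}$. If $W' \in \mathfrak{m}\mathbb{C}Q$, then $\psi$ lands in $\mathfrak{m}\mathbb{C}Q$, which is the extra condition needed for the second bullet.

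Second, I would verify the hypotheses [BG] and [CP] by invoking \autoref{th:flatness-BGstrong-Wcase}. This lemma uses exactly the assumption that $\Jac(Q,W)$ is CY3: it gives linear independence of the $\partial_a W$ (so that $\psi$ is well-defined on generators), it identifies $\mathbb{C}Q_1 R \cap R \mathbb{C}Q_1$ with the $\mathbb{C}Q_0$-bimodule generated by $W$, and it establishes both the strong Berger-Ginzburg inclusion [BG] and the cyclicity property [CP]. Combined with $R$ being of bounded type, all hypotheses of \autoref{th:flatness-flatness-general} are satisfied.

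Third, I would match the four Jacobi algebras appearing in the theorem to the quotients produced by \autoref{th:flatness-flatness-general}. The quotient $\mathbb{C}Q / I(R)$ is literally $\Jac(Q,W)$. For the completion, \autoref{th:flatness-closedness-IRclosed} shows that $I(R)_{\widehat{\mathbb{C}Q}}$ is the Krull-closure of the ideal $(\partial_a W)$ inside $\widehat{\mathbb{C}Q}$, so $\widehat{\mathbb{C}Q}/I(R)_{\widehat{\mathbb{C}Q}} = \Jac(\widehat{Q}, W)$. On the deformed side, \autoref{th:flatness-completed-IPclosed} (whose hypotheses are met since $I(P)$ is quasi-flat by \autoref{th:flatness-flatcompleted-th}) identifies $I(P)$ with the tensor-topology closure of the ideal generated by $P$, so $(B \mathbin{\widehat{\otimes}} \widehat{\mathbb{C}Q})/I(P) = \Jac(\widehat{Q}, W+W')$. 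For the second bullet, $\psi$ lands in $\mathfrak{m}\mathbb{C}Q$, so $I(P)_{\mathbb{C}Q}$ is defined and is quasi-flat by \autoref{th:flatness-flatalgebraic-th}; by \autoref{th:prelim-flatvariants-equivalence} it is then closed in the $\mathfrak{m}$-adic topology, whence $(B \mathbin{\widehat{\otimes}} \mathbb{C}Q)/I(P)_{\mathbb{C}Q} = \Jac(Q, W+W')$. Applying the two bullets of \autoref{th:flatness-flatness-general} now yields the two bullets of the theorem verbatim.

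The substantive content has already been done upstream: Propositions \ref{th:flatness-flatcompleted-th} and \ref{th:flatness-flatalgebraic-th} are the genuine flatness results, and \autoref{th:flatness-closedness-IRclosed}, \autoref{th:flatness-closedness-IRcomplement}, and \autoref{th:flatness-completed-IPclosed} take care of the closedness and complement issues needed to promote quasi-flatness to a deformation of algebras via \autoref{th:flatness-closedness-AlgVsIdeal}. Consequently the only potential obstacle here is bookkeeping: making sure the various notations for ``the ideal generated by the deformed relations'' in $\mathbb{C}Q$, $\widehat{\mathbb{C}Q}$, $B \mathbin{\widehat{\otimes}}\mathbb{C}Q$, and $B \mathbin{\widehat{\otimes}}\widehat{\mathbb{C}Q}$ line up with $I(R)$, $I(R)_{\widehat{\mathbb{C}Q}}$, $I(P)_{\mathbb{C}Q}$, and $I(P)$ respectively. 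Once the bounded-type hypothesis is used to invoke the closedness lemmas above, every identification is forced and the proof closes.
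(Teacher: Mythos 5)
Your proposal is correct and matches the paper's approach, which presents the theorem as an immediate consequence of \autoref{th:flatness-flatness-general} after noting that \autoref{th:flatness-BGstrong-Wcase} supplies [BG] and [CP] from the CY3 hypothesis and that the four Jacobi algebras are, by the closedness lemmas you cite, the quotients $ℂQ/I(R)$, $\compl{ℂQ}/I(R)_{\compl{ℂQ}}$, $(B \htensor ℂQ)/I(P)_{ℂQ}$, and $(B \htensor \compl{ℂQ})/I(P)$. Your bookkeeping of the four ideal identifications is exactly the content of the remark the paper places just before stating the theorem.
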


The theorem also applies to the standard Jacobi algebra $ \Jac Q $ of a dimer. Here cancellation consistency of $ Q $ already implies that $ \Jac Q $ is CY3 \cite[Theorem 8.1]{Davison}. We have investigated the specific superpotential $ W = W_{\cyc}^+ - W_{\cyc}^- $ in \autoref{sec:flatness-dimers}. It is the difference of the clockwise and the counterclockwise polygons in $ Q $. If all polygons in $ Q $ have equal length or $ Q $ sits in a torus or has no triangles, then the superpotential $ W $ is of bounded type.

\begin{theorem}
\label{th:flatness-flatness-dimer}
Let $ Q $ be a cancellation consistent dimer of bounded type. Denote by $ W $ the superpotential of $ Q $ and let $ W' ∈ \mathfrak{m} \compl{ℂQ} $ be cyclic. Then
\begin{itemize}
\item $ \Jac(\compl{Q}, W + W') $ is a deformation of $ \Jac(\compl{Q}, W) $.
\item $ \Jac(Q, W + W') $ is a deformation of $ \Jac (Q, W) $ if $ W' ∈ \mathfrak{m} ℂQ $.
\end{itemize}
\end{theorem}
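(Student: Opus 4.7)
The plan is to deduce \autoref{th:flatness-flatness-dimer} directly from \autoref{th:flatness-flatness-applied}; the two hypotheses to verify are that $ \Jac(Q, W) $ is CY3 and that $ W $ is a superpotential of bounded type in the sense of \autoref{def:flatness-bounded-RWbounded}.

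The first hypothesis is immediate from Davison's theorem \cite[Theorem 8.1]{Davison}, which states that the Jacobi algebra of a cancellation consistent dimer is CY3. Since every polygon in a dimer has at least three arcs, $ W $ automatically lies in $ ℂQ_{≥3} $, as required by \autoref{th:flatness-flatness-applied}.

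The second hypothesis reduces to matching two notions of equivalence. I would take $ F = \{∂_a W : a ∈ Q_1\} $ as the candidate basis of $ R = \vspan\{∂_a W\} $; by \autoref{th:flatness-BGstrong-Wcase}, CY3-ness already guarantees linear independence, so $ F $ is a basis of $ R $. The key observation is that $ F $-relatedness in the sense of \autoref{sec:flatness-bounded} coincides with F-term equivalence in the dimer sense: each $ ∂_a W $ decomposes as the difference $ r_a^+ - r_a^- $ of the two path completions of the polygons adjacent to $ a $, so the elementary step $ p c_i q \sim p c_j q $ of \autoref{sec:flatness-bounded} is precisely an F-term flip, and the transitive hulls therefore agree.

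With this identification the assumption that $ Q $ is of bounded type (\autoref{def:flatness-dimers-bounded}) — every F-term equivalence class is bounded in length — translates to $ h(N) < ∞ $ for all $ N ∈ ℕ $: since $ Q $ has only finitely many paths of length at most $ N $, and each of their F-term classes admits a length bound by assumption, the supremum $ h(N) $ is finite. Hence $ F $ is a basis of bounded type, so $ W $ is a superpotential of bounded type, and \autoref{th:flatness-flatness-applied} delivers both claims of the theorem. The only delicate step is the identification of the two equivalence notions, but this is essentially a matter of unwinding definitions since the superpotential was designed precisely so that its derivatives encode polygon-flip relations; no substantial obstacle is expected.
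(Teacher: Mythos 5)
Your proposal is correct and follows essentially the same route the paper intends: cancellation consistency gives CY3 via Davison, and the bounded-type hypothesis on the dimer is transferred to the superpotential by identifying $F$-relatedness for the basis $F = \{\partial_a W\}$ with F-term equivalence, after which \autoref{th:flatness-flatness-applied} applies directly. The paper leaves this identification implicit; you have correctly made it explicit, including the necessary observations that $W \in \mathbb{C}Q_{\geq 3}$ and that the finiteness of $h(N)$ follows from finiteness of the set of paths of length $\leq N$ together with boundedness of each class.
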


\section{A deformed Cho-Hong-Lau construction}
\label{sec:CHL}
In this section, we recapitulate the mirror construction of Cho, Hong and Lau \cite{CHL} and formulate a deformed version. Let us sketch this procedure: The construction of Cho, Hong and Lau starts from a pair $ (\RefObjects, \cat C) $ consisting of an $ A_∞ $-category $ \cat C $ together with a designated subcategory $ \RefObjects ⊂ \cat C $ whose $ A_∞ $-products are cyclic. From this pair $ (\ZigzagCat, \cat C) $ they construct a Landau-Ginzburg model: an algebra $ \Jac(\chlQ, W) $ with a central element $ ℓ ∈ \Jac(\chlQ, W) $. They also construct a functor $ F: \cat C → \MF(\Jac(\chlQ, W), ℓ) $:

\begin{center}
\begin{tikzpicture}
\path (0, 0) node[align=center] (A) {\textbf{Cyclic subcategory} \\ $ \RefObjects ⊂ \cat C $} (8, 0) node[align=center] (B) {Mirror functor \\ $ F: \cat C → \MF(\Jac(\chlQ, W), ℓ) $};
\path[draw, ->] ($ (A.east)!0.2!(B.west) $) -- ($ (A.east)!0.8!(B.west) $);
\end{tikzpicture}
\end{center}

The idea to deform this construction is as easy as it can get: Once we change $ \cat C $ to a deformation $ \cat C_q $, the relations of $ \Jac(\chlQ, W) $ deform and the central element $ ℓ $ changes. As long as the subcategory $ \DefZigzagCat ⊂ \cat C_q $ is still cyclic, this gives a deformed Landau-Ginzburg model $ (\Jac(\chlQ, W_q), ℓ_q) $ together with a deformed functor $ F_q: \cat C_q → \MF(\Jac(\chlQ, W_q), ℓ_q) $:

\begin{center}
\begin{tikzpicture}
\path (0, 0) node[align=center] (A) {\textbf{Cyclic deformed subcategory} \\ $ \DefRefObjects ⊂ \cat C_q $} (8, 0) node[align=center] (B) {Deformed mirror functor \\ $ F: \cat C_q → \MF(\Jac(\chlQ, W_q), ℓ_q) $};
\path[draw, ->] ($ (A.east)!0.2!(B.west) $) -- ($ (A.east)!0.8!(B.west) $);
\end{tikzpicture}
\end{center}

In \autoref{sec:CHL-koszul}, we motivate the Cho-Hong-Lau construction via Koszul duality. In \autoref{sec:CHL-classical}, we recall the construction and fix notation. In \autoref{sec:CHL-defLG}, we start deforming the construction by building a deformed Landau-Ginzburg model. In \autoref{sec:CHL-projectives}, we prepare a category of projective modules for deformed algebras. In \autoref{sec:CHL-defMF}, we define categories of deformed matrix factorizations. In \autoref{sec:CHL-functor}, we construct the deformed mirror functor.

After discussing the Cho-Hong-Lau construction in \autoref{sec:CHL-classical}, we will assume \autoref{conv:CHL-category} throughout the rest of the section. From \autoref{sec:CHL-defLG} onwards, we assume its deformed version \autoref{conv:CHL-deformed}. In text and lemmas, we may typically omit mentioning the conventions, while for the actual results we will always remind the reader. We may refer to the non-deformed Cho-Hong-Lau construction as the “classical construction” and to the deformed version as the “deformed construction”.

\subsection{Perspective from Koszul duality}
\label{sec:CHL-koszul}
In this section, we motivate the construction of Cho, Hong and Lau from the perspective of Koszul duality. In fact, the Cho-Hong-Lau construction is a specialized variant of Koszul duality adapted to the case that $ \cat C $ is not an augmented $ A_∞ $-category. In \autoref{sec:koszul} we have already recalled Koszul duality and the connection between cyclic $ A_∞ $-algebras and Calabi-Yau algebras. We have also provided a series of direct tweaks to Koszul duality which motivate the Cho-Hong-Lau construction. In the present section, we compile explicitly a roadmap from Koszul duality to the Cho-Hong-Lau construction:

\begin{description}
\item[Starting from arbitrary $ \cat C $:] Koszul duality departs from an augmented finite-dimensional $ A_∞ $-algebra $ A $ and yields a functor $ \rModfd A → \Tw \koszul{A} $. Cho, Hong and Lau instead depart from an $ A_∞ $-category $ \cat C $ and subcategory of reference objects $ \RefObjects ⊂ \cat C $. They form the $ A_∞ $-algebra $ A = \Hom(\RefObjects, \RefObjects) $. Their mirror functor is then roughly the composition
\begin{equation*}
F: \cat C \xrightarrow{\Hom(\RefObjects, -)} \rModfd A \xrightarrow{\text{Koszul}} \Tw \koszul{A}.
\end{equation*}
\item[Multiple reference objects:] The Koszul dual $ \koszul A $ is always an algebra of noncommutative power series. When departing from a category $ \RefObjects $ with multiple objects, the algebra $ A $ becomes structured over the semisimple ring $ ℂQ_0 $. The Koszul dual $ \koszul A $ inherits the structuring and becomes a quiver algebra with vertex set $ Q_0 $. The dual element $ \kdual x_i $ runs in the opposite direction of $ x_i $.
\item[Passing to cohomology:] The Koszul dual of an $ A_∞ $-algebra is a dg algebra. Cho, Hong and Lau forget the dg algebra and pass to cohomology. The root cause for the success of this procedure consists of an $ A_∞ $-morphism $ \koszul A → \H^0 \koszul A $. As we demonstrate in \autoref{th:koszul-transfer-pass}, this $ A_∞ $-morphism exists naturally if $ A $ is positively graded.
\item[Restriction to odd morphisms:] The Koszul dual of an $ A_∞ $-algebra $ A $ spanned by $ x_1, …, x_n $ is generated by dual variables $ \kdual x_1, …, \kdual x_n $ of degree $ |\kdual x_i| = 1 - |x_i| $. Cho, Hong and Lau take the even part of $ A $ into account, but restrict the mirror $ \H^0 \koszul A $ to the degree zero generators only. The only relations divided out are $ d_{\koszul A} \kdual x $ for $ x ∈ A^2 $. As we demonstrate in \autoref{th:koszul-transfer-cohI}, this approach comes completely naturally if $ A $ is positively graded.
\item[Relaxing the grading:] $ ℤ $-gradedness of $ A $ is not a necessity for Koszul duality. As we demonstrate in \autoref{th:koszul-transfer-3destiny}, it is however essential for using $ \H^0\koszul A $ as codomain of Koszul duality. Cho, Hong and Lau admit $ A $ to be $ ℤ/2ℤ $-graded. To obtain a functioning Koszul duality in this case, the codomain of the functor is not based on the actual cohomology $ \H^0\koszul A $ but deploys a surrogate. As we demonstrate in \autoref{th:koszul-transfer-dropZ}, the functioning surrogate for $ \H^0\koszul A $ is a quotient of the tensor algebra generated by $ \kdual x_i $ for $ x_i $ odd. The surrogate ideal is generated by the restrictions of $ d_{\koszul A} \kdual x_i ∈ \koszul A $ to $ T(\bar A^{\odd} [1]) ⊂ T(\bar A[1]) $ for $ x_i $ even.
\item[Cyclicity of relations:] Cyclicity of the $ A_∞ $-algebra $ A $ typically makes its Koszul dual $ \koszul A $ a Calabi-Yau dg algebra. While its cohomology $ \H^0\koszul A $ is not necessarily a Calabi-Yau, the surrogate algebra used instead of $ \H^0\koszul A $ is always the Jacobi algebra $ \Jac(\chlQ, W) $ of a quiver with superpotential. It is a candidate for being a Calabi-Yau algebra of dimension 3.
\item[Non-augmented $ \cat C $:] Koszul duality requires that the algebra $ A $ is augmented in the sense that $ A_∞ $-products of non-identities never yield identities. Cho, Hong and Lau solve this by accumulating all products that yield identities in an element $ ℓ ∈ \koszul{A} $. The element $ ℓ $ is central in $ \koszul{A} $ and therefore forms a curved dg algebra $ (\koszul{A}, ℓ) $.
\item[Matrix factorizations:] Together with the curvature mentioned above, the result is a Landau-Ginzburg model $ (\Jac_W \chlQ, ℓ) $ consisting of the Jacobi algebra of a quiver with superpotential, together with the additional potential $ ℓ $ as curvature. The suitable analog of the codomain $ \Tw\Jac(\chlQ, W) $ in this curved setting is category $ \MF(\Jac_W \chlQ, ℓ) $ of matrix factorizations.
\item[Opposite construction:] Koszul duality yields a strictly defined Koszul dual $ \koszul A $ and a functor to $ \Tw\koszul A $. Cho, Hong and Lau instead construct a Jacobi algebra based on the opposite algebra of $ \koszul A $. In this algebra, generator $ \kdual x_i $ points in the same direction as $ x_i $. The codomain of the functor never has to be written $ \Tw(\koszul A)^{\algopp} $, because $ \Tw $ is replaced by $ \MF $. While $ \Tw $ is naturally a category of right modules, the category $ \MF $ is by definition a category of left modules and therefore already takes the opposite into account.
\end{description}

The most important ingredients of the Cho-Hong-Lau construction are the category $ \cat C $ and a choice of subcategory $ \RefObjects = \{L_1, …, L_N\} ⊂ \cat C $ which is largely cyclic with respect to a non-degenerate odd graded-symmetric pairing $ ⟨-, -⟩ $. We shall fix this terminology as follows:

\begin{definition}
Let $ \cat C $ be an $ ℤ/2ℤ $-graded $ A_∞ $-category. An \emph{odd non-degenerate graded-symmetric pairing} on $ \cat C $ consists of a family of non-degenerate odd bilinear pairings $ ⟨-, -⟩_{L_1, L_2} $ indexed by all pairs of objects $ L_1, L_2 ∈ \cat C $, with
\begin{equation*}
⟨-, -⟩_{L_1, L_2}: \Hom(L_1, L_2) × \Hom(L_2, L_1) → ℂ, \quad ⟨x, y⟩ = (-1)^{|x||y|} ⟨y, x⟩.
\end{equation*}
\end{definition}

\begin{remark}
We simply write $ ⟨-, -⟩ $ instead of $ ⟨-, -⟩_{L_1, L_2} $. We set $ ⟨x, y⟩ = 0 $ whenever $ x, y $ lie in incompatible hom spaces.
\end{remark}

When choosing a basis for the odd part of the hom spaces of $ \RefObjects $, one obtains a dual basis for the even part of the hom spaces of $ \RefObjects $ as in \autoref{def:koszul-correspondence-dualbasis}. We shall prepare here terminology for the precise type of basis that the Cho-Hong-Lau construction requires:

\begin{definition}
\label{def:CHL-classical-basis}
Let $ \RefObjects = \{L_1, …, L_N\} $ be a unital $ A_∞ $-category with non-degenerate odd graded-symmetric pairing $ ⟨-, -⟩ $. Let $ E_{ij} $ be disjoint index sets for every $ 1 ≤ i, j ≤ N $ and let
\begin{equation*}
\{X_e\}_{e ∈ E_{ij}} ⊂ \Hom^{\odd} (L_i, L_j), \quad \{Y_e\}_{e ∈ E_{ji}} ⊂ \Hom^{\even} (L_i, L_j), \quad \coid_{L_i} ∈ \Hom^{\odd} (L_i, L_i)
\end{equation*}
for every $ 1 ≤ i, j ≤ N $. Then the triple $ \{X_e\} $, $ \{Y_e\} $, $ \{\coid_{L_i}\} $ is a \emph{CHL basis} for $ \RefObjects $ if
\begin{enumerate}
\item These families of morphisms form a basis for the hom spaces of $ \RefObjects $ when combined with the identities $ \id_{L_i} $:
\begin{equation*}
\Hom(L_i, L_j) = \vspan\{X_e\}_{e ∈ E_{ij}} ⊕ \vspan\{Y_e\}_{e ∈ E_{ji}} \quad [~ ⊕ \vspan\{\id_{L_i}, \coid_{L_i}\} \text{ if } i = j].
\end{equation*}
\item We have the pairing identities
\begin{equation}
\label{eq:CHL-classical-basis}
\begin{aligned}
& ⟨Y_e, X_f⟩ = ⟨X_f, Y_e⟩ = δ_{ef}, && ⟨\coid_{L_i}, \id_{L_j}⟩ = ⟨\id_{L_j}, \coid_{L_i}⟩ = δ_{ij}, \\
& ⟨X_e, X_f⟩ = ⟨Y_e, Y_f⟩ = 0, && ⟨\id_{L_i}, \id_{L_j}⟩ = ⟨\coid_{L_i}, \coid_{L_j}⟩ = 0, \\
& ⟨X_e, \id_{L_i}⟩ = ⟨X_e, \coid_{L_i}⟩ = 0, && ⟨Y_e, \id_{L_i}⟩ = ⟨Y_e, \coid_{L_i}⟩ = 0.
\end{aligned}
\end{equation}
\end{enumerate}
The element $ \coid_{L_i} $ is the \emph{co-identity} of $ L_i $ in $ \RefObjects $.
\end{definition}

\begin{remark}
In contrast to the case of cyclic $ A_∞ $-algebras, the pairing pairs the opposite hom spaces $ \Hom(L_i, L_j) $ and $ \Hom(L_j, L_i) $. The dual basis element for $ X_e ∈ \Hom(L_i, L_j) $ is therefore an element $ Y_e ∈ \Hom(L_j, L_i) $. This is the reason why the basis elements $ Y_e $ for $ \Hom^{\even} (L_i, L_j) $ are indexed by the index set $ E_{ji} $ borrowed from the basis of the opposite hom space $ \Hom^{\odd} (L_j, L_i) $.
\end{remark}

\begin{remark}
A Cho-Hong-Lau basis for $ \RefObjects $ always exists if the hom spaces of $ \RefObjects $ are finite-dimensional. This is a simple consequence of \autoref{def:koszul-correspondence-dualbasis} which we shall briefly explain. First, the identity $ \id_{L_i} $ determines a dual odd element $ \coid_{L_i} ∈ \Hom^{\odd} (L_i, L_i) $. One now freely chooses further odd elements $ X_e ∈ \Hom^{\odd} (L_i, L_j) $, where the index $ e $ ranges over an arbitrary set $ E_{ij} $ for every $ 1 ≤ i, j ≤ N $. Together with the co-identities, the elements $ X_e $ are supposed to form a basis for the odd part of the hom spaces of $ \cat C $:
\begin{equation*}
\Hom^{\odd} (L_i, L_j) = \vspan\{X_e\}_{e ∈ E_{ij}} \quad [⊕ ℂ\coid_{L_i} \text{ if } i = j].
\end{equation*}
According to \autoref{def:koszul-correspondence-dualbasis}, we obtain a unique dual basis for the even hom spaces. It is of the form
\begin{equation*}
\Hom^{\even} (L_i, L_j) = \vspan\{Y_e\}_{e ∈ E_{ji}} \quad [⊕ ℂ\id_{L_i} \text{ if } i = j].
\end{equation*}
By construction, the triple $ \{X_e\} $, $ \{Y_e\} $ and $ \{\coid_{L_i}\} $ now satisfies the pairing identities \eqref{eq:CHL-classical-basis} and forms a CHL basis according to \autoref{def:CHL-classical-basis}.
\end{remark}

\subsection{The Cho-Hong-Lau construction}
\label{sec:CHL-classical}
In this section, we recall the construction of the noncommutative mirror functor due to Cho, Hong and Lau \cite{CHL}. The aim is to define the mirror functor as fast as possible. The mirror functor recalled here serves as leading term of the deformed mirror functor which we construct in the next sections. The present section also serves to fix notation and terminology as well as to fix sign conventions.

In \autoref{conv:CHL-category}, we record the complete list of input data and assumptions for the Cho-Hong-Lau construction. The input data include a category $ \cat C $ and a chosen subcategory $ \RefObjects ⊂ \cat C $. The input data also include a choice of CHL basis for $ \RefObjects $. The assumptions include that $ \RefObjects $ is “cyclic on the odd augmented part” of $ \RefObjects $. The precise list reads as follows:

\begin{convention}
\label{conv:CHL-category}
The $ A_∞ $-category $ \cat C $ is $ ℤ/2ℤ $-graded and unital. A subset of reference objects $ \RefObjects = \{L_1, …, L_N\} ⊂ \cat C $ is provided. The category $ \RefObjects $ is supposed to come with an odd non-degenerate graded-symmetric pairing $ ⟨-, -⟩ $. A CHL basis $ \{X_e\}_{e ∈ E_{ij}, 1 ≤ i, j ≤ N} $, $ \{Y_e\}_{e ∈ E_{ij}, 1 ≤ i, j ≤ N} $, $ \{\coid_{L_i}\}_{1 ≤ i ≤ N} $ for $ \RefObjects $ is provided. The category $ \RefObjects $ is required to be cyclic on the odd part with respect to $ ⟨-, -⟩ $:
\begin{equation*}
⟨μ(X_{e_{k+1}}, …, X_{e_2}), X_{e_1}⟩ = ⟨μ(X_{e_k}, …, X_{e_1}), X_{e_{k+1}}⟩.
\end{equation*}
The hom spaces $ \Hom(L_i, X) $ are assumed to be finite-dimensional for $ 1 ≤ i ≤ N $ and $ X ∈ \cat C $.
\end{convention}

The construction of mirror and mirror functor proceeds by Koszul transforming the $ A_∞ $-structure of $ \RefObjects $. In classical Koszul duality, one transforms the $ A_∞ $-structure of an $ A_∞ $-algebra by looking at which sequences of basis input elements produce a given basis element as output. In the construction of Cho, Hong and Lau, the role of the input sequences is played by sequences of basis elements $ X_{e_1}, …, X_{e_k} $, ranging over all hom spaces in $ \RefObjects $. To record the products on these sequences, one introduces a formal variable $ x_e $ for every $ e ∈ E_{ij} $ and $ 1 ≤ i,j ≤ N $. The variables are subject to constraints on composition, coming from a quiver structure:

\begin{definition}
The \emph{CHL quiver} $ \chlQ $ has one vertex $ L_i $ for every reference object $ L_i $ and an arrow $ x_e: L_i → L_j $ for every $ e ∈ E_{ij} $ and $ 1 ≤ i, j ≤ N $.
\end{definition}

With the definition of $ \chlQ $ in mind, we build the auxiliary formal element
\begin{equation}
\label{eq:CHL-classical-bdef}
b = \sum_{i, j = 1}^N \sum_{e ∈ E_{ij}} x_e X_e.
\end{equation}
In principle, the basis morphisms $ X_e $ lie in different hom spaces. If we view $ \RefObjects $ as a direct sum of its elements $ L_1, …, L_N $, we can interpret $ b $ as an element of $ \compl{ℂ\chlQ} ¤ \Hom(\RefObjects, \RefObjects) $. Our convention is that product of the type $ μ (m_k, …, m_1, b, …, b) $ are always to be understood as multlinear expansions of the product under use of the sum \eqref{eq:CHL-classical-bdef}. More background can be found in \cite[Chapter 2]{CHL}.

Summing up products of the type $ μ(m_k, …, m_1, b, …, b) $ over increasing number of $ b $-insertions gives an infinite series. The summands consist of paths in $ \chlQ $ multiplied by basis elements $ X_e $, $ Y_e $ or (co)identities. The coefficients series of a basis element $ X_e $ need not converge in $ ℂ\chlQ $, but generally only in the completed path algebra $ \compl{ℂ\chlQ} $. The special case where the coefficient series terminate is however relevant, as it allows one to obtain a Landau-Ginzburg model building on the quiver algebra $ ℂ\chlQ $ instead of its completion. We shall give this case a name:

\begin{definition}
$ \RefObjects $ is of \emph{bounded growth} if for all morphisms $ m_1, …, m_1 $ in $ \cat C $ there is an $ l_0 ∈ ℕ $ such that
\begin{equation*}
∀l ≥ l_0: \quad μ^{k+l} (m_k, …, m_1, b, …, b) = 0.
\end{equation*}
\end{definition}

With this in mind, we can define all relevant intermediates of the Cho-Hong-Lau construction as follows:

\begin{definition}
\label{def:CHL-classical-gadgetsdef}
The \emph{relations} $ R_e ∈ \compl{ℂ\chlQ} $ and the \emph{potential} $ ℓ ∈ \compl{ℂ\chlQ} $ are defined by
\begin{equation}
\label{eq:CHL-classical-Rdef}
\sum_{k ≥ 1} μ^k (b, …, b) = ℓ \id_{\RefObjects} + \sum_{i, j = 1}^N \sum_{e ∈ E_{ij}} R_e Y_e.
\end{equation}
The \emph{superpotential} is defined as
\begin{equation*}
W = ⟨\sum_{k ≥ 1} μ^k (b, …, b), b⟩ ∈ \compl{ℂ\chlQ}.
\end{equation*}
The \emph{Jacobi algebra} is defined as
\begin{equation}
\label{eq:CHL-classical-Jacdef}
\Jac (\compl{\chlQ}, W) = \frac{\compl{ℂ\chlQ}}{~\closure{(∂_{x_e} W)}~}.
\end{equation}
The \emph{Landau-Ginzburg model} is the pair $ (\Jac(\compl{\chlQ}, W), ℓ) $. If $ \RefObjects $ is of bounded growth, then $ R_e, ℓ, W $ are regarded as elements of $ ℂ\chlQ $, the Jacobi algebra is defined as $ \Jac (\chlQ, W) = ℂQ / (∂_{x_e} W) $, and the Landau-Ginzburg model is $ (\Jac(\chlQ, W), ℓ) $.
\end{definition}

\begin{remark}
The element $ W ∈ \compl{ℂ\chlQ} $ is cyclic, as we recall in \autoref{th:CHL-classical-centrality}. Its derivative $ ∂_{x_e} W $ is defined by stripping off $ x_e $ from the front (or rear) side of all terms in $ W $ that start (or end) with $ x_e $.
\end{remark}

\begin{remark}
The description of $ ℓ $ and $ R_e $ in \eqref{eq:CHL-classical-Rdef} is to be interpreted as follows: All the products $ μ(b, …, b) $ are even and can hence be written as a sum of the even basis elements. The even basis elements are by assumption of the form $ Y_e $ and $ \id_{L_i} $. The element $ R_e $ is formed by recording the coefficient of $ Y_e $ and the element $ ℓ $ is formed by recording the coefficients of the identities $ \id_{L_i} $ and summing up.
\end{remark}

\begin{remark}
We have used the notation $ \closure{X} $ for the closure of a set $ X ⊂ \compl{ℂ\chlQ} $ with respect to the Krull topology on $ \compl{ℂ\chlQ} $. More information on the Krull topology can be found in \autoref{sec:flatness-closedness}.
\end{remark}

\begin{remark}
In \autoref{def:CHL-classical-gadgetsdef}, the two uses of the symbol $ (∂_{x_e} W) $ differ slightly. Namely in \eqref{eq:CHL-classical-Jacdef}, the expression $ (∂_{x_e} W) $ denotes the ideal $ \compl{ℂ\chlQ} \vspan(∂_{x_e})_e \compl{ℂ\chlQ} $ generated by the relations $ ∂_{x_e} W $ in $ \compl{ℂ\chlQ} $, while in the case of bounded growth it denotes the ideal generated in $ ℂ\chlQ $. Alternatively, the shared definition $ (∂_{x_e} W) = ℂ\chlQ \vspan(∂_{x_e} W) ℂ\chlQ $ can be used in both cases since in \eqref{eq:CHL-classical-Jacdef} closure is taken. The notation for the two ideals also differs slightly from the notation of \autoref{sec:flatness}.
\end{remark}

\begin{remark}
We have decided to treat the case that $ \RefObjects $ is of bounded growth in parallel with the general case. In contrast, Cho, Hong and Lau \cite{CHL} specialize to the case of bounded growth only in Chapter 10. In fact, their construction departs from the even more general situation involving the Novikov ring. We shall try to make explicit every time whether we regard the general construction which uses the completed path algebra and the closure of the ideal, or the specific construction which uses the ordinary path algebra and ideal.
\end{remark}

\begin{remark}
Unfortunately the calculations of \cite{CHL} and \cite{CHL17} appear to contain at least two independent sign issues. We have therefore decided to repeat the calculations here and repair the signs. We shall here trace back the signs: The first dubious sign can be found in \cite[Theorem 2.19]{CHL17}: The expressions (2.10), (2.12), (2.14) indeed add up to zero due to the $ A_∞ $-relation for $ \cat C $, but we need to show that the difference $ +\text{(2.10)}-\text{(2.12)}-\text{(2.14)} $ vanishes. This issue breaks the functor equations for $ \compl F $, even if the matrix factorizations under consideration have vanishing $ δ $. The second dubious sign can be found in the combination of \cite[Definition 4.3]{CHL} and \cite[Definition 4.4]{CHL}. The specific combination of sign conventions for $ μ^1_{\MF} $ and for the endomorphism $ δ $ of $ \compl F $ seems to break the functor equations for $ \compl F $. A third issue is that even without regarding the functor relations the definition of $ \compl F $ immediately renders $ \compl F(\id) = -\id $, while it would be desirable to have $ \compl F(\id) = \id $. We have tried to repair all issues, even though it makes the sign convention for $ \compl F $ slightly unesthetic.
\end{remark}

From now on, the element $ ℓ $ is typically regarded as an element in the quotient $ \Jac(\chlQ, W) $. Its significance in the quotient and the relation between $ R_e $ and $ W $ is explained as follows:

\begin{lemma}
\label{th:CHL-classical-centrality}
The superpotential $ W ∈ \compl{ℂ\chlQ} $ is cyclic and we have $ R_e = ∂_{x_e} W $. The potential $ ℓ ∈ \Jac(\compl{\chlQ}, W) $ is central. The analogous statements hold if $ \RefObjects $ is of bounded growth.
\end{lemma}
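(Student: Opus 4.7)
The plan is to reduce all three claims to coefficient-level identities in $\compl{ℂ\chlQ}$ by expanding $\sum_{k\geq 1}\mu^k(b,\ldots,b)$, $W$, and $R_e$ explicitly in the CHL basis. Cyclicity of $W$ and the identity $R_e=\partial_{x_e}W$ will drop out from cyclicity of $\RefObjects$ together with the CHL pairing relations; centrality of $\ell$ will come from applying the $A_\infty$-relation to the constant input sequence $(b,\ldots,b)$, in which $\ell$ appears as the ``scalar'' part of the Maurer--Cartan obstruction $M=\sum_k\mu^k(b,\ldots,b)$. The whole argument works verbatim in the bounded-growth setting after replacing $\compl{ℂ\chlQ}$ by $ℂ\chlQ$ and closures of ideals by ideals themselves.

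Concretely, I would begin by pulling path-algebra scalars out of the multilinear products to obtain $\sum_k\mu^k(b,\ldots,b)=\sum_k\sum_{e_1,\ldots,e_k}x_{e_k}\cdots x_{e_1}\,\mu^k(X_{e_k},\ldots,X_{e_1})$, and hence $W=\sum_k\sum_{e_0,\ldots,e_k}x_{e_k}\cdots x_{e_0}\,\langle\mu^k(X_{e_k},\ldots,X_{e_1}),X_{e_0}\rangle$ via the $\compl{ℂ\chlQ}$-bilinear extension of $\langle-,-\rangle$. A single application of cyclicity of $\RefObjects$ matches the coefficient of any path in this expansion with that of its cyclic permutation, so $W$ is cyclic. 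Next I would identify $R_e$ as the coefficient of $Y_e$ in $\sum_k\mu^k(b,\ldots,b)$, using the CHL rules $\langle Y_e,X_f\rangle=\delta_{ef}$ and $\langle\id,X\rangle=\langle\coid,X\rangle=0$, to obtain $R_e=\sum_k\sum_{e_1,\ldots,e_k}x_{e_k}\cdots x_{e_1}\,\langle\mu^k(X_{e_k},\ldots,X_{e_1}),X_e\rangle$; comparing with the formal derivative of the expansion of $W$ (stripping the leading letter of each path) and applying cyclicity of $\RefObjects$ once more to move $X_e$ into the output slot of the pairing yields $\partial_{x_e}W=R_e$.

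For centrality of $\ell$, I would apply the $A_\infty$-relation to $(b,\ldots,b)$; since $b$ is odd the Koszul signs collapse and the relation rearranges into $\sum_{p,q\geq 0}\mu^{p+1+q}(b^{p},M,b^{q})=0$ with $M=\ell\,\id_{\RefObjects}+\sum_eR_eY_e$. Substituting the $\ell\,\id$ part and invoking strict unitality kills every term with $p+q\geq 2$ as well as the $p=q=0$ term (since $\mu^1(\id)=0$), leaving only $\mu^2(\ell\id,b)+\mu^2(b,\ell\id)=-\ell b+b\ell=\sum_e(x_e\ell-\ell x_e)X_e$. Substituting the $\sum_eR_eY_e$ part instead produces terms all of which carry $R_e$ as an inner scalar factor and so lie in the closed two-sided ideal $\overline{(\partial_{x_e}W)}$. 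Reducing the resulting identity modulo this ideal and using linear independence of the $X_e$ in $\Hom(\RefObjects,\RefObjects)$ gives $\ell x_e=x_e\ell$ in $\Jac(\compl{\chlQ},W)$ for every $e$, and centrality follows because the $x_e$ together with the vertex idempotents (which commute with $\ell$ trivially) generate $\compl{ℂ\chlQ}$. The main obstacle I anticipate is careful sign bookkeeping: the pairing has odd degree, the $X_e$ are odd while the $Y_e$ are even, and the scalars $x_e$ fail to commute in $\compl{ℂ\chlQ}$. With the CHL basis normalized so that $\|X_e\|\equiv\|b\|\equiv 0\pmod 2$ most Koszul signs collapse, but one still has to verify that the sign $(-1)^{|b|}=-1$ in $\mu^2(\id,b)$ combines with $\mu^2(b,\id)=b$ to produce the commutator $x_e\ell-\ell x_e$ rather than a symmetric sum, and that the derivative sign convention for the superpotential aligns with the signs arising in the expansion of $W$.
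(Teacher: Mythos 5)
Your proposal follows essentially the same route as the paper's proof: cyclicity of $W$ from grouping the expansion of $\langle\mu^k(b,\ldots,b),b\rangle$ into cyclic orbits, $R_e=\partial_{x_e}W$ by coefficient extraction against the CHL pairing, and centrality of $\ell$ from the $A_\infty$-relation applied to the constant sequence $(b,\ldots,b)$, isolating $\mu^2(b,\ell\id)+\mu^2(\ell\id,b)$ and reducing the remaining terms modulo the closed relations ideal. One small imprecision worth flagging: the arrows $x_e$ and vertex idempotents do not generate $\compl{ℂ\chlQ}$ as an associative algebra but only a dense subalgebra, so passing from ``$\ell$ commutes with each $x_e$'' to ``$\ell$ is central'' requires a continuity (or explicit series-truncation) argument with respect to the Krull topology on $\compl{ℂ\chlQ}$ and its quotient $\Jac(\compl{\chlQ},W)$, which the paper carries out explicitly but your sketch elides.
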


\begin{proof}
We divide the proof into the four obvious parts. The first part of the proof is to check cyclicity of $ W $. To show this, regard the sum decomposition
\begin{equation}
\label{eq:CHL-classical-Wsum}
⟨μ^k (b, …, b), b⟩ = \sum_{e_1, …, e_{k+1}} ⟨μ^k (X_{e_{k+1}}, …, X_{e_2}), X_{e_1}⟩ x_{e_{k+1}} … x_{e_1}.
\end{equation}
We will group the collection of all summands into cyclic orbits. Namely, regard any term $ λ x_{e_{k+1}} … x_{e_1} $ appearing in this sum. Recall that by the cyclicity assumption of \autoref{conv:CHL-category} we have
\begin{equation}
λ = ⟨μ^k (X_{e_{k+1}}, …, X_{e_2}), X_{e_1}⟩ = ⟨μ^k (X_{e_k}, …, X_{e_1}), X_{e_{k+1}}⟩,
\end{equation}
This means the permuted version $ λ x_{e_k} … x_{e_1} x_{e_{k+1}} $ also appears in the sum \eqref{eq:CHL-classical-Wsum}, with equal coefficient $ λ $. This renders $ ⟨μ^k (b, …, b), b⟩ $ cyclic. Summing over $ k ≥ 1 $, we derive that the entire superpotential $ W $ is cyclic.

The second part of the proof is to check that $ R_e = ∂_{x_e} W $. Usually, derivatives of a cyclic superpotential are written in terms of those paths starting with the given variable $ x_e $. Since $ W $ is already cyclic, it suffices to extract all paths which instead end on $ x_e $. To avoid double indexing by $ e $, we will write $ x_f $ in the expansion of $ ⟨μ^k (b, …, b), b⟩ $. Ultimately, we calculate within $ \compl{ℂ\chlQ} $ that
\begin{align*}
∂_{x_e} W &= ∂_{x_e} \left⟨\sum_{k ≥ 1} μ^k (b, …, b), \sum_f x_f X_f\right⟩ \\ &= ∂_{x_e} \left⟨ℓ \id_{\RefObjects} + \sum_f R_f Y_f, \sum_f x_f X_f\right⟩
= ∂_{x_e} \left(\sum_f R_f x_f\right) = R_e.
\end{align*}
The third part of the proof is to check that $ ℓ ∈ \Jac(\compl{\chlQ}, W) $ is central. In this part of the proof, we shall regard $ ℓ $ interchangeably as element of $ \compl{ℂ\chlQ} $ or $ \Jac(\compl{Q}, W) $. We start with the observation that within $ \compl{ℂ\chlQ} ¤ \Hom(\RefObjects, \RefObjects) $ we have
\begin{align*}
0 = \sum_{k ≥ 1} \sum_{l ≥ 1} μ^k (b, …, μ^l (b, …, b), …, b)
= \sum_{k ≥ 1} μ^k (b, …, \sum_{i = 1}^N ℓ_i \id_{L_i} + \sum R_e Y_e, …, b).
\end{align*}
We have essentially performed a reordering of the double sum which is legitimate since the path lengths regarding $ \chlQ $ encountered in $ μ(b, …, b) $ increase as the number of inputs increases. We now claim that apart from $ μ^2 (b, ℓ_i \id_{L_i}) $ and $ μ^2 (ℓ_i \id_{L_i}, b) $, the entire sum on the right-hand side of the equation lies in $ \closure{(∂_{x_e} W)} ¤ \Hom(\RefObjects, \RefObjects) $. Indeed, all terms with identities vanish for $ k ≥ 3 $ and $ k = 1 $ and the products involving $ Y_e $ all embrace relations $ R_e $. We caution that the expression does in general not lie in $ \compl{ℂ\chlQ} \vspan(∂_{x_e} W) \compl{ℂ\chlQ} $, see \autoref{rem:flatness-closedness-nonclosedCQRCQ} for an illustration. Ultimately, we conclude within $ \Jac(\compl{\chlQ}, W) ¤ \Hom(\RefObjects, \RefObjects) $ that
\begin{equation*}
0 = μ^2 (b, ℓ \id_{\RefObjects}) + μ^2 (ℓ \id_{\RefObjects}, b).
\end{equation*}
Let $ 1 ≤ i, j ≤ N $ and $ e ∈ E_{ij} $. Then extracting the $ X_e $-component gives
\begin{equation*}
0 = x_e ℓ - ℓ x_e \text{ within } \Jac(\compl{\chlQ}, W).
\end{equation*}
We conclude that $ ℓ $ commutes with all arrows in $ \chlQ $ and hence with all finite paths. Let now $ x ∈ \compl{ℂ\chlQ} $ be an arbitrary element. If one wants to take Krull-continuity of the projection $ π: \compl{ℂ\chlQ} → \Jac(\compl{\chlQ}, W) $ for granted, simply choose a sequence $ (x_n) ∈ ℂ\chlQ $ with $ x_n → x $, then $ 0 = π(x_n) ℓ - ℓ π(x_n) → π(x)ℓ - ℓπ(x) $, hence $ xℓ = ℓx $. If not, write $ x = \sum_{k ≥ 0} x_k $ where $ x_k $ is homogeneous of length $ k $. Then
\begin{equation*}
\left(\sum_{k ≥ 0} π(x_k)\right) ℓ = π\left(\left(\sum_{k ≥ 0} x_k \right) ℓ\right) = π\left(\sum_{k ≥ 0} x_k ℓ\right) = π\left(\sum_{k ≥ 0} ℓ x_k + z_k \right) = ℓ \left(\sum_{k ≥ 0} π(x_k) \right).
\end{equation*}
Here $ z_k ∈ \closure{(∂_{x_e} W)} $ denotes the difference of $ x_k ℓ $ and $ ℓ x_k $ when regarded as elements of $ \compl{ℂ\chlQ} $. Note that $ z_k $ has length at least $ k $ since $ x_k $ does, hence $ \sum z_k ∈ \closure{(∂_{x_e} W)} $. We conclude that $ ℓ $ commutes with any element $ x ∈ \Jac(\compl{\chlQ}, W) $ and hence $ ℓ ∈ Z(\Jac(\compl{\chlQ}, W)) $. The fourth part of the proof consists of observing that the calculations still hold, even simplify, in case $ \RefObjects $ is of bounded growth. This finishes the proof.
\end{proof}

It is time to demonstrate how one calculates in the Jacobi algebra. For instance, within $ \Jac(\compl{\chlQ}, W) ¤ \Hom(\RefObjects, \RefObjects) $ the expression \eqref{eq:CHL-classical-Rdef} simplifies to
\begin{equation*}
\sum_{k ≥ 1} μ^k (b, …, b)  = \sum_{i = 1}^N ℓ_i \id_{L_i} ∈ \Jac(\compl{\chlQ}, W) ¤ \Hom(\RefObjects, \RefObjects).
\end{equation*}

We are ready to recall the construction of the mirror functor of Cho, Hong and Lau. The idea is to tweak the Koszul duality functor for the $ A_∞ $-algebra $ A = \End(\RefObjects) $. We shall construct two functors $ \compl F $ and $ F $, where $ \compl F $ serves the general case and $ F $ the case where $ \RefObjects $ is of bounded growth. The domain of both functors is $ \cat C $ and the codomains are the matrix factorization categories $ \MF(\Jac(\compl{\chlQ}, W), ℓ) $ and $ \MF(\Jac(\chlQ, W), ℓ) $, respectively. We start with the explicit descriptions on the level of objects:
\begin{align}
\label{eq:CHL-classical-Fobj}
\compl F(X) &= \left(\bigoplus_{i = 1}^N \Jac(\compl{\chlQ}, W) L_i \tensor \Hom(L_i, X), \quad δ(m) = \sum_{k ≥ 1} (-1)^{‖m‖} μ^k (m, b, …, b)\right), \\
F(X) &= \left(\bigoplus_{i = 1}^N \Jac(\chlQ, W) L_i ¤ \Hom(L_i, X), \quad δ(m) = \sum_{k ≥ 1} (-1)^{‖m‖} μ^k (m, b, …, b)\right).
\end{align}
Let us explain how to interpret these expressions as matrix factorizations. Recall from \autoref{th:prelim-matrixfact-alternative} that a matrix factorization can be defined as a $ ℤ/2ℤ $-graded module which together with an odd endomorphism which squares to the desired central element. In our case, the $ \Jac(\compl{\chlQ}, W) $-module $ \Jac(\compl{\chlQ}, W) L_i \tensor \Hom(L_i, X) $ shall have $ ℤ/2ℤ $-grading inherited from $ \Hom(L_i, X) $. Since $ b $ is odd, the map $ δ $ itself becomes odd. The module is projective and finitely generated since $ \Hom(L_i, X) $ is finite-dimensional by assumption. If we check that $ δ $ squares to $ ℓ $, then $ \compl F(X) $ is indeed a matrix factorization.

\begin{lemma}
For $ X ∈ \cat C $ the object $ \compl F(X) $ is indeed a matrix factorization of $ (\Jac(\compl{\chlQ}, W), ℓ) $. If $ \RefObjects $ is of bounded growth, then $ F(X) $ is a matrix factorization of $ (\Jac(\chlQ, W), ℓ) $.
\end{lemma}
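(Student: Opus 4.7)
First, each summand $\Jac(\compl{\chlQ}, W) L_i \tensor \Hom(L_i, X)$ is finitely generated and projective as a $\Jac(\compl{\chlQ}, W)$-module, since $\Hom(L_i, X)$ is finite-dimensional by \autoref{conv:CHL-category}, and it inherits a $\mathbb{Z}/2\mathbb{Z}$-grading from $\Hom(L_i, X)$. The formal element $b$ is odd and each $\mu^k(m, b, \ldots, b)$ has parity $|m| + 1$, so $\delta$ is an odd endomorphism, and the defining sum converges in the Krull topology because the tensor coefficient of the $k$-fold contribution is a path of length $k-1$ in $\chlQ$. By \autoref{th:prelim-matrixfact-alternative}, it remains only to verify the Maurer-Cartan identity $\delta^2 = \ell \cdot \id$; the plan is to do this by applying the $A_\infty$-relation for $\cat C$ to the sequence $(m, b, \ldots, b)$, with $m$ placed in the top slot and arbitrarily many $b$'s below.

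Since $\|b\| = 0$, every sign $(-1)^{\|a_j\| + \ldots + \|a_1\|}$ appearing in the $A_\infty$-relation collapses to $+1$, and the relation splits into Case A, where the inner $\mu$ contains $m$, and Case B, where the inner $\mu$ consists entirely of $b$'s. Reindexing Case A by inner arity $k$ and outer arity $l$ yields
\begin{equation*}
\text{Case A} = \sum_{k, l \geq 1} \mu^l(\mu^k(m, b^{k-1}), b^{l-1}),
\end{equation*}
and a short sign check using $\|\mu^k(m, b^{k-1})\| = |m|$ together with the prefactor $(-1)^{\|m\|}$ in the definition of $\delta$ identifies this with $-\delta^2(m)$. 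Reindexing Case B by the number $p$ of $b$'s between $m$ and the inner, the number $q$ of $b$'s below the inner, and the inner arity $k$ gives
\begin{equation*}
\text{Case B} = \sum_{p, q \geq 0, \, k \geq 1} \mu^{p+q+2}(m, b^p, \mu^k(b^k), b^q).
\end{equation*}

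Case B is handled using \eqref{eq:CHL-classical-Rdef} and the centrality of $\ell$ from \autoref{th:CHL-classical-centrality}. Inside $\Jac(\compl{\chlQ}, W)$ the relations $R_e = \partial_{x_e} W$ vanish, and the residual $R_e Y_e$-tails lie in $\closure{(\partial_{x_e} W)}$ because their path coefficients in $\compl{\C\chlQ}$ grow without bound, so in the quotient we may replace $\sum_{k \geq 1} \mu^k(b^k)$ by $\ell \id_{\RefObjects}$. Centrality of $\ell$ lets it be extracted as a scalar coefficient, and strict unitality of $\mu$ kills every term with outer arity $p + q + 2 \geq 3$; only $p = q = 0$ survives, giving $\ell \cdot \mu^2(m, \id) = \ell m$. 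The vanishing Case A $+$ Case B $= 0$ now reads $-\delta^2(m) + \ell m = 0$, so $\delta^2(m) = \ell m$ as required.

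For $\RefObjects$ of bounded growth, every sum involved terminates at a finite stage, so the argument takes place in $\C\chlQ$ itself with no completion needed, and $F(X)$ is a matrix factorization of $(\Jac(\chlQ, W), \ell)$ by the same reasoning. The main technical subtlety to watch is the justification for replacing $\sum_{k \geq 1} \mu^k(b^k)$ by $\ell \id_{\RefObjects}$ modulo the closed ideal $\closure{(\partial_{x_e} W)}$; this reduces to verifying that the residual $R_e Y_e$-contributions are genuinely Krull-captured, which follows from the growth of the path-length of their coefficients in $\compl{\C\chlQ}$.
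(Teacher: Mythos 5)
Your proof is correct and takes essentially the same route as the paper's. Both proofs verify $\delta^2 = \ell$ by expanding $\delta(\delta(m))$, invoking the $A_\infty$-relation $\mu \cdot \mu = 0$ on the sequence $(m, b, \ldots, b)$, and then recognizing the residual "inner $\mu^l(b,\ldots,b)$" terms as $\ell \id_{\RefObjects}$ plus relation tails that vanish in $\Jac(\compl{\chlQ}, W)$. Your version is a bit more explicit on the sign bookkeeping (noting that $\|b\| = 0$ kills all $A_\infty$-signs) and on why the $R_e Y_e$-tails land in the Krull closure of the ideal — details the paper compresses into "calculating in $\Jac(\compl{\chlQ}, W) \otimes \Hom(\RefObjects, X)$."
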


\begin{proof}
We merely check the case of $ \compl F(X) $. It is our task to show that $ δ^2 $ equates to multiplying by $ ℓ $. Calculating in $ \Jac(\compl{\chlQ}, W) ¤ \Hom(\RefObjects, X) $, we get
\begin{align*}
δ(δ(m)) &= δ(\sum_{k ≥ 1} (-1)^{‖m‖} μ^k (m, b, …, b)) \\
&= \sum_{l ≥ 1} \sum_{k ≥ 1} (-1)^{‖m‖ + ‖m‖ - 1} μ^l (μ^k (m, b, …, b), b, …, b) \\
&= - \sum_{n ≥ 1} (μ · μ)^n (m, b, …, b) + \sum_{k ≥ 2} \sum_{l ≥ 1} μ^k (m, b, …, μ^l (b, …, b), …, b) \\
&= μ^2 (m, ℓ \id_{\RefObjects}) = ℓ m ∈ \Jac(\compl{\chlQ}, W) ¤ \Hom(\RefObjects, X).
\end{align*}
Here we have used the $ A_∞ $-relation $ μ · μ = 0 $ and $ \sum_{l ≥ 1} μ^l (b, …, b) = ℓ \id_{\RefObjects} $. We conclude that $ δ^2 = ℓ $. This shows that $ \compl F(X) $ is a matrix factorization. The analogous calculations show that $ F(X) $ is a matrix factorization when $ \RefObjects $ is of bounded growth.
\end{proof}

We are finally ready to write down the functors $ \compl F $ and $ F $. On objects, these functors map $ X ∈ \cat C $ to their associated matrix factorizations $ \compl F(X) $ and $ F(X) $ defined in \eqref{eq:CHL-classical-Fobj}. On morphisms, the functors are intuitively constructed through viewing $ \Hom(L_i, X) $ as a module over $ \End(\RefObjects) $ and composing with the Koszul duality functors. We explain the origin of the functors in more detail in \autoref{rem:CHL-classical-origin}.

\begin{definition}
The \emph{CHL functor} $ \compl F $ is the mapping
\begin{align}
\compl F: \cat C &\verylongto \MF(\Jac(\compl{\chlQ}, W), ℓ), \\
X &\verylongmapsto \compl F(X), \\
\compl F(m_k, …, m_1)(m) &= (-1)^{(‖m_1‖ + … + ‖m_k‖) ‖m‖ + 1} \sum_{l ≥ 0} μ^{k+l+1} (m_k, …, m_1, m, b, …, b) \\
& \text{for } m_i: X_i → X_{i+1}, \quad m ∈ \compl F(X_1).
\end{align}
In case $ \RefObjects $ is of bounded growth, the functor $ F: \cat C → \MF(\Jac(\chlQ, W), ℓ) $ is defined analogously.
\end{definition}

\begin{remark}
Let us elaborate and make sense of the definition of $ \compl F(m_k, …, m_1) $. To start with, the sequence $ m_1, …, m_k $ is an arbitrary sequence of morphisms $ m_i: X_i → X_{i+1} $ in $ \cat C $. The morphism $ \compl F(m_k, …, m_1) $ is supposed to be a $ \Jac(\compl{\chlQ}, W) $-module map $ \compl F(X_1) → \compl F(X_{k+1}) $. Let us make sense of its definition: The element $ m $ used to define this map lies in the direct sum
\begin{equation*}
\compl F(X_1) = \bigoplus_{i ∈ \chlQ_0} \Jac(\compl{\chlQ}, W) L_i ¤ \Hom(L_i, X_1).
\end{equation*}
Its image $ \compl F(m_k, …, m_1) (m) $ is defined by the products $ μ(m_k, …, m_1, m, b, …, b) $, with an arbitrary amount of $ b $-insertions. The result of each of these products lives in
\begin{equation*}
\compl F(X_{k+1}) = \bigoplus_{j ∈ \chlQ_0} \Jac(\compl{\chlQ}, W) L_j ¤ \Hom(L_j, X_{k+1}).
\end{equation*}
To see this, recall that the formal parameters $ x_e $ in $ b $ simply get multiplied up as we evaluate the product. Simply speaking, a product where the right-most $ b $-summand $ x_e X_e $ is consumed lands in $ \Jac(\compl{\chlQ}, W) t(e) ¤ \allowbreak \Hom(L_{t(e)}, X_{k+1}) $. Finally, note that $ \compl F(m_k, …, m_1) $ is indeed a module map: If $ a ∈ \Jac(\compl{\chlQ}, W) $, then $ F(m_k, …, m_1)(am) = a F(m_k, …, m_1)(m) $ since the factor $ a $ can be pulled to the front.
\end{remark}

The main algebraic result of Cho, Hong and Lau entails that $ \compl F $ indeed defines an $ A_∞ $-functor:

\begin{lemma}
\label{th:CHL-classical-functor}
The CHL functor is a unital $ A_∞ $-functor $ \compl F: \cat C → \MF(\Jac(\compl{\chlQ}, W), ℓ) $. In case $ \RefObjects $ is of bounded growth, then $ F $ is a unital $ A_∞ $-functor as well.
\end{lemma}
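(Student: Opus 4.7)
The plan is to derive both the functor equations and unitality from a single source: the $A_\infty$-relations of $\cat C$ applied to compound sequences of the form $(a_k, \dots, a_1, m, b, \dots, b)$ with an arbitrary number of $b$-insertions. Throughout I will treat the general completed case; the bounded-growth case is identical except that all series terminate and live in $\ell\chlQ$ rather than $\compl{\ell\chlQ}$. I will liberally use that $b$ is an odd formal sum and that products $\mu^k(\dots,b,\dots,b)$ with increasingly many $b$'s lie in increasingly high path length, which ensures every infinite series that arises is automatically Krull-convergent in $\Jac(\compl{\chlQ},W)\htensor\Hom(\RefObjects,-)$.

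First I would record the two non-trivial ingredients that will drive every sign and index bookkeeping. On the domain side, the strict $A_\infty$-relation applied to the sequence $(a_k,\dots,a_1,m,b,\dots,b)$ decomposes into four mutually exclusive families of inner products: (I) an inner $\mu$ that contains only $b$'s, (II) an inner $\mu$ that starts at $m$ and consumes $m$ together with some trailing $b$'s, (III) an inner $\mu$ lying entirely inside the $a$-block, and (IV) an inner $\mu$ that straddles the boundary between the $a$-block and the $m,b,\dots,b$ tail. On the codomain side, writing $\tilde\delta(x)=(-1)^{|x|}\delta(x)$ as in \autoref{sec:prelim-matrixfact}, the dg structure of $\MF(\Jac(\compl{\chlQ},W),\ell)$ decomposes $\mu^1_{\MF}(\compl F(a_k,\dots,a_1))$ into a pre- and a post-composition with $\tilde\delta$, while $\mu^2_{\MF}(\compl F(\dots),\compl F(\dots))$ is signed matrix composition.

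Next I would apply the $\cat C$-relation to every augmented sequence $(a_k,\dots,a_1,m,b,\dots,b)$ and sum over the number of $b$-insertions, then reorganize the resulting double sum along the four families above. Family (III) plainly collects into the left-hand side $\sum(-1)^{\|a_j\|+\dots+\|a_1\|}\compl F(a_k,\dots,\mu(a_i,\dots,a_{j+1}),\dots,a_1)(m)$ of the functor relation, once the Koszul-type sign $\|a_j\|+\dots+\|a_1\|$ is matched against the definition of $\compl F$. Family (II) reassembles, after summing over the number of internal $b$'s, into $\compl F(a_k,\dots,a_1)(\delta(m))$, which is exactly the post-composition part of $\mu^1_{\MF}(\compl F(a_k,\dots,a_1))(m)$; family (IV) similarly becomes the pre-composition part together with, for each $1\le i\le k-1$, the matrix product $\mu^2_{\MF}(\compl F(a_k,\dots,a_{i+1}),\compl F(a_i,\dots,a_1))(m)$ coming from the position of the split. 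Finally, family (I) returns $\sum_{l\ge1}\mu^l(b,\dots,b)=\ell\,\id_{\RefObjects}+\sum R_eY_e$ inserted at some position; the $R_eY_e$-contributions are killed by the Jacobi quotient \eqref{eq:CHL-classical-Jacdef}, while the surviving $\ell\,\id$-terms pair up across the two possible inner positions to produce, after the sign $(-1)^{\|m\|}$ in the definition of $\delta$ is tracked, exactly the $\ell\cdot\id$-term that would be the curvature contribution — but here it cancels because the two insertion positions of the $\ell\,\id$ produce opposite signs and coincide with $\delta^2(m)=\ell m$ already being absorbed into the matrix-factorization structure of $\compl F(X)$. Tracking these signs carefully is the main obstacle; I would model the sign computation on \autoref{th:koszul-koszul-functoriality} and on the Koszul-sign discipline implicit in the definition of $\compl F$, and I would verify the signs first in the low-order case $k=0$ (which recovers the matrix-factorization identity $\delta^2=\ell$ proven already) and $k=1$ (which is the dg condition $\mu^1_{\MF}(\compl F(a))=\compl F(\mu(a))$), and then propagate by the same pattern.

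For unitality, I would plug $a_i=\id_X$ into an arbitrary position of $\compl F(a_k,\dots,a_1)$. Using the strict unitality axioms of $\cat C$ — namely $\mu^{\ge3}(\dots,\id_X,\dots)=0$ and $\mu^2(\id_Y,a)=(-1)^{|a|}a$, $\mu^2(a,\id_X)=a$ — every product $\mu^{k+l+1}(a_k,\dots,\id_X,\dots,a_1,m,b,\dots,b)$ vanishes except when it is itself a $\mu^2$ with the identity. For $k\ge2$ everything vanishes identically, giving $\compl F^{\ge2}(\dots,\id_X,\dots)=0$, and for $k=1$ the two surviving $\mu^2$-terms combine after the sign conventions in the definition of $\compl F$ to yield $\compl F(\id_X)(m)=m$, i.e.\ $\compl F^1(\id_X)=\id_{\compl F(X)}$. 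This establishes strict unitality of $\compl F$ and completes the proof; the bounded-growth statement for $F$ is formally identical.
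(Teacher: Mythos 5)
Your overall strategy matches the paper's: apply the $A_\infty$-relation of $\cat C$ to the augmented sequence $(a_k,\dots,a_1,m,b,\dots,b)$, sum over the number of $b$-insertions, and sort the terms by where the inner product sits. Families (II), (III), and (IV) reassemble exactly as you say into the post-composition with $\tilde\delta$, the left-hand side of the functor relation, and the pre-composition plus $\mu^2_{\MF}$-matrix products respectively; the paper's proof performs precisely this reorganization. Your unitality check also agrees in substance with the paper's (though for $k=1$ with $a_1=\id_X$ there is only one surviving $\mu^2$-term, namely $\mu^2(\id_X,m)$, not two).

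The one place where your reasoning goes wrong is family (I). After the Jacobi quotient kills the $\sum R_e Y_e$ part, the remaining $\ell\,\id_{L_j}$ sits strictly to the right of $m$ in the outer product, so there is only one possible slot for it, not two. The reason family (I) drops out is simply strict unitality of $\cat C$: the outer product has arity $\ge 3$ whenever $k\ge 1$ or any $b$'s remain outside the inner block, so $\mu^{\ge3}(\dots,\ell\,\id_{L_j},\dots)=0$, and nothing is left over; there is no pairing of two insertion positions and no sign cancellation at this step. (The paper, for what it's worth, silently omits family (I) from its displayed calculation and relies on exactly this vanishing.) The $\delta^2=\ell$ identity you invoke is what remains of the $k=0$ relation and is a separate, earlier verification, not something absorbed here.

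Beyond that, be aware that the sign bookkeeping — the absolute sign flip the $A_\infty$-relation introduces, the factor $(-1)^{(\|a_1\|+\dots+\|a_k\|)\|m\|+1}$ in the definition of $\compl F$, and the conversion between $\delta$ and $\tilde\delta$ — is where essentially all of the paper's labor lies. Your proposal defers this, which is reasonable as a plan, but the family decomposition you lay out is the easy part; the lemma is not proved until those signs are run to ground.
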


\begin{proof}
We have that $ \compl F(\id)(m) = (-1)^{‖m‖ + 1} μ^2 (\id, m) = m $, therefore $ \compl F(\id) = \id $. Similarly, we have $ \compl F(m_k, …, m_1) = 0 $ whenever $ k ≥ 1 $ and one $ m_i $ is an identity. For the functor relations, we need to check
\begin{multline*}
\sum_{i, j} (-1)^{‖m_1‖ + … + ‖m_j‖} \compl F(m_k, …, μ(m_i, …, m_{j+1}), …, m_1) \\
= μ^2_{\MF} (\compl F(m_k, …, m_{i+1}), \compl F(m_i, …, m_1)) + μ^1_{\MF} (\compl F(m_k, …, m_1)).
\end{multline*}
Both sides are homomorphisms $ \compl F(X) → \compl F(Y) $ of certain $ \Jac(\compl{\chlQ}, W) $-modules. In order to equate both sides, we plug in an arbitrary element $ m ∈ \compl F(X) $ and start evaluating from the left-hand side. Minding the notation $ \tilde{δ} (m) = (-1)^{|m|} δ(m) $, we get:
\begin{align*}
& \sum_{0 ≤ i ≤ j ≤ k} (-1)^{‖m_1‖ + … + ‖m_j‖} \compl F(m_k, …, μ(m_i, …, m_{j+1}), …, m_1)(m) \\
&= \sum_{\substack{0 ≤ i ≤ j ≤ k \\ 0 ≤ l}} (-1)^{‖m_1‖ + … + ‖m_j‖ + (‖m_1‖ + … + ‖m_k‖ + 1)‖m‖ + 1} μ^{k+l} (m_k, …, μ(m_i, …, m_{j+1}), …, m_1, m, b, …, b) \\
&= \sum_{\substack{1 ≤ i ≤ k-1 \\ j, l ≥ 0}} (-1)^{‖m‖ + (‖m_1‖ + … + ‖m_k‖ + 1)‖m‖} μ^{k-i+l+1} (m_k, …, μ^{i+1+l} (m_i, …, m_1, m, b, …, b), b, …, b) \\
& \quad + \sum_{\substack{i, j ≥ 0}} (-1)^{‖m‖ + (‖m_1‖ + … + ‖m_k‖ + 1)‖m‖} μ^{i+1} (μ^{k+j+1} (m_k, …, m_1, m, b, …, b), b, …, b) \\
& \quad + \sum_{\substack{i, j ≥ 0}} (-1)^{‖m‖ + (‖m_1‖ + … + ‖m_k‖ + 1)‖m‖} μ^{k+i+1} (m_k, …, m_1, μ^{j+1} (m, b, …, b), b, …, b) \\
&= \sum_{1 ≤ i ≤ k-1} (-1)^{\maltese} \compl F(m_k, …, m_{i+1})(\compl F(m_i, …, m_1)(m)) \\
& \quad + (-1)^{‖m‖ + (‖m_1‖ + … + ‖m_k‖ + 1)‖m‖ + (‖m_1‖ + … + ‖m_k‖)‖m‖ + 1 + 1} \tilde{δ}(\compl F(m_k, …, m_1)(m)) \\
& \quad + (-1)^{‖m‖ + (‖m_1‖ + … + ‖m_k‖ + 1)‖m‖ + 1 + (‖m_1‖ + … + ‖m_k‖)‖\tilde{δ}(m)‖ + 1} \compl F(m_k, …, m_1)(\tilde{δ}(m)) \\
& \qquad \scalebox{0.7}{$ \maltese = ‖m‖ + (‖m_1‖ + … + ‖m_k‖ + 1)‖m‖ + (‖m_1‖ + … + ‖m_i‖)‖m‖ + 1 + (‖m_{i+1}‖ + … + ‖m_k‖) (‖\compl F(m_i, …, m_1)(m)‖) + 1 $} \\
&= \sum_{1 ≤ i ≤ k-1} (-1)^{(‖m_{i+1}‖ + … + ‖m_k‖) (‖m_1‖ + … + ‖m_i‖ + 1)} \compl F(m_k, …, m_{i+1})(\compl F(m_i, …, m_1)(m)) \\
& \quad + \tilde{δ}(\compl F(m_k, …, m_1)(m)) - (-1)^{|\compl F(m_k, …, m_1)|} \compl F(m_k, …, m_1)(\tilde{δ}(m)) \\
&= μ^2_{\MF} (\compl F(m_k, …, m_{i+1}), \compl F(m_i, …, m_1)) (m) + μ^1_{\MF} (\compl F(m_k, …, m_1)) (m).
\end{align*}
This calculation deserves a few comments. In the first equality, we have unraveled the definition of $ \compl F $. In the second equality, we have used the $ A_∞ $-relations for $ \cat C $, which adds an absolute flip to the sign. In the third equality, we have reinterpreted the inner and outer $ μ $ applications as $ \compl F $ or $ \tilde{δ} $, using that $ \tilde{δ} (m) = - \sum_{l ≥ 1} μ^l (m, b, …, b) $. In the fourth equality, we have rewritten the expressions in terms of the products $ μ_{\MF} $. Since $ m $ was arbitrary, we conclude that the $ A_∞ $-functor relations hold. The very same calculations apply for $ F $ in case $ \RefObjects $ is of bounded growth.
\end{proof}

\begin{remark}
\label{rem:CHL-classical-origin}
The approach of Cho, Hong and Lau seems to originate from Yoneda functors instead of Koszul duality. We sketch here their line of reasoning \cite{CHL17, CHL}, focusing on the case of a single reference object, $ \RefObjects = \{L\} $. Let $ \Ch $ denote the dg category of chain complexes.

A typical Yoneda functor in the $ A_∞ $-world takes the shape $ F = \Hom(L, -): \cat C → \Ch $ and sends $ X ∈ \cat C $ to the chain complex $ (\Hom(L, X), μ^1) $. The functor $ F $ is not strict, but has higher components $ F^{≥1} $ given by $ F(m_k, …, m_1)(m) = ± μ(m_k, …, m_1, m) $.

Twisting an $ A_∞ $-category $ \cat C $ by any element $ b $ gives a new possibly curved $ A_∞ $-category. Cho, Hong and Lau simply decided to enlarge the category $ \cat C $ by introducing formal variables $ x_e $ for every odd basis element $ X_e $ of the hom space $ \Hom(L, L) $. Their twist is given by the element $ b = \sum x_e X_e $. This gives an enlarged and twisted category $ \cat C_b = (\cat C \ncpow{x_e}, b) $. Its products $ μ^k_b $ are given by twisting $ μ $ with $ b $. This category in principle has a Yoneda functor $ F_b: \cat C_b → \Ch $ itself, given by
\begin{equation}
\label{eq:CHL-classical-twistedfunctor}
\begin{aligned}
F_b (X) &= (\Hom(L, X), μ^1_b), \\
F_b (m_k, …, m_1)(m) &= μ_b (m_k, …, m_1, m).
\end{aligned}
\end{equation}
At this point, adaptations have to be made. For instance the map $ μ^1_b $, explicitly $ μ^1_b (m) = μ(b, …, m, …, b) $, is not a differential anymore because $ \cat C_b $ has curvature $ μ(b, …, b) $. Before we proceed, simplify $ F_b $ slightly by inserting $ b $ only at the right-most positions in \eqref{eq:CHL-classical-twistedfunctor}.

To make $ μ^1_b $ a differential and restore the functor $ F_b $, the crucial idea is to enforce relations among the variables $ x_e $ to at least render the curvature a multiple of the identity. This precisely explains the choice of the relations $ R_e $. It be noted that $ μ^1_b $ is still not a differential since $ μ(b, …, b) $ may contain an identity term $ ℓ \id_L $. This in mind, $ μ^1_b $ however squares to $ ℓ $ and we are in the realm of matrix factorizations. The functor $ F_q $ can be adapted accordingly. This explains the original motivation of Cho, Hong and Lau.
\end{remark}

\subsection{Deformed Landau-Ginzburg model}
\label{sec:CHL-defLG}
In this section, we start deforming the construction of Cho, Hong and Lau. The idea is as easy as it can get: We apply the Cho-Hong-Lau construction formally to the whole formal family of categories $ (\cat C_q) $. Intuitively, we obtain a family of matrix factorization categories. More precisely, we get a new algebra $ \Jac(\compl{\chlQ}, W_q) $, a new central element $ ℓ_q ∈ Z(\Jac(\compl{\chlQ}, W_q)) $, a deformation $ \MF(\Jac(\compl{\chlQ}, W_q), ℓ_q) $ of $ \MF(\Jac(\compl{\chlQ}, W) ℓ) $ and a functor $ \compl F_q $ running from $ \cat C_q $ to $ \MF(\Jac(\compl{\chlQ}, W_q), ℓ_q) $. In other words: Thanks to the generality the Cho-Hong-Lau construction, the mirror gets deformed when the input category $ \cat C $ gets deformed.

In the present section, we devote ourselves to the construction of $ W_q $, $ ℓ_q $ and the Jacobi algebra $ \Jac(\compl{\chlQ}, W_q) $. In \autoref{sec:CHL-defMF}, we define the deformed matrix factorizations category and in \autoref{sec:CHL-functor} we construct our deformed mirror functor. We document our setup in the following convention:

\begin{convention}
\label{conv:CHL-deformed}
The category $ \cat C $, the reference objects $ \RefObjects = \{L_1, …, L_N\} $, the basis elements $ X_e $, $ Y_e $, $ \coid_{L_i} $ and the pairing $ ⟨-, -⟩ $ are as in \autoref{conv:CHL-category}. Let $ \cat C_q $ be a deformation of $ \cat C $ over a deformation base $ (B, \mathfrak{m}) $. We assume $ \cat C_q $ is (deformed) cyclic on odd elements:
\begin{equation*}
⟨μ_q (X_{e_{k+1}}, …, X_{e_2}), X_{e_1}⟩ = ⟨μ_q (X_{e_k}, …, X_{e_1}), X_{e_{k+1}}⟩.
\end{equation*}
We assume that $ \cat C_q $ is strictly unital with the same identities $ \id_X $ as in $ \cat C $:
\begin{equation*}
μ_q^1 (\id_X) = 0, \quad μ_q^2 (a, \id_X) = a, \quad μ_q^2 (\id_X, a) = (-1)^{|a|} a, \quad μ_q^{≥3} (…, \id_X, …) = 0.
\end{equation*}
The letter $ \DefRefObjects $ denotes the subcategory of $ \cat C_q $ given by the objects $ L_1, …, L_N ∈ \RefObjects $.
\end{convention}

In overview, our construction of the deformed Landau-Ginzburg model $ (\Jac(\compl{\chlQ}, W_q), ℓ_q) $ proceeds as follows: We model the algebra $ \Jac(\compl{\chlQ}, W_q) $ still via the CHL quiver $ \chlQ $ and relations. More precisely, we regard the enlarged version $ B \htensor \compl{ℂ\chlQ} $ instead of $ \compl{ℂ\chlQ} $. The deformed superpotential $ W_q $ lives in this enlarged algebra instead of $ \compl{ℂ\chlQ} $. The deformed Jacobi algebra $ \Jac(\compl{\chlQ}, W_q) $ is defined as quotient of $ B \htensor \compl{ℂ\chlQ} $ instead of $ \compl{ℂ\chlQ} $. The deformed potential $ ℓ_q $ lives in the center of $ \Jac(\compl{\chlQ}, W_q) $. We develop in parallel a variant in case $ \RefObjects $ is of bounded type. For the variant in case of bounded type, we need an additional condition on the deformed products $ μ_q $. We refer to this condition as being of slow growth, and the condition already implies that $ \RefObjects $ is of bounded growth. All terminology is collected in \autoref{tab:CHL-defLG-notions}.

\begin{table}
\centering
\morearraystretch
\begin{tabular}{@{}ccc@{}}
\textbf{Gadget} & \textbf{Classical} & \textbf{Deformed} \\\hline
Completed quiver algebra & $ \compl{ℂ\chlQ} $ & $ B \htensor \compl{ℂ\chlQ} $ \\
(slow growth) & $ ℂ\chlQ $ & $ B \htensor ℂ\chlQ $ \\\hline
Superpotential & $ W ∈ \compl{ℂ\chlQ} $ & $ W_q ∈ B \htensor \compl{ℂ\chlQ} $ \\
(slow growth) & $ W ∈ ℂ\chlQ $ & $ W_q ∈ B \htensor ℂ\chlQ $ \\\hline
Relations & $ R_e ∈ \compl{ℂ\chlQ} $ & $ R_{q, e} ∈ B \htensor \compl{ℂ\chlQ} $ \\
(slow growth) & $ R_e ∈ ℂ\chlQ $ & $ R_{q, e} ∈ B \htensor ℂ\chlQ $ \\\hline
Jacobi algebra & $ \Jac(\compl{\chlQ}, W) = \frac{\compl{ℂ\chlQ}}{~\closure{(∂_{x_e} W)}~} $ & $ \Jac(\compl{\chlQ}, W_q) = \frac{B \htensor \compl{\chlQ}}{~\closure{(∂_{x_e} W)}^{\tensor}~} $ \\
(slow growth) & $ \Jac(\chlQ, W) = \frac{ℂ\chlQ}{(∂_{x_e} W)} $ & $ \Jac(\chlQ, W_q) = \frac{B \htensor ℂ\chlQ}{~\closure{(∂_{x_e} W)}~} $ \\\hline
Potential & $ ℓ ∈ \Jac(\compl{\chlQ}, W) $ & $ ℓ_q ∈ \Jac(\compl{\chlQ}, W_q) $ \\
(slow growth) & $ ℓ ∈ \Jac(\chlQ, W) $ & $ ℓ_q ∈ \Jac(\chlQ, W_q) $
\end{tabular}
\caption{Terminology of deformed Cho-Hong-Lau construction}
\label{tab:CHL-defLG-notions}
\end{table}

\begin{definition}
\label{def:CHL-defLG-slowgrowth}
$ \DefRefObjects $ is of \emph{slow growth} if for all morphisms $ m_1, …, m_k $ in $ \cat C $ and for every $ n ∈ ℕ $ there exists an $ l_0 ∈ ℕ $ such that
\begin{equation*}
∀l ≥ l_0: \quad μ_q^{k+l} (m_k, …, m_1, b, …, b) ∈ \mathfrak{m}^n \Hom(\RefObjects, X_{k+1}).
\end{equation*}
\end{definition}

\begin{remark}
If $ \DefRefObjects $ is of slow growth, then $ \RefObjects $ is automatically of bounded growth.
\end{remark}

We are now ready to define the deformed relations $ R_{q, e} $ and potential $ ℓ_q $. As the reader may expect, these are simply read off from $ μ_q (b, …, b) $.

\begin{definition}
\label{def:CHL-defLG-def}
The \emph{deformed relations} $ R_{q, e} ∈ B \htensor \compl{ℂ\chlQ} $ and the \emph{deformed potential} are defined by
\begin{equation}
\label{eq:CHL-defLG-Rdef}
\sum_{k ≥ 0} μ_q^k (b, …, b) = ℓ_q \id + \sum_{i, j = 1}^N \sum_{e ∈ E_{ij}} R_{q, e} Y_e.
\end{equation}
The \emph{deformed superpotential} is defined as
\begin{equation*}
W_q = ⟨\sum_{k ≥ 0} μ_q^k (b, …, b), b⟩ ∈ B \htensor \compl{ℂ\chlQ}.
\end{equation*}
The \emph{deformed Jacobi algebra} is defined as
\begin{equation*}
\Jac(\compl{\chlQ}, W_q) = \frac{B \htensor \compl{ℂ\chlQ}}{~\closure{(∂_{x_e} W)}^{\tensor}~}.
\end{equation*}
The \emph{deformed Landau-Ginzburg model} is the pair $ (\Jac(\compl{\chlQ}, W_q), ℓ_q) $. If $ \DefRefObjects $ is of slow growth, then $ R_{q, e} $, $ ℓ_q $, $ W_q $ are regarded as elements of $ B \htensor ℂ\chlQ $, the deformed Jacobi algebra is defined as $ \Jac(\chlQ, W_q) = (B \htensor ℂ\chlQ) / \closure{(∂_{x_e} W)} $, and the deformed Landau-Ginzburg model is $ (\Jac(\compl{\chlQ}, W_q), ℓ_q) $.
\end{definition}

\begin{remark}
The element $ W_q ∈ B \htensor \compl{ℂ\chlQ} $ is cyclic, as we shall see in \autoref{th:CHL-defLG-centrality}. Cyclicity for $ W_q $ is to be understood in the sense that $ W_q ∈ B \htensor \compl{ℂ\chlQ}_{\cyc} $, where $ \compl{ℂ\chlQ}_{\cyc} ⊂ \compl{ℂ\chlQ} $ is the subspace of elements invariant under cyclic permutation. More explicitly, $ W_q $ is of the form $ \sum m_i p_i $ with $ m_i ∈ \mathfrak{m}^{→∞} $ and $ p_i ∈ \compl{ℂ\chlQ}_{\cyc} $. This explains the sense in which $ W_q $ is cyclic.

The derivatives $ ∂_{x_e} W_q $ are elements of $ B \htensor \compl{ℂ\chlQ} $. More precisely, the derivative $ ∂_{x_e} W_q $ is defined as $ \lim_k ∂_{x_e} π_k (W_q) $ where $ π_k $ denotes the projection $ π_k: B \htensor \compl{ℂ\chlQ}_{\cyc} → B/\mathfrak{m}^k ¤ \compl{ℂ\chlQ}_{\cyc} $. More explicitly, if $ W_q = \sum m_k p_k $ with $ m_k ∈ \mathfrak{m}^{→∞} $ and $ p_k ∈ \compl{ℂ\chlQ}_{\cyc} $, then $ ∂_{x_e} W_q = \sum m_k ∂_{x_e} p_k $. This explains how to understand $ ∂_{x_e} W_q $.
\end{remark}

\begin{remark}
The definition of the relations $ R_{q, e} $ is understood as follows: The chunk of $ R_{q, e} $ read off from $ μ_q^k (b, …, b) $ for a certain $ k ≥ 0 $ consists of paths in $ \chlQ $ of length $ k $, weighted with deformation parameters. More precisely, this chunk lies in $ B \htensor ℂ\chlQ_k $. The total relation $ R_{q, e} $ is the sum of these chunks over $ k ≥ 0 $. As such, the deformed relations $ R_{q, e} $ all lie in $ B \htensor \compl{ℂ\chlQ} $. As in the classical case, the relation $ R_{q, e} $ only contain paths running from $ h(e) $ to $ t(e) $ for every $ e ∈ E_{ij} $.

Similarly, the potential $ ℓ_q $ lies in $ B \htensor \compl{ℂ\chlQ} $. We shall from now on typically denote by $ ℓ_q $ its projection to $ \Jac(\compl{\chlQ}, W_q) $. As in the classical case, the only paths contained in $ ℓ_q $ are loops of $ \chlQ $.
\end{remark}

\begin{remark}
In contrast to the classical case, the category $ \cat C_q $ may also have infinitesimal curvature. This is not a problem. We simply start counting from $ k = 0 $ instead of $ k = 1 $ in \eqref{eq:CHL-defLG-Rdef}.
\end{remark}

\begin{remark}
Within the present \autoref{sec:CHL}, we have decided to stick to the following closure notations: If $ X ⊂ \compl{ℂ\chlQ} $, then $ \closure{X} $ is the closure with respect to the Krull topology. If $ X ⊂ B \htensor \compl{ℂ\chlQ} $ or $ X ⊂ B \htensor ℂ\chlQ $, then $ \closure{X} $ denotes the closure with respect to the $ \mathfrak{m} $-adic topology. If $ X ⊂ B \htensor \compl{ℂ\chlQ} $, then $ \closure{X}^{\tensor} $ denotes the closure with respect to the tensor topology. For more information on the tensor topology, we refer to \autoref{sec:flatness-closedness}.
\end{remark}

In the rest of this section, we check properties of the deformed Landau-Ginzburg model. Our first milestone is a deformed version of \autoref{th:CHL-classical-centrality}:

\begin{lemma}
\label{th:CHL-defLG-centrality}
Assume \autoref{conv:CHL-deformed}. Then the superpotential $ W_q ∈ B \htensor \compl{ℂ\chlQ} $ is cyclic and we have $ R_{q, e} = ∂_{x_e} W_q $. The potential $ ℓ_q ∈ \Jac(\compl{\chlQ}, W_q) $ is central. The analogous statements hold if $ \DefRefObjects $ is of slow growth.
\end{lemma}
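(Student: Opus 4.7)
The plan is to mirror the four-part structure of \autoref{th:CHL-classical-centrality}, upgrading each step to accommodate the deformation base $B$ and the possible infinitesimal curvature $μ_q^0$. For cyclicity of $W_q$, I will exploit the cyclicity assumption on $μ_q$ from \autoref{conv:CHL-deformed} to group the summands of $\sum_{k \geq 0} \langle μ_q^k(b, \ldots, b), b \rangle$ into cyclic orbits under index rotation, exactly as in the classical case; the only new check is convergence in the tensor topology on $B \htensor \compl{ℂ\chlQ}$, and this is immediate because the $k$-th contribution consists of paths in $\chlQ$ of length $k+1$ with $B$-coefficients (the $k=0$ piece additionally sitting in $\mathfrak{m}$). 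The identity $R_{q,e} = ∂_{x_e} W_q$ then follows from the pairing identities of \autoref{def:CHL-classical-basis}: since $\langle Y_f, X_g \rangle = δ_{fg}$ while identities pair only with co-identities, the inner product collapses to $W_q = \sum_f R_{q,f} x_f$ up to cyclic rearrangement, and differentiating in $x_e$ reads off $R_{q,e}$.

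The substantive step is centrality of $ℓ_q$. I plan to apply the curved $A_\infty$-relation $μ_q \cdot μ_q = 0$ to nested $b$-insertions, now starting from $l \geq 0$ to cover the curvature term:
\begin{equation*}
0 = \sum_{k, l \geq 0} μ_q^k(b, \ldots, μ_q^l(b, \ldots, b), \ldots, b).
\end{equation*}
Substituting $\sum_{l \geq 0} μ_q^l(b, \ldots, b) = ℓ_q \id_{\DefRefObjects} + \sum_e R_{q,e} Y_e$ into the inner slot, every summand either embraces a $Y_e$ factor (hence an $R_{q,e} = ∂_{x_e} W_q$ factor, which vanishes in $\Jac(\compl{\chlQ}, W_q)$) or contains a strict identity. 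By the unitality clause of \autoref{conv:CHL-deformed}, identities kill all $μ_q^{\geq 3}$ and $μ_q^1$ contributions, leaving
\begin{equation*}
μ_q^2(b, ℓ_q \id_{\DefRefObjects}) + μ_q^2(ℓ_q \id_{\DefRefObjects}, b) = 0
\end{equation*}
modulo the ideal. Extracting the $X_e$-coefficient yields $x_e ℓ_q = ℓ_q x_e$ in $\Jac(\compl{\chlQ}, W_q)$ for every generator $x_e$ of $\chlQ$.

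The hard part will be promoting this commutation from single arrows $x_e$ to arbitrary $x \in \Jac(\compl{\chlQ}, W_q)$. The classical proof relied on Krull continuity of the projection $\compl{ℂ\chlQ} \to \Jac(\compl{\chlQ}, W)$, handling the non-continuous case by hand via a length-graded decomposition; in the deformed setting I must work in the tensor topology of \autoref{def:flatness-closedness-tensortop}, which interleaves Krull and $\mathfrak{m}$-adic data. My plan is to expand $x = \sum_j m_j \sum_k x_{j,k}$ with $m_j \in \mathfrak{m}^{\to \infty}$ and $x_{j,k}$ of homogeneous path length $k$, compute $x ℓ_q - ℓ_q x$ term by term using the arrow case and $B$-bilinearity, and collect the commutator defects $z_{j,k}$ in the ideal $\closure{(∂_{x_e} W_q)}^{\tensor}$ along a double series whose indices force simultaneously growing $\mathfrak{m}$-order and path length. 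Tensor-closedness of the ideal (built into its very definition in \autoref{def:CHL-defLG-def}) then absorbs the total defect, giving $x ℓ_q = ℓ_q x$ in the quotient. The slow-growth case is considerably easier: all $b$-insertion series terminate after finitely many terms, so one stays inside $B \htensor ℂ\chlQ$, the ideal $\closure{(∂_{x_e} W_q)}$ is only $\mathfrak{m}$-adically closed, and the promotion step reduces to ordinary $\mathfrak{m}$-adic continuity, which is automatic for $B$-bilinear maps by \autoref{rem:prelim-ainfty-continuityautomatic}.
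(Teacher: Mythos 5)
Your proposal matches the paper's proof in overall architecture: cyclicity and $R_{q,e}=\partial_{x_e}W_q$ are read off from the pairing identities exactly as in the classical case, the centrality calculation is driven by the curved $A_\infty$-relation applied to nested $b$-insertions (now with $l\ge 0$ for the curvature term), and the commutation relation is first established on generators $x_e$ and then promoted to arbitrary elements. Two points warrant attention.

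First, in your centrality derivation you conclude that the $Y_e$-embracing summands ``vanish in $\Jac(\compl{\chlQ},W_q)$.'' This is a term-by-term observation, but the sum over all $k$ is infinite and one must justify that the full series lies in $\closure{\vspan(\partial_{x_e}W_q)}^{\tensor}$; the paper does this explicitly by noting each summand also sits in $B\htensor\compl{ℂ\chlQ}_{\ge k-1}$, so path lengths grow, the series converges in the tensor topology, and tensor-closedness of the ideal then traps the limit. You invoke tensor-closedness later in the promotion step, so the ingredient is in your toolbox, but the centrality step needs it too; without it, the argument has a genuine gap, since the non-completed ideal $\compl{ℂ\chlQ}\vspan(\partial_{x_e}W_q)\compl{ℂ\chlQ}$ is generally too small (compare the cautionary remark in the classical proof).

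Second, your promotion step is a hand-rolled double-series version of what the paper phrases as sequence-approximation plus tensor-continuity of the projection $\pi$; both are equivalent and rest on the same closedness property. Your route is closer to the ``if not'' branch of the classical proof and is fine, but you should be careful: in the expansion $x=\sum_j m_j\sum_k x_{j,k}$ the indices $j$ and $k$ grow independently, and the defect $z_{j,k}$ is absorbed because large $j$ forces high $\mathfrak{m}$-order while large $k$ forces large path length, and the tensor topology is exactly the one that interleaves these two. Finally, in the slow-growth case the $b$-insertion series do not literally terminate; slow growth only forces the tail into increasingly high $\mathfrak{m}$-powers, so the series converge $\mathfrak{m}$-adically inside $B\htensor ℂ\chlQ$ rather than being finite sums. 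Your conclusion (stay inside $B\htensor ℂ\chlQ$, use the $\mathfrak{m}$-adically closed ideal and automatic continuity) is correct, but the justification should be the $\mathfrak{m}$-adic convergence, not termination.
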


\begin{proof}
The proof is analogous to the proof of the classical version \autoref{th:CHL-classical-centrality}. Cautious is due when working with the completed ideals. We shall therefore spell out a few details. By comparison, the fact that $ \cat C_q $ is allowed to have curvature is technically unproblematic.

Cyclicity of $ W_q $ and the property that $ R_{q, e} = ∂_{x_e} W_q $ follow immediately from the cyclicity assumption in \autoref{conv:CHL-deformed} and are unproblematic in the sense that these properties hold in $ B \htensor \compl{ℂ\chlQ} $.

We shall rather comment in detail on the centrality of the deformed potential $ ℓ_q $: Within $ B \htensor \compl{ℂ\chlQ} ¤ \Hom(\RefObjects, \RefObjects) $, we have
\begin{equation}
\label{eq:CHL-defLG-centralityreason}
\begin{aligned}
0 &= \sum_{k ≥ 0} \sum_{0 ≤ l ≤ k} μ_q^{k-l+1} (b, …, μ_q^l (b, …, b), b, …, b) \\
&= \sum_{k ≥ 1} \sum_{l ≥ 0} μ_q^k (b, …, μ_q^l (b, …, b), …, b) \\
&= \sum_{k ≥ 1} μ_q^k (b, …, \sum_{i = 1}^N ℓ_{q, i} \id_{L_i} + \sum R_e Y_e, …, b).
\end{aligned}
\end{equation}
In the first row, we have used the curved $ A_∞ $-relations. In the second row, we have applied a reordering of the double sum with increasing path length. In the third row, we have reproduced \eqref{eq:CHL-defLG-Rdef}. We claim that up to the two terms $ μ_q^2 (b, ℓ_{q, i} \id_{L_i}) $ and $ μ_q^2 (ℓ_{q, i} \id_{L_i}, b) $, the entire sum lies in $ \closure{\vspan(∂_{x_e} W_q)}^{\tensor} ¤ \Hom(\RefObjects, \RefObjects) $. Indeed, for every $ k≥3 $ and for $ k = 1 $ the summand lies in the intersection of $ B \htensor ℂ\chlQ \vspan(R_e) ℂ\chlQ $ and $ B \htensor ℂ\chlQ_{≥k-1} $. Therefore the series converges, apart from the two special summands, to an element of $ \closure{\vspan(∂_{x_e} W_q)}^{\tensor} ¤ \Hom(\RefObjects, \RefObjects) $. Within $ \Jac(\compl{\chlQ}, W_q) ¤ \Hom(\RefObjects, \RefObjects) $, we conclude that
\begin{equation*}
0 = μ_q^2 (b, ℓ_q \id) + μ_q^2 (ℓ_q \id, b).
\end{equation*}
Let $ 1 ≤ i, j ≤ N $ and $ e ∈ E_{ij} $. Then extracting the $ X_e $-component gives
\begin{equation*}
0 = x_e ℓ_q - ℓ_q x_e \text{ within }  \Jac(\compl{\chlQ}, W_q).
\end{equation*}
We conclude that $ ℓ $ commutes with all arrows in $ \chlQ $, hence with all finite paths and the image of $ B ¤ ℂ\chlQ $. Let now $ x ∈ B \htensor \compl{ℂ\chlQ} $ be an arbitrary element. We shall prove that $ π(x) ℓ = ℓ π(x) $, where $ π: B \htensor \compl{ℂ\chlQ} → \Jac(\compl{\chlQ}, W_q) $ is the projection. The easiest way to achieve this is by approximating $ x $ by finite paths via the tensor topology. Pick any sequence $ (x_n) ⊂ B ¤ ℂ\chlQ $ such that $ x_n → x $ in the tensor topology. Since $ \closure{(∂_{x_e} W_q)}^{\tensor} $ is closed with respect to the tensor topology, the projection $ π $ is continuous. We obtain
\begin{equation*}
0 = π(x_n) ℓ - ℓ π(x_n) → π(x) ℓ - ℓ π(x) \text{ within } \Jac(\compl{\chlQ}, W_q).
\end{equation*}
We conclude that $ ℓ $ commutes with any element $ x ∈ \Jac(\compl{\chlQ}, W_q) $ and hence $ ℓ ∈ Z(\Jac(\compl{\chlQ}, W_q)) $. This finishes the third part.

The fourth part of the proof consists of observing that the calculations still hold in case $ \DefRefObjects $ is of slow growth. Indeed, in this case the double sums in the calculation \eqref{eq:CHL-defLG-centralityreason} all become finite and apart from the two special terms lie in $ \closure{\vspan(∂_{x_e} W_q)} $, noting that the closure is taken this time only with respect to the $ \mathfrak{m} $-adic topology. The approximation of $ x ∈ B \htensor ℂ\chlQ $ happens by a sequence $ x_n ∈ B ¤ ℂ\chlQ $ which converges to $ x $ in the $ \mathfrak{m} $-adic topology. This proves the fourth step and finishes the proof.
\end{proof}

\subsection{Projectives of deformed algebras}
\label{sec:CHL-projectives}
In this section, we show that projectives of an algebra and any deformation are in one-to-one correspondence. The purpose of this section is to build a rigorous foundation for our deformed category of matrix factorizations.

\begin{center}
\begin{tikzpicture}
\path (0, 0) node (A) {Projectives of $ A_q $} (8, 0) node[align=center] (B) {Projectives of $ A $};
\path[draw, <->] ($ (A.east)!0.2!(B.west) $) to ($ (A.east)!0.8!(B.west) $);
\end{tikzpicture}
\end{center}

The core idea of matching projectives is as follows: Projectives of $ A $ are direct $ A $-module summands of a free $ A $-module. Given such a decomposition $ A^{⊕n} = P_1 ⊕ … P_k $ into projectives, intuition says that the modules $ P_i $ can be extended to $ A_q $-modules $ P_{q, i} $ such that $ A_q^{⊕n} = P_{q, 1} ⊕ … ⊕ P_{q, k} $. The extensions $ P_{q, i} $ are direct summands of $ A_q^{⊕n} $ and therefore automatically projective. This procedure allows us to turn projectives of $ A $ into projectives of $ A_q $. Conversely, our expectation is that a projective module $ P_q $ of $ A_q $ gives rise to a projective $ P $ of $ A $ by merely setting $ P = P_q / \mathfrak{m} P_q $. This allows us to match projectives of $ A_q $ with projectives of $ A $.

If $ P $ is a projective of $ A $, then $ B \htensor P $ is not a projective of $ A_q $. Instead, the module needs to be tweaked a little. This is best accomplished by realizing $ P $ as image of an $ A $-linear idempotent $ p: A^{\oplus n} → A^{\oplus n} $ and lifting $ p $ to an $ A_q $-linear idempotent $ p_q: A_q^{\oplus n} → A_q^{\oplus n} $. We achieve this by an adaption of the classical idempotent lifting argument to the case of formal deformations:

\begin{lemma}
Let $ A $ be an algebra and $ A_q = (B \htensor A, μ_q) $ a deformation. Let $ p: A^{\oplus n} → A^{\oplus n} $  be an $ A $-linear idempotent in the sense that $ p^2 = p $. Then there exists an $ A_q $-linear idempotent $ p_q: A_q^{\oplus n} → A_q^{\oplus n} $ with leading term $ p $.
\end{lemma}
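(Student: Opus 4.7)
The plan is to reduce to the classical problem of lifting an idempotent along a surjective ring homomorphism with complete kernel, and then apply Newton's method. First I identify $\End_{A_q}(A_q^{\oplus n})$ with the matrix ring $M_n(A_q)$ via evaluation on the standard basis vectors, and similarly $\End_A(A^{\oplus n}) \cong M_n(A)$. Reduction modulo $\mathfrak{m}$ yields a surjective $B$-algebra homomorphism $\pi: M_n(A_q) \twoheadrightarrow M_n(A)$ whose kernel is $\mathfrak{m} M_n(A_q)$. The task becomes: given the idempotent matrix $p \in M_n(A)$, construct an idempotent $p_q \in M_n(A_q)$ with $\pi(p_q) = p$. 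The $A_q$-linearity of the resulting endomorphism is then automatic, since multiplication by a matrix is always module-linear, so this reduction absorbs any worry about $A_q$-linearity versus mere $B$-linearity.

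I then pick any lift $p_0 \in M_n(A_q)$ of $p$ by lifting each entry individually, so that $e_0 := p_0^2 - p_0 \in \mathfrak{m} M_n(A_q)$, and apply the cubic Newton iteration
\[
p_{k+1} := 3 p_k^2 - 2 p_k^3.
\]
Since $p_k$ commutes with $e_k := p_k^2 - p_k$, a short calculation produces the two identities
\[
p_{k+1} - p_k = e_k (1 - 2 p_k), \qquad e_{k+1} = e_k^2 \bigl[ (1 - 2 p_k)^2 - 4 \bigr],
\]
from which $e_k$ decays quadratically. Inductively $e_k \in \mathfrak{m}^{2^k} M_n(A_q)$ and $p_{k+1} - p_k \in \mathfrak{m}^{2^k} M_n(A_q)$, so the sequence $(p_k)$ is Cauchy with respect to the $\mathfrak{m}$-adic topology on $M_n(A_q) \cong B \htensor M_n(A)$.

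By completeness the limit $p_q := \lim_k p_k$ exists, continuity of multiplication forces $p_q^2 - p_q = \lim_k e_k = 0$, and $\pi(p_q) = \pi(p_0) = p$ because every correction lies in $\mathfrak{m} M_n(A_q)$; the latter is precisely the statement that $p_q$ has leading term $p$. Reinterpreting the matrix $p_q$ as an endomorphism of $A_q^{\oplus n}$ yields the desired idempotent lift. The only nontrivial ingredient of the argument is the quadratic decay identity for $e_{k+1}$, which is a short polynomial calculation exploiting the commutation of $p_k$ with $e_k$; everything else is routine bookkeeping in a complete $\mathfrak{m}$-adic setting.
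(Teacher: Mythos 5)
Your proof is correct and takes essentially the same approach as the paper. The cubic Newton iteration $p_{k+1} = 3p_k^2 - 2p_k^3$ that you use and the paper's update $p_{k+1} = p_k - p_k\epsilon + (\id - p_k)\epsilon$ (with $\epsilon = p_k^2 - p_k$) are algebraically identical, since the latter simplifies to $p_k + (1 - 2p_k)\epsilon = 3p_k^2 - 2p_k^3$; both are the standard Newton-style idempotent lift, converging by $\mathfrak{m}$-adic completeness.
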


\begin{proof}
The idea is to lift $ p $ to an arbitrary $ A_q $-module map and then turn it iteratively into a projection. As a first step, let $ p_0: A_q^{⊕n} → A_q^{⊕n} $ be any $ A_q $-linear continuous lift of $ p $, for instance given by setting $ p_0 (e_i) = p(e_i) ∈ A^{⊕n} ⊂ A_q^{⊕n} $ where $ e_1, …, e_n $ are the basis vectors of $ A_q^{⊕n} $.

As a second step, we construct a sequence of $ A_q $-module maps $ p_k: A_q^{⊕n} → A_q^{⊕n} $ such that $ p_k^2 - p_k ∈ \mathfrak{m}^{2^k} \End(A_q^{⊕n}) $ and $ p_{k+1} - p_k ∈ \mathfrak{m}^{2^k} \End(A_q^{⊕n}) $. The first item in the sequence is the already constructed map $ p_0 $. Assume for induction that the sequence has already been constructed up to $ p_k $. Then put $ ε = p_k^2 - p_k $. By induction hypothesis, $ ε: A_q^{⊕n} → A_q^{⊕n} $ is an $ A_q $-module map which lies in $ \mathfrak{m}^{2^k} \End(A_q^{⊕n}) $.

The trick is to set $ p_{k+1} = p_k ± ε $ in order to render $ p_{k+1}^2 - p_{k+1} $ of lesser order than $ p_k^2 - p_k = ε $. The correct sign is neither plus or minus in general, but differs for two parts of $ ε $. More precisely, we split $ ε $ into a part mapping to the image of $ p_k $ and a part almost mapping to the kernel of $ p_k $:
\begin{equation*}
ε = p_k ∘ ε + (p_k - \id) ∘ ε.
\end{equation*}
The right sign for the first part is $ -1 $, while the sign for the second part is $ +1 $. In consequence, we put
\begin{equation*}
p_{k+1} = p_k - p_k ∘ ε + (\id - p_k) ∘ ε.
\end{equation*}
Since $ p_k $ is a $ A_q $-linear, so is $ p_{n+1} $. Noting that $ ε $ commutes with $ p_k $ by definition, we get
\begin{align*}
p_{k+1}^2 - p_{k+1} &= (p_k - p_k ε + (\id - p_k) ε)^2 - (p_k - p_k ε + (\id - p_k) ε) \\
&= (p_k^2 - p_k) - 2 p_k^2 ε + 2 (p_k - p_k^2) ε + p_k ε - (\id - p_k) ε + \landau(ε^2) \\
&= p_k ε + (\id - p_k) ε - 2 p_k^2 ε + 2 (p_k - p_k^2) ε + p_k ε - (\id - p_k) ε + \landau(ε^2) \\
&= 4 (p_k - p_k^2) ε + \landau(ε^2) = \landau(ε^2).
\end{align*}
In the calculation, we have written $ \landau(ε^2) $ for any expressions containing an $ ε^2 $ factor. We conclude that $ p_{k+1} ∈ (\mathfrak{m}^{2^k})^2 \End(A_q^{⊕n}) = \mathfrak{m}^{2^{k+1}} \End(A_q^{⊕n}) $. This finishes the construction of $ p_{k+1} $ and thereby the inductive construction of the sequence $ (p_k) $.

Finally, the sequence $ (p_k) $ converges to a limit map $ p_q: A_q^{⊕n} → A_q^{⊕n} $. It can be explicitly described as limit of $ p_k $ in every matrix entry, where matrix representation is taken with respect to the direct sum description $ A_q^{⊕n} $. We conclude that $ p_q $ is an $ A_q $-linear map with $ p_q^2 = p_q $. Since $ x_{k + 1} - x_k ∈ \mathfrak{m} \End(A_q^{⊕n}) $ for $ k ≥ 0 $, the map $ p_q $ reduces to $ p $ modulo $ \mathfrak{m} $. This finishes the proof.
\end{proof}

Let $ P ⊂ A_q^{⊕n} $ be a submodule, not necessarily projective. Regard the projection map $ π: B \htensor A → A $. Then $ π(P) ⊂ A^{⊕n} $ is an $ A $-submodule of $ A^{⊕n} $. To avoid confusion, let us write $ μ_q $ for the deformed product of $ A_q $ and $ μ $ for the product of $ A $. In these terms, we have for $ a ∈ A $ and $ x ∈ B \htensor A^{⊕n} $ that $ μ(a, π(x)) = π(μ_q (a, x)) $. Now if $ x ∈ P $, then $ μ_q (a, x) ∈ P $ and we get $ μ(a, π(x)) = π(μ_q (a, x)) ∈ π(P) $. This shows that $ π(P) $ is an $ A $-submodule. With this observation in mind, we can make the following statement:

\begin{lemma}
\label{th:CHL-projectives-piprojective}
Let $ P ⊂ A_q^{⊕n} $ be a projective module. Then the $ A $-module $ π(P) $ is projective.
\end{lemma}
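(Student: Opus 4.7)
The plan is to reduce the question to lifting idempotents by the standard characterization of projective modules. Since $ P $ is a projective $ A_q $-module, there exists a free $ A_q $-module $ F = A_q^{\oplus m} $ (with $ m $ possibly infinite) together with an $ A_q $-linear idempotent $ q: F \to F $ whose image is exactly $ P $. Note that the embedding $ P \subset A_q^{\oplus n} $ from the statement is not actually needed for the argument: we are free to choose any free module in which $ P $ appears as a direct summand.

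The next step is to reduce $ q $ modulo $ \mathfrak{m} $. Since $ q $ is in particular $ B $-linear, it preserves $ \mathfrak{m} F $, and therefore descends to an $ A $-linear map $ q_0: A^{\oplus m} \to A^{\oplus m} $ on the quotient $ F/\mathfrak{m} F = A^{\oplus m} $. In other words, $ q_0 $ is characterized by the commutativity of
\begin{equation*}
\begin{tikzcd}
F \arrow[r, "q"] \arrow[d, "\pi"] & F \arrow[d, "\pi"] \\
A^{\oplus m} \arrow[r, "q_0"] & A^{\oplus m}.
\end{tikzcd}
\end{equation*}
Since $ q^2 = q $, reducing mod $ \mathfrak{m} $ gives $ q_0^2 = q_0 $, so $ q_0 $ is an $ A $-linear idempotent on a free $ A $-module.

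I then claim that $ \operatorname{im}(q_0) = \pi(P) $. This follows at once from the commutative square and surjectivity of $ \pi $: indeed, $ \operatorname{im}(q_0) = q_0(\pi(F)) = \pi(q(F)) = \pi(P) $. Since the image of an $ A $-linear idempotent on a free module is automatically a direct summand, we conclude that $ \pi(P) $ is a direct summand of $ A^{\oplus m} $, hence projective.

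No step looks delicate. The only bookkeeping to watch is that the idempotent $ q $ lives on a possibly different free module than the $ A_q^{\oplus n} $ given in the statement, but this is harmless because projectivity of $ \pi(P) $ is an intrinsic property. If one additionally cares about finite generation, note that when $ P $ is finitely generated one can take $ m $ finite, so that $ \pi(P) $ is in fact a finitely generated projective $ A $-module.
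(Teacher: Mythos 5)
Your proof is correct, and it takes a genuinely cleaner route than the paper's. The paper writes $A_q^{\oplus n} = P \oplus Q$, observes both summands are pseudoclosed, and then invokes Proposition~\ref{th:prelim-submodules-criterion} to deduce $A^{\oplus n} = \pi(P) \oplus \pi(Q)$. You instead descend the idempotent directly: take the $A_q$-linear idempotent $q$ cutting out $P$, reduce mod $\mathfrak{m}$ to get an $A$-linear idempotent $q_0$, and observe $\operatorname{im}(q_0) = \pi(P)$ by the commuting square. This bypasses the pseudoclosed/quasi-flat machinery entirely and is both shorter and more self-contained; the paper's version buys nothing except uniformity with the ambient framework of Section~\ref{sec:prelim-submodules}.

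One small caution on your aside ``we are free to choose any free module in which $P$ appears as a direct summand.'' The lemma is about a specific submodule $\pi(P) \subset A^{\oplus n}$, not an abstract isomorphism class. If you pick a free module $F$ other than $A_q^{\oplus n}$, the image of the descended idempotent is a submodule of $F/\mathfrak{m}F$ which is isomorphic to $P/\mathfrak{m}P$, and then identifying \emph{that} with the given $\pi(P)$ requires knowing $P \cap \mathfrak{m} A^{\oplus n} = \mathfrak{m} P$ — i.e.\ quasi-flatness of $P$ in $A_q^{\oplus n}$, which is exactly the step the idempotent argument was meant to bypass. The safe reading (and the one both proofs silently use) is that ``projective submodule'' here means \emph{direct summand of the ambient free module}, in which case you should simply take $F = A_q^{\oplus n}$ and everything lines up on the nose with no identification needed.
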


\begin{proof}
Write $ A_q^{⊕n} = P ⊕ Q $ for some $ A_q $-module $ Q $. As kernels of projections, both are automatically closed with respect to the $ \mathfrak{m} $-adic topology, in particular pseudoclosed. According to \autoref{th:prelim-submodules-criterion}, we get that $ A^{⊕n} = π(P) ⊕ π(Q) $ as vector spaces. As observed before, both $ π(P) $ and $ π(Q) $ are actually $ A $-submodules of $ A^{⊕n} $. We conclude that $ π(P) $ is a projective $ A $-module. This finishes the proof.
\end{proof}

The endomorphism space $ \Hom_{A_q} (A_q^{⊕n}, A_q^{⊕n}) $ is the same as $ A_q^{⊕n} = B \htensor A^{⊕n} $ as $ B $-module. If $ A_q^{⊕n} = P_1 ⊕ … ⊕ P_k $ is a decomposition into $ A_q $-submodules, then every hom space $ \Hom_{A_q} (P_i, P_j) $ can be naturally interpreted as $ B $-linear subspace of $ B \htensor A^{⊕n} $. In fact, $ B \htensor A^{⊕n} $ is their direct sum.

\begin{lemma}
\label{th:CHL-projectives-homspaces}
Let $ A_q^{⊕n} = P_1 ⊕ … ⊕ P_k $ be a decomposition as $ A_q $-module. Then $ \Hom_{A_q} (P_i, P_j) ⊂ B \htensor A^{⊕n} $ is pseudoclosed and quasi-flat, and we have $ π(\Hom_{A_q} (P_i, P_j)) = \Hom_A (π(P_i), π(P_j)) $.
\end{lemma}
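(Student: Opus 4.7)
The plan is to exhibit $\Hom_{A_q}(P_i, P_j)$ as a summand in a direct sum decomposition of the total endomorphism module and then invoke \autoref{th:prelim-submodules-criterion}, which delivers pseudoclosedness and quasi-flatness in one stroke. Let $p_i \colon A_q^{\oplus n} \to A_q^{\oplus n}$ denote the $A_q$-linear idempotent cutting out $P_i$; these exist because the decomposition $A_q^{\oplus n} = P_1 \oplus \cdots \oplus P_k$ is given. Every $A_q$-endomorphism $f$ of $A_q^{\oplus n}$ admits the canonical matrix expansion $f = \sum_{i, j} p_j f p_i$, and viewing the $(j, i)$-summand as a map $P_i \to P_j$ identifies
\begin{equation*}
\Hom_{A_q}(A_q^{\oplus n}, A_q^{\oplus n}) = \bigoplus_{i, j} \Hom_{A_q}(P_i, P_j)
\end{equation*}
as a direct sum of $B$-submodules of the ambient $B \htensor A^{\oplus n}$, using the identification of the endomorphism module as an $\mathfrak{m}$-adically free $B$-module noted in the paragraph preceding the lemma.

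Applying \autoref{th:prelim-submodules-criterion} to this decomposition yields at once that each summand $\Hom_{A_q}(P_i, P_j)$ is pseudoclosed and quasi-flat, and moreover gives the associated direct sum decomposition $A^{\oplus n} = \bigoplus_{i, j} \pi(\Hom_{A_q}(P_i, P_j))$ on the level of projections.

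It remains to identify $\pi(\Hom_{A_q}(P_i, P_j))$ with $\Hom_A(\pi(P_i), \pi(P_j))$. By \autoref{th:CHL-projectives-piprojective} combined with the direct-sum criterion, the reduction $\pi(p_i)$ is an $A$-linear idempotent with image $\pi(P_i)$. The parallel matrix expansion applied to $\Hom_A(A^{\oplus n}, A^{\oplus n})$ furnishes the decomposition $\bigoplus_{i, j} \Hom_A(\pi(P_i), \pi(P_j))$, with the $(j, i)$-summand cut out by $f \mapsto \pi(p_j) f \pi(p_i)$. Since $\pi$ is $B$-linear and sends $p_i$ to $\pi(p_i)$, the $\pi$-image of any element of the form $p_j f p_i$ agrees with $\pi(p_j) \pi(f) \pi(p_i)$, so the two direct sum decompositions of $A^{\oplus n}$ coincide summand by summand.

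The main obstacle is not conceptual but bookkeeping: one must consistently carry the identification of the deformed endomorphism module with a completed tensor product, and verify that this identification is compatible with the reduction $\pi$, so that the idempotent-based argument transfers cleanly between the deformed and undeformed settings. Once this is spelled out, the proof is a formal consequence of \autoref{th:prelim-submodules-criterion} and the previous lemma.
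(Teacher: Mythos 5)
Your first half coincides with the paper's: view $\Hom_{A_q}(A_q^{\oplus n}, A_q^{\oplus n}) = \bigoplus_{i,j} \Hom_{A_q}(P_i, P_j)$ and invoke \autoref{th:prelim-submodules-criterion} to get quasi-flatness and pseudoclosedness. For the identification $\pi(\Hom_{A_q}(P_i, P_j)) = \Hom_A(\pi(P_i), \pi(P_j))$ the two proofs diverge. The paper handles the two inclusions by hand: a direct verification that the reduction of an $A_q$-linear map is $A$-linear (unwinding the compatibility of $\mu_q$ with $\mu$ and $\pi$), and for the converse an explicit lift of a given $A$-linear $\pi(P_i) \to \pi(P_j)$ through $A^{\oplus n^2} \subset A_q^{\oplus n^2}$, followed by pre- and post-composition with the inclusion of $P_i$ and projection onto $P_j$. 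You instead carry the idempotent decomposition through $\pi$: $\pi(p_i)$ is an $A$-linear idempotent cutting out $\pi(P_i)$, the identity $\pi(p_j f p_i) = \pi(p_j)\pi(f)\pi(p_i)$ shows $\pi$ sends the $(j,i)$-summand of the deformed decomposition into the $(j,i)$-summand of the reduced one, and comparing the two resulting direct-sum decompositions of $A^{\oplus n^2}$ forces summandwise equality. Both routes are sound; the paper is a touch more explicit about the lift, yours is more structural and shorter once the idempotents are set up. One gap worth filling in your write-up: the closing "coincide summand by summand" step conceals a small argument, namely that if $\bigoplus_{i,j} V_{ij} = \bigoplus_{i,j} W_{ij}$ are two direct-sum decompositions of the same space with $V_{ij} \subseteq W_{ij}$ for all $(i,j)$, then $V_{ij} = W_{ij}$; you have the containment from $\pi(p_j f p_i) = \pi(p_j)\pi(f)\pi(p_i)$, the left decomposition from \autoref{th:prelim-submodules-criterion}, and the right decomposition from the reduced idempotents, so a line stating this elementary fact makes the argument self-contained.
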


\begin{proof}
We have $ A_q^{⊕n^2} = \Hom_{A_q} (A_q^{⊕n}, A_q^{⊕n}) = \bigoplus_{i, j} \Hom_{A_q} (P_i, P_j) $. By \autoref{th:prelim-submodules-criterion}, we conclude that every $ \Hom_{A_q} (P_i, P_j) $ is quasi-flat and pseudoclosed.

It remains to show that $ π (\Hom_{A_q} (P_i, P_j)) = \Hom_A (π(P_i), π(P_j)) ⊂ A_q^{⊕n^2} $. Pick any $ A_q $-linear morphism $ φ: P_i → P_j $. We claim that $ π(φ): π(P_i) → π(P_j) $ is an $ A $-module map. Indeed, for $ a ∈ A $ and $ x ∈ P_i $ we have
\begin{equation*}
π(φ) (μ(a, π(x))) = π(φ(μ_q (a, x))) = π(μ_q(a, φ(x))) = μ(a, π(φ(x))) = μ(a, π(φ)(x)).
\end{equation*}
Conversely, let $ φ: π(P_i) → π(P_j) $ be an $ A $-module morphism. We shall construct an $ A_q $-module map $ φ_q: P_i → P_j $ such that $ π(φ_q) = φ $. The idea is to view the composition $ \tilde{φ}: A^{⊕n} \projects π(P_i) → π(P_j) \embeds A^{⊕n} $. We can write $ \tilde{φ} $ as an element of $ A^{⊕n^2} $. In particular, we obtain a map of $ A_q $-modules $ φ_q: P_i \embeds A_q^{⊕n^2} → A_q^{⊕n^2} \projects P_j $ with $ π(φ_q) = φ $. This shows $ π (\Hom_{A_q} (P_i, P_j)) = \Hom_A (π(P_i), π(P_j)) $ and finishes the proof.
\end{proof}

Let us set up more terminology. We denote by $ \Proj A $ the category of finitely generated projective $ A $-modules and by $ \Proj A_q $ the category of finitely generated $ A_q $-modules. Both are ordinary unital $ ℂ $-linear categories. We use the terminology of $ A_∞ $-deformations for these two categories, even though the only product of both is an ordinary associative composition with different sign rule. In terms of \autoref{def:prelim-freemodules-objectcloning}, we shall prove that $ \Proj A_q $ is a loose object-cloning deformation of $ \Proj A $.

The first step is to define the mapping $ F: \Ob\Proj A_q → \Ob\Proj A $ as given by $ F(P) = P/(\mathfrak{m} · P) $. Obviously, the quotient $ P/ (\mathfrak{m} · P) $ carries a natural action of $ A = A_q / \mathfrak{m} A_q $, rendering it an $ A $-module. This mapping $ F $ is the generalization of the mapping $ P ↦ π(P) $ to projectives not presented as submodules of free modules. We shall now prove that $ F(P) $ is actually projective:

\begin{lemma}
Let $ P ∈ \Proj A_q $. Then the $ A $-module $ F(P) = P / (\mathfrak{m} · P) $ is projective.
\end{lemma}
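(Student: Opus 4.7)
The plan is to reduce to the already-proved \autoref{th:CHL-projectives-piprojective}, which handles the case where $P$ is given as a submodule of some free module $A_q^{\oplus n}$.

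First I would pick a presentation of $P$ as a direct summand. Since $P$ is finitely generated and projective over $A_q$, there exist $n \in \mathbb{N}$ and an $A_q$-module $Q$ such that $A_q^{\oplus n} = P \oplus Q$. Regard $P$ as a $B$-submodule of $B \htensor A^{\oplus n} = A_q^{\oplus n}$.

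Second, I would identify $F(P) = P / (\mathfrak{m} \cdot P)$ with the image $\pi(P)$ of $P$ under the standard projection $\pi \colon B \htensor A^{\oplus n} \to A^{\oplus n}$. By \autoref{th:prelim-submodules-criterion} applied to the decomposition $B \htensor A^{\oplus n} = P \oplus Q$, both $P$ and $Q$ are pseudoclosed and quasi-flat. Quasi-flatness gives $P \cap \mathfrak{m} A^{\oplus n} \subseteq \mathfrak{m} P$, and pseudoclosedness combined with \autoref{th:prelim-submodules-algtop} gives $\mathfrak{m} P = \mathfrak{m} \cdot P$. Since the converse inclusion $\mathfrak{m} \cdot P \subseteq P \cap \mathfrak{m} A^{\oplus n}$ is automatic, the kernel of $\pi|_P$ is exactly $\mathfrak{m} \cdot P$. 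Hence $\pi|_P$ descends to an isomorphism of $A$-modules $F(P) = P/(\mathfrak{m} \cdot P) \isoto \pi(P)$.

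Third, I would invoke \autoref{th:CHL-projectives-piprojective} directly: since $P \subseteq A_q^{\oplus n}$ is projective, $\pi(P) \subseteq A^{\oplus n}$ is a projective $A$-module. Combined with the isomorphism $F(P) \cong \pi(P)$ from the previous step, this shows $F(P)$ is projective and finitely generated. No step really constitutes a genuine obstacle; the only mildly delicate point is checking that the kernel of $\pi|_P$ is precisely $\mathfrak{m} \cdot P$ rather than the a priori larger space $\mathfrak{m} P$, which is exactly where pseudoclosedness (via \autoref{th:prelim-submodules-algtop}) is needed.
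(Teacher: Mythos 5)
Your proof is correct and follows essentially the same route as the paper: present $P$ as a direct summand of $A_q^{\oplus n}$, identify $F(P)$ with $\pi(P)$ by showing $\ker(\pi|_P) = P \cap \mathfrak{m} A^{\oplus n} = \mathfrak{m} P = \mathfrak{m} \cdot P$ via quasi-flatness and pseudoclosedness, and then invoke \autoref{th:CHL-projectives-piprojective}. The only difference is that you make explicit the appeal to \autoref{th:prelim-submodules-criterion} and \autoref{th:prelim-submodules-algtop} where the paper glosses over that chain of equalities.
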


\begin{proof}
It suffices to regard the case of a submodule $ P ⊂ A_q^{⊕n} $. We claim that $ P / (\mathfrak{m} · P) \cong π(P) $ as $ A $-modules. Regard the $ A $-linear projection map $ π: P → π(P) $. The map is surjective. Its kernel is $ P ∩ \mathfrak{m} A^{⊕n} $. Since $ P $ is a direct summand of $ B \htensor A^{⊕n} $ as $ B $-module, $ P ⊂ B \htensor A^{⊕n} $ is quasi-flat and pseudoclosed. In particular we have $ P ∩ \mathfrak{m} A^{⊕n} = \mathfrak{m} P = \mathfrak{m} · P $. We obtain an isomorphism of $ A $-modules $ P / (\mathfrak{m} · P) \isoto π(P) $. By \autoref{th:CHL-projectives-piprojective}, $ π(P) $ is a projective $ A $-module and we finish the proof by concluding that $ P/ (\mathfrak{m} · P) ∈ \Proj A $.
\end{proof}

\begin{proposition}
\label{th:CHL-projectives-correspondence}
The category $ \Proj A_q $ is an essentially surjective loose object-cloning deformation of $ \Proj A $ over $ B $ via the mapping $ F: \Ob\Proj A_q → \Ob\Proj A $.
\end{proposition}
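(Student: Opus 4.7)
The plan is to verify directly the three pieces required by \autoref{def:prelim-freemodules-objectcloning}: (i) that each hom space in $\Proj A_q$ is an $\mathfrak{m}$-adically free $B$-module; (ii) that the composition on $\Proj A_q$ defines a loose $A_\infty$-deformation (concentrated in $\mu^2$) of the composition on $F^*\Proj A$, together with the canonical identification isomorphisms $\psi_{P,Q}$; and (iii) that $F: \Ob\Proj A_q \to \Ob\Proj A$, $P \mapsto P/(\mathfrak{m} \cdot P)$, hits every object up to isomorphism.

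For (i), I would reduce to the case treated in \autoref{th:CHL-projectives-homspaces}. Given $P, Q \in \Proj A_q$, pick $n$ large enough that both embed as direct summands of $A_q^{\oplus n}$, and choose a common decomposition $A_q^{\oplus n} = P \oplus Q \oplus R$. Then \autoref{th:CHL-projectives-homspaces} gives that $\Hom_{A_q}(P, Q)$ is quasi-flat and pseudoclosed as a $B$-submodule of $B \htensor A^{\oplus n^2}$; by the equivalence in \autoref{sec:3ainfty-free} between quasi-flat pseudoclosed submodules and $\mathfrak{m}$-adically free modules presented with injective leading term, the hom space is $\mathfrak{m}$-adically free with reduction $\pi(\Hom_{A_q}(P, Q)) = \Hom_A(\pi(P), \pi(Q))$.

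For (ii), I would take $\mu^2_{\Proj A_q}$ to be ordinary composition and all other products to vanish; the $A_\infty$-relations collapse to associativity and unitality, which are immediate. The identification $\psi_{P, Q}$ is produced from part (i): combine the canonical isomorphism $P/(\mathfrak{m} \cdot P) \isoto \pi(P)$ (proved en route to \autoref{th:CHL-projectives-homspaces}, using quasi-flatness of $P$) with the equality $\pi(\Hom_{A_q}(P, Q)) = \Hom_A(\pi(P), \pi(Q))$. Compatibility of $\psi$ with composition is the identity $\pi(f \circ g) = \pi(f) \circ \pi(g)$, which holds because $\pi$ is a ring map in each matrix entry.

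For (iii), I would use the idempotent lifting lemma established in this subsection. Given $P \in \Proj A$, write $P = \Im(p)$ for some $A$-linear idempotent $p: A^{\oplus n} \to A^{\oplus n}$, lift it to an $A_q$-linear idempotent $p_q: A_q^{\oplus n} \to A_q^{\oplus n}$ with leading term $p$, and set $P_q = \Im(p_q) \in \Proj A_q$. Then $F(P_q) \cong \pi(P_q) = \pi(\Im(p_q)) = \Im(\pi(p_q)) = \Im(p) = P$, as required.

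The main obstacle is the hom space step (i): one needs the correct identification of $\Hom_{A_q}(P, Q)$ with a quasi-flat pseudoclosed submodule of $B \htensor A^{\oplus n^2}$ so that the abstract $B$-module structure really is $\mathfrak{m}$-adically free with the expected reduction; everything else, including essential surjectivity via idempotent lifting and verification of the (trivial) $A_\infty$-relations, follows readily once this identification is in place.
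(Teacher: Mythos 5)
Your proposal is correct and follows essentially the same route as the paper's proof: essential surjectivity via lifting a direct-sum decomposition (equivalently, lifting an idempotent), and the loose object-cloning structure via the reduction of hom spaces to the concrete submodule description of \autoref{th:CHL-projectives-homspaces}, where quasi-flatness and pseudoclosedness yield $\mathfrak{m}$-adic freeness with the expected leading term. The only cosmetic difference is that you construct the identifications $\psi_{P,Q}$ directly from a chosen embedding of $P$ and $Q$ into a common free module, whereas the paper first defines $\psi_{P,Q}$ intrinsically as the descended map and then checks bijectivity by reducing to an embedding, observing that $\psi_{P,Q}$ depends only on the module structure; this difference is immaterial.
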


\begin{proof}
The proof consists of two steps: First we show that $ F $ is essentially surjective. Second, we show that $ \Proj A_q $ becomes an object cloning deformation of $ \Proj A $.

To see that $ F $ is essentially surjective, let $ P ∈ \Proj A $ be any finitely generated projective $ A $ module. There exists an isomorphic module $ P' $ and a decomposition $ A^{\oplus n} = P' \oplus Q $ for some $ n \in ℕ $ and some module $ Q $. Pick a lift $ A_q^{\oplus n} = P'_q \oplus Q_q $. Then $ P'_q / (\mathfrak{m} · P'_q) \cong P $, since $ \mathfrak{m} P'_q = \mathfrak{m} · P'_q $. This shows that $ P $ is reached by $ F $ up to isomorphism. In other words, $ F $ is essentially surjective.

To see that $ \Proj A_q $ is a loose object-cloning deformation of $ \Proj A $, we have to provide linear isomorphisms
\begin{equation*}
ψ_{P, Q}: \Hom_{A_q} (P, Q) / (\mathfrak{m} · \Hom_{A_q} (P, Q)) \isoto \Hom_A (F(P), F(Q)) \quad \text{for } P, Q ∈ \Proj A_q.
\end{equation*}
Let $ φ ∈ \Hom_{A_q} (P, Q) $. Then $ φ $ descends to a map $ F(P) → F(Q) $. We define $ ψ_{P, Q} (φ) $ as this descended map. We need to check that $ ψ_{P, Q} $ is an isomorphism. For this, we note that $ ψ_{P, Q} $ is defined purely in terms of the module structure of $ P $ and $ Q $. It therefore suffices to show bijectivity of $ ψ_{P, Q} $ only in case $ P $ and $ Q $ are embedded as submodules of a free module $ A_q^{⊕n} $. In this case, the hom space $ \Hom_{A_q} (P, Q) $ has an interpretation as pseudoclosed quasi-flat $ B $-submodule of $ B \htensor A^{⊕n^2} $. The map $ ψ_{P, Q} $ is in this case the map $ \Hom_{A_q} (P, Q) / (\mathfrak{m} · \Hom_{A_q} (P, Q)) → \Hom_A (π(P), π(Q)) $ simply induced from the projection to zeroth order $ \Hom_{A_q} (P, Q) → \Hom_A (π(P), π(Q)) $. The map $ ψ_{P, Q} $ is surjective since $ \Hom_A (π(P), π(Q)) = π(\Hom_{A_q} (P, Q)) $ by \autoref{th:CHL-projectives-homspaces}, and injective since $ \Hom_{A_q} (P, Q) $ is quasi-flat. This shows that $ ψ_{P, Q} $ is an isomorphism.

Finally, we have to check that composition in the category $ \Proj A_q $ reduces to composition in $ \Proj A $ via $ \{ψ_{P, Q}\}_{P, Q} $ once $ \mathfrak{m} $ is divided out. This is however immediate, since composition commutes with descending to the quotient by $ \mathfrak{m} $. This finishes the proof.
\end{proof}

\begin{remark}
\label{rem:CHL-projectives-explicit}
The correspondence between projectives of $ A_q $ and $ A $ can typically be made more explicit. Assume for instance, an element $ v ∈ A $ is an idempotent in the sense that $ v^2 = v $. Then $ Av $ is a projective of $ A $ since $ Av $ is the image of the $ A $-linear idempotent map $ (-) v: A → A $. Moreover, if $ v^2 = v $ within $ A_q $, then $ A_q v $ is a projective of $ A_q $, since it is the image of the idempotent $ A_q $-linear map $ (-) v: A_q → A_q $.

The $ A $-projective $ F(A_q v) = A_q v / (\mathfrak{m} · A_q v) $ can be naturally identified with $ Av $ via the map $ φ: A_q v / (\mathfrak{m} · A_q v) \isoto Av $ given by $ φ(a) = π(a)v $ for $ a ∈ A_q v ⊂ B \htensor A $, where $ π: B \htensor A → A $ is the standard projection. Since $ μ_q $ is a deformation of $ μ $, we have $ φ(μ_q (a, b)) = π(μ_q(a, b))v = μ(π(a), π(b)) v $ and conclude that $ φ $ is $ A $-linear. The map $ φ $ is surjective since for $ a ∈ A $ we have $ φ(a) = av $. The map $ φ $ is injective, since $ π(a) v = 0 $ for $ a ∈ A_q v $ implies $ π(μ_q (a, v)) = 0 $, hence $ μ_q (a, v) ∈ \mathfrak{m} ∩ A_q v ⊂ \mathfrak{m} · A_q v $, since $ A_q v ⊂ B \htensor A $ is quasi-flat.

Hom spaces between projectives can also be identified. Let $ v, w $ be two idempotents of $ A $ still idempotent in $ A_q $. Then we have the natural identification
\begin{equation*}
\frac{\Hom_{A_q} (A_q v, A_q w)}{m · \Hom_{A_q} (A_q v, A_q w)} \isoto \Hom_A (F(A_q v), F(A_q w)) \isoto \Hom_A (Av, Aw).
\end{equation*}
\end{remark}

\subsection{Deformed matrix factorizations}
\label{sec:CHL-defMF}
In this section, we define deformed categories of matrix factorizations. There are several issues on the road to their definition. In order to satisfy our demand to serve as mirror model in the deformed Cho-Hong-Lau construction, the definition furthermore needs to deviate from what one expects. The first step in this section is to explain these issues. We then provide a successful construction of the deformed category of matrix factorizations for any deformed Landau-Ginzburg model. We finish this section with an explanation how this category becomes an deformation of the ordinary category of matrix factorizations.

Our starting point is a pair $ (A, ℓ) $ consisting of an associative algebra $ A $ with a central element $ ℓ ∈ A $. Recall from \autoref{sec:prelim-mf} that the category of matrix factorizations $ \MF(A, ℓ) $ is a dg category which has as objects pairs of finitely generated projective $ A $-modules $ M, N $ together with $ A $-module morphisms $ f: M → N $ and $ g: N → M $ such that $ f ∘ g = ℓ \id_N $ and $ g ∘ f = ℓ \id_M $.

The starting point for the deformed setup is a pair $ (A_q, ℓ_q) $ of a deformation $ A_q $ of $ A $ and a deformed central element $ ℓ_q ∈ Z(A_q) $. More precisely, $ A_q = (B \htensor A, μ_q) $ shall be an associative deformation of the algebra $ A $ over a deformation base $ B $, in the sense that $ μ_q: (B \htensor A) ¤ (B \htensor A) → B \htensor A $ is a $ B $-linear associative product that reduces to the product $ μ: A ¤ A → A $ once $ \mathfrak{m} $ is divided out. The element $ ℓ_q ∈ Z(A_q) $ is required to be a deformation of $ ℓ $ in the sense that $ ℓ_q - ℓ ∈ \mathfrak{m} A $. We fix this terminology as follows:

\begin{definition}
A \emph{Landau-Ginzburg model} $ (A, ℓ) $ is a pair of an associative algebra and a central element. A \emph{deformation} $ (A_q, ℓ_q) $ \emph{of} $ (A, ℓ) $ consists of an algebra deformation $ A_q = (B \htensor A, μ_q) $ of $ A $ together with a central element $ ℓ_q ∈ Z(A_q) $ which is a deformation of $ ℓ $. Disregarding the reference to $ (A, ℓ) $, we may call $ (A_q, ℓ_q) $ a \emph{deformed Landau-Ginzburg model}.
\end{definition}

The following is a naive candidate for a deformed matrix factorization category: Objects are projective $ A_q $-modules $ M, N $ together with $ A_q $-module maps $ f: M → N $ and $ g: N → M $ such that $ f ∘ g = ℓ_q \id_N $ and $ g ∘ f = ℓ_q \id_M $. The hom space of two such matrix factorizations $ \matf MNfg $ and $ \matf{M'}{N'}{f'}{g'} $ would be defined as $ \Hom_{A_q} (M, M') ⊕ … $ similar to the classical case. The differential $ μ^1 $ on this category would be given by commuting a morphism with $ f, g $ and $ f', g' $ as in the classical case. The product $ μ^2 $ would be given by standard matrix composition. This defines a dg category and a naive candidate for a deformed category of matrix factorizations.

Let us now sketch the issues associated with this naive definition. The leading question is how to interpret this category as a deformation of $ \MF(A, ℓ) $, both on object and morphism level.

The first issue consists of matching projective modules of $ A_q $ with projective modules of $ A $. Even more, we also need that the hom spaces between projective modules of the two kinds match. We have resolved this in \autoref{sec:CHL-projectives}.

The second issue consists of matching the factorization morphisms $ f, g $ for $ ℓ_q $ with factorization morphisms for $ ℓ $. In fact, multiple matrix factorizations of $ (A_q, ℓ_q) $ reduce to the same matrix factorization of $ (A, ℓ) $. A simple example is to multiply $ f $ by an element $ 1+ε $ with $ ε ∈ \mathfrak{m} $ and multiply $ g $ by $ (1+ε)^{-1} = \sum (-ε)^i $. Conversely, it is unclear whether every matrix factorization of $ (A, ℓ) $ extends to a matrix factorization of $ (A_q, ℓ_q) $. This makes a precise correspondence between matrix factorizations of $ (A, ℓ) $ and of $ (A_q, ℓ_q) $ impossible. We resolve this by building $ \MF(A_q, ℓ_q) $ as an object-cloning deformation of $ \MF(A, ℓ) $.

The third issue consists of liberalizing the category $ \MF(A_q, ℓ_q) $ enough so that it can serve as codomain of the deformed mirror functor. As we shall see, our deformed mirror functor does not map to objects $ (M, N, f, g) $ where all compositions $ f ∘ g $ and $ g ∘ f $ are equal to each other. Instead, the compositions will only agree with the predefined central element $ ℓ_q $ on zeroth order and differ per object. This phenomenon is inevitable when starting form a curved $ A_∞ $-deformation. It requires us to admit objects very liberally into the category $ \MF(A_q, ℓ_q) $.

We resolve the third issue as follows: The objects of $ \MF(A_q, ℓ_q) $ are pairs $ (M, N, f, g) $ of projective modules and module maps, but $ f ∘ g $ and $ g ∘ f $ need only equal $ ℓ_q $ up to zeroth order. The difference of $ f ∘ g $ and $ ℓ_q $, and of $ g ∘ f $ and $ ℓ_q $, serves as curvature of $ \MF(A_q, ℓ_q) $. Since our deformed mirror functor typically requires the codomain to carry curvature as well, this resolves the third issue sufficiently.

At this point, we stress the crucial importance of $ A_q $ being a deformation of $ A $ in the sense of \autoref{def:flatness-whatis-Adefo}. Intuitively, if $ A_q $ is smaller than $ B \htensor A $, then its modules have smaller hom spaces as well, which breaks all chances to make the category of matrix factorizations of $ (A_q, ℓ_q) $ a deformation of $ \MF(A, ℓ) $.

\begin{definition}
Let $ (A_q, ℓ_q) $ be a deformed Landau-Ginzburg model. A \emph{deformed matrix factorization} of $ (A_q, ℓ_q) $ consists of two finitely generated projective $ A_q $-modules $ P, Q $ together with $ A_q $-module maps $ f: P → Q $ and $ g: Q → P $ such that $ f ∘ g - ℓ_q \id_Q ∈ \mathfrak{m} · \Hom_{A_q} (Q, Q) $ and $ g ∘ f - ℓ_q \id_P ∈ \mathfrak{m} · \Hom_{A_q} (P, P) $.
\end{definition}

\begin{example}
Let $ A = ℂ[X, Y] $ and regard the trivial deformation $ A_q = (A⟦q⟧, μ) $. Regard the central element $ ℓ = XY $ and the deformed central element $ ℓ_q = XY+q $. The object $ (A, A, X, Y) $ is a matrix factorization of $ (A, ℓ) $. Both $ (A_q, A_q, X, Y) $ and $ (A_q, A_q, X+5q, Y+qX) $ are deformed matrix factorizations of $ (A_q, ℓ_q) $. Both however do not factor to $ ℓ_q $ precisely, but only on zeroth order. In fact, there are no single elements $ f, g ∈ A_q $ such that $ fg = XY + q $ and $ f - X ∈ (q) $ and $ g - Y ∈ (q) $. To see this, assume $ f = X + qz $ and $ g = Y + qw $, then $ fg = XY + q(zY+Xw) + q^2 zw $. In order for this to be equal to $ ℓ_q = XY+q $, we need that $ zY+Xw $ is $ 1 $ on first order, which is impossible. This shows that matrix factorizations need not have strict lifts, but deformed matrix factorizations are rather abundant.
\end{example}

\begin{remark}
As in the classical case, the pair of modules $ (P, Q) $ can also be described as a $ ℤ/2ℤ $-graded module $ M = P ⊕ Q[1] $ (where both graded parts are projective). The pair of morphisms $ (f, g) $ can be described as an odd morphism $ δ: P ⊕ Q[1] → P ⊕ Q[1] $. We shall liberally switch between the two kinds of notation, identifying
\begin{equation*}
\matf PQfg \quad \longleftrightarrow \quad \left(P ⊕ Q[1], \pmat{0 & g \\ f & 0}\right).
\end{equation*}
\end{remark}

We are now ready to define $ \MF(A_q, ℓ_q) $. As in the classical case, if $ (M, δ) $ is a deformed matrix factorization, we denote by $ \tilde{δ} $ the tweaked differential given by $ \tilde{δ}(m) = (-1)^{|m|} δ(m) $.

\begin{definition}
\label{def:CHL-defMF-cat}
Let $ (A_q, ℓ_q) $ be a deformed Landau-Ginzburg model. The \emph{deformed category of matrix factorizations} $ \MF(A_q, ℓ_q) $ is defined as follows:
\begin{itemize}
\item Objects are the deformed matrix factorizations $ (M, δ) $ of $ (A_q, ℓ_q) $.
\item Hom spaces are given by $ \Hom((M, δ_M), (N, δ_N)) = \Hom_{A_q} (M, N) $, naturally $ ℤ/2ℤ $-graded.
\item The curvature of an object $ (M, δ) $ is given by $ μ^0_{(M, δ)} = ℓ_q \id_M - δ^2 $.
\item The differential is given by $ μ^1 (f) = \tilde{δ}_N ∘ f - (-1)^{|f|} f ∘ \tilde{δ}_M $ for $ f ∈ \Hom((M, δ_M), (N, δ_N)) $.
\item The product is given by $ μ^2 (f, g) = (-1)^{‖f‖ |g|} f ∘ g $.
\end{itemize}
\end{definition}

\begin{remark}
In writing $ \MF(A_q, ℓ_q) $, we have abused notation: The category $ \MF(A_q, ℓ_q) $ is not the same as (classical) matrix factorizations of the pair $ (A_q, ℓ_q) $.
\end{remark}

We aim at showing that $ \MF(A_q, ℓ_q) $ is an object-cloning deformation of $ \MF(A, ℓ) $. To make this true, we provide a map $ \Ob\MF(A_q, ℓ_q) → \Ob\MF(A, ℓ) $. The construction of this map is easy and consists of taking the leading term of any matrix factorization:

\begin{definition}
\label{def:CHL-defMF-leadingterm}
Let $ (A_q, ℓ_q) $ be a deformed Landau-Ginzburg model. Let $ (M, δ) $ be a deformed matrix factorization of $ (A, ℓ) $. Then the \emph{leading term} of $ (M, δ) $ is the matrix factorization $ \mfconst (M, δ) $ of $ (A, ℓ) $ given by
\begin{equation*}
\mfconst (M, δ) = (M / (\mathfrak{m} · M), π(δ)).
\end{equation*}
\end{definition}

In this definition, $ π(δ) $ denotes the induced map $ M / (\mathfrak{m} · M) → M / (\mathfrak{m} · M) $ and is automatically an $ A $-module map. The quotient $ M / (\mathfrak{m} · M) $ is understood to be performed in even and odd degree separately. We are now ready to show that $ \MF(A_q, ℓ_q) $ is an object-cloning deformation of $ \MF(A, ℓ) $.

\begin{proposition}
\label{th:CHL-defMF-correspondence}
The category $ \MF(A_q, ℓ_q) $ is a loose object-cloning deformation of $ \MF(A, ℓ) $ along the map $ \mfconst: \Ob\MF(A_q, ℓ_q) → \Ob\MF(A, ℓ) $.
\end{proposition}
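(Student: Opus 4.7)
The plan is to unfold the definition of loose object-cloning deformation from \autoref{def:prelim-freemodules-objectcloning} and verify each clause for $ \MF(A_q, ℓ_q) $ along $ \mfconst $. Concretely, this requires showing that (i) each hom space $ \Hom_{A_q}(M, N) $ is an $ \mathfrak{m} $-adically free $ B $-module, (ii) the leading term $ \mfconst(M, δ) $ is a bona fide matrix factorization of $ (A, ℓ) $, (iii) there is a canonical linear isomorphism $ ψ_{M, N}: \Hom_{A_q}(M, N)/(\mathfrak{m} \cdot \Hom_{A_q}(M, N)) \isoto \Hom_A(\mfconst M, \mfconst N) $, (iv) the $ A_∞ $-structure with products $ μ^0, μ^1, μ^2 $ from \autoref{def:CHL-defMF-cat} satisfies the curved $ A_∞ $-relations, and (v) dividing out $ \mathfrak{m} $ and identifying via $ \{ψ_{M, N}\} $ recovers precisely the classical category $ \MF(A, ℓ) $. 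Once in place, this is by definition a loose deformation of $ \mfconst^* \MF(A, ℓ) $.

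For (i) and (iii), I would invoke \autoref{th:CHL-projectives-correspondence}: it already states that $ \Proj A_q $ is a loose object-cloning deformation of $ \Proj A $ via $ F(P) = P/(\mathfrak{m} \cdot P) $, and the proof gives precisely the isomorphisms $ ψ_{P, Q}: \Hom_{A_q}(P, Q)/(\mathfrak{m} \cdot \Hom_{A_q}(P, Q)) \isoto \Hom_A(F(P), F(Q)) $ together with $ \mathfrak{m} $-adic freeness of the hom spaces. Since a deformed matrix factorization is a pair of finitely generated projective $ A_q $-modules equipped with extra structure, the hom spaces of $ \MF(A_q, ℓ_q) $ inherit these properties immediately. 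For (ii), I observe that $ \mfconst (M, δ) = (M/(\mathfrak{m} \cdot M), π(δ)) $ has $ π(δ)^2 = π(δ^2) = π(ℓ_q \id_M - μ^0_{(M, δ)}) = ℓ \id_{\mfconst M} $, using that $ μ^0_{(M, δ)} \in \mathfrak{m} \cdot \Hom_{A_q}(M, M) $ by the very definition of deformed matrix factorization and $ ℓ_q $ reducing to $ ℓ $.

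For (iv), the curved $ A_∞ $-relations to check reduce to three: $ μ^1(μ^0_M) = 0 $, $ μ^1(μ^1(f)) + μ^2(μ^0_N, f) \pm μ^2(f, μ^0_M) = 0 $, and associativity $ μ^2(μ^2(f, g), h) = μ^2(f, μ^2(g, h)) $. Associativity is ordinary composition with signs. The curvature-level relation $ μ^1(μ^0_M) = 0 $ follows from $ μ^0_M = ℓ_q \id_M - δ^2 $ being a scalar endomorphism in the sense that it graded-commutes with $ \tilde δ $, since $ ℓ_q $ is central in $ A_q $. The mixed relation unfolds into $ \tilde δ_N^2 f - f \tilde δ_M^2 $ on the one side and the curvature contributions on the other, which cancel exactly because $ μ^0_M = -(δ_M^2 - ℓ_q) $ and $ ℓ_q $ is central; all of this reduces modulo $ \mathfrak{m} $ to the classical $ A_∞ $-structure of $ \MF(A, ℓ) $, settling (v). The main obstacle is the sign bookkeeping in (iv), where one must carefully reconcile the $ \|f\| = |f| - 1 $ convention in $ μ^2 $ with the tweak $ \tilde δ(m) = (-1)^{|m|} δ(m) $; this was already the subtle point in the classical case of \autoref{sec:prelim-matrixfact} and simply needs to be replayed one layer up, tracking curvature terms.
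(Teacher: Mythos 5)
Your plan mirrors the paper's proof: both split into (a) verifying the curved $A_\infty$-axioms for $\MF(A_q, \ell_q)$, (b) building the identification isomorphisms $\psi$ from the projective-module correspondence of \autoref{th:CHL-projectives-correspondence}, and (c) checking that the structure descends to $\MF(A,\ell)$ modulo $\mathfrak{m}$. Your step (ii), verifying that $\pi(\delta)^2 = \ell\,\id$, is a small bonus that the paper leaves implicit in \autoref{def:CHL-defMF-leadingterm}.

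One small slip: you state that the curved $A_\infty$-relations ``reduce to three,'' listing $\mu^1(\mu^0) = 0$, the mixed curvature relation, and associativity, but you have omitted the Leibniz relation
\begin{equation*}
\mu^1(\mu^2(f,g)) + (-1)^{\|g\|}\mu^2(\mu^1(f), g) + \mu^2(f, \mu^1(g)) = 0,
\end{equation*}
which the paper verifies as its ``third relation.'' Because $\mu^{\geq 3} = 0$, this relation receives no curvature insertions and coincides with the classical uncurved check, so the oversight is harmless to the argument, but the enumeration should read four, not three.
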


\begin{proof}
We divide the proof into three steps: The first step is to show that $ \MF(A_q, ℓ_q) $ satisfies the (curved) $ A_∞ $-axioms, merely regarding its hom spaces as $ ℤ/2ℤ $-graded vector spaces instead of $ B $-modules. The second step is to investigate the $ B $-linear structure on its hom spaces and the shape of the products. The third step is to draw the conclusion that $ \MF(A_q, ℓ_q) $ is indeed an object-cloning deformation of $ \MF(A, ℓ) $.

For the first step, we check all curved $ A_∞ $-relations for $ \MF(A_q, ℓ_q) $ one after another. The first relation reads
\begin{equation*}
μ^1 (μ^0_{(M, δ)}) = \tilde{δ} ∘ (ℓ_q \id_M - δ^2) - (ℓ_q \id_M - δ^2) ∘ \tilde{δ} = 0.
\end{equation*}
Note we have used that $ δ^2 = - \tilde{δ}^2 $ and $ ℓ_q $ is central. The second relation reads
\begin{align*}
μ^1 (μ^1 (f)) + (-1)^{‖f‖} μ^2 (μ^0, f) + μ^2 (f, μ^0) &= \tilde{δ} ∘ (\tilde{δ} ∘ f - (-1)^{|f|} f ∘ \tilde{δ}) \\
& \quad - (-1)^{|f| + 1} (\tilde{δ} ∘ f - (-1)^{|f|} f ∘ \tilde{δ}) ∘ \tilde{δ} \\
& \quad + (-1)^{‖f‖ + |f|} (ℓ_q \id_M - δ^2) ∘ f + f ∘ (ℓ_q \id_M - δ^2) \\
&= - δ^2 ∘ f + f ∘ δ^2 - ℓ_q f + f ℓ_q + δ^2 ∘ f - f ∘ δ^2 = 0.
\end{align*}
We have used again that $ ℓ_q $ is central. The third relation is analogous to the classical case and reads
\begin{align*}
μ^1 (μ^2 (f, g)) + (-1)^{‖g‖} μ^2 (μ^1 (f), g) + μ^2 (f, μ^1 (g)) &= (-1)^{‖f‖ |g|} (\tilde{δ} ∘ f ∘ g - (-1)^{|fg|} f ∘ g ∘ \tilde{δ}) \\
& \quad + (-1)^{|f| |g| + ‖g‖} (\tilde{δ} ∘ f - (-1)^{|f|} f ∘ \tilde{δ}) ∘ g \\
& \quad + (-1)^{‖f‖ ‖g‖} f ∘ (\tilde{δ} ∘ g - (-1)^{|g|} g ∘ \tilde{δ}) = 0.
\end{align*}
The fourth relation is associativity and all other relations vanish.

For the second part of the proof, we give $ \MF(A_q, ℓ_q) $ the structure of loose object-cloning deformation. This entails providing for every two deformed matrix factorizations $ (M, δ_M), (N, δ_N) $ a linear $ ℤ/2ℤ $-graded isomorphism
\begin{equation*}
ψ_{(M, δ_M), (N, δ_N)}: \frac{\Hom_{\MF(A_q, ℓ_q)} ((M, δ_M), (N, δ_N))}{\mathfrak{m} · \Hom_{\MF(A_q, ℓ_q)} ((M, δ_M), (N, δ_N))} \isoto \Hom_{\MF(A, ℓ)} (\mfconst (M, δ_M), \mfconst (N, δ_N)).
\end{equation*}
The hom space $ \Hom_{\MF(A_q, ℓ_q)} ((M, δ_M), (N, δ_N)) $ merely consists of the direct sum of four hom spaces from the category $ \Proj A_q $. Thanks to \autoref{th:CHL-projectives-correspondence}, every of these four hom spaces, quotiented by $ \mathfrak{m} $, comes with a natural isomorphism to the corresponding hom space from the category $ \Proj A $. We now construct the isomorphism $ ψ_{(M, δ_M), (N, δ_N)} $ by simply combining these four isomorphisms on their respective domains.

For the third part of the proof, we explain why $ \MF(A_q, ℓ_q) $ is a loose object-cloning deformation of $ \MF(A, ℓ) $ via $ \mfconst $. Indeed, composition in $ \MF(A_q, ℓ_q) $ is merely given by matrix composition and we have seen in \autoref{th:CHL-projectives-correspondence} that it is compatible via $ ψ $ with composition in $ \Proj A $. Similarly, the differential in $ \MF(A_q, ℓ_q) $ is defined via commuting with $ δ $, which reduces via $ ψ $ to commuting with the induced $ δ $ in $ \Proj A $ and hence to the differential of $ \MF(A, ℓ) $. Finally, the curvature in $ \MF(A_q, ℓ_q) $ is infinitesimal and reduces to zero after dividing out $ \mathfrak{m} $. This proves $ \MF(A_q, ℓ_q) $ a loose object-cloning deformation of $ \MF(A, ℓ) $ via $ \mfconst $ and finishes the proof.
\end{proof}

\subsection{Deformed mirror functor}
\label{sec:CHL-functor}
In this section, we construct our deformed mirror functor. The idea is to simply repeat the Cho-Hong-Lau construction using the deformed category $ \cat C_q $ as input instead of $ \cat C $. As we have seen in \autoref{sec:CHL-defLG}, we obtain a deformed Landau-Ginzburg model $ (\Jac(\compl{\chlQ}, W_q), ℓ_q) $. The target of the mirror functor is then the deformed category of matrix factorizations $ \MF(\Jac(\compl{\chlQ}, W_q), ℓ_q) $. We deploy notation and assumptions from \autoref{conv:CHL-deformed}. As in the classical case, we construct functors in the general case and the case of slow growth in parallel.

The first step of this section is to define the deformed functors $ \compl F_q: \cat C_q → \MF(\Jac(\compl{\chlQ}, W_q), ℓ_q) $ and $ F_q: \cat C_q → \MF(\Jac(\chlQ, W_q), ℓ_q) $ on object level. The second step is to define them on morphism level. Finally we check that $ \compl F_q $ and $ F_q $ satisfy the deformed $ A_∞ $-functor relations and their leading terms are the classical Cho-Hong-Lau functors $ \compl{F} $ and $ F $.

An important assumption in this section is that $ \Jac(\compl{\chlQ}, W_q) $ be a deformation of $ \Jac(\compl{\chlQ}, W) $, or $ \Jac(\chlQ, W_q) $ be a deformation of $ \Jac(\chlQ, W) $, in the sense of \autoref{def:flatness-whatis-Adefo}. As such, these algebras come with $ B $-linear algebra isomorphisms
\begin{align*}
\compl{φ}_{\Jac}: \Jac(\compl{\chlQ}, W_q) &\isoto (B \htensor \Jac(\compl{\chlQ}, W), μ_{\Jac, q}), \\
φ_{\Jac}: \Jac(\chlQ, W_q) &\isoto (B \htensor \Jac(\chlQ, W), μ_{\Jac, q}).
\end{align*}

We fix these isomorphisms and will always view $ \Jac(\compl{\chlQ}, W_q) $ and $ \Jac(\chlQ, W_q) $ as deformations of $ \Jac(\compl{\chlQ}, W) $ and $ \Jac(\chlQ, W) $.

\begin{remark}
\label{rem:CHL-functor-projectives}
By \autoref{rem:flatness-whatis-semisimple}, the isomorphisms $ \compl{φ}_{\Jac} $ and $ φ_{\Jac} $ can be assumed to be unital and be $ ℂ\chlQ_0 $-bimodule morphisms. For instance, any vertex element $ L_i ∈ \Jac(\compl{\chlQ}, W_q) $ is mapped to the vertex $ L_i ∈ \Jac(\compl{\chlQ}, W) $, without getting deformed. We also have $ L_i^2 = L_i $ in $ \Jac(\compl{\chlQ}, W_q) $ and $ \Jac(\chlQ, W_q) $, since the same holds in $ \Jac(\compl{\chlQ}, W) $ and $ \Jac(\chlQ, W) $. The two module maps $ (-) L_i: \Jac(\compl{\chlQ}, W_q) → \Jac(\compl{\chlQ}, W_q) $ and $ (-) L_i: \Jac(\chlQ, W_q) → \Jac(\chlQ, W_q) $ are therefore idempotents. We conclude that $ \Jac(\compl{\chlQ}, W_q) L_i $ and $ \Jac(\chlQ, W_q) L_i $ are projectives.
\end{remark}

We start with the explicit description of $ \compl F_q $ and $ F_q $ on object level.
\begin{align*}
\compl F_q (X) &= \left(\bigoplus_{i = 1}^N \Jac(\compl{\chlQ}, W_q) L_i \tensor \Hom_{\cat C} (L_i, X), \quad δ(m) = \sum_{k ≥ 1} (-1)^{‖m‖} μ_q^k (m, b, …, b)\right), \\
F_q (X) &= \left(\bigoplus_{i = 1}^N \Jac(\chlQ, W_q) L_i ¤ \Hom_{\cat C} (L_i, X), \quad δ(m) = \sum_{k ≥ 1} (-1)^{‖m‖} μ_q^k (m, b, …, b)\right).
\end{align*}
Note that each map $ δ $ is well-defined since $ \DefRefObjects $ is of slow growth.

\begin{lemma}
\label{th:CHL-functor-ismf}
For $ X ∈ \cat C $ the object $ \compl F_q (X) $ is indeed a deformed matrix factorization of $ (\Jac(\compl{\chlQ}, W_q), ℓ_q) $. If $ \DefRefObjects $ is of slow growth, then $ F_q (X) $ is a deformed matrix factorization of $ (\Jac(\chlQ, W_q), ℓ_q) $.
\end{lemma}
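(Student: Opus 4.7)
The plan is to verify four items for $ \compl F_q (X) $ in order: finite generation and projectivity of the underlying $ \mathbb{Z}/2\mathbb{Z} $-graded $ \Jac(\compl{\chlQ}, W_q) $-module, well-definedness of the defining sum for $ δ $, $ \Jac(\compl{\chlQ}, W_q) $-linearity and oddness of $ δ $, and finally the factorization condition $ δ^2 - ℓ_q \id_M ∈ \mathfrak{m} · \Hom_{\Jac(\compl{\chlQ}, W_q)} (M, M) $.

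The first three items are mostly bookkeeping. Each summand $ \Jac(\compl{\chlQ}, W_q) L_i $ is projective by the idempotent argument of \autoref{rem:CHL-functor-projectives}, and tensoring with the finite-dimensional space $ \Hom_{\cat C} (L_i, X) $ (finite by \autoref{conv:CHL-category}) preserves both finite generation and projectivity. For convergence of $ δ $, note that each term $ μ_q^k (m, b, …, b) $ involves $ k-1 $ insertions of $ b $ and is therefore supported on paths of length $ \geq k-1 $ in $ \chlQ $, so the series converges in the tensor topology on $ B \htensor \compl{ℂ\chlQ} \tensor \Hom(\RefObjects, X) $; projecting to $ \Jac(\compl{\chlQ}, W_q) $ then yields an honest element of $ \compl F_q (X) $. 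The map $ δ $ is $ \Jac(\compl{\chlQ}, W_q) $-linear because $ m $ sits at the leftmost position in $ μ_q^k (m, b, …, b) $, so left multiplication by scalars passes through the $ B $-multilinear products. Since $ |b| = 1 $ and $ μ_q^k $ has degree $ 2-k $, a degree count gives $ |μ_q^k (m, b, …, b)| = |m| + (k-1) + (2-k) = |m| + 1 $, so $ δ $ is odd.

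The heart of the matter is the factorization condition. Applying the curved $ A_∞ $-relations for $ \cat C_q $ to input sequences $ (m, b, …, b) $ and summing over the number of $ b $'s (where all signs vanish since $ \|b\| = 0 $) yields
\begin{equation*}
0 = \sum_{k, l \geq 1} μ_q^l (μ_q^k (m, b, …, b), b, …, b) + \sum_{p, q_f, q_b \geq 0} μ_q^{q_f + q_b + 2} (m, b^{q_b}, μ_q^p (b^p), b^{q_f}),
\end{equation*}
the reordering of double sums being legitimate because path lengths in $ \chlQ $ grow with the total number of $ b $-insertions. The sign computation $ (-1)^{\|m\|} (-1)^{\|μ_q^k (m, b, …, b)\|} = -1 $ identifies the first sum with $ -δ^2 (m) $. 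By \autoref{def:CHL-defLG-def}, the inner sum $ \sum_{p \geq 0} μ_q^p (b, …, b) $ equals $ μ_q^0 + ℓ_q \id_{\RefObjects} + \sum_e R_{q, e} Y_e $. Substituting and invoking strict unitality, all terms with an $ ℓ_q \id $ insertion vanish except $ μ_q^2 (m, ℓ_q \id) = ℓ_q m $; the $ R_{q, e} Y_e $ contributions vanish after projection to $ \Jac(\compl{\chlQ}, W_q) $ because $ R_{q, e} = ∂_{x_e} W_q $ by \autoref{th:CHL-defLG-centrality} and the closed ideal generated by the $ R_{q, e} $ is divided out; and the $ μ_q^0 $ contributions lie in $ \mathfrak{m} · \Hom_{\Jac}(M, M) $ since $ μ_q^0 ∈ \mathfrak{m} \Hom_{\cat C} $. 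Hence $ δ^2 (m) - ℓ_q m ∈ \mathfrak{m} · \Hom_{\Jac}(M, M) $, proving $ \compl F_q (X) $ is a deformed matrix factorization.

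The anticipated obstacle is the bookkeeping of infinite sums in the completed setting, particularly the justification that separating the inner sum into the three pieces $ μ_q^0 + ℓ_q \id + \sum_e R_{q, e} Y_e $ and reordering is legitimate term-by-term in the tensor topology, and that the $ R_{q, e} Y_e $ contributions genuinely collapse in $ \Jac(\compl{\chlQ}, W_q) $ (where one must use that the relevant ideal is closed in the tensor topology, as established in \autoref{sec:flatness-closedness}). In the slow-growth case, all of these sums are bounded at each order of $ \mathfrak{m} $, so every computation takes place in $ B \htensor ℂ\chlQ \tensor \Hom $ rather than its completion, and the same argument shows $ F_q (X) $ is a deformed matrix factorization of $ (\Jac(\chlQ, W_q), ℓ_q) $.
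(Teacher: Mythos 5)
Your preliminary bookkeeping (projectivity, convergence, linearity, oddness of $\delta$, and the degree count) is fine and matches what the paper leaves implicit. However, your treatment of the factorization condition has a genuine gap that happens to make the conclusion look cleaner than it is.

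Your displayed curved $A_\infty$-relation is missing a term. Writing the relation as in the paper's convention $\sum_{k\geq l\geq m\geq 0}$, your two sums account for the cases $l=k,\,m<k$ (inner product including $m$, giving $-\delta^2(m)$) and $l<k$ (inner product spanning only $b$'s, including $\mu_q^0$ inserted among or to the right of the $b$'s). But the case $l=m=k$ — the curvature $\mu_q^0$ inserted at the leftmost position, \emph{before} $m$ — is absent. This term is $(-1)^{\|m\|}\sum_{n\geq 0}\mu_q^{n+2}(\mu_q^0,m,b,\dots,b)$, and notice the sign does \emph{not} vanish: it is $(-1)^{\|m\|}$ because everything to the right of this inner $\mu_q^0$ is $m,b,\dots,b$, whose total reduced degree is $\|m\|$. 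So your remark ``all signs vanish since $\|b\|=0$'' is only true for the terms you wrote down, and the omitted term is precisely the one with a nontrivial sign.

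This omission cascades into an inconsistency. You write the inner sum $\sum_{p\geq 0}\mu_q^p(b,\dots,b)$ as $\mu_q^0 + \ell_q\id + \sum_e R_{q,e}Y_e$, but by \autoref{def:CHL-defLG-def} this sum already \emph{equals} $\ell_q\id + \sum_e R_{q,e}Y_e$: the $p=0$ term $\mu_q^0$ is absorbed into the definitions of $\ell_q$ and $R_{q,e}$, not a separate summand. So the ``$\mu_q^0$ contributions'' you invoke at the end are phantom — they are already accounted for by $\ell_q$ and $R_{q,e}$. Were your formula complete, after the $R_{q,e}$ vanish and unitality kills all but $\mu_q^2(m,\ell_q\id)=\ell_qm$, you would be left with $\delta^2(m)=\ell_qm$ exactly, which is false in general. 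The genuine infinitesimal remainder — exactly what makes $\compl F_q(X)$ a \emph{deformed} (curved) rather than strict matrix factorization — is the missing leftmost-curvature term: the paper's proof culminates in $\ell_qm - \delta^2(m) = (-1)^{|m|}\sum_{i\geq 0}\mu_q^{i+2}(\mu_q^0,m,b,\dots,b) \in \mathfrak{m}\cdot(\Jac(\compl{\chlQ},W_q)\otimes\Hom(\RefObjects,X))$. Restoring that term (and its sign) repairs the argument.
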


\begin{proof}
We merely check the case of $ \compl F_q (X) $. It is our task to show that $ ℓ_q \id_{\compl F_q (X)} - δ^2 $ is infinitesimal. Calculating in $ \Jac(\compl{\chlQ}, W_q) ¤ \Hom(\RefObjects, X) $, we find
\begin{align*}
ℓ_q m - δ^2 (m) &= ℓ_q m - δ^2 (m) \\
&= ℓ_q m + \sum_{i, j ≥ 0} μ_q^{i+1} (μ_q^{j+1} (m, b …, b), b, …, b) \\
&= ℓ_q m - \sum_{i, j, k ≥ 0} μ_q (m, \underbrace{b, …, b}_{i}, μ_q (\underbrace{b, …, b}_{j}), \underbrace{b, …, b}_k) - (-1)^{‖m‖} \sum_{i ≥ 0} μ_q^{i+2} (μ_q^0, m, b, …, b) \\
&= (-1)^{|m|} \sum_{i ≥ 0} μ_q^{i+2} (μ_q^0, m, b, …, b) \\
& ∈ \mathfrak{m} · (\Jac(\compl{\chlQ}, W_q) ¤ \Hom(\RefObjects, X)).
\end{align*}
Here we have used the curved $ A_∞ $-relation for $ μ_q $ and $ \sum_{l ≥ 0} μ_q^l (b, …, b) = ℓ_q \id_{\DefRefObjects} $. We conclude that $ ℓ_q \id_{\compl F_q (X)} - δ^2 $ is infinitesimal and $ \compl F_q (X) $ is a deformed matrix factorization. The analogous calculations show that $ F_q (X) $ is a matrix factorization when $ \DefRefObjects $ is of bounded growth.
\end{proof}

\begin{remark}
Explicitly, the curvature of the object $ \compl F_q (X) $ is the endomorphism of $ \compl F_q (X) $ given by
\begin{equation*}
μ^0_{\MF, \compl F_q (X)} (m) = ℓ_q m - δ^2 (m) = (-1)^{|m|} \sum_{l ≥ 0} μ_q^{l+2} (μ_q^0, m, b, …, b).
\end{equation*}
\end{remark}

We now define the deformed CHL functor in analogy to the classical CHL functor:

\begin{definition}
The \emph{deformed CHL functor} $ \compl F_q $ is the mapping
\begin{align*}
\compl F_q: \cat C_q &\verylongto \MF(\Jac(\compl{\chlQ}, W_q), ℓ_q), \\
X &\verylongmapsto \compl F_q(X), \\
\compl F_q (m_k, …, m_1)(m) &= (-1)^{(‖m_1‖ + … + ‖m_k‖) ‖m‖ + 1} \sum_{l ≥ 0} μ_q^{k+l+1} (m_k, …, m_1, m, b, …, b) \\
& \qquad \text{for } m_i: X_i → X_{i+1}, \quad m ∈ \compl F_q (X_1).
\end{align*}
In case $ \DefRefObjects $ is of slow growth, the functor $ F_q: \cat C_q → \MF(\Jac(\chlQ, W_q), ℓ_q) $ is defined analogously.
\end{definition}

\begin{remark}
Note that we put $ F_q^0 = 0 $, as opposed to $ F^0_q ≔ \sum_{l ≥ 0} μ_q^{l+1} (-, b, …, b) $.
\end{remark}

We shall now prove that $ \compl F_q $ and $ F_q $ are actually functors. We shall also show that their leading terms are the classical functors $ \compl F $ and $ F $. The categories $ \MF(\Jac(\compl{\chlQ}, W_q), ℓ_q) $ and $ \MF(\Jac(\chlQ, W_q), ℓ_q) $ are only object-cloning deformations of $ \MF(\Jac(\compl{\chlQ}, W), ℓ) $ and $ \MF(\Jac(\chlQ, W), ℓ) $, so making the statement on leading terms rigorous we need to provide an identification map on object level. In \autoref{def:CHL-defMF-leadingterm}, we have already provided such identification maps
\begin{align*}
\mfconst: \Ob\MF(\Jac(\compl{\chlQ}, W_q), ℓ_q) &→ \Ob\MF(\Jac(\compl{\chlQ}, W), ℓ), \\
\mfconst: \Ob\MF(\Jac(\chlQ, W_q), ℓ_q) &→ \Ob\MF(\Jac(\chlQ, W), ℓ).
\end{align*}
The map $ \mfconst $ sends $ \compl F_q (X) $ and $ F_q (X) $ to
\begin{align*}
\mfconst (\compl F_q (X)) &= \left(\bigoplus_{i = 1}^N \frac{\Jac(\compl{\chlQ}, W_q) L_i}{\mathfrak{m} · \Jac(\compl{\chlQ}, W_q) L_i} \tensor \Hom_{\cat C} (L_i, X), \quad δ(m) = \sum_{k ≥ 1} (-1)^{‖m‖} (π ¤ \id) (μ_q^k (m, b, …, b))\right), \\
\mfconst (F_q (X)) &= \left(\bigoplus_{i = 1}^N \frac{\Jac(\chlQ, W_q) L_i}{\mathfrak{m} · \Jac(\chlQ, W_q) L_i} \tensor \Hom_{\cat C} (L_i, X), \quad δ(m) = \sum_{k ≥ 1} (-1)^{‖m‖} (π ¤ \id) (μ_q^k (m, b, …, b))\right).
\end{align*}
Here $ π $ denotes the projection from the Jacobi algebra to its quotient by $ \mathfrak{m} $. Meanwhile, the object $ \compl F (X) $ is given by a sum of projectives of the form $ \Jac(\compl{\chlQ}, W) L_i $. As discussed in \autoref{rem:CHL-projectives-explicit}, the difference is entirely cosmetic and we shall naturally identify
\begin{equation}
\label{eq:CHL-functor-objectidentification}
\begin{aligned}
\mfconst (\compl F_q (X)) \isoto \compl F (X), \\
\mfconst (F_q (X)) \isoto F(X).
\end{aligned}
\end{equation}
A similar statement holds for the hom spaces. Regard a hom space between two image objects $ \compl F_q (X) $ and $ \compl F_q (Y) $, divide out $ \mathfrak{m} $, and identify the quotient via the map $ ψ $ from \autoref{sec:CHL-projectives}. According to \autoref{rem:CHL-projectives-explicit}, the result can further be identified naturally with the hom space in $ \MF(\Jac(\compl{\chlQ}, W), ℓ) $ or $ \MF(\Jac(\chlQ, W), ℓ) $:
\begin{equation}
\label{eq:CHL-functor-homidentification}
\begin{aligned}
\frac{\Hom(\compl F_q (X), \compl F_q (Y))}{\mathfrak{m} · \Hom(\compl F_q (X), \compl F_q (Y))} &\xrightarrow[~~\sim~~]{ψ} \Hom\left(\frac{\compl F_q (X)}{\mathfrak{m} · \compl F_q (X)}, \frac{\compl F_q (Y)}{\mathfrak{m} · \compl F_q (Y)}\right) \isoto \Hom(\compl F (X), \compl F (Y)), \\
\frac{\Hom(F_q (X), F_q (Y))}{\mathfrak{m} · \Hom(F_q (X), F_q (Y))} &\xrightarrow[~~\sim~~]{ψ} \Hom\left(\frac{F_q (X)}{\mathfrak{m} · F_q (X)}, \frac{F_q (Y)}{\mathfrak{m} · F_q (Y)}\right) \isoto \Hom(F (X), F (Y)).
\end{aligned}
\end{equation}
We claim that with respect to this identification, the leading terms of the two functors $ \compl F_q $ and $ F_q $ are the classical Cho-Hong-Lau functors $ \compl F $ and $ F $:

\begin{theorem}
\label{th:CHL-functor-th}
Assume \autoref{conv:CHL-deformed} and that $ \Jac(\compl{\chlQ}, W_q) $ is a deformation of $ \Jac(\compl{\chlQ}, W) $. Then the mapping $ \compl F_q $ defines a functor of loose object-cloning $ A_∞ $-deformations
\begin{equation*}
\compl F_q: \cat C_q → \MF(\Jac(\compl{\chlQ}, W_q), ℓ_q).
\end{equation*}
The leading term of $ \compl F_q $ via the identifications \eqref{eq:CHL-functor-objectidentification} and \eqref{eq:CHL-functor-homidentification} is the classical Cho-Hong-Lau functor $ \compl F $.

Assume instead \autoref{conv:CHL-deformed}, that $ \DefRefObjects $ is of slow growth and that $ \Jac(\chlQ, W_q) $ is a deformation of $ \Jac(\chlQ, W) $. Then the mapping $ F_q $ defines a functor of loose object-cloning $ A_∞ $-deformations
\begin{equation*}
F_q: \cat C_q → \MF(\Jac(\chlQ, W_q), ℓ_q).
\end{equation*}
The leading term of $ F_q $ via the identifications \eqref{eq:CHL-functor-objectidentification} and \eqref{eq:CHL-functor-homidentification} is the classical Cho-Hong-Lau functor $ F $.
\end{theorem}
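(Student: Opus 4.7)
The plan is to follow the template of the classical proof of \autoref{th:CHL-classical-functor} verbatim, swapping $ μ $ for $ μ_q $ throughout and systematically tracking the new contributions produced by the curvature $ μ_q^0 $ of $ \cat C_q $. To verify the curved $ A_∞ $-functor relations for $ k ≥ 1 $, I fix a sequence $ m_1, …, m_k $ with $ m_i: X_i → X_{i+1} $ and an element $ m ∈ \compl F_q(X_1) $. Expanding the definitions of $ μ^1_{\MF} $, $ μ^2_{\MF} $ and the $ \compl F_q $-components, the desired equality reduces (as in the classical proof) to the curved $ A_∞ $-relation of $ \cat C_q $ applied to the sequence $ (m_k, …, m_1, m, b, …, b) $, summed over all numbers of $ b $-insertions. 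The inner $ μ_q $ produced by this relation falls into one of four regions: (a) strictly inside $ m_k, …, m_1 $; (b) straddling the boundary between the $ m_i $'s and the $ (m, b, …, b) $ block; (c) strictly inside the $ (m, b, …, b) $ block; or (d) a pure curvature $ μ_q^0 $ inserted between two consecutive entries. Contributions (a)--(c) reproduce the three terms of the classical proof verbatim: (a) gives the $ \compl F_q(m_k, …, μ_q(…), …, m_1) $ terms of the LHS, (b) reassembles into $ μ^2_{\MF}(\compl F_q(…), \compl F_q(…)) $, and (c) into $ μ^1_{\MF}(\compl F_q(…)) $ via the identity $ \tilde δ(m) = -\sum_{l ≥ 1} μ_q^l(m, b, …, b) $. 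The new contribution (d) splits further: insertions of $ μ_q^0 $ between two $ m_i $'s assemble into the $ j = i $ terms of the LHS of the curved functor relation, while insertions inside the $ (m, b, …, b) $ block contribute precisely the object curvature $ μ^0_{\MF, \compl F_q(X_{k+1})} $ of the target category via the identity $ ℓ_q \id - δ^2 = (-1)^{|m|} \sum_{l ≥ 0} μ_q^{l+2}(μ_q^0, m, b, …, b) $ established inside \autoref{th:CHL-functor-ismf}. Because we set $ F^0_{q, X} = 0 $, no further curvature terms appear on the RHS, so the equality closes. The $ k = 0 $ relation then specializes to $ \compl F_q^1(μ^0_{\cat C_q, X}) = μ^0_{\MF, \compl F_q(X)} $, which is the same identity (using $ ‖m‖ + 1 \equiv |m| $ mod $ 2 $).

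The remaining bookkeeping is routine. Strict unitality of $ \compl F_q $ follows from strict unitality of $ \cat C_q $ (\autoref{conv:CHL-deformed}) combined with $ μ_q^2(\id, a) = a $ and $ μ_q^{\neq 2}(…, \id, …) = 0 $, which force $ \compl F_q^1(\id) = \id $ and $ \compl F_q^{≥ 2}(…, \id, …) = 0 $; $ B $-multilinearity of each $ \compl F_q^k $ is automatic from $ B $-multilinearity of $ μ_q $; and \autoref{th:CHL-defMF-correspondence} already supplies the loose object-cloning structure on $ \MF(\Jac(\compl{\chlQ}, W_q), ℓ_q) $. For the leading term, modding out by $ \mathfrak{m} $ reduces $ μ_q $ to $ μ $, $ ℓ_q $ to $ ℓ $, and $ \Jac(\compl{\chlQ}, W_q) $ to $ \Jac(\compl{\chlQ}, W) $; through the identifications \eqref{eq:CHL-functor-objectidentification} and \eqref{eq:CHL-functor-homidentification} supplied by \autoref{rem:CHL-projectives-explicit}, the object $ \mfconst(\compl F_q(X)) $ becomes the classical $ \compl F(X) $ and the formulas defining $ \compl F_q^k $ reduce term for term to those defining $ \compl F^k $. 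The $ F_q $ case is identical once one observes that slow growth guarantees that the $ b $-expansion series terminate modulo each power of $ \mathfrak{m} $, so that the construction lands in the non-completed Jacobi algebra.

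The main obstacle I anticipate is sign bookkeeping. The classical proof of \autoref{th:CHL-classical-functor} already requires four chained equalities with carefully balanced signs, and each curvature insertion in contribution (d) must be checked to carry exactly the sign prescribed by the curved functor relation and by the sign conventions of \autoref{def:CHL-defMF-cat}. The key sanity check is that the signs in (d) combine with the $ (-1)^{(‖m_1‖ + … + ‖m_k‖)‖m‖ + 1} $ prefactor from the definition of $ \compl F_q $ to produce exactly the $ (-1)^{|m|} $ prefactor of $ μ^0_{\MF, \compl F_q(X)} $; this is the same compatibility that makes the $ k = 0 $ functor relation work, so I expect no genuinely new difficulty beyond routine case checks once the classical computation is in hand.
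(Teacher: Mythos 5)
Your general approach — swap $μ$ for $μ_q$, apply the curved $A_\infty$-relation to $(m_k,\dots,m_1,m,b,\dots,b)$, and track where the new curvature insertions land — matches the paper's proof. However, your account of region (d) contains a genuine error for $k\geq 1$, and it is not just a sign problem.

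You claim that $μ_q^0$ insertions \emph{inside} the $(m,b,\dots,b)$ block contribute the object curvature $μ^0_{\MF,\compl F_q(X_{k+1})}$, invoking the identity $ℓ_q m - δ^2(m) = (-1)^{|m|}\sum_l μ_q^{l+2}(μ_q^0, m, b, \dots, b)$. But look at where $μ_q^0$ sits in that identity: it is the \emph{outermost} argument, before $m$ — i.e.\ the gap between $m_1$ and $m$, which in your own bookkeeping is already covered by the $n=0$ summand $\compl F_q(m_k,\dots,m_1,μ_q^0)(m)$ of the LHS curvature terms (region (d), first kind). The insertions genuinely inside $(m,b,\dots,b)$ — between $m$ and the first $b$, among the $b$'s, or at the innermost end — carry curvature $μ_q^0_{L}$ for zigzag objects $L$, not $μ_q^0_{X_1}$, and they do \emph{not} reassemble into $μ^0_\MF$. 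Instead they vanish: together with the inner $μ_q^{\geq 1}(b,\dots,b)$ terms (which you silently dropped, as the classical proof also implicitly does) they form the full sum $\sum_{l\geq 0} μ_q^l(b,\dots,b) = ℓ_q\id_{\DefRefObjects} + \sum R_{q,e}Y_e$; the $ℓ_q\id$ part dies under the outer $μ_q^{\geq 3}$ by strict unitality, and the $R_{q,e}Y_e$ part dies in $\Jac(\compl{\chlQ},W_q)$. This is exactly what the paper says — \emph{there are no new terms on the right-hand side} for $k\geq 1$ — and it is forced: since $\compl F_q^0 = 0$, the $cA_\infty$-functor relation for $k\geq 1$ has no $μ^0_\MF$ anywhere, so an unexplained $μ^0_\MF$ on your left side would leave the equality open, not closed. (You also have a secondary slip: if the curvature did appear, the identity you cite produces $μ^0_{\MF,\compl F_q(X_1)}$, not $μ^0_{\MF,\compl F_q(X_{k+1})}$.) The $k=0$ case you state correctly; the mistake is in transporting that picture unchanged to $k\geq 1$, where the ``$μ_q^0$ before $m$'' position has been absorbed into the LHS and the remaining insertions must vanish rather than reproduce curvature. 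The remaining parts of your proposal — unitality, $B$-multilinearity, the leading-term computation via \autoref{rem:CHL-projectives-explicit}, and the slow-growth reduction for $F_q$ — are correct and agree with the paper.
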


\begin{proof}
First we check the $ A_∞ $-functor relations, then we comment on the leading term. To start with, for the functor relations it is our task to check that
\begin{multline*}
\sum_{0 ≤ j ≤ i ≤ k} (-1)^{‖m_1‖ + … + ‖m_j‖} \compl F_q (m_k, …, μ_q (m_i, …, m_{j+1}), …, m_1) \\
= μ^2_{\MF} (\compl F_q (m_k, …, m_{i+1}), \compl F_q (m_i, …, m_1)) + μ^1_{\MF} (\compl F_q (m_k, …, m_1)).
\end{multline*}
Checking these relations is similar to the classical case \autoref{th:CHL-classical-functor}. The calculation does not use explicitly that $ δ^2 $ vanishes, therefore remains intact. However, due to the possible curvature of $ \cat C_q $ a few new terms appear on one side of the $ A_∞ $-functor equation. We shall check these terms in more detail.

Assume there are $ k ≥ 1 $ input morphisms $ m_1, …, m_k $. Comparing with the calculation in \autoref{th:CHL-classical-functor}, the new terms on the left-hand side of the functor relation are
\begin{align*}
&\sum_{0 ≤ n ≤ k} (-1)^{‖m_1‖ + … + ‖m_n‖} \compl F_q (m_k, …, m_{n+1}, μ^0_q, m_n, …, m_1) (m) \\
&= \sum_{\substack{0 ≤ n ≤ k \\ 0 ≤ l}} (-1)^{‖m_1‖ + … + ‖m_n‖ + ‖m‖ (‖m_1‖ + … + ‖m_k‖ + 1) + 1} μ_q^{k+l+1} (m_k, …, m_{n+1}, μ^0_q, m_n, …, m_1, m, b, …, b).
\end{align*}
There are no new terms on the right-hand side of the functor relation. When applying the curved $ A_∞ $-relation as in the proof of \autoref{th:CHL-classical-functor}, the terms on the left-hand side disappear and the terms $ μ_q (m_k, …, m_1, m, b, …, μ_q^{≥0} (b, …, b), …, b) $ come in, which still vanish since $ \sum μ_q (b, …, b) = ℓ_q \id_{\DefRefObjects} $. This proves the functor relations for $ k ≥ 1 $.

Also for $ k = 0 $ the functor relation is still satisfied:
\begin{equation*}
\compl F_q^1 (μ^0_X) (m) = (-1)^{‖m‖ + 1} \sum_{l ≥ 0} μ_q^{l+2} (μ^0_q, m, b, …, b) = μ^0_{\compl F_q (X)}.
\end{equation*}
The computations for $ F_q $ are analogous. This finishes the checks of the functor relations.

Finally, let us now comment on the leading terms of $ \compl F_q $ and $ F_q $. Indeed, every functor component $ \compl F_q^k $ or $ F_q^k $ is constructed via the deformed products of $ \cat C_q $.

We need to check the $ \Jac(\compl{\chlQ}, W_q) $-linear map $ \compl F_q^k (m_k, …, m_1): \compl F_q (X_1) → \compl F_q (X_{k+1}) $. Up to terms of $ \mathfrak{m} · \compl F_q (X_{k+1}) $, this map is equal to
\begin{equation*}
\sum_{l ≥ 0} μ^{k+l+1} (m_k, …, m_1, -, b, …, b): \compl F_q (X_1) → \compl F_q (X_{k+1}).
\end{equation*}
Passing along $ ψ $ gives
\begin{equation*}
\sum_{l ≥ 0} μ^{k+l+1} (m_k, …, m_1, -, b, …, b): \compl F_q (X_1) / (\mathfrak{m} · \compl F_q (X_1)) → \compl F_q (X_{k+1}) / (\mathfrak{m} · \compl F_q (X_{k+1})).
\end{equation*}
This map is not the same as $ \compl F^k (m_k, …, m_1) $ yet. But now identify $ \compl F_q (X_1) / (\mathfrak{m} · \compl F_q (X_1)) \isoto \compl F (X_1) $ and similarly for $ X_{k+1} $. Recalling \autoref{rem:CHL-projectives-explicit}, the induced map $ \compl F (X_1) → \compl F (X_{k+1}) $ is merely given by the composition $ \compl F (X_1) → \compl F_q (X_1) / (\mathfrak{m} · \compl F_q (X_1)) → \compl F_q (X_{k+1}) / (\mathfrak{m} · \compl F_q (X_{k+1})) → \compl F (X_{k+1}) $, yielding the map
\begin{equation*}
\sum_{l ≥ 0} μ^{k+l+1} (m_k, …, m_k, -, b, …, b): \compl F (X_1) → \compl F (X_{k+1}).
\end{equation*}
This is precisely $ \compl F^k (m_k, …, m_1) $. We have shown that the leading term of $ \compl F_q $ is $ \compl F $. The same argument holds for $ F_q $. This finishes the proof.
\end{proof}

\section{Deformed mirror symmetry}
\label{sec:MS}
In this section, we collect all preliminary results and prove deformed mirror symmetry, our main theorem. The procedure is as follows: We start with a geometrically consistent dimer $ Q $ under \autoref{conv:zigzag-category-convention}. By \autoref{sec:zigzag}, the deformed category of zigzag curves $ \H\DefZigzagCat ⊂ \H\Tw\Gtl_q Q $ is deformed cyclic. By \autoref{sec:CHL}, we obtain an algebra $ \Jac_q \mirQ $ plus central element $ ℓ_q ∈ \Jac_q \mirQ $ together with a deformed functor
\begin{equation*}
F_q: \quad \Gtl_q Q → \mf(\Jac_q \mirQ, ℓ_q).
\end{equation*}
Because of the slow growth requirements, this functor is restricted to the domain category $ \Gtl_Q $ instead of $ \HTw\Gtl_q Q $. By \autoref{sec:flatness}, the algebra $ \Jac_q \mirQ $ is a deformation of $ \Jac \mirQ $. The necessary requirement for this last step is that $ \mirQ $ is cancellation consistent and of bounded type. Finally, we interpret the category $ \mf(\Jac_q \mirQ, ℓ_q) $ as a deformation of the classical mirror $ \mf(\Jac \mirQ, ℓ) $, and the functor $ F_q $ as a deformation of the classical $ A_∞ $ mirror quasi-isomorphism
\begin{equation*}
F: \quad \Gtl Q → \mf(\Jac \mirQ, ℓ).
\end{equation*}
In particular, we conclude that $ F_q $ itself is a quasi-isomorphism.

\subsection{Mirror symmetry by Cho-Hong-Lau}
\label{sec:MS-CHL}
In this section, we recollect how a gentle algebra as specific instance of the Cho-Hong-Lau construction yields mirror symmetry for punctured surfaces. The first step in this section is to provide some details on this specific instance. Second, we read off the specific properties for the Cho-Hong-Lau construction. Third, we realize that the resulting mirror functor indeed recovers mirror symmetry for punctured surfaces. This section is an integrated summary of \cite[Chapter 10]{CHL}.

\begin{remark}
Cho, Hong and Lau depart from a folklore version of wrapped Fukaya category, where the complete set of products is unclear. We have opted in the present paper to use the very rigorous description of $ \HTw\Gtl Q $ and $ \HTw\Gtl_q Q $ from \papertwoB. In particular, we restrict to the case that $ Q $ is geometrically consistent or a standard sphere dimer.
\end{remark}

Our starting point is a dimer $ Q $ whose zigzag paths are equipped with a specific choice of spin structure, which we have codified in \autoref{conv:zigzag-category-convention}. We shall start describing the specific instance of the Cho-Hong-Lau construction needed for mirror symmetry. The first step is to choose $ \cat C = \HTw\Gtl Q $. The subcategory of reference objects, denoted $ \RefObjects $ in \autoref{sec:CHL}, is the category of zigzag paths $ \H\ZigzagCat ⊂ \HTw\Gtl Q $.

As a second step, we describe the CHL basis that needs to be chosen. The right choice of basis elements $ \{X_e\} $ for $ \Hom_{\H\ZigzagCat} (L_1, L_2) $ is the collection of all transversal intersection points of $ L_1 $ and $ L_2 $ which are odd as morphisms $ L_1 → L_2 $. In other words, every single transversal intersection points between two arbitrary zigzag curves appears as basis element for precisely one hom space, namely the one in which it is odd. The set of transversal intersection points between zigzag curves in $ Q $ is precisely the same as the set of arcs $ Q_1 $. We shall therefore fix notation as follows:

\begin{definition}
For every arc $ a ∈ Q_1 $, we denote by $ X_a $ the odd morphism in $ \ZigzagCat $ which is located at the midpoint of $ a $. Similarly, we denote by $ Y_a $ the even morphism in $ \ZigzagCat $ which is located at the midpoint of $ a $.
\end{definition}

\begin{remark}
Visually speaking, we have the correspondence
\begin{center}
\begin{tikzpicture}
\path (0, 0) node[align=center] {odd morphisms \\ (transversal only)} (2.5, 0) node {\Large $ \longleftrightarrow $} (5, 0) node {arrows $ a ∈ Q_1 $} (7.5, 0) node {\Large $ \longleftrightarrow $} (10, 0) node[align=center] {even morphisms \\ (transversal only)};
\end{tikzpicture}
\end{center}
\end{remark}

\begin{table}
\centering
\morearraystretch
\begin{tabular}{ccc}
\textbf{Gadget} & \textbf{General} & \textbf{Specific} \\\hline
Category & $ \cat C $ & $ \HTw\Gtl Q $ \\
Reference objects & $ \ZigzagCat $ & $ \H\ZigzagCat = \{L_1, …, L_N\} $ \\
Cohomology basis & $ \{X_e, Y_e, \id_L, \id_L^*\} $ & intersection points \\
Quiver & $ \chlQ $ & $ \mirQ $ \\
Superpotential $ W $ & $ ⟨μ(b, …, b), b⟩ $ & $ W = \mirQ^+_{\cyc} - \mirQ^-_{\cyc} $ \\
Relations $ R_e $ & $ ⟨μ(b, …, b), X_e⟩ $ & $ r_a^+ - r_a^- $ \\
Landau-Ginzburg model & $ (\Jac(\chlQ, W), ℓ) $ & $ (\Jac \mirQ, ℓ) $ \\
Matrix factorization $ F(a) $ & $ (M, δ) $ & $ \dmatf{(\Jac \mirQ) h(a)}{(\Jac \mirQ) t(a)}{a}{\bar a} $
\end{tabular}
\caption{For every gadget involved in the Cho-Hong-Lau construction, this overview exhibits the general definition and its specific shape in the case of the pair $ (\H\ZigzagCat, \HTw\Gtl Q) $.}
\label{tab:MS-CHL-gadgets}
\end{table}

With this in mind, let us describe the specific instance of the Cho-Hong-Lau construction which yields mirror symmetry for punctured surfaces. The specific instance consists of a specific pair $ (\RefObjects, \cat C) $, a specific choice of CHL basis and a specific choice of odd pairing. These data are given as follows:
\begin{itemize}
\item The category $ \cat C $ is the derived category $ \HTw\Gtl Q $ of the gentle algebra $ \Gtl Q $.
\item The subcategory $ \RefObjects ⊂ \cat C $ is the subcategory $ \H\ZigzagCat ⊂ \HTw\Gtl Q $ of zigzag paths.
\item The CHL basis consists of all odd cohomology basis elements $ X_e $ between zigzag paths, all even cohomology basis elements $ Y_e $ between zigzag paths, and the co-identity elements $ \coid_{L_i} $.
\item The odd pairing $ ⟨-, -⟩ $ on $ \H\ZigzagCat $ is defined by enforcing the pairing identities \eqref{eq:CHL-classical-basis}. Explicitly, one sets $ ⟨-, -⟩ $ to zero on all pairs of basis elements except
\begin{equation*}
⟨X_e, Y_f⟩ = δ_{ef}, \quad ⟨\coid_{L_i}, \id_{L_j}⟩ = δ_{ij}.
\end{equation*}
\end{itemize}

Application of the Cho-Hong-Lau construction to the pair $ (\H\ZigzagCat, \HTw\Gtl Q) $ yields precisely the dual dimer $ \mirQ $, which we have recalled in \autoref{sec:prelim-ms}:

\begin{lemma}
The CHL quiver of the pair $ (\H\ZigzagCat, \HTw\Gtl Q) $ is the quiver $ \chlQ = \mirQ $.
\end{lemma}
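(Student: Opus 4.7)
The plan is to build the identification $\chlQ = \mirQ$ by separately matching vertices, arrows, and the assignment of source and target to each arrow, using the triad correspondence (zigzag path $\leftrightarrow$ zigzag curve $\leftrightarrow$ twisted complex) recalled in \autoref{sec:zigzag-category} together with the construction of the dual dimer from \autoref{sec:prelim-ms}.

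For vertices, recall that by definition of $\chlQ$ the vertex set is in bijection with the reference objects, which here are the zigzag paths of $Q$. By construction of $\mirQ$ (cutting $Q$ into polygons, flipping the counterclockwise ones and regluing along arrows), the punctures of $\mirQ$ are in natural bijection with the zigzag paths of $Q$, as already observed in \autoref{rem:prelim-ms-Z}. This gives the vertex identification $\chlQ_0 = \mirQ_0$.

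For arrows, the CHL index set $E_{ij}$ is the set of odd basis morphisms $L_i \to L_j$ in $\H\ZigzagCat$. By the choice of CHL basis recorded above, these odd basis morphisms are exactly the transversal intersection points of $\smooth{L}_i$ and $\smooth{L}_j$ that are odd as morphisms $L_i \to L_j$. Every transversal intersection of two zigzag curves occurs at the midpoint of a unique arc $a \in Q_1$, and conversely each arc $a \in Q_1$ carries a unique transversal intersection of the two zigzag curves departing from it, which is odd in precisely one of the two directions (compatible with \autoref{fig:zigzag-category-intersectiondeg}). Since $Q_1 = \mirQ_1$ as sets under the construction of the dual dimer, this yields a bijection between the arrows of $\chlQ$ and those of $\mirQ$.

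It remains to match source and target of each arrow. Given $a \in Q_1$, let $L_i, L_j$ be the two zigzag paths through $a$. By the definition of odd/even intersection degree, choosing the direction of $X_a$ amounts to choosing which of the two orientations at the midpoint is odd; this is dictated by the spin structure and orientation convention of \autoref{conv:zigzag-category-convention}. On the mirror side, the arrow $a \in \mirQ_1$ inherits its orientation from the flip-and-glue construction applied to the polygon incidences of $a$ in $Q$. The main work in the proof is to verify that these two prescriptions agree, i.e.\ that the head and tail of $X_a$ as an odd morphism between zigzag paths match the head and tail of $a$ viewed as an arrow of $\mirQ$. I expect this head/tail matching to be the only nontrivial step: the vertex and arrow counts are immediate, but tracking the orientations through the flip of counterclockwise polygons and comparing with the convention fixing which side of a transversal intersection is odd requires a careful local picture in a neighborhood of $a$. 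Once this local check is done in the four cases (clockwise/counterclockwise polygon on either side of $a$), the three identifications combine to give $\chlQ = \mirQ $ as quivers.
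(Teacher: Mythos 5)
Your proposal takes essentially the same approach as the paper's proof: vertices are matched via the zigzag-path/$\mirQ$-vertex correspondence, arrows via odd transversal intersections located at arc midpoints, with the source/target identification as the remaining step. The paper handles that last step by pointing to \autoref{fig:MS-CHL-arccorrespondence}, which is the same local picture you identify as the point requiring verification.
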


\begin{proof}
By general definition, the vertices of the quiver $ \chlQ $ are the reference objects $ L_i ∈ \RefObjects $ and the arrows from $ L_i $ to $ L_j $ are given by the index set $ E_{ij} $. For the specific case of the pair $ (\H\ZigzagCat, \HTw\Gtl Q) $, the reference objects are the zigzag paths of $ Q $. Since zigzag paths of $ Q $ are in correspondence with vertices of $ \mirQ $, this identifies the vertices of $ \chlQ $ and $ \mirQ $. For given vertices $ i, j $, the set of arrows $ E_{ij} $ from $ i $ to $ j $ is equal to the set of odd transversal intersection points $ L_i → L_j $. Every odd transversal intersection is located at the midpoint of an arc $ a ∈ Q_1 $. The corresponding arc $ a ∈ \mirQ_1 $ runs from $ i $ to $ j $ as well, as illustrated in \autoref{fig:MS-CHL-arccorrespondence}. We conclude that $ \chlQ = \mirQ $.
\end{proof}

\begin{figure}
\centering
\begin{tikzpicture}
\path[draw, gray, ->] (0, 0) -- ++(60:1) coordinate[midway] (1) -- ++(right:1) coordinate[midway] (2) coordinate (stop1);
\path[draw, gray] (stop1) -- ++(60:0.85) coordinate[midway] (3);
\path[draw, gray, ->] (0, 1.6) -- ++(300:1) coordinate[midway] (4) -- ++(right:1) coordinate[midway] (5) coordinate (stop2);
\path[draw, gray] (stop2) -- ++(300:0.85) coordinate[midway] (6);
\path ($ (2)!0.5!(5) $) coordinate (m);
\path[draw, semithick, rounded corners] ($ (1) + (-0.5, -0.1) $) to (1) to coordinate[pos=0.4] (7) (m) to coordinate[pos=0.6] (8) (3) to ($ (3) + (0.5, 0.1) $) node[below] {$ L_1 $};
\path[draw, semithick, rounded corners] ($ (4) + (-0.5, 0.1) $) to (4) to coordinate[pos=0.4] (9) (m) to coordinate[pos=0.6] (10) (6) to ($ (6) + (0.5, -0.1) $) node[below] {$ L_2 $};
\path[fill] (m) circle[radius=0.05] node[above] {$ a $};
\path[draw, semithick, -{To[scale=2]}, bend right=60] (7) to (10);
\path[draw, semithick, -{To[scale=2]}, bend right=60] (8) to (9);
\path (1, -0.5) node {in $ Q $} (5, -0.5) node {in $ \mirQ $};
\begin{scope}[shift={(4, -0.1)}]
\path[draw, gray, ->] (0, 0) -- ++(60:1);
\path[draw, gray, ->] (60:1) -- ++(120:1);
\path[draw, semithick, ->] (60:1) -- ++(right:1);
\path[draw, semithick, ->] (60:1)++(up:0.12) -- ++(right:1) coordinate (right) node[midway, above] {$ a $};
\path[draw, gray, ->] (right) -- ++(60:1);
\path[draw, gray, ->] (right) -- ++(300:1);
\path (60:1) node[left] {$ L_1 $} (right) node[right] {$ L_2 $};
\end{scope}
\end{tikzpicture}
\caption{This picture depicts the correspondence between odd intersections in $ Q $ and arrows in $ \mirQ $. The odd basis element $ X_a: L_1 → L_2 $, given by the intersection of $ \smooth L_1 $ and $ \smooth L_2 $ located at the midpoint of the arc $ a $, corresponds to an arrow $ L_1 → L_2 $ in the dual dimer $ \mirQ $.}
\label{fig:MS-CHL-arccorrespondence}
\end{figure}
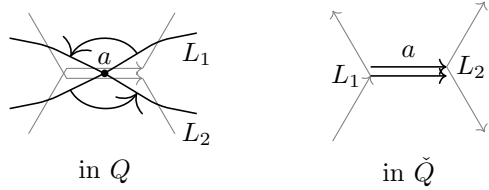

\begin{lemma}[{\cite[Lemma 10.13]{CHL}}]
\label{th:MS-CHL-W}
Application of the Cho-Hong-Lau construction to the pair $ (\H\ZigzagCat, \HTw\Gtl Q) $ yields the familiar superpotential
\begin{equation*}
W = \sum\limits_{\substack{a_k … a_1 \\ \text{clockwise}}} (a_k … a_1)_{\cyc} ~-~ \sum\limits_{\substack{a_k … a_1 \\ \text{counterclockwise}}} (a_k … a_1)_{\cyc} ∈ (ℂ \mirQ)_{≥3}.
\end{equation*}
In particular, application yields the familiar relations $ R_a = r_a^+ - r_a^- $ and the familiar Jacobi algebra $ \Jac(\chlQ, W) = \Jac \mirQ $.
\end{lemma}

\begin{proof}
It is our task to determine the superpotential $ W $. After that, the conclusion on the relations $ R_a $ and the Jacobi algebra is immediate.

The general superpotential is given by $ W = ⟨μ(b, …, b), b⟩ $. Specifically for the pair $ (\H\ZigzagCat, \HTw\Gtl Q) $ this description boils down to calculating the products
\begin{equation*}
μ_{\HTw\Gtl Q} (X_{e_k}, …, X_{e_1})
\end{equation*}
The result of such a product is a linear combination of the even basis elements $ Y_e $ and possibly an identity. It is our task to extract the coefficient of every $ Y_e $ in the product. We claim that this coefficient is precisely
\begin{equation}
\label{eq:MS-CHL-Wcalc}
⟨μ_{\HTw\Gtl Q} (X_{e_k}, …, X_{e_1}), X_e⟩ = \begin{cases}
+1 & \text{if } e_k … e_1 e \text{ is a clockwise polygon in } \mirQ \\
-1 & \text{if } e_k … e_1 e \text{ is a counterclockwise polygon in } \mirQ \\
0 & \text{else} \end{cases}.
\end{equation}
To compute the product, we use our explicit description of the minimal model $ \H\ZigzagCat $, see \autoref{sec:zigzag-products}. Recall that \autoref{sec:zigzag-products} actually describes the deformed version $ \H\DefZigzagCat $ of $ \H\ZigzagCat $. The $ A_∞ $-structure $ \H\ZigzagCat $ is obtained from the deformed $ A_∞ $-structure of $ \H\DefZigzagCat $ by extracting the leading terms of every product. Spelling this out, the higher products in $ \H\ZigzagCat $ are simply determined by only those CR, ID, DS and DW disks that do not cover any punctures.

With this in mind, we are ready to calculate the products \eqref{eq:MS-CHL-Wcalc}. Let $ D $ be a (CR, ID, DS or DW) disk contributing to the product. Note that all input morphisms $ X_{e_i} $ are odd and the output is not an identity, so $ D $ is necessarily a CR disk.

\begin{figure}
\centering
\begin{subfigure}{0.4\linewidth}
\centering
\begin{tikzpicture}
\path[draw] (0, 0) -- (2, 0) -- (2, 1) -- (0, 1) -- (0, 0);
\path[draw] (1, 0) -- (1, 1);
\path[fill=gray!50, draw, semithick] (0.5, 0) to[bend right] (0, 0.5) to[bend right] (0.5, 1) to[bend right] (1, 0.5) to[bend right] (0.5, 0);
\path[fill=gray!50, draw, semithick] (1.5, 0) to[bend right] (1, 0.5) to[bend right] (1.5, 1) to[bend right] (2, 0.5) to[bend right] (1.5, 0);
\path[fill] (1, 0.5) circle[radius=0.03] node[below, shift={(left:0.1)}] {$ e $};
\path[fill] (0.5, 0) circle[radius=0.03] node[below] {$ e_1 $};
\path[fill] (0.5, 1) circle[radius=0.03] node[above] {$ e_k $};
\path[fill] (0, 0.5) circle[radius=0.03];
\path (0, 0) -- (0, 1) node[midway, sloped, above] {$ … $};
\path[fill] (1.5, 0) circle[radius=0.03];
\path[fill] (2, 0.5) circle[radius=0.03];
\path[fill] (1.5, 1) circle[radius=0.03];
\end{tikzpicture}
\caption{The two disks contributing to $ ∂_e W $}
\label{fig:MS-CHL-Wcalc}
\end{subfigure}
\begin{subfigure}{0.4\linewidth}
\centering
\begin{tikzpicture}[scale=0.5]
\path[draw] (0, 0) -- ++(right:1) coordinate[midway] (1-0) -- ++(right:1) coordinate[midway] (3-0) -- ++(right:1) coordinate[midway] (5-0) -- ++(right:1) coordinate[midway] (7-0);
\path[draw] (0, 1) -- ++(right:1) coordinate[midway] (1-2) -- ++(right:1) coordinate[midway] (3-2) -- ++(right:1) coordinate[midway] (5-2) -- ++(right:1) coordinate[midway] (7-2);
\path[draw] (0, 2) -- ++(right:1) coordinate[midway] (1-4) -- ++(right:1) coordinate[midway] (3-4) -- ++(right:1) coordinate[midway] (5-4) -- ++(right:1) coordinate[midway] (7-4);
\path[draw] (0, 3) -- ++(right:1) coordinate[midway] (1-6) -- ++(right:1) coordinate[midway] (3-6) -- ++(right:1) coordinate[midway] (5-6) -- ++(right:1) coordinate[midway] (7-6);
\path[draw] (0, 4) -- ++(right:1) coordinate[midway] (1-8) -- ++(right:1) coordinate[midway] (3-8) -- ++(right:1) coordinate[midway] (5-8) -- ++(right:1) coordinate[midway] (7-8);
\path[draw] (0, 0) -- ++(up:1) coordinate[midway] (0-1) -- ++(up:1) coordinate[midway] (0-3) -- ++(up:1) coordinate[midway] (0-5) -- ++(up:1) coordinate[midway] (0-7);
\path[draw] (1, 0) -- ++(up:1) coordinate[midway] (2-1) -- ++(up:1) coordinate[midway] (2-3) -- ++(up:1) coordinate[midway] (2-5) -- ++(up:1) coordinate[midway] (2-7);
\path[draw] (2, 0) -- ++(up:1) coordinate[midway] (4-1) -- ++(up:1) coordinate[midway] (4-3) -- ++(up:1) coordinate[midway] (4-5) -- ++(up:1) coordinate[midway] (4-7);
\path[draw] (3, 0) -- ++(up:1) coordinate[midway] (6-1) -- ++(up:1) coordinate[midway] (6-3) -- ++(up:1) coordinate[midway] (6-5) -- ++(up:1) coordinate[midway] (6-7);
\path[draw] (4, 0) -- ++(up:1) coordinate[midway] (8-1) -- ++(up:1) coordinate[midway] (8-3) -- ++(up:1) coordinate[midway] (8-5) -- ++(up:1) coordinate[midway] (8-7);
\path[draw, ultra thick] plot[smooth] coordinates{($ (0-1)!-0.5!(1-2) $) (0-1) ($ (0-1)!0.5!(1-2) + (315:0.1) $) (1-2) ($ (1-2)!0.5!(2-3) + (135:0.1) $) (2-3) ($ (2-3)!0.5!(3-4) + (315:0.1) $) (3-4) ($ (3-4)!0.5!(4-5) + (135:0.1) $) (4-5) ($ (4-5)!0.5!(5-6) + (315:0.1) $) (5-6) ($ (5-6)!0.5!(6-7) + (135:0.1) $) (6-7) ($ (6-7)!0.5!(7-8) + (315:0.1) $) (7-8) ($ (7-8)!-0.5!(6-7) $)};
\path[draw, ultra thick] plot[smooth] coordinates{($ (0-3)!-0.5!(1-4) $) (0-3) ($ (0-3)!0.5!(1-4) + (315:0.1) $) (1-4) ($ (1-4)!0.5!(2-5) + (135:0.1) $) (2-5) ($ (2-5)!0.5!(3-6) + (315:0.1) $) (3-6) ($ (3-6)!0.5!(4-7) + (135:0.1) $) (4-7) ($ (4-7)!0.5!(5-8) + (315:0.1) $) (5-8) ($ (5-8)!-0.5!(4-7) $)};
\path[draw, ultra thick] plot[smooth] coordinates{($ (0-5)!-0.5!(1-6) $) (0-5) ($ (0-5)!0.5!(1-6) + (315:0.1) $) (1-6) ($ (1-6)!0.5!(2-7) + (135:0.1) $) (2-7) ($ (2-7)!0.5!(3-8) + (315:0.1) $) (3-8) ($ (3-8)!-0.5!(2-7) $)};
\path[draw, ultra thick] plot[smooth] coordinates{($ (0-7)!-0.5!(1-8) $) (0-7) ($ (0-7)!0.5!(1-8) + (315:0.1) $) (1-8) ($ (1-8)!-0.5!(0-7) $)};
\path[draw, ultra thick] plot[smooth] coordinates{($ (1-0)!-0.5!(2-1) $) (1-0) ($ (1-0)!0.5!(2-1) + (135:0.1) $) (2-1) ($ (2-1)!0.5!(3-2) + (315:0.1) $) (3-2) ($ (3-2)!0.5!(4-3) + (135:0.1) $) (4-3) ($ (4-3)!0.5!(5-4) + (315:0.1) $) (5-4) ($ (5-4)!0.5!(6-5) + (135:0.1) $) (6-5) ($ (6-5)!0.5!(7-6) + (315:0.1) $) (7-6) ($ (7-6)!0.5!(8-7) + (135:0.1) $) (8-7) ($ (8-7)!-0.5!(7-6) $)};
\path[draw, ultra thick] plot[smooth] coordinates{($ (3-0)!-0.5!(4-1) $) (3-0) ($ (3-0)!0.5!(4-1) + (135:0.1) $) (4-1) ($ (4-1)!0.5!(5-2) + (315:0.1) $) (5-2) ($ (5-2)!0.5!(6-3) + (135:0.1) $) (6-3) ($ (6-3)!0.5!(7-4) + (315:0.1) $) (7-4) ($ (7-4)!0.5!(8-5) + (135:0.1) $) (8-5) ($ (8-5)!-0.5!(7-4) $)};
\path[draw, ultra thick] plot[smooth] coordinates{($ (5-0)!-0.5!(6-1) $) (5-0) ($ (5-0)!0.5!(6-1) + (135:0.1) $) (6-1) ($ (6-1)!0.5!(7-2) + (315:0.1) $) (7-2) ($ (7-2)!0.5!(8-3) + (135:0.1) $) (8-3) ($ (8-3)!-0.5!(7-2) $)};
\path[draw, ultra thick] plot[smooth] coordinates{($ (7-0)!-0.5!(8-1) $) (7-0) ($ (7-0)!0.5!(8-1) + (135:0.1) $) (8-1) ($ (8-1)!-0.5!(7-0) $)};
\path[draw, ultra thick] plot[smooth] coordinates{($ (0-1)!-0.5!(1-0) $) (0-1) ($ (0-1)!0.5!(1-0) + (45:0.1) $) (1-0) ($ (1-0)!-0.5!(0-1) $)};
\path[draw, ultra thick] plot[smooth] coordinates{($ (0-3)!-0.5!(1-2) $) (0-3) ($ (0-3)!0.5!(1-2) + (45:0.1) $) (1-2) ($ (1-2)!0.5!(2-1) + (225:0.1) $) (2-1) ($ (2-1)!0.5!(3-0) + (45:0.1) $) (3-0) ($ (3-0)!-0.5!(2-1) $)};
\path[draw, ultra thick] plot[smooth] coordinates{($ (0-5)!-0.5!(1-4) $) (0-5) ($ (0-5)!0.5!(1-4) + (45:0.1) $) (1-4) ($ (1-4)!0.5!(2-3) + (225:0.1) $) (2-3) ($ (2-3)!0.5!(3-2) + (45:0.1) $) (3-2) ($ (3-2)!0.5!(4-1) + (225:0.1) $) (4-1) ($ (4-1)!0.5!(5-0) + (45:0.1) $) (5-0) ($ (5-0)!-0.5!(4-1) $)};
\path[draw, ultra thick] plot[smooth] coordinates{($ (0-7)!-0.5!(1-6) $) (0-7) ($ (0-7)!0.5!(1-6) + (45:0.1) $) (1-6) ($ (1-6)!0.5!(2-5) + (225:0.1) $) (2-5) ($ (2-5)!0.5!(3-4) + (45:0.1) $) (3-4) ($ (3-4)!0.5!(4-3) + (225:0.1) $) (4-3) ($ (4-3)!0.5!(5-2) + (45:0.1) $) (5-2) ($ (5-2)!0.5!(6-1) + (225:0.1) $) (6-1) ($ (6-1)!0.5!(7-0) + (45:0.1) $) (7-0) ($ (7-0)!-0.5!(6-1) $)};
\path[draw, ultra thick] plot[smooth] coordinates{($ (1-8)!-0.5!(2-7) $) (1-8) ($ (1-8)!0.5!(2-7) + (225:0.1) $) (2-7) ($ (2-7)!0.5!(3-6) + (45:0.1) $) (3-6) ($ (3-6)!0.5!(4-5) + (225:0.1) $) (4-5) ($ (4-5)!0.5!(5-4) + (45:0.1) $) (5-4) ($ (5-4)!0.5!(6-3) + (225:0.1) $) (6-3) ($ (6-3)!0.5!(7-2) + (45:0.1) $) (7-2) ($ (7-2)!0.5!(8-1) + (225:0.1) $) (8-1) ($ (8-1)!-0.5!(7-2) $)};
\path[draw, ultra thick] plot[smooth] coordinates{($ (3-8)!-0.5!(4-7) $) (3-8) ($ (3-8)!0.5!(4-7) + (225:0.1) $) (4-7) ($ (4-7)!0.5!(5-6) + (45:0.1) $) (5-6) ($ (5-6)!0.5!(6-5) + (225:0.1) $) (6-5) ($ (6-5)!0.5!(7-4) + (45:0.1) $) (7-4) ($ (7-4)!0.5!(8-3) + (225:0.1) $) (8-3) ($ (8-3)!-0.5!(7-4) $)};
\path[draw, ultra thick] plot[smooth] coordinates{($ (5-8)!-0.5!(6-7) $) (5-8) ($ (5-8)!0.5!(6-7) + (225:0.1) $) (6-7) ($ (6-7)!0.5!(7-6) + (45:0.1) $) (7-6) ($ (7-6)!0.5!(8-5) + (225:0.1) $) (8-5) ($ (8-5)!-0.5!(7-6) $)};
\path[draw, ultra thick] plot[smooth] coordinates{($ (7-8)!-0.5!(8-7) $) (7-8) ($ (7-8)!0.5!(8-7) + (225:0.1) $) (8-7) ($ (8-7)!-0.5!(7-8) $)};
\path[fill=gray, fill opacity=0.5] plot[smooth] coordinates{(1-0) ($ (1-0)!0.5!(2-1) + (135:0.1) $) (2-1)} -- plot[smooth] coordinates{(2-1) ($ (1-2)!0.5!(2-1) + (225:0.1) $) (1-2)} -- plot[smooth] coordinates{(1-2) ($ (0-1)!0.5!(1-2) + (315:0.1) $) (0-1)} -- plot[smooth] coordinates{(0-1) ($ (0-1)!0.5!(1-0) + (45:0.1) $) (1-0)};
\path[fill=gray, fill opacity=0.5] plot[smooth] coordinates{(1-2) ($ (1-2)!0.5!(2-3) + (135:0.1) $) (2-3)} -- plot[smooth] coordinates{(2-3) ($ (1-4)!0.5!(2-3) + (225:0.1) $) (1-4)} -- plot[smooth] coordinates{(1-4) ($ (0-3)!0.5!(1-4) + (315:0.1) $) (0-3)} -- plot[smooth] coordinates{(0-3) ($ (0-3)!0.5!(1-2) + (45:0.1) $) (1-2)};
\path[fill=gray, fill opacity=0.5] plot[smooth] coordinates{(1-4) ($ (1-4)!0.5!(2-5) + (135:0.1) $) (2-5)} -- plot[smooth] coordinates{(2-5) ($ (1-6)!0.5!(2-5) + (225:0.1) $) (1-6)} -- plot[smooth] coordinates{(1-6) ($ (0-5)!0.5!(1-6) + (315:0.1) $) (0-5)} -- plot[smooth] coordinates{(0-5) ($ (0-5)!0.5!(1-4) + (45:0.1) $) (1-4)};
\path[fill=gray, fill opacity=0.5] plot[smooth] coordinates{(1-6) ($ (1-6)!0.5!(2-7) + (135:0.1) $) (2-7)} -- plot[smooth] coordinates{(2-7) ($ (1-8)!0.5!(2-7) + (225:0.1) $) (1-8)} -- plot[smooth] coordinates{(1-8) ($ (0-7)!0.5!(1-8) + (315:0.1) $) (0-7)} -- plot[smooth] coordinates{(0-7) ($ (0-7)!0.5!(1-6) + (45:0.1) $) (1-6)};
\path[fill=gray, fill opacity=0.5] plot[smooth] coordinates{(3-0) ($ (3-0)!0.5!(4-1) + (135:0.1) $) (4-1)} -- plot[smooth] coordinates{(4-1) ($ (3-2)!0.5!(4-1) + (225:0.1) $) (3-2)} -- plot[smooth] coordinates{(3-2) ($ (2-1)!0.5!(3-2) + (315:0.1) $) (2-1)} -- plot[smooth] coordinates{(2-1) ($ (2-1)!0.5!(3-0) + (45:0.1) $) (3-0)};
\path[fill=gray, fill opacity=0.5] plot[smooth] coordinates{(3-2) ($ (3-2)!0.5!(4-3) + (135:0.1) $) (4-3)} -- plot[smooth] coordinates{(4-3) ($ (3-4)!0.5!(4-3) + (225:0.1) $) (3-4)} -- plot[smooth] coordinates{(3-4) ($ (2-3)!0.5!(3-4) + (315:0.1) $) (2-3)} -- plot[smooth] coordinates{(2-3) ($ (2-3)!0.5!(3-2) + (45:0.1) $) (3-2)};
\path[fill=gray, fill opacity=0.5] plot[smooth] coordinates{(3-4) ($ (3-4)!0.5!(4-5) + (135:0.1) $) (4-5)} -- plot[smooth] coordinates{(4-5) ($ (3-6)!0.5!(4-5) + (225:0.1) $) (3-6)} -- plot[smooth] coordinates{(3-6) ($ (2-5)!0.5!(3-6) + (315:0.1) $) (2-5)} -- plot[smooth] coordinates{(2-5) ($ (2-5)!0.5!(3-4) + (45:0.1) $) (3-4)};
\path[fill=gray, fill opacity=0.5] plot[smooth] coordinates{(3-6) ($ (3-6)!0.5!(4-7) + (135:0.1) $) (4-7)} -- plot[smooth] coordinates{(4-7) ($ (3-8)!0.5!(4-7) + (225:0.1) $) (3-8)} -- plot[smooth] coordinates{(3-8) ($ (2-7)!0.5!(3-8) + (315:0.1) $) (2-7)} -- plot[smooth] coordinates{(2-7) ($ (2-7)!0.5!(3-6) + (45:0.1) $) (3-6)};
\path[fill=gray, fill opacity=0.5] plot[smooth] coordinates{(5-0) ($ (5-0)!0.5!(6-1) + (135:0.1) $) (6-1)} -- plot[smooth] coordinates{(6-1) ($ (5-2)!0.5!(6-1) + (225:0.1) $) (5-2)} -- plot[smooth] coordinates{(5-2) ($ (4-1)!0.5!(5-2) + (315:0.1) $) (4-1)} -- plot[smooth] coordinates{(4-1) ($ (4-1)!0.5!(5-0) + (45:0.1) $) (5-0)};
\path[fill=gray, fill opacity=0.5] plot[smooth] coordinates{(5-2) ($ (5-2)!0.5!(6-3) + (135:0.1) $) (6-3)} -- plot[smooth] coordinates{(6-3) ($ (5-4)!0.5!(6-3) + (225:0.1) $) (5-4)} -- plot[smooth] coordinates{(5-4) ($ (4-3)!0.5!(5-4) + (315:0.1) $) (4-3)} -- plot[smooth] coordinates{(4-3) ($ (4-3)!0.5!(5-2) + (45:0.1) $) (5-2)};
\path[fill=gray, fill opacity=0.5] plot[smooth] coordinates{(5-4) ($ (5-4)!0.5!(6-5) + (135:0.1) $) (6-5)} -- plot[smooth] coordinates{(6-5) ($ (5-6)!0.5!(6-5) + (225:0.1) $) (5-6)} -- plot[smooth] coordinates{(5-6) ($ (4-5)!0.5!(5-6) + (315:0.1) $) (4-5)} -- plot[smooth] coordinates{(4-5) ($ (4-5)!0.5!(5-4) + (45:0.1) $) (5-4)};
\path[fill=gray, fill opacity=0.5] plot[smooth] coordinates{(5-6) ($ (5-6)!0.5!(6-7) + (135:0.1) $) (6-7)} -- plot[smooth] coordinates{(6-7) ($ (5-8)!0.5!(6-7) + (225:0.1) $) (5-8)} -- plot[smooth] coordinates{(5-8) ($ (4-7)!0.5!(5-8) + (315:0.1) $) (4-7)} -- plot[smooth] coordinates{(4-7) ($ (4-7)!0.5!(5-6) + (45:0.1) $) (5-6)};
\path[fill=gray, fill opacity=0.5] plot[smooth] coordinates{(7-0) ($ (7-0)!0.5!(8-1) + (135:0.1) $) (8-1)} -- plot[smooth] coordinates{(8-1) ($ (7-2)!0.5!(8-1) + (225:0.1) $) (7-2)} -- plot[smooth] coordinates{(7-2) ($ (6-1)!0.5!(7-2) + (315:0.1) $) (6-1)} -- plot[smooth] coordinates{(6-1) ($ (6-1)!0.5!(7-0) + (45:0.1) $) (7-0)};
\path[fill=gray, fill opacity=0.5] plot[smooth] coordinates{(7-2) ($ (7-2)!0.5!(8-3) + (135:0.1) $) (8-3)} -- plot[smooth] coordinates{(8-3) ($ (7-4)!0.5!(8-3) + (225:0.1) $) (7-4)} -- plot[smooth] coordinates{(7-4) ($ (6-3)!0.5!(7-4) + (315:0.1) $) (6-3)} -- plot[smooth] coordinates{(6-3) ($ (6-3)!0.5!(7-2) + (45:0.1) $) (7-2)};
\path[fill=gray, fill opacity=0.5] plot[smooth] coordinates{(7-4) ($ (7-4)!0.5!(8-5) + (135:0.1) $) (8-5)} -- plot[smooth] coordinates{(8-5) ($ (7-6)!0.5!(8-5) + (225:0.1) $) (7-6)} -- plot[smooth] coordinates{(7-6) ($ (6-5)!0.5!(7-6) + (315:0.1) $) (6-5)} -- plot[smooth] coordinates{(6-5) ($ (6-5)!0.5!(7-4) + (45:0.1) $) (7-4)};
\path[fill=gray, fill opacity=0.5] plot[smooth] coordinates{(7-6) ($ (7-6)!0.5!(8-7) + (135:0.1) $) (8-7)} -- plot[smooth] coordinates{(8-7) ($ (7-8)!0.5!(8-7) + (225:0.1) $) (7-8)} -- plot[smooth] coordinates{(7-8) ($ (6-7)!0.5!(7-8) + (315:0.1) $) (6-7)} -- plot[smooth] coordinates{(6-7) ($ (6-7)!0.5!(7-6) + (45:0.1) $) (7-6)};
\end{tikzpicture}
\caption{Checkerboard coloring}
\label{fig:MS-CHL-checkerboard}
\end{subfigure}
\caption{One can put a checkerboard coloring on the area of the dimer $ Q $. The checkerboard coloring makes it easy to check that there are only two disks which contribute to $ ∂_e W $.}
\end{figure}
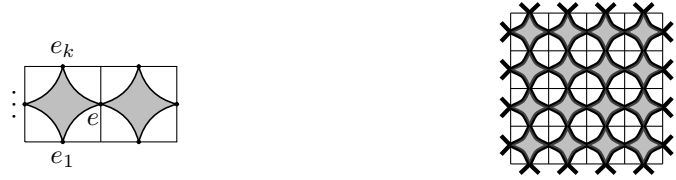

We claim that the disk $ D $ precisely bounds the interior of an (elementary) polygon, as depicted in \autoref{fig:MS-CHL-Wcalc}. Indeed, the zigzag curves of $ Q $ split $ D $ into pieces of two types: the first type comprises an interior of a polygon of $ Q $, and the second type comprises a polygonal neighborhood of a puncture bounded by neighboring zigzag curves. Two pieces of $ D $ bounding each other are of opposite type. An example of this checkerboard coloring is depicted in \autoref{fig:MS-CHL-checkerboard}. Since the specific disk $ D $ is supposed to cover no puncture, it only consists of pieces of the first type. In fact, there it consists of only one such piece, since any second bordering piece would be of the second type. This shows that $ D $ precisely fits a polygon of $ Q $.

Let us conclude the product formula \eqref{eq:MS-CHL-Wcalc}. If $ e_k … e_1 e $ is a clockwise polygon in $ \mirQ $, then the same path forms a clockwise polygon in $ Q $. As we have just seen, there is a single disk $ D $ contributing the product, and we conclude
\begin{equation*}
μ(X_{e_k}, …, X_{e_1}) = Y_e.
\end{equation*}
The sign is positive because all inputs of $ D $ are odd and run clockwise with the disk, the output is odd, and no $ \# $ signs appear on the zigzag curve segments. If $ e_k … e_1 e $ is a counterclockwise polygon in $ \mirQ $, then the path $ e_1 … e_k e $ is a counterclockwise polygon in $ Q $. As we have just seen, there is a single disk $ D $ contributing to the product, and we conclude
\begin{equation*}
μ(X_{e_k}, …, X_{e_1}) = - Y_e.
\end{equation*}
The sign is negative because $ D $ has $ k $ odd inputs running counterclockwise with $ D $, while there is also a $ \# $-sign in every of the $ k + 1 $ corners of the polygon.

If $ e_k … e_1 e $ is a path in $ \mirQ $ that is not a polygon, there is no disk $ D $ with inputs $ X_{e_1}, …, X_{e_k} $ and output $ Y_e $ at all. In summary, we have shown \eqref{eq:MS-CHL-Wcalc}. Ultimately, the superpotential is given by
\begin{align*}
W &= ⟨μ_{\HTw\Gtl Q} (b, …, b), b⟩ \\
&= \sum_{e_k, …, e_1, e} e_k … e_1 e ⟨μ(X_{e_k}, …, X_{e_1}), Y_e⟩ \\
&= \sum_{\substack{a_k … a_1 \\ \text{clockwise}}} (a_k … a_1)_{\cyc} - \sum_{\substack{a_k … a_1 \\ \text{counterclockwise}}} (a_k … a_1)_{\cyc}.
\end{align*}
This finishes the calculation of the superpotential $ W $. The final statement on $ R_a $ and $ \Jac(\chlQ, W) $ is now immediate: By \autoref{th:CHL-classical-centrality}, we have $ R_a = ∂_a W $ which simplifies to $ R_a = r_a^+ - r_a^- $. In particular, we have $ \Jac(\chlQ, W) = \Jac \mirQ $. This finishes the proof.
\end{proof}

\begin{remark}
\label{rem:MS-CHL-specialpoly}
As preparation for the proof of \autoref{th:MS-CHL-l}, let us fix some terminology regarding the identity location of a zigzag path. Let $ L_i ∈ \ZigzagCat $ be one of the zigzag paths of $ Q $. The identity location of $ L_i $ is a certain arc $ a_0 $. This arc borders precisely two polygons in $ Q $. One of the two is clockwise, the other counterclockwise. The \emph{special polygon} $ P_i $ of $ L_i $ is the one lying on the left or right side of the arc $ a_0 $, depending on whether where $ L_i $ turns left or right at the head of $ a_0 $, respectively. If $ Q $ has no punctures of valence 2, this condition can also be expressed as follows: $ P $ is the polygon which has the identity arc $ a_0 $ and its successor within $ L_i $ as part of the boundary.

In case $ P_i $ is clockwise, we denote by $ p_1, …, p_l $ be the boundary arcs of $ P_i $, in clockwise order and ending with the identity arc $ p_l = a_0 $. In case $ P_i $ is counterclockwise, let $ p_1, …, p_l $ be the boundary arcs of $ P_i $, in clockwise order and starting with $ p_1 = a_0 $. We may call $ p_1, …, p_l $ the \emph{special polygon sequence} of $ L_i $. Note that $ p_l … p_1 $ is only a path in $ Q $ if $ P_i $ is clockwise, while it is always a path in $ \mirQ $.
\end{remark}

\begin{lemma}[{\cite[Lemma 10.21]{CHL}}]
\label{th:MS-CHL-l}
Application of the Cho-Hong-Lau construction to the pair $ (\H\ZigzagCat, \HTw\Gtl Q) $ yields the familiar potential $ ℓ = \sum ℓ_v ∈ \Jac \mirQ $. Altogether, application yields the familiar Landau-Ginzburg model $ (\Jac \mirQ, ℓ) $.
\end{lemma}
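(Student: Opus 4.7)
The plan is to run the same sort of calculation as in \autoref{th:MS-CHL-W}, but extracting the identity component of $\sum_{k \geq 1} \mu_{\H\ZigzagCat}^k(b, \ldots, b)$ rather than the $Y_e$-components. By \eqref{eq:CHL-classical-Rdef} together with the CHL basis pairing identities $\langle \id_{L_j}, \coid_{L_i}\rangle = \delta_{ij}$, the $\id_{L_i}$-coefficient is
\begin{equation*}
ℓ_i = \sum_{k \geq 1} \sum_{e_1, \ldots, e_k} \langle \mu^k(X_{e_k}, \ldots, X_{e_1}), \coid_{L_i} \rangle \, x_{e_k} \cdots x_{e_1},
\end{equation*}
so the task reduces to enumerating products of odd transversal morphisms whose output is the identity at $L_i$.

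By \autoref{th:zigzag-products-th} applied to the leading (undeformed) part of $\H\DefZigzagCat$, such products are counted by CR, ID, DS, and DW disks that cover no punctures. Among these four types, only ID disks produce identity outputs. Since the inputs $X_{e_j}$ are all transversal odd, no stacked co-identity inputs occur, so after excising the output mark and substituting the degenerate input each relevant ID disk becomes a CR disk with purely transversal odd inputs and output covering no punctures. The checkerboard argument already used in the proof of \autoref{th:MS-CHL-W} then forces the underlying disk to bound a single polygon of $Q$.

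For $\id_{L_i}$ to appear as output, the ID disk's output mark must sit at the midpoint of the identity arc $a_0$ of $L_i$, and the orientation clause for odd degenerate inputs (\autoref{sec:zigzag-products}) selects exactly one of the two polygons incident at $a_0$ --- namely the special polygon $P_i$ of \autoref{rem:MS-CHL-specialpoly}. Reading off the boundary of $P_i$ gives the special polygon sequence $p_1, \ldots, p_l$, with inputs $X_{p_1}, \ldots, X_{p_{l-1}}$ and degenerate input $X_{p_l} = X_{a_0}$. The corresponding monomial in $\compl{ℂ\mirQ}$ is $x_{p_l} x_{p_{l-1}} \cdots x_{p_1}$, which is precisely the boundary cycle of $P_i$ at the vertex $L_i$.

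What remains is the Abouzaid sign. As in the $W$ computation, the sign combines the $\#$-signs along the boundary (governed by \autoref{conv:zigzag-category-convention}), the parity contributions from odd inputs relative to the disk orientation, and the correction from the odd output-identification convention of $\id_{L_i}$ and $\coid_{L_i}$. The stipulation in \autoref{conv:zigzag-category-convention} that $\coid_{L_i}$ lies in a counterclockwise polygon is designed to synchronize the clockwise and counterclockwise cases of $P_i$, so the coefficient should come out to $+1$ uniformly; this yields $ℓ_i = ℓ_{L_i}$ inside $\Jac \mirQ$, where any other polygon at $L_i$ is F-term equivalent so the choice of $P_i$ is invisible. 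Summing $i = 1, \ldots, N$ gives $ℓ = \sum_v ℓ_v$ as claimed, and the Landau-Ginzburg model recovered is $(\Jac \mirQ, ℓ)$. The main obstacle is the bookkeeping in this final Abouzaid sign analysis across the two orientations of $P_i$ and their interaction with the chosen spin structure.
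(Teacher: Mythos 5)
Your proposal follows essentially the same route as the paper's proof: extract the $\id_{L_i}$-coefficient by pairing with $\coid_{L_i}$, reduce to enumerating disks with identity output, use the checkerboard argument to force the underlying disk to bound a single polygon of $Q$, apply the ID disk orientation clauses to pin down the special polygon $P_i$, read off the special polygon sequence, and pass to the Jacobi algebra via F-term equivalence. The reduction to ID disks is phrased slightly differently --- you assert outright that only ID disks produce identity outputs, whereas the paper first narrows to CR or ID (all inputs are odd, ruling out DS/DW), establishes that the disk bounds a polygon, and only then infers it is ID because the identity arc $a_0$ must be one of the boundary arcs of that polygon and is therefore infinitesimally close to an input. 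Your blanket assertion is ultimately defensible but is doing a little more work than \autoref{th:zigzag-products-th} hands you, and it would be cleaner to route it through the polygon-bounding step as the paper does. (A small imprecision: you tacitly set $a_0 = p_l$, which is only the clockwise case of \autoref{rem:MS-CHL-specialpoly}; the counterclockwise case has $a_0 = p_1$.)

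The genuine gap is the Abouzaid sign. You identify exactly the three ingredients --- the $\#$-signs of \autoref{conv:zigzag-category-convention}, the parity contributions of odd inputs oriented counterclockwise to the disk, and the co-identity convention --- and then write that the coefficient "should come out to $+1$ uniformly," explicitly deferring the computation. But the entire content of the lemma is that this coefficient equals $+1$; without the sign argument the statement is unverified. The paper closes the loop directly: if $P_i$ is clockwise, the $k$ inputs are all odd but oriented clockwise with the disk (contributing nothing) and no $\#$-signs appear, so the sign is $+1$; if $P_i$ is counterclockwise, the $k$ odd inputs are all counterclockwise (contributing $k$), and the $k$ corners each carry a $\#$-sign (contributing another $k$), so the total is $2k \equiv 0$. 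You need to actually carry this out for the proposal to constitute a proof.
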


\begin{proof}
The general potential $ ℓ = \sum_{i ∈ \chlQ_0} ℓ_i $ is given by
\begin{equation*}
ℓ_i = ⟨μ(b, …, b), \id_{L_i}⟩.
\end{equation*}
Specifically for the pair $ (\H\ZigzagCat, \HTw\Gtl Q) $ this description boils down to extracting the identities from the products
\begin{equation*}
μ_{\HTw\Gtl Q} (X_{e_k}, …, X_{e_1}).
\end{equation*}

Denote by $ p_1, …, p_l $ the special polygon sequence of $ L_i $, defined in \autoref{rem:MS-CHL-specialpoly}. We claim the coefficient of $ \id_{L_i} $ in the product $ μ(X_{e_k}, …, X_{e_1}) $ is precisely
\begin{equation}
\label{eq:MS-CHL-lcalc}
⟨μ_{\HTw\Gtl Q} (X_{e_k}, …, X_{e_1}), \coid_{L_i}⟩ = \begin{cases}
+1 & \text{if } e_k … e_1 = p_l … p_1 \\
0 & \text{else}. \end{cases}
\end{equation}
As in the proof of \autoref{th:MS-CHL-W}, we use our explicit description of the minimal model $ \H\ZigzagCat $. Again, the only disks that count are those not covering any punctures. Let $ D $ be a (CR, ID, DS or DW) disk contributing to the product \eqref{eq:MS-CHL-lcalc}. Note that all input morphisms $ X_{e_i} $ are odd and the output is an identity, so $ D $ is necessarily a CR or ID disk. Similar to the case of the products presented in the proof of \autoref{th:MS-CHL-W}, we conclude that $ D $ precisely bounds a certain polygon $ P $. More precisely, the length $ l $ of the polygon is equal to the number $ k $ of inputs of $ D $, and the boundary arcs of $ P $ are equal to $ e_1, …, e_k $ in this order.

Let us analyze the properties of $ D $. By assumption, the output of $ D $ is the identity of $ L_i $. This identity is therefore located on one of the boundary arcs of $ P $. Since every single odd intersection located at arcs of the polygon is used as input of $ D $, the identity location necessarily of $ L_i $ necessarily lies infinitesimally close to one of the inputs of $ D $. We conclude that $ D $ is necessarily an ID disk.

We claim that $ P $ is the special polygon $ P_i $ of $ L_i $. To show this, assume $ P $ is clockwise. By the definition of ID disks, the input lying close to the output precedes the output. In other words, the identity arc is $ e_k $ and the zigzag path $ L_i $ is the one turning right at the head of $ e_k $. Assume now $ P $ is counterclockwise. By definition of ID disks, the input lying close to the output succeeds the output. In other words, the identity arc is $ e_1 $ and the zigzag path $ L_i $ is the one turning left at the head of $ e_1 $. In both cases we conclude that $ P $ is precisely the special polygon $ P_i $.

We are ready to conclude the product formula \eqref{eq:MS-CHL-lcalc}. In case $ e_1, …, e_k = p_1, …, p_k $ within $ Q $ is the boundary of the special polygon $ P_i $, then there is a single disk contributing to the product as we have just seen. We conclude
\begin{equation*}
⟨μ(X_{e_k}, … X_{e_1}), \coid_{L_i}⟩ = +1.
\end{equation*}
The sign is always positive. Indeed, if $ D $ lies in a clockwise polygon, then all of its $ k $ inputs are odd but clockwise with $ D $ and no $ \# $ signs appear on the boundary of $ D $. If $ D $ lies in a counterclockwise polygon, then all of its $ k $ inputs are odd and counterclockwise, while there is also a $ \# $-sign in every of the $ k $ corners of the polygon

In case $ e_1, …, e_k $ is not $ p_1, …, p_k $, then there is no disk contributing to the product as we have just seen. We conclude
\begin{equation*}
⟨μ(X_{e_k}, …, X_{e_1}), \coid_{L_i}⟩ = 0.
\end{equation*}
Ultimately, the potential reads
\begin{align*}
ℓ &= \sum_{i ∈ \mirQ_0} ⟨μ_{\HTw\Gtl Q} (b, …, b), \coid_{L_i}⟩ \\
&= \sum_{i ∈ \mirQ_0} ℓ_i ∈ \Jac \mirQ.
\end{align*}
Here for every $ i $ the letter $ ℓ_i $ denotes an arbitrary boundary cycle of a polygon starting at vertex $ i ∈ \mirQ_0 $. This finishes the proof.
\end{proof}

\begin{lemma}[{\cite[Proposition 10.30]{CHL}}]
Application of the Cho-Hong-Lau construction to the pair $ (\H\ZigzagCat, \HTw\Gtl Q) $ yields the familiar matrix factorizations
\begin{equation*}
F(a) = \matf{(\Jac \mirQ)h(a)}{(\Jac\mirQ)t(a)}{a}{\bar a} \text{ for } a ∈ \Ob\Gtl Q = Q_1.
\end{equation*}
\end{lemma}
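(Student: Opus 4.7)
The plan is to apply the general Cho-Hong-Lau recipe of \autoref{def:CHL-classical-gadgetsdef} to the object $ a \in Q_1 \subset \Ob \HTw\Gtl Q $, using the computations of \autoref{sec:zigzag-mirobjects} (in the undeformed case, i.e.\ restricted to disks covering no punctures) to evaluate the resulting module and twisted differential. First I would identify the underlying $ \mathbb{Z}/2 $-graded $ \Jac\mirQ $-module $ F(a) = \bigoplus_{i} \Jac\mirQ \cdot L_i \tensor \Hom_{\HTw\Gtl Q}(L_i, a) $. By the basis description in \autoref{sec:zigzag-mirobjects}, the hom space $ \Hom_{\HTw\Gtl Q}(L_i, a) $ has a basis given by intersections of $ \smooth L_i $ with $ a $, and the only zigzag curves passing through $ a $ are those corresponding to the two vertices $ h(a), t(a) \in \mirQ_0 $; each meets $ a $ at its midpoint. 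The intersection-degree convention of \autoref{fig:zigzag-category-intersectiondeg} together with the spin choice in \autoref{conv:zigzag-category-convention} renders the intersection with $ L_{h(a)} $ even and the intersection with $ L_{t(a)} $ odd; denote them by $ Y $ and $ X $. This identifies the underlying module of $ F(a) $ with $ \Jac\mirQ \cdot h(a) \oplus (\Jac\mirQ \cdot t(a))[1] $, matching that of $ M_a $.

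Next I would compute the twisted differential $ \delta(m) = \sum_{k \geq 1} (-1)^{\|m\|} \mu^k(m, b, \ldots, b) $ by specialising \autoref{th:zigzag-mirobjects-products} to the undeformed products. For $ m = Y $ the second case of that lemma applies: only $ \mu^2(Y, X_a) = -X $ survives, where $ X_a \colon L_{t(a)} \to L_{h(a)} $ is the odd basis element at $ a $ between the two zigzag paths. Expanding $ b $ and inserting the sign $ (-1)^{\|Y\|} = -1 $ yields $ \delta(Y) = x_a \cdot X $, and under the identification $ \chlQ = \mirQ $ from \autoref{th:MS-CHL-W} the generator $ x_a $ is the arrow $ a \in \mirQ_1 $. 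For $ m = X $ the first case of \autoref{th:zigzag-mirobjects-products} applies, and $ \delta(X) $ is a sum over MD disks with output the partner intersection $ Y $ at $ a $. In the undeformed setting the MD-disk conditions (odd inputs, clockwise zigzag segments, no puncture covered) single out the polygons of $ Q $ adjacent to $ a $; tracing the complementary boundary of such a polygon recovers the paths $ r_a^+, r_a^- \in \mirQ $, both projecting to $ \bar{a} $ in $ \Jac\mirQ $. Hence $ \delta(X) = \bar{a} \cdot Y $.

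Assembling these two computations, the matrix of $ \delta $ in the basis $ (Y, X) $ reads $ \pmat{0 & \bar{a} \\ a & 0} $, which identifies $ F(a) $ with $ M_a $ as claimed. The main obstacle I expect is careful sign bookkeeping: the Abouzaid sign of \autoref{sec:zigzag-products}, the $ (-1)^{\|m\|} $ prefactor in the definition of $ \delta $, and the spin signs from \autoref{conv:zigzag-category-convention} must conspire to yield unsigned $ a $ and $ \bar{a} $ rather than their negatives. A secondary point is to verify that the MD-disk contributions to $ \delta(X) $ do not cancel spuriously, and that the two resulting boundary paths are correctly identified in $ \Jac\mirQ $ via the relations $ r_a^+ = r_a^- = \bar{a} $ recalled in \autoref{sec:prelim-mf}.
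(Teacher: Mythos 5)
Your proposal follows the same route as the paper: identify the underlying module via intersection points of the zigzag curves with $ a $, then evaluate $ \delta $ on the even and odd intersections using \autoref{th:zigzag-mirobjects-products}. The module identification and the computation $ \delta(Y) = a \cdot X $ (via the even-input case of that lemma and the $ (-1)^{\|Y\|} $ prefactor) are correct and match the paper.

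The flaw is in your MD-disk enumeration for $ \delta(X) $. The MD-disk condition that all zigzag segments run clockwise relative to the disk selects exactly one of the two polygons of $ Q $ bordering $ a $, namely the clockwise one (cf.\ the analysis in the proof of \autoref{th:MS-CHL-W}: a CR disk covering no punctures is an elementary polygon, and its segments run clockwise precisely when the polygon is clockwise). So in the undeformed setting there is a single MD disk, and $ \delta(X) $ receives a single contribution, the complementary path $ r_a^+ $ of the clockwise polygon. Your ``secondary point'' — worrying about spurious cancellation between two boundary paths and invoking $ r_a^+ = r_a^- = \bar{a} $ to reconcile them — reveals a misdiagnosis: if both polygons contributed MD disks, the result would be $ 2\bar{a} $ rather than $ \bar{a} $, which would be wrong. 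You land on the correct answer $ \delta(X) = \bar{a}\cdot Y $, but only because in fact exactly one MD disk appears; the F-term identification $ r_a^+ = r_a^- $ is not what saves the argument.
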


\begin{proof}
By definition, the matrix factorization $ F(a) $ is given by $ (M, δ) $ with module given by
\begin{equation*}
M = \bigoplus_{L_i ∈ \mirQ_0} (\Jac \mirQ) L_i \tensor \Hom_{\HTw\Gtl Q} (L_i, a).
\end{equation*}
and differential given by
\begin{equation*}
δ(m) = (-1)^{‖m‖} \sum μ_{\HTw\Gtl Q} μ(m, b, …, b).
\end{equation*}
Let us evaluate the module. As seen earlier, there are precisely two indices $ i, j $ whose hom space $ \Hom_{\HTw\Gtl Q} (L_i, a) $ is non-empty. These indices correspond to the two zigzag paths that cross the arc $ a $. Of course, the two zigzag paths leaving $ a $ might actually be equal, in which case we set $ i = j $. If we set $ i $ to be the zigzag path with the even intersection and $ j $ the zigzag path with the odd intersection, we have $ i = h(a) $ and $ j = t(a) $. This already gives the shape
\begin{equation*}
(M, δ) = \matf{(\Jac \mirQ) h(a)}{(\Jac \mirQ) t(a)}{*}{*}.
\end{equation*}
It remains to evaluate the differential $ δ $. Denote the even intersection point by $ p ∈ \Hom_{\HTw\Gtl Q} (L_{h(a)}, a) $ and the odd intersection point by $ q ∈ \Hom_{\HTw\Gtl Q} (L_{t(a)}, a) $. As we have seen earlier, the two products $ μ_{\HTw\Gtl Q} (p, b, …, b) $ and $ μ_{\HTw\Gtl Q} (q, b, …, b) $ are computed by MT and MD disks, respectively. For the present non-deformed case, this description boils down to the equations
\begin{align*}
δ(h(a) ¤ p) = (-1)^{‖p‖} μ_{\HTw\Gtl Q} (p, b, …, b) = (-1)^{1+1} a ¤ q, \\
δ(t(a) ¤ q) = (-1)^{‖q‖} μ_{\HTw\Gtl Q} (q, b, …, b) = + \bar a ¤ p.
\end{align*}
This shows that the module map $ (\Jac \mirQ) h(a) → (\Jac \mirQ) t(a) $ is given by left multiplication with $ a $ and the module map $ (\Jac \mirQ) t(a) → (\Jac \mirQ) h(a) $ is given by left multiplication with $ \bar a $.
\end{proof}

We are now ready to grasp the mirror functor associated with the specific case of the pair $ (\H\ZigzagCat, \HTw\Gtl Q) $. Indeed, we have computed the specific Landau-Ginzburg model and the mirror objects. The functor itself is not a quasi-equivalence, but it becomes one if we restrict to domain $ \Gtl Q ⊂ \HTw\Gtl Q $ and codomain $ \mf(\Jac \mirQ, ℓ) ⊂ \MF(\Jac \mirQ, ℓ) $.

\begin{corollary}[{\cite[Proposition 10.32]{CHL}}]
\label{th:MS-CHL-qi}
If $ \mirQ $ is zigzag consistent, the CHL functor $ F: \Gtl Q → \mf(\Jac \mirQ, ℓ) $ is a quasi-isomorphism.
\end{corollary}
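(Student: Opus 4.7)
The plan is to verify the three items constituting a quasi-isomorphism of $A_\infty$-categories: (i) a bijection on objects, (ii) that $F$ actually lands in the subcategory $\mf(\Jac \mirQ, \ell)$, and (iii) for every pair of arcs $a, b \in Q_1$ the induced map $F^1 \colon \Hom_{\Gtl Q}(a, b) \to \Hom_{\mf(\Jac \mirQ, \ell)}(M_a, M_b)$ is a quasi-isomorphism of complexes. Items (i) and (ii) are immediate from the preceding lemma: $F$ sends the arc $a \in \Ob \Gtl Q = Q_1$ to the matrix factorization $M_a$, and by definition $\mf(\Jac \mirQ, \ell)$ is the full subcategory of $\MF(\Jac \mirQ, \ell)$ spanned by exactly these objects. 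Hence $a \mapsto M_a$ is a bijection $Q_1 \to \Ob \mf(\Jac \mirQ, \ell)$.

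For item (iii), the key step is to identify $F^1(\alpha) = \zeta(Z_\alpha)$ for every angle $\alpha \colon a \to b$ in $Q$, where $Z_\alpha$ is the zigzag segment associated with $\alpha$ in $\mirQ$ and $\zeta(Z_\alpha)$ is the closed matrix factorization morphism introduced in \autoref{sec:prelim-ms}. Once this identification is in place, the conclusion follows immediately: the complex $\Hom_{\Gtl Q}(a, b)$ has vanishing differential and its basis is the set of angles $\alpha \colon a \to b$; by \cite[Lemma 8.3]{Bocklandt}, under cancellation consistency of $\mirQ$ (which follows from zigzag consistency), the very same set $\{\zeta(Z_\alpha)\}_\alpha$ constitutes a cohomological basis of $\Hom_{\mf(\Jac \mirQ, \ell)}(M_a, M_b)$. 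Thus $F^1$ sends a vector space basis bijectively to a cohomological basis and induces an isomorphism on cohomology.

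To verify $F^1(\alpha) = \zeta(Z_\alpha)$, I would unravel the Cho-Hong-Lau formula $F^1(\alpha)(m) = \pm \sum_{l \ge 0} \mu^{l+2}_{\HTw \Gtl Q}(\alpha, m, b, \ldots, b)$, where $m$ runs over the two basis morphisms of $\Hom_{\HTw \Gtl Q}(L_i, a)$ associated with the midpoint of the arc $a$, namely the even and odd intersections of $a$ with the two zigzag curves $\smooth L_{t(a)}, \smooth L_{h(a)}$ crossing $a$. The products $\mu^{l+2}_{\HTw \Gtl Q}(\alpha, m, X_{e_l}, \ldots, X_{e_1})$ are controlled by the same disk combinatorics governing $\HTw\Gtl_q Q$ specialised to the non-deformed setting, which restricts attention to disks covering no punctures. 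By the disk/zigzag-segment dictionary underlying the CR, ID and MD families (see \autoref{sec:zigzag-products} and \ref{sec:zigzag-mirobjects}), the surviving configurations are in bijection with the sub-segments of $Z_\alpha$, and upon reading off the matrix entries along the zigzag segment, one recovers precisely the opposite paths $\opp_1, \opp_2$ of \autoref{sec:prelim-ms}.

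The main obstacle is this identification $F^1(\alpha) = \zeta(Z_\alpha)$. The difficulty is not the combinatorial picture, which is already encoded in the disk descriptions of \autoref{th:zigzag-products-th} and \autoref{th:zigzag-mirobjects-products}, but rather the bookkeeping of signs: matching the Abouzaid signs carried by the contributing disks against the dg-vs.-$A_\infty$ sign conventions for $F^1$ recorded in \autoref{def:CHL-defMF-cat} and the parity-dependent matrix form of $\zeta(Z_\alpha)$ in \autoref{sec:prelim-ms}. Once the sign check is dispatched and the entry-wise formula $\zeta(Z_\alpha) = \operatorname{diag}(\opp_2, \opp_1)$ (or its anti-diagonal variant in the odd-length case) is confirmed, the quasi-isomorphism statement is immediate from \cite[Lemma 8.3]{Bocklandt} together with the vanishing of $\mu^1_{\Gtl Q}$.
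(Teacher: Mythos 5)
Your proposal matches the paper's own argument essentially step for step: both establish the bijection on objects, both reduce the hom-space claim to the identification $F^1(\alpha) = \zeta(Z_\alpha)$ together with the citation of \cite[Lemma 8.3]{Bocklandt} that the $\zeta(Z_\alpha)$ form a cohomological basis, and both acknowledge that the identification of $F^1(\alpha)$ ultimately rests on the description of products $\mu_{\HTw\Gtl Q}(\alpha, m, b, \ldots, b)$ by smooth immersed disks with Abouzaid sign. Where the paper frankly declares this last input ``assumed without proof'' and refers to \cite[Figure~20]{CHL}, you flag the same gap more carefully as a sign-bookkeeping task and sketch how the CR/ID/MD disk dictionaries from \autoref{sec:zigzag-products} and \ref{sec:zigzag-mirobjects} would close it; this is a refinement of the same route, not a different one.
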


\begin{proof}
On the level of objects, $ F $ maps an arc $ a ∈ Q_1 $ to the matrix factorization $ F(a) $ which is bijective on the level of objects. On the level of hom spaces, we are forced to cheat a little and believe the following fact: Let $ L_1, …, L_{N+1} $ be a sequence of $ N+1 ≥ 1 $ zigzag paths and let $ a, b ∈ Q_1 $ be arcs. Let $ h_1, …, h_N $ be odd morphisms with $ h_i ∈ \Hom_{\H\ZigzagCat} (L_i, L_{i+1}) $ not containing co-identities, let $ m ∈ \Hom_{\HTw\Gtl Q} (L_{N+1}, a) $ and $ α ∈ \Hom_{\HTw\Gtl Q} (a, b) $ be further morphisms. Then we shall assume without proof that the product
\begin{equation*}
μ_{\HTw\Gtl Q} (α, m, h_N, …, h_1)
\end{equation*}
is computed by smooth immersed disks with Abouzaid sign. A good illustration of these disks can be found in \cite[Figure 20]{CHL}. This illustration makes it easy to check that $ F^1 $ sends an angle $ α ∈ \Hom_{\Gtl Q} (a, b) $ to its associated morphism of matrix factorizations
\begin{equation*}
±\pmat{0 & - \mathrm{opp}_1 \\ \mathrm{opp}_2 & 0} \text{ or } ±\pmat{\mathrm{opp}_2 & 0 \\ 0 & \mathrm{opp}_1},
\end{equation*}
depending on whether $ α $ is odd or even. It was shown in \cite{Bocklandt} that these morphisms of matrix factorizations provide a basis for $ \H\Hom_{\mf(\Jac \mirQ, ℓ)} (F(a), F(b)) $. This proves the functor $ F $ a quasi-isomorphism.
\end{proof}

The original mirror functor for punctured surfaces \cite{Bocklandt} was defined in a nonconstructive and nonunique way. Its properties known precisely are its values on objects and the first component $ F^1 $. All higher components $ F^{≥2} $ are defined nonconstructively. In contrast, the CHL functor $ F: \Gtl Q → \mf(\Jac \mirQ, ℓ) $ has explicitly defined higher components, at least if one counts the involved higher products of $ \HTw\Gtl Q $ as explicit. The CHL functor should therefore be viewed as a modern constructive incarnation of the original mirror functor.

\begin{remark}
Cho, Hong and Lau use a different convention for Fukaya categories: In their convention, a disk $ D $ contributing to a product $ μ(h_N, …, h_1) $ is supposed to hit the intersection points $ h_1, …, h_N $ in counterclockwise order. We have decided to stick with the original definition of gentle algebras and let products be given by counting disks hitting the intersection points in clockwise order.

Correspondingly, the conventions for the dual dimer $ \mirQ $ also differ: In their convention, the dual dimer $ \mirQ $ is obtained from $ Q $ by flipping over the clockwise faces. In our convention, the dual dimer $ \mirQ $ is obtained by flipping over the counterclockwise faces instead. The results differ by a flip of orientation and an inversion of the arrows.
\end{remark}

\subsection{Midpoint polygons}
\label{sec:MS-midpoint}
In this section, we introduce an auxiliary tool for the description of the deformed superpotential and deformed potential in \autoref{sec:MS-defsuperpotential} and \ref{sec:MS-defpotential}. The deformed Cho-Hong-Lau construction namely expresses superpotential and potential in term of the products on $ \H\DefZigzagCat $ which are in turn enumerated in terms of CR, ID, DS and DW disks. In the present section, we overhaul this description and provide one single type of disks, which we call midpoint polygons.

\begin{definition}
\label{def:MS-defsuperpotential-midpointpolygon}
A \emph{midpoint polygon} is an immersion of the standard polygon $ D: P_N → |Q| $ with $ N ≥ 1 $, such that
\begin{itemize}
\item The boundary of $ P_N $ is mapped to (nonempty) zigzag curve segments.
\item The corners are convex and lie on intersections points of zigzag curves.
\item The corners point into the interior of a polygon.
\end{itemize}
The map $ D $ itself is taken only up to reparametrization. The corners $ h_1, …, h_N $ of $ D $ lie on intersection points of zigzag curves, in other words on the midpoints of arcs $ a_1, …, a_N ∈ Q_1 $. We refer to the sequence of arcs as the \emph{arc sequence} of $ D $. The midpoint polygon \emph{starts} at arc $ a_1 $ and \emph{ends} at arc $ a_N $. It starts \emph{at the left/right side} of $ a_1 $ if the interior of $ D $ at $ a_1 $ lies at the left/right of the arc $ a_1 $ in the arc's natural orientation. It ends \emph{at the left/right side} of $ a_N $ if the interior of $ D $ lies at the left/right of the arc $ a_N $ in the arc's natural orientation. A \emph{arc crossing} of $ D $ is the datum of a single (indexed) arc crossed by one of the zigzag segments of $ D $, not counting the corner arcs. Arc crossings are always supposed to come with the datum of their location on the boundary of $ D $.
\end{definition}

Midpoint polygons are our most general container format for enumerating products in $ \H\DefZigzagCat $. Midpoint polygons with specific properties or specific additional data can be used to enumerate specific products. By requiring that the corners point into the interior of a polygon, we have incarnated the fact that we only regard disks with odd inputs.

\begin{definition}
\label{def:zigzag-defsuperpotential-polysign}
Let $ D $ be a midpoint polygon with sides lengths $ n_1, …, n_k $. Then the \emph{sign} of $ D $ is $ |D| = \sum (n_i - 1)/2 ∈ ℤ/2ℤ $. The \emph{deformation parameter} $ \punctures(D) ∈ ℂ⟦Q_0⟧ $ is the product of the punctures covered by $ D $, counting multiplicities. Let $ e_1, …, e_k $ be the arc sequence of $ D $. Then the \emph{path recording} of $ D $ is the path $ \arcsequence(D) = e_k … e_1 ∈ ℂ\mirQ $.
\end{definition}

A sample midpoint polygon is depicted in \autoref{fig:MS-midpoint-polygon}.

\begin{figure}
\centering
\begin{subfigure}[b]{0.4\linewidth}
\centering
\begin{tikzpicture}[scale=1.4]
\path[draw] (0, 0) coordinate (1-start1) -- ++(120:0.5) coordinate (1-start2) -- ++(0:0.5) -- ++(120:0.5) -- ++(0:0.5) -- ++(120:0.5) -- ++(0:0.5) -- ++(120:0.5) coordinate (1-end1) -- ++(0:0.5) coordinate (1-end2);
\path[draw] (1-end1) ++(0, 0.05) coordinate (2-start1) -- ++(0:0.6) coordinate (2-start2) -- ++(240:0.5) --  ++(0:0.5) -- ++(240:0.5) --  ++(0:0.5) -- ++(240:0.5) --  ++(0:0.5) coordinate (2-end1) -- ++(240:0.5) coordinate (2-end2);
\path[draw] (2-end1) ++(0.05, 0) coordinate (3-start1) -- ++(240:0.6) coordinate (3-start2) -- ++(120:0.5)  -- ++(240:0.5) -- ++(120:0.5)  -- ++(240:0.5) -- ++(120:0.5) -- ++(240:0.5) coordinate (3-end1) -- ++(120:0.5) coordinate (3-end2);
\path ($ 0.25*(1-start1)+0.25*(1-start2)+0.25*(3-end1)+0.25*(3-end2) $) coordinate (m1);
\path ($ 0.25*(2-start1)+0.25*(2-start2)+0.25*(1-end1)+0.25*(1-end2) $) coordinate (m2);
\path ($ 0.25*(3-start1)+0.25*(3-start2)+0.25*(2-end1)+0.25*(2-end2) $) coordinate (m3);
\path[draw, thick, ->] ($ (m1)!-0.1!(m2) $) -- ($ (m1)!1.1!(m2) $);
\path[draw, thick, ->] ($ (m2)!-0.1!(m3) $) -- ($ (m2)!1.1!(m3) $);
\path[draw, thick, ->] ($ (m3)!-0.1!(m1) $) -- ($ (m3)!1.1!(m1) $);
\path[fill] \foreach \i in {1, 2, 3} {(m\i) circle[radius=0.05]};
\path ($ (1-start1)!0.5!(1-start2) $) node[below left, shift={(-0.1, -0.1)}] {$ e_1 $};
\path ($ (2-start1)!0.5!(2-start2) $) node[above, shift={(0, 0.1)}] {$ e_2 $};
\path ($ (3-start1)!0.5!(3-start2) $) node[below right, shift={(0.1, -0.1)}] {$ e_3 $};
\end{tikzpicture}
\caption{Midpoint polygon}
\label{fig:MS-midpoint-polygon}
\end{subfigure}
\begin{subfigure}[b]{0.4\linewidth}
\begin{tikzpicture}[scale=1.2]
\path[draw, gray, -{To[scale=1.5]}] (0, 0) -- ++(right:1) coordinate[midway] (1) -- ++(down:1) -- ++(right:1) -- ++(down:1) coordinate[midway] (2);
\path[draw, semithick] (1)++(225:1) -- (1) coordinate[pos=0.3] (0) -- (2) node[midway, above right] {$ n_i = 3 $} -- ++(225:1) coordinate[pos=0.7] (3);
\filldraw[semithick, draw=black, fill=gray, fill opacity=0.3] (0) -- (1) -- (2) -- (3);
\path[fill] (1) circle[radius=0.05] (2) circle[radius=0.05];
\path ($ (0)!0.3!(1) $) node[above] {\small $ + $} ($ (1)!0.25!(2) $) node[above] {\small $ + $};
\path ($ (1)!0.45!(2) $) node[below] {\small $ - $} ($ (1)!0.9!(2) $) node[above] {\small $ + $};
\begin{scope}[shift={(3, 0)}]
\path[draw, gray, {To[scale=1.5]}-] (0, 0) -- ++(right:1) coordinate[midway] (1) -- ++(down:1) -- ++(right:1) -- ++(down:1) coordinate[midway] (2);
\path[draw, semithick] (1)++(225:1) -- (1) coordinate[pos=0.3] (0) -- (2) node[midway, above right] {$ n_i = 3 $} -- ++(225:1) coordinate[pos=0.7] (3);
\filldraw[semithick, draw=black, fill=gray, fill opacity=0.3] (0) -- (1) -- (2) -- (3);
\path[fill] (1) circle[radius=0.05] (2) circle[radius=0.05];
\path ($ (0)!0.3!(1) $) node[above] {\small $ - $} ($ (1)!0.25!(2) $) node[above] {\small $ - $};
\path ($ (1)!0.45!(2) $) node[below] {\small $ + $} ($ (1)!0.9!(2) $) node[above] {\small $ - $};
\end{scope}
\end{tikzpicture}
\caption{Lengths and signs of zigzag segments}
\label{fig:zigzag-defsuperpot-length}
\end{subfigure}
\caption{These pictures illustrate midpoint polygons and how we measure the lengths of their boundary segments. In the midpoint polygon on the left, the intersection points are located at arcs $ e_1 $, $ e_2 $, $ e_3 $ of $ Q $. To this midpoint polygon $ D $, we assign the path recording $ \arcsequence(D) = e_3 e_2 e_1 $ and the sign $ |D| = 3·(7-1)/2 = 1 ∈ ℤ/2ℤ $.}
\end{figure}
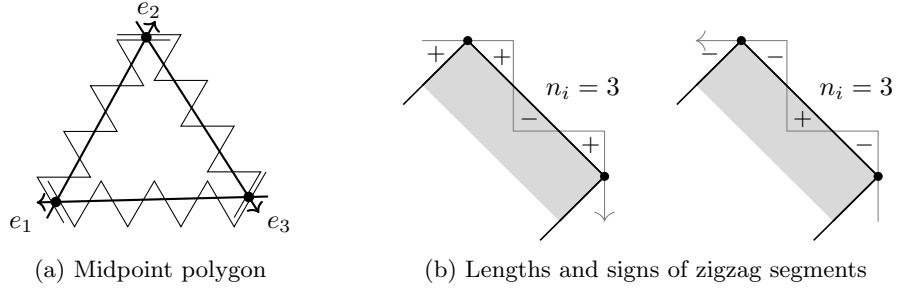

\subsection{Cyclicity and slow growth}
\label{sec:MS-technical}
In this section, we check that $ \H\DefZigzagCat $ is deformed cyclic and of slow growth. These two properties are technical requirements for the deformed Cho-Hong-Lau construction according to \autoref{th:CHL-functor-th}. We check the cyclicity property directly in terms of midpoint polygons. By contrast, the slow growth property makes reference also to products between arcs and zigzag paths in $ \HTw\Gtl_q Q $ and must therefore be dealt with from scratch. We do not prove slow growth with respect to the entire category, but restrict to arcs and zigzag paths.

\begin{lemma}
\label{th:MS-defsuperpotential-cyclicity}
The $ A_∞ $-deformation $ \H\DefZigzagCat $ is deformed cyclic on odd sequences containing no co-identities.
\end{lemma}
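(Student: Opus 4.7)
My approach will be to express both sides of the proposed cyclicity equation in terms of the disk enumeration from \autoref{th:zigzag-products-th} and exhibit a sign-preserving bijection.

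First I will unpack which disks actually contribute. Since all inputs $X_{e_i}$ are odd transversal intersections and none is a co-identity, and since we pair the output with $X_{e_1}$ (also odd, non co-identity), the relevant component of $\mu_q(X_{e_{k+1}}, \ldots, X_{e_2})$ is its $Y_{e_1}$-coefficient (because $\langle X_e, Y_f\rangle = \delta_{ef}$ and all other pairings among the $X$'s vanish). In particular, the output must be even and equal to $Y_{e_1}$. By inspecting the four disk types: ID disks have identity outputs, and DS/DW disks require at least one even input, so neither can contribute. Only CR disks survive, and those CR disks have no co-identity inputs either. Thus both sides of the proposed cyclicity equation are enumerations of CR disks with inputs all odd, non co-identity, and even non co-identity, non-identity outputs — which is precisely the midpoint polygon setup of \autoref{sec:MS-midpoint}.

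Next I will set up the bijection. A CR disk $D$ contributing to $\langle \mu_q(X_{e_{k+1}}, \ldots, X_{e_2}), X_{e_1}\rangle$ is a midpoint polygon whose arc sequence is $e_1, e_2, \ldots, e_{k+1}$ (read in boundary order starting from the designated output corner at $e_1$). Cyclically shifting which corner we declare ``the output'' turns $D$ into a CR disk with inputs $X_{e_k}, \ldots, X_{e_1}$ and output paired against $X_{e_{k+1}}$. This shift is a bijection of the two sets of disks — the underlying geometric midpoint polygon is the same, only the designation of input versus output corner moves.

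Finally I will check that the numerical contribution is invariant under this cyclic shift. The deformation parameter $\punctures(D) \in \mathbb{C}\llbracket Q_0\rrbracket$ depends only on the punctures covered by $D$ and is manifestly invariant. The Abouzaid sign $\Abouzaid(D)$ is a sum of four contributions: (i) the $\#$-signs along the boundary, which depend only on $D$ and not on the choice of corner; (ii) the count of odd inputs whose target zigzag runs counterclockwise with $D$; and (iii) the output contribution, plus one if the output is odd on a counterclockwise target zigzag. Since in our situation all corners of $D$ are odd transversal intersections with the same structural role, rotating which corner is designated output simply swaps one term of type (ii) with one term of type (iii) — both contribute $1$ to $\Abouzaid$ under the same condition (odd corner on counterclockwise zigzag), so the total parity is unchanged.

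The main obstacle is the sign check, since our inputs are treated via the Abouzaid rule while the output appears through the pairing $\langle -,-\rangle$ on $\DefRefObjects$; verifying that these two bookkeeping conventions agree under the cyclic shift requires careful inspection of the conventions for odd intersections fixed in \autoref{fig:zigzag-category-intersectiondeg} together with the spin-structure convention of \autoref{conv:zigzag-category-convention}. Once this accounting is carried out, the bijection transports each summand on one side exactly to a summand on the other side with identical sign and identical deformation weight, proving the asserted deformed cyclicity.
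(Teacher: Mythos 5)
Your overall strategy is the same as the paper's: identify the contributing disks with midpoint polygons, rotate the polygon so that the corner at $e_1$ trades roles with the corner at $e_{k+1}$, and check that the Abouzaid sign and the deformation parameter $\punctures(D)$ are preserved. The bijection and the invariance of $\punctures(D)$ are fine. However, the specific account you give for the sign invariance in your third paragraph is wrong, and it contradicts your own observation in the second paragraph.

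You correctly note that the output of a contributing CR disk is the \emph{even} morphism $Y_{e_1}$ (since $\langle\mu_q(X_{e_{k+1}},\ldots,X_{e_2}), X_{e_1}\rangle$ picks out the $Y_{e_1}$-coefficient). But then you claim the cyclic shift ``simply swaps one term of type (ii) with one term of type (iii) --- both contribute $1$ to $\Abouzaid$ under the same condition (odd corner on counterclockwise zigzag).'' The type-(iii) rule reads ``plus one if the output is \emph{odd} and $\smooth L_{N+1}$ is counterclockwise''; since the output here is even both before and after the rotation, the type-(iii) contribution is identically zero on both sides. There is no swap between types. The actual change under rotation is entirely within type (ii): the odd input $X_{e_{k+1}}$ (contributing $[\smooth L_{k+1}\text{ cc relative to }D]$) leaves the input list, and the odd input $X_{e_1}$ (contributing $[\smooth L_1\text{ cc relative to }D]$) enters it. So $\Abouzaid(D)$ changes by $[\smooth L_1\text{ cc}]-[\smooth L_{k+1}\text{ cc}]\pmod 2$, and nothing in your argument makes this vanish.

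What actually makes it vanish is a separate geometric fact about midpoint polygons, used implicitly throughout \autoref{sec:MS-CHL} (see the sign computations in the proofs of \autoref{th:MS-CHL-W} and \autoref{th:MS-CHL-l}): because all corners are convex and point into polygon interiors, the boundary zigzag segments of a midpoint polygon all have the \emph{same} orientation relative to $D$ --- either all clockwise or all counterclockwise. This forces $[\smooth L_1\text{ cc}]=[\smooth L_{k+1}\text{ cc}]$ and saves the argument. You need to invoke or establish this consistent-orientation property explicitly; the ``swap with the output term'' explanation does not hold, and without the orientation fact the sign agreement would be unproven.
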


\begin{proof}
Rotating a midpoint polygon contributing to $ ⟨μ_{\H\DefZigzagCat} (X_{e_{k-1}}, …, X_{e_1}), Y_{e_k}⟩ $ gives a midpoint polygon contributing to $ ⟨μ_{\H\DefZigzagCat} (X_{e_k}, …, X_{e_2}), X_{e_1}⟩ $. Their Abouzaid signs and deformation parameters agree. We conclude that both pairings agree, which finishes the proof.
\end{proof}

Let us now check the slow growth requirement property. The requirement is slightly problematic in that we are supposed to evaluate products of the form $ μ_{\HTw\Gtl_q Q} (m_k, …, m_1, b, …, b) $. Here $ m_1, …, m_k $ are morphisms between arbitrary objects of $ \HTw\Gtl_q Q $ and we do not have these products under control. However, for our purposes it suffices to construct the mirror functor merely on the subcategory $ \Gtl_q Q ⊂ \HTw\Gtl_q Q $. Inspecting the construction of the functor $ F_q $ and the proof of \autoref{th:CHL-functor-ismf} and \autoref{th:CHL-functor-th}, we conclude that for constructing the functor $ F_q: \Gtl_q Q → \MF(\Jac(\chlQ, W_q), ℓ_q) $ it suffices to check the slow growth property of \autoref{def:CHL-defLG-slowgrowth} only for morphisms with $ m_1: L → a_1 $ and $ m_i: a_{i-1} → a_i $ with $ L ∈ \DefZigzagCat $ and $ a_1, …, a_k ∈ \Gtl_q Q $. We record this as follows:

\begin{lemma}
\label{th:MS-defsuperpotential-slowgrowth}
Let $ k ≥ 0 $ and $ a_1, …, a_k ∈ \Gtl_q Q $. Let $ L ∈ \DefZigzagCat $ and $ m_1: L → a_1 $ and $ m_i: a_{i-1} → a_i $ for $ i = 2, …, k $. Then for every $ n ∈ ℕ $ there exists an $ l_0 ∈ ℕ $ such that
\begin{equation*}
∀l ≥ l_0: \quad μ_{\HTw\Gtl_q Q}^{k+l} (m_k, …, m_1, b, …, b) ∈ \mathfrak{m}^n \Hom(\RefObjects, a_k).
\end{equation*}
\end{lemma}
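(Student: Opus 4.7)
The plan is to show that the $\mathfrak{m}$-order of the product grows without bound as $l \to \infty$, by combining the classical bounded growth of $\H\ZigzagCat$ with a combinatorial argument about disk complexity on the punctured surface.

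First, I would unravel the product. Since $b = \sum_e x_e X_e$ is a formal sum where the variables $x_e$ belong to $B \htensor \compl{ℂ\mirQ}$ but carry no $\mathfrak{m}$-weight themselves, the entire $\mathfrak{m}$-order of $\mu_{\HTw\Gtl_q Q}^{k+l}(m_k, \ldots, m_1, b, \ldots, b)$ is generated by the deformed products $\mu_q$. By multilinearity, it suffices to bound the $\mathfrak{m}$-order of each individual product $\mu_q^{k+l}(m_k, \ldots, m_1, X_{e_l}, \ldots, X_{e_1})$ uniformly in the choice of $X_{e_1}, \ldots, X_{e_l}$. Combining \autoref{th:zigzag-products-th} with \autoref{th:zigzag-mirobjects-products} and the analogous descriptions from \papertwoB\ for sequences ending on an arc, such a product is a signed sum over immersed disks with zigzag-inputs $X_{e_1}, \ldots, X_{e_l}$, intermediate arc-inputs $m_2, \ldots, m_k$, an initial mixed input $m_1$, and an output on $a_k$. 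The disk's weight is $\punctures(D) \in B$, where the $\mathfrak{m}$-order is the number of punctures covered counted with multiplicity.

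Second, I would handle the leading order $\mathfrak{m}^0$ via bounded growth of the classical Cho-Hong-Lau setup. The puncture-free disks contributing to this order are exactly the disks used in the classical mirror functor $F: \Gtl Q \to \mf(\Jac \mirQ, \ell)$ of \autoref{th:MS-CHL-qi}; by the finiteness reflected in $W \in ℂ\mirQ$ (rather than $\compl{ℂ\mirQ}$) of \autoref{th:MS-CHL-W} and in the finitely many disks attached to arc objects, there exists $l_0^{(0)}$ such that for all $l \geq l_0^{(0)}$ no puncture-free disk has the required input configuration. This yields $\mu_q^{k+l}(m_k, \ldots, m_1, X_{e_l}, \ldots, X_{e_1}) \in \mathfrak{m} \Hom(\DefRefObjects, a_k)$ for $l \geq l_0^{(0)}$.

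Third, I would iterate this argument on the number $j$ of covered punctures. The core combinatorial claim is: for every $j \in ℕ$, there exists $l_0^{(j)} \in ℕ$ such that no midpoint-disk with $l \geq l_0^{(j)}$ zigzag-corners and at most $j$ punctures covered (with multiplicity) fits the specified inputs $m_1, \ldots, m_k$. This holds because geometrical consistency (resp.\ standard sphere structure) of $Q$ bounds the combinatorial complexity of puncture-free regions: the connected components of $|Q| \setminus Q_0$ swept out by the disk are each bounded in size, so a disk covering at most $j$ punctures decomposes into $\landau(j)$ such bounded pieces, allowing only $\landau(j)$ many corners. Taking $l_0 \coloneqq \max(l_0^{(0)}, l_0^{(1)}, \ldots, l_0^{(n-1)})$, every nonzero contribution for $l \geq l_0$ must cover at least $n$ punctures, hence lies in $\mathfrak{m}^n$.

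The main obstacle will be the third step: making the bound on disk complexity in terms of puncture-coverings precise, uniformly across the choice of input sequences $X_{e_1}, \ldots, X_{e_l}$ and the starting arc-input $m_1: L \to a_1$. The argument must handle the interaction of the deformed zigzag boundary segments with the arc boundary segments, and in the sphere-dimer case it must additionally accommodate the monogons and digons responsible for nonvanishing $\mu^0_{\H\DefZigzagCat}$ and $\mu^1_{\H\DefZigzagCat}$. Once this combinatorial input is in hand, the slow-growth conclusion of the lemma is immediate.
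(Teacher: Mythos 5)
Your approach is the one the paper explicitly flags as \emph{unproven}, and this is a genuine gap. In the opening of the proof you would have found the remark: ``If one is willing to assume that the product $\mu_{\HTw\Gtl_q Q}(m_k,\ldots,m_1,b,\ldots,b)$ is computed by counting disks (which we have not shown), this argument should suffice.'' Your entire first step rests precisely on this unproven premise. The disk-count description of products provided by \papertwoB\ and cited in this paper covers only two specific situations: products $\mu_{\H\DefZigzagCat}(h_N,\ldots,h_1)$ between zigzag morphisms alone (\autoref{th:zigzag-products-th}), and products $\mu(m,h_N,\ldots,h_1)$ where $m$ is a single zigzag-to-arc morphism and the $h_i$ are odd zigzag morphisms (\autoref{th:zigzag-mirobjects-products}). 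Neither covers the mixed product $\mu(m_k,\ldots,m_1,b,\ldots,b)$ with arbitrary arc-to-arc morphisms $m_2,\ldots,m_k$ stacked on top, which is what the lemma needs for $k\ge2$. You also reference ``the analogous descriptions from \papertwoB\ for sequences ending on an arc,'' but no such general description is imported into this paper.

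The paper sidesteps this by working at the level of Kadeishvili trees, the mechanism by which the minimal model $\HTw\Gtl_q Q$ is actually constructed in \papertwoA\ and \papertwoB. It classifies nodes of each tree into ``pure'' (subtrees consuming only $b$ inputs) and ``typical,'' uses the explicit codifferential $h_q$ from \papertwoB\ to show that pure subtrees of size $\ge K$ already land in $\mathfrak{m}^n$ and that pure result components carry order $\ge 1$, and then argues that each typical node must be a first-out or final-out disk (the $h_q\mu^2$ and $\pi_q\mu^2$ cases with $\alpha_3/\alpha_4$ or $\beta$(A) vanish), so it strictly reduces a length invariant. This bounds the number of typical nodes, the number of pure nodes, the direct $b$ insertions at each typical node, and hence the total $b$ insertions contributing below order $n$. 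You would need either to establish a disk-counting description for these mixed products from scratch, or to adopt the Kadeishvili-tree route; the third step's ``puncture-coverage bounds complexity'' claim, which you flagged as the main obstacle, is actually the easier heuristic --- the harder, unaddressed step is the first one.
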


\begin{proof}
The intuition is that the number of $ b $ insertions is a lower bound for the size of a disk contributing to the product. In other words, the number of punctures covered by a disk is at least as large as the number of $ b $ insertions, up to a multiplicative constant. If one is willing to assume that the product $ μ_{\HTw\Gtl_q Q} (m_k, …, m_1, b, …, b) $ is computed by counting disks (which we have not shown), this argument should suffice.

Otherwise, the property can be checked rigorously as follows: According to the minimal model calculation procedure detailed in \papertwoA, the product can be computed by summing over Kadeishvili trees. In \papertwoB, we also provide a description of the applicable codifferential $ h_q $ for hom spaces between zigzag paths and for hom spaces from zigzag paths to arcs.

The idea is to measure subtrees containing purely $ b $ inputs separately. Thanks to the dedicated subdisk construction in \papertwoB, we see that result components of $ h $-trees only consisting of $ b $ inputs immediately lie in order $ ≥ n $ as soon as they have a certain amount of $ K ∈ ℕ $ inputs. The $ h $-trees consisting only of $ b $ inputs only yield $ β $ (A) result components of order $ ≥1 $. Apart from the fact that this already establishes the claim in case $ k = 0 $, this observation is important for what follows.

If $ m $ is any morphism between zigzag paths or arcs, then we call the maximum length of angles contained in $ m $ simply the length of $ m $. Let $ I $ be the sum of the lengths of $ m_1, …, m_k $. Let $ F $ the maximum length of a full turn around a puncture in $ Q $.

Pick a result component of the product $ μ(m_k, …, m_1, b, …, b) $ of order $ <n $. Let $ T $ be the Kadeishvili $ π $-tree which the result component is derived from. We call a node of $ T $ pure if its subtree purely consumes $ b $ inputs (instead of $ m_i $) and if it is maximal with this property $ T $ (in that its parent does not have this property). We view a subtree at a pure node of $ T $ as an indecomposable unit. We call all nodes of $ T $ typical that are non-leaf and not contained in a pure subtree.

Regard a typical node $ N ∈ T $. We claim the result at $ N $ has length at most one less than its total inputs including outer $ δ $ insertions and excluding inner $ δ $ inputs and direct $ b $ inputs. Indeed, the result component at the node is a morphism from a zigzag path to an arc by construction. When $ m $ is an $ H $ or $ R $ basis morphism from a zigzag path to an arc, it can be checked easily that the following products vanish:
\begin{equation*}
h_q μ^2_{\Add\Gtl_q Q} (m, α_3/α_4), ~ h_q μ^2_{\Add\Gtl_q Q} (m, β\text{(A)}), ~ π_q μ^2_{\Add\Gtl_q Q} (m, α_3/α_4), ~ π_q μ^2_{\Add\Gtl_q Q} (m, β\text{(A)}).
\end{equation*}
Therefore the node $ N $ is necessarily decorated with $ h_q μ^{≥3}_{\Add\Gtl_q Q} $ or $ π_q μ^{≥3}_{\Add\Gtl_q Q} $. A first-out or final-out disk definitely reduces the length of the inputs. This shows the claim.

According to the claim just proven, the total length of the output of a typical node $ N ∈ T $ is at most $ I+nF-s $, where $ s $ is the number of typical nodes in the subtree at $ N $. This bounds the number of typical nodes in $ T $ by $ I+nF $. The subtree at every pure node has at most $ K $ nodes by assumption. Since there are at most $ n $ pure nodes, the tree $ T $ in total has at most $ I+nF+nK $ nodes. This bounds the total number of nodes in $ T $.

Regard a typical node $ N $. Among the child nodes of $ N $, there are at most $ k $ typical children and at most $ n $ pure children. Moreover, $ N $ may have at most $ n $ direct outer $ δ $ insertions. This gives rise to already $ 2n+k $ inputs of the $ h_q μ_{\Add\Gtl_q Q} $ at $ N $. Every string of direct $ b $ and $ δ $ insertions between these $ 2n+k $ inputs is limited to at most $ F $ items, since otherwise no discrete immersed disk can be made. This means that $ N $ consumes at most $ (2n+k+1)F $ many direct $ b $ inputs. This number is bounded.

The total number of direct $ b $ inputs at typical nodes is therefore bounded by $ (2n+k+1)F(I+nF) $. The total number of $ b $ inputs used for pure trees is at most $ nK $. Therefore the total number of $ b $ inputs in the tree $ T $ is bounded by $ (2n+k+1)F(I+nF)+nK $.

We have shown that the product $ μ_{\HTw\Gtl_q Q} (m_k, …, m_1, b, …, b) $ can only have nonzero result components of order $ <n $ if the number of $ b $ insertions is bounded. In consequence, once the number of $ b $ insertions exceeds this bound, all result components are necessarily of order $ ≥n $. This finishes the proof.
\end{proof}

\begin{remark}
\label{rem:MS-technical-restricted}
The construction of the deformed CHL functor $ F_q: \HTw\Gtl_q Q → \MF(\Jac_q Q, ℓ_q) $ along the procedure of \autoref{sec:CHL} is impossible in the specific case of the pair $ (\H\DefZigzagCat, \HTw\Gtl_q Q) $. Indeed, we need to restrict the domain of the functor to the subcategory $ \Gtl_q Q $ due to the slow-growth requirements. Let us explain this in a slightly more elaborate fashion.

If the slow-growth property held for the entire category $ \HTw\Gtl_q Q $, then we could build a CHL functor $ F_q: \HTw\Gtl_q Q → \MF(\Jac_q \mirQ, ℓ_q) $ and restrict afterwards to the subcategory $ \Gtl_q Q ⊂ \HTw\Gtl_q Q $. The construction and subsequent restriction would be analogous to the process described in \autoref{sec:MS-CHL} in case of the undeformed pair $ (\H\ZigzagCat, \HTw\Gtl Q) $.

In reality, the slow-growth property is not guaranteed for the entire category $ \HTw\Gtl_q Q $. For this reason, we cannot define the deformed mirror functor with domain $ \HTw\Gtl_q Q $. Fortunately, we have proven a restricted slow-growth property \autoref{th:MS-defsuperpotential-slowgrowth}. With this help, the construction of the deformed mirror functor succeeds if from its very beginning we restrict the domain of the functor to consist only of the subcategory $ \Gtl_q Q ⊂ \HTw\Gtl_q Q $.
\end{remark}

\subsection{Deformed superpotential}
\label{sec:MS-defsuperpotential}
In this section, we start applying the deformed Cho-Hong-Lau construction to the specific pair $ (\H\DefZigzagCat, \HTw\Gtl_q Q) $. The result is a deformed superpotential $ W_q ∈ ℂ\mirQ ⟦Q_0⟧ $, which we describe explicitly in terms of midpoint polygons. After that, we describe the resulting deformed Jacobi algebra. In order to obtain a mirror functor according to \autoref{th:CHL-functor-th}, we are required to show that the deformed Jacobi algebra is a deformation of the classical Jacobi algebra. In the present section, we invoke the flatness result \autoref{th:flatness-flatness-dimer} to show that this is the case. Recall that we work under \autoref{conv:zigzag-category-convention}.

\begin{lemma}
\label{th:MS-defsuperpotential-W}
The deformed superpotential $ W_q ∈ ℂ\mirQ⟦Q_0⟧ $ can be expressed as
\begin{equation*}
W_q = \sum_{\substack{D \text{ clockwise} \\ \text{midpoint polygon}}} (-1)^{|D|} \punctures(D) \arcsequence(D) ~-~ \sum_{\substack{D \text{ counterclockwise} \\ \text{midpoint polygon}}} (-1)^{|D|} \punctures(D) \arcsequence(D).
\end{equation*}
\end{lemma}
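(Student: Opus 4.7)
The starting point is the defining formula $W_q = \<\sum_{k\geq 0}\mu_q^k(b,\ldots,b), b\>$. Expanding $b = \sum_e x_e X_e$ and pulling the formal variables out of the pairing gives
\begin{equation*}
W_q \;=\; \sum_{k\geq 0}\sum_{e_0,\ldots,e_k} \<\mu_{\H\DefZigzagCat}^k(X_{e_k},\ldots,X_{e_1}),\,X_{e_0}\> \cdot x_{e_k}\cdots x_{e_1}x_{e_0},
\end{equation*}
so the task reduces to identifying, for every tuple $(e_0,\ldots,e_k)$, which configurations in $\H\DefZigzagCat$ contribute to the pairing and with which signs and deformation weights.

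By \autoref{th:zigzag-products-th}, the product $\mu_{\H\DefZigzagCat}^k$ is computed by summing over CR, ID, DS and DW disks with input sequence $X_{e_1},\ldots,X_{e_k}$. Since the pairing $\<-,X_{e_0}\>$ extracts the coefficient of $Y_{e_0}$, the output must be $Y_{e_0}$: ID disks (identity outputs) and the identity part of the product never contribute. Since all inputs $X_{e_i}$ are odd transversal intersections, DS and DW disks are ruled out because each of them necessarily contains at least one even input. Thus only CR disks contribute. Moreover, none of the inputs is a co-identity, so the ``stacked co-identity'' exception in the CR-disk definition does not arise; all zigzag segments between consecutive corners are nonempty, and the corners $X_{e_0},X_{e_1},\ldots,X_{e_k}$ are all odd transversal intersections pointing into a polygon of $Q$. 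By \autoref{def:MS-defsuperpotential-midpointpolygon}, such a CR disk is precisely a midpoint polygon whose corners lie at the arcs $e_0,e_1,\ldots,e_k$, with $e_0$ playing the role of output. The summation over cyclic permutations of the tuple $(e_0,\ldots,e_k)$, combined with the deformed cyclicity of \autoref{th:MS-defsuperpotential-cyclicity}, means that each midpoint polygon $D$ is counted once per choice of output corner, which is exactly what is needed to assemble $\arcsequence(D)$ as an ordered (non-cyclic) path in $\mirQ$ and obtain one summand in the final formula.

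The main obstacle is the sign match: I must show that for a midpoint polygon $D$ the product $(-1)^{\Abouzaid(D)}\<Y_{e_0},X_{e_0}\>$ equals $+(-1)^{|D|}$ when $D$ is clockwise and $-(-1)^{|D|}$ when $D$ is counterclockwise. Unpacking $\Abouzaid(D)$ from its definition, it splits as (i) the sum of all $\#$-signs on the boundary, (ii) the number of odd inputs whose target zigzag segment is counterclockwise relative to $D$, and (iii) a correction of $1$ if the output zigzag segment is counterclockwise relative to $D$. By \autoref{conv:zigzag-category-convention}, $\#\alpha=1$ for angles of counterclockwise polygons of $Q$ and $\#\alpha=0$ for clockwise ones, so part (i) counts the parity of counterclockwise-polygon angles along $\partial D$; a careful bookkeeping along each side of length $n_i$ shows that the $\#$-contribution from that side is $\lfloor n_i/2\rfloor\bmod 2$, which aggregated over all sides gives $\sum_i(n_i-1)/2 = |D|$ plus a uniform parity shift. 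Parts (ii) and (iii) contribute a single uniform parity that depends only on whether $D$ is clockwise or counterclockwise: for a clockwise $D$ all target zigzag segments at the corners are clockwise with the disk, so (ii) and (iii) vanish, whereas for a counterclockwise $D$ they total an odd number. Combining these observations yields the required sign identity.

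As a final consistency check, setting all deformation parameters to zero (so midpoint polygons are forced to cover no punctures) restricts the sum to midpoint polygons of zero area in the covering sense, which are exactly the single polygons of $Q$, each of $|D|=0$; this recovers the classical formula $W = \sum_{\mathrm{cw}}(a_k\cdots a_1)_{\cyc}-\sum_{\mathrm{ccw}}(a_k\cdots a_1)_{\cyc}$ from \autoref{th:MS-CHL-W}, confirming that the leading term is correct.
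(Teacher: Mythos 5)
Your structural argument — defining formula, restriction to CR disks by degree and parity considerations, bijection with midpoint polygons — matches the paper's proof exactly, and the ID/DS/DW exclusion is correct. But your sign analysis contains a concrete error in the decomposition of $\Abouzaid(D)$. You claim that parts (ii) and (iii) (the ``odd input with counterclockwise target'' count and the output correction) contribute a parity depending \emph{only} on the orientation of $D$, and that for a counterclockwise $D$ they ``total an odd number.'' This is false. For a midpoint polygon $D$ with $k$ corners, the corresponding CR disk has $k-1$ odd inputs and an even output $Y_{e_k}$, so part (iii) always vanishes and part (ii) equals $k-1$ when $D$ is counterclockwise. The quantity $k-1$ has no fixed parity. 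At the same time, the $\#$-sign sum (part (i)) along a side of length $n_i$ is $(n_i-1)/2$ when the corner sign is $0$ but $(n_i+1)/2$ when the corner sign is $1$, so for a counterclockwise $D$ part (i) equals $|D| + k$, not $|D|$ plus a ``uniform parity shift.'' The two $k$'s combine to $2k-1 \equiv 1 \pmod 2$, which is how the paper's argument (the one in the actual proof of \autoref{th:MS-defsuperpotential-W}) closes the gap. Your final conclusion is correct, but the intermediate claims as you have split them are individually wrong and would not survive scrutiny for a disk with an odd number of corners.

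The bijection step, the cyclicity observation, and the consistency check at $q=0$ are all fine and essentially the same as the paper's.
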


\begin{proof}
Let us digest the statement. It is our task to evaluate the products $ μ_{\H\DefZigzagCat} (X_{e_{k-1}}, …, X_{e_1}) $ and extract the coefficient of $ Y_{e_k} $. By our explicit description of the products from \autoref{sec:zigzag-products}, these products boil down to counting CR, ID, DS and DW disks. Since ID disks have identity outputs and DS and DW disks are already irrelevant, we are left with the task of counting CR disks.

Let $ D $ be a CR disk with inputs $ X_{e_1}, …, X_{e_{k-1}} $ and output $ Y_{e_k} $. Since $ D $ is CR and its inputs contain no co-identities, all of the zigzag segments of $ D $ are actually non-empty. This immediately renders $ D $ a midpoint polygon. More precisely, we associate with $ D $ the midpoint polygon given by the one with the same shape as $ D $ and arc sequence $ e_1, …, e_k $. Note that this sets up a bijection
\begin{align*}
&\{\text{CR disks with inputs } X_{e_1}, …, X_{e_{k-1}} \text{ and output } Y_{e_k}\} \\
&\qquad\longleftrightarrow\quad \{\text{midpoint polygons with arc sequence } e_1, …, e_k\}.
\end{align*}
We can therefore write
\begin{align*}
⟨μ(X_{e_{k-1}}, …, X_{e_1}), Y_{e_k}⟩ &= \sum_{\substack{\text{CR disks } D \\ \text{with inputs } X_{e_1}, …, X_{e_{k-1}} \\ \text{and output } Y_{e_k}}} (-1)^{\Abouzaid(D)} \punctures(D) \\
&= \sum_{\substack{\text{midpoint polygons } D \\ \text{with arc sequence } e_1, …, e_k}} (-1)^{\Abouzaid(D)} \punctures(D).
\end{align*}
Here we have already anticipated that the Abouzaid sign of a CR disk $ D $ with inputs $ X_{e_1}, …, X_{e_{k-1}} $ and output $ Y_{e_k} $ is equal to the Abouzaid sign attached to its associated midpoint polygon by \autoref{def:zigzag-defsuperpotential-polysign}.

In the remainder of the proof, we check that these two signs are indeed equal. Regard one of the CR disks. Its boundary cuts a number of angles of clockwise and counterclockwise polygons of $ Q $. Let $ n_1, …, n_k $ denote the lengths of the zigzag segments, as depicted in \autoref{fig:zigzag-defsuperpot-length}. The $ \# $ signs alternate along the zigzag segments and the individual lengths $ n_i $ are all odd. At every corner $ X_{e_i} $, the two neighboring signs are equal and in fact determined by whether $ D $ is clockwise or counterclockwise.

If $ D $ is clockwise, the sign at every corner is $ 0 ∈ ℤ/2ℤ $ and it alternates $ n_i $ times until the next corner. This gives a sign contribution of $ (n_i - 1)/2 $. The total sign becomes
\begin{equation}
\label{eq:zigzag-defsuperpot-hashsign}
\sum_{i = 1}^k \frac{n_i - 1}{2} ∈ ℤ/2ℤ.
\end{equation}
Since all zigzag curves lie clockwise with $ D $, this is already the Abouzaid sign of $ D $. If $ D $ is counterclockwise, the sign at every corner is $ 1 ∈ ℤ/2ℤ $. In comparison with \eqref{eq:zigzag-defsuperpot-hashsign}, we incur a sign flip for all $ n_1 + … + n_k $ angles that $ L $ runs through. Since every $ n_i $ is odd, this changes the sign by $ k ∈ ℤ/2ℤ $. Since all zigzag paths are counterclockwise relative to $ D $, the Abouzaid sign also incurs an increase by $ k-1 ∈ ℤ/2ℤ $. This makes the sign precisely opposite to \eqref{eq:zigzag-defsuperpot-hashsign} and finishes the proof.
\end{proof}

The deformed Cho-Hong-Lau construction proceeds by defining the deformed Jacobi algebra. The construction comes in two variants, depending on whether the category of reference objects $ \RefObjects $ is of slow growth with respect to $ \cat C $ or not. In the case of the specific pair $ (\H\DefZigzagCat, \HTw\Gtl_q Q) $, we have seen in \autoref{th:MS-defsuperpotential-slowgrowth} that $ \H\DefZigzagCat $ is of slow growth, at least in a way sufficient for the Cho-Hong-Lau construction. We can therefore apply the slow growth variant of the deformed Cho-Hong-Lau construction and obtain a deformed Jacobi algebra $ \Jac(\mirQ, W_q) $ according to \autoref{def:CHL-defLG-def}. We shall abbreviate this algebra by $ \Jac_q \mirQ $:

\begin{definition}
We denote the \emph{deformed Jacobi algebra} by
\begin{equation*}
\Jac_q \mirQ = \Jac(\mirQ, W_q) = \frac{ℂ\mirQ ⟦Q_0⟧}{~\closure{(∂_e W_q)_{e ∈ \mirQ_1}}~}.
\end{equation*}
\end{definition}

\begin{remark}
Recall that the closure refers to the $ \mathfrak{m} $-adic topology on $ ℂ\mirQ ⟦Q_0⟧ $.
\end{remark}

Let us now explain that the deformed Jacobi algebra $ \Jac_q \mirQ $ is a deformation of the classical Jacobi algebra $ \Jac \mirQ $ in the sense of \autoref{def:flatness-whatis-Adefo}. We have investigated this question in detail during \autoref{sec:flatness} in the general context of superpotential deformations of CY3 Jacobi algebras. We record the specific case of $ \Jac_q \mirQ $ as follows:

\begin{corollary}
\label{th:MS-defsuperpotential-flatness}
If $ \mirQ $ is cancellation consistent and of bounded type, then $ \Jac_q \mirQ $ is a deformation of $ \Jac \mirQ $.
\end{corollary}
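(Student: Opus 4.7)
The plan is to invoke the flatness result \autoref{th:flatness-flatness-dimer}, applied to the dimer $\mirQ$ with its standard superpotential $W = \sum (a_k\cdots a_1)_{\cyc}^+ - \sum(a_k\cdots a_1)_{\cyc}^-$. Concretely, I want to recognize $W_q$ as a cyclic deformation $W + W'$ of $W$ with $W' \in \mathfrak{m}\, ℂ\mirQ$, so that the second bullet of \autoref{th:flatness-flatness-dimer} produces the desired statement in $B \htensor ℂ\mirQ$ (rather than only in the completed algebra).

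First, I would justify working in $B \htensor ℂ\mirQ$ instead of $B \htensor \compl{ℂ\mirQ}$: \autoref{th:MS-defsuperpotential-slowgrowth} shows that $\H\DefZigzagCat$ is of slow growth in the sense of \autoref{def:CHL-defLG-slowgrowth}. Hence by \autoref{def:CHL-defLG-def} the deformed superpotential $W_q$ lives in $B \htensor ℂ\mirQ$ and the deformed Jacobi algebra has the form
\begin{equation*}
\Jac_q \mirQ = \frac{B \htensor ℂ\mirQ}{\closure{(∂_e W_q)}},
\end{equation*}
which agrees notationally with the algebra $\Jac(\mirQ, W_q)$ appearing in \autoref{th:flatness-flatness-dimer}.

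Second, I would use the explicit description from \autoref{th:MS-defsuperpotential-W} to decompose $W_q = W + W'$. The zeroth-order part of $W_q$ is the sum over midpoint polygons $D$ with $\punctures(D) = 1$, i.e., those which cover no puncture. Exactly as in the classical verification of \autoref{th:MS-CHL-W}, the checkerboard argument forces such a midpoint polygon to bound a single polygon of $\mirQ$; this identifies the zeroth-order part of $W_q$ with the classical superpotential $W$ of $\mirQ$. The remainder $W' = W_q - W$ is then the sum over midpoint polygons covering at least one puncture, hence lies in $\mathfrak{m}\, ℂ\mirQ$. Cyclicity of $W'$ follows from cyclicity of both $W$ and $W_q$ (the latter is immediate from the midpoint-polygon description, since cyclic rotation of the corner labels of $D$ rotates the arc sequence while leaving $|D|$ and $\punctures(D)$ invariant; alternatively it is a direct consequence of \autoref{th:CHL-defLG-centrality} combined with \autoref{th:MS-defsuperpotential-cyclicity}).

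With these two verifications in hand, I would apply the second bullet of \autoref{th:flatness-flatness-dimer}: since $\mirQ$ is cancellation consistent and of bounded type and $W' \in \mathfrak{m}\, ℂ\mirQ$ is cyclic, the algebra $\Jac(\mirQ, W + W')$ is a deformation of $\Jac(\mirQ, W) = \Jac\mirQ$, which is exactly the statement. I do not expect a serious obstacle here; the main point requiring care is the bookkeeping identification of the CHL-produced superpotential $W_q$ with a superpotential deformation in the precise sense demanded by \autoref{th:flatness-flatness-dimer} (matching ideals, closures, and topologies), which is why I would write out the decomposition $W_q = W + W'$ in detail before invoking the flatness theorem.
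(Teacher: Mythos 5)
Your proposal is correct and matches the paper's proof in essence: both invoke \autoref{th:flatness-flatness-dimer} (second bullet), and both verify the two prerequisites — cyclicity of $W_q$ (hence of $W' = W_q - W$) via \autoref{th:MS-defsuperpotential-cyclicity}/\autoref{th:MS-defsuperpotential-W}, and the containment $W_q \in ℂ\mirQ⟦Q_0⟧$ via the slow-growth result \autoref{th:MS-defsuperpotential-slowgrowth}. Your version spells out slightly more explicitly that the zeroth-order part of $W_q$ coincides with the classical superpotential $W$ (forcing $W' \in \mathfrak{m}\,ℂ\mirQ$), which the paper leaves implicit; this is a mild but welcome piece of extra bookkeeping rather than a different approach.
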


\begin{proof}
The goal is to invoke \autoref{th:flatness-flatness-dimer}. We have to check two conditions. First, $ W_q $ and therefore also $ W' = W_q - W $ is indeed cyclic as shown in \autoref{th:MS-defsuperpotential-cyclicity} or equivalently \ref{th:MS-defsuperpotential-W}. Second, $ W_q $ indeed lies in $ ℂ\mirQ ⟦Q_0⟧ $ instead of only $ \compl{ℂ\mirQ} ⟦Q_0⟧ $, since $ \H\DefZigzagCat $ is of slow growth by \autoref{th:MS-defsuperpotential-slowgrowth}.
%
%
Finally, we conclude that \autoref{th:flatness-flatness-dimer} applies and $ \Jac_q \mirQ $ is a deformation of $ \Jac \mirQ $. This finishes the proof.
\end{proof}

\begin{remark}
The dual dimer $ \mirQ_M $ of the standard sphere dimer $ Q_M $ is cancellation consistent. The dimer $ \mirQ_M $ is also of bounded type because its two polygons have the same length, namely $ M $. Therefore \autoref{th:MS-defsuperpotential-flatness} particularly applies to the case $ Q = Q_M $.
\end{remark}

\subsection{Deformed potential}
\label{sec:MS-defpotential}
In this section, we compute the deformed potential. More precisely, we evaluate the definition of the deformed potential from the deformed Cho-Hong-Lau construction in the case of $ \H\DefZigzagCat $. The result is an element $ ℓ_q ∈ \mirQ⟦Q_0⟧ $. We describe this element in terms of midpoint polygons. Recall that we work under \autoref{conv:zigzag-category-convention}.

Recall that the deformed Cho-Hong-Lau construction in general requires us to compute a deformed potential $ ℓ_q ∈ B \htensor \chlQ $, given by counting identity outputs of products of the form $ μ(X_{e_k}, …, X_{e_1}) $. More precisely, the definition reads
\begin{equation*}
ℓ_q = \sum_{i ∈ \chlQ_0} ⟨μ(b, …, b), \coid_i⟩.
\end{equation*}
It is our task to evaluate this deformed potential for the specific pair $ (\H\DefZigzagCat, \HTw\Gtl_q Q) $. In this specific case, the products $ μ_{\H\DefZigzagCat} (b, …, b) $ are given by an enumeration of CR, ID, DS and DW disks. As announced, DS and DW disks are irrelevant. In fact, the enumeration boils down to counting midpoint polygons of specific type.

Regard a midpoint polygon. Its boundary crosses many arcs. Indeed, it consists of segments of zigzag curves, which run between midpoints of arcs. The longer the segments, the more arcs are crossed. A midpoint polygon may or may not cross arcs that are identity arcs $ a_0 $ of zigzag paths. If a midpoint polygon starts at, ends at or crosses an arc that is the identity arc $ a_0 $ of a zigzag path, then the midpoint polygon together with the datum of this crossing determines an identity contribution to the product of the intersection points associated with the polygon's corners. To make this precise, we set up the following terminology:

\begin{definition}
\label{def:zigzag-identity-polygons}
Let $ L $ be a zigzag path in $ Q $ with identity location $ a_0 $. An \emph{$ L $-polygon} is one of the following:
\begin{itemize}
\item If $ L $ turns left at the head of $ a_0 $: a midpoint polygon starting at the left side of $ a_0 $.
\item If $ L $ turns right at the head of $ a_0 $: a midpoint polygon ending on the right side of $ a_0 $.
\item A midpoint polygon $ D $ whose segment between the last and first arc is an $ L $ segment, together with the datum of a crossing of this $ L $-segment with the arc $ a_0 ∈ L $.
\end{itemize}
\end{definition}

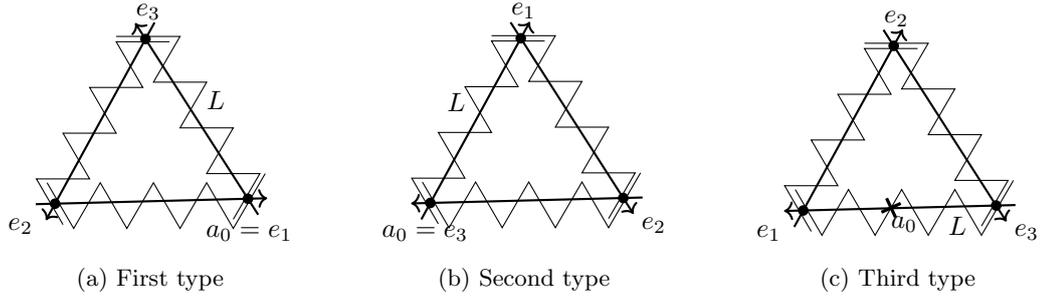
\begin{figure}
\centering
\begin{subfigure}{0.3\linewidth}
\centering
\begin{tikzpicture}[scale=1.4]
\path[draw] (0, 0) coordinate (1-start1) -- ++(120:0.5) coordinate (1-start2) -- ++(0:0.5) -- ++(120:0.5) -- ++(0:0.5) -- ++(120:0.5) -- ++(0:0.5) -- ++(120:0.5) coordinate (1-end1) -- ++(0:0.5) coordinate (1-end2);
\path[draw] (1-end1) ++(0, 0.05) coordinate (2-start1) -- ++(0:0.6) coordinate (2-start2) -- ++(240:0.5) --  ++(0:0.5) -- ++(240:0.5) --  ++(0:0.5) -- ++(240:0.5) --  ++(0:0.5) coordinate (2-end1) -- ++(240:0.5) coordinate (2-end2);
\path[draw] (2-end1) ++(0.05, 0) coordinate (3-start1) -- ++(240:0.6) coordinate (3-start2) -- ++(120:0.5)  -- ++(240:0.5) -- ++(120:0.5)  -- ++(240:0.5) -- ++(120:0.5) -- ++(240:0.5) coordinate (3-end1) -- ++(120:0.5) coordinate (3-end2);
\path ($ 0.25*(1-start1)+0.25*(1-start2)+0.25*(3-end1)+0.25*(3-end2) $) coordinate (m1);
\path ($ 0.25*(2-start1)+0.25*(2-start2)+0.25*(1-end1)+0.25*(1-end2) $) coordinate (m2);
\path ($ 0.25*(3-start1)+0.25*(3-start2)+0.25*(2-end1)+0.25*(2-end2) $) coordinate (m3);
\path[draw, thick, <-] ($ (m1)!-0.1!(m2) $) -- ($ (m1)!1.1!(m2) $);
\path[draw, thick, <-] ($ (m2)!-0.1!(m3) $) -- ($ (m2)!1.1!(m3) $) node[midway, above right] {$ L $};
\path[draw, thick, <-] ($ (m3)!-0.1!(m1) $) -- ($ (m3)!1.1!(m1) $);
\path[fill] \foreach \i in {1, 2, 3} {(m\i) circle[radius=0.05]};
\path ($ (1-start1)!0.5!(1-start2) $) node[below left, shift={(-0.1, -0.1)}] {$ e_2 $};
\path ($ (2-start1)!0.5!(2-start2) $) node[above, shift={(0, 0.1)}] {$ e_3 $};
\path ($ (3-start1)!0.5!(3-start2) $) node[below, shift={(down:0.2)}] {$ a_0 = e_1 $};
\end{tikzpicture}
\caption{First type}
\label{fig:MS-defpotential-Lpolygons1}
\end{subfigure}
\begin{subfigure}{0.3\linewidth}
\centering
\begin{tikzpicture}[scale=1.4]
\path[draw] (0, 0) coordinate (1-start1) -- ++(120:0.5) coordinate (1-start2) -- ++(0:0.5) -- ++(120:0.5) -- ++(0:0.5) -- ++(120:0.5) -- ++(0:0.5) -- ++(120:0.5) coordinate (1-end1) -- ++(0:0.5) coordinate (1-end2);
\path[draw] (1-end1) ++(0, 0.05) coordinate (2-start1) -- ++(0:0.6) coordinate (2-start2) -- ++(240:0.5) --  ++(0:0.5) -- ++(240:0.5) --  ++(0:0.5) -- ++(240:0.5) --  ++(0:0.5) coordinate (2-end1) -- ++(240:0.5) coordinate (2-end2);
\path[draw] (2-end1) ++(0.05, 0) coordinate (3-start1) -- ++(240:0.6) coordinate (3-start2) -- ++(120:0.5)  -- ++(240:0.5) -- ++(120:0.5)  -- ++(240:0.5) -- ++(120:0.5) -- ++(240:0.5) coordinate (3-end1) -- ++(120:0.5) coordinate (3-end2);
\path ($ 0.25*(1-start1)+0.25*(1-start2)+0.25*(3-end1)+0.25*(3-end2) $) coordinate (m1);
\path ($ 0.25*(2-start1)+0.25*(2-start2)+0.25*(1-end1)+0.25*(1-end2) $) coordinate (m2);
\path ($ 0.25*(3-start1)+0.25*(3-start2)+0.25*(2-end1)+0.25*(2-end2) $) coordinate (m3);
\path[draw, thick, ->] ($ (m1)!-0.1!(m2) $) -- ($ (m1)!1.1!(m2) $) node[midway, above left] {$ L $};
\path[draw, thick, ->] ($ (m2)!-0.1!(m3) $) -- ($ (m2)!1.1!(m3) $);
\path[draw, thick, ->] ($ (m3)!-0.1!(m1) $) -- ($ (m3)!1.1!(m1) $);
\path[fill] \foreach \i in {1, 2, 3} {(m\i) circle[radius=0.05]};
\path ($ (1-start1)!0.5!(1-start2) $) node[below, shift={(down:0.2)}] {$ a_0 = e_3 $};
\path ($ (2-start1)!0.5!(2-start2) $) node[above, shift={(0, 0.1)}] {$ e_1 $};
\path ($ (3-start1)!0.5!(3-start2) $) node[below right, shift={(0.1, -0.1)}] {$ e_2 $};
\end{tikzpicture}
\caption{Second type}
\label{fig:MS-defpotential-Lpolygons2}
\end{subfigure}
\begin{subfigure}{0.3\linewidth}
\centering
\begin{tikzpicture}[scale=1.4]
\path[draw] (0, 0) coordinate (1-start1) -- ++(120:0.5) coordinate (1-start2) -- ++(0:0.5) -- ++(120:0.5) -- ++(0:0.5) -- ++(120:0.5) -- ++(0:0.5) -- ++(120:0.5) coordinate (1-end1) -- ++(0:0.5) coordinate (1-end2);
\path[draw] (1-end1) ++(0, 0.05) coordinate (2-start1) -- ++(0:0.6) coordinate (2-start2) -- ++(240:0.5) --  ++(0:0.5) -- ++(240:0.5) --  ++(0:0.5) -- ++(240:0.5) --  ++(0:0.5) coordinate (2-end1) -- ++(240:0.5) coordinate (2-end2);
\path[draw] (2-end1) ++(0.05, 0) coordinate (3-start1) -- ++(240:0.6) coordinate (3-start2) -- ++(120:0.5)  -- ++(240:0.5) -- ++(120:0.5) -- ++(240:0.5) node[near end, right] {$ a_0 $} coordinate[pos=0.4] (cross) -- ++(120:0.5) -- ++(240:0.5) coordinate (3-end1) -- ++(120:0.5) coordinate (3-end2);
\path ($ 0.25*(1-start1)+0.25*(1-start2)+0.25*(3-end1)+0.25*(3-end2) $) coordinate (m1);
\path ($ 0.25*(2-start1)+0.25*(2-start2)+0.25*(1-end1)+0.25*(1-end2) $) coordinate (m2);
\path ($ 0.25*(3-start1)+0.25*(3-start2)+0.25*(2-end1)+0.25*(2-end2) $) coordinate (m3);
\path[draw, thick, ->] ($ (m1)!-0.1!(m2) $) -- ($ (m1)!1.1!(m2) $);
\path[draw, thick, ->] ($ (m2)!-0.1!(m3) $) -- ($ (m2)!1.1!(m3) $);
\path[draw, thick, ->] ($ (m3)!-0.1!(m1) $) -- ($ (m3)!1.1!(m1) $) node[pos=0.25, below] {$ L $};
\path[fill] \foreach \i in {1, 2, 3} {(m\i) circle[radius=0.05]};
\path[draw, very thick] (cross) ++(115:0.1) -- ++(295:0.2) (cross) ++(25:0.1) -- ++(205:0.2);
\path ($ (1-start1)!0.5!(1-start2) $) node[below left, shift={(-0.1, -0.1)}] {$ e_1 $};
\path ($ (2-start1)!0.5!(2-start2) $) node[above, shift={(0, 0.1)}] {$ e_2 $};
\path ($ (3-start1)!0.5!(3-start2) $) node[below right, shift={(0.1, -0.1)}] {$ e_3 $};
\end{tikzpicture}
\caption{Third type}
\label{fig:MS-defpotential-Lpolygons3}
\end{subfigure}
\caption{These pictures illustrate $ L $-polygons of all three types. According to the definition, the midpoint polygon underlying an $ L $-polygon of the first type starts on the left side of $ a_0 $. In the first picture, we have depicted this by writing $ e_1 = a_0 $. The midpoint polygon underlying an $ L $-polygon of the second type ends on the right side of $ a_0 $. We have depicted the ending in the second picture as $ e_3 = a_0 $. The midpoint polygon underlying an $ L $-polygon of the third type requires that the zigzag segment between the first and the last arc is $ L $ and comes with the datum of a crossing with $ a_0 $. In the third picture, we have indicated this crossing by a thick cross.}
\label{fig:MS-defpotential-Lpolygons}
\end{figure}

We have illustrated $ L $-polygons in \autoref{fig:MS-defpotential-Lpolygons}. With this in mind, we can express the potential $ ℓ_q ∈ ℂ\mirQ⟦Q_0⟧ $ in terms of $ L $-polygons:

\begin{lemma}
\label{th:MS-defpotential-th}
The deformed potential $ ℓ_q = \sum_{i ∈ \mirQ_0} ℓ_{q, i} ∈ ℂ\mirQ⟦Q_0⟧ $ can be expressed as
\begin{equation*}
ℓ_{q, i} = \sum_{L_i\text{-polygons } D} (-1)^{|D|} \punctures(D) \arcsequence(D).
\end{equation*}
\end{lemma}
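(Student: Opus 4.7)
The plan is to follow the pattern of \autoref{th:MS-CHL-l}, substituting the deformed product description of \autoref{th:zigzag-products-th}. From the CHL definition of $\ell_q$ and the pairing identities of the CHL basis, I would first write
\[
\ell_{q,i} = \sum_{k \geq 1,\; e_1, \ldots, e_k} \left\langle \mu_{\H\DefZigzagCat}(X_{e_k}, \ldots, X_{e_1}),\; \coid_{L_i} \right\rangle \cdot x_{e_k} \cdots x_{e_1},
\]
and then invoke \autoref{th:zigzag-products-th} to express each inner product as a sum over CR, ID, DS and DW disks. Because the inputs are all odd transversal basis elements and the extracted output is the identity $\id_{L_i}$, the DS and DW disks drop out (they require an even input), and a CR disk cannot produce an identity output (its output would have to lie infinitesimally close to an input, violating the non-empty-segment condition). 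Thus only ID disks contribute, and $\ell_{q,i}$ reduces to a sum over ID disks with output $\id_{L_i}$, weighted by $(-1)^{\Abouzaid(D)}\,\punctures(D)\,\arcsequence(D)$.

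Next I would establish a bijection between ID disks with output $\id_{L_i}$ and $L_i$-polygons by excising the output mark and retaining the original $N$ input corners; what remains is a midpoint polygon, since the surviving inputs are odd transversal intersections and all remaining zigzag segments are non-empty (the CR-after-excision condition of the ID definition). The three variants of the ID disk according to the degenerate input (clockwise odd, counterclockwise odd, even) match the three variants of $L_i$-polygon in \autoref{def:zigzag-identity-polygons}. In the first case, the output sits infinitesimally after the degenerate input at arc $a_0$, so the resulting midpoint polygon starts at $a_0$ on the left side, matching the first variant when $L_i$ turns left at the head of $a_0$. In the second case, symmetrically, one obtains a midpoint polygon ending at $a_0$ on the right side. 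In the third case, the even degenerate input together with the output lie in the interior of a zigzag segment carrying the target curve $L_i$ (oriented clockwise with the disk); this segment connects the last corner back to the first, and the location of the excised output records precisely the marked $a_0$-crossing in the third variant.

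For the sign identity $\Abouzaid(D) = |D|$ I would adapt the calculation in the proof of \autoref{th:MS-defsuperpotential-W}. Along each zigzag segment of odd length $n_j$ the $\#$-signs alternate and contribute $(n_j-1)/2 \in \mathbb{Z}/2\mathbb{Z}$; the contributions from the parities of the odd inputs and from the orientations of the incident zigzag curves combine as before, and they flip coherently under a clockwise/counterclockwise swap of the disk, so the total sign reduces to $|D|$. Once the bijection and sign identifications are in place, summing over all ID disks with output $\id_{L_i}$ and transporting along the bijection immediately produces the claimed formula.

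The main obstacle I anticipate is the sign check in the third case: the even degenerate input shifts the parity bookkeeping at the output relative to the odd cases, and the extra datum of a marked crossing on the wrap-around $L_i$-segment has to be reconciled with the Abouzaid convention at the excised output position. Types~1 and~2 should follow by a direct parallel with the classical computation of \autoref{th:MS-CHL-l}, extended to allow disks that cover punctures. Type~3 has no classical analog, since in the non-deformed setting the midpoint polygon would have to be a single elementary polygon of $Q$ and can never wrap with an $L$-segment around a cluster of punctures; its signs therefore require a dedicated verification, which I expect to be the only genuinely new ingredient relative to the earlier proofs in the paper.
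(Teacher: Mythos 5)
Your plan breaks at the step ``a CR disk cannot produce an identity output.'' That exclusion is wrong, and it is exactly where the paper's proof does something you don't: the paper's enumeration matches both CR disks and ID disks with identity output against the three types of $L_i$-polygons. A CR disk whose output is $\id_{L_i}$ is perfectly legal. The output corner sits at the identity location $a_0$, which is a point in the interior of the wrap-around $\smooth L_i$-segment; both adjacent boundary edges are non-empty pieces of $\smooth L_i$ on either side of $a_0$, so the non-empty-segment requirement of a CR disk is satisfied. The corner at the output is degenerate (the zigzag curve runs straight through it), but that is permitted — the same degeneracy already appears at stacked co-identity inputs, which the CR definition explicitly allows. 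Concretely, this is what produces $L_i$-polygons of the third type (a midpoint polygon with an interior crossing of the wrap-around $\smooth L_i$-segment with $a_0$): when you remove the degenerate output mark from such a CR disk, the marked crossing is precisely the record of where the output sat. The paper's worked example for the three-punctured sphere exhibits exactly these contributions: the two monogons ``having a crossing with $a_0$'' that contribute $-q_b b$ and $-q_c c$ to $\ell_q$ are CR disks, not ID disks.

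The second flaw compounds the first: the three ID-disk variants you list cannot be in bijection with the three $L_i$-polygon types, because the even-degenerate-input variant never appears in $\mu(b,\ldots,b)$. Every input inserted via $b = \sum x_e X_e$ is an odd transversal morphism $X_e$, so the degenerate input of any contributing ID disk is forced to be odd. Moreover, for a fixed $L_i$ only one of the first two $L_i$-polygon types is even admissible (depending on whether $L_i$ turns left or right at the head of $a_0$), so one of your two odd ID variants has nowhere to land. The correct picture is that ID disks (with the identity location $a_0$ coinciding with a corner arc, i.e.\ $e_1 = a_0$ or $e_k = a_0$) match the applicable one of types 1–2, while CR disks (with the identity location strictly in the interior of the wrap-around segment) match type 3. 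Your sign discussion for types 1 and 2 would then go through much as you describe, but the sign check you worried most about (type 3) needs to be done for CR disks rather than for the non-existent even-degenerate-input ID disks, and there the computation reduces to the same alternating-$\#$-sign argument used for $W_q$ along the wrap-around segment.
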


\begin{proof}
It is our task to evaluate identity terms in the products $ μ_{\H\DefZigzagCat} (b, …, b) $ and match them with $ L $-polygons of the three types.

According to the description recapitulated in \autoref{sec:zigzag-products}, the products $ μ_{\H\DefZigzagCat} (b, …, b) $ are computed by enumerating CR and ID disks. More precisely, a counterclockwise CR disk with inputs $ X_{e_1}, …, X_{e_k} $ and output $ \id_{L_i} $ corresponds precisely to an $ L_i $-polygon with arc sequence $ e_1, …, e_k $ of the first type. A clockwise CR disk with inputs $ X_{e_1}, …, X_{e_k} $ and output $ \id_{L_i} $ corresponds precisely to an $ L_i $-polygon with arc sequence $ e_1, …, e_k $ of the second type. An ID disk with inputs $ X_{e_1}, …, X_{e_k} $ and output $ \id_{L_i} $ corresponds precisely to an $ L_i $-polygon with arc sequence $ e_1, …, e_k $ of the third type. This already matches all identity terms in the products $ μ_{\H\DefZigzagCat} (b, …, b) $ with $ L $-polygons.

It remains to check the signs. More precisely, we have to check that the Abouzaid sign of a CR or ID disk contributing to $ μ_{\H\DefZigzagCat} (b, …, b) $ is equal to the sign $ |D| ∈ ℤ/2ℤ $ of its associated midpoint polygon $ D $. To see that both are equal, we distinguish whether $ D $ is clockwise or counterclockwise. If the disk is clockwise, its Abouzaid sign is simply the sum of the $ \# $ signs along $ D $, which by the proof of \autoref{th:MS-defsuperpotential-W} is equal to $ |D| $. If $ D $ is counterclockwise and has $ k $ corners, its Abouzaid sign is the sum of the $ \# $ signs plus $ k $, which by the proof of \autoref{th:MS-defsuperpotential-W} is still $ |D| $. This shows that the Abouzaid sign of a disk is equal to the sign $ |D| $ of the associated midpoint polygon $ D $. This proves the desired formula for $ ℓ_{q, i} $.
\end{proof}

The deformed potential $ ℓ_q ∈ ℂ\mirQ ⟦Q_0⟧ $ depends on the choice of identity locations. However, we claim that it becomes independent of this choice when we pass to the deformed Jacobi algebra $ \Jac_q \mirQ $:

\begin{figure}
\centering
\begin{tikzpicture}
\path[draw, ->] (0, 0) -- (1, 0) node[midway, below] {$ b_1 $};
\path[draw, ->] (1, 0) -- (1, 1) coordinate[midway] (X);
\path[fill=gray!50] (X) -- ++(45:0.7) -- ($ (X) + (315:0.7) $) node[midway, above, sloped] {$ … $} -- cycle;
\path[fill=gray!50] (X) -- ++(135:0.7) -- ($ (X) + (225:0.7) $) node[midway, below, sloped] {$ … $} -- cycle;
\path[draw] (1, 1) -- (2, 1);
\path (X) node[right] {$ b_2 $};
\begin{scope}[shift={(4, 0.5)}]
\path[fill=gray!50] (0, 0) -- (135:0.7) -- (225:0.7) node[midway, below, sloped] {$ … $} -- cycle;
\path (0.5, 0) node {$ - $};
\path[fill=gray!50] (1, 0) -- ++(45:0.7) -- ($ (1, 0) + (315:0.7) $) node[midway, above, sloped] {$ … $} -- cycle;
\path (3, 0) node {$ = ~~ b_2 ∂_{b_2} W_q $};
\end{scope}
\end{tikzpicture}
\caption{The difference between two choices of $ ℓ_q $ is a relation.}
\label{fig:MS-defpotential-choices}
\end{figure}
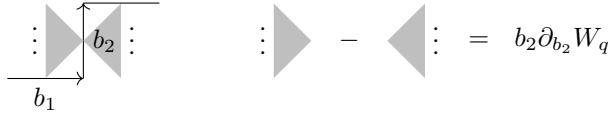

\begin{lemma}
\label{th:MS-defpotential-independence}
The deformed potential $ ℓ_q ∈ \Jac_q \mirQ = ℂ\mirQ ⟦Q_0⟧ / \closure{(∂_a W_q)} $ is independent of the choice of identity locations.
\end{lemma}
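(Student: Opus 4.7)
The plan is to reduce the independence statement to a one-step change of the identity arc along a single zigzag path $L$, and then exhibit the difference of the two resulting candidate potentials as an element of the closed ideal generated by $\partial_a W_q$.

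First, I would observe that any two identity arcs on a given zigzag path $L$ can be connected by a finite sequence of one-step changes along $L$, and that changing the identity arc on one zigzag path does not affect the contributions to $\ell_{q, j}$ for $j \neq i$. So it suffices to fix a zigzag path $L = L_i$ with identity arc $a_0$, move to the next identity arc $a_0'$ along $L$, and compare the corresponding potentials $\ell_{q, i}$ and $\ell'_{q, i}$. Since both sums converge in the $\mathfrak{m}$-adic topology and the ideal $\overline{(\partial_e W_q)}$ is $\mathfrak{m}$-adically closed, it is enough to match the two expressions order by order modulo this ideal.

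Second, I would carefully catalogue how the set of $L$-polygons changes under the one-step move from $a_0$ to $a_0'$. The three types in \autoref{def:zigzag-identity-polygons} reshuffle as follows: type~3 polygons whose last-to-first $L$-segment crosses both $a_0$ and $a_0'$ appear with identical coefficient for both choices and therefore cancel in the difference; type~3 polygons that cross exactly one of $a_0,a_0'$ survive and contribute a net term; and the type~1 and type~2 polygons reshuffle into new type~3 contributions for the other choice, or vice versa, so that what is left after cancellation are precisely the midpoint polygons whose last-to-first $L$-segment has an endpoint exactly at the arc $a$ lying between $a_0$ and $a_0'$ along $L$. Pictorially this is exactly the configuration in Figure~\ref{fig:MS-defpotential-choices}: two families of midpoint polygons attached at the intersection point of $\smooth L$ with $a$.

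Third, I would rewrite the surviving difference as a product. Unfolding \autoref{def:zigzag-identity-polygons} and comparing with the expression $W_q = \sum \pm \punctures(D) \arcsequence(D)$ from \autoref{th:MS-defsuperpotential-W}, each surviving midpoint polygon in the difference is obtained by splitting a single midpoint polygon of $W_q$ at the arrow $a$; equivalently, the difference equals $\pm\,a \cdot \partial_a W_q$ (or a sum of such expressions over the two zigzag segments leaving $a$). Since $\partial_a W_q \in \overline{(\partial_e W_q)}$ by \autoref{th:CHL-defLG-centrality}, the difference vanishes in $\Jac_q \mirQ$. Iterating this one-step argument and passing to the $\mathfrak{m}$-adic limit completes the proof.

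The main obstacle will be the sign bookkeeping together with the combinatorial reshuffling. Three families of $L$-polygons change simultaneously under the one-step move, and the Abouzaid sign $|D| = \sum (n_i-1)/2$ shifts as the zigzag-segment lengths $n_i$ change at $a_0$ and $a_0'$; matching these shifted signs with the sign conventions inside $\partial_a W_q$ (where clockwise and counterclockwise midpoint polygons enter with opposite sign) requires a careful case analysis according to whether $L$ turns left or right at the head of $a_0$ and whether the enclosing polygon is clockwise or counterclockwise. The compatibility of deformation parameters $\punctures(D)$ is automatic, since all combinatorial replacements preserve the multiset of punctures covered by each disk.
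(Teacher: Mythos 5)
Your proposal follows the same route as the paper: reduce to a one-step change of the identity arc along a single zigzag path, observe that $L$-polygons for the two adjacent choices match up except for a controlled set of corner cases, and identify the net difference as an arc times a derivative of $W_q$, hence zero in $\Jac_q \mirQ$. One small imprecision: the arc appearing in the correction term $a\,\partial_a W_q$ is not an arc \emph{between} $a_0$ and $a_0'$ (two consecutive arcs on a zigzag path share a vertex, so there is nothing between them); it is one of the two endpoint arcs themselves, namely the one at whose head $L$ turns right. The paper makes this precise and obtains the clean single term $b_2\,\partial_{b_2} W_q$ rather than a sum over the two segments leaving $a$, but this does not affect the validity of the argument.
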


\begin{proof}
The idea is to describe the difference of two possible choices explicitly in terms of the relations $ ∂_a W_q $. Denote by $ L_1, …, L_N $ the zigzag paths of $ Q $. Recall that the element $ ℓ_q $ is the sum of elements $ ℓ_{q, i} $ over all $ 1 ≤ i ≤ N $. The identity location on a certain zigzag path $ L_i $ only impacts the element $ ℓ_{q, i} $ and no other summands. It therefore suffices to study only a single element $ ℓ_{q, i} $. It also suffices to assume that the two choices of identity locations to be studied are arcs which are neighbors along $ L_i $. The strategy is to enumerate $ L_i $-polygons with respect to the two choices.

Let us denote the two neighboring arcs are $ b_1 $ and $ b_2 $, such that $ h(b_1) = t(b_2) $ and $ L_i $ turns left at the head of $ b_1 $ and right at the head of $ b_2 $. The situation is depicted in \autoref{fig:MS-defpotential-choices}. We claim that $ L_i $-polygons with respect to $ a_0 = b_1 $ and $ L_i $-polygons with respect to $ a_0 = b_2 $ are almost the same. Indeed, the typical $ L_i $-polygon with respect to $ a_0 = b_1 $ has a very long $ \smooth L_i $-segment and many crossings with $ b_1 $. Since $ b_2 $ is the neighbor of $ b_1 $, all these $ L_i $-polygons can be interpreted alternatively as $ L_i $-polygons with respect to $ a_0 = b_2 $. The only difference lies in the corner cases.

To be very precise, the only $ L_i $-polygons with respect to $ a_0 = b_1 $ which cannot be interpreted as $ L_i $-polygons with respect to $ a_0 = b_2 $ are those which start at $ b_1 $ and end at $ b_2 $, with just the shortest possible $ \smooth L_i $-segment in between, consisting of a single angle. The only $ L_i $-polygons with respect to $ a_0 = b_2 $ which cannot be interpreted as $ L_i $-polygons with respect to $ a_0 = b_1 $ are those which end at $ b_2 $. Let us denote by $ ℓ_{q, i}^{(b_1)} $ and $ ℓ_{q, i}^{(b_2)} $ the two potentials in $ ℂ\mirQ ⟦Q_0⟧ $ obtained from the two choices of identity locations. In these terms, we conclude
\begin{align*}
ℓ_{q, i}^{(b_2)} - ℓ_{q, i}^{(b_1)} &= \sum_{\substack{\text{clockwise} \\ \text{midpoint polygons } D \\ \text{ending at } b_2}} \punctures(D) \arcsequence(D) - \sum_{\substack{\text{counterclockwise} \\ \text{midpoint polygons } D \\ \text{ending at } b_2}} \punctures(D) \arcsequence(D) \\
&= b_2 ∂_{b_2} W_q.
\end{align*}
This shows that the difference of $ ℓ_{q, i}^{(b_1)} $ and $ ℓ_{q, i}^{(b_2)} $ is a relation in the deformed Jacobi algebra. In conclusion, the potential is independent of the choice of identity locations.
\end{proof}

\subsection{Deformed mirror objects}
\label{sec:MS-mirobjects}
In this section, we compute the deformed mirror objects. More precisely, we evaluate the definition of the deformed mirror functor for the arc objects $ a ∈ \Gtl_q Q ⊂ \HTw\Gtl_q Q $. We describe these objects $ F_q (a) ∈ \MF(\Jac_q \mirQ, ℓ_q) $ in terms of midpoint polygons. Recall that we work under \autoref{conv:zigzag-category-convention}. As we have seen in \autoref{sec:MS-CHL}, the mirror functor $ F: \HTw\Gtl Q → \MF(\Jac \mirQ, ℓ) $ sends the arcs $ a ∈ Q_1 $ to very explicit matrix factorizations $ F(a) $. We provide in this section an explicit computation of the deformed matrix factorizations $ F_q (a) $, which is in fact likewise explicit:

\begin{center}
\begin{tikzpicture}
\path (0, 0) node (A) {arc $ a ∈ \Ob\Gtl Q = Q_1 $} (7, 0) node (B) {$ \dmatf{(\Jac \mirQ) h(a)}{(\Jac \mirQ) t(a)}{a}{\bar a} $};
\path ($ (A.east)!0.5!(B.west) + (up:0.15) $) node {\large $ \xmapsto{~~~F~~~} $};
\path (0, -1) node (C) {arc $ a ∈ \Ob\Gtl_q Q = Q_1 $} (7, -1) node (D) {$ \dmatf{(\Jac_q \mirQ) h(a)}{(\Jac \mirQ) t(a)}{a}{\bar a_q} $};
\path ($ (C.east)!0.5!(D.west) + (up:0.15) $) node {\large $ \xmapsto{~~~F_q~~~} $};
\end{tikzpicture}
\end{center}

To compute the object $ F_q (X) $ for $ X ∈ \cat C_q $, we are in general required to find for every $ L_i ∈ \RefObjects $ the hom space $ \Hom_{\cat C_q} (L_i, X) $. We also need to compute the products $ μ_q (m, b, …, b) $ where $ m ∈ \Hom_{\cat C_q} (L_i, X) $ is a morphism from one of the reference objects to $ X $.

For the specific instance $ \H\DefZigzagCat ⊂ \HTw\Gtl_q Q $ of the deformed Cho-Hong-Lau construction, we have described the hom spaces and the relevant products already \autoref{sec:zigzag-mirobjects}. Computing the mirror objects $ F_q (a) $ for $ a ∈ \Gtl_q Q $ now boils down to evaluating the products $ μ_{\HTw\Gtl_q Q} (m, b, …, b) $, where $ m ∈ \Hom_{\HTw\Gtl_q Q} (L, a) $ is an even or odd intersection point between $ L $ and $ a $. Recall that the products for even $ m $ are easy to express, while the products for odd $ m $ are computed by MD disks. We shall simplify the description of these products here in terms of midpoint polygons.

\begin{definition}
Let $ a ∈ Q_1 $ be an arc. Define the \emph{deformed complement} of $ a $ as
\begin{equation*}
\bar a_q = \sum_{\substack{\text{clockwise} \\ \text{midpoint polygons } D \\ \text{ending at } a}} \punctures(D) ~ ∂_a \arcsequence(D) ∈ \Jac_q \mirQ.
\end{equation*}
\end{definition}

\begin{corollary}
The deformed mirror objects are the deformed matrix factorizations given by
\begin{equation*}
F_q (a) = \dmatf{(\Jac_q \mirQ) h(a)}{(\Jac_q \mirQ) t(a)}{a}{\bar a_q}.
\end{equation*}
\end{corollary}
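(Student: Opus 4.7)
The strategy is to unwind the definition of $F_q(a)$ from the deformed Cho-Hong-Lau construction applied to the pair $(\H\DefZigzagCat, \HTw\Gtl_q Q)$, and then compute its differential explicitly using the product formulas collected in \autoref{th:zigzag-mirobjects-products}.

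First, I would identify the underlying module. By the definition of the deformed mirror functor in \autoref{sec:CHL-functor}, the $B$-module underlying $F_q(a)$ is
\begin{equation*}
\bigoplus_{L_i \in \H\DefZigzagCat} (\Jac_q \mirQ) \cdot L_i \otimes \Hom_{\HTw\Gtl_q Q}(L_i, a).
\end{equation*}
From \autoref{sec:zigzag-mirobjects}, a basis for $\Hom_{\HTw\Gtl_q Q}(L, a)$ is given by intersections of $\smooth L$ with the arc $a$, and exactly two zigzag paths intersect $a$: namely $L_{h(a)}$ provides the even intersection $p$ at the midpoint of $a$, and $L_{t(a)}$ provides the odd intersection $q$. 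Combined with the identification of deformed projectives in \autoref{rem:CHL-projectives-explicit}, this matches the $\mathbb{Z}/2\mathbb{Z}$-graded module underlying the proposed matrix factorization $\dmatf{(\Jac_q \mirQ) h(a)}{(\Jac_q \mirQ) t(a)}{a}{\bar a_q}$.

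Next, I would compute the differential $\delta$ on the two generators. For the even generator $p$ (with $\|p\| = 1$), \autoref{th:zigzag-mirobjects-products} (even case) shows that $\mu_q^k(p, b, \ldots, b)$ vanishes unless $k = 2$ and the inserted $b$-component is the odd intersection at $a$, in which case the value is $-q$. Since the associated formal variable is $x_a = a$, one obtains $\mu^2_q(p, b) = -a \cdot q$ and hence $\delta(p) = (-1)^{\|p\|}\mu^2_q(p, b) = a \cdot q$, which is precisely right multiplication by $a$, matching the top arrow. For the odd generator $q$ (with $\|q\| = 0$), \autoref{th:zigzag-mirobjects-products} (odd case) expresses $\mu^{N+1}_q(q, X_{e_N}, \ldots, X_{e_1})$ as a signed sum over MD disks with input sequence $(X_{e_1}, \ldots, X_{e_N}, q)$, each contributing $\pm\punctures(D) \cdot q^* = \pm \punctures(D) \cdot p$. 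By the definition of MD disks as surgered clockwise CR disks, these are in bijection with clockwise midpoint polygons ending at $a$ with arc sequence $(e_1, \ldots, e_N, a)$, and the composed formal variables $x_{e_N} \cdots x_{e_1}$ then equal $\partial_a \arcsequence(D) \in \Jac_q \mirQ$. Summing over $N$ gives $\delta(q) = \bar a_q \cdot p$, matching the bottom arrow.

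The main obstacle is the sign bookkeeping for MD disks: the Abouzaid sign of each clockwise MD disk must reconcile with the unsigned definition of $\bar a_q$. This parallels the analysis in the proof of \autoref{th:MS-defsuperpotential-W}, where the Abouzaid sign of a clockwise midpoint polygon was identified with $|D| = \sum (n_i-1)/2$, now modified by the surgery that removes the even output at $a$ and reattaches the odd morphism at $a$ as a final input with a new even output. Once the signs are reconciled and the identification $F_q(a) = \dmatf{(\Jac_q \mirQ) h(a)}{(\Jac_q \mirQ) t(a)}{a}{\bar a_q}$ is established, the fact that $F_q(a)$ is a deformed matrix factorization of $(\Jac_q \mirQ, \ell_q)$ follows automatically from \autoref{th:CHL-functor-th}.
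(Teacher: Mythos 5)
Your proposal matches the paper's proof in both structure and detail: identify the underlying module from the two zigzag paths crossing $a$, compute $\delta$ on the even generator via the single surviving $\mu^2$-term (yielding multiplication by $a$), and compute $\delta$ on the odd generator by identifying MD disks with clockwise midpoint polygons ending at $a$ (yielding multiplication by $\bar a_q$). Your flagged sign caveat is well-placed but does not indicate a gap relative to the paper: the paper's own proof likewise omits the $(-1)^{\Abouzaid(D)}$ factor when summing over MD disks, and the stated definition of $\bar a_q$ lacks the sign $(-1)^{|D|}$ that is visibly used in the 3-punctured-sphere example, so completing that reconciliation would in fact tighten the paper's treatment rather than merely reproduce it.
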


\begin{proof}
It is our task to evaluate the products $ μ_{\HTw\Gtl_q Q} (m, b, …, b) $. Here $ m $ is an even or odd intersection point between a zigzag path and an arc. We shall regard the two cases separately and explain the relevant products.

First of all, fix an arc $ a ∈ Q_1 $. Let $ L $ be the zigzag path turning left at the head of $ a $ and $ L' $ the zigzag path turning right at the head of $ a $. Let $ m: L → a $ be the odd intersection point and $ m^*: L' → a $ the even intersection point. Regard the odd morphism $ X_a: L → L' $ of zigzag paths. According to \autoref{th:zigzag-mirobjects-products}, the product $ μ_{\HTw\Gtl_q Q} (m^*, X_a) $ is equal to $ - m $ and it is the only nonvanishing term in $ μ(m^*, b, …, b) $. We conclude
\begin{equation*}
δ(m^*) = (-1)^{‖m^*‖} \sum_{l ≥ 0} μ_{\HTw\Gtl_q Q}^{l+1} (m^*, b, …, b) = a m.
\end{equation*}
Regard now the odd intersection point $ m: L → a $ and regard a sequence $ X_{e_1}, …, X_{e_N} $ of odd basis morphisms with $ X_{e_i}: L_i → L_{i+1} $ and $ L_{N+1} = L $. According to \autoref{th:zigzag-mirobjects-products}, we can describe the product $ μ_{\HTw\Gtl_q Q} (m, X_{e_k}, …, X_{e_1}) $ by means of MD disks with output $ m^* $:
\begin{equation*}
μ_{\HTw\Gtl_q Q} (m, X_{e_k}, …, X_{e_1}) = \sum_{\substack{D \text{ MD disk} \\ \text{with inputs } X_{e_1}, …, X_{e_k}, m}} \punctures(D) m^*.
\end{equation*}
Unraveling the definition, an MD disk with inputs $ X_{e_1}, …, X_{e_k}, m $ is the same as a midpoint polygon with arc sequence $ e_1, …, e_k, a $. We can therefore write
\begin{equation*}
δ(m) = (-1)^{‖m‖} \sum_{l ≥ 0} μ_{\HTw\Gtl_q Q}^{l+1} (m, b, …, b) = \sum_{\substack{\text{clockwise} \\ \text{midpoint polygons } D \\ \text{ending at } a}} \punctures(D) ~ ∂_a \arcsequence(D) m^* = \bar a_q m^*.
\end{equation*}
In summary, we have computed the map $ δ $ both on the odd hom space $ \Hom(L, a) $ and the even hom space $ \Hom(L', a) $. This finishes the proof.
\end{proof}

\begin{figure}
\centering
\begin{tikzpicture}
\begin{scope}[every path/.style={gray}]
\path[draw] (0, 0) -- ++(right:1) coordinate[midway] (1-0) -- ++(right:1) coordinate[midway] (3-0) -- ++(right:1) coordinate[midway] (5-0);
\path[draw] (0, 1) -- ++(right:1) coordinate[midway] (1-2) -- ++(right:1) coordinate[midway] (3-2) -- ++(right:1) coordinate[midway] (5-2);
\path[draw] (0, 2) -- ++(right:1) coordinate[midway] (1-4) -- ++(right:1) coordinate[midway] (3-4) -- ++(right:1) coordinate[midway] (5-4);
\path[draw] (0, 3) -- ++(right:1) coordinate[midway] (1-6) -- ++(right:1) coordinate[midway] (3-6) -- ++(right:1) coordinate[midway] (5-6);
\path[draw] (0, 0) -- ++(up:1) coordinate[midway] (0-1) -- ++(up:1) coordinate[midway] (0-3) -- ++(up:1) coordinate[midway] (0-5);
\path[draw] (1, 0) -- ++(up:1) coordinate[midway] (2-1) -- ++(up:1) coordinate[midway] (2-3) -- ++(up:1) coordinate[midway] (2-5);
\path[draw] (2, 0) -- ++(up:1) coordinate[midway] (4-1) -- ++(up:1) coordinate[midway] (4-3) -- ++(up:1) coordinate[midway] (4-5);
\path[draw] (3, 0) -- ++(up:1) coordinate[midway] (6-1) -- ++(up:1) coordinate[midway] (6-3) -- ++(up:1) coordinate[midway] (6-5);
\end{scope}
\path[draw, fill=gray, fill opacity=0.5] plot[smooth] coordinates{(1-0) ($ (1-0)!0.5!(2-1) + (135:0.1) $) (2-1) ($ (2-1)!0.5!(3-2) + (315:0.1) $) ($ (3-2)+(down:0.1) $) (1, 0.9) (0.9, 1) (1, 1.1) ($ (3-2)+(up:0.1) $) ($ (3-2)!0.5!(4-3) + (135:0.1) $) (4-3) ($ (4-3)!0.5!(5-4) + (315:0.1) $) (5-4) ($ (5-4)!0.5!(6-5) + (135:0.1) $) (6-5)} -- plot[smooth] coordinates{(6-5) ($ (5-6)!0.5!(6-5) + (225:0.1) $) (5-6)} -- plot[smooth] coordinates{(5-6) ($ (4-5)!0.5!(5-6) + (315:0.1) $) (4-5) ($ (3-4)!0.5!(4-5) + (135:0.1) $) (3-4) ($ (2-3)!0.5!(3-4) + (315:0.1) $) (2-3) ($ (1-2)!0.5!(2-3) + (135:0.1) $) (1-2) ($ (0-1)!0.5!(1-2) + (315:0.1) $) (0-1)} -- plot[smooth] coordinates{(0-1) ($ (0-1)!0.5!(1-0) + (45:0.1) $) (1-0)};
\path[draw] (1, 1) -- (2, 1);
\path[fill] (1, 1) circle[radius=0.05];
\path[fill] (1.5, 1.1) circle[radius=0.05] node[right] {$ m $};
\path[fill] (1.5, 0.9) circle[radius=0.05] node[right] {out};
\path[draw, ->] (1.2, 1.1) arc(30:330:0.2) node[pos=0.7, left] {$ μ^0_q $};
\path[draw, ->] (1.5, 1)++(60:0.3) arc(60:180:0.2) node[at start, right] {};
\path[draw, ->] (1.3, 0.9) arc(180:240:0.2);
\path[fill] (1-0) circle[radius=0.05] node[below] {$ b $};
\path[fill] (0-1) circle[radius=0.05] node[left] {$ b $};
\path[fill] (5-6) circle[radius=0.05] node[above] {$ b $};
\path[fill] (6-5) circle[radius=0.05] node[right] {$ b $};
\end{tikzpicture}
\caption{The picture depicts the products $ μ(μ^0_q, m, b, …, b) $, which determine the curvature of $ F_q (a) $. The products can be interpreted as smooth immersed disks between odd intersections of zigzag curves, with a small embayment given by intermediately tracing an arc. The longer a disk, the more arcs can be embayed and the more it contributes to curvature of mirror objects. The less punctures a disk is supposed to cover, the shorter it is, which shows nicely why the classical mirror objects $ F(a) $ are curvature-free.}
\label{fig:MS-mirobjects-curvature}
\end{figure}
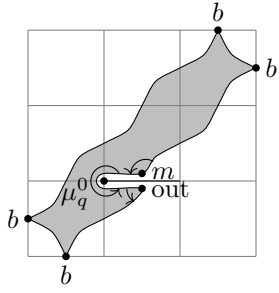

\begin{remark}
The mirror objects $ F_q (a) ∈ \mf(\Jac_q \mirQ, ℓ_q) $ have curvature. Explicitly, their curvature reads
\begin{equation*}
μ^0_{\MF, F_q (a)} (m) = ℓ_q m - δ^2 (m) = (-1)^{|m|} \sum_{l ≥ 0} μ_{\HTw\Gtl_q Q}^{l+2} (μ_{\HTw\Gtl_q Q}^0, m, b, …, b).
\end{equation*}
In \autoref{fig:MS-mirobjects-curvature}, we have depicted how to interpret this curvature geometrically.
\end{remark}

\subsection{Main result}
\label{sec:MS-main}
In this section, we state our main result. It consists of a wide range of deformed mirror equivalences for punctured surfaces. The A-side is the deformed gentle algebra $ \Gtl_q Q $ and the B-side is the deformed category of matrix factorizations $ \mf(\Jac_q \mirQ, ℓ_q) $. When dividing out the maximal ideal, one recovers the classical mirror symmetry originally described in \cite{Bocklandt}. More specifically, one recovers the classical mirror functor of Cho, Hong and Lau defined in \cite[Chapter 10]{CHL}.

\begin{remark}
For sake of completeness, we provide here a brief overview of the setting. The starting point is a dimer $ Q $ which is geometrically consistent or a standard sphere dimer $ Q_M $ with $ M ≥ 3 $. We assume that the dual dimer $ \mirQ $ is zigzag consistent and of bounded type. The bounded type requirement for dimers is detailed in \autoref{sec:flatness-dimers}. For instance, a cancellation consistent dimer on a torus or without triangles is automatically of bounded type.

Our mirror functor is a specific instance of the deformed Cho-Hong-Lau construction of \autoref{sec:CHL}. The input pair $ (\H\DefZigzagCat, \HTw\Gtl_q Q) $ for this specific instance consists of derived category $ \HTw\Gtl_q Q $ of the deformed gentle algebra together with the minimal model $ \H\DefZigzagCat $ of the deformed category of zigzag paths. In \autoref{sec:MS-defsuperpotential}, we have started evaluating the deformed Cho-Hong-Lau construction for this specific instance and obtained an explicit description of the deformed superpotential $ W_q $ and deformed Jacobi algebra $ \Jac_q \mirQ $. In \autoref{sec:MS-defpotential}, we have obtained an explicit description of the deformed potential $ ℓ_q $. The general deformed Cho-Hong-Lau construction of \autoref{sec:CHL-functor} together with \autoref{rem:MS-technical-restricted} about the restriction to the subcategory $ \Gtl_q Q ⊂ \HTw\Gtl_q Q $ provides us with a functor
\begin{equation*}
F_q: \Gtl_q Q → \MF(\Jac_q \mirQ, ℓ_q).
\end{equation*}
In \autoref{sec:MS-mirobjects}, we have computed explicitly the mirror objects $ F_q (a) $ for $ a ∈ \mirQ_1 $. We denote by $ \mf(\Jac_q \mirQ, ℓ_q) $ the subcategory consisting of these objects:
\begin{equation*}
\mf(\Jac_q \mirQ, ℓ_q) = \{F_q (a)\}_{a ∈ \mirQ_1} ⊂ \MF(\Jac_q \mirQ, ℓ_q).
\end{equation*}
\end{remark}

\begin{remark}
\label{rem:MS-main-reinterpretation}
In our main result, we wish to present a mirror functor of $ A_∞ $-deformations. At the present point, both the Jacobi algebra $ \Jac_q \mirQ $ and the category $ \MF(\Jac_q \mirQ, ℓ_q) $ however do not come with a canonical identification as completed tensor product of $ \Jac \mirQ $ and $ \MF(\Jac \mirQ, ℓ) $ with $ B $. We amend this by the following procedure:
\begin{itemize}
\item The algebra $ \Jac_q \mirQ $ is a deformation of $ \Jac \mirQ $ in the sense that there exists a deformation $ μ_{\Jac, q} $ of the product of $ \Jac \mirQ $ and a $ ℂ⟦Q_0⟧ $-linear algebra isomorphism
\begin{equation*}
φ_{\Jac, q}: \Jac_q \mirQ \isoto (ℂ⟦Q_0⟧ \htensor \Jac \mirQ, μ_{\Jac, q}).
\end{equation*}
This identification is not canonical and depends on choice. For sake of the construction, we shall fix one identification and view $ \Jac_q \mirQ $ as $ (ℂ⟦Q_0⟧ \htensor \Jac \mirQ, μ_{\Jac, q}) $.

\item The category $ \MF(\Jac_q \mirQ, ℓ_q) $ is by construction only a loose object-cloning deformation of the category $ \MF(\Jac \mirQ, ℓ) $. This means that its objects are not the same as the objects of $ \MF(\Jac \mirQ, ℓ) $ and its hom spaces cannot be naturally identified with the completed tensor product of the hom spaces of $ \MF(\Jac \mirQ, ℓ) $. Instead, such identification depends on choice. We have elaborated on this phenomenon in \autoref{sec:CHL-functor}. We amend this by regarding the subcategory $ \mf(\Jac_q \mirQ, ℓ_q) $ instead of $ \MF(\Jac_q \mirQ, ℓ_q) $.

\item The category $ \mf(\Jac_q \mirQ, ℓ_q) $ is by construction only a loose deformation of $ \mf(\Jac \mirQ, ℓ) $. This entails that the objects $ F_q (a) $ are in correspondence with the objects $ F(a) $, but the hom spaces have not been identified. We amend this by noting that the specific projective modules contained in $ F_q (a) $ are of the form $ (\Jac_q \mirQ)L_i $. These projective modules come with natural isomorphisms
\begin{equation*}
\Hom_{\Jac_q \mirQ} ((\Jac_q \mirQ)L_i, (\Jac_q \mirQ)L_j) \isoto L_i (\Jac_q \mirQ) L_j \isoto B \htensor L_i (\Jac \mirQ) L_j.
\end{equation*}
We can therefore identify the hom spaces of $ \mf(\Jac_q \mirQ, ℓ_q) $ as
\begin{equation}
\label{eq:MS-main-homidentification}
\Hom_{\mf(\Jac_q \mirQ, ℓ_q)} (F_q (a), F_q (b)) \isoto B \htensor \Hom_{\mf(\Jac \mirQ, ℓ)} (F (a), F(b)).
\end{equation}
This way, we can view $ \mf(\Jac_q \mirQ, ℓ_q) $ as an actual $ A_∞ $-deformation of $ \mf(\Jac \mirQ, ℓ) $.
\item The mirror functor $ F_q: \Gtl_q Q → \MF(\Jac_q \mirQ, ℓ_q) $ is by construction only a functor of loose $ A_∞ $-deformations. This means that it does not have a naturally defined leading term. However, once we interpret $ \mf(\Jac_q \mirQ, ℓ_q) $ as an actual $ A_∞ $-deformation of $ \mf(\Jac \mirQ, ℓ) $, the functor $ F_q: \Gtl_q Q → \mf(\Jac_q \mirQ, ℓ_q) $ becomes a functor of actual $ A_∞ $-deformations and does have a well-defined leading term.
\end{itemize}
\end{remark}

In \autoref{th:MS-main-th} we present the main result. It is a specific instance of \autoref{th:CHL-functor-th} and yields a wide range of deformed mirror equivalences. In the statement of the result, we have already applied the reinterpretation of $ \mf(\Jac_q \mirQ, ℓ_q) $ as actual $ A_∞ $-deformation of $ \mf(\Jac \mirQ, ℓ) $, according to \autoref{rem:MS-main-reinterpretation}.

\begin{theorem}
\label{th:MS-main-th}
Let $ Q $ be a geometrically consistent dimer or standard sphere dimer $ Q_M $ with $ M ≥ 3 $. Assume that the dual dimer $ \mirQ $ is zigzag consistent and of bounded type. Denote by $ (\Jac_q \mirQ, ℓ_q) $ the deformed Landau-Ginzburg model associated with $ Q $. Then:
\begin{enumerate}
\item The algebra $ \Jac_q \mirQ $ is a deformation of $ \Jac \mirQ $.
\item The category $ \mf(\Jac_q \mirQ, ℓ_q) $ is an $ A_∞ $-deformation of $ \mf(\Jac \mirQ, ℓ) $.
\item The deformed Cho-Hong-Lau functor provides a quasi-isomorphism of deformed $ A_∞ $-categories
\begin{equation*}
F_q: \Gtl_q Q \verylongisoto \mf(\Jac_q \mirQ, ℓ_q).
\end{equation*}
\item The leading term of $ F_q $ is the classical Cho-Hong-Lau functor $ F: \Gtl Q → \mf(\Jac \mirQ, ℓ) $.
\end{enumerate}
\end{theorem}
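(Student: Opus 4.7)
The plan is to assemble the four claims from the results built up in the preceding sections; each claim will be a direct consequence of one main intermediate theorem applied to the specific pair $(\H\DefZigzagCat, \HTw\Gtl_q Q)$.

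First I would settle claim (1) by invoking the dimer flatness theorem \autoref{th:flatness-flatness-dimer}. The hypotheses required there are that $\mirQ$ be cancellation consistent and of bounded type: the former follows from zigzag consistency of $\mirQ$ and the latter is assumed directly, while the slow-growth property \autoref{th:MS-defsuperpotential-slowgrowth} guarantees that $W_q$ lives in $ℂ\mirQ⟦Q_0⟧$ rather than only in $\compl{ℂ\mirQ}⟦Q_0⟧$, so \autoref{th:MS-defsuperpotential-flatness} applies verbatim. For claim (2), once flatness of $\Jac_q \mirQ$ is in hand, I would apply \autoref{th:CHL-defMF-correspondence} to realize $\MF(\Jac_q \mirQ, \ell_q)$ as a loose object-cloning deformation of $\MF(\Jac \mirQ, \ell)$, and then restrict to the subcategory $\mf(\Jac_q \mirQ, \ell_q)$, whose objects are built from the standard projectives $(\Jac_q \mirQ) L_i$. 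By \autoref{rem:CHL-projectives-explicit} these projectives admit canonical hom-space identifications with completed tensor products of hom spaces in $\mf(\Jac \mirQ, \ell)$, as recorded in \eqref{eq:MS-main-homidentification}; this upgrades the loose deformation to an honest $A_\infty$-deformation.

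For claims (3) and (4) I would feed the pair $(\H\DefZigzagCat, \HTw\Gtl_q Q)$ into the slow-growth version of the deformed Cho-Hong-Lau machinery of \autoref{th:CHL-functor-th}. Its three hypotheses have already been verified: the deformed cyclicity on odd sequences (\autoref{th:MS-defsuperpotential-cyclicity}), the slow growth of $\H\DefZigzagCat$ relative to the domain subcategory $\Gtl_q Q$ (\autoref{th:MS-defsuperpotential-slowgrowth}; see \autoref{rem:MS-technical-restricted} on why the domain must be restricted to $\Gtl_q Q$), and flatness of $\Jac_q \mirQ$ from claim (1). The machinery then delivers $F_q: \Gtl_q Q \to \mf(\Jac_q \mirQ, \ell_q)$ as a functor of $A_\infty$-deformations whose leading term, identified via claim (2), coincides with the classical Cho-Hong-Lau functor of \autoref{sec:MS-CHL}; this settles claim (4) and the functoriality half of claim (3).

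The remaining and hardest step is to promote $F_q$ from a functor of deformations to a quasi-isomorphism of deformed $A_\infty$-categories. The leading term $F: \Gtl Q \to \mf(\Jac \mirQ, \ell)$ is a quasi-isomorphism by \autoref{th:MS-CHL-qi} under zigzag consistency of $\mirQ$. The deformed component $F_q^1$ is a $B$-linear map between $B$-modules whose reduction modulo $\mathfrak{m}$ is $F^1$, and the residue differentials governing cohomology in the deformed setting are themselves deformations of the classical differentials. The plan is then to run a standard $\mathfrak{m}$-adic filtration argument, exploiting that the hom spaces on both sides are flat as $B$-modules (for the B-side, this is precisely what claims (1) and (2) provide), to lift the quasi-isomorphism from the leading term to the whole deformation. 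I expect this final bookkeeping — carefully reconciling the curved, deformed differentials on $\mf(\Jac_q \mirQ, \ell_q)$ and $\Gtl_q Q$ with the classical quasi-isomorphism criterion — to be the main obstacle, although no new conceptual input beyond the earlier sections should be required.
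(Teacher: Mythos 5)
Your handling of claims (1), (2), and (4) matches the paper's proof: you invoke the same flatness result (\autoref{th:MS-defsuperpotential-flatness} via \autoref{th:flatness-flatness-dimer}), the same projective-module identification (\autoref{rem:CHL-projectives-explicit} and \eqref{eq:MS-main-homidentification}), and the same slow-growth version of the deformed Cho-Hong-Lau machinery (\autoref{th:CHL-functor-th}, with the domain restriction explained in \autoref{rem:MS-technical-restricted}). That part is correct.

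Your treatment of claim (3) contains a genuine gap, and it is precisely where you anticipate ``the main obstacle.'' You propose a $\mathfrak{m}$-adic filtration/lifting argument to propagate the quasi-isomorphism from the leading term to the whole deformation. But in this framework, both $\Gtl_q Q$ and $\mf(\Jac_q \mirQ, \ell_q)$ are \emph{curved} $A_\infty$-deformations: $\mu_q^0 \neq 0$, so $\mu_q^1$ does not square to zero and the deformed hom spaces are not chain complexes. There is therefore no deformed cohomology to ``lift'' a quasi-isomorphism into, and a spectral-sequence or filtration comparison in the naive sense is not even well-posed. This is exactly the problem the notion of \emph{quasi-isomorphism of deformed $A_\infty$-categories} from \papertwoA\ is designed to sidestep: in that framework the property is \emph{defined} by the condition that the leading-term functor be a quasi-isomorphism of the uncurved leading-term categories. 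Once claim (4) establishes that the leading term of $F_q$ is $F$, and \autoref{th:MS-CHL-qi} (using zigzag consistency of $\mirQ$) establishes that $F$ is a quasi-isomorphism, claim (3) follows by definition — there is no further bookkeeping to do. The paper's proof of this step is a single sentence for this reason, and your proposed route should be replaced by citing the definition rather than attempting to run an argument that the curvature obstructs.
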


\begin{proof}
This is essentially a restatement of \autoref{th:CHL-functor-th}, applied to the specific pair $ (\H\DefZigzagCat, \HTw\Gtl_q Q) $ under the help of \autoref{rem:MS-technical-restricted}. Although we have already worked with the functor $ F_q $ in \autoref{sec:MS-mirobjects}, we shall recapitulate here why the conditions of \autoref{th:CHL-functor-th} are satisfied. After that, we comment on the first, second, fourth and third claimed statement, in this order.

Let us explain that all requirements of \autoref{th:CHL-functor-th} are satisfied. To start with, we briefly trace the requirements according to \autoref{conv:CHL-deformed}. The category $ \H\DefZigzagCat $ is strictly unital with the same identities as $ \H\ZigzagCat $. It comes with a non-degenerate odd pairing and a choice of CHL basis. As shown in \autoref{th:MS-defsuperpotential-cyclicity}, the deformed $ A_∞ $-structure is cyclic on the odd part. As we explained in \autoref{rem:MS-technical-restricted}, we restrict the construction of the deformed Cho-Hong-Lau functor to the domain $ \Gtl_q Q $. We have seen in \autoref{sec:zigzag} that the hom spaces between zigzag paths and arcs are finite-dimensional. This establishes all requirements of \autoref{conv:CHL-deformed}, adapted to the restriction of the functor to $ \Gtl_q Q $.

The other requirements of \autoref{th:CHL-functor-th} are that $ \H\DefZigzagCat $ is of slow growth and that $ \Jac_q \mirQ $ is a deformation of $ \Jac \mirQ $. In \autoref{th:MS-defsuperpotential-slowgrowth} we have indeed verified that $ \H\DefZigzagCat $ is of slow growth, at least to an extent sufficient for application of the deformed Cho-Hong-Lau construction when restricting the functor to $ \Gtl_q Q $. In \autoref{th:MS-defsuperpotential-flatness}, we have verified that $ \Jac_q \mirQ $ is a deformation of $ \Jac \mirQ $. This already proves the first claimed statement.

We see that all requirements of \autoref{th:CHL-functor-th} are satisfied, adapted to the restriction of the functor to $ \Gtl_q Q $. Invoke the slow growth version of \autoref{th:CHL-functor-th} and conclude that the large category $ \MF(\Jac_q \mirQ, ℓ_q) $ is a loose object-cloning deformation of $ \MF(\Jac \mirQ, ℓ) $. Furthermore, the deformed Cho-Hong-Lau functor defines a functor of loose object-cloning deformations
\begin{equation*}
F_q: \Gtl_q Q → \MF(\Jac_q \mirQ, ℓ_q).
\end{equation*}
Note that the functor is restricted to $ \Gtl_q Q ⊂ \HTw\Gtl_q Q $ because we have only partially verified that $ \H\DefZigzagCat $ is of slow growth. We now explain the second, third and fourth claimed statements of the theorem.

For the second statement of the theorem, we explain why $ \mf(\Jac_q \mirQ, ℓ_q) $ is a deformation of $ \mf(\Jac \mirQ, ℓ) $. Indeed, the subcategory $ \mf(\Jac_q \mirQ, ℓ_q) $ consists of the objects $ F_q (a) $. As such, it is a loose object-cloning deformation of $ \mf(\Jac \mirQ, ℓ) $ via the object-cloning map
\begin{align*}
O: \Ob(\mf(\Jac_q \mirQ, ℓ_q)) &\verylongto \Ob(\mf(\Jac \mirQ, ℓ)), \\
F_q (a) &\verylongmapsto F(a).
\end{align*}
Since the object-cloning map $ O $ is in fact a bijection, we can call $ \mf(\Jac_q \mirQ, ℓ_q) $ simply a loose deformation of $ \mf(\Jac Q, ℓ) $ instead of a loose object-cloning deformation. Upon the further identification \eqref{eq:MS-main-homidentification}, we can naturally view $ \mf(\Jac_q \mirQ, ℓ_q) $ as an actual deformation of $ \mf(\Jac \mirQ, ℓ) $.

For the fourth statement, regard the deformed Cho-Hong-Lau functor $ F_q $ provided to us by \autoref{th:CHL-functor-th}. We merely restrict the codomain of $ F_q $ to the subcategory $ \mf(\Jac_q Q, ℓ_q) ⊂ \MF(\Jac_q Q, ℓ_q) $:
\begin{equation*}
F_q: \Gtl_q Q → \mf(\Jac_q \mirQ, ℓ_q).
\end{equation*}
This restriction is a bijection on object level. In \autoref{th:CHL-functor-th}, we have investigated its leading term. By definition, the leading term entails dividing out $ \mathfrak{m} $ on hom spaces on both sides. We have shown the leading term is $ F $ when the following subsequent identification is applied:
\begin{equation}
\label{eq:MS-main-quotientidentification}
\frac{\Hom_{\MF(\Jac_q \mirQ, ℓ_q)} (F_q (a), F_q (b))}{(Q_0) · \Hom_{\MF(\Jac_q \mirQ, ℓ_q)} (F_q (a), F_q (b))} \isoto \Hom_{\MF(\Jac \mirQ, ℓ)} (F(a), F(b)),
\end{equation}
Under the present interpretation of $ \mf(\Jac_q \mirQ, ℓ_q) $ as actual deformation of $ \mf(\Jac \mirQ, ℓ) $, we can go a step further. Indeed, passing $ F_q $ to the quotient by $ \mathfrak{m} $ and subsequently applying \eqref{eq:MS-main-quotientidentification} is equivalent to interpreting $ \mf(\Jac_q \mirQ, ℓ_q) $ as actual deformation of $ \mf(\Jac \mirQ, ℓ) $ by means of \eqref{eq:MS-main-homidentification} and taking the leading term of the functor of actual $ A_∞ $-deformations $ F_q: \Gtl_q Q → \mf(\Jac_q \mirQ, ℓ_q) $. This shows that $ F $ is the leading term of the functor $ F_q $ of actual deformed $ A_∞ $-categories. This proves the fourth statement.

For the third statement, recall from \autoref{th:MS-CHL-qi} that the classical Cho-Hong-Lau functor $ F: \Gtl Q → \mf(\Jac Q, ℓ) $ is a quasi-isomorphism. Since $ F $ is the leading term of $ F_q $, the functor $ F_q $ is then also a quasi-isomorphism in the sense of \papertwoA. This finishes the proof.
\end{proof}

\begin{remark}
The classical mirror functor $ F: \HTw\Gtl Q → \MF(\Jac \mirQ, ℓ) $ is not quasi-fully-faithful on the entire category $ \HTw\Gtl Q $. For instance, a narrow loop around a puncture is mapped to the zero object, because the loop does not intersect any zigzag curves. While we have formally not verified the slow growth condition for $ \H\DefZigzagCat $ with respect to the entire category $ \HTw\Gtl_q Q $, suppose we obtain a deformed Cho-Hong-Lau functor $ F_q: \HTw\Gtl_q Q → \MF(\Jac_q \mirQ, ℓ_q) $. This functor would then not be quasi-fully-faithful either, in the sense of \papertwoA.
\end{remark}

\begin{remark}
We can build another functor $ \tilde F_q: \HTw\Gtl_q Q → \HTw\MF(\Jac_q \mirQ, ℓ_q) $ by extending the deformed Cho-Hong-Lau functor $ F_q: \Gtl_q Q → \MF(\Jac_q \mirQ, ℓ_q) $ to the deformed twisted completion and passing to the deformed minimal model, in the sense of \papertwoA. In contrast to $ F_q $, the functor $ \tilde F_q $ is quasi-fully-faithful.
\end{remark}

\appendix
\section{Examples}
\label{sec:3examples}
In this section, we present two examples of deformed mirror symmetry. We consider the 3-punctured sphere and a 4-punctured torus. In both cases we determine the deformed Jacobi algebra $ \Jac_q \mirQ $ and the deformed potential $ ℓ_q $ explicitly.

\subsection{3-punctured sphere}
In this section, we exhibit mirror symmetry and deformed mirror symmetry for the 3-punctured sphere. We start by explaining the dimer involved together with its zigzag curve. Then we turn to the Jacobi algebra and the mirror objects. We explain all midpoint polygons and compute the deformed Landau-Ginzburg model and the deformed matrix factorizations.

\begin{figure}
\centering
\begin{subfigure}{0.3\linewidth}
\centering
\begin{tikzpicture}
\path[shade, ball color=white] (0, 0) circle[radius=2];
\path[draw, very thick] plot[smooth cycle, tension=1.3] coordinates {(90:1.5) (210:0.5) (330:1.5) (90:0.5) (210:1.5) (330:0.5)};
\path[fill] (90:1) circle[radius=0.05] node[above] {$ q_c $};
\path[fill] (210:1) circle[radius=0.05] node[below] {$ q_a $};
\path[fill] (330:1) circle[radius=0.05] node[below] {$ q_b $};
\path[draw, ->, bend left] (90:1) to node[midway, right] {$ a $} (330:1);
\path[draw, ->, bend left] (330:1) to node[midway, below] {$ c $} (210:1);
\path[draw, ->, bend left] (210:1) to node[midway, left] {$ b $} (90:1);
\end{tikzpicture}
\caption{3-punctured sphere}
\label{fig:examples-sphere-arcsys}
\end{subfigure}
\begin{subfigure}{0.2\linewidth}
\centering
\begin{tikzpicture}
\path[draw, ->] (0, 0.1) -- (0, 0.9) node[midway, left] {$ b $};
\path[draw, ->] (0.1, 0) -- (0.9, 0) node[midway, below] {$ b $};
\path[draw, ->] (0.1, 1) -- (0.9, 1) node[midway, above] {$ a $};
\path[draw, ->] (1, 0.1) -- (1, 0.9) node[midway, right] {$ a $};
\path[draw, ->] (0.9, 0.9) -- (0.1, 0.1) node[pos=0.6, above] {$ c $};
\end{tikzpicture}
\caption{$ Q $}
\end{subfigure}
\begin{subfigure}{0.2\linewidth}
\centering
\begin{tikzpicture}
\path[draw, ->] (0, 0.1) -- (0, 0.9) node[midway, left] {$ b $};
\path[draw, ->] (0.1, 0) -- (0.9, 0) node[midway, below] {$ a $};
\path[draw, ->] (0.1, 1) -- (0.9, 1) node[midway, above] {$ a $};
\path[draw, ->] (1, 0.1) -- (1, 0.9) node[midway, right] {$ b $};
\path[draw, ->] (0.9, 0.9) -- (0.1, 0.1) node[pos=0.6, above] {$ c $};
\end{tikzpicture}
\caption{$ \mirQ $}
\label{fig:examples-sphere-mirQ}
\end{subfigure}
\caption{The 3-punctured sphere and its mirror}
\end{figure}
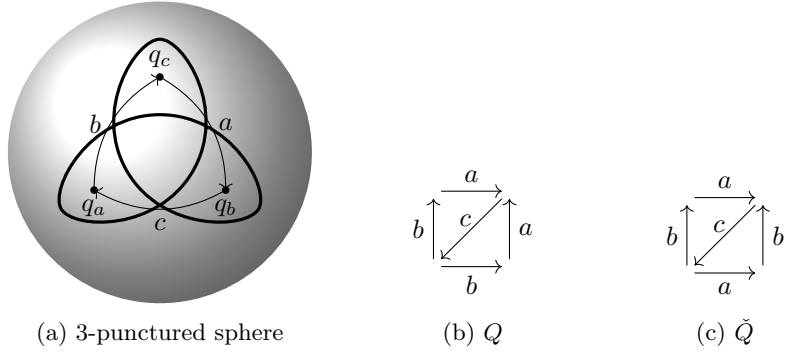

\paragraph*{The dimer and its mirror} The sphere we regard is depicted in \autoref{fig:examples-sphere-arcsys}. The figures depicts the three punctures $ q_a $, $ q_b $, $ q_c $, the three arcs $ a, b, c $ and the zigzag curve. The zigzag curve, as an object of the Fukaya category, is also known as the Seidel Lagrangian.

The mirror dimer $ \mirQ $ is a 1-punctured torus depicted in \autoref{fig:examples-sphere-mirQ}. The superpotential $ W ∈ ℂ\mirQ $ equals $ W = (abc)_{\cyc} - (cba)_{\cyc} $. The Jacobi algebra is $ \Jac \mirQ = ℂ⟨a, b, c⟩ / (ab-ba, ca-ac, bc-cb) = ℂ[a, b, c] $. The potential is $ ℓ = abc ∈ \Jac \mirQ $. The mirror objects are the matrix factorizations
\begin{equation*}
M_a = \dmatf{\Jac \mirQ}{\Jac \mirQ}{a}{bc}, \quad M_b = \dmatf{\Jac \mirQ}{\Jac \mirQ}{b}{ca}, \quad M_c = \dmatf{\Jac \mirQ}{\Jac \mirQ}{c}{ab}.
\end{equation*}

\paragraph*{The deformed superpotential} The deformed gentle algebra $ \Gtl_q Q $ has three deformation parameters $ q_a $, $ q_b $ and $ q_c $. To compute the deformed superpotential, we have to enumerate all midpoint polygons in $ Q $. There are in total 12 midpoint polygons contributing to $ W_q $, depicted in \autoref{fig:examples-sphere-Wmain} and \ref{fig:examples-sphere-Wside}. They fall into the following four groups:
Three of them are 3-gons on the front side and have arc sequences $ abc $, $ bca $ and $ cab $ (sign $ +1 $). They are depicted in \autoref{fig:examples-sphere-Wmainfront}.
Three of them are 3-gons on the rear side and have arc sequences $ cba $, $ bac $ and $ acb $ (sign $ -1 $). They are depicted in \autoref{fig:examples-sphere-Wmainrear}.
Three of them are monogons mainly lying on the front side with arc sequences $ a $, $ b $, $ c $ (sign $ +1 $, $ q $-parameters $ q_a $, $ q_b $, $ q_c $, respectively). They are depicted in \autoref{fig:examples-sphere-Wsidefront_a}, \ref{fig:examples-sphere-Wsidefront_b} and \ref{fig:examples-sphere-Wsidefront_c}.
Three of them are monogons mainly lying on the rear side with arc sequences $ a $, $ b $, $ c $ (sign $ -1 $, $ q $-parameters $ q_a $, $ q_b $, $ q_c $, respectively). They are depicted in \autoref{fig:examples-sphere-Wsiderear_a}, \ref{fig:examples-sphere-Wsiderear_b} and \ref{fig:examples-sphere-Wsiderear_c}.
We conclude that the deformed terms cancel each other and we are left with
\begin{equation*}
W_q = W = (abc)_{\cyc} - (cba)_{\cyc} ∈ ℂ⟨a, b, c⟩.
\end{equation*}
The deformed Jacobi algebra is $ \Jac_q \mirQ = ℂ[a, b, c]⟦q_a, q_b, q_c⟧ $.

\newcommand{\threepspheredraw}{%
\path[draw] (270:0.5) to[out=45, in=255] (30:0.5) to[out=75, in=0] (90:1.5);
\path[draw, <-] (90:1.5) to[out=180, in=105] (150:0.5) to[out=285, in=135] (270:0.5) to[out=315, in=240] (-30:1.5);
\path[draw, <-] (-30:1.5) to[out=60, in=-15] (30:0.5) to[out=165, in=15] (150:0.5) to[out=195, in=120] (210:1.5);
\path[draw, <-] (210:1.5) to[out=300, in=225] (270:0.5);
\path[fill] (90:0.8) circle[radius=0.05] node[above] {$ q_c $};
\path[fill] (210:0.8) circle[radius=0.05] node[below] {$ q_a $};
\path[fill] (-30:0.8) circle[radius=0.05] node[below] {$ q_b $};
\path (150:0.5) node[above left] {$ b $};
\path (30:0.5) node[above right] {$ a $};
\path (-90:0.5) node[below] {$ c $};}

\begin{figure}
\centering
\begin{subfigure}{0.3\linewidth}
\centering
\begin{tikzpicture}
\path[draw] (0, 0) circle[radius=2];
\path[fill, color=gray!50] (30:0.5) to[out=155, in=15] (150:0.5) to[out=285, in=135] (-90:0.5) to[out=45, in=255] cycle;
\threepspheredraw
\end{tikzpicture}
\caption{$ \arcsequence(D) = abc, bca, cab $}
\label{fig:examples-sphere-Wmainfront}
\end{subfigure}
\begin{subfigure}{0.35\linewidth}
\centering
\begin{tikzpicture}
\path[draw] (0, 0) circle[radius=2];
\path[fill, color=gray!50] (90:2) arc(90:270:2) arc(-90:90:2) to (90:1.5) to[out=0, in=75] (30:0.5) to[out=-15, in=60] (330:1.5) to[out=240, in=315] (-90:0.5) to[out=225, in=300] (210:1.5) to[out=120, in=195] (150:0.5) to[out=105, in=180] (90:1.5) to cycle;
\threepspheredraw
\end{tikzpicture}
\caption{$ \arcsequence(D) = cba, bac, acb $}
\label{fig:examples-sphere-Wmainrear}
\end{subfigure}
\caption{Six midpoint polygons contributing to $ W_q $}
\label{fig:examples-sphere-Wmain}
\end{figure}
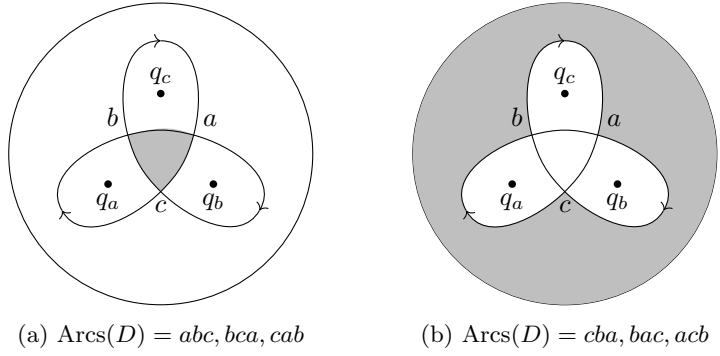

\begin{figure}
\centering
\begin{subfigure}{0.3\linewidth}
\centering
\begin{tikzpicture}
\path[fill, color=gray!50] (30:0.5) to[out=165, in=15] (150:0.5) to[out=195, in=120] (210:1.5) to[out=300, in=225] (-90:0.5) to[out=45, in=255] cycle;
\threepspheredraw
\end{tikzpicture}
\caption{$ \arcsequence(D) = a $}
\label{fig:examples-sphere-Wsidefront_a}
\end{subfigure}
\begin{subfigure}{0.3\linewidth}
\centering
\begin{tikzpicture}
\path[fill, color=gray!50] (150:0.5) to[out=285, in=135] (-90:0.5) to[out=315, in=240] (-30:1.5) to[out=60, in=-15] (30:0.5) to[out=165, in=15] cycle;
\threepspheredraw
\end{tikzpicture}
\caption{$ \arcsequence(D) = b $}
\label{fig:examples-sphere-Wsidefront_b}
\end{subfigure}
\begin{subfigure}{0.3\linewidth}
\centering
\begin{tikzpicture}
\path[fill, color=gray!50] (-90:0.5) to[out=45, in=255] (30:0.5) to[out=75, in=0] (90:1.5) to[out=180, in=105] (150:0.5) to[out=285, in=135] cycle;
\threepspheredraw
\end{tikzpicture}
\caption{$ \arcsequence(D) = c $}
\label{fig:examples-sphere-Wsidefront_c}
\end{subfigure}
\begin{subfigure}{0.32\linewidth}
\centering
\begin{tikzpicture}
\path[draw] (0, 0) circle[radius=2];
\path[fill, color=gray!50] (90:2) to (90:1.5) to[out=180, in=105] (150:0.5) to[out=285, in=135] (270:0.5) to[out=315, in=240] (-30:1.5) to[out=60, in=-15] (30:0.5) to[out=75, in=0] (90:1.5) to (90:2) arc(90:-90:2) arc(-90:-270:2);
\threepspheredraw
\end{tikzpicture}
\caption{$ \arcsequence(D) = a $}
\label{fig:examples-sphere-Wsiderear_a}
\end{subfigure}
\begin{subfigure}{0.32\linewidth}
\centering
\begin{tikzpicture}
\path[draw] (0, 0) circle[radius=2];
\path[fill, color=gray!50] (90:2) to (90:1.5) to[out=180, in=105] (150:0.5) to[out=195, in=120] (210:1.5) to[out=300, in=225] (-90:0.5) to[out=45, in=255] (30:0.5) to[out=75, in=0] (90:1.5) to (90:2) arc(90:-270:2);
\threepspheredraw
\end{tikzpicture}
\caption{$ \arcsequence(D) = b $}
\label{fig:examples-sphere-Wsiderear_b}
\end{subfigure}
\begin{subfigure}{0.32\linewidth}
\centering
\begin{tikzpicture}
\path[draw] (0, 0) circle[radius=2];
\path[fill, color=gray!50] (30:0.5) to[out=165, in=15] (150:0.5) to[out=195, in=120] (210:1.5) to[out=300, in=225] (270:0.5) to[out=315, in=240] (-30:1.5) to[out=60, in=-15] (30:0.5) to (90:2) arc(90:-90:2) arc(270:90:2) to (30:0.5);
\threepspheredraw
\end{tikzpicture}
\caption{$ \arcsequence(D) = c $}
\label{fig:examples-sphere-Wsiderear_c}
\end{subfigure}
\caption{Six canceling midpoint polygons contributing to $ W_q $}
\label{fig:examples-sphere-Wside}
\end{figure}
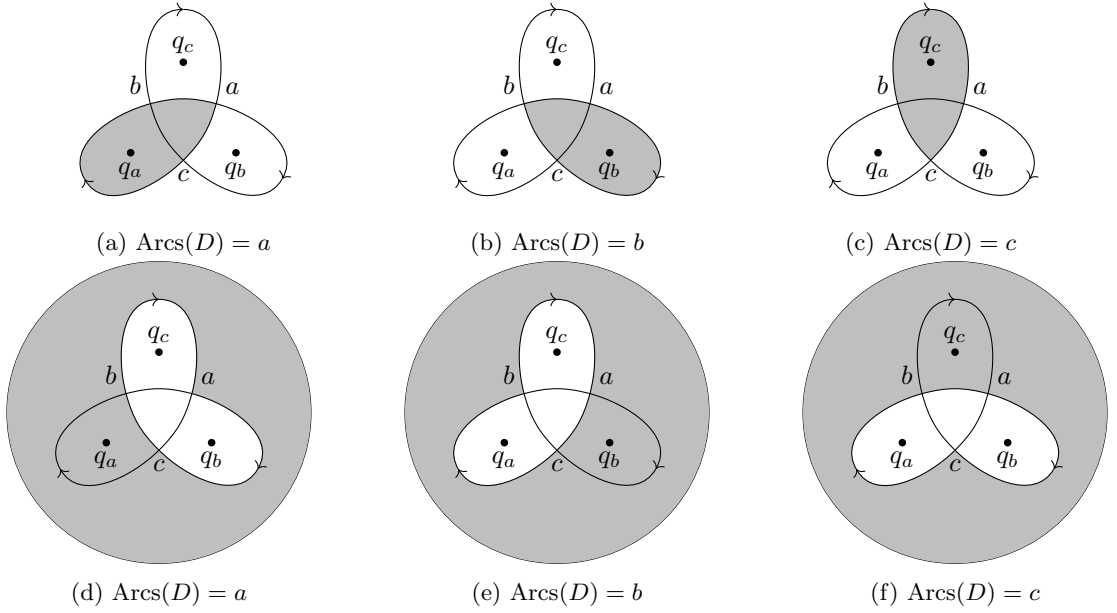

\begin{figure}
\centering
\begin{tikzpicture}
\threepspheredraw
\path[draw] (-90:0.5) to[out=45, in=255] coordinate[pos=0.8] (id) (30:0.5);
\path[fill] (id) circle[radius=0.05] node[left] {$ a_0 $};
\end{tikzpicture}
\caption{Identity location $ a_0 $}
\label{fig:examples-sphere-identity}
\end{figure}

\paragraph*{The deformed potential}
The expression for the deformed potential depends on the choice of identity location for the zigzag path $ L $. Let us choose the identity to lie on the copy of $ a $ which turns right at head and tail of $ a $. This choice is depicted in \autoref{fig:examples-sphere-identity}. The deformed potential $ ℓ_q $ is determined by enumerating $ L $-polygons. The specific choice of $ a_0 $ gives the following four $ L $-polygons. We list these polygons here and recapitulate in technical terms from which part of the minimal model $ \H\DefZigzagCat $ they come from:
\begin{itemize}
\item
One $ L $-polygon is a 3-gon lying on the front side and ending at the right side of $ a $. The underlying midpoint polygon is the same as the one depicted in \autoref{fig:examples-sphere-Wmainfront}. Its contribution to $ ℓ_q $ is $ abc $. In terms of $ \H\DefZigzagCat $, it concerns an ID disk.
\item
One $ L $-polygon is a monogon mainly lying on the front side and ending at the right side of $ a $. The underlying midpoint polygon is the same as the one depicted in \autoref{fig:examples-sphere-Wsidefront_a}. Its contribution to $ ℓ_q $ is $ - q_a a $. In terms of $ \H\DefZigzagCat $, it concerns an ID disk.
\item
One $ L $-polygon is a monogon mainly lying on the rear side and having a crossing with $ a_0 $. The underlying midpoint polygon is the same as the one depicted in \autoref{fig:examples-sphere-Wsiderear_b}. Its contribution to $ ℓ_q $ is $ - q_b b $. In terms of $ \H\DefZigzagCat $, it concerns a CR disk.
\item
One $ L $-polygon is a monogon mainly lying on the front side and having a crossing with $ a_0 $. The underlying midpoint polygon is the same as the one depicted in \autoref{fig:examples-sphere-Wsidefront_c}. Its contribution to $ ℓ_q $ is $ - q_c c $. In terms of $ \H\DefZigzagCat $, it concerns a CR disk.
\end{itemize}
The total deformed potential amounts to
\begin{equation*}
ℓ_q = abc - q_a a - q_b b - q_c c ∈ \Jac_q \mirQ.
\end{equation*}
We immediately see that $ ℓ_q ∈ \Jac_q \mirQ = ℂ[a, b, c]⟦q_a, q_b, q_c⟧ $ is still central.

\paragraph*{The deformed mirror objects}
The deformed mirror objects $ F_q (a) $, $ F_q (b) $, $ F_q (c) $ can be determined by enumerating clockwise midpoint polygons ending at $ a $, $ b $ and $ c $, respectively. For the arc $ a $, there are two such midpoint polygons. The first is a 3-gon and has $ ∂_a \arcsequence(D) = bc $, sign $ +1 $ and no $ q $-parameters. The second is a monogon and has $ ∂_a \arcsequence(D) = 1 $, sign $ -1 $ and $ q $-parameter $ q_a $. Both add up to a contribution of $ bc - q_a $. The deformed mirror object is therefore
\begin{equation*}
F_q (a) = \dmatf{\Jac_q \mirQ}{\Jac_q \mirQ}{a}{bc - q_a}.
\end{equation*}
Similar considerations hold for $ F_q (b) $ and $ F_q (c) $. The deformed mirror objects are listed in \autoref{tab:examples-sphere-mirobjects}. Note that none of the “factors” actually factor to $ ℓ_q $. Instead, the failure to factor $ ℓ_q $ is infinitesimal and serves as curvature of the deformed matrix factorizations.

\begin{table}
\centering
\begin{tabular}{ccc}
Object & Description & Curvature \\\hline
$ F_q (a) $ & $ \dmatf{\Jac_q \mirQ}{\Jac_q \mirQ}{a}{bc-q_a} $ & $ - q_b b - q_c c $ \\
$ F_q (b) $ & $ \dmatf{\Jac_q \mirQ}{\Jac_q \mirQ}{b}{ca-q_b} $ & $ - q_a a - q_c c $ \\
$ F_q (c) $ & $ \dmatf{\Jac_q \mirQ}{\Jac_q \mirQ}{c}{ab-q_c} $ & $ - q_a a - q_b b $
\end{tabular}
\caption{The deformed mirror objects}
\label{tab:examples-sphere-mirobjects}
\end{table}

\subsection{4-punctured torus}
In this section, we exhibit mirror symmetry and deformed mirror symmetry for a 4-punctured torus dimer. We start by explaining the dimer involved together with its zigzag curve. Then we turn to the Jacobi algebra and the mirror objects. We explain all midpoint polygons and compute the deformed Landau-Ginzburg model and the deformed matrix factorizations. Centrality of the deformed Landau-Ginzburg potential obtained is not obvious, and we provide a manual check.

\paragraph*{The dimer and its mirror} The torus dimer we regard is depicted in \autoref{fig:examples-torus-fig}. The dimer has four punctures $ q_1 $, $ q_2 $, $ q_3 $, $ q_4 $, eight arcs $ a_1, …, a_4 $ and $ b_1, …, b_4 $ and four elementary polygons. It has four zigzag paths and is geometrically consistent. The associated zigzag curves are depicted in \autoref{fig:examples-torus-zigzag}. The figure also depicts a few sample midpoint polygons of different sizes (without indication which is the first and the last midpoint).

The mirror dimer $ \mirQ $ is a 4-punctured torus as well, depicted in \autoref{fig:examples-torus-mirror}. In the numbering of the figure, the correspondence between the punctures $ 1, 2, 3, 4 $ of $ \mirQ $ and zigzag paths of $ Q $ is as follows:
\begin{align*}
1 ∈ \mirQ_0 & \longleftrightarrow L_1 = …, b_1, a_2, b_4, a_3, … \\
2 ∈ \mirQ_0 &\longleftrightarrow L_2 = …, b_2, a_3, b_3, a_2, … \\
3 ∈ \mirQ_0 &\longleftrightarrow L_3 = …, a_1, b_2, a_4, b_3, … \\
4 ∈ \mirQ_0 &\longleftrightarrow L_4 = …, a_1, b_4, a_4, b_1, …
\end{align*}
For instance, the two associated zigzag curves $ \smooth L_1 $ and $ \smooth L_2 $ are depicted in \autoref{fig:examples-torus-Xa2}. The superpotential $ W ∈ ℂ\mirQ $ equals
\begin{equation*}
W = (b_1 a_4 b_2 a_2)_{\cyc} + (b_4 a_1 b_3 a_3)_{\cyc} - (b_2 a_3 b_1 a_1)_{\cyc} - (b_3 a_2 b_4 a_4)_{\cyc}.
\end{equation*}
The Jacobi algebra $ \Jac \mirQ = ℂ\mirQ / (∂_a W)_{a ∈ Q_1} $ is a noncommutative quiver algebra with relations. The dimer $ \mirQ $ is zigzag consistent and therefore $ \Jac \mirQ $ is CY3. Its 12 relations read
\begin{align*}
a_4 b_2 a_2 &= a_1 b_2 a_3, \\
a_2 b_1 a_4 &= a_3 b_1 a_1, \\
… &= ….
\end{align*}
The potential $ ℓ ∈ \Jac \mirQ $ reads
\begin{equation*}
ℓ = (b_1 a_4 b_2 a_2)_{\cyc}.
\end{equation*}
The mirror objects are the eight matrix factorizations
\begin{align*}
M_{a_1} = \dmatf{(\Jac \mirQ) L_3}{(\Jac \mirQ) L_4}{a_1}{b_3 a_3 b_4}, \quad
M_{a_2} = \dmatf{(\Jac \mirQ) L_2}{(\Jac \mirQ) L_1}{a_2}{b_1 a_4 b_2}, \\
M_{a_3} = \dmatf{(\Jac \mirQ) L_2}{(\Jac \mirQ) L_1}{a_1}{b_4 a_1 b_3}, \quad
M_{a_4} = \dmatf{(\Jac \mirQ) L_3}{(\Jac \mirQ) L_4}{a_4}{b_2 a_2 b_1}, \\
M_{b_1} = \dmatf{(\Jac \mirQ) L_1}{(\Jac \mirQ) L_3}{b_1}{a_4 b_2 a_2}, \quad
M_{b_2} = \dmatf{(\Jac \mirQ) L_4}{(\Jac \mirQ) L_2}{b_2}{a_2 b_1 a_4}, \\
M_{b_3} = \dmatf{(\Jac \mirQ) L_4}{(\Jac \mirQ) L_2}{b_3}{a_3 b_4 a_1}, \quad
M_{b_4} = \dmatf{(\Jac \mirQ) L_1}{(\Jac \mirQ) L_3}{b_4}{a_1 b_3 a_3}.
\end{align*}

\begin{figure}
\centering
\begin{subfigure}{0.3\linewidth}
\centering
\begin{tikzpicture}[scale=2]
\newcommand{\arrowbetween}[2]{($ (#1)!0.1!(#2) $) -- ($ (#2)!0.1!(#1) $)}
\path[draw, ->] \arrowbetween{0, 0}{1, 0} node[midway, above] {$ a_1 $};
\path[draw, ->] \arrowbetween{2, 0}{1, 0} node[midway, above] {$ a_2 $};
\path[draw, ->] \arrowbetween{0, 2}{1, 2} node[midway, above] {$ a_1 $};
\path[draw, ->] \arrowbetween{2, 2}{1, 2} node[midway, above] {$ a_2 $};
\path[draw, ->] \arrowbetween{1, 1}{0, 1} node[midway, above] {$ a_3 $};
\path[draw, ->] \arrowbetween{1, 1}{2, 1} node[midway, above] {$ a_4 $};
\path[draw, ->] \arrowbetween{0, 1}{0, 0} node[midway, left] {$ b_1 $};
\path[draw, ->] \arrowbetween{0, 1}{0, 2} node[midway, left] {$ b_3 $};
\path[draw, ->] \arrowbetween{2, 1}{2, 0} node[midway, left] {$ b_1 $};
\path[draw, ->] \arrowbetween{2, 1}{2, 2} node[midway, left] {$ b_3 $};
\path[draw, ->] \arrowbetween{1, 0}{1, 1} node[midway, left] {$ b_2 $};
\path[draw, ->] \arrowbetween{1, 2}{1, 1} node[midway, left] {$ b_4 $};
\path (0, 0) node {\small $ 1 $};
\path (2, 0) node {\small $ 1 $};
\path (2, 2) node {\small $ 1 $};
\path (0, 2) node {\small $ 1 $};
\path (1, 0) node {\small $ 2 $};
\path (1, 2) node {\small $ 2 $};
\path (0, 1) node {\small $ 3 $};
\path (2, 1) node {\small $ 3 $};
\path (1, 1) node {\small $ 4 $};
\end{tikzpicture}
\caption{Four-punctured torus $ Q $}
\label{fig:examples-torus-fig}
\end{subfigure}
\begin{subfigure}{0.3\linewidth}
\centering
\begin{tikzpicture}[scale=2]
\newcommand{\arrowbetween}[2]{($ (#1)!0.1!(#2) $) -- ($ (#2)!0.1!(#1) $)}
\path[draw, ->] \arrowbetween{0, 0}{1, 0} node[midway, above] {$ a_3 $};
\path[draw, ->] \arrowbetween{2, 0}{1, 0} node[midway, above] {$ a_2 $};
\path[draw, ->] \arrowbetween{0, 2}{1, 2} node[midway, above] {$ a_3 $};
\path[draw, ->] \arrowbetween{2, 2}{1, 2} node[midway, above] {$ a_2 $};
\path[draw, ->] \arrowbetween{1, 1}{0, 1} node[midway, above] {$ a_1 $};
\path[draw, ->] \arrowbetween{1, 1}{2, 1} node[midway, above] {$ a_4 $};
\path[draw, ->] \arrowbetween{0, 1}{0, 0} node[midway, left] {$ b_1 $};
\path[draw, ->] \arrowbetween{0, 1}{0, 2} node[midway, left] {$ b_4 $};
\path[draw, ->] \arrowbetween{2, 1}{2, 0} node[midway, left] {$ b_1 $};
\path[draw, ->] \arrowbetween{2, 1}{2, 2} node[midway, left] {$ b_4 $};
\path[draw, ->] \arrowbetween{1, 0}{1, 1} node[midway, left] {$ b_2 $};
\path[draw, ->] \arrowbetween{1, 2}{1, 1} node[midway, left] {$ b_3 $};
\path (0, 0) node {\small $ 1 $};
\path (2, 0) node {\small $ 1 $};
\path (2, 2) node {\small $ 1 $};
\path (0, 2) node {\small $ 1 $};
\path (1, 0) node {\small $ 2 $};
\path (1, 2) node {\small $ 2 $};
\path (0, 1) node {\small $ 3 $};
\path (2, 1) node {\small $ 3 $};
\path (1, 1) node {\small $ 4 $};
\end{tikzpicture}
\caption{Its mirror $ \mirQ $}
\label{fig:examples-torus-mirror}
\end{subfigure}
\begin{subfigure}{0.3\linewidth}
\centering
\begin{tikzpicture}[scale=0.66666667]
\path[draw] (0, 0) -- ++(right:1) coordinate[midway] (1-0) -- ++(right:1) coordinate[midway] (3-0) -- ++(right:1) coordinate[midway] (5-0) -- ++(right:1) coordinate[midway] (7-0) -- ++(right:1) coordinate[midway] (9-0) -- ++(right:1) coordinate[midway] (11-0);
\path[draw] (0, 1) -- ++(right:1) coordinate[midway] (1-2) -- ++(right:1) coordinate[midway] (3-2) -- ++(right:1) coordinate[midway] (5-2) -- ++(right:1) coordinate[midway] (7-2) -- ++(right:1) coordinate[midway] (9-2) -- ++(right:1) coordinate[midway] (11-2);
\path[draw] (0, 2) -- ++(right:1) coordinate[midway] (1-4) -- ++(right:1) coordinate[midway] (3-4) -- ++(right:1) coordinate[midway] (5-4) -- ++(right:1) coordinate[midway] (7-4) -- ++(right:1) coordinate[midway] (9-4) -- ++(right:1) coordinate[midway] (11-4);
\path[draw] (0, 3) -- ++(right:1) coordinate[midway] (1-6) -- ++(right:1) coordinate[midway] (3-6) -- ++(right:1) coordinate[midway] (5-6) -- ++(right:1) coordinate[midway] (7-6) -- ++(right:1) coordinate[midway] (9-6) -- ++(right:1) coordinate[midway] (11-6);
\path[draw] (0, 4) -- ++(right:1) coordinate[midway] (1-8) -- ++(right:1) coordinate[midway] (3-8) -- ++(right:1) coordinate[midway] (5-8) -- ++(right:1) coordinate[midway] (7-8) -- ++(right:1) coordinate[midway] (9-8) -- ++(right:1) coordinate[midway] (11-8);
\path[draw] (0, 5) -- ++(right:1) coordinate[midway] (1-10) -- ++(right:1) coordinate[midway] (3-10) -- ++(right:1) coordinate[midway] (5-10) -- ++(right:1) coordinate[midway] (7-10) -- ++(right:1) coordinate[midway] (9-10) -- ++(right:1) coordinate[midway] (11-10);
\path[draw] (0, 6) -- ++(right:1) coordinate[midway] (1-12) -- ++(right:1) coordinate[midway] (3-12) -- ++(right:1) coordinate[midway] (5-12) -- ++(right:1) coordinate[midway] (7-12) -- ++(right:1) coordinate[midway] (9-12) -- ++(right:1) coordinate[midway] (11-12);
\path[draw] (0, 0) -- ++(up:1) coordinate[midway] (0-1) -- ++(up:1) coordinate[midway] (0-3) -- ++(up:1) coordinate[midway] (0-5) -- ++(up:1) coordinate[midway] (0-7) -- ++(up:1) coordinate[midway] (0-9) -- ++(up:1) coordinate[midway] (0-11);
\path[draw] (1, 0) -- ++(up:1) coordinate[midway] (2-1) -- ++(up:1) coordinate[midway] (2-3) -- ++(up:1) coordinate[midway] (2-5) -- ++(up:1) coordinate[midway] (2-7) -- ++(up:1) coordinate[midway] (2-9) -- ++(up:1) coordinate[midway] (2-11);
\path[draw] (2, 0) -- ++(up:1) coordinate[midway] (4-1) -- ++(up:1) coordinate[midway] (4-3) -- ++(up:1) coordinate[midway] (4-5) -- ++(up:1) coordinate[midway] (4-7) -- ++(up:1) coordinate[midway] (4-9) -- ++(up:1) coordinate[midway] (4-11);
\path[draw] (3, 0) -- ++(up:1) coordinate[midway] (6-1) -- ++(up:1) coordinate[midway] (6-3) -- ++(up:1) coordinate[midway] (6-5) -- ++(up:1) coordinate[midway] (6-7) -- ++(up:1) coordinate[midway] (6-9) -- ++(up:1) coordinate[midway] (6-11);
\path[draw] (4, 0) -- ++(up:1) coordinate[midway] (8-1) -- ++(up:1) coordinate[midway] (8-3) -- ++(up:1) coordinate[midway] (8-5) -- ++(up:1) coordinate[midway] (8-7) -- ++(up:1) coordinate[midway] (8-9) -- ++(up:1) coordinate[midway] (8-11);
\path[draw] (5, 0) -- ++(up:1) coordinate[midway] (10-1) -- ++(up:1) coordinate[midway] (10-3) -- ++(up:1) coordinate[midway] (10-5) -- ++(up:1) coordinate[midway] (10-7) -- ++(up:1) coordinate[midway] (10-9) -- ++(up:1) coordinate[midway] (10-11);
\path[draw] (6, 0) -- ++(up:1) coordinate[midway] (12-1) -- ++(up:1) coordinate[midway] (12-3) -- ++(up:1) coordinate[midway] (12-5) -- ++(up:1) coordinate[midway] (12-7) -- ++(up:1) coordinate[midway] (12-9) -- ++(up:1) coordinate[midway] (12-11);
%
\path[fill, color=gray, fill opacity=0.5] plot[smooth] coordinates{(2-7) ($ (2-7)!0.5!(3-6) + (45:0.1) $) (3-6) ($ (3-6)!0.5!(4-5) + (225:0.1) $) (4-5) ($ (4-5)!0.5!(5-4) + (45:0.1) $) (5-4) ($ (5-4)!0.5!(6-3) + (225:0.1) $) (6-3) ($ (6-3)!0.5!(7-2) + (45:0.1) $) (7-2) ($ (7-2)!0.5!(8-1) + (225:0.1) $) (8-1) ($ (8-1)!0.5!(9-0) + (45:0.1) $) (9-0)} -- plot[smooth] coordinates{(9-0) ($ (9-0)!0.5!(10-1) + (135:0.1) $) (10-1) ($ (10-1)!0.5!(11-2) + (315:0.1) $) (11-2) ($ (11-2)!0.5!(12-3) + (135:0.1) $) (12-3)} -- plot[smooth] coordinates{(12-3) ($ (11-4)!0.5!(12-3) + (225:0.1) $) (11-4) ($ (10-5)!0.5!(11-4) + (45:0.1) $) (10-5) ($ (9-6)!0.5!(10-5) + (225:0.1) $) (9-6) ($ (8-7)!0.5!(9-6) + (45:0.1) $) (8-7) ($ (7-8)!0.5!(8-7) + (225:0.1) $) (7-8) ($ (6-9)!0.5!(7-8) + (45:0.1) $) (6-9) ($ (5-10)!0.5!(6-9) + (225:0.1) $) (5-10)} -- plot[smooth] coordinates{(5-10) ($ (4-9)!0.5!(5-10) + (315:0.1) $) (4-9) ($ (3-8)!0.5!(4-9) + (135:0.1) $) (3-8) ($ (2-7)!0.5!(3-8) + (315:0.1) $) (2-7)};
\path[fill, color=gray, fill opacity=0.5] plot[smooth] coordinates{(1-0) ($ (1-0)!0.5!(2-1) + (135:0.1) $) (2-1) ($ (2-1)!0.5!(3-2) + (315:0.1) $) (3-2) ($ (3-2)!0.5!(4-3) + (135:0.1) $) (4-3)} -- plot[smooth] coordinates{(4-3) ($ (3-4)!0.5!(4-3) + (225:0.1) $) (3-4)} -- plot[smooth] coordinates{(3-4) ($ (2-3)!0.5!(3-4) + (315:0.1) $) (2-3) ($ (1-2)!0.5!(2-3) + (135:0.1) $) (1-2) ($ (0-1)!0.5!(1-2) + (315:0.1) $) (0-1)} -- plot[smooth] coordinates{(0-1) ($ (0-1)!0.5!(1-0) + (45:0.1) $) (1-0)};
\path[fill, color=gray, fill opacity=0.5] plot[smooth] coordinates{(10-9) ($ (10-9)!0.5!(11-8) + (45:0.1) $) (11-8)} -- plot[smooth] coordinates{(11-8) ($ (11-8)!0.5!(12-9) + (135:0.1) $) (12-9)} -- plot[smooth] coordinates{(12-9) ($ (11-10)!0.5!(12-9) + (225:0.1) $) (11-10)} -- plot[smooth] coordinates{(11-10) ($ (10-9)!0.5!(11-10) + (315:0.1) $) (10-9)};
%
\path[draw, ultra thick] plot[smooth] coordinates{($ (0-1)!-0.5!(1-2) $) (0-1) ($ (0-1)!0.5!(1-2) + (315:0.1) $) (1-2) ($ (1-2)!0.5!(2-3) + (135:0.1) $) (2-3) ($ (2-3)!0.5!(3-4) + (315:0.1) $) (3-4) ($ (3-4)!0.5!(4-5) + (135:0.1) $) (4-5) ($ (4-5)!0.5!(5-6) + (315:0.1) $) (5-6) ($ (5-6)!0.5!(6-7) + (135:0.1) $) (6-7) ($ (6-7)!0.5!(7-8) + (315:0.1) $) (7-8) ($ (7-8)!0.5!(8-9) + (135:0.1) $) (8-9) ($ (8-9)!0.5!(9-10) + (315:0.1) $) (9-10) ($ (9-10)!0.5!(10-11) + (135:0.1) $) (10-11) ($ (10-11)!0.5!(11-12) + (315:0.1) $) (11-12) ($ (11-12)!-0.5!(10-11) $)};
\path[draw, ultra thick] plot[smooth] coordinates{($ (0-3)!-0.5!(1-4) $) (0-3) ($ (0-3)!0.5!(1-4) + (315:0.1) $) (1-4) ($ (1-4)!0.5!(2-5) + (135:0.1) $) (2-5) ($ (2-5)!0.5!(3-6) + (315:0.1) $) (3-6) ($ (3-6)!0.5!(4-7) + (135:0.1) $) (4-7) ($ (4-7)!0.5!(5-8) + (315:0.1) $) (5-8) ($ (5-8)!0.5!(6-9) + (135:0.1) $) (6-9) ($ (6-9)!0.5!(7-10) + (315:0.1) $) (7-10) ($ (7-10)!0.5!(8-11) + (135:0.1) $) (8-11) ($ (8-11)!0.5!(9-12) + (315:0.1) $) (9-12) ($ (9-12)!-0.5!(8-11) $)};
\path[draw, ultra thick] plot[smooth] coordinates{($ (0-5)!-0.5!(1-6) $) (0-5) ($ (0-5)!0.5!(1-6) + (315:0.1) $) (1-6) ($ (1-6)!0.5!(2-7) + (135:0.1) $) (2-7) ($ (2-7)!0.5!(3-8) + (315:0.1) $) (3-8) ($ (3-8)!0.5!(4-9) + (135:0.1) $) (4-9) ($ (4-9)!0.5!(5-10) + (315:0.1) $) (5-10) ($ (5-10)!0.5!(6-11) + (135:0.1) $) (6-11) ($ (6-11)!0.5!(7-12) + (315:0.1) $) (7-12) ($ (7-12)!-0.5!(6-11) $)};
\path[draw, ultra thick] plot[smooth] coordinates{($ (0-7)!-0.5!(1-8) $) (0-7) ($ (0-7)!0.5!(1-8) + (315:0.1) $) (1-8) ($ (1-8)!0.5!(2-9) + (135:0.1) $) (2-9) ($ (2-9)!0.5!(3-10) + (315:0.1) $) (3-10) ($ (3-10)!0.5!(4-11) + (135:0.1) $) (4-11) ($ (4-11)!0.5!(5-12) + (315:0.1) $) (5-12) ($ (5-12)!-0.5!(4-11) $)};
\path[draw, ultra thick] plot[smooth] coordinates{($ (0-9)!-0.5!(1-10) $) (0-9) ($ (0-9)!0.5!(1-10) + (315:0.1) $) (1-10) ($ (1-10)!0.5!(2-11) + (135:0.1) $) (2-11) ($ (2-11)!0.5!(3-12) + (315:0.1) $) (3-12) ($ (3-12)!-0.5!(2-11) $)};
\path[draw, ultra thick] plot[smooth] coordinates{($ (0-11)!-0.5!(1-12) $) (0-11) ($ (0-11)!0.5!(1-12) + (315:0.1) $) (1-12) ($ (1-12)!-0.5!(0-11) $)};
\path[draw, ultra thick] plot[smooth] coordinates{($ (1-0)!-0.5!(2-1) $) (1-0) ($ (1-0)!0.5!(2-1) + (135:0.1) $) (2-1) ($ (2-1)!0.5!(3-2) + (315:0.1) $) (3-2) ($ (3-2)!0.5!(4-3) + (135:0.1) $) (4-3) ($ (4-3)!0.5!(5-4) + (315:0.1) $) (5-4) ($ (5-4)!0.5!(6-5) + (135:0.1) $) (6-5) ($ (6-5)!0.5!(7-6) + (315:0.1) $) (7-6) ($ (7-6)!0.5!(8-7) + (135:0.1) $) (8-7) ($ (8-7)!0.5!(9-8) + (315:0.1) $) (9-8) ($ (9-8)!0.5!(10-9) + (135:0.1) $) (10-9) ($ (10-9)!0.5!(11-10) + (315:0.1) $) (11-10) ($ (11-10)!0.5!(12-11) + (135:0.1) $) (12-11) ($ (12-11)!-0.5!(11-10) $)};
\path[draw, ultra thick] plot[smooth] coordinates{($ (3-0)!-0.5!(4-1) $) (3-0) ($ (3-0)!0.5!(4-1) + (135:0.1) $) (4-1) ($ (4-1)!0.5!(5-2) + (315:0.1) $) (5-2) ($ (5-2)!0.5!(6-3) + (135:0.1) $) (6-3) ($ (6-3)!0.5!(7-4) + (315:0.1) $) (7-4) ($ (7-4)!0.5!(8-5) + (135:0.1) $) (8-5) ($ (8-5)!0.5!(9-6) + (315:0.1) $) (9-6) ($ (9-6)!0.5!(10-7) + (135:0.1) $) (10-7) ($ (10-7)!0.5!(11-8) + (315:0.1) $) (11-8) ($ (11-8)!0.5!(12-9) + (135:0.1) $) (12-9) ($ (12-9)!-0.5!(11-8) $)};
\path[draw, ultra thick] plot[smooth] coordinates{($ (5-0)!-0.5!(6-1) $) (5-0) ($ (5-0)!0.5!(6-1) + (135:0.1) $) (6-1) ($ (6-1)!0.5!(7-2) + (315:0.1) $) (7-2) ($ (7-2)!0.5!(8-3) + (135:0.1) $) (8-3) ($ (8-3)!0.5!(9-4) + (315:0.1) $) (9-4) ($ (9-4)!0.5!(10-5) + (135:0.1) $) (10-5) ($ (10-5)!0.5!(11-6) + (315:0.1) $) (11-6) ($ (11-6)!0.5!(12-7) + (135:0.1) $) (12-7) ($ (12-7)!-0.5!(11-6) $)};
\path[draw, ultra thick] plot[smooth] coordinates{($ (7-0)!-0.5!(8-1) $) (7-0) ($ (7-0)!0.5!(8-1) + (135:0.1) $) (8-1) ($ (8-1)!0.5!(9-2) + (315:0.1) $) (9-2) ($ (9-2)!0.5!(10-3) + (135:0.1) $) (10-3) ($ (10-3)!0.5!(11-4) + (315:0.1) $) (11-4) ($ (11-4)!0.5!(12-5) + (135:0.1) $) (12-5) ($ (12-5)!-0.5!(11-4) $)};
\path[draw, ultra thick] plot[smooth] coordinates{($ (9-0)!-0.5!(10-1) $) (9-0) ($ (9-0)!0.5!(10-1) + (135:0.1) $) (10-1) ($ (10-1)!0.5!(11-2) + (315:0.1) $) (11-2) ($ (11-2)!0.5!(12-3) + (135:0.1) $) (12-3) ($ (12-3)!-0.5!(11-2) $)};
\path[draw, ultra thick] plot[smooth] coordinates{($ (11-0)!-0.5!(12-1) $) (11-0) ($ (11-0)!0.5!(12-1) + (135:0.1) $) (12-1) ($ (12-1)!-0.5!(11-0) $)};
\path[draw, ultra thick] plot[smooth] coordinates{($ (0-1)!-0.5!(1-0) $) (0-1) ($ (0-1)!0.5!(1-0) + (45:0.1) $) (1-0) ($ (1-0)!-0.5!(0-1) $)};
\path[draw, ultra thick] plot[smooth] coordinates{($ (0-3)!-0.5!(1-2) $) (0-3) ($ (0-3)!0.5!(1-2) + (45:0.1) $) (1-2) ($ (1-2)!0.5!(2-1) + (225:0.1) $) (2-1) ($ (2-1)!0.5!(3-0) + (45:0.1) $) (3-0) ($ (3-0)!-0.5!(2-1) $)};
\path[draw, ultra thick] plot[smooth] coordinates{($ (0-5)!-0.5!(1-4) $) (0-5) ($ (0-5)!0.5!(1-4) + (45:0.1) $) (1-4) ($ (1-4)!0.5!(2-3) + (225:0.1) $) (2-3) ($ (2-3)!0.5!(3-2) + (45:0.1) $) (3-2) ($ (3-2)!0.5!(4-1) + (225:0.1) $) (4-1) ($ (4-1)!0.5!(5-0) + (45:0.1) $) (5-0) ($ (5-0)!-0.5!(4-1) $)};
\path[draw, ultra thick] plot[smooth] coordinates{($ (0-7)!-0.5!(1-6) $) (0-7) ($ (0-7)!0.5!(1-6) + (45:0.1) $) (1-6) ($ (1-6)!0.5!(2-5) + (225:0.1) $) (2-5) ($ (2-5)!0.5!(3-4) + (45:0.1) $) (3-4) ($ (3-4)!0.5!(4-3) + (225:0.1) $) (4-3) ($ (4-3)!0.5!(5-2) + (45:0.1) $) (5-2) ($ (5-2)!0.5!(6-1) + (225:0.1) $) (6-1) ($ (6-1)!0.5!(7-0) + (45:0.1) $) (7-0) ($ (7-0)!-0.5!(6-1) $)};
\path[draw, ultra thick] plot[smooth] coordinates{($ (0-9)!-0.5!(1-8) $) (0-9) ($ (0-9)!0.5!(1-8) + (45:0.1) $) (1-8) ($ (1-8)!0.5!(2-7) + (225:0.1) $) (2-7) ($ (2-7)!0.5!(3-6) + (45:0.1) $) (3-6) ($ (3-6)!0.5!(4-5) + (225:0.1) $) (4-5) ($ (4-5)!0.5!(5-4) + (45:0.1) $) (5-4) ($ (5-4)!0.5!(6-3) + (225:0.1) $) (6-3) ($ (6-3)!0.5!(7-2) + (45:0.1) $) (7-2) ($ (7-2)!0.5!(8-1) + (225:0.1) $) (8-1) ($ (8-1)!0.5!(9-0) + (45:0.1) $) (9-0) ($ (9-0)!-0.5!(8-1) $)};
\path[draw, ultra thick] plot[smooth] coordinates{($ (0-11)!-0.5!(1-10) $) (0-11) ($ (0-11)!0.5!(1-10) + (45:0.1) $) (1-10) ($ (1-10)!0.5!(2-9) + (225:0.1) $) (2-9) ($ (2-9)!0.5!(3-8) + (45:0.1) $) (3-8) ($ (3-8)!0.5!(4-7) + (225:0.1) $) (4-7) ($ (4-7)!0.5!(5-6) + (45:0.1) $) (5-6) ($ (5-6)!0.5!(6-5) + (225:0.1) $) (6-5) ($ (6-5)!0.5!(7-4) + (45:0.1) $) (7-4) ($ (7-4)!0.5!(8-3) + (225:0.1) $) (8-3) ($ (8-3)!0.5!(9-2) + (45:0.1) $) (9-2) ($ (9-2)!0.5!(10-1) + (225:0.1) $) (10-1) ($ (10-1)!0.5!(11-0) + (45:0.1) $) (11-0) ($ (11-0)!-0.5!(10-1) $)};
\path[draw, ultra thick] plot[smooth] coordinates{($ (1-12)!-0.5!(2-11) $) (1-12) ($ (1-12)!0.5!(2-11) + (225:0.1) $) (2-11) ($ (2-11)!0.5!(3-10) + (45:0.1) $) (3-10) ($ (3-10)!0.5!(4-9) + (225:0.1) $) (4-9) ($ (4-9)!0.5!(5-8) + (45:0.1) $) (5-8) ($ (5-8)!0.5!(6-7) + (225:0.1) $) (6-7) ($ (6-7)!0.5!(7-6) + (45:0.1) $) (7-6) ($ (7-6)!0.5!(8-5) + (225:0.1) $) (8-5) ($ (8-5)!0.5!(9-4) + (45:0.1) $) (9-4) ($ (9-4)!0.5!(10-3) + (225:0.1) $) (10-3) ($ (10-3)!0.5!(11-2) + (45:0.1) $) (11-2) ($ (11-2)!0.5!(12-1) + (225:0.1) $) (12-1) ($ (12-1)!-0.5!(11-2) $)};
\path[draw, ultra thick] plot[smooth] coordinates{($ (3-12)!-0.5!(4-11) $) (3-12) ($ (3-12)!0.5!(4-11) + (225:0.1) $) (4-11) ($ (4-11)!0.5!(5-10) + (45:0.1) $) (5-10) ($ (5-10)!0.5!(6-9) + (225:0.1) $) (6-9) ($ (6-9)!0.5!(7-8) + (45:0.1) $) (7-8) ($ (7-8)!0.5!(8-7) + (225:0.1) $) (8-7) ($ (8-7)!0.5!(9-6) + (45:0.1) $) (9-6) ($ (9-6)!0.5!(10-5) + (225:0.1) $) (10-5) ($ (10-5)!0.5!(11-4) + (45:0.1) $) (11-4) ($ (11-4)!0.5!(12-3) + (225:0.1) $) (12-3) ($ (12-3)!-0.5!(11-4) $)};
\path[draw, ultra thick] plot[smooth] coordinates{($ (5-12)!-0.5!(6-11) $) (5-12) ($ (5-12)!0.5!(6-11) + (225:0.1) $) (6-11) ($ (6-11)!0.5!(7-10) + (45:0.1) $) (7-10) ($ (7-10)!0.5!(8-9) + (225:0.1) $) (8-9) ($ (8-9)!0.5!(9-8) + (45:0.1) $) (9-8) ($ (9-8)!0.5!(10-7) + (225:0.1) $) (10-7) ($ (10-7)!0.5!(11-6) + (45:0.1) $) (11-6) ($ (11-6)!0.5!(12-5) + (225:0.1) $) (12-5) ($ (12-5)!-0.5!(11-6) $)};
\path[draw, ultra thick] plot[smooth] coordinates{($ (7-12)!-0.5!(8-11) $) (7-12) ($ (7-12)!0.5!(8-11) + (225:0.1) $) (8-11) ($ (8-11)!0.5!(9-10) + (45:0.1) $) (9-10) ($ (9-10)!0.5!(10-9) + (225:0.1) $) (10-9) ($ (10-9)!0.5!(11-8) + (45:0.1) $) (11-8) ($ (11-8)!0.5!(12-7) + (225:0.1) $) (12-7) ($ (12-7)!-0.5!(11-8) $)};
\path[draw, ultra thick] plot[smooth] coordinates{($ (9-12)!-0.5!(10-11) $) (9-12) ($ (9-12)!0.5!(10-11) + (225:0.1) $) (10-11) ($ (10-11)!0.5!(11-10) + (45:0.1) $) (11-10) ($ (11-10)!0.5!(12-9) + (225:0.1) $) (12-9) ($ (12-9)!-0.5!(11-10) $)};
\path[draw, ultra thick] plot[smooth] coordinates{($ (11-12)!-0.5!(12-11) $) (11-12) ($ (11-12)!0.5!(12-11) + (225:0.1) $) (12-11) ($ (12-11)!-0.5!(11-12) $)};
\end{tikzpicture}
\caption{A few midpoint polygons}
\label{fig:examples-torus-zigzag}
\end{subfigure}
\caption{Illustration of 4-punctured torus}
\end{figure}

\paragraph*{The deformed superpotential}
The deformed gentle algebra $ \Gtl_q Q $ has four deformation parameters $ q_1 $, $ q_2 $, $ q_3 $, $ q_4 $. To compute the deformed superpotential $ W_q $, we have to enumerate all midpoint polygons in $ Q $. As can be seen from \autoref{fig:examples-torus-zigzag}, there are infinitely many midpoint polygons. All midpoint polygons are rectangular, in particular for the sign $ |D| $ we have $ |D| = 0 ∈ ℤ/2ℤ $ for every midpoint polygon $ D $. The midpoint polygons can be classified into 64 different types according to their arc sequence, or 16 if one takes arc sequences up to cyclic permutation. All 16 types are listed in \autoref{tab:examples-torus-polygonsenumeration}.

The midpoint polygons of any type are a family indexed by natural numbers $ k, l ∈ ℕ $, standing for the width and height of the polygon. More precisely, define side lengths of a midpoint polygon to be the number of angles cut by the zigzag segments. In these terms, every family comes with a minimally small midpoint polygon and all larger midpoint polygons in the family are derived from this minimal version by extending side lengths by multiples of $ 4 $. For every of the 16 types, we have indicated in \autoref{tab:examples-torus-polygonsenumeration} the $ q $-parameters of the midpoint polygon of size $ (k, l) $. This way, we have efficiently enumerated all midpoint polygons and their properties in 16 types and two parameters.

We use the following abbreviations:
\begin{equation*}
q = q_1 q_2 q_3 q_4, ~ q_{14} = q_1 q_4, ~ q_{23} = q_2 q_3.
\end{equation*}
With this in mind, the deformed superpotential can be expressed as a sum over sixteen terms, eight of which actually cancel out due to symmetry of the 4-punctured torus:
\begin{align*}
W_q =& \sum_{\substack{\text{8 clockwise} \\ \text{types}}} \sum_{k, l ≥ 0} \punctures(D) \arcsequence(D)_{\cyc} - \sum_{\substack{\text{8 counterclockwise} \\ \text{types}}} \sum_{k, l ≥ 0} \punctures(D) \arcsequence(D)_{\cyc} \\
= & \left(\sum_{k, l ≥ 0} q_{23}^l q_{14}^k q^{2kl} - \sum_{k, l ≥ 0} q (q_{14} q)^l (q_{23} q)^k q^{2kl}\right) (b_2 a_2 b_1 a_4)_{\cyc} \\
& + \left(\sum_{k, l ≥ 0} q (q_{23} q)^l (q_{14} q)^k q^{2kl} - \sum_{k, l ≥ 0} q_{14}^l q_{23}^k q^{2kl}\right) (b_3 a_2 b_4 a_4)_{\cyc} \\
& + \left(\sum_{k, l ≥ 0} q_{23}^l q_{14}^k q^{2kl} - \sum_{k, l ≥ 0} q (q_{14} q)^l (q_{23} q)^k q^{2kl}\right) (b_3 a_3 b_4 a_1)_{\cyc} \\
& + \left(\sum_{k, l ≥ 0} q (q_{23} q)^l (q_{14} q)^k q^{2kl} - \sum_{k, l ≥ 0} q_{14}^l q_{23}^k q^{2kl}\right) (b_2 a_3 b_1 a_1)_{\cyc}.
\end{align*}
The deformed Jacobi algebra is $ \Jac_q \mirQ = ℂ\mirQ ⟦q_1, q_2, q_3, q_4⟧ / \closure{(∂_a W_q)_{a ∈ \mirQ_1}} $. Here $ \closure{(∂_a W_q)_{a ∈ \mirQ_1}} $ denotes the closure of the ideal generated by the derivatives $ ∂_a W_q ∈ ℂ\mirQ ⟦q_1, q_2, q_3, q_4⟧ $ with respect to the $ (q_1, q_2, q_3, q_4) $-adic topology.

\begin{table}
\centering
\begin{tabular}{ccc}
Orientation & $ q $-parameter $ \punctures(D) $ & arc sequence $ \arcsequence(D) $ \\\hline
clockwise & $ q_{23}^l q_{14}^k q^{2kl} $ & $ b_1 a_4 b_2 a_2 $ \\ 
clockwise & $ q_3 q_{23}^l (q_{14} q)^k q^{2kl} $ & $ b_4 a_1 b_2 a_2 $ \\
clockwise & $ q_4 (q_{23} q)^l q_{14}^k q^{2kl} $ & $ b_1 a_1 b_3 a_2 $ \\
clockwise & $ q (q_{23} q)^l (q_{14} q)^k q^{2kl} $ & $ b_4 a_4 b_3 a_2 $ \\\hline
clockwise & $ q_{23}^l q_{14}^k q^{2kl} $ & $ b_4 a_1 b_3 a_3 $ \\ 
clockwise & $ q_2 q_{23}^l (q_{14} q)^k q^{2kl} $ & $ b_1 a_4 b_3 a_3 $ \\
clockwise & $ q_1 (q_{23} q)^l q_{14}^k q^{2kl} $ & $ b_4 a_4 b_2 a_3 $ \\
clockwise & $ q (q_{23} q)^l (q_{14} q)^k q^{2kl} $ & $ b_1 a_1 b_2 a_3 $ \\\hline
counterclockwise & $ q_{14}^l q_{23}^k q^{2kl} $ & $ b_2 a_3 b_1 a_1 $ \\ 
counterclockwise & $ q_4 q_{14}^l (q_{23} q)^k q^{2kl} $ & $ b_3 a_2 b_1 a_1 $ \\
counterclockwise & $ q_3 (q_{14} q)^l q_{23}^k q^{2kl} $ & $ b_2 a_2 b_4 a_1 $ \\
counterclockwise & $ q (q_{14} q)^l (q_{23} q)^k q^{2kl} $ & $ b_3 a_3 b_4 a_1 $ \\\hline
counterclockwise & $ q_{14}^l q_{23}^k q^{2kl} $ & $ b_3 a_2 b_4 a_4 $ \\ 
counterclockwise & $ q_1 q_{14}^l (q_{23} q)^k q^{2kl} $ & $ b_2 a_3 b_4 a_4 $ \\
counterclockwise & $ q_2 (q_{14} q)^l q_{23}^k q^{2kl} $ & $ b_3 a_3 b_1 a_4 $ \\
counterclockwise & $ q (q_{14} q)^l (q_{23} q)^k q^{2kl} $ & $ b_2 a_2 b_1 a_4 $ \\
\end{tabular}
\caption{Enumeration of midpoint polygons}
\label{tab:examples-torus-polygonsenumeration}
\end{table}

\paragraph*{The deformed potential}
The deformed potential $ ℓ_q $ as element of $ ℂ\mirQ ⟦q_1, q_2, q_3, q_4⟧ $ is the sum of four potentials, one for each of the four zigzag paths:
\begin{equation*}
ℓ_q = ℓ_{q, 1} + ℓ_{q, 2} + ℓ_{q, 3} + ℓ_{q, 4}.
\end{equation*}
The expression for the deformed potential $ ℓ_{q, i} $ as element of $ ℂ\mirQ ⟦q_1, q_2, q_3, q_4⟧ $ depends on the choice of identity location for the zigzag path $ L_i $. For instance, let us choose the identity of the zigzag path $ L_1 = …, a_2, b_1, a_3, b_4, … $ to be the arc $ a_2 $. Then the summands $ ℓ_{q, 1} $ and $ ℓ_{q, 2} $ read
\begin{align*}
ℓ_{q, 1} =& \left(\sum_{k, l ≥ 0} l q_{23}^l q_{14}^k q^{2kl} b_1 a_4 b_2 a_2 + \sum_{k, l ≥ 0} (l+1) q (q_{14} q)^l (q_{23} q)^k q^{2kl}\right) b_1 a_4 b_2 a_2 \\
& + \left(\sum_{k, l ≥ 0} l q_4 (q_{23} q)^l q_{14}^k q^{2kl} + \sum_{k, l ≥ 0} (l+1) q_4 q_{14}^l (q_{23} q)^k q^{2kl}\right) b_1 a_1 b_3 a_2 \\
& + \left(\sum_{k, l ≥ 0} l q_3 q_{23}^l (q_{14} q)^k q^{2kl} + \sum_{k, l ≥ 0} (l+1) q_3 (q_{14} q)^l q_{23}^k q^{2kl} \right) b_4 a_1 b_2 a_2 \\
& + \left(\sum_{k, l ≥ 0} l q (q_{23} q)^l (q_{14} q)^k q^{2kl} + \sum_{k, l ≥ 0} (l+1) q_{14}^l q_{23}^k q^{2kl}\right) b_4 a_4 b_3 a_2 \\
& + \left(\sum_{k, l ≥ 0} (l+1) q_2 q_{23}^l (q_{14} q)^k q^{2kl} + \sum_{k, l ≥ 0} l q_2 (q_{14} q)^l q_{23}^k q^{2kl}\right) b_1 a_4 b_3 a_3 \\
& + \left(\sum_{k, l ≥ 0} (l+1) q (q_{23} q)^l (q_{14} q)^k q^{2kl} + \sum_{k, l ≥ 0} l q_{14}^l q_{23}^k q^{2kl}\right) b_1 a_1 b_2 a_3 \\
& + \left(\sum_{k, l ≥ 0} l q_{23}^l q_{14}^k q^{2kl} + \sum_{k, l ≥ 0} (l+1) q (q_{14} q)^l (q_{23} q)^k q^{2kl}\right) b_4 a_1 b_3 a_3 \\
& + \left(\sum_{k, l ≥ 0} l q_1 (q_{23} q)^l q_{14}^k q^{2kl} + \sum_{k, l ≥ 0} (l+1) q_1 q_{14}^l (q_{23} q)^k q^{2kl}\right) b_4 a_4 b_2 a_3
\end{align*}
and
\begin{align*}
ℓ_{q, 2} =& \left(\sum_{k, l ≥ 0} (k+1) q_{23}^l q_{14}^k q^{2kl} + \sum_{k, l ≥ 0} k q (q_{14} q)^l (q_{23} q)^k q^{2kl}\right) a_2 b_1 a_4 b_2 \\
& + \left(\sum_{k, l ≥ 0} (k+1) q_4 (q_{23} q)^l q_{14}^k q^{2kl} + \sum_{k, l ≥ 0} k q_4 q_{14}^l (q_{23} q)^k q^{2kl}\right) a_2 b_1 a_1 b_3 \\
& + \left(\sum_{k, l ≥ 0} (k+1) q_3 (q_{23})^l (q_{14} q)^k q^{2kl} + \sum_{k, l ≥ 0} k q_3 (q_{14} q)^l q_{23}^k q^{2kl}\right) a_2 b_4 a_1 b_2 \\
& + \left(\sum_{k, l ≥ 0} (k+1) q (q_{23} q)^l (q_{14} q)^k q^{2kl} + \sum_{k, l ≥ 0} k q_{14}^l q_{23}^k q^{2kl}\right) a_2 b_4 a_4 b_3 \\
& + \left(\sum_{k, l ≥ 0} k (q_{23})^l (q_{14})^k q^{2kl} + \sum_{k, l ≥ 0} (k+1) q (q_{14} q)^l (q_{23} q)^k q^{2kl}\right) a_3 b_4 a_1 b_3 \\
& + \left(\sum_{k, l ≥ 0} (k+1) q_1 (q_{23} q)^l q_{14}^k q^{2kl} + \sum_{k, l ≥ 0} k q_1 q_{14}^l (q_{23} q)^k q^{2kl}\right) a_3 b_4 a_4 b_2 \\
& + \left(\sum_{k, l ≥ 0} k q_2 q_{23}^l (q_{14} q)^k q^{2kl} + \sum_{k, l ≥ 0} (k+1) q_2 (q_{14} q)^l q_{23}^k q^{2kl}\right) a_3 b_1 a_4 b_3 \\
& + \left(\sum_{k, l ≥ 0} (k+1) q (q_{23} q)^l (q_{14} q)^k q^{2kl} + \sum_{k, l ≥ 0} k q_{14}^l q_{23}^k q^{2kl}\right) a_3 b_1 a_1 b_2.
\end{align*}

\paragraph*{Centrality}
The general mirror construction guarantees that the deformed potential $ ℓ_q $ is central in the deformed Jacobi algebra $ \Jac_q \mirQ $. In what follows, we verify centrality manually in the case of the 4-punctured torus. Our starting point is the explicit description of $ W_q $ and $ ℓ_q $ in terms of midpoint polygons. It turns out that centrality is not as obvious as in the classical case of $ ℓ ∈ \Jac \mirQ $. The checks we present incarnate particular cases of the $ A_∞ $-relations for $ \H\DefZigzagCat $, in fact going beyond the transversal case. Checking centrality therefore presents evidence that the description of $ \H\DefZigzagCat $ in terms of CR, ID, DS and DW disks is accurate and is therefore evidence for the correctness of \papertwoB.

To get started, recall that proving $ ℓ_q $ central entails finding for every arc $ a ∈ \mirQ_1 $ an element $ x ∈ \closure{(∂_a W_q)} $ such that $ a ℓ_q + x = ℓ_q a $ within $ ℂ\mirQ $. In the classical case of $ ℓ ∈ \Jac \mirQ $, this element $ x $ can be described as a sequence of F-term flips of the path $ ℓ a ∈ ℂ\mirQ $, see \autoref{sec:flatness-dimers}. In the deformed case of $ ℓ_q ∈ \Jac_q \mirQ $, the potential $ ℓ_q $ is much more complicated and the element $ x $ can only be obtained by inspecting and Koszul dualizing the $ A_∞ $-relations of $ \H\DefZigzagCat $.

\begin{figure}
\centering
\begin{subfigure}{0.8\linewidth}
\centering
\begin{tikzpicture}[scale=2]
\newcommand{\arrowbetween}[2]{($ (#1)!0.1!(#2) $) -- ($ (#2)!0.1!(#1) $)}
\newcommand{\thetorus}{%
\path[draw, ->] \arrowbetween{0, 0}{1, 0} node[midway, above] {$ a_1 $};
\path[draw, ->] \arrowbetween{2, 0}{1, 0} node[midway, above] {$ a_2 $};
\path[draw, ->] \arrowbetween{0, 2}{1, 2} node[midway, above] {$ a_1 $};
\path[draw, ->] \arrowbetween{2, 2}{1, 2} node[midway, above] {$ a_2 $};
\path[draw, ->] \arrowbetween{1, 1}{0, 1} node[midway, above] {$ a_3 $};
\path[draw, ->] \arrowbetween{1, 1}{2, 1} node[midway, above] {$ a_4 $};
\path[draw, ->] \arrowbetween{0, 1}{0, 0} node[midway, left] {$ b_1 $};
\path[draw, ->] \arrowbetween{0, 1}{0, 2} node[midway, left] {$ b_3 $};
\path[draw, ->] \arrowbetween{2, 1}{2, 0} node[midway, left] {$ b_1 $};
\path[draw, ->] \arrowbetween{2, 1}{2, 2} node[midway, left] {$ b_3 $};
\path[draw, ->] \arrowbetween{1, 0}{1, 1} node[midway, left] {$ b_2 $};
\path[draw, ->] \arrowbetween{1, 2}{1, 1} node[midway, left] {$ b_4 $};
\path (0, 0) node {\small $ 1 $};
\path (2, 0) node {\small $ 1 $};
\path (2, 2) node {\small $ 1 $};
\path (0, 2) node {\small $ 1 $};
\path (1, 0) node {\small $ 2 $};
\path (1, 2) node {\small $ 2 $};
\path (0, 1) node {\small $ 3 $};
\path (2, 1) node {\small $ 3 $};
\path (1, 1) node {\small $ 4 $};}
\begin{scope}[shift={(2, 0)}, every path/.style={gray}] \thetorus \end{scope}
\begin{scope}[shift={(2, 2)}, every path/.style={gray}] \thetorus \end{scope}
\begin{scope}[shift={(4, 0)}, every path/.style={gray}] \thetorus \end{scope}
\begin{scope}[shift={(4, 2)}, every path/.style={gray}] \thetorus \end{scope}
\path[draw, thick] plot[smooth] coordinates{(2, 0.5) ($ (2.25, 0.75) + (315:0.1) $) (2.5, 1) ($ (2.75, 1.25) + (135:0.1) $) (3, 1.5) ($ (3.25, 1.75) + (315:0.1) $) (3.5, 2) ($ (3.75, 2.25) + (135:0.1) $) (4, 2.5) ($ (4.25, 2.75) + (315:0.1) $) (4.5, 3) ($ (4.75, 3.25) + (135:0.1) $) (5, 3.5) ($ (5.25, 3.75) + (315:0.1) $) (5.5, 4)};
\path[draw, thick] plot[smooth] coordinates{(5.5, 0) ($ (5.25, 0.25) + (45:0.1) $) (5, 0.5) ($ (4.75, 0.75) + (225:0.1) $) (4.5, 1) ($ (4.25, 1.25) + (45:0.1) $) (4, 1.5) ($ (3.75, 1.75) + (225:0.1) $) (3.5, 2) ($ (3.25, 2.25) + (45:0.1) $) (3, 2.5) ($ (2.75, 2.75) + (225:0.1) $) (2.5, 3) ($ (2.25, 3.25) + (45:0.1) $) (2, 3.5)};
\path[fill] (3.5, 2) circle[radius=0.03];
\path[draw, bend right=45, ->] ($ (3.75, 2.25) + (135:0.1) $) to node[above] {$ X_{a_2} $} node[at start, below right] {$ \smooth L_1 $} node[at end, below left] {$ \smooth L_2 $} ($ (3.25, 2.25) + (45:0.1) $);
\path ($ (4.75, 0.75) + (225:0.1) $) node[left] {$ \smooth L_2 $};
\path ($ (2.25, 0.75) + (315:0.1) $) node[right] {$ \smooth L_1 $};
\end{tikzpicture}
\caption{The zigzag paths $ L_1 $ and $ L_2 $ and the morphism $ X_{a_2}: L_1 → L_2 $}
\label{fig:examples-torus-Xa2}
\end{subfigure}
\end{figure}

For symmetry reasons, it suffices to prove $ ℓ_q a - a ℓ_q ∈ \closure{(∂_a W_q)} $ for a single arc $ a ∈ \mirQ_1 $, which we choose to be the arc $ a = a_2 $. Recall that $ t(a_2) = L_1 $ and $ h(a_2) = L_2 $. The arrow set $ \mirQ_1 $ identifies with odd transversal intersections of zigzag paths in $ Q $, and we write $ X_a $ for the odd basis morphism whose corresponding intersection point is located at the midpoint of the arc $ a $. For instance, the two zigzag paths $ L_1 $ and $ L_2 $ and the morphism $ X_{a_2}: L_1 → L_2 $ are depicted in \autoref{fig:examples-torus-Xa2}. Recall that the element $ b $ is a formal sum of the odd transversal intersections $ X_a $, weighted by their formal parameter $ a ∈ \mirQ_1 $. In our case, we have explicitly
\begin{equation*}
b = \sum_{i = 1}^4 a_i X_{a_i} + \sum_{i = 1}^4 b_i X_{b_i}.
\end{equation*}
The first step to centrality is to obtain a candidate expression for $ x = ℓ_{q, 2} a_2 - a_2 ℓ_{q, 1} $ in terms of the relations $ ∂_a W_q $. The idea is to inspect those $ A_∞ $-relations which lead to centrality in the proof of \autoref{th:CHL-defLG-centrality}:
\begin{align}
\label{eq:examples-torus-aid}
\begin{split}
& μ_{\H\DefZigzagCat} (μ^{≥0} (b, …, b)) + μ_{\H\DefZigzagCat} (b, μ^{≥0}_{\H\DefZigzagCat} (b, …, b)) \\
+ & μ_{\H\DefZigzagCat} (μ^{≥0} (b, …, b), b) + μ_{\H\DefZigzagCat}^{≥3} (b, …, μ^{≥0} (b, …, b), …, b) = 0.
\end{split}
\end{align}
The expression on the left-hand side of \eqref{eq:examples-torus-aid} is a sum of multiple odd morphisms weighted by paths in $ \mirQ $. To guess $ x $, we extract the coefficient of the odd morphism $ X_{a_2} $.

Let us inspect all four terms of \eqref{eq:examples-torus-aid}. The first term vanishes in the case of the 4-punctured torus, since it is geometrically consistent. For the second and third term, recall that $ μ^{≥0} (b, …, b) $ is the sum of the idententities of the zigzag paths $ L_i $ weighted by $ ℓ_{q, i} $, and the even intersections $ Y_a $ weighted by the relations $ ∂_a W_q $:
\begin{equation*}
μ^{≥0} (b, …, b) = \sum_{i = 1}^4 ℓ_{q, i} \id_{L_i} + \sum_{i = 1}^4 ∂_{a_i} W_q Y_{a_i} + \sum_{i = 1}^4 ∂_{b_i} W_q Y_{b_i}.
\end{equation*}
We insert this expression into the second and third terms of \eqref{eq:examples-torus-aid} and extract the coefficient of $ X_{a_2} $. Since there are no 3-gons among zigzag curves in $ Q $, the $ X_{a_2} $ coefficient of the second term is $ a_2 ℓ_{q, 1} $ and the $ X_{a_2} $ coefficient of the third term is $ - ℓ_{q, 2} a_2 $.

For the fourth term in \eqref{eq:examples-torus-aid}, we have to count CR, ID, DS and DW disks with output $ X_{a_2} $ and input sequence consisting of odd elements of the form $ X_{a_i} $ or $ X_{b_i} $, mixed with one single even input of the form $ Y_{a_i} $ or $ Y_{b_i} $. The $ X_{a_2} $ coefficient from every CR, ID, DS or DW disk is then the path in $ \mirQ $ given by the concatenation of the $ a_i $ or $ b_i $ symbols, inserting the relation $ ∂_{a_i} W_q $ or $ ∂_{b_i} W_q $ instead at the even input.

It remains to enumerate all relevant CR, ID, DS and DW disks explicitly in the case of the 4-punctured torus:
\begin{itemize}
\item CR disks are 4-gons, in fact rectangles due to the shape of $ Q $. CR disks contribute both to the inner and to the outer $ μ_{\H\DefZigzagCat}^3 $ in \eqref{eq:examples-torus-aid}.
\item ID disks are 5-gons, in fact rectangles of which one side has an output marking on the identity location $ a_2 $. They are only relevant for the inner $ μ_{\H\DefZigzagCat} $ since their output is an identity and not $ X_{a_2} $.
\item DS disks are only relevant for the outer $ μ_{\H\DefZigzagCat} $ since they require at least one even input. According to the intricate rules for DS disks, the only DS disk with output $ X_{a_2} $ is the DS disk with infinitesimally small side lengths contributing to $ μ^3_{\H\DefZigzagCat} (X_{a_2}, Y_{a_2}, X_{a_2}) $. The existence of this DS disk is independent of the choice of co-identity location. In terms of the Kadeishvili construction from \papertwoB, this DS disk corresponds to the π-tree with associated DS result component depicted in \autoref{fig:examples-torus-DStree}.
\item DW disks are irrelevant. In fact, a DW disk has at least one even input and has a co-identity output and can therefore neither contribute to the inner $ μ_{\H\DefZigzagCat} $ nor contribute $ X_{a_2} $ to the outer $ μ_{\H\DefZigzagCat} $.
\end{itemize}
This finishes our evaluation of \eqref{eq:examples-torus-aid}. All in all, the $ A_∞ $-relations for $ \H\DefZigzagCat $ claim that the $ X_{a_2} $ coefficients of \eqref{eq:examples-torus-aid} add up to zero. This provides us with a candidate expression for $ x = ℓ_{q, 2} a_2 - a_2 ℓ_{q, 1} $ in terms of the relations $ ∂_a W_q $. We record and verify this guess as follows:

\begin{figure}
\centering
\begin{tikzpicture}
\path (0, 0) node (A) {$ α_4 $} (2, 0) node (B) {$ \id $(C)} (4, 0) node (C) {$ α_4 $}
(3, -1.3) node[align=center] (D) {$ μ^2 = α_4 $ \\ $ h_q = \id_{b_1} $} edge (B) edge (C)
(2, -2.6) node[align=center] {$ μ^2 = α_4 $ \\ $ π_q = α_3 + α_4 $} edge (A) edge (D);
\end{tikzpicture}
\caption{DS result component contributing to $ μ^3 (X_{a_2}, Y_{a_2}, X_{a_2}) $}
\label{fig:examples-torus-DStree}
\end{figure}
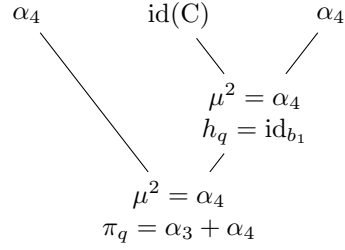

\begin{lemma}
Regard the 4-punctured torus $ Q $ with its zigzag paths $ L_1, L_2, L_3, L_4 $. Denote by $ ℓ_{q, 1} $ and $ ℓ_{q, 2} $ the potentials in $ ℂ\mirQ⟦q_1, q_2, q_3, q_4⟧ $ associated with $ L_1 $ and $ L_2 $ under the choice of $ a_2 $ as identity location. Then within $ ℂ\mirQ ⟦q_1, q_2, q_3, q_4⟧ $ we have
\begingroup
\allowdisplaybreaks
\begin{align}
\label{eq:examples-torus-centrality}
ℓ_{q, 2} a_2 - a_2 ℓ_{q, 1} = & ~ a_2 (∂_{a_2} W_q) a_2 \\
\nonumber & + \bigg(\sum_{k, l ≥ 0} q_1 q^l q_{14}^k q^{2kl} - \sum_{k, l ≥ 0} q_1 q_{23} q^l (q_{23} q)^k q^{2kl}\bigg) a_3 b_4 (∂_{b_1} W_q) \\
\nonumber & + \bigg(\sum_{k, l ≥ 0} q_{14} q^{2l} q_{14}^k q^{2kl} - \sum_{k, l ≥ 0} q_{23} q q^{2l} (q_{23} q)^k q^{2kl}\bigg) a_2 b_1 (∂_{b_1} W_q) \\
\nonumber & + \bigg(\sum_{k, l ≥ 0} q_1 q_2 q_4 q^l (q_{14} q)^k q^{2kl} - \sum_{k, l ≥ 0} q_2 q^l q_{23}^k q^{2kl} \bigg) a_3 b_1 (∂_{b_4} W_q) \\
\nonumber & + \bigg(\sum_{k, l ≥ 0} q_{14} q q^{2l} (q_{14} q)^k q^{2kl} - \sum_{k, l ≥ 0} q_{23} q^{2l} q_{23}^k q^{2kl} \bigg) a_2 b_4 (∂_{b_4} W_q) \\
\nonumber & + \bigg(\sum_{k, l ≥ 0} q_1 q_2 q^l q^k q^{2kl} - \sum_{k, l ≥ 0} q_1 q_2 q^l q^k q^{2kl} \bigg) a_3 (∂_{a_2} W_q) a_3 \\
\nonumber & + \bigg(\sum_{k, l ≥ 0} q^2 q^{2l} q^{2k} q^{2kl} - \sum_{k, l ≥ 0} q^2 q^{2l} q^{2k} q^{2kl} \bigg) a_2 (∂_{a_2} W_q) a_2 \\
\nonumber & + \bigg(\sum_{k, l ≥ 0} q q^l q^{2k} q^{2kl} - \sum_{k, l ≥ 0} q q^l q^{2k} q^{2kl} \bigg) a_3 (∂_{a_3} W_q) a_2 \\
\nonumber & + \bigg(\sum_{k, l ≥ 0} q q^{2l} q^k q^{2kl} - \sum_{k, l ≥ 0} q q^{2l} q^k q^{2kl} \bigg) a_2 (∂_{a_3} W_q) a_3 \\
\nonumber & + \bigg(\sum_{k, l ≥ 0} q_2 q_{14} (q_{14} q)^l q^k q^{2kl} - \sum_{k, l ≥ 0} q_2 q_{23}^l q^k q^{2kl} \bigg) (∂_{b_2} W_q) b_3 a_3 \\
\nonumber & + \bigg(\sum_{k, l ≥ 0} q_{14} q (q_{14} q)^l q^{2k} q^{2kl} - \sum_{k, l ≥ 0} q_{23} q_{23}^l q^{2k} q^{2kl} \bigg) (∂_{b_2} W_q) b_2 a_2 \\
\nonumber & + \bigg(\sum_{k, l ≥ 0} q_1 q_{14}^l q^k q^{2kl} - \sum_{k, l ≥ 0} q_1 q_{23} (q_{23} q)^l q^k q^{2kl} \bigg) (∂_{b_3} W_q) b_2 a_3 \\
\nonumber & + \bigg(\sum_{k, l ≥ 0} q_{14} q_{14}^l q^{2k} q^{2kl} - \sum_{k, l ≥ 0} q_{23} q (q_{23} q)^l q^{2k} q^{2kl} \bigg) (∂_{b_3} W_q) b_3 a_2.
\end{align}
\endgroup
In particular, we have $ ℓ_q a_2 = a_2 ℓ_q $ within $ \Jac_q \mirQ $.
\end{lemma}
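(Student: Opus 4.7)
The plan is to deduce the explicit identity \eqref{eq:examples-torus-centrality} by extracting the $X_{a_2}$-coefficient from the $A_\infty$-relation \eqref{eq:examples-torus-aid} on $\H\DefZigzagCat$, following the same bookkeeping as in the general centrality argument of \autoref{th:CHL-defLG-centrality}. The second statement $\ell_q a_2 = a_2 \ell_q$ in $\Jac_q\mirQ$ is then immediate: the right-hand side of \eqref{eq:examples-torus-centrality} manifestly lies in $\closure{(\partial_a W_q)_{a\in\mirQ_1}}$, and since $\ell_{q,i}$ is supported at the vertex $L_i$ while $a_2: L_1\to L_2$, the commutator $\ell_q a_2 - a_2\ell_q$ collapses to the left-hand side $\ell_{q,2} a_2 - a_2 \ell_{q,1}$. (Centrality of $\ell_q$ is already guaranteed abstractly by \autoref{th:CHL-defLG-centrality}; the content of the lemma is the explicit realisation of the centrality witness.)

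The proof begins by noting that geometric consistency of $Q$ kills the first term in \eqref{eq:examples-torus-aid}, and using the expansion $\mu^{\geq 0}_{\H\DefZigzagCat}(b,\ldots,b) = \sum_i \ell_{q,i}\id_{L_i} + \sum_{a\in\mirQ_1} \partial_a W_q \, Y_a$, the second and third terms contribute the boundary pieces $a_2\ell_{q,1}$ and $-\ell_{q,2}a_2$ via $\mu^2(b,\ell_{q,1}\id_{L_1})$ and $\mu^2(\ell_{q,2}\id_{L_2},b)$ (using $\mu^2(X_{a_2},\id_{L_1})=X_{a_2}$ and $\mu^2(\id_{L_2},X_{a_2})=-X_{a_2}$). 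Any $\mu^2$ contributions of the form $\mu^2(b,\partial_e W_q\,Y_e)$ are counted by 3-gons in the zigzag arrangement, but inspection of \autoref{fig:examples-torus-zigzag} shows that no such 3-gons exist on the 4-punctured torus, so these pieces vanish. The remaining ideal content must therefore come entirely from the fourth term.

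To handle the fourth term, I invoke the explicit description of $\H\DefZigzagCat$ from \autoref{th:zigzag-products-th}. Products of the form $\mu^{\geq 3}_{\H\DefZigzagCat}(b,\ldots,Y_e,\ldots,b)$ with output $X_{a_2}$ are enumerated by DS and DW disks (the only types admitting an even input while producing a non-identity odd output). The distinguished DS disk of \autoref{fig:examples-torus-DStree} produces the leading summand $a_2(\partial_{a_2}W_q)a_2$, and the remaining twelve series arise from the CR-type rectangle families adjacent to $a_2$, in bijection with the relevant entries of \autoref{tab:examples-torus-polygonsenumeration}: for each family indexed by $(k,l)\in\mathbb{N}^2$, the arc-sequence $\arcsequence(D)$ decomposes as a prefix along $L_2$, the inserted relation $\partial_e W_q$, and a suffix along $L_1$, while the $q$-weight $\punctures(D)$ is the already-tabulated monomial. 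Since every midpoint polygon on the 4-punctured torus has $|D|=0$, all signs are uniform and no delicate Abouzaid-sign tracking is required.

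The main obstacle is the purely combinatorial bookkeeping of pairing each of the twelve series on the right-hand side of \eqref{eq:examples-torus-centrality} with the unique CR rectangle family producing it, and verifying the doubled $(k,l)$-indexing exactly reproduces the double sums written out in the formula. A helpful symmetry is that the two outer vertices $L_1,L_2$ of $a_2$ and the two remaining zigzag paths $L_3,L_4$ partition the twelve families into a pattern that mirrors the sixteen-term decomposition of $W_q$; four families yield the $a_?b_?(\partial_{b_?}W_q)$ terms (CR rectangles ending at $a_2$), four yield the $(\partial_{b_?}W_q)b_?a_?$ terms (CR rectangles starting at $a_2$), and the remaining four yield the diagonal $a_?(\partial_{a_?}W_q)a_?$ terms (DS-type insertions along the $a$-direction, of which four pairs visibly cancel). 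Once this dictionary is written out, the identity is a direct rearrangement of the $A_\infty$-relation and the lemma follows.
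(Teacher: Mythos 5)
Your proposal essentially reproduces the \emph{setup} that the paper places in the paragraphs preceding the lemma — extract the $X_{a_2}$-coefficient from the $A_\infty$-relation \eqref{eq:examples-torus-aid}, note that the first term dies by geometric consistency, and recognize that the second and third terms give $a_2\ell_{q,1}$ and $-\ell_{q,2}a_2$. That much is correct. But there are two real problems with the rest.

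First, a concrete error: you claim that outer products of the form $\mu^{\geq 3}_{\H\DefZigzagCat}(b,\ldots,Y_e,\ldots,b)$ with output $X_{a_2}$ are enumerated only by DS and DW disks, ``the only types admitting an even input while producing a non-identity odd output.'' That is not what the paper says, and it is wrong: the dominant contributions are CR disks with exactly one even input $Y_e$ at a corner (a convex corner sits in the opposite quadrant from the one an $X_e$ input would occupy, so CR disks with one $Y$-corner absolutely exist). The paper is explicit that ``CR disks contribute both to the inner and to the outer $\mu^3$,'' that DW disks are irrelevant, and that only \emph{one} DS disk contributes at all — the degenerate one giving the leading term $a_2(\partial_{a_2}W_q)a_2$. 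You then contradict your own claim one sentence later by attributing the remaining twelve series to ``CR-type rectangle families,'' which is what they actually are, so the statement about DS/DW is presumably a slip — but as written it is incorrect and would confuse a reader.

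Second, and more substantively, the paper's proof is not a derivation from the $A_\infty$-relation at all: it is a direct \emph{verification} of the displayed identity, carried out path-coefficient by path-coefficient. The paper fixes length-$5$ paths in $\mirQ$, expands each of the four entities of the identity into explicit (quadruple) power series in $q_{14},q_{23}$, labels the resulting pieces 1A--6B, and then exhibits a dictionary of index reparametrizations (e.g.\ ``1A with indices $(k,l,k',l')$ cancels 3A with indices $(k,l-l'-1,k+k'+1,l')$'') making every piece cancel against its partner or against the boundary term $\ell_{q,2}a_2 - a_2\ell_{q,1}$. That catalogue of index changes \emph{is} the proof. Your proposal asserts that ``once this dictionary is written out, the identity is a direct rearrangement,'' but writing out the dictionary is exactly the content that is missing. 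The claim that ``all signs are uniform and no delicate Abouzaid-sign tracking is required'' glosses over the fact that the needed cancellations depend on carefully shifted double-sum indices, not just uniform signs — the paper also explicitly warns against cancelling the eight middle terms first (which your ``visibly cancel'' remark suggests doing), because those terms must be kept in order to make the other cancellations transparent. So the high-level framing is fine, but the combinatorial heart of the argument is absent.
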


\begin{proof}
The strategy is to cancel terms on the right-hand side against each other. The correct means of finding cancellation partners is by tracing back all terms to their respective geometric origins. More precisely, recall that the claimed identity \eqref{eq:examples-torus-centrality} is supposed to reflect the $ A_∞ $-relations for $ \H\DefZigzagCat $. The $ A_∞ $-relations again reflect the geometric property that nonconvex disks can be divided into convex disks in two different ways. Once we trace back every term on the right-hand side of \eqref{eq:examples-torus-centrality} to its nonconvex disk origin, we find the correct cancellation partner. A few corner cases remain in which the disk is actually convex, and these correspond to the left-hand side.

It might be tempting to cancel the eight middle terms on the right-hand side of \eqref{eq:examples-torus-centrality} first. However, the cancellation of these eight terms is specific to the case of the 4-punctured torus and we in fact need to preserve these terms in order to make the other cancellations transparent.

To get started, note that the identity \eqref{eq:examples-torus-centrality} consists of $ ℂ⟦q_1, q_2, q_3, q_4⟧ $-linear combinations of paths of length five. Checking the identity entails comparing coefficients of every of the possible paths. In the remainder of the proof, we merely focus on two example paths: $ a_2 b_1 a_1 b_3 a_2 $ and $ a_2 b_1 a_1 b_2 a_3 $. Out of these two, the case of the first path is easily settled because the right-hand side contains no such paths. The case of the second path is much harder and truly illustrates the geometric reason for equality. The authors also checked manually the coefficients of all 14 other paths and used the results to eradicate all errors in \eqref{eq:examples-torus-centrality}. For the present demonstration, we merely restrict to the two paths mentioned.

Let us comment on the first path $ a_2 b_1 a_1 b_3 a_2 $. We determine the coefficient of this path in all four entities of \eqref{eq:examples-torus-centrality}. The coefficient coming from the term $ ℓ_{q, 2} a_2 $ is
\begin{equation*}
\sum_{k, l ≥ 0} (k+1) q_4 (q_{23} q)^l q_{14}^k q^{2kl} + \sum_{k, l ≥ 0} k q_4 q_{14}^l (q_{23} q)^k q^{2kl}.
\end{equation*}
The coefficient coming from the term $ a_2 ℓ_{q, 1} $ is
\begin{equation*}
\sum_{k, l ≥ 0} l q_4 (q_{23} q)^l q_{14}^k q^{2kl} + \sum_{k, l ≥ 0} (l+1) q_4 q_{14}^l (q_{23} q)^k q^{2kl}.
\end{equation*}
The coefficient coming from the term $ a_2 (∂_{a_2} W_q) a_2 $ is
\begin{equation*}
\sum_{k, l ≥ 0} q_4 q_{14}^k (q_{23} q)^l q^{2kl} - \sum_{k, l ≥ 0} q_4 q_{14}^l (q_{23} q)^k q^{2kl} = 0.
\end{equation*}
Note that this vanishing is specific to the 4-punctured torus. The coefficient in the rest of the right-hand side of \eqref{eq:examples-torus-centrality} vanishes because $ ∂_{b_1} W_q $ has vanishing $ a_1 b_3 a_2 $ coefficient, because $ ∂_{a_2} W_q $ has vanishing $ b_1 a_1 b_3 $ coefficient, and because $ ∂_{b_3} W_q $ has vanishing $ a_2 b_1 a_1 $ coefficient. All in all, both the left-hand side and the right-hand side of \eqref{eq:examples-torus-centrality} cancel out. This proves the identity \eqref{eq:examples-torus-centrality} on the level of $ a_2 b_1 a_1 b_3 a_2 $ terms.

We now focus on the second path $ a_2 b_1 a_1 b_2 a_3 ∈ ℂ\mirQ $ and prove that its coefficient in \eqref{eq:examples-torus-centrality} vanishes. Regarding the left-hand side, the coefficient in $ ℓ_{q, 2} a_2 $ vanishes and the coefficient in $ - a_2 ℓ_{q, 1} $ consists of the two sums
\begin{equation}
\label{eq:examples-torus-lcontrib}
- \sum_{k, l ≥ 0} (l+1) q (q_{23} q)^l (q_{14} q)^k q^{2kl} - \sum_{k, l ≥ 0} l q_{14}^l q_{23}^k q^{2kl}.
\end{equation}
Regarding the right-hand side, the coefficient in $ a_2 (∂_{a_2} W_q) a_2 $ vanishes. The relevant contribution of the 24 remaining relation terms is
\begin{align}
\label{eq:examples-torus-centralityRHS}
\begin{split}
& + \sum_{k, l ≥ 0} q_1 q_4 q^{2l} q_{14}^k q^{2kl} a_2 b_1 (∂_{b_1} W_q) \\ 
& - \sum_{k, l ≥ 0} q_2 q_3 q q^{2l} (q_{23} q)^k q^{2kl} a_2 b_1 (∂_{b_1} W_q) \\ 
& - \sum_{k, l ≥ 0} q q^{2l} q^k q^{2kl} a_2 (∂_{a_3} W_q) a_3 \\ %
& + \sum_{k, l ≥ 0} q q^{2l} q^k q^{2kl} a_2 (∂_{a_3} W_q) a_3 \\
& + \sum_{k, l ≥ 0} q_1 q_{14}^l q^k q^{2kl} (∂_{b_3} W_q) b_2 a_3 \\
& - \sum_{k, l ≥ 0} q_1 q_2 q_3 (q_{23} q)^l q^k q^{2kl} (∂_{b_3} W_q) b_2 a_3
\end{split}
\end{align}
We count the $ a_1 b_2 a_3 $ coefficients in $ ∂_{b_1} W_q $, $ ∂_{a_3} W_q $ and $ ∂_{b_3} W_q $ as follows:
\begin{align*}
∂_{b_1} W_q \restr{a_1 b_2 a_3} = & + \sum_{k, l ≥ 0} q_{14}^{2k+1+l+2kl} q_{23}^{2l+1+k+2kl} \\
& - \sum_{k, l ≥ 0} q_{14}^{l+2kl} q_{23}^{k+2kl} \\
∂_{a_3} W_q \restr{b_1 a_1 b_2} = & + \sum_{k, l ≥ 0} q_{14}^{2k+l+1+2kl} q_{23}^{2l+k+1+2kl} \\
& - \sum_{k, l ≥ 0} q_{14}^{l+2kl} q_{23}^{k+2kl} \\
∂_{b_3} | a_2 b_1 a_1 = & + \sum_{k, l ≥ 0} q_4 q_{14}^{k+l+2kl} q_{23}^{2l+2kl} \\
& - \sum_{k, l ≥ 0} q_4 q_{14}^{k+l+2kl} q_{23}^{2k+2kl}
\end{align*}
Plugging these expressions into \eqref{eq:examples-torus-centralityRHS}, we get twelve terms which we name as follows:
\begin{align*}
\text{1A:} & \quad + \sum_{k, l, k', l' ≥ 0} q_{14}^{(k+1)(2l+1) + (2k'+1)(l'+1)} q_{23}^{2(k+1)l + (k'+1)(2l'+1)} \\
\text{1B:} & \quad - \sum_{k, l, k', l' ≥ 0} q_{14}^{(k+1)(2l+1) + l'(2k'+1)} q_{23}^{2(k+1)l+k'(2l'+1)}
\end{align*}
\begin{align*}
\text{2A:} & \quad - \sum_{k, l, k', l' ≥ 0} q_{14}^{(k+1)(2l+1) + (2k'+1)(l'+1)} q_{23}^{2(k+1)(l+1) + (k'+1)(2l'+1)} \\
\text{2B:} & \quad + \sum_{k, l, k', l' ≥ 0} q_{14}^{(k+1)(2l+1) + (2k'+1)l'} q_{23}^{2(k+1)(l+1) + k'(2l'+1)}
\end{align*}
\begin{align*}
\text{3A:} & \quad - \sum_{k, l, k', l' ≥ 0} q_{14}^{(k+1)(2l+1) + (2k'+1)(l'+1)} q_{23}^{(k+1)(2l+1) + (k'+1)(2l'+1)} \\
\text{3B:} & \quad + \sum_{k, l, k', l' ≥ 0} q_{14}^{(k+1)(2l+1) + (2k'+1) l'} q_{23}^{(k+1)(2l+1) + k'(2l'+1)}
\end{align*}
\begin{align*}
\text{4A:} & \quad + \sum_{k, l, k', l' ≥ 0} q_{14}^{(k+1)(2l+1) + (2k'+1)(l'+1)} q_{23}^{(k+1)(2l+1) + (k'+1)(2l'+1)} \\
\text{4B:} & \quad - \sum_{k, l, k', l' ≥ 0} q_{14}^{(k+1)(2l+1) + (2k'+1)l'} q_{23}^{(k+1)(2l+1) + k'(2l'+1)}
\end{align*}
\begin{align*}
\text{5A:} & \quad + \sum_{k, l, k', l' ≥ 0} q_{14}^{(1+l+k+2kl) + (k'+l'+2k'l')} q_{23}^{k(2l+1) + 2l'(k'+1)} \\
\text{5B:} & \quad - \sum_{k, l, k', l' ≥ 0} q_{14}^{(1+l+k+2kl) + (k'+l'+2k'l')} q_{23}^{k(2l+1) + 2k'(l'+1)}
\end{align*}
\begin{align*}
\text{6A:} & \quad - \sum_{k, l, k', l' ≥ 0} q_{14}^{(1+l+k+2kl) + (k'+l'+2k'l')} q_{23}^{(k+1)(2l+1) + 2(k'+1)l'} \\
\text{6B:} & \quad + \sum_{k, l, k', l' ≥ 0} q_{14}^{(1+l+k+2kl) + (k'+l'+2k'l')} q_{23}^{(k+1)(2l+1) + 2k'(l'+1)}
\end{align*}
It is our task to sum up these twelve terms (1A–6B) and prove the sum equal to \eqref{eq:examples-torus-lcontrib}. Again, note that 3A and 3B cancel with 4A and 4B. However, performing this easy cancellation would obfuscate all other cancellations. Instead, we shall obtain the correct cancellations by inspecting the geometric origin of every of these twelve terms.

We give a catalog of all cancellations below. To understand the reasons behind the cancellations, we shall demonstrate here how one finds the correct cancellation for the term 1A in case $ l ≥ l' + 1 $. Its geometric origin is depicted in \autoref{fig:examples-torus-cancellation1A}. In short, this figure depicts a pair of disks contributing to
\begin{equation}
\label{eq:examples-torus-centrality_Ainfty1}
μ_{\H\DefZigzagCat} (X_{a_2}, X_{b_1}, μ_{\H\DefZigzagCat} (X_{a_1}, X_{b_2}, X_{a_3})).
\end{equation}
More precisely, the disk labeled $ ∂_{b_1} W_q $ contributes to the inner $ μ_{\H\DefZigzagCat} $, while the disk labeled “coeff” contributes to the outer $ μ_{\H\DefZigzagCat} $. The side lengths of the disk labeled $ ∂_{b_1} W_q $ are $ 4k' + 3 $ and $ 4l' + 3 $. Correspondingly, the example depicted has $ k' = l' = 0 $. The side lengths of the disk labeled “coeff” are $ 4k+4 $ and $ 4l+1 $. Correspondingly, the example depicted has $ k = 0 $ and $ l = 1 $. The nonconvex shape can be split into two other pieces, depicted in \autoref{fig:examples-torus-cancellation3A}. That figure depicts a pair of disks contributing to
\begin{equation}
\label{eq:examples-torus-centrality_Ainfty2}
μ_{\H\DefZigzagCat} (X_{a_2}, μ_{\H\DefZigzagCat} (X_{b_1}, X_{a_1}, X_{b_2}), X_{a_3}).
\end{equation}
The two individual contributions to \eqref{eq:examples-torus-centrality_Ainfty1} and \eqref{eq:examples-torus-centrality_Ainfty2} cover the same area and therefore have the same deformation parameters. A quick check shows that they have equal sign. Noting that 1A and 3A come with opposite overall signs, this makes 3A our candidate for cancellation with 1A in case $ l ≥ l'+1 $.

The cancellation of 1A and 3A involves a change of indices. More precisely, the term 1A with indices $ (k, l, k', l') $ cancels with 3A with indices $ (k, l-l'-1, k+k'+1, l') $. To obtain this correspondence between indices, let us understand the disks in \autoref{fig:examples-torus-cancellation3A}. The disk labeled $ ∂_{a_3} W_q $ contributes to the inner $ μ_{\H\DefZigzagCat} $, while the disk labeled “coeff” contributes to the outer $ \H\DefZigzagCat $. The side lengths of the disk labeled $ ∂_{a_3} W_q $ are $ 4(k+k'+1) + 3 $ and $ 4l' + 3 $. The side lengths of the disk labeled “coeff” are $ 4k+4 $ and $ 4(l-l'-1) + 2 $. This means we expect to find the partner of 1A with indices $ (k, l, k', l') $ in 3A with indices $ (k, l-l'-1, k+k'+1, l') $.

Similar considerations for all other pairs of terms give rise to a list of cancellation candidates. In what follows, we compile this list and check for every entry individually that it indeed cancels out with its partner:
\begin{itemize}
\item 1A in case $ l ≥ l'+1 $ cancels with 3A with indices $ (k, l-l'-1, k+k'+1, l') $:
\begin{equation*}
\begin{aligned}
(q_{14}): && (k+1)(2l+1) + (2k'+1)(l'+1) &= (k+1)(2(l-l'-1)+1) + (2(k+k'+1)+1)(l'+1), \\
(q_{23}): && 2(k+1)l + (k'+1)(2l'+1) &= (k+1)(2(l-l'-1)+1) + ((k+k'+1)+1)(2l'+1).
\end{aligned}
\end{equation*}
\item 1A in case $ l ≤ l' $ cancels with 6A with indices $ (k', l'-l, k+k'+1, l) $:
\begin{equation*}
\begin{aligned}
(q_{14}): && (k+1)(2l+1) + (2k'+1)(l'+1) & \\
&& & \hspace{-10em} = (1+(l'-l)+k'+2k'(l'-l)) + ((k+k'+1)+l+2(k+k'+1)l), \\
(q_{23}): && 2(k+1)l + (k'+1)(2l'+1) &= (k'+1)(2(l'-l)+1) + 2((k+k'+1)+1)l.
\end{aligned}
\end{equation*}
\item 1B in case $ k ≥ k' $ cancels with 5A with indices $ (k', l+l', k-k', l) $:
\begin{equation*}
\begin{aligned}
(q_{14}): && (k+1)(2l+1) + l'(2k'+1) &= (1+(l+l')+k'+2k'(l+l')) + ((k-k')+l+2(k-k')l), \\
(q_{23}): && 2(k+1)l+k'(2l'+1) &= k'(2(l+l')+1) + 2l((k-k')+1).
\end{aligned}
\end{equation*}
\item 1B in case $ k+1 ≤ k' $ cancels with 3B with indices $ (k, l+l', k'-k-1, l') $:
\begin{equation*}
\begin{aligned}
(q_{14}): && (k+1)(2l+1) + l'(2k'+1) &= (k+1)(2(l+l')+1) + (2(k'-k-1)+1) l', \\
(q_{23}): && 2(k+1)l+k'(2l'+1) &= (k+1)(2(l+l')+1) + (k'-k-1)(2l'+1).
\end{aligned}
\end{equation*}
\item 2A in case $ k+1 ≤ k' $ cancels with 4A with indices $ (k, l+l'+1, k'-k-1, l') $:
\begin{equation*}
\begin{aligned}
(q_{14}): && (k+1)(2l+1) + (2k'+1)(l'+1) &= (k+1)(2(l+l'+1)+1) + (2(k'-k-1)+1)(l'+1), \\
(q_{23}): && 2(k+1)(l+1) + (k'+1)(2l'+1) &= (k+1)(2(l+l'+1)+1) + ((k'-k-1)+1)(2l'+1).
\end{aligned}
\end{equation*}
\item 2A in case $ k ≥ k' $ cancels with 6B with indices $ (k', l+l'+1, k-k', l) $:
\begin{equation*}
\begin{aligned}
(q_{14}): && (k+1)(2l+1) + (2k'+1)(l'+1) & \\
&& & \hspace{-10em} = (1+(l+l'+1)+k'+2k'(l+l'+1)) + ((k-k')+l+2(k-k')l) \\
(q_{23}): && 2(k+1)(l+1) + (k'+1)(2l'+1) &= (k'+1)(2(l+l'+1)+1) + 2(k-k')(l+1).
\end{aligned}
\end{equation*}
\item 2B in case $ l ≥ l' $ cancels with 4B with indices $ (k, l-l', k+k'+1, l') $:
\begin{equation*}
\begin{aligned}
(q_{14}): (k+1)(2l+1) + (2k'+1)l' &= (k+1)(2(l-l')+1) + (2(k+k'+1)+1)l'. \\
(q_{23}): 2(k+1)(l+1) + k'(2l'+1) &= (k+1)(2(l-l')+1) + (k+k'+1)(2l'+1).
\end{aligned}
\end{equation*}
\item 2B in case $ l+1 ≤ l' $ cancels with 5B with indices $ (k', l'-l-1, k+k'+1, l) $:
\begin{equation*}
\begin{aligned}
(q_{14}): && (k+1)(2l+1) + (2k'+1)l' & \\
&& & \hspace{-10em} = (1+(l'-l-1)+k'+2k'(l'-l-1)) + ((k+k'+1)+l+2(k+k'+1)l), \\
(q_{23}): && 2(k+1)(l+1) + k'(2l'+1) &= k'(2(l'-l-1)+1) + 2(k+k'+1)(l+1).
\end{aligned}
\end{equation*}
\item 3A in case $ k' ≤ k $ cancels with 5A with indices $ (k-k', l, k', l+l'+1) $:
\begin{equation*}
\begin{aligned}
(q_{14}): && (k+1)(2l+1) + (2k'+1)(l'+1) & \\
&& & \hspace{-10em} = (1+l+(k-k')+2(k-k')l) + (k'+(l+l'+1)+2k'(l+l'+1)), \\
(q_{23}): && (k+1)(2l+1) + (k'+1)(2l'+1) &= (k-k')(2l+1) + 2(l+l'+1)(k'+1).
\end{aligned}
\end{equation*}
\item 3B in case $ l' ≥ l+1 $ cancels with 5B with indices $ (k+k'+1, l, k', l'-l-1) $:
\begin{equation*}
\begin{aligned}
(q_{14}): && (k+1)(2l+1) + (2k'+1) l' & \\
&& & \hspace{-10em} = (1+l+(k+k'+1)+2(k+k'+1)l) + (k'+(l'-l-1)+2k'(l'-l-1)), \\
(q_{23}): && (k+1)(2l+1) + k'(2l'+1) &= (k+k'+1)(2l+1) + 2k'((l'-l-1)+1).
\end{aligned}
\end{equation*}
\item 4A in case $ l' ≥ l $ cancels with 6A with indices $ (k+k'+1, l, k', l'-l) $:
\begin{equation*}
\begin{aligned}
(q_{14}): && (k+1)(2l+1) + (2k'+1)(l'+1) & \\
&& & \hspace{-10em} = (1+l+(k+k'+1)+2(k+k'+1)l) + (k'+(l'-l)+2k'(l'-l)), \\
(q_{23}): && (k+1)(2l+1) + (k'+1)(2l'+1) &= ((k+k'+1)+1)(2l+1) + 2(k'+1)(l'-l).
\end{aligned}
\end{equation*}
\item 4B in case $ k ≥ k' $ cancels with 6B with indices $ (k-k', l, k', l+l') $:
\begin{equation*}
\begin{aligned}
(q_{14}): && (k+1)(2l+1) + (2k'+1)l' &= (1+l+(k-k')+2(k-k')l) + (k'+(l+l')+2k'(l+l')), \\
(q_{23}): && (k+1)(2l+1) + k'(2l'+1) &= ((k-k')+1)(2l+1) + 2k'((l+l')+1).
\end{aligned}
\end{equation*}
\end{itemize}
We have shown that most terms of \eqref{eq:examples-torus-centralityRHS} cancel out. It remains to analyze the corner cases which did not cancel and match them with \eqref{eq:examples-torus-lcontrib}. In fact, the only remaining corner terms are 6A with indices $ (k, l, k, l') $ and 5B with indices $ (k, l, k, l') $. We conclude that the sum of the 12 terms (1A–6B) is
\begin{align*}
& - \sum_{k, l, l' ≥ 0} q_{14}^{1+(l+l')+2k+2k(l+l')} q_{23}^{(k+1)(2(l+l')+1)}
- \sum_{k, l, l' ≥ 0} q_{14}^{1+(l+l')+2k+2k(l+l')} q_{23}^{k(2(l+l')+3)} \\
&=
- \sum_{k, s ≥ 0} (s+1) q_{14}^{1+s+2k+2ks} q_{23}^{1+k+2s+2ks}
- \sum_{k, s ≥ 0} s q_{14}^{s+2ks} q_{23}^{k+2ks}.
\end{align*}
We recognize this expression as equal to \eqref{eq:examples-torus-lcontrib}. In other words, the coefficient of $ a_2 b_1 a_1 b_2 a_3 $ in the centrality identity \eqref{eq:examples-torus-centrality} agrees on both sides. This finishes the checks for the coefficients of $ a_2 b_1 a_1 b_2 a_3 $, and thereby finishes our selected calculations aimed at demonstrating \eqref{eq:examples-torus-centrality}.
\end{proof}

\begin{figure}
\newcommand{\eightbasicgrid}{%
\begin{scope}[every path/.style={gray, draw opacity=0.5}]
\path[draw] (0, 0) -- ++(right:1) coordinate[midway] (1-0) -- ++(right:1) coordinate[midway] (3-0) -- ++(right:1) coordinate[midway] (5-0) -- ++(right:1) coordinate[midway] (7-0) -- ++(right:1) coordinate[midway] (9-0) -- ++(right:1) coordinate[midway] (11-0) -- ++(right:1) coordinate[midway] (13-0) -- ++(right:1) coordinate[midway] (15-0);
\path[draw] (0, 1) -- ++(right:1) coordinate[midway] (1-2) -- ++(right:1) coordinate[midway] (3-2) -- ++(right:1) coordinate[midway] (5-2) -- ++(right:1) coordinate[midway] (7-2) -- ++(right:1) coordinate[midway] (9-2) -- ++(right:1) coordinate[midway] (11-2) -- ++(right:1) coordinate[midway] (13-2) -- ++(right:1) coordinate[midway] (15-2);
\path[draw] (0, 2) -- ++(right:1) coordinate[midway] (1-4) -- ++(right:1) coordinate[midway] (3-4) -- ++(right:1) coordinate[midway] (5-4) -- ++(right:1) coordinate[midway] (7-4) -- ++(right:1) coordinate[midway] (9-4) -- ++(right:1) coordinate[midway] (11-4) -- ++(right:1) coordinate[midway] (13-4) -- ++(right:1) coordinate[midway] (15-4);
\path[draw] (0, 3) -- ++(right:1) coordinate[midway] (1-6) -- ++(right:1) coordinate[midway] (3-6) -- ++(right:1) coordinate[midway] (5-6) -- ++(right:1) coordinate[midway] (7-6) -- ++(right:1) coordinate[midway] (9-6) -- ++(right:1) coordinate[midway] (11-6) -- ++(right:1) coordinate[midway] (13-6) -- ++(right:1) coordinate[midway] (15-6);
\path[draw] (0, 4) -- ++(right:1) coordinate[midway] (1-8) -- ++(right:1) coordinate[midway] (3-8) -- ++(right:1) coordinate[midway] (5-8) -- ++(right:1) coordinate[midway] (7-8) -- ++(right:1) coordinate[midway] (9-8) -- ++(right:1) coordinate[midway] (11-8) -- ++(right:1) coordinate[midway] (13-8) -- ++(right:1) coordinate[midway] (15-8);
\path[draw] (0, 5) -- ++(right:1) coordinate[midway] (1-10) -- ++(right:1) coordinate[midway] (3-10) -- ++(right:1) coordinate[midway] (5-10) -- ++(right:1) coordinate[midway] (7-10) -- ++(right:1) coordinate[midway] (9-10) -- ++(right:1) coordinate[midway] (11-10) -- ++(right:1) coordinate[midway] (13-10) -- ++(right:1) coordinate[midway] (15-10);
\path[draw] (0, 6) -- ++(right:1) coordinate[midway] (1-12) -- ++(right:1) coordinate[midway] (3-12) -- ++(right:1) coordinate[midway] (5-12) -- ++(right:1) coordinate[midway] (7-12) -- ++(right:1) coordinate[midway] (9-12) -- ++(right:1) coordinate[midway] (11-12) -- ++(right:1) coordinate[midway] (13-12) -- ++(right:1) coordinate[midway] (15-12);
\path[draw] (0, 0) -- ++(up:1) coordinate[midway] (0-1) -- ++(up:1) coordinate[midway] (0-3) -- ++(up:1) coordinate[midway] (0-5) -- ++(up:1) coordinate[midway] (0-7) -- ++(up:1) coordinate[midway] (0-9) -- ++(up:1) coordinate[midway] (0-11);
\path[draw] (1, 0) -- ++(up:1) coordinate[midway] (2-1) -- ++(up:1) coordinate[midway] (2-3) -- ++(up:1) coordinate[midway] (2-5) -- ++(up:1) coordinate[midway] (2-7) -- ++(up:1) coordinate[midway] (2-9) -- ++(up:1) coordinate[midway] (2-11);
\path[draw] (2, 0) -- ++(up:1) coordinate[midway] (4-1) -- ++(up:1) coordinate[midway] (4-3) -- ++(up:1) coordinate[midway] (4-5) -- ++(up:1) coordinate[midway] (4-7) -- ++(up:1) coordinate[midway] (4-9) -- ++(up:1) coordinate[midway] (4-11);
\path[draw] (3, 0) -- ++(up:1) coordinate[midway] (6-1) -- ++(up:1) coordinate[midway] (6-3) -- ++(up:1) coordinate[midway] (6-5) -- ++(up:1) coordinate[midway] (6-7) -- ++(up:1) coordinate[midway] (6-9) -- ++(up:1) coordinate[midway] (6-11);
\path[draw] (4, 0) -- ++(up:1) coordinate[midway] (8-1) -- ++(up:1) coordinate[midway] (8-3) -- ++(up:1) coordinate[midway] (8-5) -- ++(up:1) coordinate[midway] (8-7) -- ++(up:1) coordinate[midway] (8-9) -- ++(up:1) coordinate[midway] (8-11);
\path[draw] (5, 0) -- ++(up:1) coordinate[midway] (10-1) -- ++(up:1) coordinate[midway] (10-3) -- ++(up:1) coordinate[midway] (10-5) -- ++(up:1) coordinate[midway] (10-7) -- ++(up:1) coordinate[midway] (10-9) -- ++(up:1) coordinate[midway] (10-11);
\path[draw] (6, 0) -- ++(up:1) coordinate[midway] (12-1) -- ++(up:1) coordinate[midway] (12-3) -- ++(up:1) coordinate[midway] (12-5) -- ++(up:1) coordinate[midway] (12-7) -- ++(up:1) coordinate[midway] (12-9) -- ++(up:1) coordinate[midway] (12-11);
\path[draw] (7, 0) -- ++(up:1) coordinate[midway] (14-1) -- ++(up:1) coordinate[midway] (14-3) -- ++(up:1) coordinate[midway] (14-5) -- ++(up:1) coordinate[midway] (14-7) -- ++(up:1) coordinate[midway] (14-9) -- ++(up:1) coordinate[midway] (14-11);
\path[draw] (8, 0) -- ++(up:1) coordinate[midway] (16-1) -- ++(up:1) coordinate[midway] (16-3) -- ++(up:1) coordinate[midway] (16-5) -- ++(up:1) coordinate[midway] (16-7) -- ++(up:1) coordinate[midway] (16-9) -- ++(up:1) coordinate[midway] (16-11);
\end{scope}
}
\centering
\begin{subfigure}{0.45\linewidth}
\centering
\begin{tikzpicture}[scale=0.8]
\eightbasicgrid
\path[fill=gray!50, draw, thick] plot[smooth] coordinates{(9-0) ($ (9-0)!0.5!(10-1) + (135:0.1) $) (10-1) ($ (10-1)!0.5!(11-2) + (315:0.1) $) (11-2) ($ (11-2)!0.5!(12-3) + (135:0.1) $) (12-3) ($ (12-3)!0.5!(13-4) + (315:0.1) $) (13-4) ($ (13-4)!0.5!(14-5) + (135:0.1) $) (14-5)} -- plot[smooth] coordinates{(14-5) ($ (13-6)!0.5!(14-5) + (225:0.1) $) (13-6) ($ (12-7)!0.5!(13-6) + (45:0.1) $) (12-7) ($ (11-8)!0.5!(12-7) + (225:0.1) $) (11-8) ($ (10-9)!0.5!(11-8) + (45:0.1) $) (10-9) ($ (9-10)!0.5!(10-9) + (225:0.1) $) (9-10) ($ (8-11)!0.5!(9-10) + (45:0.1) $) (8-11) ($ (7-12)!0.5!(8-11) + (225:0.1) $) (7-12)} -- plot[smooth] coordinates{(7-12) ($ (6-11)!0.5!(7-12) + (315:0.1) $) (6-11) ($ (5-10)!0.5!(6-11) + (135:0.1) $) (5-10) ($ (4-9)!0.5!(5-10) + (315:0.1) $) (4-9)} -- plot[smooth] coordinates{(4-9) ($ (4-9)!0.5!(5-8) + (45:0.1) $) (5-8) ($ (5-8)!0.5!(6-7) + (225:0.1) $) (6-7) ($ (6-7)!0.5!(7-6) + (45:0.1) $) (7-6)} -- plot[smooth] coordinates{(7-6) ($ (6-5)!0.5!(7-6) + (315:0.1) $) (6-5) ($ (5-4)!0.5!(6-5) + (135:0.1) $) (5-4)} -- plot[smooth] coordinates{(5-4) ($ (5-4)!0.5!(6-3) + (225:0.1) $) (6-3) ($ (6-3)!0.5!(7-2) + (45:0.1) $) (7-2) ($ (7-2)!0.5!(8-1) + (225:0.1) $) (8-1) ($ (8-1)!0.5!(9-0) + (45:0.1) $) (9-0)};
\eightbasicgrid
\path[draw, thick] plot[smooth] coordinates{(7-6) ($ (7-6)!0.5!(8-7) + (135:0.1) $) (8-7) ($ (8-7)!0.5!(9-8) + (315:0.1) $) (9-8) ($ (9-8)!0.5!(10-9) + (135:0.1) $) (10-9)};
\path (5-4) node[below left] {$ a_2 $ (out)};
\path (9-0) node[below] {$ a_2 $};
\path (14-5) node[right] {$ b_1 $};
\path (7-12) node[above] {$ a_1 $};
\path (4-9) node[left] {$ b_2 $};
\path (7-6) node[left] {$ a_3 $};
\path (10-9) node[above right] {$ b_1 $};
\path[draw, decorate, decoration={brace, amplitude=10pt, raise=5pt, mirror, pre=moveto, post=moveto, pre length=7pt, post length=7pt}] (5-4) to node[midway, shift={(225:0.7)}] {$ k $} (9-0);
\path[draw, decorate, decoration={brace, amplitude=10pt, raise=5pt, mirror, pre=moveto, post=moveto, pre length=7pt, post length=7pt}] (4-9) to node[midway, shift={(225:0.9)}] {$ k' $} (7-6);
\path[draw, decorate, decoration={brace, amplitude=10pt, raise=9pt, mirror, pre=moveto, post=moveto, pre length=7pt, post length=7pt}] (9-0) to node[midway, shift={(315:0.9)}] {$ l $} (14-5);
\path[draw, decorate, decoration={brace, amplitude=10pt, raise=5pt, mirror, pre=moveto, post=moveto, pre length=7pt, post length=7pt}] (7-12) to node[midway, shift={(135:0.7)}] {$ l' $} (4-9);
\path (6-9) -- (8-9) node[midway] {\Large $ ∂_{b_1} W_q $};
\path (9-4) node {\Large coeff};
\path[use as bounding box] (0, -0.7) -- (8, 6.5);
\end{tikzpicture}
\caption{The origin of 1A}
\label{fig:examples-torus-cancellation1A}
\end{subfigure}
\begin{subfigure}{0.45\linewidth}
\centering
\begin{tikzpicture}[scale=0.8]
\eightbasicgrid
\path[fill=gray!50, draw, thick] plot[smooth] coordinates{(9-0) ($ (9-0)!0.5!(10-1) + (135:0.1) $) (10-1) ($ (10-1)!0.5!(11-2) + (315:0.1) $) (11-2) ($ (11-2)!0.5!(12-3) + (135:0.1) $) (12-3) ($ (12-3)!0.5!(13-4) + (315:0.1) $) (13-4) ($ (13-4)!0.5!(14-5) + (135:0.1) $) (14-5)} -- plot[smooth] coordinates{(14-5) ($ (13-6)!0.5!(14-5) + (225:0.1) $) (13-6) ($ (12-7)!0.5!(13-6) + (45:0.1) $) (12-7) ($ (11-8)!0.5!(12-7) + (225:0.1) $) (11-8) ($ (10-9)!0.5!(11-8) + (45:0.1) $) (10-9) ($ (9-10)!0.5!(10-9) + (225:0.1) $) (9-10) ($ (8-11)!0.5!(9-10) + (45:0.1) $) (8-11) ($ (7-12)!0.5!(8-11) + (225:0.1) $) (7-12)} -- plot[smooth] coordinates{(7-12) ($ (6-11)!0.5!(7-12) + (315:0.1) $) (6-11) ($ (5-10)!0.5!(6-11) + (135:0.1) $) (5-10) ($ (4-9)!0.5!(5-10) + (315:0.1) $) (4-9)} -- plot[smooth] coordinates{(4-9) ($ (4-9)!0.5!(5-8) + (45:0.1) $) (5-8) ($ (5-8)!0.5!(6-7) + (225:0.1) $) (6-7) ($ (6-7)!0.5!(7-6) + (45:0.1) $) (7-6)} -- plot[smooth] coordinates{(7-6) ($ (6-5)!0.5!(7-6) + (315:0.1) $) (6-5) ($ (5-4)!0.5!(6-5) + (135:0.1) $) (5-4)} -- plot[smooth] coordinates{(5-4) ($ (5-4)!0.5!(6-3) + (225:0.1) $) (6-3) ($ (6-3)!0.5!(7-2) + (45:0.1) $) (7-2) ($ (7-2)!0.5!(8-1) + (225:0.1) $) (8-1) ($ (8-1)!0.5!(9-0) + (45:0.1) $) (9-0)};
\eightbasicgrid
\path[draw, thick] plot[smooth] coordinates{(7-6) ($ (7-6)!0.5!(8-5) + (225:0.1) $) (8-5) ($ (8-5)!0.5!(9-4) + (45:0.1) $) (9-4) ($ (9-4)!0.5!(10-3) + (225:0.1) $) (10-3) ($ (10-3)!0.5!(11-2) + (45:0.1) $) (11-2)};
\path (5-4) node[below left] {$ a_2 $ (out)};
\path (9-0) node[below] {$ a_2 $};
\path (14-5) node[right] {$ b_1 $};
\path (7-12) node[above] {$ a_1 $};
\path (4-9) node[left] {$ b_2 $};
\path (7-6) node[left] {$ a_3 $};
\path (11-2) node[below right] {$ a_3 $};
\path[draw, decorate, decoration={brace, amplitude=10pt, raise=5pt, mirror, pre=moveto, post=moveto, pre length=7pt, post length=7pt}] (5-4) to node[midway, shift={(225:0.7)}] {$ k $} (9-0);
\path[draw, decorate, decoration={brace, amplitude=10pt, raise=5pt, mirror, pre=moveto, post=moveto, pre length=7pt, post length=7pt}] (14-5) to node[midway, shift={(45:1)}] {$ k+k' $} (7-12);
\path[draw, decorate, decoration={brace, amplitude=10pt, raise=9pt, mirror, pre=moveto, post=moveto, pre length=7pt, post length=7pt}] (9-0) to node[midway, shift={(315:0.9)}] {$ l-l' $} (11-2);
\path[draw, decorate, decoration={brace, amplitude=10pt, raise=5pt, mirror, pre=moveto, post=moveto, pre length=7pt, post length=7pt}] (7-12) to node[midway, shift={(135:0.7)}] {$ l' $} (4-9);
\path (8-7) -- (10-7) node[midway] {\Large $ ∂_{a_3} W_q $};
\path (8-3) node {\Large coeff};
\path[use as bounding box] (0, -0.7) -- (8, 6.5);
\end{tikzpicture}
\caption{The origin of 3A}
\label{fig:examples-torus-cancellation3A}
\end{subfigure}
\caption{Term 1A cancels with term 3A if $ l ≥ l' + 1 $}
\label{fig:examples-torus-cancellation}
\end{figure}
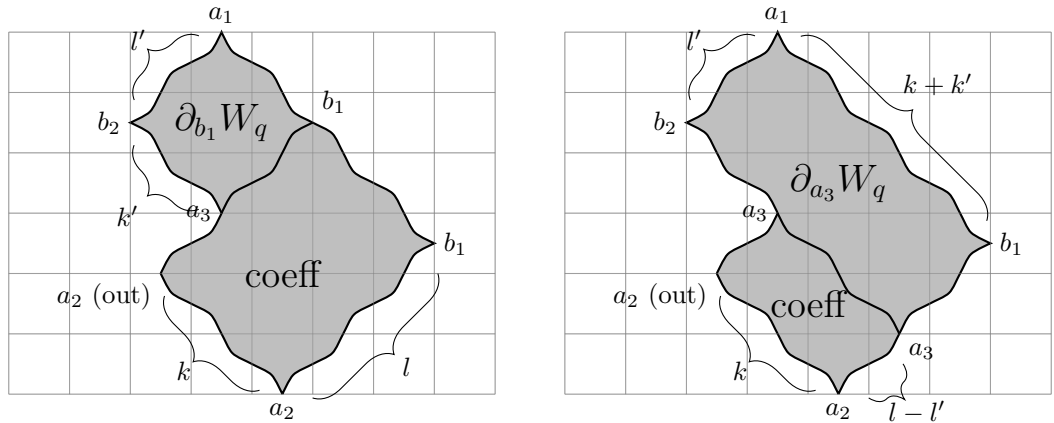

\printbibliography

@article{Bocklandt,
author={Bocklandt, Raf},
title={Noncommutative mirror symmetry for punctured surfaces},
note={With an appendix by Mohammed Abouzaid},
journal={Trans. Amer. Math. Soc.},
volume={368},
year={2016},
number={1},
pages={429--469}}

@article {Bocklandt-consistency,
    AUTHOR = {Bocklandt, Raf},
     TITLE = {Consistency conditions for dimer models},
   JOURNAL = {Glasg. Math. J.},
  FJOURNAL = {Glasgow Mathematical Journal},
    VOLUME = {54},
      YEAR = {2012},
    NUMBER = {2},
     PAGES = {429--447},
      ISSN = {0017-0895},
   MRCLASS = {14A22 (14M25 16S38)},
  MRNUMBER = {2911380},
MRREVIEWER = {Jon Eivind Vatne},
       DOI = {10.1017/S0017089512000080},
       URL = {https://doi-org.proxy.uba.uva.nl/10.1017/S0017089512000080},
}

@article{Bocklandt-CY,
title={Graded Calabi Yau algebras of dimension 3},
author={R. Bocklandt},
journal={Journal of Pure and Applied Algebra},
year={2006},
volume={212},
pages={14-32}}

@book {Bocklandt-book,
    AUTHOR = {Bocklandt, Raf},
     TITLE = {A gentle introduction to homological mirror symmetry},
    SERIES = {London Mathematical Society Student Texts},
    VOLUME = {99},
 PUBLISHER = {Cambridge University Press, Cambridge},
      YEAR = {2021},
     PAGES = {xi+390},
      ISBN = {978-1-108-48350-6; 978-1-108-72875-1},
   MRCLASS = {14J33 (00A79 14F08 16E99 16G20 18G70)},
  MRNUMBER = {4297810},
       DOI = {10.1017/9781108692458},
       URL = {https://doi-org.proxy.uba.uva.nl/10.1017/9781108692458},
}

@article{CHL,
author={Cho, Cheol-Hyun and Hong, Hansol and Lau, Siu-Cheong},
title={Noncommutative homological mirror functor},
journal={Mem. Amer. Math. Soc.},
volume={271},
year={2021},
number={1326},
pages={v+116}}

@article {CHL17,
    AUTHOR = {Cho, Cheol-Hyun and Hong, Hansol and Lau, Siu-Cheong},
     TITLE = {Localized mirror functor for {L}agrangian immersions, and
              homological mirror symmetry for {$\Bbb{P}^1_{a,b,c}$}},
   JOURNAL = {J. Differential Geom.},
  FJOURNAL = {Journal of Differential Geometry},
    VOLUME = {106},
      YEAR = {2017},
    NUMBER = {1},
     PAGES = {45--126},
}

@article{Davison,
    AUTHOR = {Davison, Ben},
     TITLE = {Consistency conditions for brane tilings},
   JOURNAL = {J. Algebra},
  FJOURNAL = {Journal of Algebra},
    VOLUME = {338},
      YEAR = {2011},
     PAGES = {1--23}}

@article {Broomhead,
    AUTHOR = {Broomhead, Nathan},
     TITLE = {Dimer models and {C}alabi-{Y}au algebras},
   JOURNAL = {Mem. Amer. Math. Soc.},
  FJOURNAL = {Memoirs of the American Mathematical Society},
    VOLUME = {215},
      YEAR = {2012},
    NUMBER = {1011},
     PAGES = {viii+86},
      ISSN = {0065-9266},
      ISBN = {978-0-8218-5308-5},
   MRCLASS = {82B20 (14M25)},
  MRNUMBER = {2908565},
       DOI = {10.1090/S0065-9266-2011-00617-9},
       URL = {https://doi-org.proxy.uba.uva.nl/10.1090/S0065-9266-2011-00617-9},
}

@article{Berger-Ginzburg,
author={Berger, Roland and Ginzburg, Victor},
title={Higher symplectic reflection algebras and non-homogeneous
              {$N$}-{K}oszul property},
journal={J. Algebra},
volume={304},
year={2006},
number={1},
pages={577--601}}

@article {Berger-Taillefer,
    AUTHOR = {Berger, Roland and Taillefer, Rachel},
     TITLE = {Poincar\'{e}-{B}irkhoff-{W}itt deformations of {C}alabi-{Y}au
              algebras},
   JOURNAL = {J. Noncommut. Geom.},
  FJOURNAL = {Journal of Noncommutative Geometry},
    VOLUME = {1},
      YEAR = {2007},
    NUMBER = {2},
     PAGES = {241--270},
}

@article {Berger,
    AUTHOR = {Berger, Roland},
     TITLE = {Koszulity for nonquadratic algebras},
   JOURNAL = {J. Algebra},
  FJOURNAL = {Journal of Algebra},
    VOLUME = {239},
      YEAR = {2001},
    NUMBER = {2},
     PAGES = {705--734},
}

@article {Braverman-Gaitsgory,
    AUTHOR = {Braverman, Alexander and Gaitsgory, Dennis},
     TITLE = {Poincar\'{e}-{B}irkhoff-{W}itt theorem for quadratic algebras of
              {K}oszul type},
   JOURNAL = {J. Algebra},
  FJOURNAL = {Journal of Algebra},
    VOLUME = {181},
      YEAR = {1996},
    NUMBER = {2},
     PAGES = {315--328},
}

@article{Assem,
author={Assem, Ibrahim and Br\"{u}stle, Thomas and Charbonneau-Jodoin,
              Gabrielle and Plamondon, Pierre-Guy},
title={Gentle algebras arising from surface triangulations},
journal={Algebra Number Theory},
volume={4},
year={2010},
number={2},
pages={201--229}}

@misc{Barmeier-Wang,
Author = {Severin Barmeier and Zhengfang Wang},
Title = {Deformations of path algebras of quivers with relations},
Year = {2020},
Eprint = {arXiv:2002.10001},
}

@misc{Mescher,
Author = {Stephan Mescher},
IDS = {mescher2016primer},
Title = {A primer on A-infinity-algebras and their Hochschild homology},
Year = {2016},
Eprint = {arXiv:1601.03963},
}

@article {Abouzaid,
    AUTHOR = {Abouzaid, Mohammed},
     TITLE = {On the {F}ukaya categories of higher genus surfaces},
   JOURNAL = {Adv. Math.},
  FJOURNAL = {Advances in Mathematics},
    VOLUME = {217},
      YEAR = {2008},
    NUMBER = {3},
     PAGES = {1192--1235},
      ISSN = {0001-8708},
   MRCLASS = {53D40 (18F99 53D12 57R17)},
  MRNUMBER = {2383898},
MRREVIEWER = {Michael J. Usher},
       DOI = {10.1016/j.aim.2007.08.011},
       URL = {https://doi-org.proxy.uba.uva.nl/10.1016/j.aim.2007.08.011},
}

@inproceedings {Seidel-relative,
    AUTHOR = {Seidel, Paul},
     TITLE = {Fukaya categories and deformations},
 BOOKTITLE = {Proceedings of the {I}nternational {C}ongress of
              {M}athematicians, {V}ol. {II} ({B}eijing, 2002)},
     PAGES = {351--360},
 PUBLISHER = {Higher Ed. Press, Beijing},
      YEAR = {2002},
   MRCLASS = {53D40 (14J32 18E30 53D35 57R17 57R56)},
  MRNUMBER = {1957046},
MRREVIEWER = {Richard P. Thomas},
}

@misc{Lekili-Perutz,
Author = {Yanki Lekili and Timothy Perutz},
Title = {Arithmetic mirror symmetry for the 2-torus},
Year = {2012},
Eprint = {arXiv:1211.4632},
}

@article{Lekili-Polishchuk,
    AUTHOR = {Lekili, Yankı  and Polishchuk, Alexander},
     TITLE = {Arithmetic mirror symmetry for genus 1 curves with {$n$}
              marked points},
   JOURNAL = {Selecta Math. (N.S.)},
  FJOURNAL = {Selecta Mathematica. New Series},
    VOLUME = {23},
      YEAR = {2017},
    NUMBER = {3},
     PAGES = {1851--1907},
      ISSN = {1022-1824},
   MRCLASS = {53D37 (11G05 14F05 14H10 14J33 16E45)},
  MRNUMBER = {3663596},
MRREVIEWER = {Remke Kloosterman},
       DOI = {10.1007/s00029-016-0286-2},
       URL = {https://doi-org.proxy.uba.uva.nl/10.1007/s00029-016-0286-2},
}

@article {Yekutieli-completion,
    AUTHOR = {Yekutieli, Amnon},
     TITLE = {On flatness and completion for infinitely generated modules
              over {N}oetherian rings},
   JOURNAL = {Comm. Algebra},
  FJOURNAL = {Communications in Algebra},
    VOLUME = {39},
      YEAR = {2011},
    NUMBER = {11},
     PAGES = {4221--4245},
}

@article {Pascaleff-Sibilla,
    AUTHOR = {Pascaleff, James and Sibilla, Nicol\`o},
     TITLE = {Topological {F}ukaya category and mirror symmetry for
              punctured surfaces},
   JOURNAL = {Compos. Math.},
  FJOURNAL = {Compositio Mathematica},
    VOLUME = {155},
      YEAR = {2019},
    NUMBER = {3},
     PAGES = {599--644},
}

@article {Ishii-Ueda,
    AUTHOR = {Ishii, Akira and Ueda, Kazushi},
     TITLE = {Dimer models and crepant resolutions},
   JOURNAL = {Hokkaido Math. J.},
  FJOURNAL = {Hokkaido Mathematical Journal},
    VOLUME = {45},
      YEAR = {2016},
    NUMBER = {1},
     PAGES = {1--42},
}

@incollection {Keller-onDG,
    AUTHOR = {Keller, Bernhard},
     TITLE = {On differential graded categories},
 BOOKTITLE = {International {C}ongress of {M}athematicians. {V}ol. {II}},
     PAGES = {151--190},
 PUBLISHER = {Eur. Math. Soc., Z\"{u}rich},
      YEAR = {2006},
}

@article {LPWZ-Koszul,
    AUTHOR = {Lu, Di Ming and Palmieri, John H. and Wu, Quan Shui and Zhang,
              James J.},
     TITLE = {Koszul equivalences in {$A_\infty$}-algebras},
   JOURNAL = {New York J. Math.},
  FJOURNAL = {New York Journal of Mathematics},
    VOLUME = {14},
      YEAR = {2008},
     PAGES = {325--378},
   MRCLASS = {16E45 (16E35 16S37)},
}

@misc{Ginzburg-CY,
Author = {Victor Ginzburg},
Title = {Calabi-Yau algebras},
Year = {2006},
Eprint = {arXiv:math/0612139},
}

@incollection {Loday,
    AUTHOR = {Loday, Jean-Louis},
     TITLE = {The diagonal of the {S}tasheff polytope},
 BOOKTITLE = {Higher structures in geometry and physics},
    SERIES = {Progr. Math.},
    VOLUME = {287},
     PAGES = {269--292},
 PUBLISHER = {Birkh\"{a}user/Springer, New York},
      YEAR = {2011},
}

@article {Markl-Shnider,
    AUTHOR = {Markl, Martin and Shnider, Steve},
     TITLE = {Associahedra, cellular {$W$}-construction and products of
              {$A_\infty$}-algebras},
   JOURNAL = {Trans. Amer. Math. Soc.},
  FJOURNAL = {Transactions of the American Mathematical Society},
    VOLUME = {358},
      YEAR = {2006},
    NUMBER = {6},
     PAGES = {2353--2372},
}

@article {BGS,
    AUTHOR = {Beilinson, Alexander and Ginzburg, Victor and Soergel,
              Wolfgang},
     TITLE = {Koszul duality patterns in representation theory},
   JOURNAL = {J. Amer. Math. Soc.},
  FJOURNAL = {Journal of the American Mathematical Society},
    VOLUME = {9},
      YEAR = {1996},
    NUMBER = {2},
     PAGES = {473--527},
      ISSN = {0894-0347},
}

@article {vdB-duality,
    AUTHOR = {van den Bergh, Michel},
     TITLE = {A relation between {H}ochschild homology and cohomology for
              {G}orenstein rings},
   JOURNAL = {Proc. Amer. Math. Soc.},
  FJOURNAL = {Proceedings of the American Mathematical Society},
    VOLUME = {126},
      YEAR = {1998},
    NUMBER = {5},
     PAGES = {1345--1348},
}

@article {vdB-CY,
    AUTHOR = {Van den Bergh, Michel},
     TITLE = {Calabi-{Y}au algebras and superpotentials},
   JOURNAL = {Selecta Math. (N.S.)},
  FJOURNAL = {Selecta Mathematica. New Series},
    VOLUME = {21},
      YEAR = {2015},
    NUMBER = {2},
     PAGES = {555--603},
}

@article {Seidel-bimodules,
    AUTHOR = {Seidel, Paul},
     TITLE = {{$A_\infty$}-subalgebras and natural transformations},
   JOURNAL = {Homology Homotopy Appl.},
  FJOURNAL = {Homology, Homotopy and Applications},
    VOLUME = {10},
      YEAR = {2008},
    NUMBER = {2},
     PAGES = {83--114},
}

@incollection {Kontsevich-Soibelman-NC,
    AUTHOR = {Kontsevich, M. and Soibelman, Y.},
     TITLE = {Notes on {$A_\infty$}-algebras, {$A_\infty$}-categories and
              non-commutative geometry},
 BOOKTITLE = {Homological mirror symmetry},
    SERIES = {Lecture Notes in Phys.},
    VOLUME = {757},
     PAGES = {153--219},
 PUBLISHER = {Springer, Berlin},
      YEAR = {2009},
}

@misc{Paper-I,
Author = {Raf Bocklandt and Jasper van de Kreeke},
Title = {Deformations of Gentle $A_\infty$-Algebras},
Year = {2023},
Eprint = {arXiv:2304.10223},
}

@misc{Paper-IIA,
Author = {Jasper van de Kreeke},
Title = {$ A_\infty $-Deformations and their Derived Categories},
Year = {2023},
Eprint = {arXiv:2308.08026},
}

@misc{Paper-IIB,
Author = {Jasper van de Kreeke},
Title = {Relative Fukaya Categories via Gentle Algebras},
Year = {2023},
Eprint = {arXiv:2305.09112},
}

\end{document}